\documentclass[12pt, a4paper, twoside, openright, leqno]{book}

\usepackage[latin9]{inputenc} 
\usepackage[T1]{fontenc} 
\usepackage[english, francais]{babel} 
\selectlanguage{francais} 
\usepackage{lmodern} 
	\rmfamily 
	\DeclareFontShape{T1}{lmr}{b}{sc}{<->ssub*cmr/bx/sc}{}
	\DeclareFontShape{T1}{lmr}{bx}{sc}{<->ssub*cmr/bx/sc}{}

\usepackage{a4wide}
\usepackage[francais]{layout} 
\usepackage{cdlr} 
\usepackage[french]{minitoc} 
\usepackage[nottoc]{tocbibind} 
\usepackage{makeidx} \makeindex 
\usepackage{fancyhdr} 
	\pagestyle{fancy} 
	\fancyhf{} 
	\addtolength{\headheight}{\baselineskip}
	\fancyhead[RO]{\scshape \small \nouppercase{\rightmark}}
	\fancyhead[LE]{\scshape \small \nouppercase{\leftmark}}
	\fancyfoot[RO]{\thepage}
	\fancyfoot[LE]{\thepage}
\usepackage[color, final]{showkeys} 
	\definecolor{refkey}{rgb}{0,1,0}
	\definecolor{labelkey}{rgb}{1,0,0}

\usepackage{graphicx} 
\graphicspath{ {../Images/} } 
\usepackage{xcolor, colortbl} 
\usepackage[absolute]{textpos} 
\usepackage{rotating} 
\usepackage{lscape} 
\usepackage{subcaption} 
\usepackage{enumerate} 
\usepackage{tabularx, array, multirow} 

\usepackage{amsmath, amsfonts, amssymb, mathtools, mathrsfs, xfrac, mathperso} 
\usepackage{amsthm} 
\usepackage{etex} 
\usepackage[all]{xy} 
\usepackage{tikz} 
	\newenvironment{tikzbox}%
		{\vcenter \bgroup \hbox \bgroup \begin{tikzpicture}}%
		{\end{tikzpicture} \egroup \egroup}
\usepackage{braids} 
\usepackage[vcentermath]{youngtab} 
	\newcommand{\lyng}[1]{\vcenter{\hbox{\scalebox{0.5}{\yng(#1)}}}}
	\newcommand{\lyoung}[1]{\vcenter{\hbox{\scalebox{0.5}{\young(#1)}}}}
\usepackage{figperso} 
\allowdisplaybreaks 

\theoremstyle{plain}
	\newtheorem{myThm}{Théorème}
	
	\newtheorem{myCor}[myThm]{Corollaire}
\theoremstyle{plain}
	\newtheorem{Thm}[equation]{Théorème}
	\newtheorem{Prop}[equation]{Proposition}
	\newtheorem{Lemme}[equation]{Lemme}
	\newtheorem{Cor}[equation]{Corollaire}
	\newtheorem{Pb}{Problème}
\theoremstyle{definition}
	\newtheorem*{Def}{Définition}
	
\theoremstyle{remark}
\newcommand{\qedrem}{\hfill \ensuremath{\vartriangle}}
	\newtheorem{Rem}[equation]{Remarque}
	\let \OldendRem \endRem \def \endRem{\qedrem \OldendRem}
	\newtheorem{Cas}{$\bullet$ Cas}[equation]


\numberwithin{equation}{section}


\begin{document}

\title{ Idempotents de Jones-Wenzl évaluables aux racines de l'unité et représentation modulaire sur le centre de $\Uq$ }
\author{Elsa Ibanez}
\email{ibanez@enim.fr}
\date{vendredi 04 décembre 2015}
\titletimestamp{}

\university{Université de Montpellier}
\univlogo{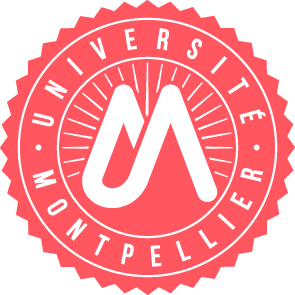}

\doctoral{Information Structures Systèmes}
\researchunit{Institut Montpelliérain Alexander Grothendieck}

\specialisation{Mathématiques et modélisation}

\jury{M. Stéphane {\sc Baseilhac}}{Université de Montpellier}{Directeur}
\jury{M. François {\sc Costantino}}{Université Toulouse III - Paul Sabatier}{Rapporteur}
\jury{M. Charles {\sc Frohman}}{University of Iowa}{Rapporteur}
\jury{M. Damien {\sc Calaque}}{Universté de Montpellier}{Examinateur}
\jury{M. Louis {\sc Funar}}{Université Grenoble I}{Examinateur}

\maketitle 

\newpage 
\thispagestyle{empty}


\frontmatter 
\chapter*{Résumé}
\markboth{Résumé}{Résumé}


\section*{Résumé}

Soit $p \in \N^*$. On définit une famille d'idempotents (et de nilpotents) des algèbres de Temperley-Lieb aux racines $4p$-ième de l'unité qui généralisent les idempotents de Jones-Wenzl usuels. Ces nouveaux idempotents sont associés aux représentations simples et indécomposables projectives de dimension finie du groupe quantique restreint $\Uq$, où $q$ est une racine $2p$-ième de l'unité. A l'instar de la théorie des champs quantique topologique (TQFT) de \cite{BHMV95}, ils fournissent une base canonique de classes d'écheveaux coloriés pour définir des représentations des groupes de difféotopie des surfaces. Dans le cas du tore, cette base nous permet d'obtenir une correspondance partielle entre les actions de la vrille négative et du bouclage, et la représentation de $\SL_2(\Z)$ de \cite{LM94} induite sur le centre de $\Uq$, qui étend non trivialement de la représentation de $\SL_2(\Z)$ obtenue par la TQFT de \cite{RT91}.

\subsubsection*{Mots-clefs}

Groupes quantiques, théorie des écheveaux, algèbres de Temperley-Lieb, idempotents de Jones-Wenzl, représentations modulaires, TQFTs.

\begin{otherlanguage}{english}

\vspace{1cm}
\begin{center} \rule{\textwidth/3}{1pt} \end{center}
\vspace{1cm}

\newpage
\begin{center} \Large \bf 
Evaluable Jones-Wenzl idempotents at root of unity and modular representation on the center of $\Uq$ 
\end{center}

\section*{Abstract}
  
Let $p \in \N^*$. We define a family of idempotents (and nilpotents) in the Temperley - Lieb algebras at $4p$-th roots of unity which generalize the usual Jones-Wenzl idempotents. These new idempotents correspond to finite dimentional simple and projective indecomposable representations of the restricted quantum group $\Uq$, where $q$ is a $2p$-th root of unity. In the manner of the \cite{BHMV95} topological quantum field theorie (TQFT), they provide a canonical basis in colored skein modules to define mapping class groups representations. In the torus case, this basis allows us to obtain a partial match between the negative twist and the buckling actions, and the \cite{LM94} induced representation of $\SL_2(\Z)$ on the center of $\Uq$, which extends non trivially the \cite{RT91} representation of $\SL_2(\Z)$.

\subsection*{Keywords} 

Quantum groups, skein theory, temperley-Lieb algebras, Jones-Wenzl idempotents, modular representations, TQFT.

\end{otherlanguage}

\dominitoc 
\tableofcontents

\mainmatter
\chapter*{Introduction}
\addstarredchapter{Introduction}
\markboth{Introduction}{Introduction}

Cette thèse étudie quelques aspects des théories des champs quantiques topologiques (TQFTs) de type Reshetikhin-Turaev pour $sl_2$ :
\begin{itemize}
	\item une partie de la catégorie des représentations de dimension finie du groupe quantique restreint $\Uq$, où $q$ est une racine primitive paire de l'unité,
	\item l'espace d'écheveaux du tore solide colorié par des nouveaux idempotents (et nilpotents) de Jones-Wenzl aux racines de l'unité, 
	\item les représentations linéaires projectives des groupes de difféotopie des surfaces, et en particulier celles du groupe de difféotopie du tore (épointé).
\end{itemize}

\subsection*{Présentation du sujet}

Les invariants de 3-variété de type Reshetikhin-Turaev motivent de nombreux sujets de recherche en topologie de basse dimension : constructions effectives par la théorie des groupes quantiques, constructions effectives par les modules d'écheveaux coloriés, extensions en des TQFTs, représentations des groupes de difféotopies des surfaces. La première construction éponyme est donnée en 1991 dans \cite{RT91} et repose principalement sur deux outils. Le premier est la \emph{chirurgie} qui permet de passer de l'étude des 3-variétés compactes orientées à celle des entrelacs enrubannés de la sphère $\mathbb{S}^3$ de dimension 3. En effet, toute 3-variété compacte orientée est obtenue par chirurgie le long d'un entrelacs enrubanné dans $\mathbb{S}^3$. De plus, deux telles 3-variétés $M$ et $M'$, obtenues par chirurgie le long des entrelacs enrubannés $L$ et $L'$ respectivement, sont homéomorphes si et seulement si $L$ et $L'$ sont liés par une séries de \emph{mouvements de Kirby} (cf. par exemple \cite[§ 16, § 19]{PS97}). Le deuxième outils est la structure remarquable des catégories des représentations de dimension finie des \emph{groupes quantiques} $\overline{U}_q \mathfrak{g}$, où $\mathfrak{g}$ est une algèbre de Lie et $q$ une racine de l'unité.

Un exemple fondamental de ces catégories est donné par des groupes quantiques quotients $\Uq$, associés à l'algèbre de Lie $sl_2$ et aux racines $q$ de l'unité. Leurs catégories $\Rep^{fd}$ des représentations de dimension finie sont naturellement munies d'un produit tensoriel et d'une dualité. Conformément au théorème de Krull-Schmidt, les objets de $\Rep^{fd}$ se décomposent à isomorphisme près en somme directe de modules indécomposables. Parmi ceux-ci, Reshetikhin et Turaev utilisent uniquement les modules \emph{simples}. Ils obtiennent ainsi des \emph{catégories modulaires} dont les objets sont semi-simples, engendrés à isomorphisme près par un nombre fini de modules simples $\{ \X_1,...,\X_N \}$, et admettent des \emph{traces quantiques} non nulles (cf. \cite[§ 3]{RT91}). Ils construisent alors leur invariant de la façon suivante. Pour chaque racine $q$ de l'unité et pour chaque 3-variété $M$ compacte orientée, obtenue par chirurgie le long d'un entrelacs enrubanné $L = L_1 \cup ... \cup L_k$, on associe à toute composante connexe $L_i$ de $L$ la classe d'isomorphisme d'un $\Uq$-module simple $\X_{j_i}$, $j_i \in \{1,...,N\}$. \`A un scalaire près, on considère ensuite :
\[ Z_{j_1,...,j_k} := \left( \prod_{i=1}^k \mathrm{qdim} (\X_{j_i}) \right) I_L(\X_{j_1},...,\X_{j_k}), \]
où $\mathrm{qdim}$ et $I_L$ désignent respectivement la trace quantique et l'invariant de nœud construits dans \cite{RT90}. L'invariant $Z(M)$ associé à $M$ s'obtient alors en sommant les termes $Z_{j_1,...,j_k}$ sur toutes les associations $(j_1,...,j_k)$ possibles. De plus, cet invariant de 3-variété compacte orientée s'étend en une TQFT de la manière suivante. Pour chaque racine $q$ de l'unité et pour chaque surface $\Sigma = \Sigma_1 \cup ... \cup \Sigma_l$ fermée orientée, on associe à toute composante connexe $\Sigma_i$ de $\Sigma$ de genre $g_i$ un enchevêtrement enrubanné $L = L_1 \cup ... \cup L_{g_i}$, dont les composantes connexes sont coloriées comme précédemment par les classes d'isomorphisme de $\Uq$-modules simples $\{ \X_{j_1},...,\X_{j_{g_i}} \}$. \`A un scalaire près, on considère cette fois :
\[ V_{i ; j_1,...,j_{g_i}} := \bigotimes_{m=1}^{g_i} \X_{j_m} \otimes \X_{j_m}^* \mod \text{"modules non simples"}. \]
L'espace vectoriel $V(\Sigma)$ associé à $\Sigma$ s'obtient alors en sommant les espaces vectoriels $V_{i; j_1,...,j_k}$ sur $i \in \{1,...,l\}$ et toutes les associations $(j_1,...,j_{g_i})$ possibles. Pour tout couple $(\Sigma, \Sigma')$ de surfaces fermées orientées, l'application linéaire $f_M : V(\Sigma) \mapsto V(\Sigma')$ associée à un \emph{cobordisme} $(M, \Sigma, \Sigma')$ est définie de manière tautologique via l'entrelacs enrubanné qui code $M$ par chirurgie et la structure de $\Rep^{fd}$ (cf. \cite[§ 4]{RT91}). Ainsi, Reshetikhin et Turaev obtiennent un foncteur $F : (\Sigma, M) \mapsto (V(\Sigma), f_M)$ et, pour toute surface $\Sigma$ fermée orientée connexe, une représentation projective $\varphi \mapsto f_{\Sigma \times_\varphi [0,1]}$ du groupe de difféotopie de $\Sigma$ sur $V(\Sigma)$ (cf. \cite[§ 4.6]{RT91}).

Dans les articles \cite{Lic91}, \cite{Lic92} et \cite{Lic93}, Lickorish offre une construction topologique alternative de ces invariants quantiques pour $sl_2$. Dans ce cadre, les représentations de dimensions finie sont remplacées par les \emph{espaces d'écheveaux}, et les modules simples par les \emph{idempotents de Jones-Wenzl}. Il retrouve les invariants de \cite{RT91} pour les racines \emph{paires} de l'unité de la manière suivante. On considère des classes d'écheveaux coloriés par les idempotents de Jones-Wenzl $\{f_1,...,f_N\}$ qui admettent des \emph{traces} non nulles (cf. \cite[§ 2, § 4]{Lic92}). Pour chaque 3-variété $M$ compacte orientée, obtenue par chirurgie le long d'un entrelacs enrubanné $L = L_1 \cup ... \cup L_k$, on associe à toute composante $L_i$ de $L$ un idempotent de Jones-Wenzl $f_{j_i}$, $j_i \in \{1,...,N\}$. \`A un scalaire près, on considère ensuite :
\[ Z_{j_1,...,j_k}' := \left( \prod_{i=1}^k \tr ( f_{j_i} ) \right) \Phi_L (f_{i_1},...,f_{i_k}) , \]
où $\tr$ et $\Phi_L$ désignent respectivement la trace et la forme multilinéaire construites dans \cite{Lic92} à partir du \emph{crochet de Kauffman}. L'invariant $Z(M)'$ associé à $M$ s'obtient alors en sommant les termes $Z_{j_1,...,j_k}'$ sur toutes les associations $\{j_1,...,j_k\}$ possibles. Il s'avère que la catégorie des classes d'écheveaux coloriés par les idempotents de Jones-Wenzl possède une structure analogue à la catégorie modulaire construite dans \cite{RT91}. Forts de ce constat, Blanchet, Habegger, Masbaum et Vogel ont généralisé le travail de Lickorish pour toutes les racines de l'unité dans \cite{BHMV92}, puis ont étendu leurs invariants en une TQFT dans \cite{BHMV95}. Ils obtiennent ainsi des représentations projectives des groupes de difféotopie des surfaces sur les espaces d'écheveaux.

\subsection*{Objet de cette thèse}

On concentre notre étude sur les invariants quantiques pour $sl_2$ associés à une racine $q$ primitive \emph{paire} de l'unité. Ils mettent en œuvre la catégorie $\Rep^{fd}$ des représentations de dimension finie du groupe quantique \emph{restreint} $\Uq$. Les deux constructions possibles, celle algébrique de \cite{RT91} et celle topologique de \cite{Lic92} rappelées ci-dessus, ne rendent compte que des modules \emph{simples} de $\Rep^{fd}$. En effet, dans le cadre topologique, les idempotents de Jones-Wenzl correspondent bijectivement à des projecteurs sur les $\Uq$-modules simples (cf. par exemple \cite[§ 3.5]{CFS95}). Pourtant, on ne compte pas seulement des modules simples dans $\Rep^{fd}$, ni-même dans sa sous-catégorie $\Rep^{fd}_s$ engendrée sous les deux lois $\oplus$ et $\otimes$ par les modules simples. Plus précisément, les produits tensoriels des $\Uq$-modules simples de dimension finie font apparaître tous les $\Uq$-modules indécomposables projectifs (PIMs) de dimension finie. Cette impasse sur les $\Uq$-modules non simples de dimension finie repose sur deux obstructions majeures : d'une part leurs traces quantiques sont nulles, et d'autre part on ne sait pas les interpréter avec les idempotents de Jones-Wenzl.

Cette thèse propose de lever la seconde obstruction via la construction de nouveaux idempotents (et nilpotents) de Jones-Wenzl qui tiennent compte des $\Uq$-PIMs. En effet, les candidats que nous proposons généralisent les idempotents de Jones-Wenzl usuels et correspondent à des projecteurs sur les $\Uq$-modules simples et les $\Uq$-PIMs de dimension finie. Par ailleurs, notre intérêt particulier pour les $\Uq$-PIMs est motivé par la représentation linéaire du groupe de difféotopie du tore épointé de \cite{LM94}. Pour le groupe quantique restreint $\Uq$, Feigin, Gainutdinov, Semikhanov et Tipunin ont montré dans \cite{FGST06b} que cette représentation induit une représentation de $\SL_2(\Z)$ sur le centre de $\Uq$ qui étend non trivialement la représentation du groupe de difféotopie du tore obtenue par la TQFT de \cite{RT91}. A l'instar de la TQFT de \cite{BHMV95}, les nouveaux idempotents de Jones-Wenzl devraient fournir une base canonique de classes d'écheveaux coloriés pour retrouver topologiquement cette représentation de $\SL_2(\Z)$ de \cite{FGST06b}. Or, les éléments du centre de $\Uq$ correspondent aux $\Uq$-modules simples et aux $\Uq$-PIMs de dimension finie (cf. \cite[§ 4]{FGST06b}). L'interprétation des $\Uq$-modules simples par les idempotents de Jones-Wenzl usuels étant déjà connue, il est donc naturel de chercher à interpréter les $\Uq$-PIMs de manière similaire. Ainsi, on obtient une analogie entre les éléments du centre de $\Uq$, les objets de $\Rep^{fd}_s$, et les classes d'écheveaux coloriés par nos idempotents (et nilpotents) de Jones-Wenzl. Avec celle-ci, une partie de l'action de $\SL_2(\Z)$ de \cite{FGST06b} s'interprète avec les actions de la vrille négative et du bouclage sur l'espace d'écheveaux du tore solide.

\subsection*{Structure et résumé des résultats}

On fixe $p \in \N^*$ et $q = e^{\frac{i \pi}{p}}$. Les chapitres I et II reprennent et détaillent des résultats connus sur le groupe quantique restreint $\Uq$. Dans le chapitre I, on commence par rappeler le système de générateurs et de relations de $\Uq$, ainsi que sa structure d'algèbre de Hopf. On donne ensuite une description de ses modules simples $\X^\pm(s)$, $1 \leq s \leq p$, et de ses PIMs $\PIM^\pm(s)$, $1 \leq s \leq p$, de dimension finie.  En étudiant leurs produits tensoriels, on obtient une description exhaustive de la sous-catégorie $\Rep^{fd}_s$ de $\Rep^{fd}$. Dans le chapitre II, on suit l'approche de \cite[§ 4]{FGST06a} pour décrire le centre $\Zf$ de $\Uq$ de trois façons différentes. On construit d'abord une base canonique $\{ e_s \; ; \; 0 \leq s \leq p \} \cup \{ w^\pm_s \; ; \; 1  \leq s \leq p-1 \}$ de $\Zf$ à partir de $\Uq$-modules cités précédemment, puis on la décrit en fonction des générateurs de $\Uq$, et on la retrouve enfin via les morphismes de Drinfeld \cite[Prop. 3.3]{Dri90} et de Radford \cite[Prop. 3]{Dri90}. Pour ces derniers, on aura besoin de rappels préliminaires sur les algèbres de Hopf tressées et enrubannées.

Le chapitre III donne les outils et la construction de nos nouveaux idempotents (et nilpotents) de Jones-Wenzl. Comme pour les idempotents de Jones-Wenzl usuels, après quelques rappels sur les espaces d'écheveaux, on commence par étudier la structure des algèbres de Temperley-Lieb génériques $\TL_n(A^2)$, $n \in \N^*$, définies pour un paramètre $A$ \emph{formel}. Pour tout $n \in \N^*$, l'algèbre $\TL_n(A^2)$ se décompose en somme directs d'idéaux indécomposables à gauche correspondant à des idempotents orthogonaux primitifs (POIs) $p_t$, indexés par l'ensemble $T_n^{\leq2}$ des tableaux standards de taille $n$ avec au plus deux lignes. Notre construction repose sur la nouvelle structure des algèbres de Temperley-Lieb $\TL_n(q)$, $n \in \N^*$, obtenues après évaluation de $A^2$ en $q$. La partie technique de ce chapitre consiste à déterminer les POIs de ces algèbres de Temperley-Lieb évaluées à partir des POIs des algèbres de Temperley-Lieb génériques. Pour cela, on s'approprie les outils mis en place dans \cite{GW93}. Pour tout $n \in \N^*$ et tout tableau standard $t \in T_n^{\leq 2}$ de forme $\lambda = [\lambda_1, \lambda_2]$, on note :
\begin{itemize}
	\item $\gamma(t)$ le graphe orienté du treillis de Young donné par :
	\[ \gamma(t):= \xymatrix @-2ex {
	\overset{\emptyset}{\bullet} \ar[r] &
	\overset{\lambda^{(1)}}{\bullet} \ar[r] &
		\overset{\lambda^{(2)}}{\bullet} \ar[r] &
		\cdots \ar[r] &
		\overset{\lambda^{(n)}}{\bullet} } \]
	où, pour tout $i \in \{1,...,n\}$, $\lambda^{(i)}$ est le diagramme de Young contenant les étiquettes $1,...,i$ de $t$ ;
	\item $\omega(\lambda) := \lambda_1-\lambda_2+1$.
\end{itemize}
On dit qu'un tableau standard de forme $\lambda$ est \emph{critique} si $\omega(\lambda)$ est divisible par $p$. Or, sur le treillis de Young, l'entier $\omega(\lambda)$ s'interprète comme le numéro de la colonne du sommet étiqueté par $\lambda$. Ainsi, l'ensemble des tableaux critiques forme des lignes, dites \emph{critiques}, sur le treillis de Young. Lorsque le graphe $\gamma(t)$ d'un tableau standard $t$ ne passe pas deux fois consécutivement sur la même ligne critique, on dit que $t$ est \emph{régulier}. Alors, pour tout $n \in \N^*$, les POIs des algèbres $\TL_n(q)$ se scindent en trois types :
\begin{itemize}
	\item[$(R1)$] les POIs $\bar{p}_{t}$ associés à des tableaux standards $t \in T_n^{\leq2}$ réguliers;
	\item[$(R2)$] les POIs $\bar{p}_{[t]}$ associés à des couples $(t,\bar{t}) \in T_n^{\leq2} \times T_n^{\leq2}$ de tableaux standards réguliers ;
	\item[$(NR)$] les POIs associés à des familles finies de tableaux standards non réguliers, dont les expressions en fonction des POIs $\bar{p}_t$, $t \in T_n^{\leq2}$, sont compliquées.
\end{itemize}
On met alors en évidence une correspondance (pas tout à fait bijective) entre ces POIs et les objets de $\Rep^{fd}_s$ grâce aux morphismes d'algèbres injectifs de \cite[Thm 2.4]{GW93} :
\[ \theta_n : \TL_n(q) \hookrightarrow \End_{\Uq} \left( \X^+(2)^{\otimes n} \right) \quad ; \quad n \in \N^* . \]
Pour les POIs de type $(R1)$ et $(R2)$, cette correspondance prend la forme suivante.

\begin{myThm}
Soient $n \in \N^*$ et $t \in T_n^{\leq2}$ un tableau standard régulier de forme $\lambda$. On note $N \in \N$ le nombre d'intersection(s) entre le graphe $\gamma(t)$ et l'ensemble des lignes critiques privé de la première.	
\begin{itemize}
	\item[$(R1a)$] Si $t$ ne possède pas de sous-tableau critique propre, alors il existe un unique facteur direct $\X$ de $\X^+(2)^{\otimes n}$, isomorphe à $\X^+(\omega(\lambda))$, tel que $\theta_n(\bar{p}_t)$ est un projecteur sur $\X$.
	\item[$(R1b)$] Si $t$ est critique, alors il existe un unique facteur $\X$ de $\X^+(2)^{\otimes n}$, isomorphe à $2^N \PIM^{(-)^{\frac{\omega(\lambda)}{p}-1}}(p) = 2^N \X^{(-)^{\frac{\omega(\lambda)}{p}-1}}(p)$, tel que $\theta_n(\bar{p}_t)$ est un projecteur sur $\X$.
	\item[$(R2)$] Si $t$ n'est pas critique et possède un sous-tableau critique maximal $r \in T_k^{\leq2}$ de forme $\mu$, alors il existe un unique facteur $\X$ de $\X^+(2)^{\otimes n}$, isomorphe à \\ $2^N \PIM^{(-)^{\frac{\omega(\mu)}{p}-1}} \left( p-|\omega(\lambda)-\omega(\mu)| \right)$, tel que $\theta_n(\bar{p}_{[t]})$ est un projecteur sur $\X$. \\ De plus, le morphisme $\theta_n(\bar{p}_{[t]} h_k \bar{p}_{[t]})$ envoie le sous-module de $\X$ isomorphe à \\ $2^N \X^{(-)^{\frac{\omega(\mu)}{p}-1}} \left( p-|\omega(\lambda)-\omega(\mu)| \right)$ sur le module quotient isomorphe à \\ $2^N \X^{(-)^{\frac{\omega(\mu)}{p}-1}} \left( p-|\omega(\lambda)-\omega(\mu)| \right)$.
\end{itemize}
\end{myThm}
\noindent
Parmi les POIs de type $(R1)$ et $(R2)$, en considérant les tableaux standards réguliers :
\[ t(n) := {\scriptstyle \begin{array}{|c|c|c|c|} 
		\hline 1 & 2 & \cdots & n \\
		\hline \end{array}} \quad ; \quad n \in \{1,...,p-1\}, \]
on retrouve les idempotents de Jones-Wenzl usuels $f_n$, $1 \leq n \leq p-1$. A savoir :
\[ f_n = \bar{p}_{t(n)} \quad ; \quad n \in \{1,...,p-1\}. \]
Notre approche consiste à considérer \emph{tous} les POIs des algèbres de Temperley-Lieb évaluées associés aux tableaux standards réguliers :
\[ t(n) = {\scriptstyle \begin{array}{|c|c|c|c|} 
		\hline 1 & 2 & \cdots & n \\
		\hline \end{array}} \quad ; \quad n \in \N^* . \]
On obtient ainsi de nouveaux idempotents :
\[\bar{f}_n = \begin{cases} 
	\bar{p}_{t(n)} & \text{si } t(n) \text{ est de type } (R1)\\ 		
	\bar{p}_{[t(n)]} & \text{si } t(n) \text{ est de type } (R2) \end{cases} \quad ; \quad n \in \N^* . \]
Conformément au théorème 1, ces POIs de type $(R2)$ s'accompagnent d'éléments nilpotents $\bar{f}_n'$, $n \geq p$ et $n \neq -1 \mod p$, et il vérifient les assertions suivantes.
\begin{itemize}
	\item[$(R1a)$] Si $n \leq p-1$, alors il existe un unique facteur direct $\X$ de $\X^+(2)^{\otimes n}$, isomorphe à $\X^+(n+1)$, tel que $\theta_n(\bar{f}_n)$ est un projecteur sur $\X$ ;
	\item[$(R1b)$] Si $n = lp-1$ avec $l \in \N^*$, alors il existe un unique facteur direct $\X$ de $\X^+(2)^{\otimes n}$, isomorphe à $2^{l-1} \PIM^{(-)^{l-1}}(p) = 2^{l-1} \X^{(-)^{l-1}}(p)$, tel que $\theta_n(\bar{f}_n)$ est un projecteur sur $\X$ ;
	\item[$(R2)$] Si $n = lp+r$ avec $l \in \N^*$ et $r \in \{1,...,p-2\}$, alors il existe un unique facteur direct $\X$ de $\X^+(2)^{\otimes n}$, isomorphe à $2^{l-1} \PIM^{(-)^{l-1}}((l+1)p-n-1)$, tel que $\theta_n(\bar{f}_n)$ est un projecteur sur $\X$. \\ De plus, le morphisme $\theta_n(\bar{f}_n')$ envoie le sous-module de $\X$ isomorphe à \\ $2^{l-1} \X^{(-)^{l-1}}((l+1)p-n-1)$ sur le module quotient isomorphe à \\ $2^{l-1} \X^{(-)^{l-1}}((l+1)p-n-1)$.
\end{itemize}
Par ailleurs, ils généralisent les propriétés des idempotents de Jones-Wenzl usuels. D'une part, ils étendent le système de récurrence établi dans \cite{Wen87} comme suit.

\begin{myThm}
\begin{enumerate}[(i)]
	\item Les nouveaux idempotents de Jones-Wenzl vérifient le système de récurrence :
\begin{align*}
&\bar{f}_1 = 1, \\
&\bar{f}_n = \bar{f}_{n-1} + \frac{[n-1]}{[n]} \bar{f}_{n-1} h_{n-1}\bar{f}_{n-1}, && 2 \leq n \leq p-1, \\
&\bar{f}_p = \bar{f}_{p-1}, \\
&\bar{f}_{p+1} = \bar{f}_p - \left( h_p \bar{f}_p' + \bar{f}_p' h_p \right)	- [2] \bar{f}_p' h_p \bar{f}_p', \\
&\bar{f}_n = \bar{f}_{n-1} + \frac{[n-1]}{[n]} \bar{f}_{n-1} h_{n-1} \bar{f}_{n-1} - \frac{2}{[n]^2} \bar{f}_{n-1}' h_{n-1} \bar{f}_{n-1}, &&	p+2 \leq n \leq 2p-1, \\
&\bar{f}_{2p} = \bar{f}_{2p-1}, \\
&\bar{f}_{2p+1} = \bar{f}_{2p} - \left( h_{2p} \bar{f}_{2p}' + \bar{f}_{2p}' h_{2p} \right) - [2] \bar{f}_{2p}' h_{2p} \bar{f}_{2p}', \\
&\bar{f}_n = \bar{f}_{n-1} + \frac{[n-1]}{[n]} \bar{f}_{n-1} h_{n-1} \bar{f}_{n-1} - \frac{2}{[n]^2} \bar{f}_{n-1}' h_{n-1} \bar{f}_{n-1}, && 2p+2 \leq n \leq 3p-2.
\end{align*}
	\item Les nouveaux nilpotents de Jones-Wenzl vérifient le système de récurrence :
\begin{align*}
&\bar{f}_p' = \bar{f}_p h_{p-1} \bar{f}_p, \\
&\bar{f}_{p+1}' = \bar{f}_p' - \bar{f}_p' h_p \bar{f}_p', \\
&\bar{f}_n' = \bar{f}_{n-1}' + \frac{[n-1]}{[n]} \bar{f}_{n-1} h_{n-1} \bar{f}_{n-1}', && p+2 \leq p \leq 2p-2, \\
&\bar{f}_{2p}' = \bar{f}_{2p} h_{2p-1} \bar{f}_{2p}, \\
&\bar{f}_{2p+1}' = \bar{f}_{2p}' - \bar{f}_{2p}' h_{2p} \bar{f}_{2p}', \\
&\bar{f}_n' = \bar{f}_{n-1}' + \frac{[n-1]}{[n]} \bar{f}_{n-1} h_{n-1} \bar{f}_{n-1}', && 2p+2 \leq p \leq 3p-2. \\
\end{align*}
\end{enumerate}
\end{myThm}
\noindent
D'autre part, ils fournissent une base de l'espace d'écheveaux du tore solide similaire à celle décrite dans \cite{Lic92}.

\begin{myThm}
Soient $n \in \N^*$ et $l \in \N$ le quotient de $n$ dans sa division euclidienne par $p$. On note $1$ la classe d'écheveau de l'entrelacs vide, $\alpha$ la classe d'écheveau associée à l'âme $\{0\} \times \mathbb{S}^1$ du tore, et $(U_s(x))_{s \in \mathbb{N}}$ les polynômes de Chebychev de seconde espèce. Alors, on a :
\begin{align*}
	\insertion{\bar{f}_n}{}{Tore} &= \begin{cases} 
		U_{n+1}(\alpha) & \text{ si } n \leq p-1 \text{ ou } n = -1 \mod p, \\
		U_{n+1}(\alpha)+U_{2lp-n-1}(\alpha) & \text{ sinon, }
		\end{cases} \\
	\frac{1}{[lp]} \insertion{\bar{f}_n'}{}{Tore} &= (-1)^l U_{2lp-n-1}(\alpha) \quad \text{ si } n \geq p \text{ et } n \neq -1 \mod p. 
\end{align*}
\end{myThm}

\begin{myCor}
Soient $n \in \N^*$ et $l \in \N$ le quotient de $n$ dans sa division euclidienne par $p$. On note $([s])_{s \in \mathbb{N}}$ les coefficients $q$-entiers. Alors, on a :
\begin{align*}
	\insertion{\bar{f}_n}{}{Empty} &= \begin{cases} 
		(-1)^n [n+1] & \text{ si } n \leq p-1 \text{ ou } n = -1 \mod p, \\
		0 & \text{ sinon, }
		\end{cases} \\
	\frac{1}{[lp]} \; \insertion{\bar{f}_n'}{}{Empty} &= (-1)^{l+n+1} [n+1] \quad \text{ si } n \geq p \text{ et } n \neq -1 \mod p. 
\end{align*}
\end{myCor}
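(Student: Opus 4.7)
Le plan est de d\'eduire le corollaire directement du th\'eor\`eme pr\'ec\'edent, via l'\'evaluation naturelle du module d'\'echeveaux du tore solide dans le module d'\'echeveaux de $\mathbb{S}^3 \cong \mathbb{C}$. Celle-ci correspond \`a plonger le tore solide trivialement dans $\mathbb{S}^3$ : l'\^ame $\alpha$ y devient un n\oe ud trivial, dont la valeur par le crochet de Kauffman vaut $-[2]$. Il suffira donc d'\'evaluer les polyn\^omes $U_{n+1}(\alpha)$ et $U_{2lp-n-1}(\alpha)$ apparaissant dans les formules du th\'eor\`eme pr\'ec\'edent en $\alpha = -[2]$.

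\textbf{Calculs trigonom\'etriques.} Je commencerai par \'etablir la formule $U_s(-[2]) = (-1)^{s-1}[s]$ \`a partir de l'expression close $U_s(2\cos\theta) = \sin((s+1)\theta)/\sin\theta$ appliqu\'ee \`a $\theta = \pi - \pi/p$, qui satisfait $2\cos\theta = -2\cos(\pi/p) = -[2]$ pour $q = e^{i\pi/p}$, et des identit\'es $\sin((s+1)(\pi-\pi/p)) = (-1)^s \sin((s+1)\pi/p)$ et $\sin(\pi-\pi/p) = \sin(\pi/p)$. Il en d\'ecoulera $U_{n+1}(-[2]) = (-1)^n[n+1]$ et $U_{2lp-n-1}(-[2]) = (-1)^n[2lp-n-1]$ (puisque $(-1)^{2lp-n-2} = (-1)^n$). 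Puis j'\'etablirai l'identit\'e $[2lp-k] = -[k]$ pour $q = e^{i\pi/p}$, qui d\'ecoule imm\'ediatement de $[k] = \sin(k\pi/p)/\sin(\pi/p)$ et de la p\'eriodicit\'e et imparit\'e du sinus ; en particulier $[2lp-n-1] = -[n+1]$.

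\textbf{Substitution.} Il restera \`a substituer ces valeurs dans les formules du th\'eor\`eme pr\'ec\'edent. Dans le cas $n \leq p-1$ ou $n \equiv -1 \bmod p$, l'\'evaluation de $U_{n+1}(\alpha)$ donnera directement $(-1)^n[n+1]$. Dans le cas $n \geq p$ et $n \not\equiv -1 \bmod p$, la somme $U_{n+1}(-[2]) + U_{2lp-n-1}(-[2]) = (-1)^n[n+1] + (-1)^n(-[n+1])$ s'annulera, conform\'ement \`a la valeur nulle attendue. Pour le nilpotent, l'\'evaluation de $(-1)^l U_{2lp-n-1}(\alpha)$ fournira $(-1)^l(-1)^n(-[n+1]) = (-1)^{l+n+1}[n+1]$.

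\textbf{Difficult\'e principale.} Aucune difficult\'e s\'erieuse n'est attendue : la preuve se ram\`ene aux deux v\'erifications trigonom\'etriques ci-dessus, standards pour les $q$-entiers aux racines paires de l'unit\'e. La seule subtilit\'e conceptuelle est l'annulation des deux termes $(-1)^n[n+1]$ et $(-1)^n[2lp-n-1]$, qui se compensent exactement au param\`etre $q = e^{i\pi/p}$, refl\'etant alg\'ebriquement le fait que $\bar{f}_n$ se projette (dans ce cas) sur un PIM de trace quantique nulle.
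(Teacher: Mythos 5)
Votre preuve est correcte et suit essentiellement la m\^eme d\'emarche que le texte : la trace s'obtient en \'evaluant en $\alpha=-[2]$ les polyn\^omes de Chebychev fournis par le th\'eor\`eme pr\'ec\'edent, puis on conclut avec les identit\'es $[2lp-k]=-[k]$ et $[lp-1]=(-1)^{l-1}$ aux racines de l'unit\'e. Seule r\'eserve, purement notationnelle : avec la normalisation de ce texte ($U_1=1$, $U_2=x$, donc $U_s(2\cos\theta)=\sin(s\theta)/\sin\theta$ et non $\sin((s+1)\theta)/\sin\theta$), la formule close que vous \'ecrivez est d\'ecal\'ee d'un indice, mais l'identit\'e interm\'ediaire $U_s(-[2])=(-1)^{s-1}[s]$ que vous utilisez effectivement est bien la bonne et tous les calculs en aval sont exacts.
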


Le chapitre IV propose un lien entre les classes d'écheveaux coloriés par les nouveaux idempotents (et nilpotents) de Jones-Wenzl, et la représentation de $\SL_2(\Z)$ sur $\Zf$ de \cite{FGST06b}. Dans un premier temps, on détaille cette représentation de $\SL_2(\Z)$ sur des éléments de la base canonique $\{ e_s \; ; \; 0 \leq s \leq p \} \cup \{ w^\pm_s \; ; \; 1  \leq s \leq p-1 \}$ de $\Zf$ grâce aux résultats du chapitre II. On calcule ensuite les actions de la vrille et du bouclage sur l'espace d'écheveaux du tore solide colorié par nos nouveaux idempotents et nilpotents de Jones-Wenzl. Elles sont données par les :

\begin{myThm}
Soient $n \in \N^*$ et $l \in \N$ le quotient de $n$ dans sa division euclidienne par $p$. Alors, on a :
\begin{align*}
\begin{tikzbox}
		\draw (0.5,0) rectangle (2.5,0.6);
		\node at (1.5,0.25) {$\bar{f}_n$} ;
		\draw (1,0) -- (1,-0.5) ;
		\braid at (1,-0.5) s_1^{-1} ;
		\draw (2,-0.5) to[bend left=150] (2,-2) ;
		\draw (1,-2) -- (1,-2.5) ;
		\draw (0.5,-2.5) rectangle (2.5,-3.1);
		\node at (1.5,-2.85) {$\bar{f}_n$} ;
	\end{tikzbox} 
	&= \begin{cases}
		(-1)^n q^{\frac{n(n+2)}{2}} \; \rect{\bar{f}_n}{} \quad \text{ si } n \leq p-1 \text{ ou } n = -1 \mod p, \\
		(-1)^n q^{\frac{n(n+2)}{2}} \; \rect{\bar{f}_n}{}
		+ (-1)^n q^{\frac{n(n+2)}{2}} \left( q - q^{-1} \right) (n-lp+1) \; \rect{\bar{f}_n'}{} \quad \text{ sinon, }
		\end{cases} \\
\begin{tikzbox}
		\draw (0.5,0) rectangle (2.5,0.6);
		\node at (1.5,0.25) {$\bar{f}_n'$} ;
		\draw (1,0) -- (1,-0.5) ;
		\braid at (1,-0.5) s_1^{-1} ;
		\draw (2,-0.5) to[bend left=150] (2,-2) ;
		\draw (1,-2) -- (1,-2.5) ;
		\draw (0.5,-2.5) rectangle (2.5,-3.1);
		\node at (1.5,-2.85) {$\bar{f}_n$} ;
	\end{tikzbox}
	&= \begin{tikzbox}
		\draw (0.5,0) rectangle (2.5,0.6);
		\node at (1.5,0.25) {$\bar{f}_n$} ;
		\draw (1,0) -- (1,-0.5) ;
		\braid at (1,-0.5) s_1^{-1} ;
		\draw (2,-0.5) to[bend left=150] (2,-2) ;
		\draw (1,-2) -- (1,-2.5) ;
		\draw (0.5,-2.5) rectangle (2.5,-3.1);
		\node at (1.5,-2.85) {$\bar{f}_n'$} ;
	\end{tikzbox}
	=  (-1)^n q^{\frac{n(n+2)}{2}} \; \rect{\bar{f}_n'}{} \quad \text{ si } n \geq p \text{ et } n \neq -1 \mod p.
\end{align*}
\end{myThm}

\begin{myThm}
Soient $n \in \N^*$, $l \in \N$ le quotient de $n$ dans sa division euclidienne par $p$, et $i \in \N^*$. Alors, on a :
\begin{align*}
\begin{tikzbox}
		\draw (0.5,0) rectangle (2.5,0.6);
		\node at (1.5,0.25) {$\bar{f}_n$} ;
		\draw (1,0) -- (1,-0.5) ;
		\braid[height=0.5cm] at (1,-0.5) s_1 s_1 ;
		\draw (2,-0.5) to[bend left=150] (2,-2) node[below right] {$\scriptstyle i$};
		\draw (1,-2) -- (1,-2.5) ;
		\draw (0.5,-2.5) rectangle (2.5,-3.1);
		\node at (1.5,-2.85) {$\bar{f}_n$} ;
	\end{tikzbox}
	&= \begin{cases}
		(-1)^i \left( q^{(n+1)} + q^{-(n+1)} \right)^i \; \rect{\bar{f}_n}{} \quad \text{ si } n \leq p-1 \text{ ou } n = -1 \mod p, \\
		\begin{multlined}[t]
		(-1)^i \left( q^{(n+1)} + q^{-(n+1)} \right)^i \; \rect{\bar{f}_n}{}  \\
		+ (-1)^{i-1} \left( q^{(n+1)} q^{-(n+1)} \right)^{i-1} i \left( q - q^{-1} \right)^2 [n+1] \; \rect{\bar{f}_n'}{} \quad \text{ sinon, }
		\end{multlined} 
		\end{cases} \\
\begin{tikzbox}
		\draw (0.5,0) rectangle (2.5,0.6);
		\node at (1.5,0.25) {$\bar{f}_n'$} ;
		\draw (1,0) -- (1,-0.5) ;
		\braid[height=0.5cm] at (1,-0.5) s_1 s_1 ;
		\draw (2,-0.5) to[bend left=150] (2,-2) node[below right] {$\scriptstyle i$};
		\draw (1,-2) -- (1,-2.5) ;
		\draw (0.5,-2.5) rectangle (2.5,-3.1);
		\node at (1.5,-2.85) {$\bar{f}_n$} ;
	\end{tikzbox}
	&= \begin{tikzbox}
		\draw (0.5,0) rectangle (2.5,0.6);
		\node at (1.5,0.25) {$\bar{f}_n$} ;
		\draw (1,0) -- (1,-0.5) ;
		\braid[height=0.5cm] at (1,-0.5) s_1 s_1 ;
		\draw (2,-0.5) to[bend left=150] (2,-2) node[below right] {$\scriptstyle i$};
		\draw (1,-2) -- (1,-2.5) ;
		\draw (0.5,-2.5) rectangle (2.5,-3.1);
		\node at (1.5,-2.85) {$\bar{f}_n'$} ;
	\end{tikzbox}
	= \begin{multlined}[t]
		(-1)^i \left( q^{(n+1)} + q^{-(n+1)} \right)^i \; \rect{\bar{f}_n'}{} \\
		\quad \text{ si } n \geq p \text{ et } n \neq -1 \mod p.
		\end{multlined}
\end{align*}
\end{myThm}
\noindent
Avec ces calculs d'écheveaux, on construit enfin les prémisses d'un analogue topologique de la représentation de $\SL_2(\Z)$ de \cite{FGST06b}. En effet, on a les correspondances suivantes entre les classes d'écheveaux coloriés, les objets de $\Rep^{fd}_s$, et les éléments du centre $\Zf$.
\[ \begin{array}{ccccc} 
	\text{Classes d'écheveaux} && \Uq-\text{représentations} && \text{Eléments du centre} \\
	\rect{\bar{f}_{p-1}}{} & \leftrightarrow & \X^+(p) & \leftrightarrow & e_p, \\
	\rect{\bar{f}_{2p-s-1}}{} + \rect{\bar{f}_{2p+s-1}}{} & \leftrightarrow & \PIM^+(s) \oplus (2) \PIM^-(p-s) & \leftrightarrow & e_s, 1 \leq s \leq p-1, \\
	\rect{\bar{f}_{2p-s-1}'}{} & \leftrightarrow & \X^+(s) & \leftrightarrow & \frac{w^+_s}{[s]}, 1 \leq s \leq p-1, \\
	\rect{\bar{f}_{2p+s-1}'}{} & \leftrightarrow & (2) \X^-(p-s) & \leftrightarrow & - \frac{w^-_s}{[s]}, 1 \leq s \leq p-1, \\
	\rect{\bar{f}_{2p-1}}{} & \leftrightarrow & (2) \X^-(p) & \leftrightarrow & e_0
\end{array} \]
Sous ces identifications, une partie de l'action de $\SL_2(\Z)$ de \cite{FGST06b} s'interprète alors avec les actions de la vrille négative et du bouclage.

\subsection*{Questions ouvertes et perspectives}

Pour la représentation de $\SL_2(\Z)$ obtenue par la TQFT de \cite{RT91}, l'interprétation topologique est entièrement donnée par les actions de la vrille négative et du bouclage via les idempotents de Jones-Wenzl usuels. C'est pourquoi, après interprétation totale de l'action de $\SL_2(\Z)$ de \cite{FGST06b}, on espère définir une nouvelle représentation de $\SL_2(\Z)$ sur l'espace d'écheveaux du tore solide qui étende celle de \cite{RT91}.

On espère également que ces nouveaux idempotents (et nilpotents) de Jones-Wenzl permettent de construire des invariants de 3-variété à la manière de \cite{Lic92}. Cette perspective soulève les problèmes ouverts suivants.

\begin{Pb}
Définir une trace modifiée, analogue à celle de \cite{GPMT09}, sur l'espace d'écheveaux du tore solide qui ne s'annule pas sur les nouveaux idempotents (et nilpotents) de Jones-Wenzl.
\end{Pb}

\begin{Pb}
Définir une couleur de Kirby à partir des nouveaux idempotents (et nilpotents) de Jones-Wenzl à la manière de \cite{Lic91}.
\end{Pb}

\begin{Pb}
Etudier les coupons :
\[ \begin{tikzbox}
	\draw (-0.75,0.5) -- (-0.75,0) ;
	\draw (0.75,0.5) -- (0.75,0) ;
	\draw[thick] (-1.25,0) node[above] {$\scriptstyle a$} -- (-0.25,0) ;
	\draw[thick] (0.25,0) -- (1.25,0) node[above] {$\scriptstyle b$} ;
	\draw (-0.5,0) to[bend right=90] (0.5,0) ; 
	\draw (-1,0) to[out=-90, in=90] (-0.25,-1.5) ;
	\draw (1,0) to[out=-90, in=90] (0.25,-1.5) ;
	\draw[thick] (-0.5,-1.5) -- (0.5,-1.5) node[below] {$\scriptstyle j$} ;
	\draw (0,-1.5)  -- (0,-2) ;
\end{tikzbox} \; ; \quad a,b,j \in \N^*, \]
correspondant aux opérateurs de Clebsch-Gordan dans $\Rep^{fd}_s$, et définir des systèmes de couleurs admissibles pour étendre le coloriage par les nouveaux idempotents (et nilpotents) de Jones-Wenzl du tore aux corps à anses à la manière de \cite{Lic93}.
\end{Pb}

\begin{Pb}
Avec les nouveaux idempotents (et nilpotents) de Jones-Wenzl, est-il possible de retrouver les invariants construits dans \cite{CGPM14} pour la classe cohomologique nulle, à la manière de \cite{Lic92} ? 
\end{Pb}

\setcounter{Pb}{0}
\chapter{Le groupe quantique restreint et ses représentations}

Introduit au début des années 1980 par Kulish et Reshetikhin, le groupe quantique $U_h sl(2)$ de l'algèbre de Lie $sl_2$, associé à un paramètre $h$ \emph{formel}, constitue un exemple phare d'algèbre de Hopf tressée et enrubannée (cf. par exemple \cite[§ VI-IX, § XIII-XIV]{Kas95}). De ce fait, sa catégorie des représentations de dimension finie est naturellement munie d'un produit tensoriel remarquable et d'une bidualité. Conformément à l'article \cite{RT90}, cette structure permet de construire des invariants de nœud de la sphère de dimension 3, qui généralisent le célèbre polynôme de Jones. 

Il existe également une forme intégrale $U_q sl(2)$ du groupe quantique $U_h sl(2)$, où $q$ est un paramètre évaluable aux racines de l'unité. Ainsi, lorsque $q$ s'évalue en une racine de l'unité, on obtient un nouveau groupe quantique $U_q sl(2)$ dont la structure n'est plus la même : c'est une algèbre de Hopf non semi-simple, à priori ni tressée ni enrubannée (cf. par exemple \cite[§ VI]{Kas95} et \cite[Cor. 3.7.4]{KS11}). Aussi, sa catégorie des représentations de dimension finie est toujours naturellement munie d'un produit tensoriel et d'une dualité, mais s'enrichit de représentations \emph{non semi-simples}. Pour chaque racine $q$ de l'unité, on considère un groupe quantique \emph{quotient} $\Uq$ de dimension finie de $U_q sl(2)$, pour lequel il y a un nombre fini de modules simples et indécomposables projectifs de dimension finie. Sa catégorie $\Rep^{fd}$ des représentations de dimension finie admet une sous-catégorie intéressante $\Rep^{fd}_s$ engendrée sous les deux lois $\oplus$ et $\otimes$ par les modules \emph{simples}. Une partie de la structure de $\Rep^{fd}_s$ est formalisée dans \cite{RT91} et permet cette fois de construire des invariants de 3-variété via le procédé de chirurgie et l'étude des mouvements de Kirby (cf. par exemple \cite[§16, § 19]{PS97}). Plus précisément, pour chaque racine $q$ de l'unité, on associe à toute surface fermée orientée la \emph{partie semi-simple} d'un produit de $\Uq$-modules simples ; ce qui revient à le quotienter par les $\Uq$-modules indécomposables projectifs dont les traces quantiques sont nulles (cf. par exemple \cite[§ XIV]{Kas95}). 

Récemment, Costantino, Geer, et Paturaud-Mirand ont généralisé la construction de \cite{RT91} dans \cite{CGPM14} en tenant compte, entre autres, de \emph{toute} la structure de $\Rep^{fd}_s$ (voire de $\Rep^{fd}$) ; les modules indécomposables projectifs sont pris en compte grâce à des \emph{traces modifiées} introduites dans \cite{GPMT09}. Dans ce travail, on s'intéresse à tous les objets de la catégorie $\Rep^{fd}_s$. Leur étude exige  une différentiation de cas : le cas des racines impaires de l'unité, et celui des racines paires de l'unité. On se concentre sur le second cas qui met en œuvre les groupes quantiques \emph{restreints}. Toutefois, on pourrait reproduire une grande partie de ce travail (chapitres suivants compris) avec des racines impaires de l'unité, lesquelles mettent en œuvre les \emph{petits} groupes quantiques. Dans ce premier chapitre, on fixe une racine primitive $q$ \emph{paire} de l'unité et on commence par des rappels succincts sur le groupe quantique restreint $\Uq$ associé à cette racine. On détaille ensuite ses représentations simples et indécomposables projectives de dimension finie, ainsi que leurs produits tensoriels. On obtient ainsi une description exhaustive de la sous-catégorie $\Rep^{fd}_s$ de $\Uq$. Avec des approches différentes, on pourra trouver cette description dans \cite{Sut94}, \cite{Ari10a} ou \cite{FGST06a}.

\minitoc

\section{Rappels sur le groupe quantique restreint}
\label{section: Uq}

On se place sur le corps $\C$. On fixe un entier $p \in \N^*$ \index{p1@ $p$} et on pose $q = e^{\frac{i \pi}{p}}$ \index{q1@ $q$} une racine $2p$-ième de l'unité. On définit le groupe quantique restreint, noté $\Uq$, et on donne quelques rappels. On pourra trouver ces éléments dans le chapitre VI de \cite{Kas95}.

\begin{Def}[{\cite[Def. VI.1.1, Def. VI.5.6, Prop. VII.1.1]{Kas95}}]
Le \emph{groupe quantique restreint} $\Uq$ \index{Uq@ $\Uq$} est la $\C$-algèbre engendrée par $E, F, K, K^{-1}$ sous les relations :
\begin{equation} 
\label{Eqn: comm1}
\begin{gathered}
K E K^{-1} =q^2 E,
	\qquad K F K^{-1} =q^{-2} F ,
	\qquad [E,F] = \frac{K-K^{-1}}{q-q^{-1}},  \\ 
E^p = 0,
	\qquad F^p = 0,
	\qquad K^{2p}=1.
\end{gathered}
\end{equation}
Elle est munie d'une structure d'algèbre de Hopf, dont le coproduit $\Delta$, la co-unité $\varepsilon$, et l'antipode $S$ sont donnés par :
\begin{equation} 
\label{Eqn: coprod1}
\begin{gathered} 
\Delta(E) = 1 \otimes E + E \otimes K,
	\; \Delta(F) = K^{-1} \otimes F + F \otimes 1,
	\; \Delta(K) = K \otimes K, \\	
\varepsilon(E) = 0, 
	\quad \varepsilon(F) = 0, 
	\quad \varepsilon(K) = 1,  \\
S(E) = -EK^{-1},
	\quad S(F) = -KF,
	\quad S(K)=K^{-1}.
\end{gathered}
\end{equation}
\end{Def}

\begin{Thm}[{\cite[Prop. VI.5.8]{Kas95}, \cite[§ 3]{Sut94}}] \label{Thm: base PBW}
Les ensembles suivants sont des $\C$-bases de $\Uq$ :
\begin{enumerate}[(a)]
	\item $\{ F^m K^j E^n \; ; \; 0 \leq m,n \leq p-1, \; 0 \leq j \leq 2p-1\}$ ;
	\item $\{ E^n K^j F^m \; ; \; 0 \leq m,n \leq p-1, \; 0 \leq j \leq 2p-1\}$.
\end{enumerate}
\end{Thm}

On utilise les notations standards pour les coefficients $q$-entiers, $q$-factoriels ainsi que $q$-binomiaux. Pour tout $n \in \N$ : \index{n1@ $[n]$} \index{n2@ $[n]"!"$} \index{n3@ $"{" n \brack m "}"$}
\begin{equation} 
\label{Eqn: qcoeff1}
[n] := \frac{q^n-q^{-n}}{q-q^{-1}},
	\qquad [n]! := [n][n-1]...[1],
	\qquad { n \brack m } := \frac{[n]!}{[m]![n-m]!},
\end{equation}
avec la convention $[0]!=1$. Les coefficients $q$-entiers apparaissent dans tous les calculs de commutateurs. Par exemple, pour tous $m, n \in \N$ :
\begin{equation} 
\label{Eqn: comm2}
\begin{aligned} 
\left[ E,F^m \right] &= [m] F^{m-1} \frac{q^{-(m-1)}K - q^{m-1}K^{-1}}{q-q^{-1}} \\
	&= [m] \frac{q^{m-1}K - q^{-(m-1)}K^{-1}}{q-q^{-1}} F^{m-1},  \\
\left[ E^n,F \right] &= [n] E^{n-1} \frac{q^{n-1}K - q^{-(n-1)}K^{-1}}{q-q^{-1}} \\
	&= [n] \frac{q^{-(n-1)}K - q^{n-1}K^{-1}}{q-q^{-1}} E^{n-1}.
\end{aligned}
\end{equation}
Les coefficients $q$-factoriels et $q$-binomiaux permettent de généraliser la formule du binôme pour des éléments $x, y \in \Uq$ tels que $yx = q^2 xy$. Par exemple, pour tous $m, n \in \N$ :
\begin{equation} 
\label{Eqn: coprod2}
\begin{aligned}
\Delta(F^m) =& \Delta(F)^m
= \sum_{r=0}^m { m \brack k } q^{r(m-r)} F^r K^{r-m} \otimes F^{m-r},  \\
\Delta(E^n) =& \Delta(E)^n
= \sum_{s=0}^n {n \brack s } q^{s(n-s)} E^{n-s} \otimes E^s K^{n-s}.
\end{aligned}
\end{equation}
Enfin, comme $q$ est une racine $2p$-ième de l'unité, on a pour tout $n \in \N$ :
\begin{equation} 
\label{Eqn: qcoeff2}
[p+n] = -[n] = [-n],
	\quad [n]!=[p-n][p-n+1]...[p-1],
	\quad {p-1 \brack n} = 1.
\end{equation}

\section{Modules simples et PIMs}
\label{section: reps}

On note $\Rep^{fd}$ \index{Rep@ $\Rep^{fd}$} la catégorie des $\Uq$-représentations à gauche de dimension finie, c'est-à-dire dont :
\begin{itemize}
	\item les objets sont les $\Uq$-modules à gauche de dimension finie,
	\item les morphismes sont les morphismes $\Uq$-linéaires de $\Uq$-modules à gauche de dimension finie.
\end{itemize} 
Grâce au coproduit $\Delta$ de $\Uq$, la catégorie $\Rep$ est naturellement munie d'un produit. En effet, le produit tensoriel $\X \otimes \Y$ de deux $\Uq$-modules à gauche $\X, \Y$ possède encore une structure de $\Uq$-module à gauche donnée par :
\[ \forall a \in \Uq \quad \forall x \in \X \quad \forall y \in \Y \quad
	a (x \otimes y) = \Delta(a) (x \otimes y) . \]
On s'intéresse à la sous-catégorie $\Rep^{fd}_s$\index{Rep@ $\Rep^{fd}_s$} de $\Rep^{fd}$ engendrée sous les deux lois $\oplus$ et $\otimes$ par les $\Uq$-modules à gauche \emph{simples} de dimension finie. Dans la suite, sauf mention contraire, tous les modules considérés seront :
\begin{equation}
\begin{aligned}
& - \text{ des modules à gauches, } \\
& - \text{ des modules de dimension finie. }
\end{aligned}
\end{equation}

On rappelle que la $\C$-algèbre $\Uq$ est de dimension finie (cf. le théorème \ref{Thm: base PBW}). Dans ce cas, d'après le théorème de Krull-Schmidt (cf. par exemple \cite[Thm 14.5]{CR62}), tout $\Uq$-module $\X$ se décompose de manière unique, à isomorphisme et ordre des facteurs près, en somme directe de $\Uq$-modules indécomposables appelés \emph{facteurs directs} de $\X$. Ainsi, la catégorie $\Rep^{fd}_s$ est engendrée sous $\oplus$ par les facteurs directs de l'ensemble des produits tensoriels possibles de $\Rep^{fd}_s$. Il s'agit donc de décrire (la classe d'isomorphisme de) ces facteurs directs.

En outre, les facteurs directs de la représentation régulière $\Reg$ \index{Reg@ $\Reg$} de $\Uq$, appelés \emph{modules indécomposables principaux} (PIMs) \index{PIMs1@ PIMs} de $\Uq$, jouent un rôle particulier. D'une part, un raffinement du théorème de Krull-Schmidt affirme que tout $\Uq$-module \emph{projectif} se décompose de manière unique, à isomorphisme et ordre des facteurs près, en somme directe de $\Uq$-PIMs (cf. par exemple \cite[Thm 56.6]{CR62}). De ce fait, les classes d'isomorphisme de $\Uq$-modules indécomposables projectifs coïncident avec celles de $\Uq$-PIMs. D'autre part, on verra dans la section \ref{section: reps prod} que les facteurs directs des $\Uq$-modules de $\Rep^{fd}_s$ sont des modules simples ou des PIMs, lesquels s'obtiennent par extension non triviale \emph{maximale} de modules simples. Par conséquent, on commence par décrire les $\Uq$-modules simples et leurs extensions non triviales.

\begin{Rem}
\label{Rem: droite}
Parlons des $\Uq$-modules \emph{à droite} de dimension finie. On note ${\Rep'}^{fd}$ la catégorie qu'ils définissent. Pour les mêmes raisons que précédemment, elle est naturellement munie d'un produit et on s'intéresse à la sous-catégorie ${\Rep'}^{fd}_s$ engendrée sous $\oplus$ et $\otimes$ par les $\Uq$-modules \emph{simples} à droite de dimension finie. Alors, la catégorie ${\Rep'}^{fd}_s$ est la version duale de $\Rep^{fd}_s$ car $\Uq$ est une algèbre de \emph{Frobenius} (cf. par exemple \cite[Thm 2.1.3]{Mon93}). En effet, pour tout $\Uq$-module à gauche $\X$, le dual $\X^* := \Hom(\X,\C)$ est muni d'une structure de $\Uq$-module à droite donnée par :
\[ \forall a \in \Uq \quad \forall \alpha \in \X^* \qquad \alpha \cdot a = \alpha( a \cdot ? ) \]
où le symbol $?$ désigne la place de la variable. Dans le cas des algèbres de Frobenius, le dual de tout module à gauche simple est un module simple à droite (cf. par exemple \cite[Thm 58.6]{CR62}), et le dual de tout PIM à gauche est un PIM à droite (cf. par exemple \cite[Thm 60.6, Thm 62.11]{CR62}). En outre, les structures de modules simples à droite (resp. des PIMs à droite) sont analogues aux structures de modules simples à gauche (resp. de PIMs à gauche, cf. par exemple \cite[§ 61]{CR62}). C'est pourquoi on se contente d'étudier les $\Uq$-modules à gauche de dimension finie.
\end{Rem}

\subsection{Modules simples} 

A isomorphisme près, il y a $2p$ modules simples $\X^\alpha(s)$, indexés par $\alpha \in \{+,-\}$ et $s \in \{1,...,p\}$ (cf. par exemple \cite[§ VI.3, § VI.5]{Kas95}).

\begin{Def}[{\cite[Def. VI.3.2]{Kas95}}]
Soit $\X$ un $\Uq$-module. Soit $x \in \X$.
\begin{enumerate}[(i)]
	\item On dit que $x$ est un vecteur \emph{de poids $\lambda$} s'il est vecteur propre sous l'action de $K$, de valeur propre $\lambda$.
	\item On dit que $x$ est un vecteur \emph{de plus haut poids} (resp. \emph{de plus bas poids}) s'il est vecteur propre sous l'action de $K$ et nul sous l'action de $E$ (resp. de $F$).
\end{enumerate}
\end{Def}

\begin{Rem} 
\label{Rem: php/pbp}
Illustrons ces notions sur la représentation régulière $\Reg$. Pour tout $h \in \N$, il existe un vecteur de poids $q^h$ dans $\Reg$ :
\[ x_h:= \sum_{j=0}^{2p-1} q^{-hj} K^j . \]
On obtient ainsi $2p-1$ vecteurs de poids différents tels que, pour tout $h \in \N$ :
\[ K x_h = q^h x_h = x_h K, \quad E x_h = x_{h+2} E, \quad F x_h = x_{h-2} F \] 
(cf. les équations \eqref{Eqn: comm1}). Sous $\Uq$, ils engendrent tous les vecteurs de poids de $\Reg$. En effet, en effectuant un changement de variables sur les $\C$-bases de $\Uq$ données dans le théorème \ref{Thm: base PBW}, les ensembles :
\begin{enumerate}[(a)]
	\item $\{ F^m x_h E^n \; ; \; 0 \leq m,n \leq p-1, \; 0 \leq h \leq 2p-1\}$,
	\item $\{ E^n x_h F^m \; ; \; 0 \leq m,n \leq p-1, \; 0 \leq h \leq 2p-1\}$,
\end{enumerate}
sont aussi des $\C$-bases de $\Uq$, formées de vecteurs de poids de $\Reg$. 

On en déduit que :
\begin{enumerate}[(a)]
	\item les vecteurs $F^{p-1} x_h E^n$, $0 \leq n \leq p-1$ et $0 \leq h \leq 2p-1$, sont les vecteurs de plus \emph{bas} poids de $\Reg$,
	\item les vecteurs $E^{p-1} x_h F^m$, $0 \leq m \leq p-1$ et $0 \leq h \leq 2p-1$, sont les vecteurs de plus \emph{haut} poids de $\Reg$.
\end{enumerate}
\end{Rem}

\begin{Prop} 
\label{Prop: simples}
Soient $\alpha \in \{-,+\}$ et $s \in \{1,...,p\}$. Le module simple $\X^\alpha(s)$ \index{X@ $\X^\alpha(s)$} est engendré sous $\Uq$ par un vecteur de plus haut poids $x_0^\alpha(s)$ (resp. de plus bas poids $x_{s-1}^\alpha(s)$), et il admet une $\C$-base $\{ x_n^\alpha(s) \; ; \; 0 \leq n \leq s-1 \}$ telle que :
\begin{align*}
& K x_n^\alpha(s) = \alpha q^{s-1-2n} x_n^\alpha(s), \quad 0 \leq n \leq s-1, \\
& E x_0^\alpha(s) = 0, 
	\qquad E x_n^\alpha(s) = \alpha [n] [s-n] x_{n-1}^\alpha(s), \quad 1 \leq n \leq s-1, \\
& F x_n^\alpha(s) = x_{n+1}^\alpha(s), \quad 0 \leq n \leq s-2,
	\qquad F x_{s-1}^\alpha(s) = 0.
\end{align*}
On l'appelle la \emph{base canonique de $\X^\alpha(s)$}.
\end{Prop}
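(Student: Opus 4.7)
The plan is to build $\X^\alpha(s)$ by brute force: I start from an abstract $\C$-vector space $V = \bigoplus_{n=0}^{s-1} \C\, x_n^\alpha(s)$, define the operators $E$, $F$, $K$ on this basis by the formulas in the statement (with the conventions $E x_0^\alpha(s) = 0$ and $F x_{s-1}^\alpha(s) = 0$), and then verify each defining relation \eqref{Eqn: comm1} so that $V$ inherits a $\Uq$-module structure that I may legitimately call $\X^\alpha(s)$.

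The verifications reduce to routine substitutions on basis vectors. The relations $KEK^{-1} = q^2 E$ and $KFK^{-1} = q^{-2} F$ follow by direct computation on each $x_n^\alpha(s)$; the torsion relations $K^{2p} = 1$, $E^p = 0$ and $F^p = 0$ follow from $\alpha^2 = 1$ together with the bound $s \leq p$ (the chain of basis vectors has length $s \leq p$, so any $p$-fold iterate of $E$ or $F$ has already left the basis and hits $0$). The only slightly delicate step is the commutator $[E,F] = (K - K^{-1})/(q - q^{-1})$: on $x_n^\alpha(s)$ one obtains $(EF - FE)\, x_n^\alpha(s) = \alpha\bigl([n+1][s-n-1] - [n][s-n]\bigr) x_n^\alpha(s)$, and this matches $\alpha [s-1-2n]\, x_n^\alpha(s)$ via the $q$-number identity $[n+1][s-n-1] - [n][s-n] = [s-1-2n]$, which is an elementary manipulation of $[a][b] = (q^{a+b} + q^{-(a+b)} - q^{a-b} - q^{-(a-b)})/(q - q^{-1})^2$.

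For simplicity I appeal to weight-space considerations. Since $1 \leq s \leq p$, the eigenvalues $\alpha q^{s-1-2n}$ for $0 \leq n \leq s-1$ are pairwise distinct (they lie in $\{-(s-1), -(s-3), \ldots, s-1\} \subset \{-(p-1), \ldots, p-1\}$), so every nonzero $\Uq$-submodule $\Y$ of $\X^\alpha(s)$ is a sum of $K$-eigenlines and must contain at least one $x_n^\alpha(s)$. Iterating $E$, the scalars $\alpha[n][s-n]$ for $1 \leq n \leq s-1$ are all nonzero because both $n$ and $s-n$ lie in $\{1, \ldots, p-1\}$ (and $[k] \neq 0$ for $1 \leq k \leq p-1$), so I reach $\C x_0^\alpha(s) \subset \Y$; iterating $F$ from $x_0^\alpha(s)$ then recovers every $x_n^\alpha(s)$, whence $\Y = \X^\alpha(s)$. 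The same non-vanishing shows that $x_0^\alpha(s)$ generates $\X^\alpha(s)$ under $\Uq$ (via $F$) and that $x_{s-1}^\alpha(s)$ generates it under $\Uq$ (via $E$), so both are cyclic vectors, of highest and lowest weight respectively.

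The main obstacle, such as it is, is purely the $q$-identity $[n+1][s-n-1] - [n][s-n] = [s-1-2n]$; everything else is bookkeeping with \eqref{Eqn: comm1} and the bound $s \leq p$. The overall argument is the classical highest-weight construction for $U_q sl_2$, specialized to the root-of-unity highest weights $\alpha q^{s-1}$ with $\alpha^2 = 1$ that the extra torsion relations $K^{2p} = 1$ and $E^p = F^p = 0$ single out from the generic picture.
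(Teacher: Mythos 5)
Votre preuve est correcte et compl\`ete. Notez toutefois que le texte ne d\'emontre pas cette proposition : il la rappelle comme un r\'esultat classique en renvoyant \`a \cite[\S~VI.3, \S~VI.5]{Kas95}, de sorte que votre v\'erification directe est en fait plus d\'etaill\'ee que ce que contient le document. Tous les points n\'ecessaires y figurent : la v\'erification des relations \eqref{Eqn: comm1} sur la base (l'identit\'e $[n+1][s-n-1]-[n][s-n]=[s-1-2n]$ est bien le seul calcul non imm\'ediat, et elle couvre aussi les cas extr\^emes $n=0$ et $n=s-1$ puisque $[0]=0$) ; la nilpotence $E^p=F^p=0$ via $s\leq p$ ; la simplicit\'e, qui repose correctement sur le fait que les valeurs propres $\alpha q^{s-1-2n}$ sont deux \`a deux distinctes (les exposants $s-1-2n$ restent dans une fen\^etre de longueur $2(s-1)<2p$) et sur la non-nullit\'e de $[n]$ et $[s-n]$ pour $1\leq n\leq s-1\leq p-1$ ; et la cyclicit\'e \`a partir du vecteur de plus haut (resp. plus bas) poids. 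Deux remarques mineures : la relation $K^{2p}=1$ ne requiert pas la borne $s\leq p$ mais seulement $\alpha^{2p}=(q^{2p})^{k}=1$ ; et si l'on voulait justifier que ces $2p$ modules \'epuisent \emph{tous} les simples \`a isomorphisme pr\`es (ce que la phrase pr\'ec\'edant la proposition affirme en citant Kassel), il faudrait un argument suppl\'ementaire du type du corollaire \ref{Cor: php/pbp} --- mais l'\'enonc\'e tel quel ne demande que la description de la structure, que vous \'etablissez.
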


\begin{Rem} 
\label{Rem: dim simples}
La notation est choisie de sorte que, pour tous $\alpha \in \{-,+\}$ et $s \in \{1,...,p\}$, on a $\dim \X^\alpha(s) = s$.
\end{Rem}

Il est pratique d'utiliser une représentation diagrammatique de ces structures de modules. Pour cela, on représente les vecteurs de poids de la $\C$-base par des sommets, ordonnées de haut en bas suivant leurs indices croissants. On utilise les flèches verticales pour représenter les actions de $E$ et de $F$ : montantes pour l'action de $E$, et descendantes pour l'action de $F$. On ne met pas de flèche lorsque l'action correspondante est nulle. On en donne deux exemples dans la figure \ref{Fig: simples}.

\begin{figure}[!ht] 
\caption{Représentation diagrammatique de modules simples pour $p=5$.}
\label{Fig: simples}
\[ \xymatrix @-2ex { 
	\text{Poids} && 
		\X^+(3) && 
		\X^-(4) \\
	q^8 &&
		&& 
		\overset{x_0^-(4)}{\bullet} \ar@<2pt>[d]^-F &  \\
	q^6 &&
		&&
		\overset{x_1^-(4)}{\bullet} \ar@<2pt>[d] \ar@<2pt>[u]^-E  \\
	q^4 &&
		&&
		\overset{x_2^-(4)}{\bullet} \ar@<2pt>[d] \ar@<2pt>[u] &  \\
	q^2 & &
		\overset{x_0^+(3)}{\bullet} \ar@<2pt>[d]^-F && 
		\overset{x_3^-(4)}{\bullet} \ar@<2pt>[u] \\
	q^0 &&
		\overset{x_1^+(3)}{\bullet} \ar@<2pt>[d] \ar@<2pt>[u]^-E &&
		\\
	q^8 && 
		\overset{x_2^+(3)}{\bullet} \ar@<2pt>[u] &&
		}\]
\end{figure}
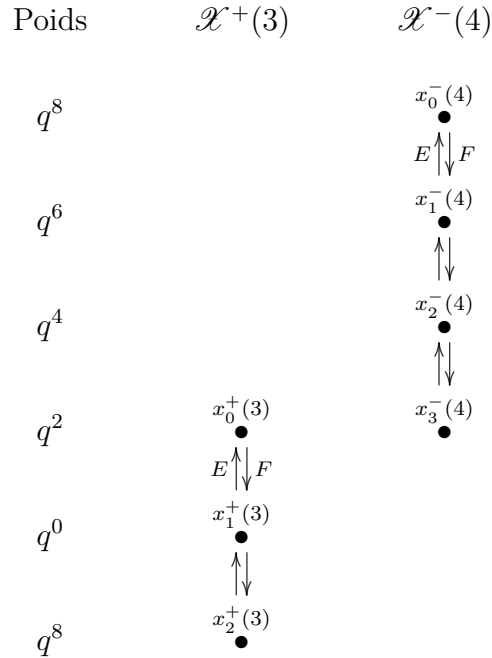

\begin{Lemme} 
\label{Lemme: Verma}
Soient $\alpha,\alpha' \in \{-,+\}$ et $s,s' \in \{1,...,p\}$. On a :
\[ \mathrm{Ext_{\Uq}^1} \left( \X^\alpha(s),\X^{\alpha'}(s') \right) \cong
	\begin{cases}
	\C^2 & \text{ si } \alpha'=-\alpha, \; s'=p-s \\
	0 & \text{ sinon }
	\end{cases} , \]
où $\mathrm{Ext_{\Uq}^1} \left( \X^\alpha(s), \X^{\alpha'}(s') \right)$ est le $\C$-espace vectoriel des classes d'extensions de $\X^\alpha(s)$ par $\X^{\alpha'}(s')$.
\end{Lemme}

\begin{proof}
Soit $M$ une classe d'extension non triviale de $\X^\alpha(s)$ par $\X^{\alpha'}(s')$. On note $N$ le sous-module de $M$ isomorphe à $\X^{\alpha'}(s')$, et $\{x_m^{\alpha'}(s') \; ; \; 0 \leq m \leq s'-1\}$ sa $\C$-base canonique (cf. la proposition \ref{Prop: simples}). On note $a_0^\alpha(s)$ un représentant dans $M$ du vecteur de plus haut poids du quotient $M/N$, isomorphe à $\X^\alpha(s)$. Alors $\{F^n a_0^\alpha(s)+N \; ; \; 0 \leq n \leq s-1\}$ est la $\C$-base canonique de $M/N$ (cf. la proposition \ref{Prop: simples}). Par construction, l'ensemble : 
\begin{equation} \tag{1}
\{x_m^{\alpha'}(s') \; ; \; 0 \leq m \leq s'-1\} \cup \{F^n a_0^\alpha(s) \; ; \; 0 \leq n \leq s-1\}
\end{equation}
est une $\C$-base de $M$. 

Reste à connaître l'action de $\Uq$ sur $M$. Pour cela, il suffit de connaître les vecteurs $K a_0^\alpha(s)$, $F^s a_0^\alpha(s)$ et $E a_0^\alpha(s)$. Pour le premier, on a :
\[ K a_0^\alpha(s) + N 
= K ( a_0^\alpha(s) + N) 
= \alpha q^{s-1} (a_0^\alpha(s) + N) 
= \alpha q^{s-1} a_0^\alpha(s) + N. \]
Il existe donc $y \in N$ tel que :
\begin{equation} \tag{2}
K a_0^\alpha(s) = \alpha q^{s-1} a_0^\alpha(s) + y.
\end{equation}
Pour les autres, on a :
\[ F^s a_0^\alpha(s)+N = F^s (a_0^\alpha(s)+N) = 0+N,
	\qquad E a_0^\alpha(s)+N = E (a_0^\alpha(s)+N) = 0+N. \]
Autrement dit, $F^s a_0^\alpha(s), E a_0^\alpha(s) \in N$. Quitte à translater $a_0^\alpha(s)$ par un vecteur adéquat de $N$, on peut supposer que $E a_0^\alpha(s) \in \Vect_\C \left( x_{s'-1}^{\alpha'}(s') \right)$. On distingue alors deux cas.

\begin{figure}[!ht]  
\caption{Représentation diagrammatique de $\V^\alpha(s)$ et  $\Vb^\alpha(s)$.}
\label{Fig: Lemme: Verma}
\[ \xymatrix @-2ex {
	\text{Poids} && 
		\V^\alpha(s) && 
		\Vb^\alpha(s) \\
	-\alpha q^{p-s-1} &&
		&& 
		\overset{x_0^{-\alpha}(p-s)}{\bullet} \ar@<2pt>[d]  \\
	\vdots &&
		&& 
		\vdots \ar@<2pt>[d] \ar@<2pt>[u] \\
	-\alpha q^{-p+s+1} &&
		&& 
		\overset{x_{p-s-1}^{-\alpha}(p-s)}{\bullet} \ar@<2pt>[u] \\
	\alpha q^{s-1} && 
		\overset{a_0^\alpha(s)}{\bullet} \ar@<2pt>[d]^-F && 
		\overset{a_0^\alpha(s)}{\bullet} \ar@<2pt>[d]^-F \ar@<2pt>[u] \\
	\vdots && 
		\vdots \ar@<2pt>[d] \ar@<2pt>[u]^-E && 
		\vdots \ar@<2pt>[d] \ar@<2pt>[u]^-E \\
	\alpha q^{-s+1} && 
		\overset{F^{s-1} a_0^\alpha(s)}{\bullet} \ar@<2pt>[d] \ar@<2pt>[u] && 
		\overset{F^{s-1} a_0^\alpha(s)}{\bullet} \ar@<2pt>[u] \\
	-\alpha q^{p-s-1} &&
		\overset{x_0^{-\alpha}(p-s)}{\bullet} \ar@<2pt>[d] &&
		\\
	\vdots &&
		\vdots \ar@<2pt>[d] \ar@<2pt>[u] &&
		\\
	-\alpha q^{-p+s+1} &&
		\overset{x_{p-s-1}^{-\alpha}(p-s)}{\bullet} \ar@<2pt>[u] &&
 		} \]
\end{figure}
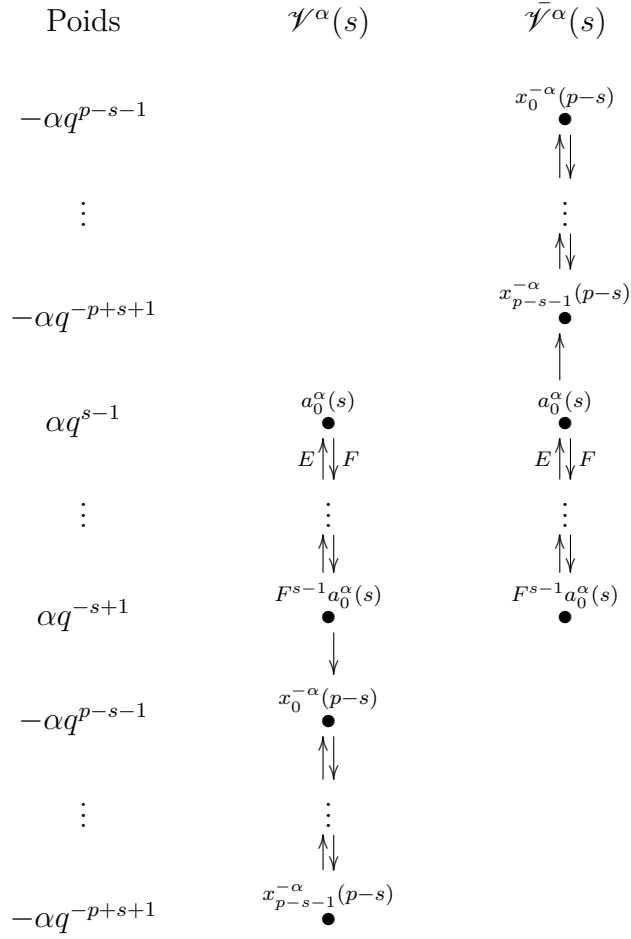

\begin{Cas} 
On suppose que $E a_0^\alpha(s) = 0$. En multipliant l'égalité $(2)$ par $E$, on obtient :
\[ E y \overset{\eqref{Eqn: comm1}}{=} q^{-2}K E a_0^\alpha(s) - \alpha q^{s-1} E a_0^\alpha(s)
= 0. \]
Or $y \in N$, donc $y \in \Vect_\C \left( x_0^{\alpha'}(s') \right)$. Quitte à translater $a_0^\alpha(s)$ par un vecteur adéquat de $\Vect_\C \left( x_0^{\alpha'}(s') \right)$, on peut supposer que $y = 0$. On en déduit que :
\begin{align*}
& K F^n a_0^\alpha(s) \overset{\eqref{Eqn: comm1}}{=} q^{-2n} F^n K a_0^\alpha(s) = \alpha q^{s-1-2n} F^n a_0^\alpha(s), \quad 1 \leq n \leq s, \\
& E F^s a_0^\alpha(s) \overset{\eqref{Eqn: comm2}}{=} F^s E a_0^\alpha(s) + [s] F^{s-1} \frac{q^{-(s-1)}K - q^{s-1}K^{-1}}{q-q^{-1}} a_0^\alpha(s) = 0.
\end{align*}
Or $F^s a_0^\alpha(s) \in N$, donc $F^s a_0^\alpha(s) \in \Vect_\C \left( x_0^{\alpha'}(s') \right)$. Ce qui impose que $s'=p-s$ et $\alpha'=-\alpha$. Par conséquent, la $\C$-base $(1)$ de M vérifie :
\begin{equation} \tag{3a}
\begin{aligned}
& \{ x_m^{-\alpha}(p-s) \; ; \; 0 \leq m \leq p-s-1 \} \text{ est la base canonique de } \X^{-\alpha}(p-s), \\
& K F^n a_0^\alpha(s) = \alpha q^{s-1-2n} F^n a_0^\alpha(s), \quad 0 \leq n \leq s, \\
& E F^n a_0^\alpha(s) = \alpha [n] [s-n] F^{n-1} a_0^\alpha(s), \quad 1 \leq n \leq s-1, \\
& E a_0^\alpha(s) = 0, \qquad F^s a_0^\alpha(s) \in \Vect_\C \left( x_0^{-\alpha}(p-s) \right).
\end{aligned}
\end{equation}
On note $\V^\alpha(s)$ le module correspondant au cas où $F^s a_0^\alpha(s)=x_0^{-\alpha}(p-s)$ (cf. la figure \ref{Fig: Lemme: Verma}).
\end{Cas} \setcounter{Cas}{1}

\begin{Cas} 
On suppose que $E a_0^\alpha(s) \not = 0$. Quitte à multiplier $a_0^\alpha(s)$ par un scalaire adéquat, on peut supposer que $E a_0^\alpha(s) = x_{s'-1}^{\alpha'}(s')$. En multipliant l'égalité $(2)$ par $E$, on obtient :
\[ E y \overset{\eqref{Eqn: comm1}}{=} q^{-2}K E a_0^\alpha(s) - \alpha q^{s-1} E a_0^\alpha(s) = (\alpha' q^{-s'-1} - \alpha q^{s-1}) x_{s'-1}^{\alpha'}(s'). \]
Or $y \in N$, donc $E y=0$. Ce qui impose que :
\[ s'=p-s, \quad \alpha'=-\alpha, \quad y \in \Vect_\C \left( x_0^{\alpha'}(s') \right) . \]
Quitte à translater $a_0^\alpha(s)$ par un vecteur adéquat de $\Vect_\C \left( x_0^{\alpha'}(s') \right)$, on peut supposer que $y = 0$. On en déduit que :
\begin{align*}
& K F^n a_0^\alpha(s) \overset{\eqref{Eqn: comm1}}{=} q^{-2n} F^n K a_0^\alpha(s) = \alpha q^{s-1-2n} F^n a_0^\alpha(s), \quad 1 \leq n \leq s, \\
& E F^s a_0^\alpha(s) \overset{\eqref{Eqn: comm2}}{=} F^s E a_0^\alpha(s) + [s] F^{s-1} \frac{q^{-(s-1)}K - q^{s-1}K^{-1}}{q-q^{-1}} a_0^\alpha(s) = 0.
\end{align*}
Or $F^s a_0^\alpha(s) \in N$, donc $F^s a_0^\alpha(s) \in \Vect_\C \left( x_0^{-\alpha}(p-s) \right)$. Par conséquent, la $\C$-base $(1)$ de M vérifie :
\begin{equation} \tag{3b}
\begin{aligned}
& \{ x_m^{-\alpha}(p-s) \; ; \; 0 \leq m \leq p-s-1 \} \text{ est la base canonique de } \X^{-\alpha}(p-s), \\
& K F^n a_0^\alpha(s) = \alpha q^{s-1-2n} F^n a_0^\alpha(s), \quad 0 \leq n \leq s-1, \\
& E F^n a_0^\alpha(s) = \alpha [n] [s-n] F^{n-1} a_0^\alpha(s), \quad 1 \leq n \leq s-1, \\
& E a_0^\alpha(s) = x_{p-s-1}^{-\alpha}(p-s), \qquad F^s a_0^\alpha(s) \in \Vect_\C \left( x_0^{-\alpha}(p-s) \right).
\end{aligned}
\end{equation}
On note $\Vb^\alpha(s)$ le module correspondant au cas où $F^s a_0^\alpha(s)=0$ (cf. la figure \ref{Fig: Lemme: Verma}).
\end{Cas}

L'extension $M$ n'étant pas triviale, soit on est dans le premier cas $(3a)$ avec $F^s a_0^\alpha(s) \not = 0$, soit on est dans le second cas $(3b)$. Dans chacun de ces cas, la $\C$-base $(1)$ de M vérifie :
\begin{align*}
& \{ x_m^{-\alpha}(p-s) \; ; \; 0 \leq m \leq p-s-1 \} \text{ est la base canonique de } \X^{-\alpha}(p-s), \\
& K F^n a_0^\alpha(s) = \alpha q^{s-1-2n} F^n a_0^\alpha(s), \quad 0 \leq n \leq s-1, \\
& E F^n a_0^\alpha(s) = \alpha [n] [s-n] F^{n-1} a_0^\alpha(s), \quad 1 \leq n \leq s-1, \\
& E a_0^\alpha(s) = \lambda x_{p-s-1}^{-\alpha}(p-s), \qquad F^s a_0^\alpha(s) = \mu \left( x_0^{-\alpha}(p-s) \right),
\end{align*}
où $\lambda$ et $\mu$ ne sont pas tout deux nuls. Autrement dit, l'extension $M$ s'exprime comme somme de Baer des classes de $\V^\alpha(s)$ et $\Vb^\alpha(s)$. Par ailleurs, les modules $\V^\alpha(s)$ et $\Vb^\alpha(s)$ ne sont pas isomorphes ; ils forment donc une famille libre dans le $\C$-espace vectoriel des classes d'extensions de $\X^\alpha(s)$ par $\X^{\alpha'}(s')$. D'où le résultat.
\end{proof}

\subsection{Modules de Verma : premières extensions}

Soient $\alpha \in \{-,+\}$ et $s \in \{1,...,p-1\}$. D'après le lemme \ref{Lemme: Verma}, il y a deux classes d'extensions $\V^\alpha(s)$ et $\Vb^\alpha(s)$  qui engendrent le $\C$-espace vectoriel des classes d'extensions de $\X^\alpha(s)$ par $\X^{-\alpha}(p-s)$. On les appelle respectivement \emph{module de Verma} et \emph{module de Verma contragrédient}.

\begin{Prop} 
\label{Prop: Verma}
Soient $\alpha \in \{-,+\}$ et $s \in \{1,...,p-1\}$. 
\begin{enumerate}[(i)]
	\item Le module de Verma $\V^\alpha(s)$ \index{V@ $\V^\alpha(s)$} est engendré sous $\Uq$ par un vecteur de plus haut poids $x_0^\alpha(s)$, et il admet une $\C$-base $\{ x_m^{-\alpha}(p-s) \; ; \; 0 \leq m \leq p-s-1 \} \cup \{ a_n^\alpha(s) \; ; \; 0 \leq n \leq s-1 \}$ telle que :
\begin{align*}
& \{ x_m^{-\alpha}(p-s) \; ; \; 0 \leq m \leq p-s-1 \} \text{ est la base canonique de } \X^{-\alpha}(p-s), \\
& K a_n^\alpha(s) = \alpha q^{s-1-2n} a_n^\alpha(s), \quad 0 \leq n \leq s-1, \\
& E a_0^\alpha(s) = 0, 
	\qquad E a_n^\alpha(s) = \alpha [n] [s-n] a_{n-1}^\alpha(s), \quad 1 \leq n \leq s-1, \\
& F a_n^\alpha(s) = a_{n+1}^\alpha(s), \quad 0 \leq n \leq s-2,
	\qquad F a_{s-1}^\alpha(s) = x_0^{-\alpha}(p-s).
\end{align*}
On l'appelle la \emph{base canonique de $\V^\alpha(s)$}.
	\item Le module de Verma contragrédient $\Vb^\alpha(s)$ \index{V@ $\Vb^\alpha(s)$} est engendré sous $\Uq$ par un vecteur de plus bas poids $x_{s-1}^\alpha(s)$, et il admet une $\C$-base $\{ x_m^{-\alpha}(p-s) \; ; \; 0 \leq m \leq p-s-1 \} \cup \{ a_n^\alpha(s) \; ; \; 0 \leq n \leq s-1 \}$	telle que :
\begin{align*}
& \{ x_m^{-\alpha}(p-s) \; ; \; 0 \leq m \leq p-s-1 \} \text{ est la base canonique de } \X^{-\alpha}(p-s), \\
& K a_n^\alpha(s) = \alpha q^{s-1-2n} a_n^\alpha(s), \quad 0 \leq n \leq s-1, \\
& E a_0^\alpha(s) = x_{p-s-1}^{-\alpha}(p-s), 
	\qquad E a_n^\alpha(s) = \alpha [n] [s-n] a_{n-1}^\alpha(s), \quad 1 \leq n \leq s-1, \\
& F a_n^\alpha(s) = a_{n+1}^\alpha(s), \quad 0 \leq n \leq s-2,
	\qquad F a_{s-1}^\alpha(s) = 0.
\end{align*}
On l'appelle la \emph{base canonique de $\Vb^\alpha(s)$}.
\end{enumerate}
\end{Prop}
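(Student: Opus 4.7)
Le plan est d'exploiter directement les constructions explicites de $\V^\alpha(s)$ et $\Vb^\alpha(s)$ d�j� r�alis�es dans les deux cas de la preuve du lemme~\ref{Lemme: Verma} : dans chacun de ces cas, une $\C$-base de la forme $\{x_m^{-\alpha}(p-s)\} \cup \{F^n a_0^\alpha(s)\}$ a d�j� �t� mise en �vidence. Il ne reste donc qu'� renommer $a_n^\alpha(s) := F^n a_0^\alpha(s)$ pour $0 \leq n \leq s-1$ et � v�rifier que ces vecteurs satisfont les formules d'action annonc�es.

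Pour la premi�re assertion (module $\V^\alpha(s)$), je me place dans le cas~1 du lemme~\ref{Lemme: Verma}, correspondant pr�cis�ment au choix $F^s a_0^\alpha(s) = x_0^{-\alpha}(p-s)$ avec $E a_0^\alpha(s) = 0$. L'action de $K$ sur $a_n^\alpha(s)$ d�coule de la relation $K F^n = q^{-2n} F^n K$ obtenue par r�currence � partir de~\eqref{Eqn: comm1}, combin�e � $K a_0^\alpha(s) = \alpha q^{s-1} a_0^\alpha(s)$. L'action de $E$ sur $a_n^\alpha(s) = F^n a_0^\alpha(s)$ s'obtient ensuite en appliquant la formule de commutation~\eqref{Eqn: comm2} � $a_0^\alpha(s)$ et en exploitant $E a_0^\alpha(s) = 0$ : le terme de reste produit exactement $\alpha [n][s-n] F^{n-1} a_0^\alpha(s)$. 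Le caract�re engendrant de $a_0^\alpha(s)$ sous $\Uq$ r�sulte de ce que $F^s a_0^\alpha(s)$ atteint le plus haut poids du sous-module $\X^{-\alpha}(p-s)$, duquel la base canonique se reconstitue par applications successives de $F$.

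Pour la seconde assertion (module $\Vb^\alpha(s)$), je proc�de de fa�on analogue en reprenant le cas~2 du lemme, avec cette fois $E a_0^\alpha(s) = x_{p-s-1}^{-\alpha}(p-s)$ et $F^s a_0^\alpha(s) = 0$. Les v�rifications concernant $K$, $E$ et $F$ sur les $a_n^\alpha(s)$ sont identiques. Le fait que $\Vb^\alpha(s)$ soit engendr� par le vecteur de plus bas poids $a_{s-1}^\alpha(s)$ r�sulte de ce qu'on r�cup�re tous les $a_n^\alpha(s)$ par applications r�p�t�es de $E$, puis que $E a_0^\alpha(s) = x_{p-s-1}^{-\alpha}(p-s)$ atteint le vecteur de plus bas poids du sous-module simple $\X^{-\alpha}(p-s)$, lequel est � son tour engendr� par action r�it�r�e de $E$.

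Le point le plus d�licat sera sans doute de se convaincre qu'aucune v�rification suppl�mentaire n'est n�cessaire concernant les relations d�finissantes de $\Uq$ (notamment $E^p = 0$, $F^p = 0$, $K^{2p}=1$ et $[E,F] = (K-K^{-1})/(q-q^{-1})$) aux vecteurs "de jonction" entre le sous-module et le quotient, � savoir $a_{s-1}^\alpha(s)$ pour $\V^\alpha(s)$ (o� $F$ traverse la jonction) et $a_0^\alpha(s)$ pour $\Vb^\alpha(s)$ (o� $E$ la traverse). Ceci tient au fait que les bases propos�es co�ncident exactement avec celles construites dans la preuve du lemme~\ref{Lemme: Verma}, o� ces v�rifications ont d�j� �t� effectu�es implicitement via les identifications $s'=p-s$, $\alpha'=-\alpha$ et les identit�s $q$-binomiales~\eqref{Eqn: qcoeff2}.
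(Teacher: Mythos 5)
Votre démonstration est correcte et suit exactement la même voie que celle du texte, qui se contente d'invoquer la preuve du lemme~\ref{Lemme: Verma} : les cas~1 et~2 de cette preuve fournissent déjà les bases $\{x_m^{-\alpha}(p-s)\} \cup \{F^n a_0^\alpha(s)\}$ avec les relations $(3a)$ et $(3b)$, et il ne reste qu'à poser $a_n^\alpha(s) := F^n a_0^\alpha(s)$. Vos vérifications supplémentaires (commutations via~\eqref{Eqn: comm1} et~\eqref{Eqn: comm2}, génération par le vecteur de plus haut ou de plus bas poids) explicitent simplement ce que le texte laisse implicite.
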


\begin{proof}
La proposition découle directement de la preuve du lemme \ref{Lemme: Verma}.
\end{proof}

\begin{Rem} 
\label{Rem: dim Verma}
\begin{enumerate}[(i)]
	\item Pour tout $\alpha \in \{-,+\}$ et $s=p$, on étend la définition précédente à $\V^\alpha(p)$ et $\Vb^\alpha(p)$ avec le système de relations :
	\begin{align*}
	& K a_n^\alpha(s) = \alpha q^{s-1-2n} a_n^\alpha(s), \quad 0 \leq n \leq s-1, \\
	& E a_0^\alpha(s) = 0, 
		\qquad E a_n^\alpha(s) = \alpha [n] [s-n] a_{n-1}^\alpha(s), \quad 1 \leq n \leq s-1, \\
	& F a_n^\alpha(s) = a_{n+1}^\alpha(s), \quad 0 \leq n \leq s-2,
		\qquad F a_{s-1}^\alpha(s) = 0.
\end{align*}
	Ainsi, on a $\V^\alpha(p) = \X^\alpha(p) = \Vb^\alpha(p)$.
	\item Pour tous $\alpha \in \{-,+\}$ et $s \in \{1,...,p\}$, on a $\dim \V^\alpha(s) = \dim \Vb^\alpha(s) = p$.
\end{enumerate}
\end{Rem}

On utilise la même représentation diagrammatique que précédemment, illustrée dans la figure \ref{Fig: Verma}, ou plus succinctement :
\[ \V^\alpha(s) = \vcenter{ \xymatrix @-2ex {
		\overset{\X^\alpha(s)}{\bullet} \ar[d]^-F \\
		\overset{\X^{-\alpha}(p-s)}{\bullet} }}, \quad
	\Vb^\alpha(s) = \vcenter{ \xymatrix @-2ex { 
		\overset{\X^\alpha(s)}{\bullet} \ar[d]^-E \\
		\overset{\X^{-\alpha}(p-s)}{\bullet} }} . \]

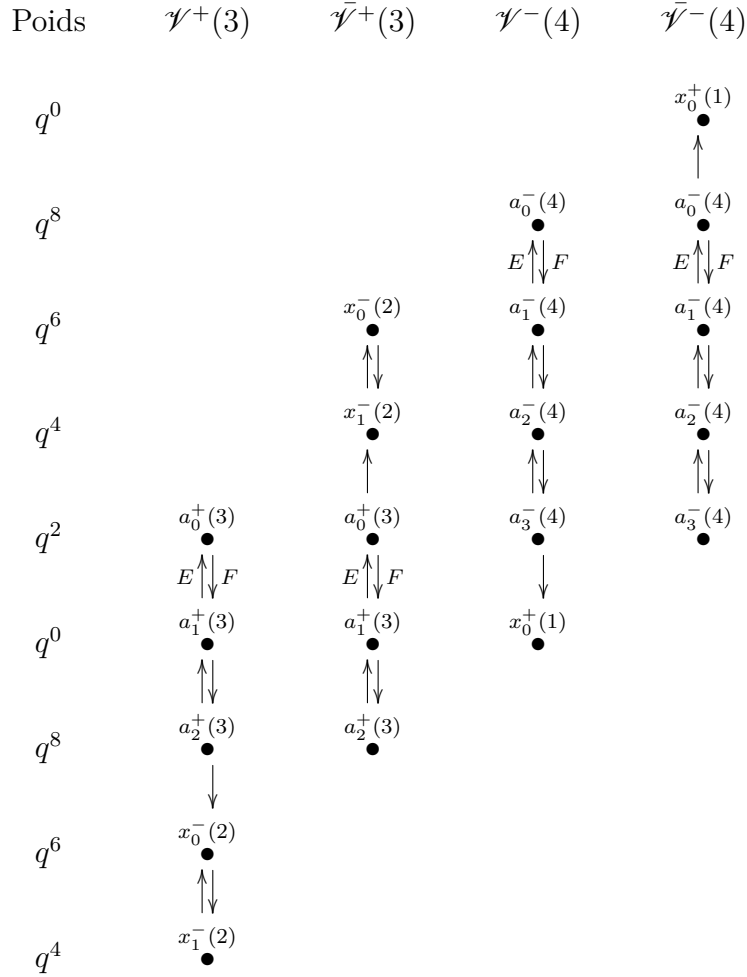
\begin{figure}[!ht] 
\caption{Représentation diagrammatique de modules de Verma pour $p=5$.}
\label{Fig: Verma}
\[ \xymatrix @-2ex @C-1ex { 
	\text{Poids} &&
		\V^+(3) &&
		\Vb^+(3) &&
		\V^-(4) &&
		\Vb^-(4) \\
	q^0 &&
		&&
		&&
		&&
		\overset{x_0^+(1)}{\bullet} \\
	q^8 &&
		&&
		&&
		\overset{a_0^-(4)}{\bullet} \ar@<2pt>[d]^-F &&
		\overset{a_0^-(4)}{\bullet} \ar@<2pt>[d]^-F \ar@<2pt>[u] \\
	q^6 && 
		&&
		\overset{x_0^-(2)}{\bullet} \ar@<2pt>[d] && 
		\overset{a_1^-(4)}{\bullet} \ar@<2pt>[d] \ar@<2pt>[u]^-E &&
		\overset{a_1^-(4)}{\bullet} \ar@<2pt>[d] \ar@<2pt>[u]^-E \\
	q^4 && 
		&&
		\overset{x_1^-(2)}{\bullet} \ar@<2pt>[u] && 
		\overset{a_2^-(4)}{\bullet} \ar@<2pt>[d] \ar@<2pt>[u] &&
		\overset{a_2^-(4)}{\bullet} \ar@<2pt>[d] \ar@<2pt>[u] \\
	q^2 && 
		\overset{a_0^+(3)}{\bullet} \ar@<2pt>[d]^-F && 
		\overset{a_0^+(3)}{\bullet} \ar@<2pt>[d]^-F \ar@<2pt>[u] && 
		\overset{a_3^-(4)}{\bullet} \ar@<2pt>[d] \ar@<2pt>[u] &&
		\overset{a_3^-(4)}{\bullet} \ar@<2pt>[u] \\
	q^0 && 
		\overset{a_1^+(3)}{\bullet} \ar@<2pt>[d] \ar@<2pt>[u]^-E && 
		\overset{a_1^+(3)}{\bullet} \ar@<2pt>[d] \ar@<2pt>[u]^-E && 
		\overset{x_0^+(1)}{\bullet} &&
		 \\
	q^8 && 
		\overset{a_2^+(3)}{\bullet}  \ar@<2pt>[d] \ar@<2pt>[u] &&
		\overset{a_2^+(3)}{\bullet}  \ar@<2pt>[u] &&
		&&
		 \\
	q^6 &&
		\overset{x_0^-(2)}{\bullet} \ar@<2pt>[d] && 
		&&
		&&
		\\
	q^4 &&
		\overset{x_1^-(2)}{\bullet} \ar@<2pt>[u] &&
		&&
		&&
		}\]
\end{figure}

\begin{Cor} 
\label{Cor: php/pbp}
Soient $\alpha \in \{+,-\}$ et $s \in \{1,...,p\}$.
\begin{enumerate}[(i)]
	\item Un vecteur de plus haut poids $\alpha q^{s-1}$ engendre sous $\Uq$ un module isomorphe à $\X^\alpha(s)$ ou à $\V^\alpha(s)$.
	\item Un vecteur de plus bas poids $\alpha q^{-s+1}$ engendre sous $\Uq$ un module isomorphe à $\X^\alpha(s)$ ou à $\Vb^\alpha(s)$.
\end{enumerate}
 \end{Cor}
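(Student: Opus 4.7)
The plan is to prove (i) directly and to deduce (ii) by a symmetric argument. Let $v$ be a highest weight vector of weight $\alpha q^{s-1}$, and set $M := \Uq \cdot v$. Since $E v = 0$ and $K v = \alpha q^{s-1} v$, the PBW basis of Theorem \ref{Thm: base PBW}(a) together with the relation $F^p = 0$ from \eqref{Eqn: comm1} shows that $M$ is the linear span of the finite family $\{F^n v \; ; \; 0 \leq n \leq p-1\}$. A direct computation using \eqref{Eqn: comm1} and the first relation of \eqref{Eqn: comm2} then yields
\[ K F^n v = \alpha q^{s-1-2n} F^n v, \qquad E F^n v = \alpha [n][s-n] F^{n-1} v, \]
so the nonzero $F^n v$ have pairwise distinct weights, and in particular $E F^s v = 0$.

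Everything then reduces to whether $F^s v$ vanishes. If $F^s v = 0$, the family $\{F^n v \; ; \; 0 \leq n \leq s-1\}$ is linearly independent and satisfies precisely the relations of Proposition \ref{Prop: simples}, so $M \cong \X^\alpha(s)$. If $F^s v \neq 0$, its weight is $\alpha q^{-s-1} = -\alpha q^{p-s-1}$ (using $q^p = -1$); for $s \in \{1,\dots,p-1\}$ this is the highest weight of $\X^{-\alpha}(p-s)$. Since $E F^s v = 0$ and $F^{p-s}(F^s v) = F^p v = 0$, applying the first sub-case to $F^s v$ gives $N := \Uq \cdot F^s v \cong \X^{-\alpha}(p-s)$; the quotient $M/N$ is generated by the class of $v$ and inherits the relations of $\X^\alpha(s)$, so $M/N \cong \X^\alpha(s)$. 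Thus $M$ is a non-trivial extension of $\X^\alpha(s)$ by $\X^{-\alpha}(p-s)$ which contains a highest weight vector of weight $\alpha q^{s-1}$. Inspecting Proposition \ref{Prop: Verma}, the generator $a_0^\alpha(s)$ of $\V^\alpha(s)$ is highest weight, whereas in $\Vb^\alpha(s)$ one has $E a_0^\alpha(s) = x_{p-s-1}^{-\alpha}(p-s) \neq 0$ and the weight space $\alpha q^{s-1}$ contains no highest weight vector; matching canonical bases therefore forces $M \cong \V^\alpha(s)$. The boundary case $s = p$ is degenerate: $F^p v = 0$ automatically, and $\V^\alpha(p) = \X^\alpha(p)$ by Remark \ref{Rem: dim Verma}(i), so the two alternatives collapse.

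For (ii), the dual argument works: using Theorem \ref{Thm: base PBW}(b) and the second line of \eqref{Eqn: comm2}, a lowest weight vector $w$ of weight $\alpha q^{-s+1}$ spans $M' := \Uq \cdot w$ through $\{E^m w \; ; \; 0 \leq m \leq p-1\}$, and the dichotomy $E^s w = 0$ versus $E^s w \neq 0$ produces $\X^\alpha(s)$ or $\Vb^\alpha(s)$ respectively, the latter being recognised via its presentation by a generating lowest weight vector in Proposition \ref{Prop: Verma}(ii). There is essentially no hard step: the classification of the possible extensions has already been packaged in Lemma \ref{Lemme: Verma} and Proposition \ref{Prop: Verma}, and the only subtlety to keep in view is that our generator is by hypothesis highest (respectively lowest) weight, which is precisely what rules out $\Vb^\alpha(s)$ in (i) and $\V^\alpha(s)$ in (ii).
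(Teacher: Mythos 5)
Your proof is correct and follows essentially the same route as the paper's: span $\Uq\cdot v$ by the $F^n v$ via the PBW basis, compute the $K$- and $E$-actions, and observe that the first vanishing power of $F$ can only occur at $n=s$ (giving $\X^\alpha(s)$) or $n=p$ (giving $\V^\alpha(s)$). The only differences are cosmetic: the paper phrases the dichotomy via the minimal $r$ with $F^r v=0$ and writes down the isomorphism with the canonical basis of $\V^\alpha(s)$ directly, where you instead detour through the submodule $\Uq\cdot F^s v$ and the extension classification of Lemma \ref{Lemme: Verma}, and the paper proves only $(i)$, leaving $(ii)$ to the symmetry you spell out.
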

 
\begin{proof}
Montrons par exemple $(i)$. Soit $x^\alpha(s)$ un vecteur de plus haut poids $\alpha q^{s-1}$. On note $M$ le module engendré par $x^\alpha(s)$ sous $\Uq$. D'après le théorème \ref{Thm: base PBW}, on a :
\begin{align*}
M &= \Vect_\C \{ F^m K^j E^n x^\alpha(s) \; ; \; 0 \leq m,n \leq p-1, 0 \leq j \leq 2p-1 \} \\
&= \Vect_\C \{ F^m K^j x^\alpha(s) \; ; \; 0 \leq m \leq p-1, 0 \leq j \leq 2p-1 \} \\
&= \Vect_\C \{ F^m x^\alpha(s) \; ; \; 0 \leq m \leq p-1 \},
\end{align*}
car $x^\alpha(s)$ est vecteur propre sous l'action de $K$. Le structure de module de $M$ est donc entièrement déterminée par l'action de $\Uq$ sur les vecteurs $F^m x^\alpha(s)$, $1 \leq m \leq p-1$. D'après les règles de commutativité \eqref{Eqn: comm1}, on a :
 \begin{align*}
& K F^m x^\alpha(s) = \alpha q^{s-1-2m} F^m x^\alpha(s), \quad 0 \leq m \leq p-1, \\
& E F^m x^\alpha(s) = \alpha [m] [s-m] F^{m-1} x^\alpha(s), \quad 1 \leq m \leq p-1.
\end{align*}
L'ensemble $\left\{ m \in \N^* \; ; \; F^m x^\alpha(s) = 0 \right\}$ étant majoré par $p$, il possède possède un minimum $r \in \{1,...,p\}$. On distingue deux cas.

\begin{Cas} On suppose que $r \not = p$.
Pour cet entier $r$, on a :
\[ E \underbrace{F^{r} x^\alpha(s)}_{=0} = \alpha [r] [s-r] \underbrace{F^{r-1} x^\alpha(s)}_{\not = 0} . \]
Or $[r] \not = 0$ car $r \in \{1,...,p-1\}$, donc $[s-r]=0$. Il s'ensuit que $r = s$ et que $M$ est isomorphe à $\X^\alpha(s)$ via les identifications :
\[ F^m x^\alpha(s) \longmapsto x_m^\alpha(s), \quad 0 \leq m \leq s-1. \]
\end{Cas}

\begin{Cas} On suppose que $r = p$.
Alors $M$ est isomorphe à $\V^\alpha(s)$ via les identifications :
\[ \begin{cases}
	F^m x^\alpha(s) \longmapsto a_m^\alpha(s), & 0 \leq m \leq s-1, \\
	F^m x^\alpha(s) \longmapsto x_{m-s}^{-\alpha}(p-s), & s \leq m \leq p.
	\end{cases} \]
\end{Cas}
\end{proof}

\subsection{PIMs : extensions maximales}
\label{subsection: PIMs}

Soient $\alpha \in \{-,+\}$ et $s \in \{1,...,p\}$. On note $\PIM^\alpha(s)$ l'extension non triviale \emph{maximale} de $\X^\alpha(s)$. A isomorphisme près, elle est unique (cf. par exemple \cite[Cor. 54.14]{CR62}) et s'identifie à un PIM $\Uq$ (voir l'introduction de la section \ref{section: reps}).

\begin{Rem} 
\label{Rem: nbre PIMs}
On obtient ainsi tous les PIMs (cf. par exemple \cite[Cor. 54.14]{CR62}). Autrement dit, à isomorphisme près, il y a $2p$ PIMs $\PIM^\alpha(s)$, indexés par $\alpha \in \{+,-\}$ et $s \in \{1,...,p\}$.
\end{Rem}

\begin{Prop} 
\label{Prop: PIMs}
Soient $\alpha \in \{-,+\}$ et $s \in \{1,...,p\}$. 
\begin{enumerate}[(a)]
	\item Si $s=p$, alors le PIM $\PIM^\alpha(p)$ s'identifie à $\X^\alpha(p)$ ;
	\item Sinon, le PIM $\PIM^\alpha(s)$ \index{PIMs2@ $\PIM^\alpha(s)$} est engendré sous $\Uq$ par un vecteur $y_0^\alpha(s)$, et il admet une $\C$-base $\{ x_n^\alpha(s), y_n^\alpha(s) \; ; \; 0 \leq n \leq s-1 \} \cup \{ a_m^{-\alpha}(p-s), b_m^{-\alpha}(p-s) \; ; \; 0 \leq m \leq p-s-1 \}$ telle que :
	\begin{align*}
	& \begin{multlined}[t][0.95\textwidth]
		\{ x_n^\alpha(s) \; ; \; 0 \leq n \leq s-1 \} \cup \{ a_m^{-\alpha}(p-s) \; ; \; 0 \leq m \leq p-s-1 \} \\
		\text{ est la base canonique de } \V^{-\alpha}(p-s), 
		\end{multlined} \\
	& \begin{multlined}[t][0.95\textwidth]
		\{ x_n^\alpha(s) \; ; \; 0 \leq n \leq s-1 \} \cup \{ b_m^{-\alpha}(p-s) \; ; \; 0 \leq m \leq p-s-1 \} \\
		\text{ est la base canonique de } \Vb^{-\alpha}(p-s), 
		\end{multlined} \\
	& K y_n^\alpha(s) = \alpha q^{s-1-2n} y_n^\alpha(s), \quad 0 \leq n \leq s-1, \\
	& \begin{multlined}[t][0.95\textwidth]
		E y_0^\alpha(s) = a_{p-s-1}^{-\alpha}(p-s), \\
		E y_n^\alpha(s) = \alpha [n][s-n] y_{n-1}^\alpha(s) + x_{n-1}^\alpha(s), \quad 1 \leq n \leq s-1, 
		\end{multlined} \\
	& F y_n^\alpha(s) = y_{n+1}^\alpha(s), \quad 0 \leq n \leq s-2,
		\qquad F y_{s-1}^\alpha(s) = b_0^{-\alpha}(p-s).
	\end{align*}
	On l'appelle la \emph{base canonique de $\PIM^\alpha(s)$}.
\end{enumerate}
\end{Prop}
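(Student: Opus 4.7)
Je distingue les deux cas de la proposition. Pour le cas $s = p$, l'argument est imm�diat : d'apr�s le lemme \ref{Lemme: Verma}, il n'existe pas d'extension non triviale de $\X^\alpha(p)$ par un module simple quelconque, car la condition n�cessaire $s' = p - p = 0$ n'est pas admissible. Par cons�quent, $\X^\alpha(p)$ co�ncide avec son extension non triviale maximale, d'o� $\PIM^\alpha(p) = \X^\alpha(p)$.

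Pour le cas $s \in \{1,\ldots,p-1\}$, mon plan est de construire explicitement un candidat $M$ comme $\C$-espace vectoriel muni de la base et des formules d'action annonc�es dans l'�nonc�, puis d'identifier $M$ � $\PIM^\alpha(s)$. Je commencerai par v�rifier que ces formules d�finissent bien une structure de $\Uq$-module, c'est-�-dire que les relations \eqref{Eqn: comm1} sont satisfaites. Les v�rifications se r�duisent essentiellement aux vecteurs $y_n^\alpha(s)$, puisque par construction les sous-ensembles $\{x_n^\alpha(s), a_m^{-\alpha}(p-s)\}$ et $\{x_n^\alpha(s), b_m^{-\alpha}(p-s)\}$ forment d�j� des bases canoniques de $\V^{-\alpha}(p-s)$ et $\Vb^{-\alpha}(p-s)$ respectivement (cf. la proposition \ref{Prop: Verma}).

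Ensuite, je montrerai que le vecteur $y_0^\alpha(s)$ engendre $M$ sous $\Uq$. Les actions descendantes fournissent $F^n y_0^\alpha(s) = y_n^\alpha(s)$ pour $0 \leq n \leq s-1$ puis $F^s y_0^\alpha(s) = b_0^{-\alpha}(p-s)$, ce qui atteint toute la composante de Verma contragr�dient. L'action $E y_0^\alpha(s) = a_{p-s-1}^{-\alpha}(p-s)$ permet, par it�rations, de reconstituer la composante de Verma. Enfin, les termes correctifs $x_{n-1}^\alpha(s)$ apparaissant dans $E y_n^\alpha(s)$ garantissent que le socle $\{x_n^\alpha(s) \; ; \; 0 \leq n \leq s-1\}$ est �galement engendr�.

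La derni�re �tape consistera � identifier $M$ � l'extension non triviale maximale $\PIM^\alpha(s)$. Pour l'ind�composabilit�, je v�rifierai que le socle de $M$ se r�duit � $\X^\alpha(s)$ et est donc essentiel. Pour la maximalit�, j'invoquerai le lemme \ref{Lemme: Verma} : les seules extensions non triviales possibles de $\X^\alpha(s)$ par un simple se font par $\X^{-\alpha}(p-s)$, avec multiplicit� au plus $2$ dans l'espace des classes d'extensions ; or $M$ les r�alise toutes via ses sous-modules de Verma et de Verma contragr�dient. L'�tape la plus d�licate sera la v�rification de la relation $[E,F] y_n^\alpha(s) = \frac{K - K^{-1}}{q-q^{-1}} y_n^\alpha(s)$, o� les termes correctifs $x_{n-1}^\alpha(s)$ interviennent de mani�re subtile ; il faudra notamment exploiter les identit�s \eqref{Eqn: qcoeff2} pour g�rer proprement les coefficients $q$-entiers aux fronti�res $n=s-1$ et $n=s$.
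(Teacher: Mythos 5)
Votre proposition emprunte une route r�ellement diff�rente de celle du texte : vous construisez d'embl�e le module $M$ par sa base et ses formules d'action, puis vous cherchez � l'identifier � $\PIM^\alpha(s)$, alors que le texte construit $\PIM^\alpha(s)$ par extensions successives $M_0 \subset M_1 \subset M_2$ et montre que le proc�d� stationne. Le cas $s=p$, la v�rification des relations \eqref{Eqn: comm1} sur les $y_n^\alpha(s)$ (qui se ram�ne bien � l'identit� $[n+1][s-n-1]-[n][s-n]=[s-1-2n]$ et aux cas fronti�res $n=s-1$, $F^p=E^p=0$) et l'engendrement par $y_0^\alpha(s)$ ne posent pas de probl�me : tout cela est m�canique et correct.

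En revanche, votre argument de \emph{maximalit�} comporte une lacune r�elle. Dire que $M$ r�alise les deux classes d'extensions de $\mathrm{Ext}^1_{\Uq}\left( \X^\alpha(s), \X^{-\alpha}(p-s) \right) \cong \C^2$ ne contr�le que le deuxi�me �tage de Loewy du module projectif ind�composable $P:=\PIM^\alpha(s)$ : cela montre seulement que le noyau de la surjection $P \twoheadrightarrow M$ (qui existe puisque $M$ est engendr� par un vecteur relevant le sommet simple $\X^\alpha(s)$) est contenu dans $\Rad^2 P$, mais n'exclut nullement que $P$ poss�de des �tages plus profonds, c'est-�-dire que l'on puisse encore �tendre $M$ vers le bas par de nouvelles copies de $\X^{-\alpha}(p-s)$ accroch�es au socle. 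C'est pr�cis�ment le point que la preuve du texte traite en construisant un candidat $M_3$ et en montrant que les relations $E^p=0$ et $F^p=0$ annulent les extensions suppl�mentaires, d'o� $M_3=M_2$. Pour boucher ce trou, il vous faut soit reproduire cet argument de stabilisation (montrer qu'aucune extension non triviale de $M$ par $\X^{-\alpha}(p-s)$ ne conserve un sommet simple, en utilisant $E^p=F^p=0$), soit fournir une majoration ind�pendante $\dim \PIM^\alpha(s) \leq 2p$ --- mais attention, la d�composition de la repr�sentation r�guli�re (th�or�me \ref{Thm: Reg}) est d�duite de la pr�sente proposition et ne peut donc pas �tre invoqu�e ici sans circularit�.
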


\begin{proof}
Il s'agit de construire l'extension non triviale maximale du module simple $\X^\alpha(s)$. Pour cela, on construit une suite de modules $(M_n)_{n \geq 0}$ telle que, pour tout $n \geq 0$, $M_{n+1}$ est obtenu par extension non triviale des sous-modules simples de $M_n$ par un nombre maximal de modules simples. On utilisera régulièrement la description des modules simples de la proposition \ref{Prop: simples} et leurs classes d'extensions détaillées dans le lemme \ref{Lemme: Verma}. Les extensions successives seront illustrées dans la figure \ref{Fig: Prop: PIMs}.

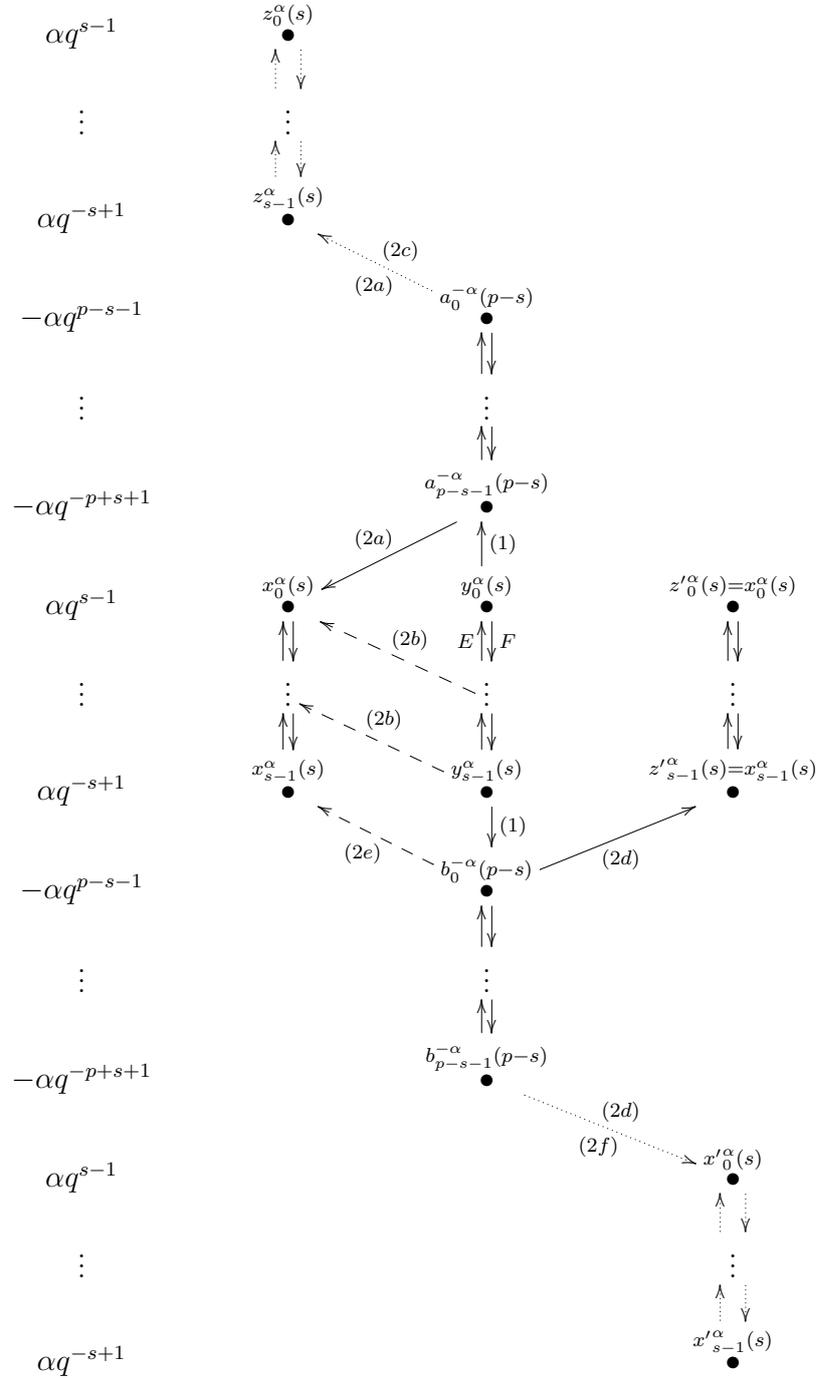
\begin{figure}[p]  
\caption{Extensions successives à partir de $\X^\alpha(s)$.}
\label{Fig: Prop: PIMs}
\[ \scalebox{0.95}{ \xymatrix @-2ex { 
	\alpha q^{s-1} &&
		\overset{z_0^\alpha(s)}{\bullet} \ar@{.>}@<5pt>[d] &&
		&&
		\\
	\vdots &&
		\vdots \ar@{.>}@<5pt>[d] \ar@{.>}@<5pt>[u] &&
		&&
		\\
	\alpha q^{-s+1} &&
		\overset{z_{s-1}^\alpha(s)}{\bullet} \ar@{.>}@<5pt>[u] &&
		&&
		\\
	-\alpha q^{p-s-1} &&
		&&
		\overset{a_0^{-\alpha}(p-s)}{\bullet} \ar@<2pt>[d] \ar@{.>}[llu]^{(2a)}_{(2c)} &&
		\\
	\vdots &&
		&&
		\vdots \ar@<2pt>[d] \ar@<2pt>[u] &&
		\\
	-\alpha q^{-p+s+1} &&
		&&
		\overset{a_{p-s-1}^{-\alpha}(p-s)}{\bullet} \ar[lld]_{(2a)} \ar@<2pt>[u] &&
		\\
	\alpha q^{s-1} &&
		\overset{x_0^\alpha(s)}{\bullet} \ar@<2pt>[d] &&
		\overset{y_0^\alpha(s)}{\bullet} \ar@<2pt>[d]^-F \ar@<2pt>[u]_-{(1)} && 
		\overset{{z'}_0^\alpha(s) = x_0^\alpha(s)}{\bullet} \ar@<2pt>[d] \\
		\vdots &&
		\vdots \ar@<2pt>[d] \ar@<2pt>[u] &&
		\vdots \ar@<2pt>[d] \ar@<2pt>[u]^-E \ar@{-->}[llu]_-{(2b)} && 
		\vdots \ar@<2pt>[d] \ar@<2pt>[u] \\
	\alpha q^{-s+1} &&
		\overset{x_{s-1}^\alpha(s)}{\bullet} \ar@<2pt>[u] && 
		\overset{y_{s-1}^\alpha(s)}{\bullet} \ar@<2pt>[d]^-{(1)} \ar@{-->}[llu]_-{(2b)} \ar@<2pt>[u] && 
		\overset{{z'}_{s-1}^\alpha(s) = x_{s-1}^\alpha(s)}{\bullet} \ar@<2pt>[u] \\
	-\alpha q^{p-s-1} &&
		&& 
		\overset{b_0^{-\alpha}(p-s)}{\bullet} \ar@<2pt>[d] \ar[rru]_{(2d)} \ar@{-->}[llu]^-{(2e)} &&
		\\
	\vdots &&
		&& 
		\vdots \ar@<2pt>[d] \ar@<2pt>[u] &&
		\\
	-\alpha q^{-p+s+1} &&
		&& 
		\overset{b_{p-s-1}^{-\alpha}(p-s)}{\bullet} \ar@{.>}[rrd]^{(2d)}_{(2f)} \ar@<2pt>[u] &&
		\\
	\alpha q^{s-1} &&
		&&
		&&
		\overset{{x'}_0^\alpha(s)}{\bullet} \ar@{.>}@<5pt>[d] \\
	\vdots &&
		&&
		&&
		\vdots \ar@{.>}@<5pt>[d] \ar@{.>}@<5pt>[u] \\
	\alpha q^{-s+1} &&
		&&
		&&
		\overset{{x'}_{s-1}^\alpha(s)}{\bullet} \ar@{.>}@<5pt>[u] }}\]
\end{figure}

Le cas $(a)$ découle directement du lemme \ref{Lemme: Verma} : $\X^\alpha(p)$ est l'extension non triviale maximale du module simple $\X^\alpha(p)$. D'où $\PIM^\alpha(p) = \X^\alpha(p)$. On suppose désormais que $s \neq p$ et on démontre la cas $(b)$.

On commence avec $M_0=\X^\alpha(s)$. On construit une extension non triviale de $\X^\alpha(s)$ par deux copies de $\X^{-\alpha}(p-s)$. On obtient ainsi un module indécomposable :
\[ M_1 = \vcenter{ \xymatrix @-2ex {
		& \overset{\X^\alpha(s)}{\bullet} \ar[ld]_-E \ar[rd]^-F \\
		\overset{\X^{-\alpha}(p-s)}{\bullet} && \overset{\X^{-\alpha}(p-s)}{\bullet} }} . \]
Quitte à le normaliser, il admet une $\C$-base $\{ a_m^{-\alpha}(p-s), b_m^{-\alpha}(p-s) \; ; \; 0 \leq m \leq p-s-1 \} \cup \{ y_n^\alpha(s) \; ; \; 0 \leq n \leq s-1 \}$ telle que :
\begin{equation} \tag{1}
\begin{aligned}
& \{ a_m^{-\alpha}(p-s) \; ; \; 0 \leq m \leq p-s-1 \} \\
& \qquad \text{ est la base canonique de la première copie de } \X^{-\alpha}(p-s), \\
& \{ b_m^{-\alpha}(p-s) \; ; \; 0 \leq m \leq p-s-1 \} \\
& \qquad \text{ est la base canonique de la seconde copie de } \X^{-\alpha}(p-s), \\
& K y_n^\alpha(s) = \alpha q^{s-1-2n} y_n^\alpha(s), \quad 0 \leq n \leq s-1, \\
& E y_0^\alpha(s) = a_{p-s-1}^{-\alpha}(p-s),
	\qquad E y_n^\alpha(s) = \alpha [n][s-n] y_{n-1}^\alpha(s), \quad 1 \leq n \leq s-1, \\
& F y_n^\alpha(s) = y_{n+1}^\alpha(s), \quad 0 \leq n \leq s-2,
	\qquad F y_{s-1}^\alpha(s) = b_0^{-\alpha}(p-s).
\end{aligned}
\end{equation}

On réitère le procédé avec $M_1$. Il possède deux sous-modules simples, en somme directe, tout deux isomorphes à $\X^{-\alpha}(p-s)$. On en construit des extensions non triviales par deux fois deux copies de $\X^\alpha(s)$. On obtient ainsi un module indécomposable :
\[ M_2 = \vcenter{ \xymatrix @-2ex {
		&&& \overset{\X^\alpha(s)}{\bullet} \ar[lld]_-E \ar[rrd]^-F \\
		& \overset{\X^{-\alpha}(p-s)}{\bullet} \ar[ld]_-E \ar[rd]^-F &&&& \overset{\X^{-\alpha}(p-s)}{\bullet} \ar[ld]_-E \ar[rd]^-F \\
		\overset{\X^\alpha(s)}{\bullet} && \overset{\X^\alpha(s)}{\bullet} && \overset{\X^\alpha(s)}{\bullet} && \overset{\X^\alpha(s)}{\bullet}}} . \]
On note $N_1$ le sous-module de $M_1$ $\C$-engendré par $\{ a_m^{-\alpha}(p-s) \; ; \; 0 \leq m \leq p-s-1 \}$, et $\tilde{N}_1$ l'extension normalisée de $N_1$ par deux copies de $\X^\alpha(s)$. Il admet une $\C$-base $\{ x_n^\alpha(s), z_n^\alpha(s) \; ; \; 0 \leq n \leq s-1 \} \cup \{ a_m^{-\alpha}(p-s) \; ; \; 0 \leq m \leq p-s-1 \}$ telle que :
\begin{equation} \tag{2a}
\begin{aligned}
& \begin{multlined}[t][0.9\textwidth]
	\{ z_n^\alpha(s) \; ; \; 0 \leq n \leq s-1 \} \\
	\text{ est la base canonique de la première copie de } \X^\alpha(s), 
	\end{multlined} \\
& \begin{multlined}[t][0.9\textwidth]
	\{ x_n^\alpha(s) \; ; \; 0 \leq n \leq s-1 \} \\
	\text{ est la base canonique de la seconde copie de } \X^\alpha(s), 
	\end{multlined} \\
& K a_m^{-\alpha}(p-s) = -\alpha q^{p-s-1-2m} a_m^{-\alpha}(p-s), \quad 0 \leq m \leq p-s-1, \\
& E a_0^{-\alpha}(p-s) = z_{s-1}^\alpha(s), \\
	& \qquad E a_m^{-\alpha}(p-s) = -\alpha [m][s-m] a_{m-1}^{-\alpha}(p-s), \quad 1 \leq m \leq p-s-1, \\
& F a_m^{-\alpha}(p-s) = a_{m+1}^{-\alpha}(p-s), \quad 0 \leq m \leq p-s-2, \\
	& \qquad F a_{p-s-1}^{-\alpha}(p-s) = x_0^\alpha(s).
\end{aligned}
\end{equation}
Il s'ensuit que, pour tout $n \in \{1,...,s-1\}$ :
\begin{equation} \tag{2b}
\begin{aligned}
E y_n^\alpha(s) &= EF^n y_0^\alpha(s)
\overset{\eqref{Eqn: comm2}}{=} F^nE y_0^\alpha(s) + \alpha [n][s-n] F^{n-1} y_0^\alpha(s) \\
&= F^{n-1} + \alpha [n][s-n] F^{n-1} y_0^\alpha(s)  x_0^\alpha(s) \\
&=  x_{n-1}^\alpha(s) + \alpha [n][s-n] y_{n-1}^\alpha(s).
\end{aligned}
\end{equation}
D'autre part, comme $E^p = 0$, on a :
\begin{equation} \tag{2c}
\begin{aligned}
0  &= E^p y_{s-1}^\alpha(s)
 = \alpha^{s-1} ([s-1]!)^2 E^{p-s+1} y_0^\alpha(s) \\
&= \alpha^{s-1} ([s-1]!)^2 E^{p-s} a_{p-s-1}^{-\alpha}(p-s) \\
&= \alpha^{s-1} (-\alpha)^{p-s-1} ([s-1]!)^2 ([p-s-1]!)^2 E a_0^{-\alpha}(p-s) \\ 
&= \alpha^{s-1} (-\alpha)^{p-s-1} ([s-1]!)^2 ([p-s-1]!)^2 z_{s-1}^\alpha(s).
\end{aligned}
\end{equation}
Donc $z_{s-1}^\alpha(s)=0$ ; la famille $\{z_n^\alpha \; ; \; 0 \leq n \leq s-1 \}$ est identiquement nulle dans $M_2$. \\
De même, on note $N_1'$ le sous-module de $M_1$ $\C$-engendré par $\{ b_m^{-\alpha}(p-s) \; ; \; 0 \leq m \leq p-s-1 \}$, et $\tilde{N}_1'$ l'extension normalisée de $N_1'$ par deux copies de $\X^\alpha(s)$. Il admet une $\C$-base $\{ {x'}_n^\alpha(s), {z'}_n^\alpha(s) \; ; \; 0 \leq n \leq s-1 \} \cup \{ b_m^{-\alpha}(p-s) \; ; \; 0 \leq m \leq p-s-1 \}$ telle que :
\begin{equation} \tag{2d}
\begin{aligned}
& \begin{multlined}[t][0.9\textwidth]
	\{ {z'}_n^\alpha(s) \; ; \; 0 \leq n \leq s-1 \} \\
	\text{ est la base canonique de la première copie de } \X^\alpha(s), 
	\end{multlined} \\
& \begin{multlined}[t][0.9\textwidth]
	\{ {x'}_n^\alpha(s) \; ; \; 0 \leq n \leq s-1 \} \\
	\text{ est la base canonique de la seconde copie de } \X^\alpha(s), 
	\end{multlined} \\
& K b_m^{-\alpha}(p-s) = -\alpha q^{p-s-1-2m} b_m^{-\alpha}(p-s), \quad 0 \leq m \leq p-s-1, \\
& E b_0^{-\alpha}(p-s) = {z'}_{s-1}^\alpha(s), \\
	& \qquad E b_m^{-\alpha}(p-s) = -\alpha [m][s-m] b_{m-1}^{-\alpha}(p-s), \quad 1 \leq m \leq p-s-1, \\
& F b_m^{-\alpha}(p-s) = b_{m+1}^{-\alpha}(p-s), \quad 0 \leq m \leq p-s-2, \\
	& \qquad F b_{p-s-1}^{-\alpha}(p-s) = {x'}_0^\alpha(s).
\end{aligned}
\end{equation}
Il s'ensuit que  :
\begin{equation} \tag{2e}
\begin{aligned}
{z'}_{s-1}^\alpha(s) & \; = EF y_{s-1}^\alpha(s)
\overset{\eqref{Eqn: comm1}}{=} FE y_{s-1}^\alpha(s) - \alpha [s-1] y_{s-1}^\alpha(s) \\
& \overset{(2b)}{=} F x_{s-2}^\alpha(s) + \alpha [s-1] F y_{s-2}^\alpha(s) - \alpha [s-1] y_{s-1}^\alpha(s) \\
& \; = x_{s-1}^\alpha(s) + \alpha [s-1] y_{s-1}^\alpha(s) - \alpha [s-1] y_{s-1}^\alpha(s) \\
& \; = x_{s-1}^\alpha(s).
\end{aligned}
\end{equation}
Donc ${z'}_n^\alpha(s) = x_n^\alpha(s)$, $0 \leq n \leq s-1$. D'autre part, comme $F^p = 0$, on a :
\begin{equation} \tag{2f}
0 = F^p y_0^\alpha(s) 
= F^{p-s+1} y_{s-1}^\alpha(s) 
= F^{p-s} b_0^{-\alpha}(p-s)
= F b_{p-s-1}^{-\alpha}(p-s) 
= {x'}_0^\alpha(s).
\end{equation}
Donc ${x'}_0^\alpha(s)=0$ ; la famille $\{{x'}_n^\alpha \; ; \; 0 \leq n \leq s-1 \}$ est identiquement nulle dans $M_2$. \\
Par conséquent, on a :
\[ M_2 = \vcenter{ \xymatrix @-2ex {
		& \overset{\X^\alpha(s)}{\bullet} \ar[ld]_-E \ar@{-->}[dd]_-E \ar[rd]^-F \\
		\overset{\X^{-\alpha}(p-s)}{\bullet} \ar[rd]_-F && \overset{\X^{-\alpha}(p-s)}{\bullet} \ar[ld]^-E  \\
		& \overset{\X^\alpha(s)}{\bullet} }} . \]
Il admet une $\C$-base $\{ x_n^\alpha(s), y_n^\alpha(s) \; ; \; 0 \leq n \leq s-1 \} \cup \{ a_m^{-\alpha}(p-s), b_m^{-\alpha}(p-s) \; ; \; 0 \leq m \leq p-s-1 \}$ telle que :
\begin{align*}
& \begin{multlined}[t][0.95\textwidth]
	\{ x_n^\alpha(s) \; ; \; 0 \leq n \leq s-1 \} \cup \{ a_m^{-\alpha}(p-s) \; ; \; 0 \leq m \leq p-s-1 \} \\
	\text{ est la base canonique de } \V^{-\alpha}(p-s),
	\end{multlined} \\
& \begin{multlined}[t][0.95\textwidth]
	\{ x_n^\alpha(s) \; ; \; 0 \leq n \leq s-1 \} \cup \{ b_m^{-\alpha}(p-s) \; ; \; 0 \leq m \leq p-s-1 \} \\
	\text{ est la base canonique de } \Vb^{-\alpha}(p-s), 
	\end{multlined} \\
& K y_n^\alpha(s) = \alpha q^{s-1-2n} y_n^\alpha(s), \quad 0 \leq n \leq s-1, \\
& \begin{multlined}[t][0.95\textwidth]
	E y_0^\alpha(s) = a_{p-s-1}^{-\alpha}(p-s), \\
	E y_n^\alpha(s) = \alpha [n][s-n] y_{n-1}^\alpha(s) + x_{n-1}^\alpha(s), \quad 1 \leq n \leq s-1, 
	\end{multlined} \\
& F y_n^\alpha(s) = y_{n+1}^\alpha(s), \quad 0 \leq n \leq s-2,
	\qquad F y_{s-1}^\alpha(s) = b_0^{-\alpha}(p-s).
\end{align*}

On réitère le procédé avec $M_2$. Il possède un sous-module simple isomorphe à $\X^\alpha(s)$. On en construit une extension non triviale par deux copies de $\X^{-\alpha}(p-s)$. On obtient ainsi un module indécomposable :
\[ M_3 = \vcenter{ \xymatrix @-2ex {
		& \overset{\X^\alpha(s)}{\bullet} \ar[ld]_-E \ar@{-->}[dd]_-E \ar[rd]^-F \\
		\overset{\X^{-\alpha}(p-s)}{\bullet} \ar[rd]_-F && \overset{\X^{-\alpha}(p-s)}{\bullet} \ar[ld]^-E  \\
		& \overset{\X^\alpha(s)}{\bullet} \ar[ld]_-E \ar[rd]^-F \\
		\overset{\X^{-\alpha}(p-s)}{\bullet} && \overset{\X^{-\alpha}(p-s)}{\bullet}  }} . \]
On note $N_2$ le sous-module de $M_2$ $\C$-engendré par $\{ x_n^\alpha(s) \; ; \; 0 \leq n \leq s-1 \}$. Pour les mêmes raisons que précédemment, les conditions $E^p=0$ et $F^p=0$ annulent les extensions de $N_2$ par $\X^{-\alpha}(p-s)$ dans $M_3$. Donc $M_2 = M_3$ et, par définition, $\PIM^\alpha(s) = M_2$.
\end{proof}

\begin{Rem} 
\label{Rem: dim PIMs}
\begin{enumerate}[(i)]
	\item Pour tout $\alpha \in \{-,+\}$, on a $\PIM^\alpha(p) = \X^\alpha(p) = \V^\alpha(p) = \Vb^\alpha(p)$.
	\item Pour tous $\alpha \in \{-,+\}$ et $s \in \{1,...,p-1\}$, on a $\dim \PIM^\alpha(s) = 2p$.
\end{enumerate}
\end{Rem}

On utilise la même représentation diagrammatique que précédemment, illustrée dans la figure \ref{Fig: PIMs}, ou plus succinctement :
\[ \PIM^\alpha(s) = \vcenter{ \xymatrix @-2ex {
	& \overset{\X^\alpha(s)}{\bullet} \ar[ld]_-E \ar@{-->}[dd]_-E \ar[rd]^-F \\
	\overset{\X^{-\alpha}(p-s)}{\bullet} \ar[rd]_-F && \overset{\X^{-\alpha}(p-s)}{\bullet} \ar[ld]^-E  \\
	& \overset{\X^\alpha(s)}{\bullet} }} 
= \vcenter{ \xymatrix @-2ex {
	\overset{\V^\alpha(s)}{\bullet} \ar[dd]_-E \\ \\
	\overset{\V^{-\alpha}(p-s)}{\bullet} }}
= \vcenter{ \xymatrix @-2ex {
	\overset{\Vb^\alpha(s)}{\bullet} \ar[dd]_-E^-F \\ \\
	\overset{\Vb^{-\alpha}(p-s)}{\bullet} }} . \]

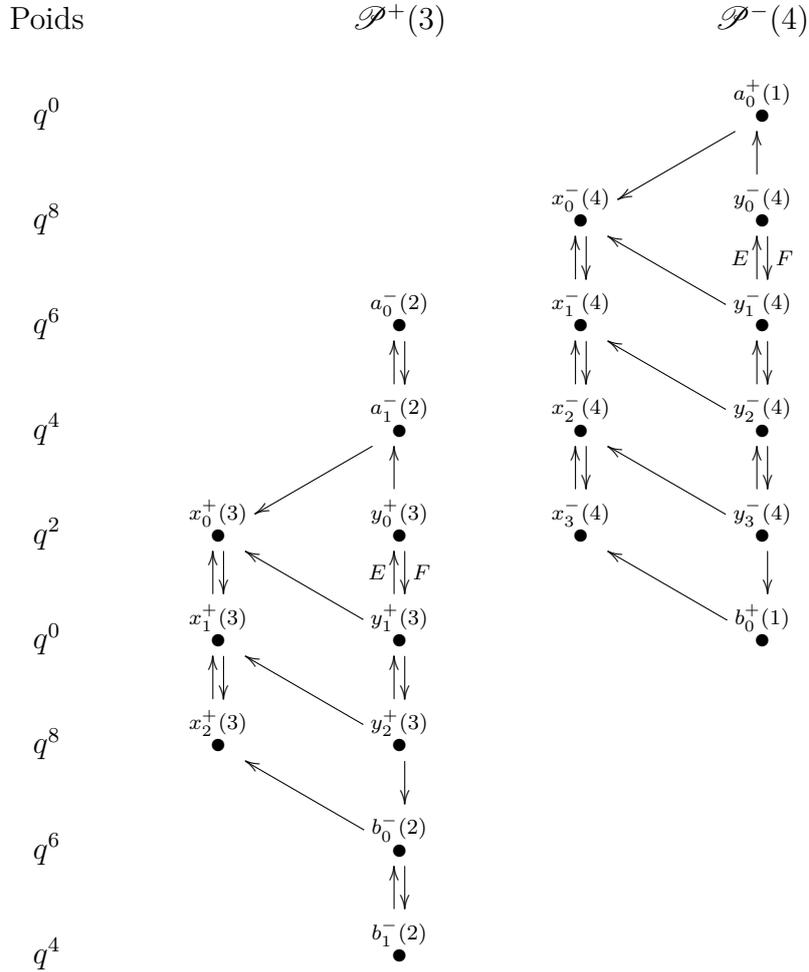
\begin{figure}[!ht]
\caption{Représentation diagrammatique de PIM pour p=5.}
\label{Fig: PIMs}
\[ \xymatrix @-2ex {
	\text{Poids} &&
		&&
		\PIM^+(3) &&
		&&
		\PIM^-(4) \\
	q^0 &&
		&&
		&&
		&&
		\overset{a_0^+(1)}{\bullet} \ar[lld] \\
	q^8 &&
		&&
		&&
		\overset{x_0^-(4)}{\bullet} \ar@<2pt>[d] && 
		\overset{y_0^-(4)}{\bullet} \ar@<2pt>[d]^-F \ar@<2pt>[u] \\
	q^6 &&
		&& 
		\overset{a_0^-(2)}{\bullet} \ar@<2pt>[d] &&
		\overset{x_1^-(4)}{\bullet} \ar@<2pt>[d] \ar@<2pt>[u] && 
		\overset{y_1^-(4)}{\bullet} \ar@<2pt>[d] \ar@<2pt>[u]^-E \ar[llu] \\
	q^4 &&
		&& 
		\overset{a_1^-(2)}{\bullet} \ar[lld] \ar@<2pt>[u] &&
		\overset{x_2^-(4)}{\bullet} \ar@<2pt>[u] \ar@<2pt>[d] && 
		\overset{y_2^-(4)}{\bullet} \ar@<2pt>[d] \ar@<2pt>[u] \ar[llu] \\
	q^2 && 
		\overset{x_0^+(3)}{\bullet} \ar@<2pt>[d] && 
		\overset{y_0^+(3)}{\bullet} \ar@<2pt>[d]^-F \ar@<2pt>[u] &&
		\overset{x_3^-(4)}{\bullet} \ar@<2pt>[u] && 
		\overset{y_3^-(4)}{\bullet} \ar@<2pt>[d] \ar@<2pt>[u] \ar[llu] \\
	q^0 && 
		\overset{x_1^+(3)}{\bullet} \ar@<2pt>[d] \ar@<2pt>[u] && 
		\overset{y_1^+(3)}{\bullet} \ar@<2pt>[d] \ar@<2pt>[u]^-E \ar[llu] &&
		&&
		\overset{b_0^+(1)}{\bullet} \ar[llu] \\
	q^8 && 
		\overset{x_2^+(3)}{\bullet} \ar@<2pt>[u] && 
		\overset{y_2^+(3)}{\bullet} \ar@<2pt>[d] \ar@<2pt>[u] \ar[llu] &&
		&&
		\\
	q^6 &&
		&& 
		\overset{b_0^-(2)}{\bullet} \ar@<2pt>[d] \ar[llu] &&
		&&
		\\
	q^4 &&
		&& 
		\overset{b_1^-(2)}{\bullet} \ar@<2pt>[u] &&
		&&
		}\]
\end{figure}

\begin{Thm} 
\label{Thm: Reg}
La représentation régulière $\Reg$ de $\Uq$ se décompose de manière unique, à isomorphisme et ordre des facteurs près, en la somme directe :
\[ \Reg \cong \bigoplus_{s=1}^{p-1} \left[ s \PIM^+(s) \oplus s \PIM^-(s) \right] \oplus p \X^+(p) \oplus p \X^-(p). \]
\end{Thm}

\begin{proof}
On sait que $\Reg$ se décompose de manière unique, à isomorphisme et ordre des facteurs près, en somme directe des PIMs (cf. par exemple \cite{CR62}[Thm 14.5, Thm 56.6]). Dans cette décomposition, la multiplicité de la classe d'isomorphisme d'un PIM est donnée par la dimension de son module quotient simple (cf. par exemple \cite[(26.8), Thm 54.5, Thm 54.11]{CR62}). D'après la remarque \ref{Rem: nbre PIMs}, à isomorphisme près, il y a $2p$ PIMs $\PIM^\pm(s)$, $1 \leq s \leq p$. Leurs modules quotients simples sont respectivement $\X^\pm(s)$, $1 \leq s \leq p$, et vérifient $\dim( \X^\pm(s) )=s$ (cf. la proposition \ref{Prop: PIMs} et la remarque \ref{Rem: dim simples})). Par conséquent :
\[ \Reg \cong \bigoplus_{s=1}^{p-1} \left[ s \PIM^+(s) \oplus s \PIM^-(s) \right] \oplus p \underbrace{\PIM^+(p)}_{\X^+(p)} \oplus p \underbrace{\PIM^-(p)}_{\X^-(p)} . \]
\end{proof}

\begin{Rem} 
\label{Rem: ideaux}
En particulier, tous les modules précédemment décrits sont isomorphes à des idéaux à gauche de $\Uq$.
\end{Rem}

\section{Produits de modules simples et de PIMs} 
\label{section: reps prod}

On étudie à présent l'ensemble des produits tensoriels  possibles à isomorphisme près dans la catégorie $\Rep^{fd}_s$. Naturellement, on commence par la description des produits tensoriels de $\Uq$-modules simples. Plus précisément, on donne leurs décompositions en facteurs directs annoncées dans l'introduction de la section \ref{section: reps}. \`A cette occasion, on verra que ces facteurs directs sont des $\Uq$-modules simples ou des $\Uq$-PIMs. Pour compléter l'étude de $\Rep^{fd}_s$, on décrit donc également les produits tensoriels de $\Uq$-modules simples-PIMs, PIMs-simples et PIMs-PIMs. \`A cette fin, on utilisera la notation de Kronecker : \index{delta @$\delta_{i,j}$} \index{delta @$\delta_{i \in E}$}
\[ \forall i,j \in \N \qquad \delta_{i,j} = \begin{cases} 1 & \text{ si } i=j, \\ 0 & \text{ sinon. } \end{cases} \]
Par extension, pour tout sous-ensemble $E$ de $\N$, on notera :
\[ \forall i \in \N \qquad \delta_{i \in E} = \begin{cases} 1 & \text{ si } i \in E, \\ 0 & \text{ sinon. } \end{cases} \]

Ces quatre types de produits tensoriels (simples-simples, simples-PIMs, PIMs-simples, et PIMs-PIMs) suffisent à donner une description exhaustive de $\Rep^{fd}_s$ pour les raisons suivantes.

\begin{Lemme} 
\label{Lemme: proj produit}
Soit $\X$ un $\Uq$-module. Le produit tensoriel de $\X$ par $\Uq$ à gauche (resp. à droite) induit deux structures de $\Uq$-module isomorphes données par :
\begin{gather*}
\forall a,b \in \Uq \quad \forall x \in \X \quad 
	a (b \otimes x) \overset{(1)}{:=} \Delta(a) (b \otimes x)
	\quad \text{ et } \quad
	a (b \otimes x)\overset{(2)}{:=} ab \otimes x \\
\left( \text{ resp. } \quad  
	a (x \otimes b) \overset{(1)}{:=} \Delta(a) (x \otimes b)
	\quad \text{ et } \quad
	a (x \otimes b)\overset{(2)}{:=} x \otimes ab \right).
\end{gather*}
\end{Lemme}

\begin{proof}
On utilise les notations de Sweedler (cf. par exemple \cite[Not. III.1.6]{Kas95}) :
\[ \forall a \in \Uq \qquad \Delta(a) = \sum_{(a)} a' \otimes a'' . \]
\begin{enumerate}[(i)]
	\item Les deux structures de $\Uq$-module sur $\Uq \otimes \X$ sont isomorphes via les morphismes $\Uq$-linéaires réciproques :
\begin{align*}
& \alpha : \left\{ \begin{array}{l}
	\Uq \otimes_1 \X \longrightarrow \Uq \otimes_2 \X \\
	a \otimes x \longmapsto \sum_{(a)} a' \otimes S(a'') x \\
	\end{array} \right. , \\
& \beta : \left\{ \begin{array}{l}
	\Uq \otimes_2 \X \longrightarrow \Uq \otimes_1 \X \\
	a \otimes x \longmapsto \sum_{(a)} a' \otimes a'' x \\
	\end{array} \right. .
\end{align*}
En effet, pour tous $a \in \Uq$ et $x \in \X$, on a :
\begin{align*}
\alpha \circ \beta (a \otimes x) =& \sum_{(a)} \alpha (a' \otimes a'' x)
= \sum_{(a)} a' \otimes S(a'') a''' x \\
=& \sum_{(a)} a' \otimes \varepsilon(a'') x
= \sum_{(a)} a' \varepsilon(a'') \otimes  x
= a \otimes x, \\
\beta \circ \alpha (a \otimes x) =& \sum_{(a)} \alpha (a' \otimes S(a'') x)
= \sum_{(a)} a' \otimes a'' S(a''') x \\
=& \sum_{(a)} a' \otimes \varepsilon(a'') x
= \sum_{(a)} a' \varepsilon(a'') \otimes  x
= a \otimes x.
\end{align*}
	\item Les deux structures de $\Uq$-module sur $\X \otimes \Uq$ sont isomorphes via les morphismes $\Uq$-linéaires réciproques :
\begin{align*}
& \alpha : \left\{ \begin{array}{l}
	\X \otimes_1 \Uq \longrightarrow \X \otimes_2 \Uq \\
	x \otimes a \longmapsto \sum_{(a)} S^{-1}(a') x \otimes a'' \\
	\end{array} \right. \\
& \beta : \left\{ \begin{array}{l}
	\X \otimes_2 \Uq \longrightarrow \X \otimes_1 \Uq \\
	x \otimes a \longmapsto \sum_{(a)} a' x \otimes a'' \\
	\end{array} \right. .
\end{align*}
En effet, pour tous $a \in \Uq$ et $x \in \X$, on a :
\begin{align*}
\alpha \circ \beta (x \otimes a) =& \sum_{(a)} \alpha (S^{-1}(a') x \otimes a'')
= \sum_{(a)} a'' S^{-1}(a') x \otimes a''' \\
=& \sum_{(a)} \varepsilon(a') x \otimes a''
= x \otimes \sum_{(a)} \varepsilon(a') a''
= x \otimes a, \\
\beta \circ \alpha (x \otimes a) =& \sum_{(a)} \alpha (a' x \otimes a'')
= \sum_{(a)} S^{-1}(a'') a' x \otimes a''' \\
=& \sum_{(a)} \varepsilon(a') x \otimes a''
= x \otimes \sum_{(a)} \varepsilon(a') a''
= x \otimes a.
\end{align*}
L'existence de l'inverse de l'antipode est assurée par le théorème \cite[Thm 2.1.3]{Mon93}, car $\Uq$ est une $\C$-algèbre de dimension finie.
\end{enumerate}
\end{proof}

\begin{Prop} 
\label{Prop: proj produit}
Soit $\X$ un $\Uq$-module. Le produit tensoriel de $\X$ par un $\Uq$-module projectif est encore un $\Uq$-module projectif. 
\end{Prop}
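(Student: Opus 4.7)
Le plan est de réduire au cas libre en invoquant la caractérisation standard : un $\Uq$-module $P$ est projectif si et seulement si c'est un facteur direct d'un $\Uq$-module libre $\Uq^n$, pour un certain $n \in \N^*$ (ce qui est loisible puisque $\Uq$ est une $\C$-algèbre de dimension finie, cf. le théorème \ref{Thm: base PBW}). Il suffira donc de vérifier que le produit tensoriel $\X \otimes \Uq$, muni de l'action diagonale issue du coproduit $\Delta$, est encore un $\Uq$-module libre ; puis d'en déduire le résultat pour tout $P$ projectif.

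Pour établir la liberté de $\X \otimes \Uq$ sous l'action diagonale, je compte invoquer directement le lemme \ref{Lemme: proj produit}, qui fournit un isomorphisme $\Uq$-linéaire entre $\X \otimes \Uq$ muni de l'action diagonale (notée $\otimes_1$ dans le lemme) et $\X \otimes \Uq$ muni de l'action par multiplication à gauche sur le seul second facteur (notée $\otimes_2$). Or, pour cette seconde structure, si l'on choisit une $\C$-base $(e_1, \ldots, e_d)$ de $\X$ avec $d = \dim_\C \X$, les sous-espaces $\C e_i \otimes \Uq$ sont manifestement des sous-$\Uq$-modules isomorphes à la représentation régulière $\Reg$, et leur somme directe reconstitue $\X \otimes \Uq$. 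Ainsi $\X \otimes \Uq \cong \Reg^d$ est libre de rang $d$.

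On en conclut comme suit : si $P$ est un facteur direct de $\Uq^n$, alors
\[ \X \otimes P \text{ est un facteur direct de } \X \otimes \Uq^n \cong (\X \otimes \Uq)^n \cong \Reg^{nd}, \]
donc $\X \otimes P$ est un facteur direct d'un $\Uq$-module libre, c'est-à-dire un $\Uq$-module projectif. Le cas du produit $P \otimes \X$ est entièrement analogue, en utilisant la seconde assertion du lemme \ref{Lemme: proj produit}.

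Il n'y a pas d'obstacle sérieux à anticiper : tout le contenu non trivial a déjà été extrait dans le lemme précédent, qui assure que l'antipode inversible de $\Uq$ (disponible car $\Uq$ est une algèbre de Frobenius, cf. la remarque \ref{Rem: droite}) permet de rectifier l'action diagonale en une action sur un seul facteur. Le seul point de vigilance rédactionnel sera de bien distinguer les deux structures de $\Uq$-module sur le même espace vectoriel, afin que l'argument de liberté sous $\otimes_2$ se transporte correctement en liberté sous $\otimes_1$ via l'isomorphisme du lemme.
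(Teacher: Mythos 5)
Votre démonstration est correcte et suit essentiellement la même stratégie que celle du texte : tout repose sur le lemme \ref{Lemme: proj produit}, qui identifie la structure diagonale sur $\X \otimes \Uq$ à la structure libre $\Reg^{\dim_\C \X}$, puis on conclut par la caractérisation des projectifs comme facteurs directs (le texte passe par les PIMs, facteurs directs de $\Reg$, ce qui revient au même que votre réduction aux facteurs directs de $\Uq^n$).
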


\begin{proof}
D'après le lemme \ref{Lemme: proj produit}, le produit tensoriel de $\X$ par un PIMs est un $\Uq$-module projectif. Tout module projectif se décomposant en somme directe de PIMs (cf. par exemple \cite[Thm 56.6]{CR62}), ce résultat s'étend à tous les modules projectifs.
\end{proof}

Tout module projectif se décomposant en somme directe de PIMs (cf. par exemple \cite[Thm 56.6]{CR62}), les facteurs directs des produits tensoriels de $\Uq$-modules simples-PIMs, PIMs-simples et PIMs-PIMs sont des PIMs.

\subsection{Produits de modules simples} 
\label{subsection: produits simples}

Soient $\alpha, \alpha' \in \{+,-\}$ et $s, s' \in \{1,...,p\}$. On commence par étudier le produit tensoriel $\X^\alpha(s) \otimes \X^{\alpha'}(s')$. Pour cela, on définit une loi multiplicative sur l'ensemble $\{+,-\}$ telle que :
\[ + + = +, \quad + - = -, \quad - + = -, \quad - - = +. \]
Compte-tenu de la structure de cogèbre de $\Uq$ (donnée parmi les équations \eqref{Eqn: coprod1} et de la description des modules simples donnée dans la proposition \ref{Prop: simples}, on a :
\[ \X^\alpha(s) \otimes \X^\pm(1) 
\cong \X^{\pm \alpha}(s) 
\cong \X^\pm(1) \otimes  \X^\alpha(s). \]
On remarque alors que :
\begin{equation} 
\label{Eqn: produits simples}
\X^\alpha(s) \otimes \X^-(s')
\cong \X^\alpha(s) \otimes \X^-(1) \otimes \X^+(s') 
\cong \X^{-\alpha}(s) \otimes \X^+(s').
\end{equation}
Il suffit donc d'étudier le produit $\X^\alpha(s) \otimes \X^+(s')$.

\begin{Lemme} 
\label{Lemme: produits simples1}
Soient $\alpha \in \{+,-\}$ et $s \in \{1,...,p\}$. On a :
\[ \X^\alpha(s) \otimes \X^+(2) \cong 
	\begin{cases}
		\X^\alpha(2) & \text{ si } s=1, \\
		\X^\alpha(s-1) \oplus \X^\alpha(s+1) & \text{ si } 2 \leq s \leq p-1, \\
		\PIM^\alpha(p-1) & \text{ si } s=p.
	\end{cases} \]
\end{Lemme}

\begin{proof}
On commence par déterminer les vecteurs de poids de $\X^\alpha(s) \otimes \X^+(2)$. Soient $\{ x_n^\alpha(s) \; ; \; 0 \leq n \leq s-1 \}$ la $\C$-base canonique de $\X^\alpha(s)$ et $\{ x_0^+(2), x_1^+(2) \}$ celle de $\X^+(2)$ (cf. la proposition \ref{Prop: simples}). Sachant que $\Delta(K) = K \otimes K$ \eqref{Eqn: coprod1}, les vecteurs de poids de $\X^\alpha(s) \otimes \X^+(2)$ sont de la forme :
\begin{equation}
x(s+1-2n):= C_{n,0} \; x_n^\alpha(s) \otimes x_0^+(2) + \delta_{n \geq 1} C_{n-1,1} \; x_{n-1}^\alpha(s) \otimes x_1^+(2), 
	\quad 0 \leq n \leq s-1,
\end{equation}
où $C_{n,0}, C_{n-1,1}$ sont des constantes pour tout $n \in \{0,...,s-1\}$. 

Cherchons lesquels sont de plus haut poids. On fixe $n \in \{0,...,s-1\}$ et on pose $s''=s+1-2n$. On a :
\begin{align*}
\Delta(K) x(s'') \overset{\eqref{Eqn: coprod1}}{=}& K \otimes K \; x(s'') = \alpha q^{s''-1} x(s''), \\
\Delta(E) x(s'') \overset{\eqref{Eqn: coprod1}}{=}& (1 \otimes E + E \otimes K) \; x(s'') \\
	= \; \; & C_{n,0} \alpha q [n] [s-n] \; x_{n-1}^\alpha(s) \otimes x_0^+(2) + \delta_{n \geq 1} C_{n-1,1} \; x_{n-1}^\alpha \otimes x_0^+(2) \\
	& + \delta_{n \geq 1} C_{n-1,1} \alpha q^{-1} [n-1][s-n+1] \; x_{n-2}^\alpha(s) \otimes x_1^+(2) .
\end{align*}
L'équation $\Delta(E) x(s'')=0$ donne un vecteur $x(s'')$ non nul si $n \in \{0,1\}$.  Dans ce cas, $s'' \in \{s+1,s-1\}$ et l'équation $\Delta(E) x(s'')=0$ est équivalente à :
\[ C_{0,1} = - \alpha q [s-1] C_{1,0} . \]
Pour tout $n \in \{0,1\}$, on choisit un vecteur $x(s'')$ solution de cette équation tel que :
\[ C_{n,0}=1  ; \]
donc $x(s'')$ est un vecteur de plus haut poids $\alpha q^{s''-1}$. Par conséquent, il y a au plus deux vecteurs de plus haut poids :
\begin{equation} \tag{1}
\begin{aligned}
& x_0^\alpha(s+1) := x(s+1) && \text{ si } s \leq p-1, \\
& x_0^{-\alpha}(1) := x(s+1) && \text{ si } s=p, \\
& x_0^\alpha(s-1) := x(s-1) && \text{ si } s \geq 2,
\end{aligned}
\end{equation}
à multiplication par un scalaire non nul près.

D'après le corollaire \ref{Cor: php/pbp}, l'action de $\Delta(F)$ sur ces vecteurs de plus haut poids engendre des modules respectivement isomorphes à :
\begin{align*}
& \X^\alpha(s+1) \text{ ou } \V^\alpha(s+1) && \text{ si } s \leq p-1, \\
& \X^{-\alpha}(1)\text{ ou } \V^{-\alpha}(1) && \text{ si } s=p, \\
& \X^\alpha(s-1) \text{ ou } \V^\alpha(s-1) && \text{ si } s \geq 2.
\end{align*}
Afin de départager ces structures possibles, on compte le nombre de vecteurs de plus haut poids et on identifie les couples qui appartiennent à un même module de Verma. On distingue trois cas.

\begin{Cas} On suppose que $s = 1$.
D'après l'inventaire $(1)$, il y a exactement un vecteur de plus haut poids à multiplication par un scalaire non nul près : $x_0^\alpha(2)$. Sous $\Uq$, il engendre donc  un sous-module isomorphe à $\X^\alpha(2)$. Les $\C$-espaces vectoriels $\X^\alpha(1) \otimes \X^+(2)$ et $\X^\alpha(2)$ étant de même dimension, ils sont isomorphes.
\end{Cas}

\begin{Cas} On suppose que $s \in \{2,...,p-1\}$.
D'après l'inventaire $(1)$, il y a exactement deux vecteurs de plus haut poids à multiplication par un scalaire non nul près : $x_0^\alpha(s+1)$ et $x_0^\alpha(s-1)$. Sous $\Uq$, ils sont indépendants et engendrent donc deux sous-modules respectivement isomorphes à $\X^\alpha(s+1)$ et à $\X^\alpha(s-1)$. Il s'ensuit que :
\[  \X^\alpha(s-1) \oplus \X^\alpha(s+1) \hookrightarrow \X^\alpha(s) \otimes \X^+(2). \]
Enfin, les dimensions des $\C$-espaces vectoriels correspondants sont égales. Ils sont donc isomorphes.
\end{Cas}

\begin{Cas} On suppose que $s = p$.
D'après l'inventaire $(1)$, il y a exactement deux vecteurs de plus haut poids à multiplication par un scalaire non nul près : $x_0^{-\alpha}(1)$ et $x_0^\alpha(p-1)$. De plus :
\begin{align*}
\Delta(F) x_0^{-\alpha}(1) 
\overset{(1)}{=} \; & \Delta(F) x_0^\alpha(p) \otimes x_0^+(2) 
\overset{\eqref{Eqn: coprod1}}{=} \left( K^{-1} \otimes F + F \otimes 1 \right) x_0^\alpha(p) \otimes x_0^+(2) \\
= \, \; & \alpha q^{-p+1} \; x_0^\alpha(p) \otimes x_1^+(2) + x_1^\alpha(p) \otimes x_0^+(2) \\
= \, \; & x_1^\alpha(p) \otimes x_0^+(2) - \alpha q \; x_0^\alpha(p) \otimes x_1^+(2) \\
\overset{(1)}{=} \; & x_0^\alpha(p-1).
\end{align*}
Il s'ensuit que les deux vecteurs de plus haut poids $x_0^{-\alpha}(1)$ et $x_0^\alpha(p-1)$ engendrent un sous-module isomorphe à  $\V^{-\alpha}(1)$. Donc :
\[  \V^{-\alpha}(1) \hookrightarrow \X^\alpha(p) \otimes \X^+(2). \]
Par ailleurs, comme $\X^\alpha(p) = \PIM^\alpha(p)$ est projectif, le produit tensoriel $\X^\alpha(p) \otimes \X^+(2)$ est aussi projectif (cf. la remarque \ref{Prop: proj produit}). Par conséquent, il se décompose en somme directe de PIMs (cf. par exemple \cite[Thm 56.6]{CR62}). D'après la remarque \ref{Rem: nbre PIMs} et la proposition \ref{Prop: PIMs}, il y a $2p$ classes d'isomorphisme de PIMs données par :
\[ \PIM^\pm(s'') = \vcenter{ \xymatrix @-2ex {
	\overset{\V^\pm(s'')}{\bullet} \ar[d]_-E \\
	\overset{\V^{\mp}(p-s'')}{\bullet} }} , 
	\quad 1 \leq s'' \leq p. \]
On en déduit que :
\[ \PIM^\alpha(p-1) \hookrightarrow \X^\alpha(p) \otimes \X^+(2) . \]
Enfin, les dimensions des $\C$-espaces vectoriels correspondants sont égales. Ils sont donc isomorphes.
\end{Cas}
\end{proof}

\begin{Rem} 
\label{Rem: facteurs simples}
Pour tout $s \in \{1,...,p\}$, le module simple $\X^+(s)$ est facteur direct de $\X^+(2)^{\otimes(s-1)}$ (récurrence immédiate).
\end{Rem}

\begin{Lemme} 
\label{Lemme: produits simples2}
Soient $\alpha \in \{+,-\}$ et $s,s' \in \{1,...,p\}$. Les facteurs directs du $\Uq$-module $\X^\alpha(s) \otimes \X^+(s')$ sont des modules simples ou des PIMs.
\end{Lemme}

\begin{proof}
D'après la remarque \ref{Rem: facteurs simples}, on sait que  $\X^+(s')$ est un facteur direct de $\X^+(2)^{\otimes (s'-1)}$. Il existe donc un $\Uq$-module $\mathcal{Y}$ tel que :
\[ \X^\alpha(s) \otimes \X^+(2)^{\otimes (s'-1)}
\cong \left( \X^\alpha(s) \otimes \X^+(s') \right) \oplus \mathcal{Y}. \] 
Il suffit alors de montrer le résultat pour $\X^\alpha(s) \otimes \X^+(2)^{\otimes (s'-1)}$. 

Pour cela, on procède par induction sur la puissance $k \in \{0,...,s'-1\}$ de $\X^+(2)$. Le résultat est immédiat pour $k=0$ car $\X^\alpha(s) \otimes \C \cong \X^\alpha(s)$ est un module simple. Soit $k \in \{0,...,s'-1\}$. On suppose que les facteurs directs de $\X^\alpha(s) \otimes \X^+(2)^{\otimes k}$ sont soit des modules simples, soit des PIMs. Pour tout $s'' \in \{1,...,p\}$, on note $\lambda^\pm_{s''}$ la multiplicité du facteur $\X^\pm(s'')$ et $\mu^\pm(s'')$ celle de $\PIM^\pm(s'')$ (cf. la remarque \ref{Rem: nbre PIMs}). Alors, on a :
\[ \X^\alpha(s) \otimes \X^+(2)^{\otimes k} \cong \bigoplus_{\substack{1 \leq s'' \leq p \\ \alpha'' \in \{+,-\} \\ \lambda_{s'}^{\alpha''} \not = 0}} \lambda^{\alpha''}_{s''} \X^{\alpha''}(s'') \oplus \bigoplus_{\substack{1 \leq s'' \leq p-1 \\ \alpha'' \in \{+,-\} \\ \mu_{s''}^{\alpha''} \not = 0}} \mu^{\alpha''}_{s''} \PIM^{\alpha''}(s''). \]
Il s'ensuit que :
\begin{align*}
\X^\alpha(s) \otimes \X^+(2)^{\otimes k+1} \cong& \bigoplus_{\substack{1 \leq s'' \leq p \\ \alpha'' \in \{+,-\} \\ \lambda_{s''}^{\alpha''} \not = 0}} \lambda^{\alpha''}_{s''} \left( \X^{\alpha''}(s'') \otimes \X^+(2) \right) \\
&\qquad \oplus \bigoplus_{\substack{1 \leq s'' \leq p-1 \\ \alpha'' \in \{+,-\} \\ \mu_{s''}^{\alpha''} \not = 0}} \mu^{\alpha''}_{s''} \left( \PIM^{\alpha''}(s'') \otimes \X^+(2) \right).
\end{align*}
D'après le lemme \ref{Lemme: produits simples1}, pour tout $s'' \in \{1,...,p\}$, les facteurs directs du module $\X^\pm(s'') \otimes \X^+(2)$ sont soit des modules simples, soit des PIMs. D'autre part, pour tout $s'' \in \{1,...,p\}$, le module $\PIM^\pm(s'') \otimes \X^+(2)$ est projectif car $\PIM^\pm(s'')$ l'est (cf. remarque \ref{Prop: proj produit}). Or, d'après le théorème de Krull-Schmidt, tout module projectif se décompose en somme directe de PIMs (cf. par exemple \cite[Thm 56.6]{CR62}). D'où le résultat.
\end{proof}

\begin{Thm} 
\label{Thm: produits simples}
Soient $\alpha \in \{+,-\}$ et $s,s' \in \{1,...,p\}$. On a :
\[ \X^\alpha(s) \otimes \X^+(s') \cong 
	\bigoplus_{\substack{s''=|s-s'|+1 \\ \mathrm{pas}=2}}^{p-1-|p-s-s'|} \X^\alpha(s'') 
	\oplus \bigoplus_{\substack{s''=2p-s-s'+1 \\ \mathrm{pas}=2}}^p \PIM^\alpha(s''). \]
\end{Thm}

\begin{proof}
A l'aide du lemme \ref{Lemme: produits simples2}, on généralise le procédé utilisé dans la preuve du lemme \ref{Lemme: produits simples1} donnant la décomposition pour $s'=2$. 

On commence par déterminer les vecteurs de poids de $\X^\alpha(s) \otimes \X^+(s')$. On se donne $\{ x_n^\alpha(s) \; ; \; 0 \leq n \leq s-1 \}$ la $\C$-base canonique de $\X^\alpha(s)$ et $\{ x_m^+(s') \; ; \; 0 \leq m \leq s'-1 \}$ celle 
$\X^+(s')$ (cf. la proposition \ref{Prop: simples}). Sachant que $\Delta(K) = K \otimes K$ \eqref{Eqn: coprod1}, les vecteurs de poids de $\X^\alpha(s) \otimes \X^+(s')$ sont de la forme :
\[ x(s+s'-1-2k) := \sum_{\substack{0 \leq n \leq s-1 \\ 0 \leq m \leq s'-1 \\ n+m=k}} C_{n,m} \; x_n^\alpha(s) \otimes x_m^+(s'), 
	\quad 0 \leq k \leq s+s'-2, \]
où $C_{n,m}$ est une constante pour tous $n \in \{0,...,s-1\}$ et $m \in \{0,...,s'-1\}$. 

Cherchons lesquels sont de plus haut poids. On fixe $k \in \{0,...,s+s'-2\}$ et on pose $s'' = s+s'-1-2k$. On a :
\begin{align*}
\Delta(K) x(s'') \overset{\eqref{Eqn: coprod1}}{=}& K \otimes K \; x(s'') = \alpha q^{s''-1} x(s''), \\
\Delta(E) x(s'') 
\overset{\eqref{Eqn: coprod1}}{=}& \sum_{\substack{0 \leq n \leq s-1 \\ 0 \leq m \leq s'-1 \\ n+m=k}} C_{n,m} \left( 1 \otimes E + E \otimes K \right) x_n^\alpha(s) \otimes x_m^+(s') \\
= \; \; & \sum_{\substack{0 \leq n \leq s-1 \\ 1 \leq m \leq s'-1 \\ n+m=k}} C_{n,m} [m] [s'-m] x_n^\alpha(s) \otimes x_{m-1}^+(s') \\
& + \sum_{\substack{1 \leq n \leq s-1 \\ 0 \leq m \leq s'-1 \\ n+m=k}} C_{n,m} \alpha q^{s'-1-2m} [n] [s-n] x_{n-1}^\alpha(s) \otimes x_m^+(s') \\
= \; \; & \sum_{\substack{0 \leq n \leq s-1 \\ 0 \leq m \leq s'-2 \\ n+m=k-1}} C_{n,m+1} [m+1] [s'-m-1] x_n^\alpha(s) \otimes x_m^+(s') \\
& + \sum_{\substack{0 \leq n \leq s-2 \\ 0 \leq m \leq s'-1 \\ n+m=k-1}} C_{n+1,m} \alpha q^{s'-1-2m} [n+1] [s-n-1] x_n^\alpha(s) \otimes x_m^+(s').
\end{align*}
L'équation $\Delta(E)x(s'')=0$ donne un vecteur $x(s'')$ non nul si $k \in \{0,...,\min(s,s')-1\}$.  Dans ce cas, $s'' \in \{|s-s'|+1,|s-s'|+3,...,s+s'-1\}$ et l'équation $\Delta(E)x(s'')$ est équivalente à :
\[ C_{n,m} = (-\alpha)^m q^{m(s'-m)} \frac{[k]! [s-1-n]! [s'-1-m]!}{[n]! [s-1-k]! [m]! [s'-1]!} C_{k,0} , \]
pour tous $n \in \{0,...,s-2\}$ et $m \in \{0,...,s'-2\}$ tels que $n+m=k$. Pour tout $k \in \{0,...,\min(s,s')-1\}$, on choisit un vecteur $x(s'')$ solution de cette équation tel que :
\[ C_{k,0}=1 ; \]
donc $x(s'')$ est un vecteur de plus haut poids $\alpha q^{s''-1}$. Par conséquent, il y a $\min(s,s')$ vecteur(s) de plus haut poids :
\begin{equation} \tag{1}
\begin{aligned}
&\begin{multlined}[t][0.9\textwidth]
	x_0^\alpha(s'') := x(s''), \quad s'' \in \{|s-s'|+1, |s-s'|+3, ..., s+s'-1\}, \\ 	\text{ si } s+s' \leq p+1, 
	\end{multlined} \\
&\begin{multlined}[t][0.9\textwidth]
	x_0^\alpha(s'') := x(s''), \quad s'' \in \{|s-s'|+1, |s-s'|+3, ..., b(s,s')\}, \\
	\text{ si } s+s' \geq p+2,
	\end{multlined} \\
&\begin{multlined}[t][0.9\textwidth]
	x_0^{-\alpha}(s''-p) := x(s''), \quad s'' \in \{b(s,s')+2, b(s,s')+4, ..., s+s'-1\}, \\ 
	\text{ si } s+s' \geq p+2.
	\end{multlined} \\
\end{aligned}
\end{equation}
à multiplication par un scalaire non nul près, où :
\[ b(s,s') = \begin{cases}
	p-1 & \text{ si } s+s' = p \mod 2, \\
	p & \text{ si } s+s' = p-1 \mod 2. 
	\end{cases} \]

D'après le corollaire \ref{Cor: php/pbp}, l'action de $\Delta(F)$ sur ces vecteurs de plus haut poids engendre des modules respectivement isomorphes à :
\begin{align*}
&\begin{multlined}[t][0.9\textwidth]
	\X^\alpha(s'') \text{ ou } \V^\alpha(s''), \quad s'' \in \{|s-s'|+1, |s-s'|+3, ..., s+s'-1\}, \\
	\text{ si } s+s' \leq p+1, 
	\end{multlined} \\
&\begin{multlined}[t][0.9\textwidth]
	\X^\alpha(s'') \text{ ou } \V^\alpha(s''), \quad s'' \in \{|s-s'|+1, |s-s'|+3, ..., b(s,s')\}, \\
	\text{ si } s+s' \geq p+2, 
	\end{multlined} \\
&\begin{multlined}[t][0.9\textwidth]
	\X^{-\alpha}(p-s'') \text{ ou } \V^{-\alpha}(p-s''), \\
	s'' \in \{2p-s-s'+1, 2p-s-s'+3, ..., 2p-b(s,s')-2\},
	\quad \text{ si } s+s' \geq p+2.
	\end{multlined}
\end{align*}
Or, d'après le lemme \ref{Lemme: produits simples2}, les facteurs directs de $\X^\alpha(s) \otimes \X^+(s')$ sont soit des modules simples, soit des PIMs. De plus, d'après la remarque \ref{Rem: nbre PIMs} et la proposition \ref{Prop: PIMs}, il y a $2p$ classes d'isomorphisme de PIMs données par :
\[ \PIM^\pm(s'') = \vcenter{ \xymatrix @-2ex {
	\overset{\V^\pm(s'')}{\bullet} \ar[d]_-E \\
	\overset{\V^{\mp}(p-s'')}{\bullet} }} , 
	\quad 1 \leq s'' \leq p. \]
On en déduit que les facteurs directs de $\X^\alpha(s) \otimes \X^+(s')$ sont isomorphes à :
\begin{align*}
&\begin{multlined}[t][0.98\textwidth]
	\X^\alpha(s'') \text{ ou } \PIM^{-\alpha}(p-s''), \quad s'' \in \{|s-s'|+1, |s-s'|+3, ..., s+s'-1\}, \\
	\text{ si } s+s' \leq p+1, 
	\end{multlined} \\
&\begin{multlined}[t][0.98\textwidth]
	\X^\alpha(s'') \text{ ou } \PIM^{-\alpha}(p-s''), \quad s'' \in \{|s-s'|+1, |s-s'|+3, ..., b(s,s')\}, \\
	\text{ si } s+s' \geq p+2, 
	\end{multlined} \\
&\begin{multlined}[t][0.9\textwidth]
	\X^{-\alpha}(p-s'') \text{ ou } \PIM^\alpha(s''), \\
	s'' \in \{2p-s-s'+1, 2p-s-s'+3, ..., 2p-b(s,s')-2\},
	\quad \text{ si } s+s' \geq p+2.
	\end{multlined}
\end{align*}
Afin de départager ces structures possibles, on compte le nombre de vecteurs de plus haut poids et on identifie les couples liés sous $\Uq$. On distingue deux cas.

\begin{Cas} On suppose que $s+s' \leq p+1$.
D'après l'inventaire $(1)$, le(s) vecteur(s) de plus haut poids sont : 
\[ x^\alpha(s''), \quad s'' \in \{|s-s'|+1, |s-s'|+3,...,s+s'-1\} , \]
à multiplication par un scalaire non nul près. Sous $\Uq$, ils sont deux à deux indépendants et engendrent donc des sous-modules respectivement isomorphes à :
\[ \X^\alpha(s''), \quad s'' \in \{|s-s'|+1, |s-s'|+3,...,s+s'-1\} . \]
Il s'ensuit que :
\[ \X^\alpha(s) \otimes \X^+(s') \cong 
	\bigoplus_{\substack{s''=|s-s'|+1 \\ \mathrm{pas}=2}}^{s+s'-1} \X^\alpha(s'') . \]
\end{Cas}

\begin{Cas} On suppose que $s+s' \geq p+2$.
D'après l'inventaire $(1)$, les vecteurs de plus haut poids sont : 
\begin{equation} \tag{2}
\begin{aligned}
& x_0^\alpha(s''), \quad s'' \in \{|s-s'|+1, |s-s'|+3, ..., b(s,s')\}, \\
& x_0^{-\alpha}(p-s''), \quad s'' \in \{2p-s-s'+1, 2p-s-s'+3, ..., 2p-b(s,s')-2\},
\end{aligned}
\end{equation}
à multiplication par un scalaire non nul près, où :
\[ |s-s'|+1 \leq 2p-s-s'+1 \leq 2p-b(s,s')-2 \leq b(s,s') . \]
Parmi ceux-ci, les vecteurs de plus hauts poids :
\begin{align*}
& x_0^\alpha(s''), \quad s'' \in \{|s-s'|+1, |s-s'|+3, ..., 2p-s-s'-1\}, && \\
& x_0^\alpha(p) && \text{ si } b(s,s')=p,
\end{align*}
sont deux à deux indépendants et ne sont pas liés aux autres sous $\Uq$. Ils engendrent donc des facteurs directs respectivement isomorphes à :
\begin{align*}
& \X^\alpha(s''), \quad s'' \in \{|s-s'|+1, |s-s'|+3, ..., 2p-s-s'-1\}, && \\
& \X^\alpha(p) && \text{ si } b(s,s')=p.
\end{align*}
Reste à étudier la structure engendrée par les couples :
\[ \left( x_0^\alpha(s''), x_0^{-\alpha}(p-s'') \right), \quad s'' \in \{2p-s-s'+1, 2p-s-s'+3, ..., 2p-b(s,s')-2\}. \]
On fixe $s'' \in \{2p-s-s'+1, |s-s'|+3, ..., s+s'-1\}$. D'après les équations\eqref{Eqn: comm1} et \eqref{Eqn: comm2} sur les commutateurs, les vecteurs $\Delta(F^{s''}) x_0^\alpha(s'')$ et $\Delta(F^{p-s''}) x_0^{-\alpha}(p-s'')$ sont nuls ou de plus haut poids $\alpha q^{s''-1}$. Or, d'après les équations \eqref{Eqn: coprod2}, pour tout $M \in \N$ :
\begin{align*}
\Delta(F^{M}) x(s'')
&= \sum_{\substack{0 \leq n \leq s- 1 \\ 0 \leq m \leq s'-1 \\ 2(n+m)=s+s'-1-s''}} C_{n,m} \sum_{r=0}^{M}   { M \brack r } q^{r(M-r)} F^r K^{r-M} x_n^\alpha(s) \otimes F^{M-r} x_m^+(s') \\
&= \sum_{\substack{0 \leq n \leq s-1 \\ 0 \leq m \leq s'-1 \\ 2(n+m)=s+s'-1-s''}}
	\begin{multlined}[t]
	 C_{n,m} \sum_{r=0}^{M} {M \brack r } \alpha^{r-M} q^{(r-s+1+2n)(M-r)} \\
	\times \delta_{m+M-s'+1 \leq r \leq s-1-n} \; x_{n+r}^\alpha(s) \otimes x_{m+M-r}^+(s').
	\end{multlined}
\end{align*}
Donc $\Delta(F^{s''}) x_0^\alpha(s'')$ est nul, et $\Delta(F^{p-s''}) x_0^{-\alpha}(p-s'')$ est un vecteur de plus haut poids $\alpha q^{s''-1}$. D'après l'inventaire $(2)$, il existe $\lambda \in \C^*$ tel que :
\[ \Delta(F^{p-s''}) x_0^{-\alpha}(p-s'') = \lambda x_0^\alpha(s''). \]
Il s'ensuit que les deux vecteurs de plus haut poids $x_0^\alpha(s'')$ et $x_0^{-\alpha}(p-s'')$ engendrent un sous-module isomorphe à $\V^{-\alpha}(p-s'')$ dans $\PIM^\alpha(s'')$.
Par conséquent, les couples de vecteurs de plus haut poids étudiés contribuent à des facteurs directs isomorphes à :
\[ \PIM^\alpha(s''), \quad s'' \in \{2p-s-s'+1, 2p-s-s'+3, ..., 2p-b(s,s')-2\}. \]
En regroupant les facteurs directs obtenus, on obtient :
\begin{align*}
\X^\alpha(s) \otimes \X^+(s') \cong& \bigoplus_{\substack{s''=|s-s'|+1 \\ \mathrm{pas}=2}}^{2p-s-s'-1} \X^\alpha(s'') \oplus \delta_{b(s,s'),p} \X^\alpha(p) \oplus \bigoplus_{\substack{s''=2p-s-s'-1 \\ \mathrm{pas}=2}}^{2p-b(s,s')-2} \PIM^\alpha(s'') \\
\cong& \bigoplus_{\substack{s''=|s-s'|+1 \\ \mathrm{pas}=2}}^{2p-s-s'-1} \X^\alpha(s'') \oplus \delta_{b(s,s'),p} \X^\alpha(p) \oplus \bigoplus_{\substack{s''=2p-s-s'-1 \\ \mathrm{pas}=2}}^{p-1} \PIM^\alpha(s'') \\
\cong& \bigoplus_{\substack{s''=|s-s'|+1 \\ \mathrm{pas}=2}}^{2p-s-s'-1} \X^\alpha(s'') \oplus \bigoplus_{\substack{s''=2p-s-s'-1 \\ \mathrm{pas}=2}}^{p} \PIM^\alpha(s'') ,
\end{align*}
car $\X^\pm(p) = \PIM^\pm(p)$.
\end{Cas}
La formule donnée dans l'énoncé regroupe les cas 1 et 2. D'où le résultat.
\end{proof}

\begin{Cor} 
\label{Cor: produits simples}
Soient $\alpha \in \{+,-\}$ et $s,s' \in \{1,...,p\}$. On a :
\[ \X^\alpha(s) \otimes \X^{\alpha'}(s') \cong 
	\bigoplus_{\substack{s''=|s-s'|+1 \\ \mathrm{pas}=2}}^{p-1-|p-s-s'|} \X^{\alpha \alpha'}(s'') 
	\oplus \bigoplus_{\substack{s''=2p-s-s'+1 \\ \mathrm{pas}=2}}^p \PIM^{\alpha \alpha'}(s''). \]
\end{Cor}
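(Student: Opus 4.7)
\medskip

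The plan is to reduce the corollary to the theorem \ref{Thm: produits simples} via the tensor trick already introduced at equation~\eqref{Eqn: produits simples}. The key observation is that the module $\X^{\alpha'}(1)$ is one-dimensional with $K$ acting by $\alpha'$, so tensoring by it is just a sign twist on weights. More precisely, one first checks that $\X^{\alpha'}(1) \otimes \X^+(s') \cong \X^{\alpha'}(s')$: both sides have dimension $s'$, and the image of the highest weight vector of $\X^+(s')$ is a highest weight vector of weight $\alpha'q^{s'-1}$, so one invokes Corollary~\ref{Cor: php/pbp} and compares dimensions.

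Having this, associativity and commutativity of $\otimes$ up to isomorphism yield
\[
\X^\alpha(s) \otimes \X^{\alpha'}(s')
\cong \X^\alpha(s) \otimes \X^{\alpha'}(1) \otimes \X^+(s')
\cong \X^{\alpha\alpha'}(s) \otimes \X^+(s'),
\]
where the last step uses the same sign-twist identity applied to $\X^\alpha(s) \otimes \X^{\alpha'}(1)$ (this is exactly the computation done for $(s,\alpha)=(s,+)$ at the beginning of \S\ref{subsection: produits simples}, which works verbatim for general $s$ since all weights of $\X^\alpha(s)$ get multiplied by $\alpha'$ and the action of $E,F$ is unchanged).

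Then I simply apply Theorem~\ref{Thm: produits simples} with $\alpha$ replaced by $\alpha\alpha'$:
\[
\X^{\alpha\alpha'}(s) \otimes \X^+(s')
\cong \bigoplus_{\substack{s''=|s-s'|+1 \\ \mathrm{pas}=2}}^{p-1-|p-s-s'|} \X^{\alpha\alpha'}(s'')
\oplus \bigoplus_{\substack{s''=2p-s-s'+1 \\ \mathrm{pas}=2}}^{p} \PIM^{\alpha\alpha'}(s''),
\]
which is exactly the stated formula. There is no real obstacle: the only point requiring care is the identification $\X^{\alpha'}(1) \otimes \mathcal{M} \cong \mathcal{M}^{\mathrm{tw}}$ (sign twist) for $\mathcal{M}$ a simple or a PIM; for a simple module this follows from the highest-weight argument above, and since the twist preserves short exact sequences of $\Uq$-modules, it sends $\PIM^\alpha(s)$ to $\PIM^{-\alpha}(s)$ by uniqueness of the maximal non-trivial extension (Proposition~\ref{Prop: PIMs}). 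This last verification is however not strictly needed here since we apply the twist only to the simple $\X^\alpha(s)$ before invoking the theorem.
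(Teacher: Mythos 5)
Votre démonstration est correcte et suit essentiellement la même démarche que celle du texte : on réduit au cas $\alpha'=+$ par le twist $\X^{\alpha'}(1)\otimes -$ via les identifications \eqref{Eqn: produits simples}, puis on applique le théorème \ref{Thm: produits simples} avec $\alpha$ remplacé par $\alpha\alpha'$. Vous explicitez simplement les vérifications (caractère unidimensionnel de $\X^{\alpha'}(1)$, argument de plus haut poids via le corollaire \ref{Cor: php/pbp}) que le texte laisse implicites.
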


\begin{proof}
Le résultat découle directement du théorème \ref{Thm: produits simples} et des identifications \eqref{Eqn: produits simples}.
\end{proof}

\begin{Rem} 
\label{Rem: comm produit}
Le résultat du corollaire \ref{Cor: produits simples} est symétrique en les couples de variables $(\alpha,s)$ et $(\alpha',s')$, donc :
\[ \X^{\alpha'}(s') \otimes \X^\alpha(s) \cong \X^\alpha(s) \otimes \X^{\alpha'}(s'). \]
Par construction, il en sera de même pour tous les produits tensoriels prochainement décrits.
\end{Rem}

\subsection{Produits de modules simples-PIMs}

Soient $\alpha, \alpha' \in \{+,-\}$ et $s, s' \in \{1,...,p\}$. On étudie maintenant le produit tensoriel $\X^\alpha(s) \otimes \PIM^{\alpha'}(s')$. Pour cela, on construit d'abord une extension de $\X^\alpha(s) \otimes \X^{\alpha'}(s')$ par $\X^\alpha(s) \otimes \X^{-\alpha'}(p-s')$, puis une extension de $\X^\alpha(s) \otimes \V^{\alpha'}(s')$ par $\X^\alpha(s) \otimes \V^{-\alpha'}(p-s')$.

\begin{Lemme} 
\label{Lemme: produits simples-Verma}
Soient $\alpha, \alpha' \in \{+,-\}$ et $s,s' \in \{1,...,p\}$. On a :
\begin{align*}
\X^\alpha(s) \otimes \V^{\alpha'}(s') \cong& \bigoplus_{\substack{s''=|s-s'|+1 \\ \mathrm{pas}=2}}^{p-1-|p-s-s'|} \V^{\alpha \alpha'}(s'')
\oplus \bigoplus_{\substack{s''=p-s+s'+1 \\ \mathrm{pas}=2}}^p \PIM^{-\alpha \alpha'}(s'')
\oplus \bigoplus_{\substack{s''=2p-s-s'+1 \\ \mathrm{pas}=2}}^p \PIM^{\alpha \alpha'}(s'').
\end{align*}
\end{Lemme}

\begin{proof}
Par construction (cf. la proposition \ref{Prop: Verma}), le module de Verma $\V^{\alpha'}(s')$ est une extension de $\X^{\alpha'}(s')$ par $\X^{-\alpha'}(p-s')$ telle que :
\[ \V^{\alpha'}(s') = \vcenter{ \xymatrix @-2ex {
	\overset{\X^{\alpha'}(s')}{\bullet} \ar[d]^-F \\
	\overset{\X^{-\alpha'}(p-s')}{\bullet} }} . \]
A la vue des expressions des coproduits $\Delta(E)$, $\Delta(F)$ et $\Delta(K)$ \eqref{Eqn: coprod1} et de la structure de module de $\X^\alpha(s)$ (cf. la proposition \ref{Prop: simples}), le produit tensoriel $\X^\alpha(s) \otimes \V^{\alpha'}(s')$ est une extension de $\X^\alpha(s) \otimes \X^{\alpha'}(s')$ par $\X^\alpha(s) \otimes \X^{-\alpha}(p-s)$ telle que :
\begin{equation} \tag{1}
\X^\alpha(s) \otimes \V^{\alpha'}(s') = \vcenter{ \xymatrix @-2ex {
	\overset{\X^\alpha(s) \otimes \X^{\alpha'}(s')}{\bullet} \ar[d]^-{\Delta(F)} \\
	\overset{\X^\alpha(s) \otimes \X^{-\alpha}(p-s)}{\bullet} }} .
\end{equation}

D'après le corollaire \ref{Cor: produits simples}, on connait les facteurs directs de $\X^\alpha(s) \otimes \X^{\alpha'}(s')$ et $\X^\alpha(s) \otimes \X^{-\alpha}(p-s)$ :
\begin{align}
\X^\alpha(s) \otimes \X^{-\alpha'}(p-s') 
	\cong & \bigoplus_{\substack{s''=|s+s'-p|+1 \\ \mathrm{pas}=2}}^{p-1-|s'-s|} \X^{-\alpha \alpha'}(s'') 
	\oplus \bigoplus_{\substack{s''=p-s+s'+1 \\ \mathrm{pas}=2}}^p \PIM^{-\alpha \alpha'}(s'') \\
	\cong & \bigoplus_{\substack{s''=|s'-s|+1 \\ \mathrm{pas}=2}}^{p-1-|s+s'-p|} \X^{-\alpha \alpha'}(p-s'')
	\oplus \bigoplus_{\substack{s''=p-s+s'+1 \\ \mathrm{pas}=2}}^p \PIM^{-\alpha \alpha'}(s''), \tag{2} \\ 
\X^\alpha(s) \otimes \X^{\alpha'}(s') 
\cong & \bigoplus_{\substack{s''=|s-s'|+1 \\ \mathrm{pas}=2}}^{p-1-|p-s-s'|} \X^{\alpha \alpha'}(s'') 
\oplus \bigoplus_{\substack{s''=2p-s-s'+1 \\ \mathrm{pas}=2}}^p \PIM^{\alpha\alpha'}(s''). \tag{3}
\end{align}
D'après le théorème de Krull-Schmidt (cf. par exemple \cite[Thm 14.5]{CR62}), il s'agit donc de déterminer des classes d'extension indécomposables des facteurs directs de $(2)$ par des facteurs directs de $(3)$ qui vérifient $(1)$. 

Or, par construction, les PIMs n'ont pas d'extension non triviale, et les classes des modules de Verma $\V^{\alpha \alpha'}(s'')$ et $\Vb^{\alpha \alpha'}(s'')$, $1 \leq s'' \leq p$, forment une base du $\C$-espace vectoriel des classes d'extensions de $\X^{\alpha \alpha'}(s'')$ par $\X^{- \alpha \alpha'}(p-s'')$ (cf. lemme \ref{Lemme: Verma}) :
\[ \V^{\alpha \alpha'}(s'') = \vcenter{ \xymatrix @-2ex {
	\overset{\X^{\alpha \alpha'}(s'')}{\bullet} \ar[d]^-F \\
	\overset{\X^{-\alpha \alpha'}(p-s'')}{\bullet} }}, \quad
\Vb^{\alpha \alpha'}(s'') = \vcenter{ \xymatrix @-2ex {
	 \overset{\X^{\alpha \alpha'}(s'')}{\bullet} \ar[d]^-E \\
	 \overset{\X^{-\alpha \alpha'}(p-s'')}{\bullet} }}. \]
On en déduit que :
\begin{align*}
\X^\alpha(s) \otimes \V^{\alpha'}(s') 
\cong & \bigoplus_{\substack{s''=|s-s'|+1 \\ \mathrm{pas}=2}}^{p-1-|p-s-s'|} \V^{\alpha \alpha'}(s'')
\bigoplus_{\substack{s''=p-s+s'+1 \\ \mathrm{pas}=2}}^p \PIM^{-\alpha \alpha'}(s'')
\oplus \bigoplus_{\substack{s''=2p-s-s'+1 \\ \mathrm{pas}=2}}^p \PIM^{\alpha \alpha'}(s'').
\end{align*}
\end{proof}

\begin{Rem} 
Le lemme \ref{Lemme: produits simples-Verma} est aussi valable en remplaçant le module de Verma $\V^{\alpha'}(s')$ par le module de Verma contragrédient $\Vb^{\alpha'}(s')$.
\end{Rem}

\begin{Thm} 
\label{Thm: produits simples-PIMs}
Soient $\alpha, \alpha' \in \{+,-\}$ et $s,s' \in \{1,...,p\}$. On a :
\begin{align*}
\X^\alpha(s) \otimes \PIM^{\alpha'}(s')
\cong&  \bigoplus_{\substack{s''=|s-s'|+1 \\ \mathrm{pas}=2}}^{p-1-|p-s-s'|} \PIM^{\alpha \alpha'}(s'')
\oplus \bigoplus_{\substack{s''=2p-s-s'+1 \\ \mathrm{pas}=2}}^p 2 \PIM^{\alpha \alpha'}(s'') \\
&\qquad \oplus \bigoplus_{\substack{s''=p-s+s'+1 \\ \mathrm{pas}=2}}^p 2 \PIM^{-\alpha \alpha'}(s'').
\end{align*}
\end{Thm}

\begin{proof}
Par construction (cf. la proposition \ref{Prop: PIMs}), le PIM $\PIM^{\alpha'}(s')$ est une extension de $\V^{\alpha'} (s')$ par $\V^{-\alpha'}(p-s')$ telle que :
\[ \PIM^{\alpha'}(s') = \vcenter{ \xymatrix @-2ex {
	\overset{\V^{\alpha'}(s')}{\bullet} \ar[d]^-E \\
	\overset{\V^{-\alpha'}(p-s')}{\bullet} }} . \]
A la vue des expressions des coproduits $\Delta(E)$, $\Delta(F)$ et $\Delta(K)$ \eqref{Eqn: coprod1} et de la structure de module de $\X^\alpha(s)$, le produit tensoriel $\X^\alpha(s) \otimes \PIM^{\alpha'}(s')$ est une extension de $\X^\alpha(s) \otimes \V^{\alpha'}(s')$ par $\X^\alpha(s) \otimes \V^{-\alpha'}(p-s')$ telle que :
\begin{equation} \tag{1}
\X^\alpha(s) \otimes \PIM^{\alpha'}(s') = \vcenter{ \xymatrix @-2ex {
	 \\
	\overset{\X^\alpha(s) \otimes \V^{\alpha'}(s')}{\bullet} \ar[d]^-{\Delta(E)} \\
	\overset{\X^\alpha(s) \otimes \V^{-\alpha}(p-s)}{\bullet} }} .
\end{equation}
	
D'après le lemme \ref{Lemme: produits simples-Verma}, on connaît les facteurs directs de $\X^\alpha(s) \otimes \V^{\alpha'}(s')$ et $\X^\alpha(s) \otimes \V^{-\alpha'}(p-s')$ :
\begin{align}
& \begin{aligned}
	\X^\alpha(s) \otimes \V^{-\alpha'}(p-s') 
\cong & \bigoplus_{\substack{s''=|s+s'-p|+1 \\ \mathrm{pas}=2}}^{p-1-|s'-s|} \V^{-\alpha \alpha'}(s'') 
	\oplus \bigoplus_{\substack{s''=2p-s-s'+1 \\ \mathrm{pas}=2}}^p \PIM^{\alpha \alpha'}(s'') \\
	&\qquad \oplus \bigoplus_{\substack{s''=p-s+s'+1 \\ \mathrm{pas}=2}}^p \PIM^{-\alpha \alpha'}(s'') \\
	\cong & \bigoplus_{\substack{s''=|s'-s|+1 \\ \mathrm{pas}=2}}^{p-1-|s+s'-p|} \V^{- \alpha \alpha'}(p-s'') 
	\oplus \bigoplus_{\substack{s''=2p-s-s'+1 \\ \mathrm{pas}=2}}^p \PIM^{\alpha \alpha'}(s'') \\
	&\qquad \oplus \bigoplus_{\substack{s''=p-s+s'+1 \\ \mathrm{pas}=2}}^p \PIM^{-\alpha \alpha'}(s''),
	\end{aligned} \tag{2} \\
& \begin{aligned}
	\X^\alpha(s) \otimes \V^{\alpha'}(s') 
	\cong& \bigoplus_{\substack{s''=|s-s'|+1 \\ \mathrm{pas}=2}}^{p-1-|p-s-s'|} \V^{\alpha \alpha'}(s'') 
	\oplus \bigoplus_{\substack{s''=p-s+s'+1 \\ \mathrm{pas}=2}}^p \PIM^{-\alpha \alpha'}(s'') \\
	&\qquad \oplus \bigoplus_{\substack{s''=2p-s-s'+1 \\ \mathrm{pas}=2}}^{p-1} \PIM^{\alpha \alpha'}(s''). 	\end{aligned} \tag{3}
\end{align}
De plus, comme $\PIM^{\alpha'}(s')$ est projectif, le produit tensoriel $\X^\alpha(s) \otimes \PIM^{\alpha'}(s')$ est aussi projectif (cf. la remarque \ref{Prop: proj produit}). D'après un raffinement du théorème de Krull-Schmidt (cf. par exemple \cite[Thm 56.6]{CR62}), il s'agit donc de déterminer les PIMs qui réalisent une extension des facteurs directs de $(2)$ par des facteurs directs de $(3)$ suivant $(1)$.

Or, d'après la remarque \ref{Rem: nbre PIMs}, il y a $2p$ classes d'isomorphisme de PIMs données par :
\[ \PIM^{\alpha \alpha'}(s'') = \vcenter{ \xymatrix @-2ex {
		\overset{\V^{\alpha \alpha'}(s'')}{\bullet} \ar[d]_-E \\
		\overset{\V^{-\alpha \alpha'}(p-s'')}{\bullet} }}, 
	\quad 1 \leq s'' \leq p \; ; \]
lesquelles n'ont pas d'extension non triviale. On en déduit que :
\begin{align*}
\X^\alpha(s) \otimes \PIM^{\alpha'}(s') 
\cong & \bigoplus_{\substack{s''=|s-s'|+1 \\ \mathrm{pas}=2}}^{p-1-|p-s-s'|} \PIM^{\alpha \alpha'}(s'')
\oplus \bigoplus_{\substack{s''=2p-s-s'+1 \\ \mathrm{pas}=2}}^p 2 \PIM^{\alpha \alpha'}(s'') \\
&\qquad \oplus \bigoplus_{\substack{s''=p-s+s'+1 \\ \mathrm{pas}=2}}^p 2 \PIM^{-\alpha \alpha'}(s'').
\end{align*}
\end{proof}

Pour des raisons de symétrie énoncées précédemment dans la remarque \ref{Rem: comm produit}, on sait que :
\[ \PIM^{\alpha'}(s') \otimes \X^\alpha(s) \cong \X^\alpha(s) \otimes \PIM^{\alpha '}(s'). \]

\subsection{Produits de PIMs}

Soient $\alpha, \alpha' \in \{+,-\}$ et $s, s' \in \{1,...,p\}$. On étudie le produit tensoriel $\PIM^\alpha(s) \otimes \PIM^{\alpha'}(s')$. Pour cela, on construit d'abord une extension de $\X^\alpha(s) \otimes \PIM^{\alpha'}(s')$ par $\X^{-\alpha}(p-s) \otimes \PIM^{\alpha'}(s')$, puis une extension de $\V^\alpha(s) \otimes \PIM^{\alpha'}(s')$ par $\V^{-\alpha}(p-s) \otimes \PIM^{\alpha'}(s')$

\begin{Lemme} 
\label{Lemme: produits Verma-PIMs}
Soient $\alpha, \alpha' \in \{+,-\}$ et $s,s' \in \{1,...,p\}$. On a :
\begin{align*}
\V^\alpha(s) \otimes \PIM^{\alpha'}(s') 
\cong & \bigoplus_{\substack{s''=|s-s'|+1 \\ \mathrm{pas}=2}}^{p-1-|p-s-s'|} \left( \PIM^{-\alpha \alpha'}(p-s'') \oplus \PIM^{\alpha \alpha'}(s'') \right) \\
&\qquad \oplus \bigoplus_{\substack{s''=p+1-|s-s'| \\ \mathrm{pas}=2}}^p 2 \PIM^{-\alpha \alpha'}(s'')
\oplus \bigoplus_{\substack{s''=p+1-|p-s-s'| \\ \mathrm{pas}=2}}^p 2 \PIM^{\alpha \alpha'}(s'').
\end{align*}
\end{Lemme}

\begin{proof}
On procède comme dans la preuve du lemme \ref{Lemme: produits simples-Verma}, donnant les facteurs directs des produits tensoriels de modules simples par des modules de Verma.
\end{proof}

\begin{Rem} 
Le lemme \ref{Lemme: produits Verma-PIMs} est aussi valable en remplaçant le module de Verma $\V^{\alpha'}(s')$ par le module de Verma contragrédient $\Vb^{\alpha'}(s')$.
\end{Rem}

\begin{Thm} 
\label{Thm: produits PIMs}
Soient $\alpha, \alpha' \in \{+,-\}$ et $s,s' \in \{1,...,p\}$. On a :
\begin{align*}
\PIM^\alpha(s) \otimes \PIM^{\alpha'}(s') 
\cong & \bigoplus_{\substack{s''=|s-s'|+1 \\ \mathrm{pas}=2}}^{p-1-|p-s-s'|} \left( 2 \PIM^{-\alpha \alpha'}(p-s'') \oplus 2 \PIM^{\alpha \alpha'}(s'') \right) \\
&\qquad \oplus \bigoplus_{\substack{s''=p+1-|s-s'| \\ \mathrm{pas}=2}}^p 4 \PIM^{-\alpha \alpha'}(s'')
\oplus \bigoplus_{\substack{s''=p+1-|p-s-s'| \\ \mathrm{pas}=2}}^p 4 \PIM^{\alpha \alpha'}(s'').
\end{align*}
\end{Thm}

\begin{proof}
On procède comme dans la preuve du théorème \ref{Thm: produits simples-PIMs}, donnant les facteurs directs des produits tensoriels de modules simples par des PIMs.
\end{proof}
\chapter{Le centre du groupe quantique restreint}

En théorie des représentations, les PIMs d'une algèbre de dimension finie ont une étroite relation avec les éléments de son centre (cf. par exemple \cite[§ 55]{CR62}). Dans le cas des groupes quantiques quotients $\Uq$, associés à l'algèbre de lie $sl_2$ et aux racines $q$ de l'unité, chaque module simple ou couple de PIMs correspond canoniquement à un élément du centre. On obtient ainsi une \emph{base canonique} du centre dont on connaît le système de relations. La première description explicite de cette base est donnée par Kerler en 1995 pour les \emph{petits} groupes quantiques (associés aux racines \emph{impaires} de l'unité) grâce à des moyens calculatoires (cf. \cite[§ 3.2]{Ker95}). En 2005, Feigin, Gainutdinov, Semikhanov et Tipunin font le lien avec la théorie des représentations et en donnent la description pour les groupes quantiques \emph{restreints} (associés aux racines \emph{paires} de l'unité, cf. \cite[§ 4]{FGST06b}). Comme dans le chapitre précédent, on concentre notre travail sur le groupe quantique restreint associé à une racine primitive paire de l'unité. Dans ce deuxième chapitre, on suit l'approche de \cite{FGST06b} et on détaille leurs résultats. A cette occasion, on utilisera régulièrement les notations du chapitre I, et les descriptions des modules données dans la section \ref{section: reps}. Il sera alors facile de développer l'approche de Kerler en exprimant les éléments de cette base canonique à l'aide de polynômes en l'\emph{élément de Casimir}.

Toutefois, l'une ou l'autre de ces approches ne tient pas compte de la structure particulière des groupes quantiques quotients de $sl_2$. En effet, pour toute racine $q$ de l'unité, il est possible de réaliser le groupe quantique quotient $\Uq$ comme une sous-algèbre d'une algèbre de Hopf tressée et enrubannée (cf. par exemple \cite[§ VI-IX, § XIII]{Kas95}). Dans cette situation, on peut décrire une partie centrale $\Df$ à l'aide du morphisme de Drinfeld $\bchi$ introduit dans \cite[Prop. 3.3]{Dri90}. Pour le groupe quantique générique $U_h sl(2)$, cette méthode suffit à décrire le centre tout entier de $U_h sl(2)$ (cf. par exemple \cite[Thm 2]{Bau98}). Par contre, lorsque $h$ s'évalue en une racine $q$ de l'unité, on obtient le reste $\Rf$ du centre de $\Uq$ à l'aide du morphisme de Radford $\hbphi$ (inverse) introduit dans \cite[Prop. 3]{Rad90}. Cette idée, initialement proposée par Lachowska dans \cite{Lac03a} pour les \emph{petits} groupes quantiques (associés aux racines \emph{impaires} de l'unité), est également reprise dans \cite[§ 4]{FGST06b} pour les groupes quantiques \emph{restreints} (associés aux racines \emph{paires} de l'unité). On complète ce chapitre II avec des rappels généraux sur les algèbres Hopfs tressées et enrubannées, puis on détaille la réalisation du centre des groupes quantiques restreints par les morphismes de Drinfeld et de Radford à la manière de \cite{FGST06a}.

Comme on le verra ultérieurement dans le chapitre IV, les morphismes de Drinfeld et de Radford sont d'autant plus intéressants qu'ils permettent de retrouver le morphisme $\cS \cong \bchi \circ \hbphi^{-1}$ découvert dans \cite{LM94} comme le morphisme d'involution d'une représentation projective de $SL_2(\Z)$ sur le centre de groupes quantiques quotients. En outre, le sous-ensemble non-vide $\Df \cap \Rf$ réalise le plus grand sous-espace stable de $\cS$ (cf. \cite[Prop. 4.5.2]{FGST06b})

\minitoc

\section{La base canonique du centre}
\label{section: centre}

On cherche à décrire le centre $\Zf$ \index{Z@ $\Zf$} de $\Uq$. Pour cela, on considère la représentation régulière bilatère $\Regb$ \index{Regb@ $\Regb$} de $\Uq$ (en tant que $\Uq$-bimodule). L'ensemble de ses $\Uq$-endomorphismes est en bijection avec $\Zf$ via l'isomorphisme d'algèbres :
\[ \begin{cases}
	\End_{\Uq-\Uq}(\Regb) \overset{\sim}{\longrightarrow} \Zf \\
	\varphi \longmapsto \varphi(1)
	\end{cases} . \]
On commence par détailler la structure de bimodule de $\Regb$, puis on en déduit l'existence d'éléments centraux canoniques, dont on connaît le système de relations et qui forment une $\C$-base de $\Zf$. Ceux-ci sont enfin exprimés en fonction des générateurs de $\Uq$ à l'aide d'un élément particulier $C$, l'\emph{élément de Casimir}.

\subsection{Décomposition de la représentation régulière}
\label{seubsection: blocs}

Tout $\Uq$-bimodule peut être vu comme un $\Uq \otimes \Uq^{op}$-module à gauche, où $\Uq^{op}$ désigne l'algèbre opposée de $\Uq$ (cf. par exemple \cite[Prop. 10.1]{Pie82}). Dans ces conditions, d'après le théorème de Krull-Schmidt (cf. par exemple \cite[Thm 14.5]{CR62}), tout $\Uq$-bimodule de dimension finie se décompose de manière unique, à isomorphisme et ordre des facteurs près, en somme directe de $\Uq$-bimodules indécomposables. On appelle encore \emph{facteurs directs} les $\Uq$-\emph{bimodules} indécomposables de cette décomposition (cf. la section \ref{section: reps}). En particulier, on s'intéresse aux facteurs directs de la représentation régulière bilatère $\Regb$ de $\Uq$. Pour cela, on définit une (nouvelle) structure de $\Uq$-bimodule sur le produit tensoriel de $\Uq$-modules, et on utilise les facteurs directs de la représentation régulière à gauche $\Reg$ (cf. la proposition \ref{Thm: Reg}).

\begin{Def} 
Soient $\mathscr{X}$ un $\Uq$-module à gauche et $\Y$ un $\Uq$-module à droite. On note $\mathscr{X} \boxtimes \Y$ \index{\times@ $\boxtimes$} le $\C$-espace vectoriel $\mathscr{X} \otimes \Y$ muni de la structure de $\Uq$-bimodule définie par :
\[ a (x \otimes y) := (ax) \otimes y, \quad (x \otimes y)b := x \otimes (yb). \]
pour tous $a \in \Uq$, $b \in \Uq^{op}$, $x \in \mathscr{X}$ et $y \in \Y$.
\end{Def}

\begin{Rem} 
\label{Rem: simp boxtimes}
Pour tout $\Uq$-module simple à gauche $\mathscr{X}$ et tout $\Uq$-module simple à droite $\Y$, le $\Uq$-bimodule $\mathscr{X} \boxtimes \Y$ est encore simple.
\end{Rem}

D'après la remarque \ref{Rem: droite}, les structures de modules simples à droite sont analogues aux structures de modules simples à gauche. Par extension, il en va de même pour les modules de Verma à droite et les PIMs à droite. Par abus, on désignera les $\Uq$-modules à droite avec les mêmes notations que celles des $\Uq$-modules à gauche :
\begin{itemize}
	\item $\X^\pm(s)$, $1 \leq s \leq p$, pour les modules simples,
	\item $\V^\pm(s)$, $\Vb^\pm(s)$, $1 \leq s \leq p$, pour les modules de Verma,
	\item $\PIM^\pm(s)$, $1 \leq s \leq p$, pour les PIMs.
\end{itemize}
C'est pourquoi on ne précisera pas le latéralité des modules considérés dans le lemme \ref{Lemme: blocs2}.

\begin{Lemme} 
\label{Lemme: blocs1}
Soient $U$ et $V$ deux $\C$-espaces vectoriels de dimension finie. On note $(e_1,...,e_n)$ et $(\varepsilon_1,...,\varepsilon_n)$ leurs bases respectives, et $\langle , \rangle$ le produit scalaire hermitien canonique de $U$.
On considère les $\C$-espaces vectoriels $V \otimes U$ et $\Hom(U,V)$ munis des structures de $\End(V)-\End(U)$-bimodules respectivement définies par :
\begin{gather*}
g (v \otimes u) := g(v) \otimes u , \quad (v \otimes u) f := v \otimes f^*(u), \\
	gh := g \circ h, \quad hf := h \circ f, 
\end{gather*}
pour tous $g \in \End(V)$, $f \in \End(U)$, $v \in V$, $u \in U$, et $h \in \Hom(U,V)$, où $f^*$ désigne l'endomorphisme adjoint de $f$. Alors l'isomorphisme canonique de $\C$-espaces vectoriels donné par :
\[ \lambda_{U,V} : \begin{cases}
	V \otimes U \overset{\sim}{\longrightarrow} \Hom(U,V) \\
	\varepsilon_j \otimes  e_i \longmapsto \langle ? , e_i \rangle \; \varepsilon_j
	\end{cases}, \]
où le symbole $?$ désigne la place de la variable, induit un isomorphisme de $\End(V)-\End(U)$-bimodules.
\end{Lemme}

\begin{proof}
Il s'agit d'une vérification directe. \\ Soient $g \in \End(V)$, $f \in \End(U)$, $i \in \{1,...,n\}$ et $j \in \{1,...,j\}$. On a :
\begin{align*}
	\lambda_{U,V} \left( g (\varepsilon_j \otimes e_i) \right) &= \lambda_{U,V} \left( g(\varepsilon_j) \otimes e_i \right) = \langle ?,e_i \rangle \; g(\varepsilon_j) = g \left( \langle ?,e_i \rangle \; \varepsilon_j \right) \\
	&= g \circ \lambda_{U,V} ( \varepsilon_j \otimes e_i ) = g \lambda_{U,V} ( \varepsilon_j \otimes e_i ), \\
	\lambda_{U,V} \left( (\varepsilon_j \otimes e_i)f \right) &= \lambda_{U,V} \left( \varepsilon_j \otimes f^*(e_i) \right) = \langle ?,f^*(e_i) \rangle \;\varepsilon_j =  \langle f(?),e_i \rangle \; \varepsilon_j \\
	&= \lambda_{U,V} ( \varepsilon_j \otimes e_i ) \circ f = \lambda_{U,V} ( \varepsilon_j \otimes e_i ) f.
\end{align*}
\end{proof}

\begin{Lemme} 
\label{Lemme: blocs2} 
\begin{enumerate}[(a)]
	\item Si $\X$ est un $\Uq$-module simple, alors il existe un morphisme surjectif de $\Uq$-bimodules :
\[ \widetilde{\pi} : \Regb \longrightarrow \X \boxtimes \X . \]
	\item Si $\V$ est un $\Uq$-module de Verma, alors il existe un morphisme surjectif de $\Uq$-bimodules :
\[ \widetilde{\pi} : \Rad \left( \Regb \right) \longrightarrow \Rad \V \boxtimes \left( \V / \Rad \V \right) , \]
où $\Rad \V$ est le radical de $\V$.
\end{enumerate}
\end{Lemme}

\begin{proof}
\begin{enumerate}[(a)]
	\item Soit $\X$ un $\Uq$-module simple (cf. la proposition \ref{Prop: simples} et la remarque \ref{Rem: droite}). On considère le morphisme de $\C$-algèbres :
\[ \pi : \begin{cases}
	\Uq \longrightarrow \End \left( \X \right) \\
	a \longmapsto \left[ x \mapsto ax \right]
	\end{cases} . \]
	Il induit une structure de $\Uq$-bimodule sur tout $\End \left( \X \right)$-bimodule. En particulier, $\X \otimes \X$ et $\End \left( \X \right)$ sont munis de structures de $\Uq$-bimodules données par :
\begin{equation} \tag{1}
\begin{gathered}
a (x \otimes y) := \pi(a)(x) \otimes y , \quad (x \otimes y) a := x \otimes {\pi(a)}^*(y), \\
	ah := \pi(a) \circ h, \quad ha := h \circ \pi(a), 
\end{gathered}
\end{equation}
pour tous $a \in \Uq$, $x,y \in \X$, et $h \in \End \left( \X \right)$, où ${\pi(a)}^*$ désigne l'endomorphisme adjoint de $\pi(a)$ par rapport à la structure duale définie dans la remarque \ref{Rem: droite} :
	\[ \pi(a)^* : x \longmapsto xa . \]
	D'après le lemme \ref{Lemme: blocs1}, l'isomorphisme canonique de $\C$-espaces vectoriels :
\[ \lambda_{\X,\X} : \X \otimes \X \overset{\sim}{\longrightarrow} \End \left( \X \right) \]
induit donc un isomorphisme de $\Uq$-bimodules, dont les structures sont détaillées dans $(1)$.	
	D'autre part, d'après le théorème de Burnside (cf. par exemple \cite[Thm 27.4]{CR62}), le morphisme d'algèbres $\pi$ est surjectif. Par conséquent, la composée :
	\[ \widetilde{\pi} : \Regb \overset{\pi}{\twoheadrightarrow} \End \left( \X \right) \overset{\lambda_{\X,\X}^{-1}}{\underset{\sim}{\longrightarrow}} \X \boxtimes \X \]
est un morphisme surjectif de $\Uq$-bimodules.

	\item Soit $\V$ un $\Uq$-module de Verma (cf. la proposition \ref{Prop: Verma} et la remarque \ref{Rem: droite}). On considère le morphisme de $\C$-algèbres :
\[ \pi : \begin{cases}
	\Uq \longrightarrow \End \left( \V \right) \\
	a \longmapsto \left[ x \mapsto ax \right]
	\end{cases} . \]
	Comme $\V$ est une extension de modules simples, le morphisme $\pi$ se surjecte sur :
	\[ \mathrm{Im}(\pi) \cong \begin{pmatrix} 
	\End \left( \V / \Rad \V \right) & \{0\} \\
	\Hom \left( \V / \Rad \V , \Rad \V \right) & \End \left( \Rad \V \right) 
	\end{pmatrix} \]
	d'après le théorème de Burnside. On note $\bar{\pi}$ le morphisme surjectif induit sur les radicaux :
\[ \bar{\pi} :	\Rad(\Uq) \twoheadrightarrow \Hom \left( \V / \Rad \V , \Rad \V \right) \]
(cf. par exemple \cite[Lemme 4.1]{Pie82}). On procède alors comme en $(a)$ avec le morphisme $\bar{\pi}$. On obtient un morphisme surjectif de $\Uq$-bimodules :
	\[ \widetilde{\pi} : \Rad(\Regb) \overset{\bar{\pi}}{\twoheadrightarrow} \Hom \left( \V / \Rad \V , \Rad \V \right) \overset{\lambda_{\Rad \V,\V / \Rad \V}^{-1}}{\underset{\sim}{\longrightarrow}} \Rad \V \boxtimes \left( \V / \Rad \V \right) . \]
\end{enumerate}
\end{proof}

\begin{Thm} 
\label{Thm: blocs}
La représentation régulière bilatère $\Regb$ de $\Uq$ se décompose de manière unique, à isomorphisme et ordre des facteurs près, en la somme directe $\bigoplus_{s=0}^p \mathbf{Q}(s)$\index{Q2@ $\mathbf{Q}(s)$}, où : 
\begin{enumerate}[(i)]
	\item les $\Uq$-bimodules $\mathbf{Q}(0) = \X^-(p) \boxtimes \X^-(p)$ et $\mathbf{Q}(p) = \X^+(p) \boxtimes \X^+(p)$ sont simples,
	\item pour tout $s \in \{1,...,p-1\}$, le $\Uq$-bimodule $\mathbf{Q}(s)$ est indécomposable et admet une filtration de $\Uq$-bimodules $0 \subset \mathbf{R}(s)^2 \subset \mathbf{R}(s) \subset \mathbf{Q}(s)$ tels que :
	\begin{align*}
	& \mathbf{Q}(s) / \mathbf{R}(s) \cong \X^+(s) \boxtimes  \X^+(s) \oplus \X^-(p-s) \boxtimes \X^-(p-s), \\
	& \mathbf{R}(s) / \mathbf{R}(s)^2  \cong 2 \X^-(p-s) \boxtimes \X^+(s) \oplus 2 \X^+(s) \boxtimes \X^-(p-s), \\
	& \mathbf{R}(s)^2 \cong \X^+(s) \boxtimes  \X^+(s) \oplus \X^-(p-s) \boxtimes \X^-(p-s).
	\end{align*}
\end{enumerate}
\end{Thm}

Pour davantage de lisibilité, la structure des $\Uq$-bimodules $\mathbf{Q}(s)$, $1 \leq s \leq p-1$, est synthétisée dans la figure \ref{Fig: blocs}.

\begin{figure}[!ht] 
\caption{Structure des bimodules $\mathbf{Q}(s)$, $1 \leq s \leq p-1$.}
\label{Fig: blocs}
Représentation diagrammatique de l'action à gauche de $\Uq$ :
\[ \scalebox{0.65}{ \xymatrix @C=1cm {
	\mathbf{Q}(s) / \mathbf{R}(s) && \overset{\X^+(s) \boxtimes \X^+(s)}{\bullet} \ar[dl]_-E \ar[dr]^-F \ar[dd]^-E &&& \overset{\X^-(p-s) \boxtimes \X^-(p-s)}{\bullet} \ar[dl]_-E \ar[dr]^-F \ar[dd]^-E \\
	\mathbf{R}(s) / \mathbf{R}(s)^2 & \overset{\X^-(p-s) \boxtimes \X^+(s)}{\bullet} \ar[dr]_-F && \overset{\X^-(p-s) \boxtimes \X^+(s)}{\bullet} \ar[dl]^-E & \overset{\X^+(s) \boxtimes \X^-(p-s)}{\bullet} \ar[dr]_-F && \overset{\X^+(s) \boxtimes \X^-(p-s)}{\bullet} \ar[dl]^-E \\	
	\mathbf{R}(s)^2 && \overset{\X^+(s) \boxtimes \X^+(s)}{\bullet} &&& \overset{\X^-(p-s) \boxtimes \X^-(p-s)}{\bullet} }} \]
Représentation diagrammatique de l'action à droite de $\Uq$ :
\[ \scalebox{0.65}{ \xymatrix {
	\mathbf{Q}(s) / \mathbf{R}(s) && \overset{\X^+(s) \boxtimes \X^+(s)}{\bullet} \ar[drr]_<<<<<E \ar[drrrr]^<<<<<F \ar[dd]^<<<<<E &&& \overset{\X^-(p-s) \boxtimes \X^-(p-s)}{\bullet} \ar[dllll]_<<<<<E \ar[dll]^<<<<<F \ar[dd]^<<<<<E \\
	\mathbf{R}(s) / \mathbf{R}(s)^2 & \overset{\X^-(p-s) \boxtimes \X^+(s)}{\bullet} \ar[drrrr]_<<<<<F && \overset{\X^-(p-s) \boxtimes \X^+(s)}{\bullet} \ar[drr]^>>>>>E & \overset{\X^+(s) \boxtimes \X^-(p-s)}{\bullet} \ar[dll]_>>>>>F && \overset{\X^+(s) \boxtimes \X^-(p-s)}{\bullet} \ar[dllll]^<<<<<E \\	
	\mathbf{R}(s)^2 && \overset{\X^+(s) \boxtimes \X^+(s)}{\bullet} &&& \overset{\X^-(p-s) \boxtimes \X^-(p-s)}{\bullet} }} \]
\end{figure}

\begin{proof}
On commence par déterminer le nombre de facteurs directs de $\Regb$. Pour tout $\Uq$-bimodule $\mathbf{X}$, on note $\mathscr{X}$ le $\Uq$-module à gauche  obtenu à partir de $\mathbf{X}$ par oubli de l'action à droite de $\Uq$. En particulier, pour la représentation régulière bilatère $\Regb$, le $\Uq$-module à gauche correspondant est la représentation régulière à gauche $\Reg$. D'après la proposition \ref{Thm: Reg}, on sait que : 
\[ \Reg \cong \bigoplus_{s=1}^{p-1} \left[ s \PIM^+(s) \oplus s \PIM^-(s) \right] \oplus p \X^+(s) \oplus p \X^-(p) . \]
Les facteurs directs de cette décomposition, appelés $\Uq$-PIMs à gauche, réalisent les idéaux à gauche de $\Uq$. En les munissant de la structure de $\Uq$-module à droite donnée par la multiplication à droite, on obtiendra les facteurs directs de $\Regb$.

Identifions les $\Uq$-PIMs à gauche appartenant à un même facteur direct de $\Regb$ (en tant que $\Uq$-bimodule). Sous l'action de la multiplication à droite par $\Uq$, deux PIMs à gauche appartiennent à un même bimodule si et seulement si ils ont les mêmes quotients, à isomorphisme et ordre près, dans une suite de composition (cf. par exemple \cite[§ 13, Thm 55.2]{CR62}). Or, d'après la description des $\Uq$-PIMs donnée dans la proposition \ref{Prop: PIMs}, on sait que $\PIM^\pm(p) = \X^\pm(p)$ sont simples et que, pour tout $s \in \{1,...,p-1\}$, la structure de $\PIM^\pm(s)$ est donnée par :
\begin{equation} \tag{1}
\xymatrix @-3.6ex {
	& \overset{\X^\pm(s)}{\bullet} \ar[ld]_-E \ar[rd]^-F \ar@{-->}[dd]^-E & \\
	\overset{\X^{\mp}(p-s)}{\bullet} \ar[rd]_-F && \overset{\X^{\mp}(p-s)}{\bullet} \ar[ld]^-E \\
	& \overset{\X^\pm(s)}{\bullet} &} 
\end{equation}
Soient $\alpha \in \{+,-\}$ et $s \in \{1,...,p-1\}$. Une suite de composition de $\PIM^\alpha(s)$ est donc :
\[ 0 \subseteq \X^\alpha(s) \subseteq \V^\alpha(s) \subseteq \V^\alpha(s) + \Vb^\alpha(s) \subseteq \PIM^\alpha(s), \]
dont les sous-quotients sont respectivement :
\[ \X^\alpha(s), \quad \X^{-\alpha}(p-s), \quad \X^{-\alpha}(p-s), \quad \X^\alpha(s) . \]
Par conséquent, il y a $p+1$ facteurs directs $\mathbf{Q}(s)$ de $\Regb$, indexés par $0 \leq s \leq p$, dont la structure de $\Uq$-module à gauche est donnée par :
\begin{equation} \tag{2}
\begin{aligned}
&\mathcal{Q}(0) \cong p \X^-(p), \\
& \mathcal{Q}(s) \cong s \PIM^+(s) \oplus (p-s) \PIM^-(p-s), && 1 \leq s \leq p-1, \\
& \mathcal{Q}(p) \cong p \X^+(p).
\end{aligned}
\end{equation}

Il reste à expliciter les structures de $\Uq$-bimodule des facteurs $\mathbf{Q}(s)$, $0 \leq s \leq p$. Ceux-ci étant en somme directe, leurs structures de $\Uq$-bimodule coïncident avec leurs structures induites de $\mathbf{Q}(s)$-bimodule, $0 \leq s \leq p$. Dans la suite, on identifiera implicitement ces structures de bimodule.

Soit $s \in \{0,...,p\}$. On note $\mathbf{R}(s)$ le radical de Jacobson de $\mathbf{Q}(s)$, et on étudie la structure du bimodule quotient $\mathbf{Q}(s) / \mathbf{R}(s)$. On sait que le bimodule $\mathbf{Q}(s) / \mathbf{R}(s)$ est semi-simple et que ses sous-bimodules sont les bimodules simples quotients de $\mathbf{Q}(s)$ (cf. par exemple \cite[§ 2.7, § 4.1]{Pie82}). Afin de les expliciter, on se donne un sous-module à gauche simple $\X$ de $\mathbf{Q}(s)$. D'après le lemme \ref{Lemme: blocs2}, il existe un morphisme surjectif de $\Uq$-bimodules :
\[ \widetilde{\pi} : \Regb \twoheadrightarrow \X \boxtimes \X . \]
Comme $\X$ est un sous-module à gauche du facteur direct $\mathbf{Q}(s)$, le morphisme $\widetilde{\pi}$ se factorise par $\mathbf{Q}(s)$. Donc $\mathbf{Q}(s)$ admet un bimodule quotient isomorphe à $\X \boxtimes \X$. De plus, d'après les identifications $(2)$ et le diagramme $(1)$, on sait que $\X$ est isomorphe à :
\begin{align*}
& \X^-(p) && \text{ si } s=0, \\
& \X^+(s) \text{ ou } \X^-(p-s) && \text{ si } 1 \leq s \leq p-1, \\
& \X^+(p) && \text{ si } s=p.
\end{align*}
Par conséquent, le bimodule $\mathbf{Q}(s)$ admet un bimodule quotient isomorphe à :
\begin{align*}
& \X^-(p) \boxtimes \X^-(p) && \text{ si } s=0, \\
& \X^+(s) \boxtimes \X^+(s) \text{ ou } \X^-(p-s) \boxtimes \X^-(p-s) && \text{ si } 1 \leq s \leq p-1, \\
& \X^+(p) \boxtimes \X^+(p) && \text{ si } s=p.
\end{align*}
Par construction, ces quotients sont des bimodules simples (cf. la remarque \ref{Rem: simp boxtimes}) deux-à-deux distincts. D'après les propriétés du radical, il s'ensuit que :
\begin{equation} \tag{3}
\begin{aligned}
& \X^-(p) \boxtimes \X^-(p) \hookrightarrow \mathbf{Q}(0) / \mathbf{R}(0), \\
& \X^+(s) \boxtimes \X^+(s) \oplus \X^-(p-s) \boxtimes \X^-(p-s) \hookrightarrow \mathbf{Q}(s) / \mathbf{R}(s), \; 1 \leq s \leq p-1, \\
& \X^+(p) \boxtimes \X^+(p) \hookrightarrow \mathbf{Q}(p) / \mathbf{R}(p).
\end{aligned}
\end{equation}
Enfin, d'après les identifications $(2)$ et le diagramme $(1)$, on connaît la structure de $\Uq$-module à gauche du radical $\mathbf{R}(s)$ (cf. par exemple \cite[Prop. 4.1]{Pie82}) et par suite, celle du quotient $\mathbf{Q}(s) / \mathbf{R}(s)$ :
\begin{align*}
&\mathcal{Q}(0) / \mathscr{R}(0) = \mathcal{Q}(0) \cong p \X^-(p) , \\
&\mathcal{Q}(s) / \mathscr{R}(s) \cong s \X^+(s) \oplus (p-s) \X^-(p-s) , && 1 \leq s \leq p-1, \\
&\mathcal{Q}(p) / \mathscr{R}(p) = \mathcal{Q}(p) \cong p \X^+(p).
\end{align*}
En particulier, cela donne la $\C$-dimension du quotient $\mathbf{Q}(s) / \mathbf{R}(s)$. Le compte des dimensions de part et d'autre des injections de bimodules $(3)$ conduit à obtenir des isomorphismes de bimodules. D'où :
\begin{align*}
&\mathbf{Q}(0) / \mathbf{R}(0) = \mathbf{Q}(0) \cong \X^-(p) \boxtimes \X^-(p), \\
&\mathbf{Q}(s) / \mathbf{R}(s) \cong \X^+(s) \boxtimes \X^+(s) \oplus \X^-(p-s) \boxtimes \X^-(p-s), && 1 \leq s \leq p-1, \\
&\mathbf{Q}(p) / \mathbf{R}(p) = \mathbf{Q}(p) \cong \X^+(p) \boxtimes \X^+(p).
\end{align*}

On considère maintenant le radical de Jacobson $\mathbf{R}(s)^2$ de $\mathbf{R}(s)$ (cf. par exemple \cite[Ex. 4.1.2]{Pie82}), et on étudie la structure du bimodule quotient $\mathbf{R}(s) / \mathbf{R}(s)^2$. De même que précédemment, le bimodule $\mathbf{R}(s) / \mathbf{R}(s)^2$ est semi-simple et ses sous-bimodules sont les bimodules simples quotients de $\mathbf{R}(s)$. Afin de les expliciter, on se donne un sous-module à gauche de Verma $\V$ (cf. la proposition \ref{Prop: Verma}) de $\mathbf{Q}(s)$. D'après le lemme \ref{Lemme: blocs2}, il existe un morphisme surjectif de $\Uq$-bimodules :
\[ \widetilde{\pi}' : \Rad \left( \Regb \right) \twoheadrightarrow \Rad \V \boxtimes \left( \V / \Rad \V \right) . \]
De même que précédemment, celui-ci se factorise par $\mathbf{R}(s)$ car $\V$ est un sous-module à gauche du facteur direct $\mathbf{Q}(s)$. Donc $\mathbf{R}(s)$ admet un bimodule quotient isomorphe à $\Rad \V \boxtimes \left( \V / \Rad \V \right)$. De plus, d'après les identifications $(2)$ et le diagramme $(1)$, on sait que $\V$ est isomorphes à :
\begin{align*}
&\V^-(p) = \Vb^-(p) && \text{ si } s=0, \\
&\V^+(s), \Vb^+(s), \V^-(p-s) \text{ ou } \Vb^-(p-s) && \text{ si } 1 \leq s \leq p-1, \\
&\V^+(p) = \Vb^+(p) && \text{ si } s=p.
\end{align*}
Par conséquent, le bimodule $\mathbf{R}(s)$ admet un bimodule quotient isomorphe à :
\begin{align*}
&\{0\} && \text{ si } s=0, \\
&\begin{cases} 
	\X^-(p-s) \boxtimes \X^+(s), \X^-(p-s) \boxtimes \X^+(s), \\
	\X^+(s) \boxtimes \X^-(p-s) \text{ ou } \X^+(s) \boxtimes \X^-(p-s)
	\end{cases}
 	&& \text{ si } 1 \leq s \leq p-1, \\
&\{0\} && \text{ si } s=p.
\end{align*}
En procédant comme précédemment, on obtient les isomorphismes de bimodules :
\begin{align*}
&\mathbf{R}(0) / \mathbf{R}(0)^2 = \mathbf{R}(0) = \{0\}, \\
&\mathbf{R}(s) / \mathbf{R}(s)^2 \cong 2 \X^-(p-s) \boxtimes \X^+(s) \oplus 2 \X^+(s) \boxtimes \X^-(p-s), && 1 \leq s \leq p-1, \\
&\mathbf{R}(p) / \mathbf{R}(p)^2 = \mathbf{R}(p) = \{0\}.
\end{align*}

Enfin, on on considère le radical de Jacobson $\mathbf{R}(s)^3$ de $\mathbf{R}(s)^2$, et on étudie la structure du bimodule quotient $\mathbf{R}(s)^2 / \mathbf{R}(s)^3$. De même que précédemment, le bimodule $\mathbf{R}(s)^2 / \mathbf{R}(s)^3$ est semi-simple et ses sous-bimodules sont les bimodules simples quotients de $\mathbf{R}(s)^2$. D'après les identifications $(2)$ et le diagramme $(1)$, on sait que $\mathbf{R}(s)^3 = \{0\}$ (cf. par exemple \cite[Prop. 4.1]{Pie82}) et on connaît la structure de $\Uq$-module à gauche du quotient $\mathbf{R}(s)^2 / \mathbf{R}(s)^3 = \mathbf{R}(s)^2$ :
\begin{align*}
&\mathscr{R}(0)^2 = \{0\} , \\
&\mathscr{R}(s)^2 \cong s \X^+(s) \oplus (p-s) \X^-(p-s) \cong \mathcal{Q}(s) / \mathscr{R}(s), && 1 \leq s \leq p-1, \\
&\mathscr{R}(p)^2 = \{0\}.
\end{align*}
Par adjonction de l'action à droite de $\Uq$, on en déduit que :
\begin{align*}
&\mathbf{R}(0)^2 = \{0\}, \\
&\mathbf{R}(s)^2 \cong \mathbf{Q}(s) / \mathbf{R}(s), && 1 \leq s \leq p-1, \\
&\mathbf{R}(0)^2 = \{0\}.
\end{align*}
D'où le résultat.
\end{proof}

\begin{Rem}
Il est possible d'identifier les $\Uq$-PIMs à gauche qui appartiennent à un même facteur direct de $\Regb$ sans évoquer le critère \cite[Thm 55.2]{CR62}. Pour cela, on explicite la base canonique d'un $\Uq$-PIM à gauche en fonction des générateurs $F$, $K$, $E$ de $\Uq$ en tant que $\C$-algèbre (cf. par exemple \cite[§ 3]{Sut94}). On montre alors que, pour tout $\alpha \in \{+,-\}$ et $s \in \{1,...,p-1\}$, on a $\PIM^\alpha(s) F^s \subseteq \PIM^{-\alpha}(p-s)$.
\end{Rem}

\subsection{Description du centre à partir des représentations}
\label{subsection: centre1}

On utilise la description de la représentation régulière bilatère $\Regb$ de $\Uq$ donnée dans le théorème \ref{Thm: blocs} pour construire des éléments centraux canoniques de $\Uq$. Pour cela, on construit une base du $\C$-espace vectoriel des endomorphismes de $\Uq$-bimodules de $\Regb$, puis on prend son image dans le centre $\Zf$ de $\Uq$ via l'isomorphisme d'algèbres :
\[ \begin{cases}
\End_{\Uq-\Uq}(\Regb) \overset{\sim}{\longrightarrow} \Zf \\
\varphi \longmapsto \varphi(1)
\end{cases} . \]
Les éléments ainsi construits forment une $\C$-base de $\Zf$, dont on connaît le système de relations et leurs actions sur les idéaux à gauche de $\Uq$ (cf. la remarque \ref{Rem: ideaux}).

\begin{Lemme} 
\label{Lemme: centre}
Le $\C$-espace vectoriel $\End_{\Uq-\Uq}(\Regb)$ des endomorphismes de bimodules de $\Uq$ de la représentation régulière bilatère $\Regb$ de $\Uq$ admet une base $\{ \mathbf{e}_s \; ; \; 0 \leq s \leq p \} \cup \{ \mathbf{w}^\pm_s \; ; \; 1 \leq s \leq p-1 \}$ où :
\begin{enumerate}[(i)]
	\item pour tout $s \in \{0,...,p\}$, le morphisme de $\Uq$-bimodules $\mathbf{e}_s$ est la projection orthogonale sur $\mathbf{Q}(s)$ :
\[ \mathbf{e}_s : \begin{cases}
	\Regb \cong \bigoplus_{j=0}^p \mathbf{Q}(j) \longrightarrow \mathbf{Q}(s) \\
	x = \sum_{j=0}^p x_j \longmapsto x_s
	\end{cases} . \]
	\item pour tout $s \in \{1,...,p-1\}$, les morphismes de $\Uq$-bimodules $\mathbf{w}_s^\pm$ sont définis par :
	\begin{align*}
&\mathbf{w}_s^+ : \begin{cases}
	\Regb \cong \mathbf{Q}(0) \oplus \bigoplus_{j=1}^{p-1} \mathbf{Q}(j) \oplus \mathbf{Q}(p) \longrightarrow \mathbf{Q}(s) \\
	y_0^+(j) \mapsto \delta_{j,s} x_0^+(s), \quad y_0^-(j) \mapsto 0, \quad 1 \leq j \leq p,
	\end{cases} \\
&\mathbf{w}_s^- : \begin{cases}
	\Regb \cong \mathbf{Q}(0) \oplus \bigoplus_{j=1}^{p-1} \mathbf{Q}(j) \oplus \mathbf{Q}(p) \longrightarrow \mathbf{Q}(s) \\
	y_0^+(j) \mapsto 0, \quad y_0^-(j) \mapsto \delta_{j,p-s} x_0^-(p-s), \quad 1 \leq j \leq p,
	\end{cases}
\end{align*}
où, pour tout $j \in \{1,...,p\}$, $x^\pm_0(j)$ et $y^\pm_0(j)$ sont des vecteurs de poids $\pm q^{j-1}$ qui engendrent respectivement $\X^\pm(j)$ et $\PIM^\pm(j)$ sous $\Uq$ (cf. les propositions \ref{Prop: simples} et \ref{Prop: PIMs}).
\end{enumerate}
\end{Lemme}

\begin{proof}
D'après le théorème \ref{Thm: blocs}, on sait que :
\[ \End_{\Uq - \Uq} \left( \Regb \right) \cong \bigoplus_{s=0}^p \End_{\mathbf{Q}(s) - \mathbf{Q}(s)} \left( \mathbf{Q}(s) \right) . \]
Il s'agit donc de déterminer, pour tout $s \in \{0,...,p\}$, les endomorphismes induits de $\mathbf{Q}(s)$-bimodules de $\mathbf{Q}(s)$. Pour cela, on utilisera régulièrement la structure des bimodules $\mathbf{Q}(s)$, $0 \leq s \leq p$, explicités dans le théorème \ref{Thm: blocs} et la figure \ref{Fig: blocs}.

Pour $s \in \{0,p\}$, le bimodule $\mathbf{Q}(s)$ est simple. D'après le lemme de Schur (cf. par exemple \cite[§ 2.3]{Pie82}), les endomorphismes de bimodules correspondants sont multiples de l'identité. A multiplication par un scalaire près, ils se relèvent sur $\Uq$ en la projection orthogonale de $\Uq$-bimodules $\mathbf{e}_s$.
	
Pour $s \in \{1,...,p-1\}$, le bimodule $\mathbf{Q}(s)$ admet la filtration de sous-bimodules :
\[ 0 \subset \mathbf{R}(s)^2 \subset \mathbf{R}(s) \subset \mathbf{Q}(s) . \]
Soient $s \in \{1,...,p-1\}$ et $f \in \End_{\mathbf{Q}(s) - \mathbf{Q}(s)} \left( \mathbf{Q}(s) \right)$. Les bimodules $\mathbf{R}(s)^2$ et $\mathbf{R}(s)$ sont stables par $f$, et on a :
\[ \xymatrix{ 
	\mathbf{Q}(s) \ar[r]^-f & \mathbf{Q}(s) \\
	\mathbf{R}(s) \ar@{^{(}->}[u] \ar[r]^-f & \mathbf{R}(s) \ar@{^{(}->}[u] \\
	\mathbf{R}(s)^2 \ar@{^{(}->}[u] \ar[r]^-f \ar[d]^-[@]{\sim} & \mathbf{R}(s)^2 \ar@{^{(}->}[u] \\
	\X^+(s) \boxtimes \X^+(s) \oplus \X^-(p-s) \boxtimes \X^-(p-s) } \]
On distingue deux cas.

\begin{Cas} On suppose que $f \left( \mathbf{R}(s)^2 \right) \not = \{0\}$. \\
D'après le lemme de Schur, l'image par $f$ du sous-bimodule  simple isomorphe à $\X^+(s) \boxtimes \X^+(s)$ (resp. $\X^-(p-s) \boxtimes \X^-(p-s)$) est soit nulle, soit isomorphe à lui-même. \\
Par exemple, on suppose par l'absurde que l'image par $f$ du sous-bimodule isomorphe à $\X^+(s) \boxtimes \X^+(s)$ est nulle. D'après le théorème de factorisation, il existe un morphisme $\bar{f}_1$ de $\mathbf{Q}(s)$-modules à gauche et un morphisme $\bar{f}_2$ de $\mathbf{Q}(s)$-modules à droite tels que :
\[ \xymatrix{ 
	\mathscr{R}(s) \ar@{->>}[d] \ar[r]^-f & \mathscr{R}(s) & \mathscr{R}(s)' \ar@{->>}[d] \ar[r]^-f & \mathscr{R}(s)' \\
	\mathscr{R}(s) / \left( \X^+(s) \boxtimes \X^+(s) \right) \ar[ru]_-{\bar{f}_1} && \mathscr{R}(s)' / \left( \X^+(s) \boxtimes \X^+(s) \right) \ar[ru]_-{\bar{f}_2} & \\
	2 \X^-(p-s) \boxtimes \X^+(s) \ar@{^{(}->}[u] && 2 \X^+(s) \boxtimes \X^-(p-s) \ar@{^{(}->}[u]} \]
où $\mathscr{R}(s)$ (resp. $\mathscr{R}(s)'$) désigne le $\mathscr{Q}(s)$-module à gauche (resp. à droite) obtenu à partir de $\mathbf{R}(s)$ par oubli de l'action à droite (resp. à gauche). D'après le lemme de Schur, l'image par $\bar{f}_1$ (resp. par $\bar{f}_2$) de chaque sous-module simple à gauche (resp. à droite) isomorphe à $\X^-(p-s) \boxtimes \X^+(s)$ (resp. à $\X^+(s) \boxtimes \X^-(p-s)$) est nulle car $\mathscr{R}(s)$ ne contient pas de sous-module à gauche (resp. à droite) isomorphe à $\X^-(p-s) \boxtimes \X^+(s)$ (resp. à $\X^+(s) \boxtimes \X^-(p-s)$). La structure de bimodule de $\mathbf{Q}(s)$ donne alors $f \left( \mathbf{R}(s) \right) = \{0\}$. Ce qui contredit l'hypothèse initiale car $\mathbf{R}(s)^2 \subset \mathbf{R}(s)$. \\
De la même manière, on montre que l'image par $f$ du sous-bimodule isomorphe à $\X^-(p-s) \boxtimes \X^-(p-s)$ n'est pas nulle. Donc les endomorphismes de bimodules $f_{\vert \X^+(s) \boxtimes \X^+(s)}$ et $f_{\vert \X^-(p-s) \boxtimes \X^-(p-s)}$ sont multiples non nuls de l'identité. La structure de bimodule de $\mathbf{Q}(s)$ impose alors que $f$ est multiple non nul de l'identité. A multiplication par un scalaire près, il se relève en la projection orthogonale de $\Uq$-bimodules $\mathbf{e}_s$.
\end{Cas}

\begin{Cas} On suppose que $f \left( \mathbf{R}(s)^2 \right) = \{0\}$. \\
D'après le théorème de factorisation, il existe un morphisme $\bar{f}_1$ de $\mathbf{Q}(s)$-bimodules tel que :
\[ \xymatrix{ 
	\mathbf{R}(s) \ar@{->>}[d] \ar[r]^-f & \mathbf{R}(s) \\
	\mathbf{R}(s) / \mathbf{R}(s)^2 \ar[d]^-[@]{\sim} \ar[ur]_-{\bar{f}_1} \\
	2 \X^-(p-s) \boxtimes \X^+(s) \oplus 2 \X^+(s) \boxtimes \X^-(p-s) } \]
D'après le lemme de Schur, l'image par $\bar{f}_1$ de chaque sous-bimodule isomorphe à $\X^-(p-s) \boxtimes \X^+(s)$ (resp. à $\X^+(s) \boxtimes \X^-(p-s)$) est nulle car $\mathscr{R}(s)$ ne contient pas de sous-bimodule isomorphe à $\X^-(p-s) \boxtimes \X^+(s)$ (resp. à $\X^+(s) \boxtimes \X^-(p-s)$). Il s'ensuit que $f \left( \mathbf{R}(s) \right) = \{0\}$, et d'après le théorème de factorisation, il existe un morphisme $\bar{f}_2$ de $\mathbf{Q}(s)$-bimodules tel que :
\[ \xymatrix{ 
	\mathbf{Q}(s) \ar@{->>}[d] \ar[r]^-f & \mathbf{Q}(s) \\
	\mathbf{Q}(s) / \mathbf{R}(s) \ar[d]^-[@]{\sim} \ar[ur]_-{\bar{f}_2} \\
	\X^+(s) \boxtimes \X^+(s) \oplus \X^-(p-s) \boxtimes \X^-(p-s) } \]
D'après le lemme de Schur, les endomorphismes de bimodules
\[ \bar{f}_{2 \vert \X^+(s) \boxtimes \X^+(s)} \quad \text{ et } \quad \bar{f}_{2 \vert \X^-(p-s) \boxtimes \X^-(p-s)} \]
sont multiples de l'identité. A multiplication par un scalaire près, ils se relèvent respectivement sur $\Uq$ en les morphismes de $\Uq$-bimodules $\mathbf{w}_s^+$ et $\mathbf{w}_s^-$. Donc $f$ se relève sur $\Uq$ en une combinaison linéaire de $\mathbf{w}_s^+$ et $\mathbf{w}_s^-$.
\end{Cas}

Par conséquent, la famille $\{ \mathbf{e}_s \; ; \; 0 \leq s \leq p \} \cup \{ \mathbf{w}^\pm_s \; ; \; 1 \leq s \leq p-1 \}$ est une base du $\C$-espace vectoriel $\End_{\Uq - \Uq} \left( \Regb \right)$.
\end{proof}

\begin{Prop} 
\label{Prop: centre}
Le centre $\Zf$ de $\Uq$ admet une $\C$-base $\{ e_s \; ; \; 0 \leq s \leq p \} \cup \{ w^\pm_s \; ; \; 1 \leq s \leq p-1 \}$ telle que :
\begin{align*}
&\forall s, s' \in \{0,..,p\}
	&& e_s e_{s'} = \delta_{s,s'} \; e_s , \\
&\forall s \in \{0,...,p\} \quad \forall s' \in \{1,...,p-1\} 
	&& e_s w^\pm_{s'} = \delta_{s,s'} \; w^\pm_{s'} , \\
&\forall s, s' \in \{1,..,p-1\}
	&& w^\pm_s w^\pm_{s'} = 0 = w^\pm_s w^\mp_{s'}.
\end{align*}
On l'appelle la \emph{base canonique du centre}.	
\end{Prop}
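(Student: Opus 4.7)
The plan is to transport the basis constructed in Lemma~\ref{Lemme: centre} across the algebra isomorphism $\End_{\Uq-\Uq}(\Regb) \overset{\sim}{\longrightarrow} \Zf$ stated at the opening of this section, and then to check each relation by reading off the composition of the corresponding bimodule endomorphism. First I would verify that $\varphi \mapsto \varphi(1)$ is indeed an algebra (not merely linear) isomorphism: for $\varphi,\psi \in \End_{\Uq-\Uq}(\Regb)$, the element $\psi(1)$ is central and $\varphi$ is $\Uq$-bilinear, so $(\varphi \circ \psi)(1) = \varphi(\psi(1)) = \varphi(1)\,\psi(1)$. Setting $e_s := \mathbf{e}_s(1)$ and $w^\pm_s := \mathbf{w}^\pm_s(1)$ then yields a $\C$-basis of $\Zf$ of the required cardinality, by Lemma~\ref{Lemme: centre}.

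Next I would translate each of the three families of relations into a composition statement in $\End_{\Uq-\Uq}(\Regb)$ and check them using the explicit descriptions. The first two are essentially formal: the $\mathbf{e}_s$ are the orthogonal projections on the summands $\mathbf{Q}(s)$ of the decomposition of $\Regb$ provided by Theorem~\ref{Thm: blocs}, so $\mathbf{e}_s \circ \mathbf{e}_{s'} = \delta_{s,s'}\,\mathbf{e}_s$ is immediate, and since $\operatorname{Im}(\mathbf{w}^\pm_{s'}) \subseteq \mathbf{Q}(s')$ by construction, $\mathbf{e}_s \circ \mathbf{w}^\pm_{s'} = \delta_{s,s'}\,\mathbf{w}^\pm_{s'}$.

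The only substantive point is the nilpotency $\mathbf{w}^\epsilon_s \circ \mathbf{w}^{\epsilon'}_{s'} = 0$. Here I would use the following two facts readable from the definition of $\mathbf{w}^\pm_s$ and Theorem~\ref{Thm: blocs}: on the one hand, $\mathbf{w}^\pm_s$ factors through the semisimple quotient $\mathbf{Q}(s)/\mathbf{R}(s)$ (it is determined by sending the images of the generators $y^\pm_0(j)$ to $x^\pm_0(s)$, and everything in the radical is killed), so $\mathbf{w}^\pm_s$ vanishes on the total radical $\bigoplus_{j} \mathbf{R}(j)$; on the other hand, the image of $\mathbf{w}^{\epsilon'}_{s'}$ is contained in the socle piece $\mathbf{R}(s')^2 \subseteq \mathbf{R}(s')$, since $x^{\epsilon'}_0(s')$ (resp.\ $x^{-\epsilon'}_0(p-s')$) belongs to the copy of $\X^{\epsilon'}(s') \boxtimes \X^{\epsilon'}(s')$ (resp.\ $\X^{-\epsilon'}(p-s') \boxtimes \X^{-\epsilon'}(p-s')$) inside $\mathbf{R}(s')^2$. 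Combining these gives $\mathbf{w}^\epsilon_s \circ \mathbf{w}^{\epsilon'}_{s'} = 0$ regardless of the signs and indices.

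Translating back through the isomorphism, these three composition identities give exactly the claimed relations among $e_s$, $w^+_s$ and $w^-_s$. There is no real obstacle: the structural work has already been done in Theorem~\ref{Thm: blocs} and Lemma~\ref{Lemme: centre}, and the proposition is essentially a corollary obtained by reading these results through the algebra isomorphism.
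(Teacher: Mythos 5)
Votre démarche est correcte et coïncide avec celle du texte : on transporte la base de $\End_{\Uq-\Uq}(\Regb)$ fournie par le lemme \ref{Lemme: centre} à travers l'isomorphisme d'algèbres $\varphi \mapsto \varphi(1)$, et les relations s'obtiennent par composition des endomorphismes de bimodules. La seule différence est que vous explicitez la vérification de $\mathbf{w}^\epsilon_s \circ \mathbf{w}^{\epsilon'}_{s'}=0$ (image dans le socle $\mathbf{R}(s')^2$, annulation sur le radical), que le texte laisse implicite en s'appuyant directement sur le lemme \ref{Lemme: centre} et le théorème \ref{Thm: blocs} ; cet argument est juste.
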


\begin{proof}
D'après le lemme \ref{Lemme: centre}, la base $\{ \mathbf{e}_s \; ; \; 0 \leq s \leq p \} \cup \{ \mathbf{w}^\pm_s \; ; \; 1 \leq s \leq p-1 \}$ du $\C$-espace vectoriel $\End_{\Uq - \Uq} \left( \Regb \right)$ vérifie le système de relations :
\begin{align*}
&\forall s, s' \in \{0,..,p\}
	&& \mathbf{e}_s \mathbf{e}_{s'} = \delta_{s,s'} \; \mathbf{e}_s , \\
&\forall s \in \{0,...,p\} \quad \forall s' \in \{1,...,p-1\} 
	&& \mathbf{e}_s \mathbf{w}^\pm_{s'} = \delta_{s,s'} \; \mathbf{w}^\pm_{s'} , \\
&\forall s, s' \in \{1,..,p-1\}
	&& \mathbf{w}^\pm_s \mathbf{w}^\pm_{s'} = 0 = \mathbf{w}^\pm_s \mathbf{w}^\mp_{s'}.
\end{align*}
Son image $\{ e_s \; ; \; 0 \leq s \leq p \} \cup \{ w^\pm_s \; ; \; 1 \leq s \leq p-1 \}$ par l'isomorphisme d'algèbres :
\[ \begin{cases}
\End_{\Uq-\Uq}(\Regb) \overset{\sim}{\longrightarrow} \Zf \\
\varphi \longmapsto \varphi(1)
\end{cases} \]
fournit donc la $\C$-base souhaitée de $\Zf$.
\end{proof}

\begin{Cor} 
\label{Cor: centre}
Soient $z \in \Zf$ et $\{ a_s \; ; \; 0 \leq s \leq p \} \cup \{ b^\pm_s \; ; \; 1 \leq s \leq p-1 \}$ l'unique famille de scalaires telle que :
\[ z = \sum_{s=0}^p a_s e_s + \sum_{s=1}^{p-1} \left( b^+_s w^+_s + b^-_s w^-_s \right) . \]
Alors, on a :
\begin{align*}
& z x^+_0(s) = a_s x^+_0(s), && 1 \leq s \leq p, \\
& z x^-_0(s) = a_{p-s} x^-_0(s), && 1 \leq s \leq p, \\
& z y^+_0(s) = a_s y^+_0(s) + b^+_s x^+_0(s), && 1 \leq s \leq p-1, \\
& z y^-_0(s) = a_{p-s} y^-_0(s) + b^-_{p-s} x^-_0(s), && 1 \leq s \leq p-1,
\end{align*}
où, pour tout $s \in \{1,...,p\}$, $x^\pm_0(s)$ et $y^\pm_0(s)$ sont des vecteurs de poids $\pm q^{s-1}$ qui engendrent respectivement $\X^\pm(s)$ et $\PIM^\pm(s)$ sous $\Uq$ (cf. les propositions \ref{Prop: simples} et \ref{Prop: PIMs}).
\end{Cor}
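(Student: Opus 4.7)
Le plan est d'utiliser l'isomorphisme d'alg\`ebres $\End_{\Uq-\Uq}(\Regb) \xrightarrow{\sim} \Zf$, $\varphi \mapsto \varphi(1)$, d\'ej\`a employ\'e dans la preuve de la proposition \ref{Prop: centre}. Sous cette identification, $z$ correspond au morphisme de bimodules
$\varphi := \sum_{s=0}^p a_s \mathbf{e}_s + \sum_{s=1}^{p-1} \bigl( b^+_s \mathbf{w}^+_s + b^-_s \mathbf{w}^-_s \bigr)$, et la $\Uq$-lin\'earit\'e \`a droite de $\varphi$ fournit $z a = \varphi(a)$ pour tout $a \in \Uq$. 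Gr\^ace \`a la remarque \ref{Rem: ideaux}, j'identifie $\X^\pm(s)$ et $\PIM^\pm(s)$ \`a des id\'eaux \`a gauche de $\Uq$, ce qui permet de voir les g\'en\'erateurs $x^\pm_0(s)$ et $y^\pm_0(s)$ comme des \'el\'ements de $\Uq$ sur lesquels appliquer $\varphi$.

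Je commence par rep\'erer les blocs $\mathbf{Q}(j)$ contenant chaque g\'en\'erateur : d'apr\`es le th\'eor\`eme \ref{Thm: blocs} et la proposition \ref{Prop: PIMs}, on a $x^+_0(s), y^+_0(s) \in \mathbf{Q}(s)$ et $x^-_0(s), y^-_0(s) \in \mathbf{Q}(p-s)$ pour $1 \leq s \leq p-1$, tandis que $x^+_0(p) \in \mathbf{Q}(p)$ et $x^-_0(p) \in \mathbf{Q}(0)$. Les projections $\mathbf{e}_j$ s\'electionnent imm\'ediatement les coefficients $a_s$, $a_{p-s}$, $a_p$ et $a_0$ annonc\'es. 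Pour les contributions des $\mathbf{w}^\pm_j$, le lemme \ref{Lemme: centre} donne directement $\mathbf{w}^+_s(y^+_0(s)) = x^+_0(s)$ et $\mathbf{w}^-_{p-s}(y^-_0(s)) = x^-_0(s)$ (avec $\mathbf{w}^-_{p-s}$ gr\^ace au d\'ecalage d'indice entre $y^-_0(s)$ et son bloc $\mathbf{Q}(p-s)$), ce qui fournit les coefficients $b^+_s$ et $b^-_{p-s}$ pour $z y^\pm_0(s)$.

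Le seul point demandant de la vigilance est l'action des $\mathbf{w}^\pm_j$ sur les vecteurs $x^\pm_0(s)$ : le lemme \ref{Lemme: centre} ne sp\'ecifie leurs valeurs qu'en termes des g\'en\'erateurs $y^\pm_0(j)$ des PIMs, et il faut s'assurer que les $x^\pm_0(s)$ sont dans leur noyau. J'observe que l'image de chaque $\mathbf{w}^\pm_j$ est contenue dans le socle $\mathbf{R}(j)^2$ de $\mathbf{Q}(j)$, alors que le morphisme $\mathbf{w}^\pm_j$ se factorise par le quotient de t\^ete $\mathbf{Q}(j)/\mathbf{R}(j)$ ; il annule donc toute la filtration radicale. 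Or, d'apr\`es la proposition \ref{Prop: PIMs}, le vecteur $x^\pm_0(s)$ vit dans le sous-module simple $\X^\pm(s) \subset \PIM^\pm(s)$, qui co\"incide pr\'ecis\'ement avec la composante $\mathbf{R}^2$ du bloc, d'o\`u $\mathbf{w}^\pm_j(x^\pm_0(s)) = 0$ pour tous $j$. Les formules annonc\'ees s'en d\'eduisent par lin\'earit\'e de $\varphi$.
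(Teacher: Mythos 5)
Votre preuve est correcte et suit essentiellement la m\^eme d\'emarche que celle du texte : identifier $z$ au morphisme de bimodules $\varphi$ via l'isomorphisme $\End_{\Uq-\Uq}(\Regb) \cong \Zf$, observer que $z a = \varphi(a)$ pour tout $a \in \Uq$, puis appliquer les formules du lemme \ref{Lemme: centre} aux g\'en\'erateurs $x^\pm_0(s)$ et $y^\pm_0(s)$ vus comme \'el\'ements de $\Uq$ gr\^ace \`a la remarque \ref{Rem: ideaux}. Vous explicitez de surcro\^it le point que le texte laisse implicite, \`a savoir l'annulation des $\mathbf{w}^\pm_j$ sur les vecteurs $x^\pm_0(s)$ au moyen de la filtration radicale des blocs $\mathbf{Q}(j)$, ce qui est exact.
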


\begin{proof}
Pour connaître l'action de $z$ sur les vecteurs $x^\pm_0(s)$ et $y^\pm_0(s)$, $1 \leq s \leq p$, il faut et il suffit de connaître celle de la base canonique du centre $\{ e_s \; ; \; 0 \leq s \leq p \} \cup \{ w^\pm_s \; ; \; 1 \leq s \leq p-1 \}$. Pour cela, on utilise la base $\{ \mathbf{e}_s \; ; \; 0 \leq s \leq p \} \cup \{ \mathbf{w}^\pm_s \; ; \; 1 \leq s \leq p-1 \}$ du $\C$-espace vectoriel $\End_{\Uq - \Uq} \left( \Regb \right)$ (cf. le lemme \ref{Lemme: centre}). Par construction, on a :
\begin{equation} \tag{1}
\begin{aligned}
&\forall a \in \Uq 
	&& e_s a = \mathbf{e}_s(1) a = \mathbf{e}_s(a) , && 0 \leq s \leq p, \\
&\forall a \in \Uq
	&& w_s^+ a = \mathbf{w}_s^+(1) a = \mathbf{w}_s^+(a), && 1 \leq s \leq p.
\end{aligned}
\end{equation}
Comme les vecteurs $x^\pm_0(s)$ et $y^\pm_0(s)$, $1 \leq s \leq p$, appartiennent à $\Uq$ (cf. la remarque \ref{Rem: ideaux}), il suffit d'appliquer les égalités $(1)$ sur ceux-ci. Le résultat s'ensuit directement.
\end{proof}

\subsection{Description du centre à partir de l'élément de Casimir} 
\label{subsection: centre2}

La base canonique $\{ e_s \; ; \; 0 \leq s \leq p \} \cup \{ w^\pm_s \; ; \; 1 \leq s \leq p-1 \}$ du centre $\Zf$ de $\Uq$ (cf. la proposition \ref{Prop: centre}) peut être explicitée en fonction des générateurs de $\Uq$ à l'aide d'un élément particulier du centre :
\begin{equation}
C := EF + \frac{q^{-1}K+qK^{-1}}{(q-q^{-1})^2} \overset{\eqref{Eqn: comm1}}{=} FE + \frac{qK+q^{-1}K^{-1}}{(q-q^{-1})^2}.
\label{Eqn: Casimir} \index{C@ $C$}
\end{equation}
On l'appelle l'\emph{élément de Casimir}.

\begin{Prop} 
\label{Prop: poly en C}
Pour tout polynôme $R \in \C[x]$, on a :
\[ R(C) = \sum_{s=0}^p R(\beta_s) e_s + \sum_{s=1}^{p-1} R'(\beta_s) \left( w^+_s + w^-_s \right), \]
où, pour tout $j \in \{0,...,p\}$, $\beta_j := \frac{q^j+q^{-j}}{(q-q^{-1})^2}$. \index{beta@ $\beta_j$}
\end{Prop}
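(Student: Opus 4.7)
L'approche que je propose consiste à calculer l'action de $C$ sur les vecteurs générateurs des modules simples et des PIMs, puis à conclure grâce au corollaire \ref{Cor: centre} qui caractérise un élément central par son action sur ces vecteurs.

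Première étape : j'utilise la deuxième expression de l'élément de Casimir, $C = FE + \frac{qK+q^{-1}K^{-1}}{(q-q^{-1})^2}$, qui est plus commode sur les vecteurs de plus haut poids. Pour $s \in \{1,\dots,p\}$, on a $E x_0^+(s)=0$ et $K x_0^+(s)=q^{s-1} x_0^+(s)$, d'où immédiatement $C x_0^+(s) = \frac{q \cdot q^{s-1}+q^{-1} \cdot q^{-(s-1)}}{(q-q^{-1})^2} x_0^+(s) = \beta_s \, x_0^+(s)$. De même, $C x_0^-(s) = -\beta_s\, x_0^-(s)$, et comme $q^p=-1$ on a $-\beta_s = \beta_{p-s}$, donc $C x_0^-(s) = \beta_{p-s} \, x_0^-(s)$.

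Deuxième étape : pour $s \in \{1,\dots,p-1\}$, je calcule $C y_0^+(s)$ à partir de la base canonique de la proposition \ref{Prop: PIMs}. On a $E y_0^+(s) = a_{p-s-1}^-(p-s)$, et $F a_{p-s-1}^-(p-s) = x_0^+(s)$ via la structure du sous-module $\V^-(p-s) \subset \PIM^+(s)$. Ainsi $FE y_0^+(s) = x_0^+(s)$, et l'identité $K y_0^+(s) = q^{s-1} y_0^+(s)$ donne
\[ C y_0^+(s) = x_0^+(s) + \beta_s \, y_0^+(s), \]
c'est-à-dire $(C - \beta_s) y_0^+(s) = x_0^+(s)$. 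Un calcul analogue fournit $(C - \beta_{p-s}) y_0^-(s) = x_0^-(s)$.

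Troisième étape : puisque $(C - \beta_s)\, x_0^+(s) = 0$, l'opérateur $C-\beta_s$ est nilpotent d'indice $\leq 2$ sur le plan engendré par $y_0^+(s)$ et $x_0^+(s)$. Par un développement de Taylor élémentaire (récurrence immédiate sur le degré de $R$), on en déduit que pour tout polynôme $R \in \C[x]$,
\[ R(C)\, x_0^+(s) = R(\beta_s)\, x_0^+(s), \qquad R(C)\, y_0^+(s) = R(\beta_s)\, y_0^+(s) + R'(\beta_s)\, x_0^+(s), \]
et de même $R(C)\, x_0^-(s) = R(\beta_{p-s})\, x_0^-(s)$ et $R(C)\, y_0^-(s) = R(\beta_{p-s})\, y_0^-(s) + R'(\beta_{p-s})\, x_0^-(s)$.

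Quatrième étape : j'écris $R(C) = \sum_{s=0}^p a_s e_s + \sum_{s=1}^{p-1}(b_s^+ w_s^+ + b_s^- w_s^-)$ dans la base canonique (proposition \ref{Prop: centre}) et j'identifie les coefficients grâce au corollaire \ref{Cor: centre}. Les égalités sur $x_0^\pm(s)$ donnent $a_s = R(\beta_s)$ pour tout $s \in \{0,\dots,p\}$ (en utilisant $\beta_{p-s}$ pour les modules négatifs, puis changement d'indice). Les égalités sur $y_0^\pm(s)$ donnent $b_s^+ = R'(\beta_s)$ et $b_{p-s}^- = R'(\beta_{p-s})$, soit $b_s^- = R'(\beta_s)$ après changement d'indice, d'où la formule annoncée. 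Le principal point technique est le troisième : vérifier proprement la formule de Taylor pour un bloc de Jordan de taille 2, mais cela se résume à une récurrence élémentaire une fois l'étape 2 établie.
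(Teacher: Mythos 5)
Votre preuve est correcte et suit essentiellement la même démarche que celle du texte : calcul de l'action de $C$ sur les vecteurs $x_0^\pm(s)$ et $y_0^\pm(s)$ via $C = FE + \frac{qK+q^{-1}K^{-1}}{(q-q^{-1})^2}$, développement de Taylor à l'ordre 2 autour de $\pm\beta_s$ exploitant la nilpotence de $C-\beta_s$ sur le bloc engendré par $y_0^\pm(s)$ et $x_0^\pm(s)$, puis identification des coefficients par le corollaire \ref{Cor: centre}. Rien à signaler.
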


\begin{proof}
Soit $R \in \C[x]$. Comme $C$ est central, l'élément $R(C)$ l'est aussi. D'après la proposition \ref{Prop: centre}, il existe une unique famille de scalaires $\{ a_s \; ; \; 0 \leq s \leq p \} \cup \{ b^\pm_s \; ; \; 1 \leq s \leq p-1 \}$ telle que :
\[ R(C) = \sum_{s=0}^p a_s e_s + \sum_{s=1}^{p-1} \left( b^+_s w^+_s + b^-_s w^-_s \right) . \]
Conformément au corollaire \ref{Cor: centre}, on explicite cette famille de scalaires en faisant agir $R(C)$ sur les vecteurs $x^\pm_0(s)$ et $y^\pm_0(s)$, $1 \leq s \leq p$, de poids $\pm q^{s-1}$, qui engendrent respectivement $\X^\pm(s)$ et $\PIM^\pm(s)$ sous $\Uq$ (cf. les propositions \ref{Prop: simples} et \ref{Prop: PIMs}). 

Pour tous $\alpha \in \{+,-\}$ et $s \in \{1,...,p\}$, on a :
\begin{align}
C \; y_0^\alpha(s) &\overset{\eqref{Eqn: Casimir}}{=} \left( FE + \frac{qK+q^{-1}K^{-1}}{(q-q^{-1})^2} \right) y_0^\alpha \notag \\
	&\quad = (1-\delta_{s,p}) x_0^\alpha(s) + \frac{\alpha q^s + \alpha q^{-s}}{(q-q^{-1})^2} y_0^\alpha(s) \notag \\ 
	&\quad = (1-\delta_{s,p}) x_0^\alpha(s) + \alpha \beta_s y_0^\alpha(s), \tag{1} \\
C \; x_0^\alpha(s) &\overset{\eqref{Eqn: Casimir}}{=} \left( FE + \frac{qK+q^{-1}K^{-1}}{(q-q^{-1})^2} \right) x_0^\alpha \notag \\
	&\quad = \frac{\alpha q^s + \alpha q^{-s}}{(q-q^{-1})^2} x_0^\alpha(s) = \alpha \beta_s x_0^\alpha(s). \tag{2}
\end{align}
La seconde égalité donne immédiatement :
\begin{align*}
&\forall s \in \{1,...,p\} && R(C) \; x_0^+(s) = R(\beta_s) x_0^+(s), \\
&\forall s \in \{1,...,p\} && R(C) \; x_0^-(s) = R(-\beta_s) x_0^-(s) = R(\beta_{p-s}) x_0^-(s).
\end{align*}
On en déduit que, pour tout $s \in \{0,...,p\}$, $a_s = R(\beta_s)$.

Pour exploiter la première égalité, on utilise un développement de Taylor-Young à l'ordre 2 sur le polynôme $R$ en $\alpha \beta$. Pour tous $\alpha \in \{+,-\}$ et $s \in \{1,...,p-1\}$, on obtient :
\begin{align*}
R(C) \; y_0^\alpha(s) &= R(\alpha \beta_s) y_0^\alpha(s) + R'(\alpha \beta_s) (C-\alpha \beta_s) y_0^\alpha(s) + O(C-\alpha \beta_s)^2 y_0^\alpha(s) \\
	&\overset{(1)}{=} R(\alpha \beta) y_0^\alpha(s) + R'(\alpha \beta_s) x_0^\alpha(s) + O(C-\alpha \beta_s) x_0^\alpha(s) \\
	&\overset{(2)}{=} R(\alpha \beta) y_0^\alpha(s) + R'(\alpha \beta_s) x_0^\alpha(s).
\end{align*}
Il s'ensuit que :
\begin{align*}
&\forall s \in \{1,...,p-1\} & R(C) \; y_0^+(s) &= R(\beta_s) y_0^+(s) + R'(\beta_s) x_0^+(s), \\
&\forall s \in \{1,...,p-1\} & R(C) \; y_0^-(s) &= R(-\beta_s) y_0^-(s) + R'(-\beta_s) x_0^-(s) \\ &&&= R(\beta_{p-s}) y_0^-(s) + R'(\beta_{p-s}) x_0^-(s).
\end{align*}
On en déduit que, pour tout $s \in \{1,...,p-1\}$, $b_s^+=b_s^-=R'(\beta_s)$. D'où le résultat.
\end{proof}

\begin{Rem}
\label{Rem: Casimir}
\begin{enumerate}[(i)]
	\item En particulier, la sous-algèbre $\langle C \rangle$ engendrée par $C$ est un $\C$-espace vectoriel de dimension $2p$, dont une $\C$-base est $\{ e_s \; ; \; 0 \leq s \leq p \} \cup \{ w^+_s+w^-_s \; ; \; 1 \leq s \leq p-1 \}$.
	\item On en déduit également le polynôme minimal de l'élément de Casimir $C$ \eqref{Eqn: Casimir} :
	\[ \psi_{2p}(x) := (x-\beta_0) \left( \prod_{j=1}^{p-1} \left( x-\beta_j \right)^2 \right) (x-\beta_p). \]
\end{enumerate}
\end{Rem} 

On considère maintenant les polynômes :
\begin{equation} 
\label{Eqn: poly en C}
\begin{aligned}
& \psi_0(x) := \prod_{j=1}^{p-1} \left( x-\beta_j \right)^2 (x-\beta_p), \\
& \psi_s(x) := (x-\beta_0) \left( \prod_{\substack{j=1 \\ j \not = s}}^{p-1} \left( x-\beta_j \right)^2 \right) (x-\beta_p), && 1 \leq s \leq p-1, \\
& \psi_p(x) := (x-\beta_0) \prod_{j=1}^{p-1} \left( x-\beta_j \right)^2,
\end{aligned}
\end{equation}
où, pour tout $j \in \{0,...,p\}$, $\beta_j := \frac{q^j+q^{-j}}{(q-q^{-1})^2}$. En utilisant ces polynômes dans la proposition \ref{Prop: poly en C}, on exprime les éléments de la base canonique du centre (cf. proposition \ref{Prop: centre}) comme des polynômes en $C$ \eqref{Eqn: Casimir} à multiplication par l'élément $K$ près.

\begin{Cor} 
\label{Cor: poly en C}
On utilise les notations \ref{Eqn: poly en C} et, pour tout $s \in \{1,...,p-1\}$, on pose :
\[ \pi^+_s := \frac{1}{2p} \sum_{\substack{0 \leq n \leq s-1 \\ 0 \leq j \leq 2p-1}} q^{(2n-s+1)j} K^j \quad \text{ et } \quad \pi^-_s := \frac{1}{2p} \sum_{\substack{s \leq n \leq p-1 \\ 0 \leq j \leq 2p-1}} q^{(2n-s+1)j} K^j . \]
Les éléments de la base canonique du centre vérifient :
\begin{align*}
& e_s = \frac{1}{\psi_s(\beta_s)} \psi_s(C), && s \in \{0,p\}, \\
& e_s = \frac{1}{\psi_s(\beta_s)} \psi_s(C) -\frac{\psi'_s(\beta_s)}{\psi_s(\beta_s)^2} (C-\beta_s) \psi_s(C), && 1 \leq s \leq p-1, \\
& w^\pm_s = \frac{1}{\psi_s(\beta_s)} \pi^\pm_s (C-\beta_s) \psi_s(C), && 1 \leq s \leq p-1,
\end{align*}
où, pour tout $j \in \{0,...,p\}$, $\beta_j := \frac{q^j+q^{-j}}{(q-q^{-1})^2}$.
\end{Cor}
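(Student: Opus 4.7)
Le plan consiste à appliquer la proposition~\ref{Prop: poly en C} aux polynômes $\psi_s$ et $R(x) := (x - \beta_s)\psi_s(x)$, puis à séparer $w^+_s$ de $w^-_s$ via les projecteurs de poids $\pi^\pm_s$.

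La définition \eqref{Eqn: poly en C} des $\psi_s$ entraîne $\psi_s(\beta_j) = 0$ pour tout $j \neq s$ et $\psi'_s(\beta_j) = 0$ pour tout $j \in \{1, \ldots, p-1\} \setminus \{s\}$ (racines doubles). La proposition~\ref{Prop: poly en C} donne alors immédiatement $\psi_s(C) = \psi_s(\beta_s)\,e_s$ lorsque $s \in \{0, p\}$, et $\psi_s(C) = \psi_s(\beta_s)\,e_s + \psi'_s(\beta_s)(w^+_s + w^-_s)$ lorsque $s \in \{1, \ldots, p-1\}$. De même, $R(\beta_j) = 0$ pour tout $j$, et un calcul direct donne $R'(\beta_j) = 0$ pour $j \in \{1, \ldots, p-1\} \setminus \{s\}$ tandis que $R'(\beta_s) = \psi_s(\beta_s)$, d'où $(C-\beta_s)\psi_s(C) = \psi_s(\beta_s)(w^+_s + w^-_s)$. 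Les deux premières formules du corollaire résultent alors directement de l'élimination de $(w^+_s + w^-_s)$ entre ces identités.

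Pour la dernière formule, j'identifie d'abord $\pi^\pm_s$ comme projecteurs sur des sous-espaces de poids sous l'action à gauche de $K$. L'orthogonalité des caractères du groupe cyclique $\Z/2p\Z$ (assurée par $K^{2p} = 1$) montre que, pour tout $a \in \Z/2p\Z$, l'élément $\varepsilon_a := \frac{1}{2p} \sum_{j=0}^{2p-1} q^{-aj} K^j$ est l'idempotent primitif de $\C[K]$ sur lequel $K$ agit par $q^a$, et l'on a $\pi^+_s = \sum_{n=0}^{s-1} \varepsilon_{s-1-2n}$, $\pi^-_s = \sum_{n=s}^{p-1} \varepsilon_{s-1-2n}$, dont les supports sont respectivement les ensembles des poids des modules simples $\X^+(s)$ et $\X^-(p-s)$, disjoints modulo $2p$. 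Il reste alors à établir $\pi^\pm_s (w^+_s + w^-_s) = w^\pm_s$ comme éléments de $\Uq$. Pour cela, j'analyse les opérateurs de multiplication à gauche par $w^\pm_s$ sur $\Reg \cong \bigoplus_j \mathcal{Q}(j)$ : d'après le corollaire~\ref{Cor: centre}, $w^+_s$ s'annule sur $\mathcal{Q}(j)$ pour $j \neq s$, envoie chaque générateur $y_0^+(s)$ d'une copie de $\PIM^+(s)$ dans $\mathcal{Q}(s)$ sur $x_0^+(s)$, et annule les générateurs $y_0^-(p-s)$ des copies de $\PIM^-(p-s)$. Par linéarité sous l'action de $\Uq$, l'image de cet opérateur est la somme directe des sous-modules simples $\X^+(s)$ situés à la base de ces copies de $\PIM^+(s)$, et ses poids sous l'action gauche de $K$ appartiennent exactement au support de $\pi^+_s$ ; symétriquement pour $w^-_s$ et $\pi^-_s$. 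Il s'ensuit que $\pi^+_s \circ w^+_s = w^+_s$ et $\pi^+_s \circ w^-_s = 0$ comme opérateurs sur $\Reg$, et par évaluation en $1$ comme éléments de $\Uq$. L'obstacle principal sera précisément cette vérification du confinement en poids de l'image des $w^\pm_s$, qui repose sur la description explicite du corollaire~\ref{Cor: centre} conjointement avec la structure des PIMs établie au chapitre précédent.
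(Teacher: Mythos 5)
Votre démonstration est correcte et suit essentiellement la même route que celle du texte : application de la proposition~\ref{Prop: poly en C} aux polynômes $\psi_s$ et $(x-\beta_s)\psi_s$, puis séparation de $w^+_s$ et $w^-_s$ par multiplication par les projecteurs spectraux $\pi^\pm_s$ de $K$. Votre justification du fait que $\pi^\pm_s(w^+_s+w^-_s)=w^\pm_s$ — via le confinement en poids de l'image des opérateurs de multiplication par $w^\pm_s$, c'est-à-dire des facteurs $\X^+(s)\boxtimes\X^+(s)$ et $\X^-(p-s)\boxtimes\X^-(p-s)$ de $\mathbf{R}(s)^2$ — explicite proprement un point que le texte se contente d'affirmer.
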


\begin{proof}
Soit $s \in \{1,...,p-1\}$. On commence par utiliser la proposition \ref{Prop: poly en C} avec le polynôme $(x-\beta_s) \psi_s(x)$. On obtient :
\begin{equation} \tag{1}
(C-\beta_s) \psi_s(C) = \psi_s(\beta_s) (w_s^+ + w_s^-) .
\end{equation}
On projette l'équation $(1)$ sur l'espace vectoriel engendré par le vecteur $w_s^+$ et sur celui engendré par le veteur $w_s^-$. Pour cela, on multiplie l'équation $(1)$ par $\pi_s^+$ et $\pi_s^-$ respectivement. En effet, $\pi_s^+$ (resp. $\pi_s^-$) agit par multiplication comme la projection orthogonale sur les espaces propres de $K$ associés aux valeurs propres :
\begin{gather*}
\{ q^{s-1-2n} \; ; \; 0 \leq n \leq s-1 \} \\
\left( \text{resp. } \{ q^{s-1-2n} \; ; \; s \leq n \leq p-1 \} = \{ -q^{p-s-1-2n} \; ; \; 0 \leq n \leq p-s-1 \} \right).
\end{gather*}
On obtient ainsi :
\[ w_s^\pm = \frac{1}{\psi_s(\beta_s)} \pi_s^\pm (C-\beta_s) \psi_s(C) . \]

Soit $s \in \{0,...,p\}$. On utilise à présent la proposition \ref{Prop: poly en C} avec le polynôme $\psi_s(x)$. On obtient :
\begin{equation} \tag{2}
\begin{aligned}
&\psi_s(C) = \psi_s(\beta_s) e_s && \text{ si } s \in \{0,p\}, \\
&\psi_s(C) = \psi_s(\beta_s) e_s + \psi_s'(\beta_s) (w_s^+ + w_s^-)&& \text{ si } 1 \leq s \leq p-1.
\end{aligned}
\end{equation}
Or, pour $s \in \{1,...,p-1\}$, on sait que :
\[ (w_s^+ + w_s^-) \overset{(1)}{=} \frac{1}{\psi_s(\beta_s)} (C-\beta_s) \psi_s(C) . \]
Les équations $(2)$ donnent donc :
\begin{align*}
& e_s = \frac{1}{\psi_s(\beta_s)} \psi_s(C) && \text{ si } s \in \{0,p\}, \\
& e_s = \frac{1}{\psi_s(\beta_s)} \psi_s(C) - \frac{\psi'_s(\beta_s)}{\psi_s(\beta_s)^2} (C-\beta_s) \psi_s(C) && \text{ si } 1 \leq s \leq p-1.
\end{align*}
\end{proof}

\begin{Rem}
\label{Rem: poly en C}
Une autre formulation de la proposition \ref{Prop: poly en C} et du corollaire \ref{Cor: poly en C} sera utile dans le chapitre IV. A savoir, pour tout polynôme $R \in \C[x]$, on a :
\[ R(\pm \widehat{C}) = \sum_{s=0}^p R(\pm \widehat{\beta}_s) e_s \pm (q-q^{-1})^2 \sum_{s=1}^{p-1} R'(\pm \widehat{\beta}_s) \left( w^+_s + w^-_s \right), \]
où $\widehat{C}:=(q-q^{-1})^2 C$ et, pour tout $j \in \{0,...,p\}$, $\widehat{\beta}_j := q^j+q^{-j}$\index{beta@ $\widehat{\beta}_j$}. Les éléments de la base canonique du centre vérifient alors :
\begin{align*}
& e_s = \frac{1}{\widehat{\psi}_s(\pm \widehat{\beta}_s)} \widehat{\psi}_s((\pm \widehat{C}), && s \in \{0,p\}, \\
& e_s = \left( \frac{1}{\widehat{\psi}_s(\pm \widehat{\beta}_s)}-\frac{\widehat{\psi}'_s(\pm \widehat{\beta}_s)}{\widehat{\psi}_s(\pm \widehat{\beta}_s)^2} (\pm \widehat{C} \mp \widehat{\beta}_s) \right) \widehat{\psi}_s(\pm \widehat{C}), && 1 \leq s \leq p-1, \\
& w^\pm_s = \pm \frac{1}{(q-q^{-1})^2 \; \widehat{\psi}_s(\pm \widehat{\beta}_s)} \pi^\pm_s (\pm \widehat{C} \mp \widehat{\beta}_s) \widehat{\psi}_s(\pm \widehat{C}), && 1 \leq s \leq p-1,
\end{align*}
avec :
\begin{align*}
& \widehat{\psi}_0(x) := \prod_{j=1}^{p-1} \left( x \mp \widehat{\beta}_j \right)^2 (x \mp \widehat{\beta}_p), \\
& \widehat{\psi}_s(x) := (x \mp \widehat{\beta}_0) \left( \prod_{\substack{j=1 \\ j \not = s}}^{p-1} \left( x \mp \widehat{\beta}_j \right)^2 \right) (x \mp \widehat{\beta}_p), && 1 \leq s \leq p-1, \\
& \widehat{\psi}_p(x) := (x \mp \widehat{\beta}_0) \prod_{j=1}^{p-1} \left( x \mp \widehat{\beta}_j \right)^2.
\end{align*}
\end{Rem}

\section{Balancement, enrubannement et caractères}
\label{section: structures}

On détaille plus amplement la structure de l'algèbre de Hopf $\Uq$. Pour cela, on commence par quelques rappels sur les algèbres de Hopf tressées (on dit aussi quasi-triangulaires, cf. par exemple \cite[§ VIII.2]{Kas95}). Pour toute algèbre de Hopf tressée de dimension finie $(A, \mu, \eta, \Delta, \varepsilon, S, \mathbf{R})$, il est possible de construire un élément canonique $\mathbf{u} \in A$ \index{u@ $\mathbf{u}$} inversible tel que :
\[ \Delta(\mathbf{u}) = (\mathbf{R}_{12} \mathbf{R})^{-1} \mathbf{u} \otimes \mathbf{u}
\quad \text{ et } \quad 
\forall x \in A \quad S^2(x) = \mathbf{u} x \mathbf{u}^{-1} \]
(cf. par exemple \cite[§ VIII.4, p.173]{Kas95} et \cite[Prop. 2]{LS69}). On en déduit un élément $\mathbf{g} = \mathbf{u} S(\mathbf{u})^{-1}$ \index{g@ $\mathbf{g}$} inversible tel que :
\[ \Delta(\mathbf{g}) = \mathbf{g} \otimes \mathbf{g} \quad \text { et } \quad \forall x \in A \quad S^4(x) = \mathbf{g} x \mathbf{g}^{-1} \]
(cf. par exemple \cite[Thm 10.1.13]{Mon93}). Ce dernier permet de définir la notion de \emph{balancement}.

\begin{Def}
Soit $(A, \mu, \eta, \Delta, \varepsilon, S, \mathbf{R})$ une algèbre de Hopf tressée de dimension finie. On dit que $\mathbf{k} \in A$ est un \emph{élément de balancement} si :
\begin{equation}
\label{Eqn: balancement}
\mathbf{k}^2 = \mathbf{g}, 
\quad \Delta( \mathbf{k} ) = \mathbf{k} \otimes \mathbf{k},
\quad \text{ et } \quad 
\forall x \in A \quad S^2(x) = \mathbf{k} x \mathbf{k}^{-1}.
\end{equation}
Lorsqu'un tel élément existe, on dit que $A$ est \emph{balancée}.
\end{Def}

Cette notion est équivalente à celle d'\emph{enrubannement} définie ci-dessous.

\begin{Def}[{\cite[Def. XIV.6.1]{Ker95}}]
Soit $(A, \mu, \eta, \Delta, \varepsilon, S, \mathbf{R})$ une algèbre de Hopf tressée de dimension finie. On dit que $\mathbf{v} \in A$ est un \emph{élément d'enrubannement} si :
\begin{equation}
\label{Enq: enrubannement}
\forall x \in A \quad x \mathbf{v} = \mathbf{v} x, 
\quad \Delta(\mathbf{v}) = (\mathbf{R}_{12} \mathbf{R})^{-1} \mathbf{v} \otimes \mathbf{v}, 
\quad \text{ et } \quad
S(\mathbf{v}) = \mathbf{v}.
\end{equation}
Lorsqu'un tel élément existe, on dit que $A$ est \emph{enrubannée}.
\end{Def}

Détaillons la relation entre balancement et enrubannement. On se donne une algèbre de Hopf $(A, \mu, \eta, \Delta, \varepsilon, S, \mathbf{R})$ tressée de dimension finie. Si $\mathbf{k} \in A$ est un élément de balancement, alors l'élément inversible $\mathbf{v} = \mathbf{u} \mathbf{k}^{-1} = S^2(\mathbf{k}^{-1}) \mathbf{u} = \mathbf{k}^{-1} \mathbf{u}$ vérifie :
\begin{gather*}
\forall x \in A \quad x \mathbf{v} = x \mathbf{k}^{-1} \mathbf{u}^{-1} = \mathbf{k}^{-1} S^2(x) \mathbf{u} = \mathbf{k}^{-1} \mathbf{u} x = \mathbf{v} x, \\
\Delta(\mathbf{v}) = \Delta(\mathbf{u}) \Delta(\mathbf{k}^{-1}) = (\mathbf{R}_{12} \mathbf{R})^{-1} (\mathbf{u} \otimes \mathbf{u})  (\mathbf{k}^{-1} \otimes \mathbf{k}^{-1}) = (\mathbf{R}_{12} \mathbf{R})^{-1} \mathbf{v} \otimes \mathbf{v}, \\
S(\mathbf{v}) = S(\mathbf{u} \mathbf{k}^{-1}) = S(\mathbf{k}^{-1}) S(\mathbf{u}) = S^2(\mathbf{k}) S(\mathbf{u}) = \mathbf{k} S(\mathbf{u}) = \mathbf{u} \mathbf{k}^{-1} = \mathbf{v}.
\end{gather*}
Donc $\mathbf{v}$ est un élément d'enrubannement. Réciproquement, si $\mathbf{v} \in A$ est un élément d'enrubannement, alors $\mathbf{v}^2 = \mathbf{u} S(\mathbf{u})$ (cf. par exemple \cite[Cor. XIV.6.3]{Kas95}) et l'élément inversible $\mathbf{k} = \mathbf{v}^{-1} \mathbf{u} = \mathbf{u} \mathbf{v}^{-1}$ vérifie :
\begin{gather*}
\mathbf{k}^2 = \mathbf{v}^{-2} \mathbf{u}^2 = S(\mathbf{u})^{-1} \mathbf{u}^{-1} \mathbf{u}^2 = S(\mathbf{u})^{-1} \mathbf{u} = S^2(\mathbf{u}) S(\mathbf{u})^{-1} = \mathbf{u} S(\mathbf{u})^{-1}, \\
\Delta(\mathbf{k}) = \Delta(\mathbf{v}^{-1}) \Delta(\mathbf{u}) = \mathbf{v}^{-1} \otimes \mathbf{v}^{-1} (\mathbf{R}_{12} \mathbf{R}) (\mathbf{R}_{12} \mathbf{R})^{-1} \mathbf{u} \otimes \mathbf{u} = \mathbf{k} \otimes \mathbf{k}, \\
\forall x \in A \quad S^2(x) = \mathbf{u} x \mathbf{u}^{-1} = \mathbf{v}^{-1} \mathbf{u} x \mathbf{u}^{-1} \mathbf{v} = \mathbf{k} x \mathbf{k}^{-1}.
\end{gather*} 
Donc $\mathbf{k}$ est un élément de balancement.

Conformément à l'article \cite[Cor. 3.7.4]{KS11}, la groupe quantique restreint $\Uq$ n'est pas une algèbre de Hopf tressée (et par suite, elle n'est ni balancée ni enrubannée). Toutefois, il est possible de construire des éléments de balancement et d'enrubannement "généralisés". Pour cela, on considère le dual $\Uq^* := \Hom(\Uq, \C)$ de $\Uq$. On rappelle que, pour toute $\C$-algèbre de Hopf $(A, \mu, \eta, \Delta, \varepsilon, S)$ de dimension finie, le dual $A^* := \Hom \left( A , \C \right)$ est naturellement muni d'une structure d'algèbre de Hopf (cf. par exemple \cite[Prop. III.1.2, Prop III.1.3, Prop. III.3.3]{Kas95}). Dans ce qui suit, on considère l'algèbre de Hopf duale \emph{co-opposée} $(A^*, \Delta^*, \varepsilon^*, {\mu^*}^{op}, \eta^*, (S^{-1})^*)$. Donc, pour tous $\lambda, \mu \in A^*$ et pour tous $a, b \in A$, on a :
\begin{gather*}
(\lambda \mu)(a) = (\lambda \otimes \mu) (\Delta(a)), \qquad \Delta(\lambda)(a \otimes b) = \lambda (ba), \qquad S(\lambda)(a)=\lambda(S^{-1}(a)) .
\end{gather*}
De plus, on notera simplement $A$ toute algèbre de Hopf $(A, \mu, \eta, \Delta, \varepsilon, S)$.

\subsection{(Co-)intégrales et balancement}
\label{subsection: integrales}

Dans un premier temps, on munit $\Uq$ et $\Uq^*$ d'une structure de $\Uq$-bimodules par actions de la multiplication à gauche et à droite. On étudie les intégrales et co-intégrales de $\Uq$. On en déduit deux éléments de balancement "généralisés" possibles pour $\Uq$.

\begin{Def}[{\cite[Def. 2.1.1]{Mon93}}]
Soit $A$ une $\C$-algèbre de Hopf de dimension finie.
\begin{enumerate}[(i)]
	\item On appelle \emph{intégrale} à gauche (resp. à droite) toute forme linéaire $\bmu : A \rightarrow \C$ telle que :
	\[ \forall x \in A \quad (id \otimes \bmu) \Delta(x) = \bmu(x) 1 \quad \left( \text{resp. } (\bmu \otimes id) \Delta(x) = \bmu(x) 1 \right). \]
	On note $\int_{A^*}^l$ (resp. $\int_{A^*}^r$) le $\C$-espace vectoriel des intégrales à gauche (resp. à droite). De plus, on dit que $A$ est \emph{unicomodulaire} si $\int_{A^*}^l=\int_{A^*}^r$.
	\item On appelle \emph{co-intégrale} à gauche (resp. à droite) tout élément $\mathbf{c} \in A$ tel que :
	\[ \forall x \in A \quad x\mathbf{c} = \varepsilon(x) \mathbf{c} \quad \left( \text{resp. } \mathbf{c}x = \varepsilon(x) \mathbf{c} \right). \]
	On note $\int_A^l$ (resp. $\int_A^r$) le $\C$-espace vectoriel des co-intégrales à gauche (resp. à droite). De plus, on dit que $A$ est \emph{unimodulaire} si $\int_A^l=\int_A^r$.
\end{enumerate}
\end{Def}

\begin{Rem}
Soit $A$ une $\C$-algèbre de Hopf de dimension finie. Le $\C$-espace vectoriel $\int_A^l$ (resp. $\int_A^r$) coïncide avec le $\C$-espace vectoriel des éléments invariants de $A$ sous l'action de la multiplication à gauche (resp. à droite). Il en va de même pour le $\C$-espace vectoriel $\int_{A^*}^l$ (resp. $\int_{A^*}^r$) dans $A^*$. En effet, pour tous $\bmu \in A^*$, on a :
\begin{align*}
&\forall \beta \in A^* \quad \forall x \in A \quad (\beta \bmu)(x) = \varepsilon(\beta) \bmu(x) 
	\quad \left( \text{resp. } (\bmu \beta)(x) = \varepsilon(\beta) \bmu(x) \right) \\
\Longleftrightarrow \quad& \begin{multlined}[t]
	\forall \beta \in A^* \quad \forall x \in A \quad (\beta \otimes \bmu) \Delta(x) = \beta(1) \bmu(x) \\
	\quad \left( \text{resp. } (\bmu \otimes \beta) \Delta(x) = \beta(1) \bmu(x) \right) 
	\end{multlined} \\
\Longleftrightarrow \quad& \begin{multlined}[t]
	\forall \beta \in A^* \quad \forall x \in A \quad \beta \left( (id \otimes \bmu) \Delta(x) \right) = \beta(\bmu(x)1) \\
	\quad \left( \text{resp. } \beta \left( (\bmu \otimes id) \Delta(x) \right) = \beta(\bmu(x)1) \right) 
	\end{multlined} \\
\Longleftrightarrow \quad& \forall x \in A \quad  (id \otimes \bmu) \Delta(x) = \bmu(x)1
	\quad \left( \text{resp. } (\bmu \otimes id) \Delta(x) = \bmu(x)1 \right)
\end{align*}
car $A$ est de dimension finie.
\end{Rem}

\begin{Prop} 
\label{Prop: integrales}
On considère les algèbres de Hopf $\Uq$ et $\Uq^*$.
\begin{enumerate}[(a)]
	\item Il existe des intégrales à gauche (resp. à droite) et elles sont définies par : \index{mu@ $\bmu^l_{\zeta}$} \index{mu@ $\bmu^r_{\zeta}$}
	\begin{gather*}
	\bmu_\zeta^l : \begin{cases}
		\Uq \longrightarrow \C \\
		F^m K^j E^n \longmapsto \zeta \delta_{m,p-1} \delta_{j,p-1} \delta_{n,p-1}
		\end{cases} ; 
		\quad \zeta \in \C \\
	\left( \text{resp. } \bmu_\zeta^r : \begin{cases}
		\Uq \longrightarrow \C \\
		F^m K^j E^n \longmapsto \zeta \delta_{m,p-1} \delta_{j,p+1} \delta_{n,p-1}
		\end{cases} ; 
		\quad \zeta \in \C \right).
	\end{gather*} 
	pour tous $m, n \in \{0,...,p-1\}$ et $j \in \{0,...,2p-1\}$.
	\item Il existe des co-intégrales à gauche (resp. à droite) et elles sont données par : \index{c@ $\mathbf{c}_{\zeta}$}
	\[ \mathbf{c}_\zeta = \zeta F^{p-1} \sum_{j=0}^{2p-1} q^{2j} K^j E^{p-1} = \zeta E^{p-1} \sum_{j=0}^{2p-1} q^{-2j} K^j F^{p-1} \quad ; 
		\quad \zeta \in \C. \] 
\end{enumerate}
\end{Prop}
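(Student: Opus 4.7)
The plan rests on the Larson--Sweedler theorem, which guarantees that each of the four spaces $\int^l_{A}, \int^r_A, \int^l_{A^*}, \int^r_{A^*}$ is one-dimensional for any finite-dimensional Hopf algebra $A$. Accordingly, it suffices in each case to exhibit one non-zero element satisfying the invariance condition, with the parameter $\zeta$ absorbing the normalization.

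For the co-integrals (b), I would introduce $\Lambda := \sum_{j=0}^{2p-1} q^{2j} K^j$, set $\mathbf{c} := F^{p-1}\Lambda E^{p-1}$, and verify the defining identity $x\mathbf{c} = \varepsilon(x)\mathbf{c}$ on the three generators. The relations $F\mathbf{c} = 0$ and $K\mathbf{c} = \mathbf{c}$ are immediate: the first from $F^p = 0$, the second from combining $KF^{p-1} = q^{-2(p-1)}F^{p-1}K = q^{2}F^{p-1}K$ with the shift identity $K\Lambda = q^{-2}\Lambda$ obtained by reindexing the summation. The relation $E\mathbf{c} = 0$ is the only delicate one: using the commutator formula \eqref{Eqn: comm2} one writes $EF^{p-1} = F^{p-1}E + F^{p-2}\,\tfrac{q^{-(p-2)}K - q^{p-2}K^{-1}}{q-q^{-1}}$, observes that $E\Lambda = \bigl(\sum_j K^j\bigr) E$ so that the first piece yields $F^{p-1}\bigl(\sum_j K^j\bigr) E^p = 0$, and checks that the second piece vanishes because $(q^{-(p-2)}K - q^{p-2}K^{-1})\Lambda = (q^{-p} - q^p)\Lambda = 0$ thanks to $q^p = -1$. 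A symmetric computation establishes $\mathbf{c}x = \varepsilon(x)\mathbf{c}$, which simultaneously shows that $\Uq$ is unimodular and provides both $\int^l_A$ and $\int^r_A$ by the one-dimensionality argument. The alternative expression $\mathbf{c} = E^{p-1}\sum_j q^{-2j}K^j F^{p-1}$ then follows either by a direct PBW expansion of both sides or by invoking the Chevalley-type anti-involution exchanging $E$ and $F$.

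For the integrals (a), I would define $\bmu^l_\zeta$ by the stated formula on the PBW basis of Theorem \ref{Thm: base PBW}(a) and verify $(id \otimes \bmu^l_\zeta)\Delta(x) = \bmu^l_\zeta(x)\cdot 1$ basis-element by basis-element. Expanding $\Delta(F^mK^jE^n)$ via \eqref{Eqn: coprod1} and \eqref{Eqn: coprod2}, the second tensor factor has the generic shape $F^{m-r}E^sK^{j+n-s}$; after reordering into PBW form, $\bmu^l_\zeta$ picks out exactly the term for which this equals $F^{p-1}K^{p-1}E^{p-1}$, which forces $m=n=p-1$, the summation indices $r=0$ and $s=p-1$, and the congruence $j \equiv p-1 \pmod{2p}$. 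At this selected term the first tensor factor collapses to $K^0 = 1$ after cancellation of $q$-factors, yielding the required equality. The right integral formula is obtained analogously, projecting onto the first tensor slot; the shift from $j=p-1$ to $j=p+1$ arises from the extra commutation $E^{n-s}K^{r-m+j} = q^{-2(n-s)(r-m+j)}K^{r-m+j}E^{n-s}$ needed to bring the first slot into PBW form.

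The main obstacle is not conceptual but bookkeeping: one must track exponents of $q$, congruences modulo $2p$, and the sign behavior induced by $q^p = -1$. Once the PBW expansion of $\Delta$ is laid out, all claims reduce to these elementary identities together with the nilpotency $E^p = F^p = 0$.
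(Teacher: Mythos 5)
Your proposal is correct and follows essentially the same route as the paper: direct verification on the PBW basis of Theorem \ref{Thm: base PBW}, using the explicit expansion of $\Delta(F^mK^jE^n)$ via \eqref{Eqn: coprod2} for the integrals, and the relations \eqref{Eqn: comm1}--\eqref{Eqn: comm2} together with $q^p=-1$ for the co-integral; invoking the Larson--Sweedler one-dimensionality up front so that it suffices to exhibit one non-zero solution is a legitimate and slightly cleaner packaging of what the paper does by deriving necessity and sufficiency simultaneously from the same expansion. One small inaccuracy: for the right integral, the shift to $j=p+1$ does not come from reordering the first tensor slot (the factor $F^rK^{r-m+j}E^{n-s}$ is already in PBW order); it comes from requiring the surviving \emph{second} factor $K^{j+n-s}=K^{j+p-1}$ to reduce to $K^0=1$, which forces $j\equiv p+1\pmod{2p}$ --- the stated formula is nevertheless the correct one, and a careful execution of your verification would produce it. Finally, note that the $E\leftrightarrow F$, $K\leftrightarrow K^{-1}$ automorphism only shows that the second expression for $\mathbf{c}_\zeta$ is \emph{again} a (two-sided) co-integral, hence proportional to the first by one-dimensionality; to get equality rather than proportionality you still need to compare one PBW coefficient, a point the paper's own proof also treats rather tersely.
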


\begin{proof}
\begin{enumerate}[(a)]
	\item On utilise la $\C$-base $\{ F^m K^j E^n \; ; \; 0 \leq m,n \leq p-1, \; 0 \leq j \leq 2p-1\}$ de $\Uq$ (cf. le théorème \ref{Thm: base PBW}). Il s'agit de trouver des formes linéaires $\bmu : \Uq \rightarrow \C$ telles que, pour tous $m,n \in \{0,...,p-1\}$ et $j \in \{0,...,2p-1\}$, on ait :
	\begin{align}
	& \quad \quad (id \otimes \bmu) \Delta(F^m K^j E^n) = \bmu(F^m K^j E^n) 1 \tag{1a} \\
	& \left( \text{resp. } (\bmu \otimes id) \Delta(F^m K^j E^n) = \bmu(F^m K^j E^n) 1 \right). \notag
	\end{align}
	Or, pour tous $m,n \in \{0,...,p-1\}$ et $j \in \{0,...,2p-1\}$, on a :
\begin{align*}
\Delta( F^m K^j E^n )
&\; \; = \Delta(F)^m \Delta(K)^j \Delta(E)^n \\
	&\overset{\eqref{Eqn: coprod2}}{=} \sum_{r=0}^m \sum_{s=0}^n q^{r(m-r)+s(n-s)} {m \brack r} {n \brack s} F^r K^{r-m+j} E^{n-s} \otimes F^{m-r} K^j E^s K^{n-s} \\
	&\overset{\eqref{Eqn: comm1}}{=} \sum_{r=0}^m \sum_{s=0}^n q^{r(m-r)+s(n-s)} {m \brack r} {n \brack s} F^r K^{r-m+j} E^{n-s} \otimes F^{m-r} K^{j+n-s} E^s.
\end{align*}
	S'il existe une intégrale à gauche (resp. à droite) $\bmu$, alors la condition $(1a)$ impose qu'il existe $\zeta \in \C$ tel que :
	\begin{gather*}
	\bmu = \bmu_\zeta^l : \begin{cases}
		\Uq \longrightarrow \C \\
		F^m K^j E^n \longmapsto \zeta \delta_{m,p-1} \delta_{j,p-1} \delta_{n,p-1}
		\end{cases} \\
	\left( \text{resp. } \bmu = \bmu_\zeta^r : \begin{cases}
		\Uq \longrightarrow \C \\
		F^m K^j E^n \longmapsto \zeta \delta_{m,p-1} \delta_{j,p+1} \delta_{n,p-1}
		\end{cases}  \right).
	\end{gather*}
	pour tous $m, n \in \{0,...,p-1\}$ et $j \in \{0,...,2p-1\}$. Réciproquement, pour tout $\zeta \in \C$, $\bmu_\zeta$ est une intégrale à gauche (resp. à droite) d'après les calculs précédents.
	\item On utilise la $\C$-base $\{ F^m K^j E^n \; ; \; 0 \leq m,n \leq p-1, \; 0 \leq j \leq 2p-1\}$ (resp. $\{ E^n K^j F^m \; ; \; 0 \leq m,n \leq p-1, \; 0 \leq j \leq 2p-1\}$) de $\Uq$ (cf. le théorème \ref{Thm: base PBW}). Il s'agit de trouver des éléments $\mathbf{c} \in \Uq$ tels que :
	\begin{align}
	&F \mathbf{c} = \varepsilon(F) \mathbf{c} \overset{\eqref{Eqn: coprod1}}{=} 0
	&& \left( \text{resp. } \mathbf{c} F = \varepsilon(F) \mathbf{c} \overset{\eqref{Eqn: coprod1}}{=} 0 \right), \tag{1b} \\
	&K \mathbf{c} = \varepsilon(K) \mathbf{c} \overset{\eqref{Eqn: coprod1}}{=} \mathbf {c}
	&& \left( \text{resp. } \mathbf{c} K = \varepsilon(K) \mathbf{c} \overset{\eqref{Eqn: coprod1}}{=} \mathbf{c} \right), \tag{2b} \\
	&E \mathbf{c} = \varepsilon(E) \mathbf{c} \overset{\eqref{Eqn: coprod1}}{=} 0
	&& \left( \text{resp. } \mathbf{c} E = \varepsilon(E) \mathbf{c} \overset{\eqref{Eqn: coprod1}}{=} 0 \right). \tag{3b}
	\end{align}
	 S'il existe un co-intégrale à gauche (resp. à droite) $\mathbf{c}$, alors les conditions $(1b)$ et $(2b)$ imposent qu'il existe $\zeta_0,...,\zeta_{p-1} \in \C$ tels que :
	\[ \mathbf{c} = F^{p-1} \sum_{n=0}^{p-1} \sum_{j=0}^{2p-1} \zeta_n q^{2j} K^j E^n
	\quad \left( \text{resp. } \mathbf{c} = \sum_{n=0}^{p-1} \sum_{j=0}^{2p-1} \zeta_n q^{-2j} E^n K^j \; F^{p-1} \right). \]
	Avec cet élément $\mathbf{c}$, on a :
	\begin{align*}
	E \mathbf{c} &\; \; = E F^{p-1} \sum_{n=0}^{p-1} \sum_{j=0}^{2p-1} \zeta_n q^{2j} K^j E^n \\
	&\overset{\eqref{Eqn: comm2}}{=} \begin{multlined}[t]
		F^{p-1} E \sum_{n=0}^{p-1} \sum_{j=0}^{2p-1} \zeta_n q^{2j} K^j E^n \\
		+ [p-1] F^{p-2} \frac{q^{-(p-2)}K-q^{p-2}K^{-1}}{q-q^{-1}} \sum_{n=0}^{p-1} \sum_{j=0}^{2p-1} \zeta_n q^{2j} K^j E^n 
		\end{multlined} \\
	\Bigg( \text{resp. } \mathbf{c} E &\; \; = \sum_{n=0}^{p-1} \sum_{j=0}^{2p-1} \zeta_n q^{-2j} E^n K^j \; F^{p-1} E\\
	&\overset{\eqref{Eqn: comm2}}{=} \begin{multlined}[t]
		\sum_{n=0}^{p-1} \sum_{j=0}^{2p-1} \zeta_n q^{-2j} E^n K^j \; E F^{p-1} \\
		- [p-1] \sum_{n=0}^{p-1} \sum_{j=0}^{2p-1} \zeta_n q^{-2j} E^n K^j \; \frac{q^{p-2}K-q^{-(p-2)}K^{-1}}{q-q^{-1}} F^{p-2} \Bigg).
		\end{multlined}
	\end{align*}
	Le second terme vérifie :
	\begin{align*}
	&[p-1] F^{p-2} \frac{q^{-(p-2)}K-q^{p-2}K^{-1}}{q-q^{-1}} \sum_{n=0}^{p-1} \sum_{j=0}^{2p-1} \zeta_n q^{2j} K^j E^n \\
	&\; \overset{\eqref{Eqn: qcoeff2}}{=} \frac{-1}{q-q^{-1}} F^{p-2} \sum_{n=0}^{p-1} \sum_{j=0}^{2p-1} \zeta_n \left( q^{2(j+1)} K^{j+1}-q^{2(j-1)} K^{j-1} \right) E^n
	\overset{\eqref{Eqn: comm1}}{=} 0 \\
	\Bigg( \text{resp. } &-[p-1] \sum_{n=0}^{p-1} \sum_{j=0}^{2p-1} \zeta_n q^{-2j} E^n K^j \; \frac{q^{p-2}K-q^{-(p-2)}K^{-1}}{q-q^{-1}} F^{p-2} \\
	&\; \overset{\eqref{Eqn: qcoeff2}}{=} \frac{1}{q-q^{-1}} \sum_{n=0}^{p-1} \sum_{j=0}^{2p-1} \zeta_n E^n \left( q^{-2(j+1)} K^{j+1}-q^{-2(j-1)} K^{j-1} \right) \; F^{p-2}
	\overset{\eqref{Eqn: comm1}}{=} 0 \Bigg).
	\end{align*}
	Donc :
	\[ E \mathbf{c} \overset{\eqref{Eqn: comm1}}{=} F^{p-1} \sum_{n=0}^{p-1} \sum_{j=0}^{2p-1} \zeta_n K^j E^{n+1}
	\quad \left( \text{resp. } \mathbf{c} E \overset{\eqref{Eqn: comm1}}{=} \sum_{n=0}^{p-1} \sum_{j=0}^{2p-1} \zeta_n E^{n+1} K^j \; F^{p-2} \right). \]
	La condition $(3b)$ impose donc que, pour tout $n \in \{0,...,p-2\}$, $\zeta_n = 0$. D'où :
	\[ \mathbf{c} = \mathbf{c}_{\zeta_{p-1}} = \zeta_{p-1} F^{p-1} \sum_{j=0}^{2p-1} q^{2j} K^j E^{p-1} = \zeta_{p-1} E^{p-1} \sum_{j=0}^{2p-1} q^{-2j} K^j F^{p-1}, \]
	où la troisième égalité découle de :
	\begin{align*}
	q^{2j} F^{p-1} K^j E^{p-1} \overset{\eqref{Eqn: comm1}}{=} K^j F^{p-1} E^{p-1} \overset{\eqref{Eqn: comm2}}{=} K^j E^{p-1} F^{p-1} \overset{\eqref{Eqn: comm1}}{=} q^{-2j} F^{p-1} K^j E^{p-1}.
	\end{align*}
	Réciproquement, pour tout $\zeta \in \mathbb{C}$, $\mathbf{c}_{\zeta}$ est une co-intégrale bilatère d'après les calculs précédents.

\end{enumerate}
\end{proof}

\begin{Rem} 
\label{Rem: integrales}
\begin{enumerate}[(i)]
	\item Soit $A$ une $\C$-algèbre de Hopf de dimension finie. D'après l'article \cite{LS69} (dont les principaux résultats sont repris dans \cite[Thm 2.1.3]{Mon93}), les $\C$-espaces vectoriels $\int_{A^*}^l$ et $\int_{A^*}^r$ (resp. $\int_A^l$ et $\int_A^r$) sont de dimension 1. De plus, l'antipode $(S^{-1})^*$ (resp. $S$) induit un isomorphisme entre ces deux $\C$-espaces vectoriels :
	\[ \begin{cases}
		\int_{A^*}^l \overset{\sim}{\longrightarrow} \int_{A^*}^r \\
		\bmu \longmapsto S(\bmu )= \bmu  \circ S^{-1}
		\end{cases} \quad
	\left( \text{resp. } \begin{cases}
		\int_A^l \overset{\sim}{\longrightarrow} \int_A^r \\
		\mathbf{c} \longmapsto S(\mathbf{c})
		\end{cases} \right). \]
		Pour l'algèbre de Hopf $\Uq$, on a montré que $\int_A^l = \int_A^r$ dans la proposition \ref{Prop: integrales}.
	\item D'après le théorème de Maschke (cf. par exemple \cite[Thm 2.2.1]{Mon93}), on retrouve que $\Uq$ n'est pas semi-simple (cf. la proposition \ref{Thm: Reg}) car $\varepsilon(\mathbf{c}) = 0$.
\end{enumerate}
\end{Rem}

\begin{Cor}
\label{Cor: integrales}
L'algèbre de Hopf $\Uq$ est unimodulaire.
\end{Cor}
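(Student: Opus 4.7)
Le plan est de déduire immédiatement l'unimodularité de la proposition \ref{Prop: integrales}(b) précédente, qui a déjà déterminé explicitement les co-intégrales à gauche et à droite de $\Uq$. Je rappellerais que ladite proposition établit, via deux calculs parallèles dans les bases PBW (a) et (b) du théorème \ref{Thm: base PBW}, que $\int_{\Uq}^l$ et $\int_{\Uq}^r$ sont tous deux engendrés par la même famille $\{ \zeta \mathbf{c}_1 \; ; \; \zeta \in \C \}$, où
\[ \mathbf{c}_1 = F^{p-1} \sum_{j=0}^{2p-1} q^{2j} K^j E^{p-1} . \]

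L'étape cruciale --- déjà effectuée à la fin de la preuve de la proposition \ref{Prop: integrales} --- est l'identification explicite
\[ F^{p-1} \sum_{j=0}^{2p-1} q^{2j} K^j E^{p-1} = E^{p-1} \sum_{j=0}^{2p-1} q^{-2j} K^j F^{p-1}, \]
obtenue par la chaîne d'égalités $q^{2j} F^{p-1} K^j E^{p-1} = K^j F^{p-1} E^{p-1} = K^j E^{p-1} F^{p-1} = q^{-2j} F^{p-1} K^j E^{p-1}$ découlant des relations de commutation \eqref{Eqn: comm1} et \eqref{Eqn: comm2}. Cette identification garantit que les deux expressions issues des deux bases PBW définissent un seul et même sous-espace de $\Uq$, de sorte que $\int_{\Uq}^l = \int_{\Uq}^r = \C \cdot \mathbf{c}_1$. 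Par définition, $\Uq$ sera donc unimodulaire.

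Aucune difficulté particulière n'est attendue dans ce corollaire : l'intégralité du contenu calculatoire est déjà absorbée par la proposition précédente, et la preuve se réduit à l'observation que la même formule gouverne les deux espaces de co-intégrales. On notera au passage que, contrairement au cas des intégrales où $\bmu_\zeta^l$ et $\bmu_\zeta^r$ diffèrent (via les indices $\delta_{j,p-1}$ et $\delta_{j,p+1}$ respectivement), c'est précisément la symétrie des relations \eqref{Eqn: comm1} entre $E$ et $F$ qui force la coïncidence côté co-intégrales.
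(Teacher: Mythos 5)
Votre démonstration est correcte et suit exactement la même voie que le texte : le corollaire est une conséquence immédiate de la proposition \ref{Prop: integrales}(b), qui identifie les co-intégrales à gauche et à droite comme multiples du même élément $\mathbf{c}_1$ grâce à l'égalité $F^{p-1} \sum_j q^{2j} K^j E^{p-1} = E^{p-1} \sum_j q^{-2j} K^j F^{p-1}$. Le paragraphe de conclusion sur la symétrie des relations entre $E$ et $F$ est une observation juste qui éclaire pourquoi la situation diffère de celle des intégrales.
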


Soit $\bmu$ une intégrale à droite de $\Uq$. Pour tout $\beta \in \Uq^*$, la forme linéaire $\beta \bmu$ est encore une intégrale à droite. Par unicité de $\bmu$ à un scalaire près (cf. l'assertion $(i)$ de la remarque \ref{Rem: integrales}) et comme $\Uq^{**} \cong \Uq$, il existe un élément $\mathbf{a} \in \Uq$ tel que :
\[ \forall \beta \in \Uq^* \quad \forall x \in \Uq \quad (\beta \bmu)(x) = \beta(\mathbf{a}) \bmu(x) . \]
Cette propriété est équivalente à :
\begin{align*}
&\forall \beta \in \Uq^* \quad  \forall x \in \Uq \quad (\beta \otimes \bmu) \Delta(x) = \beta(\mathbf{a}) \bmu(x) \\
\Longleftrightarrow \quad& \forall \beta \in \Uq^* \quad \forall x \in \Uq \quad \beta \left( (id \otimes \bmu) \Delta(x) \right) = \beta \left( \bmu(x) \mathbf{a} \right) \\
\Longleftrightarrow \quad& \forall x \in \Uq \quad (id \otimes \bmu) \Delta(x) = \bmu(x) \mathbf{a}
\end{align*}
car $\Uq$ est de dimension finie. L'élément $\mathbf{a} \in \Uq$ est donc indépendant du choix de l'intégrale à droite $\bmu$. De plus, on peut vérifier que :
\[ \forall \beta, \gamma \in \Uq^* \quad (\beta \gamma)(\mathbf{a}) = \beta(\mathbf{a}) \gamma(\mathbf{a}) . \]
De même que précédemment, cette propriété est équivalente à :
\[ \forall \beta, \gamma \in \Uq^* \quad (\beta \otimes \gamma) \Delta(\mathbf{a}) = (\beta \otimes \gamma) (\mathbf{a} \otimes \mathbf{a}) \quad \Longleftrightarrow \quad \Delta(\mathbf{a}) = \mathbf{a} \otimes \mathbf{a} . \]
Autrement dit, $\mathbf{a}$ est un élément \emph{group-like}. De manière duale, il existe un élément group-like $\boldsymbol{\alpha} \in \Uq^*$, indépendant du choix de la co-intégrale $\mathbf{c}$, tel que :
\[ \forall x \in \Uq \quad x \mathbf{c} = \boldsymbol{\alpha}(x) \mathbf{c}. \]
Dans notre cas, on a $\boldsymbol{\alpha}=\varepsilon$ car $\Uq$ est unimodulaire. L'existence de tels éléments group-like s'applique également aux (co-)intégrales à gauche. C'est un fait général aux $\C$-algèbres de Hopf de dimension finie.

\begin{Def}[{\cite[Def. 2.2.3]{Mon93}}]
Soit $A$ une $\C$-algèbre de Hopf de dimension finie.
\begin{enumerate}[(i)]
	\item Soit $\bmu$ une intégrale à gauche (resp. à droite) de $A^*$. On appelle \emph{comodule} à gauche (resp. à droite) l'élément group-like $\mathbf{a} \in A$ tel que :
\[ \forall x \in A \quad (\bmu \otimes id) \Delta(x) = \bmu(x) \mathbf{a} \quad \left( \text{resp. } (id \otimes \bmu) \Delta(x) = \bmu(x) \mathbf{a} \right). \]
	\item Soit $\mathbf{c}$ une co-intégrale à gauche (resp. à droite) de $A$. On appelle \emph{module} à gauche (resp. à droite) l'élément group-like $\boldsymbol{\alpha} \in A^*$ tel que :
\[ \forall x \in A \quad \mathbf{c} x = \boldsymbol{\alpha}(x) \mathbf{c} \quad \left( \text{resp. } x \mathbf{c} = \boldsymbol{\alpha}(x) \mathbf{c} \right). \]
\end{enumerate}
\end{Def}

\begin{Rem} 
\label{Rem: modules}
Soit $A$ une $\C$-algèbre de Hopf de dimension finie.
\begin{enumerate}[(i)]
	\item Le comodule à droite $\mathbf{b}$ est l'inverse du comodule à gauche $\mathbf{a}$. En effet, d'après l'assertion $(i)$ de la remarque \ref{Rem: integrales}, l'antipode fournit un isomorphisme entre le $\C$-espace vectoriel des intégrales à droites et celui des intégrales à gauche. Il s'ensuit que $\mathbf{b}= S(\mathbf{a})$. Or $\mathbf{a}$ est un élément group-like, donc il est inversible et son inverse est $S(\mathbf{a})$ (cf. par exemple \cite[Prop. III.3.7]{Kas95}). Le même constat s'applique pour le module à droite $\boldsymbol{\beta}$ par rapport au module à gauche $\boldsymbol{\alpha}$.
	\item En particulier, si $A$ est unicomodulaire (resp. unimodulaire), alors :
\[ \mathbf{a} = 1 \quad \text{ et } \quad \mathbf{b}=S(\mathbf{a})=1 \quad \left( \text{resp. } \mathbf{a}=\varepsilon \quad \text{ et } \quad \mathbf{b}=S(\mathbf{a})=\varepsilon \right) \]
car $S(1)=1$ (resp. $\varepsilon \circ S^{-1}=\varepsilon$, cf. par exemple \cite[III.3.4]{Kas95}). Donc $\mathbf{b}=\mathbf{a}$.
\end{enumerate}
\end{Rem}

\begin{Cor} 
On considère les algèbres de Hopf $\Uq$ et $\Uq^*$.
\begin{enumerate}[(a)]
	\item Le comodule à droite est $\mathbf{b} = K^2$. 
	\item Le module à droite est $\boldsymbol{\alpha} = \varepsilon$.
\end{enumerate}
\end{Cor}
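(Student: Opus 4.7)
The plan is to handle the two assertions separately, since (b) is an immediate consequence of the unimodularity already established for $\Uq$, while (a) requires an explicit computation using the integral $\bmu^r_\zeta$ exhibited in Proposition \ref{Prop: integrales}.

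For (b), I would simply invoke Corollary \ref{Cor: integrales} together with assertion $(ii)$ of Remark \ref{Rem: modules}: since $\Uq$ is unimodular, one has $\int_{\Uq}^l = \int_{\Uq}^r$, and the defining relation $x \mathbf{c} = \boldsymbol{\alpha}(x) \mathbf{c}$ for a \emph{right} co-integral $\mathbf{c}$ coincides with the defining relation for a \emph{left} co-integral with trivial character. Hence $\boldsymbol{\alpha} = \varepsilon$.

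For (a), I would start from the definition: a right comodule $\mathbf{b}$ is the group-like element of $\Uq$ characterised by the relation
\[ \forall x \in \Uq \qquad (id \otimes \bmu_\zeta^r)\Delta(x) = \bmu_\zeta^r(x)\, \mathbf{b}, \]
for any $\zeta \neq 0$. The strategy is to apply this to the PBW basis $\{F^m K^j E^n\}$ of Theorem \ref{Thm: base PBW} and identify which terms of $\Delta(F^m K^j E^n)$ survive the integral. Using the explicit coproduct expansion recalled in the proof of Proposition \ref{Prop: integrales},
\[ \Delta(F^m K^j E^n) = \sum_{r=0}^m \sum_{s=0}^n q^{r(m-r)+s(n-s)} {m \brack r} {n \brack s}\, F^r K^{r-m+j} E^{n-s} \otimes F^{m-r} K^{j+n-s} E^s, \]
and applying $\bmu_\zeta^r$ to the second factor, the Kronecker conditions $\delta_{m-r,p-1}\,\delta_{s,p-1}\,\delta_{j+n-s,p+1}$ force $r=0$, $m=n=p-1$ and $j=p+1$. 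Plugging these back into the surviving tensor factor on the left gives exactly $K^{j-p+1} = K^2$, so that
\[ (id \otimes \bmu_\zeta^r)\Delta(F^{p-1}K^{p+1}E^{p-1}) = \zeta\, K^2 = \bmu_\zeta^r(F^{p-1}K^{p+1}E^{p-1})\, K^2, \]
while for every other basis element $F^m K^j E^n$ both sides vanish. By linearity this establishes $\mathbf{b} = K^2$.

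The only mildly delicate step is the bookkeeping in the Kronecker analysis, especially the observation that $j+n-s \equiv p+1 \pmod{2p}$ together with $s = p-1$ and $n = p-1$ forces $j \equiv p+1$, so that no other basis vector can contribute; everything else is just substitution in the coproduct formula \eqref{Eqn: coprod2} and uses the fact that the result is automatically group-like (which could alternatively be checked directly by $\Delta(K^2) = K^2 \otimes K^2$, consistent with the general theory).
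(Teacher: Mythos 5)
Votre démonstration est correcte et suit essentiellement la même démarche que celle du texte : pour $(a)$, application de l'intégrale à droite $\bmu^r_\zeta$ sur le second facteur du coproduit développé dans la base PBW, analyse des symboles de Kronecker forçant $r=0$, $m=n=p-1$, $j=p+1$, et identification du facteur survivant $K^{r-m+j}=K^2$ ; pour $(b)$, invocation directe de l'unimodularité. Rien à signaler.
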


\begin{proof}
\begin{enumerate}[(a)]
	\item Il s'agit de vérifications directes. \\ D'après la proposition \ref{Prop: integrales}, les intégrales à droite sur $\Uq$ sont définies par : 
	\[ \bmu_\zeta : \begin{cases}
		\Uq \longrightarrow \C \\
		F^m K^j E^n \longmapsto \zeta \delta_{m,p-1} \delta_{j,p+1} \delta_{n,p-1}
		\end{cases} ; 
		\quad \zeta \in \C \]
	pour tous $m, n \in \{0,...,p-1\}$ et $j \in \{0,...,2p-1\}$. Soit $\bmu:=\bmu_1$ l'intégrale à droite correspondant à $\zeta=1$. On utilise la $\C$-base $\{ F^m K^j E^n \; ; \; 0 \leq m,n \leq p-1, \; 0 \leq j \leq 2p-1\}$ de $\Uq$ (cf. le théorème \ref{Thm: base PBW}). Il s'agit de trouver l'élément $\mathbf{b} \in \Uq$ tel que, pour tous $m,n \in \{0,...,p-1\}$ et $j \in \{0,...,2p-1\}$, on ait :
	\[ (id \otimes \bmu) \Delta(F^m K^j E^n) = \bmu(F^m K^j E^n) \mathbf{b} . \]
	Or, pour tous $m,n \in \{0,...,p-1\}$ et $j \in \{0,...,2p-1\}$, on a :
	\begin{align*}
	\Delta( F^m K^j E^n )
	&\; \;= \Delta(F)^m \Delta(K)^j \Delta(E)^n \\
	&\overset{\eqref{Eqn: coprod2}}{=} \sum_{r=0}^m \sum_{s=0}^n q^{r(m-r)+s(n-s)} {m \brack r} {n \brack s} F^r K^{r-m+j} E^{n-s} \otimes F^{m-r} K^j E^s K^{n-s} \\
	&\overset{\eqref{Eqn: comm1}}{=} \sum_{r=0}^m \sum_{s=0}^n q^{r(m-r)+s(n-s)} {m \brack r} {n \brack s} F^r K^{r-m+j} E^{n-s} \otimes F^{m-r} K^{j+n-s} E^s.
	\end{align*}
	Donc, pour tous $m,n \in \{0,...,p-1\}$ et $j \in \{0,...,2p-1\}$, on a :
	\begin{align*}
	(id \otimes \bmu) \Delta( F^m K^j E^n )
	&= \begin{multlined}[t]
		\sum_{r=0}^m \sum_{s=0}^n q^{r(m-r)+s(n-s)} {m \brack r} {n \brack s} \\
		F^r K^{r-m+j} E^{n-s} \bmu \left( F^{m-r} K^{j+n-s} E^s \right) 
		\end{multlined} \\
	&= \delta_{m,p-1} \delta_{n,p-1} \delta_{j,p+1} K^2 = \bmu ( F^m K^j E^n ) K^2.
	\end{align*}
	D'où le résultat.
	\item Cela découle de l'unimodularité. 
\end{enumerate}
\end{proof}

\begin{Def}
Soit $A$ une $\C$-algèbre de Hopf unimodulaire de dimension finie, et de comodule à droite $\mathbf{b}$. On dit que $\mathbf{k} \in A$ est un \emph{élément de balancement généralisé} si :
\[ \mathbf{k}^2 = \mathbf{b}, 
\quad \Delta( \mathbf{k} ) = \mathbf{k} \otimes \mathbf{k},
\quad \text{ et } \quad 
\forall x \in A \quad S^2(x) = \mathbf{k} x \mathbf{k}^{-1}. \]
\end{Def}

\begin{Rem}
\label{Rem: balancement}
Cette définition étend la définition d'élément de balancement aux $\C$-algèbres de Hopf $A$ unimodulaires de dimension finie. En effet, soit $A$ une telle algèbre de Hopf. Notons $\mathbf{a}$ (resp. $\mathbf{b}$) son comodule à gauche (resp. à droite) et $\boldsymbol{\alpha}$ son module à gauche. D'après \cite[Prop. 6.1]{Dri90}, dont le principal résultat est repris dans \cite[Prop. 10.1.14]{Mon93}, on a :
\[ \mathbf{g} = \mathbf{a}^{-1} \; (\boldsymbol{\alpha} \otimes id) \mathbf{R} = (\boldsymbol{\alpha} \otimes id) \mathbf{R} \; \mathbf{a}^{-1} . \]
où $\mathbf{a}^{-1}=\mathbf{b}$ d'après la remarque \ref{Rem: modules}. En particulier, si $A$ est unimodulaire, alors $\boldsymbol{\alpha}=\varepsilon$. Donc :
\[ \mathbf{g} = \mathbf{a}^{-1} = \mathbf{b} \]
car $(\varepsilon \otimes id) R = 1$ (cf. par exemple \cite[Thm VIII.2.4]{Kas95} ou \cite[Prop. 10.1.8]{Mon93}).
\end{Rem}

\begin{Cor} 
\label{Cor: balancement}
L'algèbre de Hopf $\Uq$ possède deux éléments de balancement généralisés $K^{\delta p +1}$, $\delta \in \{0,1\}$.
\end{Cor}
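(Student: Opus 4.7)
The plan is to determine which elements $\mathbf{k} \in \Uq$ satisfy the three conditions of generalized balancement with respect to the right comodule $\mathbf{b} = K^2$. First I would identify the group-like elements of $\Uq$, i.e.\ the solutions of $\Delta(\mathbf{k}) = \mathbf{k} \otimes \mathbf{k}$. Using the PBW basis $\{F^m K^j E^n \; ; \; 0 \leq m,n \leq p-1, \; 0 \leq j \leq 2p-1\}$ of Theorem \ref{Thm: base PBW} and the formula \eqref{Eqn: coprod2} for $\Delta(F^m)$ and $\Delta(E^n)$, I would expand an arbitrary element in the basis and observe that the presence of any nonzero power of $E$ or $F$ produces terms involving mixed powers on the two tensor factors. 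Solving $\Delta(\mathbf{k}) = \mathbf{k} \otimes \mathbf{k}$ then forces $\mathbf{k} = K^j$ for some $j \in \{0,\ldots,2p-1\}$ (up to scalar, then normalized by $\varepsilon(\mathbf{k}) = 1$).

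Next I would impose the condition $\mathbf{k}^2 = \mathbf{b} = K^2$. Since $K^{2p} = 1$, this reduces to the congruence $2j \equiv 2 \pmod{2p}$, whose solutions in $\{0,\ldots,2p-1\}$ are exactly $j = 1$ and $j = p+1$, i.e.\ $j = \delta p + 1$ with $\delta \in \{0,1\}$.

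Finally I would verify that both candidates $\mathbf{k} = K^{\delta p + 1}$ implement $S^2$ by inner conjugation. Using the antipode \eqref{Eqn: coprod1}, a direct computation gives $S^2(E) = q^2 E$, $S^2(F) = q^{-2} F$, and $S^2(K) = K$. On the other hand, the commutation relations \eqref{Eqn: comm1} yield
\[
K^{\delta p+1} E K^{-(\delta p+1)} = q^{2(\delta p+1)} E = q^2 E,
\quad K^{\delta p+1} F K^{-(\delta p+1)} = q^{-2(\delta p+1)} F = q^{-2} F,
\]
(using $q^{2p} = 1$), and of course $K^{\delta p+1} K K^{-(\delta p+1)} = K$. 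Since $\Uq$ is generated by $E$, $F$, $K^{\pm 1}$ as an algebra and both $S^2$ and conjugation by $\mathbf{k}$ are algebra homomorphisms, the equality $S^2(x) = \mathbf{k} x \mathbf{k}^{-1}$ extends to all $x \in \Uq$.

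The argument presents no real obstacle: the group-like identification is routine from the PBW basis, the squaring condition is an elementary modular arithmetic statement, and the $S^2$ verification is a short direct check on generators. The only subtlety is recording carefully that the two values $\delta = 0$ and $\delta = 1$ actually yield distinct elements of $\Uq$, which is immediate since $K^{p+1} \neq K$ in the PBW basis.
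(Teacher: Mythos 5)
Votre démonstration est correcte et suit exactement la voie que le texte laisse implicite (le corollaire est énoncé sans preuve détaillée) : identification des éléments group-like $K^j$ via la base PBW, résolution de $K^{2j}=\mathbf{b}=K^2$ donnant $j=\delta p+1$, puis vérification de $S^2(x)=\mathbf{k}x\mathbf{k}^{-1}$ sur les générateurs en utilisant $q^{2p}=1$. Rien à redire ; votre remarque finale sur la distinction des deux éléments est cohérente avec la remarque du texte selon laquelle ils diffèrent par l'élément central d'ordre deux $K^p$.
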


\begin{Rem}
L'élément de balancement (généralisé) est unique à multiplication près par un élément du centre d'ordre deux. En effet, soient $\mathbf{k}$ et $\mathbf{k}'$ deux éléments de balancement (généralisés) d'une algèbre de Hopf $A$ de dimension finie. Alors, on a :
\[ \mathbf{k}' = S^2(\mathbf{k}') = \mathbf{k} \mathbf{k'} \mathbf{k}^{-1}, \quad \text{ où } \quad \mathbf{k'} \mathbf{k}^{-1} = \mathbf{k}^{-1} S^2(\mathbf{k}') = \mathbf{k}^{-1} \mathbf{k}'. \]
Or, l'élément $\mathbf{k'} \mathbf{k}^{-1}$ vérifie :
\begin{gather*}
\forall x \in A \quad \mathbf{k'} \mathbf{k}^{-1} x = \mathbf{k}^{-1} \mathbf{k'} x = \mathbf{k}^{-1} S^2(x) \mathbf{k'} = x \mathbf{k}^{-1} \mathbf{k'} = x \mathbf{k'} \mathbf{k}^{-1} , \\
(\mathbf{k'} \mathbf{k}^{-1})^2 = \mathbf{k'} \mathbf{k}^{-1} \mathbf{k'} \mathbf{k}^{-1} = \mathbf{k'}^2 \mathbf{k}^{-2} = \mathbf{g} \mathbf{g}^{-1} = 1.
\end{gather*}
D'où le résultat.
\end{Rem}

\subsection{Double et enrubannement}
\label{subsection: enrubannement}

Grâce au double de Drinfeld (cf. par exemple \cite[§ IX.4]{Kas95}), il est possible de construire deux algèbres de Hopf tressée et enrubannée $(\bar{D}, \bar{\mathbf{R}}, \bar{\mathbf{v}}_\delta)$, $\delta \in \{0,1\}$, et des plongements canonique $\Uq \rightarrow (\bar{D}, \bar{\mathbf{R}}, \bar{\mathbf{v}}_\delta)$. Cette construction est détaillée dans l'article \cite[§ B]{FGST06b}. Les deux éléments d'enrubannement $\bar{\mathbf{v}}_0, \bar{\mathbf{v}}_1$ possibles de $\bar{D}$ appartiennent à la sous-algèbre de $\bar{D}$ canoniquement isomorphe à $\Uq$. On étend ainsi la définition d'élément d'enrubannement à $\Uq$.

\begin{Def}[{\cite[§ 4.1, § B.1]{FGST06b}}]
On note $\bar{D}$ la $\C$-algèbre engendrée par $e, f, k, k^{-1}$ sous les relations :
\begin{equation} 
\label{Eqn: commD}
\begin{gathered}
k e k^{-1} =q e,
	\qquad k f k^{-1} =q^{-1} f ,
	\qquad [e,f] = \frac{k^2-k^{-2}}{q-q^{-1}},  \\ 
e^p = 0,
	\qquad f^p = 0,
	\qquad k^{4p}=1.
\end{gathered}
\end{equation}
Elle est munie d'une structure d'algèbre de Hopf, dont le coproduit $\Delta$, la co-unité $\varepsilon$, et l'antipode $S$ sont donnés par :
\begin{equation} 
\label{Eqn: coprodD}
\begin{gathered} 
\Delta(e) = 1 \otimes e + e \otimes k^2,
	\qquad \Delta(f) = k^{-2} \otimes f + f \otimes 1,
	\qquad \Delta(k) = k \otimes k, \\	
\varepsilon(e) = 0, 
	\qquad \varepsilon(f) = 0, 
	\qquad \varepsilon(k) = 1,  \\
S(e) = -ek^{-2},
	\qquad S(f) = -k^2f,
	\qquad S(k)=k^{-1}.
\end{gathered}
\end{equation}
\end{Def}

Le groupe quantique $\Uq$ s'identifie à une sous-algèbre de Hopf de $\bar{D}$ via le morphisme injectif d'algèbres de Hopf :
\begin{equation} 
\label{Eqn: inj double}
\begin{cases}
	\Uq \longrightarrow \bar{D} \\
	F \mapsto f, \quad K \mapsto k^2, \quad E \mapsto e.
\end{cases}
\end{equation}

\begin{Thm}[{\cite[Thm 4.1.1]{FGST06b}}]
\label{Thm: double}
L'algèbre de Hopf $\bar{D}$ est tressée et enrubannée. Plus précisément :
\begin{enumerate}[(i)]
	\item il existe une $R$-matrice :
\[ \bar{\mathbf{R}} = \frac{1}{4p} \sum_{m=0}^{p-1} \sum_{n,j=0}^{4p-1} \frac{(q-q^{-1})^m}{[m]!} q^{\frac{m(m-1)}{2}-mj-\frac{nj}{2}} k^n e^m \otimes f^m k^j , \]
	\item il existe deux éléments d'enrubannement :
	\[ \bar{\mathbf{v}}_\delta = \frac{1-i}{2 \sqrt{p}} \sum_{m=0}^{p-1} \sum_{j=0}^{2p-1} \frac{(q-q^{-1})^m}{[m]!} q^{-\frac{m}{2}-mj+\frac{(j+p+1)^2}{2}} f^m k^{2j+2(1-\delta)p} e^m \; ; \quad \delta \in \{0,1\} . \]
\end{enumerate}
\end{Thm}

Le choix de $\delta$ revient à fixer $k^{2(\delta p+1)}$ comme élément de balancement pour $\bar{D}$. Soit $\delta \in \{0,1\}$. On observe que $\bar{\mathbf{v}}_{\delta}$ appartient à la sous algèbre de $\bar{D}$ isomorphe à $\Uq$ \eqref{Eqn: inj double}. Par conséquent, on pose : \index{v@ $\mathbf{v}_{\delta}$}
\begin{equation} 
\label{Eqn: enrubannement}
\mathbf{v}_{\delta} := \frac{1-i}{2 \sqrt{p}} \sum_{m=0}^{p-1} \sum_{j=0}^{2p-1} \frac{(q-q^{-1})^m}{[m]!} q^{-\frac{m}{2}-mj+\frac{(j+p+1)^2}{2}} F^m K^{j+(1-\delta)p} E^m.
\end{equation}
Là encore, le choix de $\delta$ revient à fixer $K^{\delta p+1}$ comme élément de balancement généralisé pour $\Uq$ (cf. le corollaire \ref{Cor: balancement}).

\begin{Cor} 
On a :
\[ \bar{\mathbf{R}}_{21} \bar{\mathbf{R}} = \frac{1}{2p} \sum_{m,n=0}^{p-1} \sum_{i,j=0}^{2p-1} \frac{(q-q^{-1})^{m+n}}{[m]![n]!} q^{\frac{m(m-1)}{2}+\frac{n(n-1)}{2}-m^2-mj-ij+mi} f^m k^{2j} e^n \otimes e^m k^{2i} f^n. \]
\end{Cor}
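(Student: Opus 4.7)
Il s'agit d'un calcul direct reposant sur la formule explicite de $\bar{\mathbf{R}}$ donn�e dans le th�or�me~\ref{Thm: double} et les relations de commutation~\eqref{Eqn: commD} de $\bar{D}$. Je commencerais par �crire $\bar{\mathbf{R}}_{21}$ en permutant les facteurs tensoriels de $\bar{\mathbf{R}}$, obtenant une somme dont le facteur de gauche est de la forme $f^m k^j$ et celui de droite $k^n e^m$. Le produit $\bar{\mathbf{R}}_{21} \bar{\mathbf{R}}$ devient alors une somme sextuple indic�e par $(m, n, j)$ et $(m', n', j')$, chaque facteur tensoriel �tant un produit de deux termes appartenant chacun � $\bar{D}$.

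\noindent Ensuite, je simplifierais chaque facteur tensoriel � l'aide des relations $ke = qek$ et $kf = q^{-1}fk$ pour collecter les puissances de $k$. Dans le premier facteur, on a simplement $(f^m k^j)(k^{n'} e^{m'}) = f^m k^{j+n'} e^{m'}$. Dans le second, le produit $(k^n e^m)(f^{m'} k^{j'})$ se r�organise, apr�s renommage $m' \leadsto n$ pour s'aligner sur la forme cibl�e, en $q^{n_0(m-n)} \, e^m f^n k^{n_0 + j'}$ (o� $n_0$ d�signe l'indice $n$ de $\bar{\mathbf{R}}_{21}$), en commutant $k^{n_0}$ successivement � travers $e^m$ et $f^n$. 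Cela laisse �mes�, $f^m$, $e^n$ et $f^n$ en position standard dans chaque facteur, s�par�s par des puissances de $k$.

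\noindent L'�tape cl� consiste alors � effectuer les deux sommations r�siduelles via les identit�s d'orthogonalit� aux racines de l'unit�~:
\[
\frac{1}{4p} \sum_{l=0}^{4p-1} q^{al/2} = \delta_{a \equiv 0 \, [4p]},
\]
v�ridique car $q^{1/2}$ est une racine primitive $4p$-i�me de l'unit�. Appliqu�es aux indices qui ne figurent plus dans les puissances $k^{j+n'}$ et $k^{n_0+j'}$ apr�s regroupement, ces relations produisent des contraintes de Kronecker qui for�ent les exposants de $k$ � �tre pairs. En substituant $2j := j+n'$ et $2i := n_0 + j'$, le pr�facteur global $\tfrac{1}{(4p)^2}$, combin� aux deux facteurs $4p$ provenant des sommes delta et au facteur $2$ issu du changement de variable $l \leadsto 2l$ (qui divise par deux les bornes tout en doublant les exposants), se r�duit � $\tfrac{1}{2p}$. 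Les nouvelles bornes $i, j \in \{0,\ldots,2p-1\}$ et les puissances paires $k^{2j}$, $k^{2i}$ apparaissent alors tels qu'annonc�s.

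\noindent Le principal obstacle est de nature purement calculatoire~: il faut suivre avec soin les exposants de $q$ accumul�s au cours des commutations et des substitutions, et v�rifier que leur somme co�ncide exactement avec l'expression $\tfrac{m(m-1)}{2} + \tfrac{n(n-1)}{2} - m^2 - mj - ij + mi$ du second membre. Les termes $\tfrac{m(m-1)}{2}$ et $\tfrac{n(n-1)}{2}$ proviennent directement de $\bar{\mathbf{R}}_{21}$ et $\bar{\mathbf{R}}$, les termes $-mj$ et $-ij$ des exposants initiaux de la forme $-mj-nj/2$ apr�s absorption des contraintes delta, et les termes crois�s $-m^2 + mi$ du facteur $q^{n_0(m-n)}$ issu de la commutation de $k^{n_0}$ avec $e^m f^n$. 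Ce r�am�nagement est routinier mais suffisamment d�licat pour m�riter d'�tre r�dig� explicitement.
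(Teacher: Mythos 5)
Le m�moire �nonce ce corollaire sans d�monstration : il n'y a donc pas de preuve de r�f�rence � laquelle comparer la v�tre, mais votre strat�gie (d�velopper $\bar{\mathbf{R}}_{21}\bar{\mathbf{R}}$ � partir de la formule du th�or�me \ref{Thm: double}, normaliser chaque facteur tensoriel avec les relations \eqref{Eqn: commD}, puis effondrer deux des quatre sommes par orthogonalit� aux racines de l'unit�) est bien la seule naturelle et elle aboutit. Je signale toutefois trois points o� votre esquisse, telle qu'elle est r�dig�e, d�raperait si on la suivait � la lettre.

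D'abord, les deux sommes r�siduelles ne sont pas deux relations d'orthogonalit� ind�pendantes : l'exposant initial contient les termes $-\tfrac{nj}{2}$ et $-\tfrac{n'j'}{2}$, et apr�s le changement de variables $a=j+n'$, $b=n+j'$ il subsiste un couplage $q^{-nj}$ entre les deux indices restants. Il s'agit d'une somme de type Gauss : la somme sur $j$ produit un Kronecker qui impose $b$ pair \emph{et} fixe $n$ modulo $2p$ (soit deux valeurs dans $\{0,\dots,4p-1\}$), et c'est l'�valuation de $q^{nv}$ en ces deux valeurs qui force ensuite la parit� de $a$. Le pr�facteur s'obtient alors comme $\tfrac{1}{(4p)^2}\cdot 4p\cdot 2=\tfrac{1}{2p}$ : un seul facteur $4p$ (une seule somme delta compl�te) et un facteur $2$ venant des deux solutions en $n$ ; votre d�compte � deux facteurs $4p$ et un facteur $2$ � donnerait $2$ et non $\tfrac{1}{2p}$. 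Ensuite, votre forme normale $e^m f^n k^{n_0+j'}$ place le $k$ � droite alors que la cible est $e^m k^{2i} f^n$ ; la commutation suppl�mentaire $e^m f^n k^{2i}=q^{-2in}e^m k^{2i}f^n$ introduit un terme $-2in$ qui doit compenser exactement les termes crois�s $-in+mn$ provenant de $q^{n_0(m-n)}$ avec $n_0\equiv i-m$ : attribuer � ce facteur la seule contribution $-m^2+mi$ est donc pr�matur�. Il est en fait plus �conomique de normaliser directement en $e^m k^{n+j'} f^{n}$ (le $k$ au milieu), ce qui donne $(k^{n_0}e^m)(f^nk^{j'})=q^{n_0m+j'n}e^mk^{n_0+j'}f^n$ et fait dispara�tre le terme $-m'j'$ de l'exposant initial sans commutation suppl�mentaire. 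Ces corrections faites, l'exposant se r�duit bien � $\tfrac{m(m-1)}{2}+\tfrac{n(n-1)}{2}-m^2-mj-ij+mi$ ; le plan est donc viable, mais la � routine calculatoire � que vous reportez � plus tard contient pr�cis�ment les deux endroits o� l'argument tel qu'�crit est inexact.
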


De même que précédemment, l'élément $\bar{\mathbf{M}}:=\bar{\mathbf{R}}_{21} \bar{\mathbf{R}}$ appartient à la sous algèbre de $\bar{D} \otimes \bar{D}$ isomorphe à $\Uq \otimes \Uq$ \eqref{Eqn: inj double}. Par conséquent, on pose : \index{M@ $\mathbf{M}$}
\begin{equation} 
\label{Eqn: M-matrice}
\mathbf{M} := \begin{multlined}[t]
	\frac{1}{2p} \sum_{m,n=0}^{p-1} \sum_{i,j=0}^{2p-1} \frac{(q-q^{-1})^{m+n}}{[m]![n]!} q^{\frac{m(m-1)}{2}+\frac{n(n-1)}{2}-m^2-mj-ij+mi} \\
	F^m K^j E^n \otimes E^m K^i F^n .
	\end{multlined}
\end{equation}
Ainsi, quelque soit le choix de $\delta$, l'élément inversible $\mathbf{v}_{\delta} \in \Uq$ \eqref{Eqn: enrubannement} vérifie :
\[ \forall x \in A \quad x \mathbf{v}_{\delta} = \mathbf{v}_{\delta} x, 
\quad \Delta(\mathbf{v}_{\delta}) = \mathbf{M}^{-1} \mathbf{v}_{\delta} \otimes \mathbf{v}_{\delta}, 
\quad \text{ et } \quad S(\mathbf{v}_{\delta}) = \mathbf{v}_{\delta}. \]
Cela généralise la définition d'élément d'enrubannement à $\Uq$.

\begin{Prop} 
\label{Prop: enrubannement}
Soit $\delta \in \{0,1\}$. L'expression de $\mathbf{v}_{\delta}$ \eqref{Eqn: enrubannement} dans la base canonique du centre (cf. la proposition \ref{Prop: centre}) est :
\[ \mathbf{v}_{\delta} = \sum_{s=0}^p (-1)^{\delta(s-1)} q^{-\frac{s^2-1}{2}} e_s + (q-q^{-1}) \sum_{s=1}^{p-1} (-1)^{\delta(s-1)} q^{-\frac{s^2-1}{2}} \left( \frac{p-s}{[s]} w^+_s - \frac{s}{[s]} w^-_s \right) .\]
\end{Prop}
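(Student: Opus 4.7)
The plan is to apply the corollary \ref{Cor: centre} to the central element $\mathbf{v}_\delta$. According to that corollary, writing
\[ \mathbf{v}_\delta = \sum_{s=0}^{p} a_s e_s + \sum_{s=1}^{p-1}\left(b_s^+ w_s^+ + b_s^- w_s^-\right), \]
the coefficients $a_s$ and $b_s^\pm$ are entirely determined by the action of $\mathbf{v}_\delta$ on the distinguished highest weight vectors $x_0^\pm(s)$ of the simple modules and $y_0^\pm(s)$ of the PIMs. The centrality of $\mathbf{v}_\delta$ is already guaranteed by its ribbon element property, so the task splits into two concrete computations.

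First, to determine $a_s$, I would act with $\mathbf{v}_\delta$ on $x_0^+(s)$. Since $E\,x_0^+(s)=0$, only the term $m=0$ in the defining sum \eqref{Eqn: enrubannement} survives, and using $K^{j+(1-\delta)p}\,x_0^+(s) = q^{(s-1)(j+(1-\delta)p)}x_0^+(s)$ reduces the computation to
\[ a_s\,x_0^+(s) = \frac{1-i}{2\sqrt{p}}\sum_{j=0}^{2p-1} q^{\frac{(j+p+1)^2}{2} + (s-1)(j+(1-\delta)p)}\,x_0^+(s). \]
Completing the square in $j$ and re-indexing modulo $2p$ (which is legitimate because $q^{j^2/2}$ is periodic of period $2p$) brings the remaining sum into the form of a quadratic Gauss sum $\sum_{j=0}^{2p-1} \zeta^{j^2}$ with $\zeta = q^{1/2} = e^{i\pi/(2p)}$. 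The classical evaluation at $N=4p$ gives $\sum_{j=0}^{2p-1}\zeta^{j^2}=(1+i)\sqrt{p}$, and the normalisation $\frac{1-i}{2\sqrt{p}}$ is designed so that this combines with the remaining prefactor to leave $q^{-\frac{s^2-1}{2}-\delta p(s-1)}$. Using $q^p=-1$ finally yields $a_s=(-1)^{\delta(s-1)}q^{-\frac{s^2-1}{2}}$. The same calculation on $x_0^-(s)$, using $K\,x_0^-(s)=-q^{s-1}x_0^-(s)$, consistently produces $a_{p-s}$ in accordance with corollary \ref{Cor: centre}.

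Second, to determine $b_s^\pm$, I would act with $\mathbf{v}_\delta$ on $y_0^+(s)$. Now $E\,y_0^+(s)=a_{p-s-1}^-(p-s)\ne 0$, so every $m$ with $1\le m\le p-s$ contributes. Iterating the action of $E$ on the vectors $a_n^-(p-s)$ via the Verma formulas of proposition \ref{Prop: Verma} gives
\[ E^m\,y_0^+(s) = (-1)^{m-1}\frac{[p-s-1]!\,[m-1]!}{[p-s-m]!}\,a_{p-s-m}^-(p-s),\quad 1\le m\le p-s. \]
The crucial observation is that $F^m\,a_{p-s-m}^-(p-s)=x_0^+(s)$ for every such $m$, because after $m-1$ applications of $F$ one reaches $a_{p-s-1}^-(p-s)$ and the last application crosses into the socle via $F\,a_{p-s-1}^-(p-s)=x_0^+(s)$. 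Consequently every $m\ge 1$ contribution is a scalar multiple of $x_0^+(s)$, while the $m=0$ term gives $a_s\,y_0^+(s)$ by the first calculation. Each $m$-slice produces a Gauss sum essentially identical to the $a_s$-sum (with $s$ shifted by $2m$), which evaluates again to a multiple of $(1+i)\sqrt{p}$. Summing these contributions over $m=1,\dots,p-s$, after cancellation with the $q$-binomial factors $\frac{[m-1]!}{[m]!}=\frac{1}{[m]}$, yields $b_s^+=(q-q^{-1})(-1)^{\delta(s-1)}q^{-\frac{s^2-1}{2}}\frac{p-s}{[s]}$. The symmetric calculation on $y_0^-(s)$ gives $b_s^-=-(q-q^{-1})(-1)^{\delta(s-1)}q^{-\frac{s^2-1}{2}}\frac{s}{[s]}$.

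The main obstacle will be the combinatorial step in which the sum over $m$ of the $m$-slice contributions collapses into the compact closed form $\frac{p-s}{[s]}$ (resp.\ $\frac{s}{[s]}$). The individual Gauss sum evaluations are standard, but the telescoping requires a $q$-identity of the type $\sum_{m=1}^{p-s}\frac{[p-s-1]!}{[p-s-m]!\,[m]}\,q^{\text{something}}(q-q^{-1})^{m-1}=\frac{p-s}{[s]}$, which one can extract from the $q$-Vandermonde or Chu--Vandermonde identity applied at the root of unity $q$. Once this identity is established, the rest of the proof is direct bookkeeping combining the Gauss sum values with $q^p=-1$.
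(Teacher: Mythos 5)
Votre proposition suit pour l'essentiel la preuve du texte : on d�termine les coefficients via le corollaire \ref{Cor: centre} en faisant agir $\mathbf{v}_\delta$ sur les vecteurs $x_0^\pm(s)$ et $y_0^\pm(s)$ (seul $m=0$ survit pour les modules simples, tous les $1 \leq m \leq p-s$ contribuent pour les PIMs via $F^m a_{p-s-m}^{-}(p-s)=x_0^{+}(s)$), on �value les sommes de Gauss quadratiques par compl�tion du carr� modulo $2p$, puis on r�duit la somme sur $m$ � la forme close $\frac{p-s}{[s]}$ par une identit� de $q$-s�ries. La seule diff�rence, purement cosm�tique, est que le texte invoque l'identit� de partition $\sum_{m=1}^{d}\frac{1}{1-q^{2m}}\prod_{i=1}^{m-1}\left(1-q^{2d-2i}\right)=\frac{d}{1-q^{2d}}$ tir�e de \cite{AE04} l� o� vous proposez de l'extraire d'une identit� de type $q$-Chu--Vandermonde ; les deux comblent le m�me pas combinatoire.
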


\begin{proof}
Conformément au corollaire \ref{Cor: centre}, on étudie l'action de $\mathbf{v}_\delta$ sur $\X^\pm(s)$, $1 \leq s \leq p$, puis sur $\PIM^\pm(s)$, $1 \leq s \leq p-1$.

Soient $\alpha \in \{+,-\}$, $s \in \{1,...,p\}$, et $x_0^\alpha(s)$ le vecteur de plus haut poids de $\X^\alpha(s)$ (cf. la proposition \ref{Prop: simples}). Pour tous $m \in \{0,...,p-1\}$ et $j \in \{0,...,2p-1\}$, on a :
\begin{align*}
F^m K^{j+(1-\delta)p} E^m x_0^\alpha(s) &= \delta_{m,0} K^{j+(1-\delta)p} x_0^\alpha(s) = \delta_{m,0} \alpha^{j+p(1-\delta)} q^{(s-1)(j+p-\delta p)} x_0^\alpha(s) \\
&= \delta_{m,0} (-1)^{(1-\delta)(s-1)} \alpha^{j+p(1-\delta)} q^{j(s-1)} x_0^\alpha(s).
\end{align*}
Donc :
\begin{align*}
\mathbf{v}_\delta x_0^+(s) &= \frac{1-i}{2 \sqrt{p}}  (-1)^{(1-\delta)(s-1)} \underbrace{\sum_{j=0}^{2p-1} q^{\frac{(j+p+1)^2}{2}} q^{j(s-1)}}_{A_s} x_0^+(s), \\
\mathbf{v}_\delta x_0^-(s) &= \frac{1-i}{2 \sqrt{p}}  (-1)^{(1-\delta)(p+s-1)} \underbrace{\sum_{j=0}^{2p-1} q^{\frac{(j+p+1)^2}{2}} q^{j(p+s-1)}}_{A_{p+s}} x_0^-(s).
\end{align*}
La coordonnée de $\mathbf{v}_\delta$ suivant $e_s$ est donnée par :
\begin{equation} \tag{1}
\begin{aligned}
a_s &= \frac{1-i}{2 \sqrt{p}}  (-1)^{(1-\delta)(s-1)} A_s, && \text{ si } s \not = 0, \\
a_s &= \frac{1-i}{2 \sqrt{p}}  (-1)^{(1-\delta)(s-1)} A_{2p-s}, && \text{ si } s \not = p.
\end{aligned}
\end{equation}
Soit $n \in \N$. Le coefficient $A_n$ vérifie  :
\[ A_n := \sum_{j=0}^{2p-1} q^{\frac{(j+p+1)^2}{2}} q^{j(n-1)} = - q^{\frac{p^2+1}{2}} \sum_{j=0}^{2p-1} q^{\frac{j(j+2p+2n)}{2}}. \]
Le dernier terme est une somme de Gauss (cf. par exemple \cite[§ IV.3]{Lan94}), dont l'indice $j$ ne dépend pas du représentant choisi dans $\Z / 2p \Z$. Pour l'évaluer, on effectue le changement de variables $j \mapsto j-p-n$ :
\begin{equation} \tag{2}
\begin{aligned}
\sum_{j=0}^{2p-1} q^{\frac{j(j+2p+2n)}{2}} &= \sum_{j=0}^{2p-1} q^{\frac{(j-p-n)(j+p+n)}{2}} 
	= q^{-\frac{(p+n)^2}{2}} \sum_{j=0}^{2p-1} q^{\frac{j^2}{2}}
	= (-1)^n q^{-\frac{p^2+n^2}{2}} (1+i) \sqrt{p}.
\end{aligned}
\end{equation}
Il s'ensuit que :
\[ A_s = (-1)^{s-1} q^{-\frac{s^2-1}{2}} (1+i) \sqrt{p} = A_{2p-s}. \]
En injectant ce résultat dans les équations $(1)$, on obtient :
\[ a_s = (-1)^{\delta(s-1)} q^{-\frac{{s}^2-1}{2}}. \]

Soient $\alpha \in \{+,-\}$, $s \in \{1,...,p-1\}$, et $y_0^\alpha(s)$ le vecteur de poids $\alpha q^{s-1}$ qui engendre $\PIM^\alpha(s)$ sous $\Uq$ (cf. la proposition \ref{Prop: PIMs}). Pour tous $m \in \{0,...,p-1\}$ et $j \in \{0,...,2p-1\}$, on a :
\begin{align*}
F^m K^{j+(1-\delta)p} E^m y_0^\alpha(s) &= \begin{multlined}[t]
		\delta_{m,0} K^{j+(1-\delta)p} y_0^\alpha(s) \\
		+ \delta_{m \geq 1} F^m K^{j+(1-\delta)p} E^{m-1} a_{p-s-1}^{-\alpha}(p-s) 
		\end{multlined} \\
	&= \begin{multlined}[t]
		\delta_{m,0} \alpha^{j+(1-\delta)p} q^{(s-1)(j+p-\delta p)} y_0^\alpha(s) 
		+ \delta_{1 \leq m \leq p-s} (-\alpha)^{m-1} \\
		\times \prod_{i=1}^{m-1} [p-s-i] [i] F^m K^{j+(1-\delta)p} a_{p-s-m}^{-\alpha}(p-s) 
		\end{multlined} \\
	&= \begin{multlined}[t]
		\delta_{m,0} (-1)^{(1-\delta)(s-1)} \alpha^{j+(1-\delta)p} q^{j(s-1)} y_0^\alpha(s) \\
		+ \delta_{1 \leq m \leq p-s} (-\alpha)^{m-1+j+(1-\delta)p} q^{(-p+s-1+2m)(j+p-\delta p)} \\
		\times \prod_{i=1}^{m-1} [s+i] [i] F^m a_{p-s-m}^{-\alpha}(p-s) 
		\end{multlined} \\
	&= \begin{multlined}[t]
		\delta_{m,0} (-1)^{(1-\delta)(s-1)} \alpha^{j+(1-\delta)p} q^{j(s-1)} y_0^\alpha(s) \\
		+ \delta_{1 \leq m \leq p-s} (-1)^{m-1+(1-\delta)(s-1)} \alpha^{m-1+j+(1-\delta)p} q^{j(s-1+2m)} \\
		\times \frac{[m-1]![s+m-1]!}{[s]!} x_0^{\alpha}(s).
		\end{multlined}
\end{align*}
Donc :
\begin{gather*}
\mathbf{v}_\delta y_0^+(s) - a_s y_0^+(s) = \begin{multlined}[t]
	\frac{1-i}{2 \sqrt{p}} (-1)^{(1-\delta)(s-1)} \sum_{m=1}^{p-s} (-1)^{m-1} q^{-\frac{m}{2}} \frac{(q-q^{-1})^m [s+m-1]!}{[m] [s]!}  \\
	\times \underbrace{\sum_{j=0}^{2p-1} q^{-mj+\frac{(j+p+1)^2}{2}} q^{j(s-1+2m)}}_{B_{m,s}} x_0^+(s), 
	\end{multlined} \\
\mathbf{v}_\delta y_0^-(s) - a_{p-s} y_0^-(s) = \begin{multlined}[t]
	\frac{1-i}{2 \sqrt{p}} (-1)^{(1-\delta)(p+s-1)} \sum_{m=1}^{p-s} q^{-\frac{m}{2}} \frac{(q-q^{-1})^m [s+m-1]!}{[m] [s]!} \\
	\times \underbrace{\sum_{j=0}^{2p-1} q^{-mj+\frac{(j+p+1)^2}{2}} q^{j(p+s-1+2m)}}_{B_{m,p+s}} x_0^-(s).
	\end{multlined}
\end{gather*}
Les coordonnées de $\mathbf{v}_\delta$ suivant $w^+_s$ et $w^-_s$ sont respectivement données par :
\begin{equation} \tag{3}
\begin{aligned}
&b_s^+ = \frac{1-i}{2 \sqrt{p}} (-1)^{(1-\delta)(s-1)} \sum_{m=1}^{p-s} (-1)^{m-1} q^{-\frac{m}{2}} \frac{(q-q^{-1})^m [s+m-1]!}{[m] [s]!} B_{m,s}, \\
&b_s^- = \frac{1-i}{2 \sqrt{p}} (-1)^{(1-\delta)(s-1)} \sum_{m=1}^{s} q^{-\frac{m}{2}} \frac{(q-q^{-1})^m [p-s+m-1]!}{[m] [p-s]!} B_{m,2p-s}.
\end{aligned}
\end{equation}
Soient $n \in \N$ et $m \in \{1,...,p-1\}$. Le coefficient $B_{m,n}$ vérifie :
\[ B_{m,n} := \sum_{j=0}^{2p-1} q^{-mj+\frac{(j+p+1)^2}{2}} q^{j(n-1+2m)} = - q^{\frac{p^2+1}{2}} \sum_{j=0}^{2p-1} q^{\frac{j(j+2p+2n+2m)}{2}} . \]
En reprenant les calculs de l'équation $(1)$, la somme de Gauss s'évalue comme :
\[ \sum_{j=0}^{2p-1} q^{\frac{j(j+2p+2n+2m)}{2}} = (-1)^{n+m} q^{-\frac{p^2+(n+m)^2}{2}} (1+i) \sqrt{p} . \]
Il s'ensuit que :
\begin{align*}
B_{m,s} &= (-1)^{s+m-1} q^{-\frac{s^2+m^2-1}{2}-ms} (1+i) \sqrt{p}, \\
B_{m,2p-s} &= (-1)^{s+m-1} q^{-\frac{s^2+m^2-1}{2}+ms} (1+i) \sqrt{p}.
\end{align*}
En injectant ce résultat dans les équations $(3)$, on obtient :
\begin{align*}
b_s^+ &= (-1)^{\delta(s-1)+1} q^{-\frac{s^2-1}{2}} \sum_{m=1}^{p-s} q^{-\frac{m(m+2s+1)}{2}} \frac{(q-q^{-1})^m [s+m-1]!}{[m] [s]!} \\
&= (-1)^{\delta(s-1)+1} q^{-\frac{s^2-1}{2}} \sum_{m=1}^{p-s} q^{-\frac{m(m+2s+1)}{2}} \frac{(q-q^{-1})^2}{q^m-q^{-m}} \prod_{i=1}^{m-1} (q^{s+i}-q^{-s-i}) \\
&= (-1)^{\delta(s-1)} q^{-\frac{s^2-1}{2}} q^{-s} (q-q^{-1})^2 \sum_{m=1}^{p-s} \frac{1}{1-q^{2m}} \prod_{i=1}^{m-1} (1-q^{2p-2s-2i}), \\
b_s^- &= (-1)^{\delta(s-1)+m} q^{-\frac{s^2-1}{2}} \sum_{m=1}^{s} q^{-\frac{m(m-2s+1)}{2}} \frac{(q-q^{-1})^m [p-s+m-1]!}{[m] [p-s]!} \\
&= (-1)^{\delta(s-1)+m} q^{-\frac{s^2-1}{2}} \sum_{m=1}^{s} q^{-\frac{m(m-2s+1)}{2}} \frac{(q-q^{-1})^2}{q^m-q^{-m}} \prod_{i=1}^{m-1} (q^{p-s+i}-q^{p+s-i}) \\
&= (-1)^{\delta(s-1)} q^{-\frac{s^2-1}{2}} q^s (q-q^{-1})^2 \sum_{m=1}^{s} \frac{1}{1-q^{2m}} \prod_{i=1}^{m-1} (1-q^{2s-2i}),
\end{align*}
Le dernier terme est une identité de partition (cf. par exemple \cite{AE04}) :
\[ \forall d \in \N \quad \sum_{m=1}^{d} \frac{1}{1-q^{2m}} \prod_{i=1}^{m-1} (1-q^{2d-2i}) = \frac{d}{1-q^{2d}} . \]

Par conséquent :
\begin{align*}
b_s^+ &= (-1)^{\delta(s-1)} q^{-\frac{s^2-1}{2}} (q-q^{-1})^2 \frac{(p-s)}{q^s-q^{-s}} = (-1)^{\delta(s-1)} q^{-\frac{s^2-1}{2}} (q-q^{-1}) \frac{(p-s)}{[s]}, \\
b_s^- &= (-1)^{\delta(s-1)} q^{-\frac{s^2-1}{2}} (q-q^{-1})^2 \frac{s}{q^{-s}-q^s} = -(-1)^{\delta(s-1)} q^{-\frac{s^2-1}{2}} (q-q^{-1}) \frac{s}{[s]}.
\end{align*}
D'où le résultat.
\end{proof}

\subsection{Actions (co-)adjointes et caractères}
\label{subsection: caracteres}

On munit maintenant $\Uq$ et $\Uq^*$ d'une structure de $\Uq$-modules à gauche grâce aux actions adjointe et co-adjointe (cf. par exemple \cite[§ IX.3]{Kas95}). Le centre $\Zf$ et les espaces des $q$-caractères de $\Uq$ sont respectivement des stabilisateurs sous l'action adjointe et co-adjointe. On introduit enfin l'anneau de Grothendieck $\G$ de $\Uq$ (cf. par exemple \cite[Def. 16.5]{CR81}) pour construire des caractères remarquables à partir des éléments de balancement généralisés.

Pour toute algèbre de Hopf $A$, on utilisera les notations de Sweedler (cf. par exemple \cite[Not. III.1.6]{Kas95}) :
\[ \forall a \in A \quad \Delta(a) = \sum_{(a)} a' \otimes a'' . \]

\begin{Def}[{\cite[§ A.1]{FGST06b}}]
Soit $A$ une $\C$-algèbre de Hopf de dimension finie. 
\begin{enumerate}[(i)]
	\item On appelle \emph{action adjointe} l'action à gauche (resp. à droite) de $A$ sur $A$ définie par :
\[ \forall a, y \in A \quad a \cdot y := \sum_{(a)} a' y S(a'') \quad \left( \text{resp. } y \cdot a := \sum_{(a)} S(a') y a'' \right). \]
	\item On appelle \emph{action co-adjointe} l'action à gauche (resp. à droite) de $A$ sur $A^*$ définie par :
\[ \begin{cases} \forall a \in A \\ \forall \beta \in A^* \end{cases} \quad a \cdot \beta := \beta \left( \sum_{(a)} S(a') ? a'' \right) \quad \left( \text{resp. } \beta \cdot a := \beta \left( \sum_{(a)} a' ? S(a'') \right) \right), \]
où le symbol $?$ indique la place de la variable.
\end{enumerate}
\end{Def}

\begin{Rem}
Soient $A$ une $\C$-algèbre de Hopf de dimension finie et $\X$ un $A$-module à gauche (resp. à droite). Il est possible de munir $\X^*$ d'une structure de $A$-module à droite (resp. à gauche) ou de $A$-module à gauche (resp. à droite) avec :
\begin{align*}
& \forall a \in A \quad \forall \beta \in \X^* \quad \beta \overset{1}{\cdot} a  = \beta(a \cdot ?) \quad \left( \text{resp. } a \overset{1}{\cdot} \beta = \beta(? \cdot a) \right), \\
& \forall a \in A \quad \forall \beta \in \X^* \quad a \overset{2}{\cdot} \beta  = \beta(S^{-1}(a) \cdot ?) \quad \left( \text{resp. } \beta \overset{2}{\cdot} a = \beta(? \cdot S^{-1}(a)) \right),
\end{align*}
où le symbol $?$ indique la place de la variable. 

Dans notre cas, on travaille implicitement avec la première structure de sorte que les actions adjointe et co-adjointe se transposent. Autrement dit :
\[ \forall a, y \in A \quad \forall \beta \in A^* \quad (a \cdot \beta)(y) = \beta(y \cdot a) \quad \text{ et } \quad (\beta \cdot a)(y) = \beta(a \cdot y) .\]

Dans les ouvrages, on utilise davantage la seconde structure en vue de construire des produits bicroisés (cf. par exemple \cite[§ IX.2]{Kas95}). Dans ce cas, on définit l'action co-adjointe à gauche (resp. à droite) de $A$ sur $A^*$ par :
\[ \begin{cases} \forall a \in A \\ \forall \beta \in A^* \end{cases} \quad a \cdot \beta := \beta \left( \sum_{(a)} S^{-1}(a'') ? a' \right) \quad \left( \text{resp. } \beta \cdot a := \beta \left( \sum_{(a)} a'' ? S^{-1}(a') \right) \right), \]
où le symbol $?$ indique la place de la variable. La nomenclature se justifie encore du fait que les actions adjointe et co-adjointe se transposent :
\[ \forall a, y \in A \quad \forall \beta \in A^* \quad (a \cdot \beta)(y) = \beta( S^{-1}(a) \cdot y) \quad \text{ et } \quad (\beta \cdot a)(y) = \beta(y \cdot S^{-1}(a)) .\]
\end{Rem}

\begin{Def}[{\cite[§ A.1]{FGST06b}}]
Soit $A$ une $\C$-algèbre de Hopf de dimension finie.
On appelle \emph{$q$-caractère} à gauche (resp. à droite) de $A$ toute forme linéaire $\beta \in A^*$ telle que :
\[ \forall x,y \in A \quad \beta(xy) = \beta \left( S^2(y)x \right) \quad \left( \text{resp. } \beta(xy) = \beta \left( y S^2(x) \right) \right). \]
\end{Def}

Le terme de \emph{caractère}, introduit par Drinfeld dans \cite{Dri90}, prendra sens dans la proposition \ref{Prop: qch}.

\begin{Prop} 
\label{Prop: stabilisateurs}
Soit $A$ une $\C$-algèbre de Hopf de dimension finie. On note $\Zf(A)$ le centre de $A$ et $\Ch^l(A)$ (resp. $\Ch^r(A)$ l'espace des $q$-caractères à gauche (resp. à droite) de $A$. On a :
\begin{enumerate}[(a)]
	\item $\mathfrak{Z}(A) = \left\{ y \in A \; ; \; \forall a \in A \quad a \cdot y = \varepsilon(a) y \right\} = \left\{ y \in A \; ; \; \forall a \in A \quad y \cdot a = \varepsilon(a) y \right\}$,
	\item $\Ch^l(A) = \left\{ \beta \in A^* \; ; \; \forall a \in A \quad a \cdot \beta = \varepsilon(a) \beta \right\}$,
	\item $\Ch^r(A) = \left\{ \beta \in A^* \; ; \; \forall a \in A \quad \beta \cdot a = \varepsilon(a) \beta \right\}$.
\end{enumerate}
\end{Prop}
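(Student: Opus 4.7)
The plan is to prove the three equalities by double inclusion, relying on Sweedler notation, coassociativity, and the two antipode identities $\sum a_{(1)} S(a_{(2)}) = \sum S(a_{(1)}) a_{(2)} = \varepsilon(a)\cdot 1$. The forward inclusions are formal manipulations; the real content lies in the reverse inclusions for (b) and (c).

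For (a), the forward direction is immediate: if $y \in \Zf(A)$ then $a\cdot y = \sum a_{(1)} y S(a_{(2)}) = y \sum a_{(1)} S(a_{(2)}) = \varepsilon(a)\,y$, and symmetrically on the right. Conversely, assume $a\cdot y = \varepsilon(a)\,y$ for every $a$. First I would observe, by coassociativity and the antipode axiom applied to the second factor of the three-fold coproduct, the identity $\sum a_{(1)} \otimes S(a_{(2)})\,a_{(3)} = a\otimes 1$; multiplying this out with $y$ inserted between the two tensor slots yields
\[
ay \;=\; \sum a_{(1)}\,y\,S(a_{(2)})\,a_{(3)} \;=\; \sum \bigl(a_{(1)}\cdot y\bigr)\,a_{(2)} \;=\; \sum \varepsilon(a_{(1)})\,y\,a_{(2)} \;=\; ya.
\]
The right-adjoint statement follows by the mirror computation using $\sum S(a_{(1)})\,a_{(2)} \otimes a_{(3)} = 1\otimes a$.

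For (b), the inclusion $\Ch^l(A) \subseteq \{\beta : a\cdot\beta = \varepsilon(a)\,\beta\}$ is a direct calculation using the $q$-character property and the fact that $S$ is an antihomomorphism:
\[
\sum \beta\bigl(S(a_{(1)})\,y\,a_{(2)}\bigr) \;=\; \sum \beta\bigl(S^2(a_{(2)})\,S(a_{(1)})\,y\bigr) \;=\; \sum \beta\bigl(S(a_{(1)} S(a_{(2)}))\,y\bigr) \;=\; \varepsilon(a)\,\beta(y).
\]
The converse is the main obstacle. The strategy will be to show that every element of the form $xy - S^2(y)\,x$ belongs to the subspace
\[
\mathcal{D} \;:=\; \operatorname{span}\Bigl\{\sum S(a_{(1)})\,z\,a_{(2)} - \varepsilon(a)\,z \;:\; a,z\in A\Bigr\},
\]
on which every coadjoint-invariant functional vanishes by hypothesis. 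To realize this, I would rewrite $xy = \sum y_{(1)}\,S(y_{(2)})\,x\,y_{(3)}$ (a consequence of the identity $\sum y_{(1)} S(y_{(2)}) \otimes y_{(3)} = 1\otimes y$ applied to the bilinear map $(u,v)\mapsto u\,x\,v$) and then use a three-fold coproduct expansion together with the hypothesis applied inside the outer sum. The combinatorial heart is to regroup the summation so that the inner split of the middle factor $y_{(2)}$ plays the role of $\Delta(a)$ in the coadjoint-invariance relation, leaving $S^2(y)\,x$ after simplification by $\sum S^2(y_{(2)})\,S(y_{(1)}) = \varepsilon(y)\cdot 1$. The right-handed statement (c) then follows by the symmetric computation in the opposite Hopf algebra, where $S$ is replaced by $S^{-1}$ and left/right roles are exchanged; concretely, one derives (c) from (b) by the substitution $\beta \mapsto \beta\circ S$, noting that $S$ intertwines the left and right coadjoint actions up to the switch $\Ch^l\leftrightarrow\Ch^r$.
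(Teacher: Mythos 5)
Your part (a), the direct inclusion of (b), and the reduction of (c) to (b) via $\beta\mapsto\beta\circ S$ are all correct; this last point is even a small shortcut compared with the paper, which simply redoes the mirror computation for the right-handed case. The genuine problem is the converse inclusion of (b), which you yourself flag as the main obstacle but whose sketched computation does not go through. Starting from $xy=\sum y_{(1)}S(y_{(2)})\,x\,y_{(3)}$ and regrouping so that the inner split of the middle factor plays the role of $\Delta(a)$ cannot work: the invariance hypothesis only applies to expressions of the exact shape $\sum_{(a)}\beta\bigl(S(a_{(1)})\,z\,a_{(2)}\bigr)$ with $z$ independent of the splitting of $a$, and in your expression the leftmost leg $y_{(1)}$ is multiplied \emph{outside} the sandwich $S(y_{(2)})\,x\,y_{(3)}$ inside the argument of $\beta$; it can be absorbed neither into $z$ (it sits on the wrong side of $S(y_{(2)})$) nor into an $\varepsilon$. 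However you regroup the three legs of $\Delta^{(2)}(y)$, one of them always obstructs the pattern.

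The paper's computation resolves this by inserting the unit at a different place and, crucially, by a change of variables through the antipode (bijective because $A$ is finite-dimensional). One writes $S^2(y)x=\sum S^2(y_{(3)})\,x\,y_{(1)}S(y_{(2)})$ and then substitutes $b=S(y)$, so that the expression becomes $\sum S(b_{(1)})\,\bigl(x\,S^{-1}(b_{(3)})\bigr)\,b_{(2)}$. Now the two sandwich legs are the \emph{first two}, adjacent, legs of $\Delta^{(2)}(b)$, and the third leg is swallowed into the middle argument; the hypothesis then applies termwise and gives $\sum\varepsilon(b_{(1)})\,\beta\bigl(x\,S^{-1}(b_{(2)})\bigr)=\beta\bigl(x\,S^{-1}(S(y))\bigr)=\beta(xy)$. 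Your subspace $\mathcal{D}$ is indeed equal to the span of the elements $xy-S^2(y)x$, but establishing that containment is precisely this computation; as written, your proposal asserts it without a working derivation, and the two missing ingredients are the insertion of $\sum y_{(1)}S(y_{(2)})=\varepsilon(y)1$ into $S^2(y)x$ rather than into $xy$, and the use of $S^{-1}$.
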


\begin{proof}
\begin{enumerate}[(a)]
	\item Il s'agit de montrer que :
\[ y \in \mathfrak{Z}(A) \quad \Longleftrightarrow \quad \forall a \in A \quad a \cdot y = \varepsilon(a) y \quad \Longleftrightarrow \quad \forall a \in A \quad y \cdot a = \varepsilon(a) y. \]
Les sens directs sont évidents car, par définition de l'antipode (cf. par exemple \cite[Def. III.3.2]{Kas95}), on a :
\[ \forall a \in A \quad \sum_{(a)} a' S(a'') = \varepsilon(a) 1 = \sum_{(a)} S(a') a''. \]
Réciproquement, soient $y, z \in A$ tels que, pour tout $a \in A$, $a \cdot y = \varepsilon(a) y$ et $z \cdot a = \varepsilon(a) z$. Alors :
\begin{align*}
\forall x \in A \quad xy &= \sum_{(x)} x' \varepsilon(x'') y 
= \sum_{(x)} x' y \varepsilon(x'')
= \sum_{(x)} x' y S(x'') x''' \\
&= \sum_{(x)} (x' \cdot y) x''
= \sum_{(x)} \varepsilon(x') y x''
= \sum_{(x)} y \varepsilon(x') x''
= yx, \\
\forall x \in A \quad zx &= \sum_{(x)} z \varepsilon(x') x''
= \sum_{(x)}  \varepsilon(x') z x''
= \sum_{(x)} x' S(x'') z x''' \\
&= \sum_{(x)} x' (z \cdot x'')
= \sum_{(x)} x' \varepsilon(x'') z
= xz.
\end{align*}
Donc $y,z \in \mathfrak{Z}(A)$.
	\item De même, il s'agit de montrer que :
\[ \beta \in \Ch^l(A) \quad \Longleftrightarrow \quad \forall a \in A \quad a \cdot \beta = \varepsilon(a) \beta. \]
Le sens direct est évident car :
\[ \forall a \in A \quad \sum_{(a)} S^2(a'') S(a') = S \left( \sum_{(a)} a' S(a'') \right) = S(\varepsilon(a) 1) = \varepsilon(a) S(1) = \varepsilon(a) 1. \]
Réciproquement, soit $\beta \in A^*$ tel que, pour tout $a \in A$, $a \cdot \beta = \varepsilon(a) \beta$. Alors :
\begin{align*}
\forall x,y \in A \quad \beta(S^2(y)x) &= \sum_{(y)} \beta \left( S^2(\varepsilon(y')y'') x \right)
= \sum_{(y)} \beta \left( S^2(y'') x  \varepsilon(y') \right) \\
&= \sum_{(y)} \beta \left( S^2(y''') x  y' S(y'') \right) \\
&= \sum_{(S(y))} \beta \left( S(S(y)') x  S^{-1}(S(y)''') S(y)'' \right) \\
&= \sum_{(S(y))} (S(y)' \cdot \beta) \left( x  S^{-1}(S(y)'') \right) \\
&= \sum_{(S(y))} \varepsilon(S(y)') \beta \left( x  S^{-1}(S(y)'') \right) \\
&= \sum_{(S(y))} \beta \left( x S^{-1} \left( \varepsilon(S(y)') S(y)'' \right) \right) \\
&= \beta \left( x  S^{-1}(S(y)) \right)
= \beta \left( x y \right).
\end{align*}
Donc $\beta \in \Ch^l(A)$.
	\item De même, il s'agit de montrer que :
\[ \beta \in \Ch^r(A) \quad \Longleftrightarrow \quad \forall a \in A \quad \beta \cdot a = \varepsilon(a) \beta. \]
Le sens direct est évident car :
\[ \forall a \in A \quad \sum_{(a)} S(a'') S^2(a') = S \left( \sum_{(a)} S(a') a'' \right) = S(\varepsilon(a) 1) = \varepsilon(a) S(1) = \varepsilon(a) 1. \]
Réciproquement, soit $\beta \in A^*$ tel que, pour tout $a \in A$, $\beta \cdot a= \varepsilon(a) \beta$. Alors :
\begin{align*}
\forall x,y \in A \quad \beta(y S^2(x)) &= \sum_{(x)} \beta \left( y S^2(x'\varepsilon(x''))  \right)
= \sum_{(x)} \beta \left( \varepsilon(x'') y S^2(x') \right) \\
&= \sum_{(x)} \beta \left( S(x'') x'''  y S^2(x') \right) \\
&= \sum_{(S(x))} \beta \left( S(x)'' S^{-1}(S(x)') y S(S(x)''') \right) \\
&= \sum_{(S(x))} (\beta \cdot S(x)'') \left( S^{-1}(S(x)') y \right) \\
&= \sum_{(S(x))} \varepsilon(S(x)'') \beta \left( S^{-1}(S(x)') y \right) \\
&= \sum_{(S(x))} \beta \left( S^{-1} \left( S(x)' \varepsilon(S(x)'') \right) y \right) \\
&= \beta \left( S^{-1}(S(x)) y \right)
= \beta \left( xy \right).
\end{align*}
Donc $\beta \in \Ch^l(A)$.
\end{enumerate}
\end{proof}

On introduit maintenant des $q$-caractères particuliers. Pour cela on rappelle que, pour toute algèbre de Hopf $A$, le produit tensoriel $\X \otimes \Y$ de deux $A$-modules $\X, \Y$ à gauche (resp. à droite) est muni d'une structure de $A$-module à gauche (resp. à droite) donnée par :
\[ a (x \otimes y) = \Delta(a) (x \otimes y) \quad \left( \text{resp. } (x \otimes y)a = (x \otimes y) \Delta(a) \right), \]
pour tous $a \in A$, $x \in \X$ et $y \in \Y$. De plus, si $A$ est de dimension finie, alors toute co-intégrale à gauche (resp. à droite) $\mathbf{c}$ de $A$ fournit une unité $A \mathbf{c} \simeq \C$ (resp. $\mathbf{c} A \simeq \C$) pour ce produit tensoriel. Ainsi, le groupe de Grothendieck à gauche (resp. à droite) $\G(A)$ de $A$ est muni d'une structure d'anneau (cf. par exemple \cite[§ 16.B]{CR81}). Les prochains résultats sont valables pour des modules à gauche ou des modules à droite. C'est pourquoi on ne précisera pas la latéralité des modules considérés. 

\begin{Lemme}
\label{Lemme: qch}
Soient $A$ une $\C$-algèbre de Hopf de dimension finie, et $s \in A$. Pour tout $A$-module $\X$, associé à une représentation $\rho : A \rightarrow \End(\X)$, on définit la forme linéaire :
\[ \qch^s_\X :
	\begin{cases}
	A \longrightarrow \C \\
	a \longmapsto \tr_\X \left( s a \right) := \tr \left( \rho(s a) \right)
	\end{cases} . \]
On note $\G(A)$ l'anneau de Grothendieck de $A$. Soient $\X$ et $\Y$ deux $A$-modules.
\begin{enumerate}[(i)]
	\item Si $\X$ et $\Y$ ont la même classe dans $\G(A)$, alors $\qch^s_\X = \qch^s_\Y$.
	\item Si $s$ est inversible et $\qch^s_\X = \qch^s_\Y$, alors $\X$ et $\Y$ ont la même classe dans $\G(A)$.
\end{enumerate} 
\end{Lemme}

\begin{proof}
On note $\X_1, \X_2, ..., \X_k$ et $\Y_1, \Y_2, ..., \Y_l$ les facteurs de compositions respectifs de $\X$ et $\Y$ (cf. par exemple \cite[§ 2.6]{Pie82}). On choisit une suite de composition de $\X$ (resp. de $\Y$) et une base de $\X$ (resp. de $\Y$) adaptée à cette suite de composition. Dans celle-ci, la matrice de représentation $M_\X(b)$ (resp. $M_\Y(b)$) de $b \in A$ dans $\X$ (resp. $\Y$) est triangulaire supérieure par bloc, de la forme :
\begin{gather*}
M_\X(b) = \begin{pmatrix}
	M_{\X_1}(b) &&& (*) \\
	& M_{\X_2}(b) && \\
	&& \ddots &  \\
	(0) &&&  M_{\X_k}(b)
	\end{pmatrix} \\
\left( \text{resp. }
	M_\Y(b) = \begin{pmatrix}
	M_{\Y_1}(b) &&& (*) \\
	& M_{\Y_2}(b) && \\
	&& \ddots &  \\
	(0) &&&  M_{\Y_l}(b)
	\end{pmatrix} \right), 
\end{gather*}
où, pour tout $i \in \{1,...,k\}$ (resp. $j \in \{1,...,l\}$), $M_{\X_i}(b)$ (resp. $M_{\Y_j}(b)$) est une matrice de représentation de $b$ dans $\X_i$ (resp. $\Y_j$). Il s'ensuit que :
\[ \forall b \in A \quad \tr_\X(b) = \sum_{i=1}^k \tr_{\X_i}(b), 
\quad \tr_\Y(b) = \sum_{j=1}^l \tr_{\X_j}(b). \]

\begin{enumerate}[(i)]
	\item Si $\X$ et $\Y$ ont la même classe dans $\G(A)$, alors ils ont les mêmes facteurs de composition à isomorphisme et ordre près (cf. par exemple \cite[Prop. 16.6]{CR81}). D'après ce qui précède, on a $k=l$ et :
	\[ \forall b \in A \quad \tr_\X(b) = \sum_{i=1}^k \tr_{\X_i}(b) = \sum_{i=1}^k \tr_{\Y_i}(b) = \tr_\Y(b) . \]
	Donc $\qch^s_{\X} = \qch^s_{\Y}$.
	\item Si $s$ est inversible, alors $a \mapsto sa$ est une bijection de $A$. Donc :
	\[ \qch^s_\X = \qch^s_\Y \quad \Longleftrightarrow \quad \qch^1_\X = \qch^1_\Y . \]
	Or, d'après le théorème de Frobenius-Schur (cf. par exemple \cite[Thm 27.8]{CR62}), la condition $\qch^1_\X = \qch^1_\Y$ implique que $\X$ et $\Y$ ont les mêmes facteurs de composition à isomorphisme et ordre près. D'où le résultat.
\end{enumerate}
\end{proof}

\begin{Prop} 
\label{Prop: qch}
Soit $A$ une $\C$-algèbre de Hopf de dimension finie, et $t \in A$ un élément inversible tel que :
\[ \Delta(t) = t \otimes t
\quad \text{ et } \quad 
\forall x \in A \quad S^2(x) = t x t^{-1} \]
On note $\G(A)$ l'anneau de Grothendieck de $A$ et $[\X]$ la classe du $A$-module $\X$ dans $\G(A)$. On a deux morphismes de $\C$-algèbres injectifs :
\[ \qch^{t^{-1}} :
	\begin{cases}
	 \C \otimes_\Z \G(A)  \longrightarrow \Ch^l(A) \\
	\left[ \X \right] \longmapsto \qch^{t^{-1}}_{[\X]}
	\end{cases}
	\quad \text{ et } \quad
	\qch^t :
	\begin{cases}
	 \C \otimes_\Z \G(A)  \longrightarrow \Ch^r(A) \\
	\left[ \X \right] \longmapsto \qch^t_{[\X]}
	\end{cases} \]
où $\Ch^l(A)$ (resp. $\Ch^r(A)$) désigne l'espace des $q$-caractères à gauche (resp. à droite) de $A$. 
\end{Prop}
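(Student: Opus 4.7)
The plan is to verify the four properties: first that $\qch^{t^{-1}}_\X$ (resp.\ $\qch^t_\X$) is really a left (resp.\ right) $q$-character, then that $\X \mapsto \qch^{t^{-1}}_\X$ passes to $\G(A)$ and to $\C \otimes_\Z \G(A)$, then that it is multiplicative for the convolution product on $A^*$ dual to $\Delta$, and finally that it is injective. I will write the argument for $\qch^{t^{-1}}$; the argument for $\qch^t$ is strictly parallel.

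For the $q$-character property, I would fix an $A$-module $\X$ and compute, using cyclicity of the trace together with $S^2(y) = tyt^{-1}$:
\[
\qch^{t^{-1}}_\X(S^2(y) x) = \tr_\X(t^{-1} \, tyt^{-1} \, x) = \tr_\X(y t^{-1} x) = \tr_\X(t^{-1} x y) = \qch^{t^{-1}}_\X(xy),
\]
so $\qch^{t^{-1}}_\X \in \Ch^l(A)$. The additivity $\qch^{t^{-1}}_{\X \oplus \Y} = \qch^{t^{-1}}_\X + \qch^{t^{-1}}_\Y$ is immediate from the block-diagonal form of the representations. For well-definedness on $\G(A)$ (and hence on $\C \otimes_\Z \G(A)$ by $\C$-linear extension), I would invoke Lemma \ref{Lemme: qch}(i): if $[\X] = [\Y]$ in $\G(A)$ then $\qch^{t^{-1}}_\X = \qch^{t^{-1}}_\Y$, so the assignment descends unambiguously.

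For multiplicativity I would exploit the two remaining hypotheses on $t$. Since $t$ is group-like, $\Delta(t^{-1}) = t^{-1} \otimes t^{-1}$, and for any $a \in A$ the trace on the tensor product yields, via the Sweedler notation $\Delta(a) = \sum_{(a)} a' \otimes a''$:
\[
\bigl(\qch^{t^{-1}}_\X \cdot \qch^{t^{-1}}_\Y\bigr)(a) = \sum_{(a)} \tr_\X(t^{-1} a') \tr_\Y(t^{-1} a'') = \tr_{\X \otimes \Y}\bigl(\Delta(t^{-1}) \Delta(a)\bigr) = \qch^{t^{-1}}_{\X \otimes \Y}(a).
\]
For the unit, the trivial $A$-module is $\C$ with action through $\varepsilon$; since $t$ group-like forces $\varepsilon(t) = 1$, I would get $\qch^{t^{-1}}_\C = \varepsilon$, which is the unit of $A^*$ with the convolution product.

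For injectivity, the invertibility of $t^{-1}$ is key: Lemma \ref{Lemme: qch}(ii) gives injectivity at the level of $\G(A)$ with $\Z$-coefficients. To extend this to $\C \otimes_\Z \G(A)$, I would use the fact that $\G(A)$ is a free $\Z$-module on the classes of simple $A$-modules, so $\C \otimes_\Z \G(A)$ has the same basis over $\C$; a $\C$-linear dependence $\sum_i a_i \, \qch^{t^{-1}}_{[\X_i]} = 0$ among pairwise non-isomorphic simples would, after multiplying by $t$ and applying the classical linear independence of ordinary characters of pairwise non-isomorphic simple modules (Frobenius--Schur, as already used to prove Lemma \ref{Lemme: qch}(ii)), force all $a_i = 0$. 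The only subtle step is this last one, and indeed the main obstacle throughout is bookkeeping conventions: one must be careful that the dual algebra structure on $A^*$ used here is the convolution coming from $\Delta$ (co-opposite already built into the definition), so that the computation above genuinely produces the product in $\Ch^l(A)$ rather than its opposite.
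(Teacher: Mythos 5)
Votre d�monstration est correcte et suit essentiellement la m�me d�marche que celle du texte : v�rification de la propri�t� de $q$-caract�re par cyclicit� de la trace et $S^2(x)=txt^{-1}$, bonne d�finition et injectivit� via le lemme \ref{Lemme: qch}, et multiplicativit� � partir du caract�re group-like de $t$ et du produit de convolution sur $A^*$. Votre pr�cision suppl�mentaire sur l'extension de l'injectivit� de $\G(A)$ � $\C\otimes_\Z\G(A)$ (ind�pendance lin�aire des caract�res des simples) comble un point que le texte laisse implicite, mais ne change pas l'approche.
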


\begin{proof}
D'après le lemme \ref{Lemme: qch}, on sait que les applications $\qch^{t^{\mp 1}}$ sont bien définies et injectives. De plus, pour tout $A$-module $\X$ et pour tous $a, b \in A$, on a :
\begin{align*}
&\qch^{t^{-1}}_\X(ab)
= \tr_\X \left( t^{-1} ab \right)
= \tr_\X \left( b t^{-1} a \right)
= \tr_\X \left( t^{-1} S^2(b) a  \right)
= \qch^{t^{-1}}_\X \left( S^2(b) a \right), \\
&\qch^t_\X(ab)
= \tr_\X \left( t ab \right)
= \tr_\X \left( S^2(a) t b \right)
= \tr_\X \left( t b S^2(a)  \right)
= \qch^t_\X \left( b S^2(a) \right),
\end{align*}
car $S^2(b) = t b t^{-1}$. D'où $\qch^{t^{-1}}_\X \in \Ch^l(A)$ et $\qch^t_\X \in \Ch^r(A)$. 

Soit $s \in \{t^{-1},t\}$. Il reste à montrer que $\qch^s$ réalise un morphisme de $\C$-algèbres. Soient $\lambda \in \C$, $\X$ et $\Y$ deux $A$-modules à gauche. Un représentant de $\lambda[\X]+[\Y]$ est $\lambda \X \oplus \Y$. Donc :
\[ \qch^s_{\lambda[\X]+[\Y]} = \qch^s_{\lambda \X \oplus \Y} = \lambda \qch^s_\X + \qch^s_\Y = \lambda \qch^s_{[\X]} + \qch^s_{[\Y]}. \]
Un représentant de $[\X] \cdot [\Y]$ est $\X \otimes \Y$, et pour tous $a \in \Uq$ :
\[ \qch^s_{[\X] \cdot [\Y]}(a) = \tr_{\X \otimes \Y}(\Delta(s a)) = \tr_{\X \otimes \Y}(s \otimes s \cdot \Delta(a)) = \qch^s_\X \otimes \tr_\Y^s (\Delta(a)), \]
car $\Delta( s a ) = \Delta( s ) \Delta(a) = s \otimes s \cdot \Delta(a)$. Or, le produit $\alpha \beta$ de deux formes linéaires $\alpha, \beta \in A^*$ est défini par :
\[ \forall a \in A \quad (\alpha \beta) (a) = \left( \alpha \otimes \beta \right) (\Delta(a)). \]
Donc :
\[ \qch^s_{[\X] \cdot [\Y]} = \qch^s_\X \cdot \qch_\Y^s = \qch^s_{[\X]} \cdot \qch_{[\Y]}^s. \]
\end{proof}

On utilise ce résultat sur $\Uq$ avec ses éléments de balancement généralisés (cf. le corollaire \ref{Cor: balancement}). D'après la remarque \ref{Rem: droite}, son anneau de Grothendieck à droite s'identifie à son anneau de Grothendieck à gauche. On le note $\G$\index{G@ $\G$}, et on note $\Ch^l$ \index{Ch@$\Ch^l$} (resp. $\Ch^r$\index{Ch@$\Ch^r$}) son espace des $q$-caractères à gauche (resp. à droite). On obtient ainsi quatre morphismes de $\C$-algèbres :
\begin{align}
\label{Eqn: qch}
& \qch^\delta := \qch^{K^{\delta p-1}} : \index{qch@ $\qch^\delta$}
	\begin{cases}
	 \C \otimes_\Z \G  \longrightarrow \Ch^l \\
	\left[ \X \right] \longmapsto \qch^{K^{\delta p-1}}_{[\X]}
	\end{cases} ;
	\quad \delta \in \{0,1\}, \\
\label{Eqn: qtr}	
& \qtr^\delta := \qch^{K^{\delta p+1}} : \index{qtr@ $\qtr^\delta$}
	\begin{cases}
	 \C \otimes_\Z \G  \longrightarrow \Ch^r \\
	\left[ \X \right] \longmapsto \qch^{K^{\delta p+1}}_{[\X]}
	\end{cases} ;
	\quad \delta \in \{0,1\}.
\end{align} 
Dans l'équation \eqref{Eqn: qtr}, on retrouve les traces quantiques des algèbres de Hopf tressées et enrubannées (cf. par exemple \cite[§ XIV.4, Prop. XIV.6.4]{Kas95}) pour lesquelles les éléments de balancement $\mathbf{k}$ et d'enrubannement $\mathbf{v}$ sont liés par $\mathbf{k} = \mathbf{v}^{-1} \mathbf{u}$ (cf. l'introduction du chapitre II).

\section{Description du centre à partir des morphismes de Drinfeld et de Radford}
\label{section: Drinfeld-Radford}

Avec des éléments mis en place dans la section \ref{section: structures}, on construit une nouvelle famille génératrice du centre de $\Uq$ à partir de son anneau de Grothendieck $\G$. Plus précisément, on considère les images des morphismes de Drinfeld et de Radford sur des $q$-caractères particuliers, associés aux classes des modules simples via le morphisme d'algèbres \eqref{Eqn: qch}.

\subsection{L'anneau de Grothendieck}
\label{subsection: Grothendieck}

On commence par détailler la structure de l'anneau de grothendieck $\G$ de $\Uq$ grâce à la donnée des produits tensoriels de modules simples, donnée dans la section \ref{section: reps prod}. Pour tout $\Uq$-module à gauche $\X$, on notera $[\X]$ \index{X @$[\X]$} sa classe d'équivalence dans $\G$.

\begin{Lemme} 
\label{Lemme: Grothendieck}
L'anneau de Grothendieck $\G$ de $\Uq$ est engendré par l'unité $[\X^+(1)]$ et la classe $[\X^+(2)]$.
\end{Lemme}

\begin{proof}
Par construction, le groupe de Grothendieck est librement engendré par les classes des modules simples (cf. par exemple \cite[Prop. 16.6]{CR81}). D'après la description des produits tensoriels de modules simples donnée dans le lemme \ref{Lemme: produits simples1}, on a :
\[ [\X^+(s)] [\X^+(2)] = [\X^+(s-1)] + [\X^+(s+1)], \quad 2 \leq s \leq p-1. \]
Une récurrence sur $s \in \{1,...,p\}$ montre alors que $[\X^+(s)]$ est engendré par $[\X^+(2)]$. De plus, d'après ce même lemme, on a aussi :
\begin{align*}
& [\X^+(p)] [\X^+(2)] = [\PIM^+(p-1)] = 2 [\X^+(p-1)] + 2 [\X^-(1)], \\
& [\X^-(1)] [\X^+(2)] = [\X^-(2)], \\
&[\X^-(s)] [\X^+(2)]
= [\X^-(s-1)] + [\X^-(s+1)], \quad 2 \leq s \leq p-1.
\end{align*}
Donc $[\X^-(1)]$, et par suite $[\X^-(2)]$, sont engendrés par $[\X^+(2)]$. Une récurrence sur $s \in \{1,...,p\}$ montre enfin que $[\X^-(s)]$ est engendré par $[\X^+(2)]$ .
\end{proof}

Par conséquent, on a un morphisme surjectif de $\C$-algèbres défini par :
\[ \begin{cases}
	\C[x] \longrightarrow \C \otimes_\Z \G \\
	x \longmapsto [\X^+(2)]
	\end{cases} . \]
Il s'ensuit que la $\C$-algèbre $\C \otimes_\Z \G$, obtenue par extension des scalaires sur $\G$, s'identifie au quotient de $\C[x]$ par l'idéal $I$ engendré par le polynôme minimal $\widehat{\psi}_{2p}(x)$ \index{psi@ $\widehat{\psi}_{2p}(x)$} de $[\X^+(2)]$. Il reste à déterminer $\widehat{\psi}_{2p}(x)$. A cette fin, on a besoin des \emph{polynômes de Chebychev}.

\begin{Def} 
On appelle polynômes de Chebychev (de seconde espèce) l'unique famille $(U_s(x))_{s \in \N}$ \index{U@ $U_s(x)$} solution du système de récurrence :
\begin{equation}
\label{Eqn: Chebychev}
\begin{cases}
	x U_s(x) = U_{s-1}(x) + U_{s+1}(x), & s \geq 1, \\
	U_0(x) = 0, \\
	U_1(x) = 1.
\end{cases}.
\end{equation}
\end{Def}

\begin{Rem} 
\label{Rem: Chebychev}
Pour tout $s \in \N$, on montre par récurrence que :
\[ \forall t \in \R \qquad U_s(t+t^{-1}) = \frac{t^s-t^{-s}}{t-t^{-1}} . \]
En particulier, on a :
\[ \forall t \in \R \qquad U_s(2 \cos t) = \frac{\sin(st)}{\sin(t)} . \]
Ce n'est pas la normalisation standard des polynômes de Chebychev de seconde espèce, laquelle est :
\[ \forall t \in \R \qquad U_s(\cos t) = \frac{\sin(st)}{\sin(t)}. \]
\end{Rem}

\begin{Prop} 
\label{Prop: Grothendieck}
On a un isomorphisme de $\C$-algèbres :
\[ \begin{cases}
	\C[x] / \widehat{\psi}_{2p}(x) \overset{\sim}{\longrightarrow} \C \otimes_\Z \G \\
	U_s(x) \longmapsto [\X^+(s)] \\
	\frac{1}{2} U_{p+s}(x) - \frac{1}{2} U_{p-s}(x) \longmapsto [\X^-(s)]
	\end{cases} \]
où $\widehat{\psi}_{2p}(x)$ est le polynôme minimal  de $[\X^+(2)]$. De plus, le polynôme $\widehat{\psi}_{2p}(x)$ vérifie :
\[ \widehat{ \psi }_{2p}(x) = U_{2p+1}(x) - U_{2p-1}(x) -2 . \]
\end{Prop}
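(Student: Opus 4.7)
Le plan est d'exploiter le fait que $\C \otimes_\Z \G$ est monog�ne engendr�e par $[\X^+(2)]$ (lemme \ref{Lemme: Grothendieck}) : on dispose du morphisme surjectif de $\C$-alg�bres $\C[x] \twoheadrightarrow \C \otimes_\Z \G$ envoyant $x$ sur $[\X^+(2)]$. Il s'agit alors de d�terminer l'image des polyn�mes de Chebychev et le noyau de ce morphisme. Comme il y a $2p$ classes d'isomorphisme de modules simples, on a $\dim_\C(\C \otimes_\Z \G) = 2p$, donc le polyn�me minimal cherch� sera de degr� $2p$, ce qui correspond bien au degr� de $U_{2p+1}(x) - U_{2p-1}(x) - 2$ (puisque $U_s(x)$ est de degr� $s-1$).

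D'abord, je v�rifierais par r�currence sur $s \in \{1,...,p\}$ l'identification $U_s(x) \mapsto [\X^+(s)]$. Cela d�coule de la co�ncidence entre le syst�me de r�currence \eqref{Eqn: Chebychev} des polyn�mes de Chebychev et celui satisfait par les classes $[\X^+(s)]$ : d'apr�s le lemme \ref{Lemme: produits simples1}, on a $[\X^+(s)] [\X^+(2)] = [\X^+(s-1)] + [\X^+(s+1)]$ pour $2 \leq s \leq p-1$, avec $[\X^+(1)] = 1 = U_1(x)$. Pour passer de $s = p$ � $s = p+1$, j'utiliserais le cas particulier $[\X^+(p)] [\X^+(2)] = [\PIM^+(p-1)] = 2[\X^+(p-1)] + 2[\X^-(1)]$ du m�me lemme, combin� � $x U_p(x) = U_{p-1}(x) + U_{p+1}(x)$, pour obtenir $U_{p+1}(x) \mapsto [\X^+(p-1)] + 2[\X^-(1)]$, soit $\tfrac{1}{2}(U_{p+1} - U_{p-1})(x) \mapsto [\X^-(1)]$.

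Ensuite, en it�rant via les relations $[\X^-(s)] [\X^+(2)] = [\X^-(s-1)] + [\X^-(s+1)]$ pour $2 \leq s \leq p-1$ et $[\X^-(1)] [\X^+(2)] = [\X^-(2)]$ (lemme \ref{Lemme: produits simples1} et identification \eqref{Eqn: produits simples}), je montrerais par r�currence sur $s \in \{1,...,p-1\}$ la formule $U_{p+s}(x) \mapsto [\X^+(p-s)] + 2[\X^-(s)]$, avec la convention $U_0 := 0$. Cela �quivaut � $\tfrac{1}{2}(U_{p+s} - U_{p-s})(x) \mapsto [\X^-(s)]$, �tablissant ainsi la premi�re partie de l'�nonc�.

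Reste � v�rifier que $\widehat{\psi}_{2p}(x) := U_{2p+1}(x) - U_{2p-1}(x) - 2$ appartient au noyau : je calcul $U_{2p-1}(x) \mapsto [\X^+(1)] + 2[\X^-(p-1)]$, puis, en appliquant le cas critique $[\X^-(p)][\X^+(2)] = [\PIM^-(p-1)] = 2[\X^-(p-1)] + 2[\X^+(1)]$ et la r�currence $x U_{2p}(x) = U_{2p-1}(x) + U_{2p+1}(x)$ partant de $U_{2p}(x) \mapsto 2[\X^-(p)]$, je d�duirais $U_{2p+1}(x) \mapsto 3[\X^+(1)] + 2[\X^-(p-1)]$. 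La diff�rence annonc�e est bien �gale � $2[\X^+(1)] = 2 = 2 \cdot 1$, d'o� $\widehat{\psi}_{2p}(x) \mapsto 0$. Par comparaison des degr�s avec $\dim_\C(\C \otimes_\Z \G) = 2p$, c'est le polyn�me minimal, et le morphisme induit par passage au quotient est l'isomorphisme cherch�. L'�tape la plus d�licate sera le recollement aux fronti�res $s=p$ et $s=2p$, o� l'apparition des PIMs $\PIM^\pm(p-1)$ casse le sch�ma de r�currence r�gulier, et o� il faudra v�rifier avec soin que les coefficients num�riques $2$ provenant de la d�composition de $\PIM^\pm(p-1)$ co�ncident avec le saut provoqu� par le terme constant $-2$ du polyn�me minimal.
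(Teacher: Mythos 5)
Votre démonstration est correcte et suit essentiellement la même démarche que celle du texte : surjection $\C[x] \twoheadrightarrow \C \otimes_\Z \G$ issue du lemme \ref{Lemme: Grothendieck}, identification des images des polynômes de Chebychev par récurrence via les décompositions du lemme \ref{Lemme: produits simples1}, et traitement des frontières $s=p$ et $s=2p$ par les relations $[\PIM^{\pm}(p-1)] = 2[\X^{\pm}(p-1)] + 2[\X^{\mp}(1)]$. Votre argument explicite de comptage de dimension ($\dim_\C(\C \otimes_\Z \G) = 2p$) pour justifier que le polynôme de degré $2p$ trouvé dans le noyau est bien le polynôme minimal est un petit plus de rigueur par rapport au texte, mais ne constitue pas une approche différente.
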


\begin{proof}
L'existence d'un tel isomorphisme découle directement du lemme \ref{Lemme: Grothendieck}. Il s'agit de vérifier les relations polynomiales. Pour cela, on exploite les relations données par la décomposition des produits tensoriels $\X^\pm(s) \otimes \X^+(2)$, $1 \leq s \leq p$ (cf. le lemme \ref{Lemme: produits simples1}). 

Pour tout polynôme $P(x) \in \C[x]$, on note $\bar{P}(x)$ sa classe dans $\C[x] / I$. On considère la famille de polynômes $(\bar{P}_n(x))_{0 \leq n \leq 2p}$ telle que :
\begin{align*}
& [\{0\}] = \bar{P}_0 \left( [\X^+(2)] \right), \\
& [\X^+(s)] = \bar{P}_s \left( [\X^+(2)] \right) , && 1 \leq s \leq p, \\
& [\X^-(s)] = \bar{P}_{p+s} \left( [\X^+(2)] \right) , && 1 \leq s \leq p.
\end{align*}
On a $\bar{P}_0(x)=0$, $\bar{P}_1(x)=1$ et $\bar{P}_2(x)=x$. 

D'après la description des PIMs donnée dans la proposition \ref{Prop: PIMs}, on sait que :
\[ \forall \alpha \in \{+,-\} \quad \forall s \in \{1,...,s\} \quad [\PIM^\alpha(s)] = 2 [\X^\alpha(s)] + 2 [\X^{-\alpha}(p-s)]. \]
En utilisant la décomposition des produits $\X^\pm(s) \otimes \X^+(2)$, $1 \leq s \leq p$, donnée dans le lemme \ref{Lemme: produits simples1}, on obtient alors le système de récurrence :
\[ \left\{ \begin{aligned}
& x \bar{P}_s(x) = \bar{P}_{s-1}(x) + \bar{P}_{s+1}(x), && 1 \leq s  \leq p-1, \\
& x \bar{P}_p(x) = 2 \bar{P}_{p-1}(x) + 2 \bar{P}_{p+1}(x), \\
& x \bar{P}_{p+1}(x)  = \bar{P}_{p+2}(x), \\
& x \bar{P}_{p+s}(x) = \bar{P}_{p+s-1}(x) + \bar{P}_{p+s+1}(x), && 2 \leq s \leq p-1, \\
& x \bar{P}_{2p}(x) = 2 \bar{P}_{2p-1}(x) + 2 \bar{P}_1(x).
\end{aligned} \right. \]
D'après la définition des polynômes $(U_s)_{s \in \N}$ de Chebychev \eqref{Eqn: Chebychev}, pour tout $s \in \{0,...,p\}$, on a $\bar{P}_s(x) = \bar{U}_s(x)$. Pour le reste, on a alors :
\begin{align*}
& \begin{cases}
	x \bar{U}_p(x) = 2 \bar{U}_{p-1}(x) + 2 \bar{P}_{p+1}(x), \\
	x \bar{P}_{p+1}(x) ) = \bar{P}_{p+2}(x), \\
	x \bar{P}_{p+s}(x) = \bar{P}_{p+s-1}(x) + \bar{P}_{p+s+1}(x), & 2 \leq s \in \leq p-1, \\
	x \bar{P}_{2p}(x) = 2 \bar{P}_{2p-1}(x) + 2 \bar{U}_1(x).
	\end{cases} \\
\Longleftrightarrow \quad
& \begin{cases}
	\bar{P}_{p+1}(x) = \frac{1}{2} \bar{U}_{p+1}(x) - \frac{1}{2} \bar{U}_{p-1}(x), \\
	\bar{P}_{p+2}(x) ) = \frac{1}{2} \bar{U}_{p+2}(x) - \frac{1}{2} \bar{U}_{p-2}(x), \\
	\bar{P}_{p+s+1}(x) = x \bar{P}_{p+s}(x) - \bar{P}_{p+s-1}(x), & 2 \leq s \leq p-1, \\
	x \bar{P}_{2p}(x)= 2 \bar{P}_{2p-1}(x) + 2 \bar{U}_1(x).
	\end{cases} \\
\Longleftrightarrow \quad
& \begin{cases}
	\bar{P}_{p+s}(x) = \frac{1}{2} \bar{U}_{p+s}(x) - \frac{1}{2} \bar{U}_{p-s}(x), & 1 \leq s \leq p, \\
	\frac{1}{2} x \bar{U}_{2p}(x) - \frac{1}{2} x \bar{U}_0(x) = \bar{U}_{2p-1}(x) - \bar{U}_1(x) + 2 \bar{U}_1(x).
	\end{cases}
\end{align*}
Enfin, la dernière relation donne:
\[ \widehat{ \psi }_{2p}(x) = x U_{2p}(x) - 2 U_{2p-1}(x) - 2 U_1(x) = U_{2p+1}(x) - U_{2p-1}(x) - 2 . \]
D'où le résultat.
\end{proof}

\begin{Cor} 
\label{Cor: Grothendieck}
On a :
\[ \widehat{ \psi }_{2p}(x) 
= \left( x - \widehat{\beta}_0 \right) \left( \prod_{j=1}^{p-1} \left( x - \widehat{\beta}_j \right)^2 \right) \left( x - \widehat{\beta}_p \right), \]
où, pour tout $j \in \{0,...,p\}$, $\widehat{\beta_j} = q^j+q^{-j} = 2 \cos \frac{\pi j}{p}$.
\end{Cor}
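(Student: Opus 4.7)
The plan is to substitute $x = t + t^{-1}$, which is the change of variables underlying the Chebyshev formula of Remark \ref{Rem: Chebychev}, and verify that both sides of the claimed identity reduce to the same Laurent polynomial in $t$. Since the map $\C[x] \hookrightarrow \C[t,t^{-1}]$ sending $x$ to $t + t^{-1}$ is injective (one can recover the leading term), equality in $\C[t,t^{-1}]$ forces equality in $\C[x]$.

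First I would transform the left-hand side. Starting from Proposition \ref{Prop: Grothendieck}, namely $\widehat{\psi}_{2p}(x) = U_{2p+1}(x) - U_{2p-1}(x) - 2$, and using $U_s(t+t^{-1}) = \frac{t^s - t^{-s}}{t - t^{-1}}$ from Remark \ref{Rem: Chebychev}, a short manipulation gives
\[
\widehat{\psi}_{2p}(t+t^{-1}) = \frac{(t^2-1)(t^{2p-1} + t^{-2p-1})}{t - t^{-1}} - 2 = t^{2p} + t^{-2p} - 2 = (t^p - t^{-p})^2,
\]
using $t^2 - 1 = t(t - t^{-1})$.

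Next I would transform the right-hand side. For each $j$, since $\widehat{\beta}_j = q^j + q^{-j}$ we have the factorization
\[
x - \widehat{\beta}_j = (t + t^{-1}) - (q^j + q^{-j}) = t^{-1}(t - q^j)(t - q^{-j}).
\]
Applying this to the product yields $(x - \widehat{\beta}_0)(x - \widehat{\beta}_p) = t^{-2}(t^2 - 1)^2$, while for the interior factors I would use that as $j$ ranges over $\{1,\dots,p-1\}$, the exponents $\pm j \bmod 2p$ cover $\{1,\dots,2p-1\}\setminus\{p\}$. Combined with the classical factorization $\prod_{k=0}^{2p-1}(t - q^k) = t^{2p} - 1$, this gives
\[
\prod_{j=1}^{p-1}(x - \widehat{\beta}_j)^2 = t^{-2(p-1)} \cdot \frac{(t^{2p}-1)^2}{(t^2-1)^2}.
\]

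Finally I would multiply the two contributions, whereupon the $(t^2-1)^2$ factors cancel and one obtains
\[
(x - \widehat{\beta}_0)(x - \widehat{\beta}_p)\prod_{j=1}^{p-1}(x-\widehat{\beta}_j)^2 = t^{-2p}(t^{2p}-1)^2 = (t^p - t^{-p})^2,
\]
matching the expression for $\widehat{\psi}_{2p}(t+t^{-1})$ obtained above. The only subtle point, and the main conceptual step, is the combinatorial repackaging of the interior factors via the $2p$-th cyclotomic identity; the rest is bookkeeping. A degree check (both sides are monic of degree $2p$ in $x$) can serve as a sanity verification.
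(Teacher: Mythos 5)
Your proof is correct. It rests on the same Chebyshev parametrization as the paper's argument (Remark \ref{Rem: Chebychev} applied to the expression $\widehat{\psi}_{2p}(x)=U_{2p+1}(x)-U_{2p-1}(x)-2$ from Proposition \ref{Prop: Grothendieck}), but the execution differs. The paper substitutes $x=2\cos t$, obtains $\widehat{\psi}_{2p}(2\cos t)=4(\cos(pt)-1)(\cos(pt)+1)$, reads off the roots $\widehat{\beta}_j$, checks that the interior ones are double by a separate derivative computation, and closes with a count of roots against the degree $2p$ (using implicitly that $\widehat{\psi}_{2p}$ is monic). You instead work in $\C[t,t^{-1}]$ via $x=t+t^{-1}$, compute $\widehat{\psi}_{2p}(t+t^{-1})=(t^p-t^{-p})^2$ directly, and factor the right-hand side through $x-\widehat{\beta}_j=t^{-1}(t-q^j)(t-q^{-j})$ together with the cyclotomic identity $\prod_{k=0}^{2p-1}(t-q^k)=t^{2p}-1$, the injectivity of $P(x)\mapsto P(t+t^{-1})$ transporting the equality back to $\C[x]$. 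Your route buys a single closed identity with no separate multiplicity or degree bookkeeping (the squares appear automatically from the pairing $j\leftrightarrow 2p-j$), at the cost of the combinatorial observation that $\{\pm j \bmod 2p \;;\; 1\leq j\leq p-1\}=\{1,\dots,2p-1\}\setminus\{p\}$ and of the fact that $q^p=-1$; the paper's route is more elementary term by term but needs the derivative check and the degree count to pin down multiplicities. Both are complete.
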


\begin{proof}
D'après l'expression de $\widehat{\psi}_{2p}(x)$ donnée dans la proposition \ref{Prop: Grothendieck}, et la remarque \ref{Rem: Chebychev} sur les polynômes de Chebychev, on a :
\begin{align*}
\forall t \in \R \qquad \widehat{ \psi }_{2p}(2 \cos t)
&= U_{2p+1}(2\cos t) - U_{2p-1}(2\cos t) - 2 \\
&= \frac{\sin(2p+1)t -\sin(2p-1)t}{\sin t} - 2
= 2\cos(2pt) - 2 \\
&= 4({\cos}^2(pt) - 1)
= 4 (\cos(pt) - 1)(\cos(pt) + 1).
\end{align*}
L'ensemble des racines de $\widehat{ \psi }_{2p}(x)$ est donc $\left\{ \widehat{\beta_j} \; ; \; j \in \{0,...,p\} \right\}$. De plus, pour tout $j \in \{1,...,p-1\}$, la racine $\widehat{\beta_j}$ est d'ordre $2$ car :
\begin{align*}
-2 \sin \frac{\pi j}{p} \; \widehat{ \psi }_{2p}'( \widehat{\beta_j} )
&= -8p \sin(\pi j) \cos(\pi j)
= 0 .
\end{align*}
Par conséquent, on a au moins $2p$ racines, comptées avec leurs multiplicités. Or, le polynôme $\widehat{ \psi }_{2p}(x)$ est de degré $2p$ car, pour tout $s \in \N$, le $s$-ième polynôme de Chebychev $U_s(x)$ est de degré $s-1$ (récurrence immédiate). On a donc toutes les racines du polynôme $\widehat{\psi}_{2p}$.
\end{proof}

\begin{Rem}
En particulier, le polynôme $\widehat{\psi}_{2p}(x)$ est de degré $2p$. D'où la notation choisie.
\end{Rem}

\subsection{Le morphisme de Drinfeld}
\label{subsection: Drinfeld}

La donnée de l'élément $\mathbf{M} \in \Uq \otimes \Uq$ \eqref{Eqn: M-matrice} et d'un élément de balancement $\mathbf{k}$ \eqref{Cor: balancement} de $\Uq$ permet de définir un morphisme de $\C$-algèbres injectifs :
\[ \bchi^\mathbf{k} : \C \otimes_\Z \G \longrightarrow \Zf, \]
où $\G$ désigne l'anneau de Grothendieck de $\Uq$ et $\Zf$ son centre. On cherche à décrire son image $\Df_{2p}$ dans le centre.

\begin{Def}[{\cite[Prop. 3.3]{Dri90}}]
Soit $(A, \mathbf{R})$ une $\C$-algèbre de Hopf tressée de $R$-matrice $\mathbf{R}$. On définit le \emph{morphisme de Drinfeld} par :
\[ \bchi : \begin{cases}
	A^* \longrightarrow A \\
	\beta \longmapsto (\beta \otimes id) \mathbf{R}_{21} \mathbf{R}
	\end{cases} . \]
\end{Def}

Soit $\mathbf{M}$ l'élément défini par \eqref{Eqn: M-matrice}. On considère le morphisme de Drinfeld $\bchi$ associé à $\Uq$, vu comme une sous-algèbre de Hopf tressée $(\bar{D}, \bar{\mathbf{R}})$ via le morphisme \eqref{Eqn: inj double}. D'après l'article \cite[Prop. 3.3]{Dri90} (on pourra également consulter \cite[Lemme 2]{Ker95}), le morphisme $\bchi$ induit un isomorphisme de $\C$-algèbres : \index{chi1@ $\bchi$}
\begin{equation}
\label{Eqn: Drinfeld1}
\bchi : \Ch^l \overset{\sim}{\longrightarrow} \Zf,
\end{equation}
où $\Ch^l$ désigne l'espace des $q$-caractères à gauche de $\Uq$. On s'intéresse à la composée des morphismes $\mathrm{qch}^\delta$ \eqref{Eqn: qch} et $\bchi$ : \index{chi2@ $\bchi^\delta$}
\begin{equation} 
\label{Eqn: Drinfeld2}
\bchi^\delta : \begin{cases}
	\C \otimes_\Z \G \hookrightarrow \Zf \\
	[\X] \longmapsto (\qch^\delta_{[\X]} \otimes id) \mathbf{M}
	\end{cases} ; \quad \delta \in \{0,1\}.
\end{equation}

Soit $\delta \in \{0,1\}$. On note $\Df_{2p}$ \index{Drinfeld@ $\Df_{2p}$} l'image de $\bchi^\delta$ et :
\begin{equation}
\forall \alpha \in \{+,-\} \quad \forall s \in \{1,...,p\} \qquad \chi_\delta^\alpha(s) := \bchi^\delta \left( [\X^\alpha(s) ] \right) .
\end{equation}
Comme le groupe de Grothendieck $\G$ de $\Uq$ est librement engendré par les classes des modules simples $\X^\pm(s)$, $1 \leq s \leq p$, la famille $\{ \chi_\delta^\pm(s) \; ; \; 1 \leq s \leq p \}$ une $\C$-base de la sous-algèbre $\Df_{2p}$. On étudie cette famille de $\Zf$.

\begin{Lemme} 
\label{Lemme: Drinfeld}
Soient $\delta \in \{0,1\}$ et $s \in \{1,...,p\}$. L'image des classes $[\X^+(s)]$ et $[\X^-(s)]$ par le morphisme $\bchi^\delta$ \eqref{Eqn: Drinfeld2} sont respectivement :
\begin{align*}
&\chi_\delta^+(s) = \begin{multlined}[t]
	(-1)^{\delta(s-1)} \alpha^{\delta p-1} \sum_{k=0}^{s-1} \sum_{m=0}^{k} (q-q^{-1})^{2m} q^{(m-1)(s-1-2k+m)} \\
	\times {k \brack m} {s-k+m-1 \brack m} E^m K^{s-1-2k+m} F^m ,
	\end{multlined} \\
&\chi_\delta^-(s) = \begin{multlined}[t]
	(-1)^{\delta(s-1)} \alpha^{\delta p-1} \sum_{k=0}^{s-1} \sum_{m=0}^{k} (q-q^{-1})^{2m} q^{(m-1)(s-1-2k+m)} \\
	\times {k \brack m} {s-k+m-1 \brack m} E^m K^{p+s-1-2k+m} F^m .
	\end{multlined}
\end{align*}
\end{Lemme}

\begin{proof}
D'après l'équation \eqref{Eqn: M-matrice}, la $M$-matrice de $\Uq$ est donnée par :
\[ \mathbf{M} = \frac{1}{2p} \sum_{m,n=0}^{p-1} \sum_{i,j=0}^{2p-1} \frac{(q-q^{-1})^{m+n}}{[m]![n]!} q^{\frac{m(m-1)}{2}+\frac{n(n-1)}{2}-m^2-mj-ij+mi} F^m K^j E^n \otimes E^m K^i F^n . \]
Soit $\alpha \in \{+,-\}$. On a donc :
\[ \chi_\delta^\alpha(s) \overset{\eqref{Eqn: Drinfeld2}}{=} \begin{multlined}[t]
	\frac{1}{2p} \sum_{m,n=0}^{p-1} \sum_{i,j=0}^{2p-1} \frac{(q-q^{-1})^{m+n}}{[m]![n]!} q^{\frac{m(m-1)}{2}+\frac{n(n-1)}{2}-m^2-mj-ij+mi} \\
	\times \qch^\delta_{[\X^\alpha(s)]} \left( F^m K^j E^n \right) E^m K^i F^n .
	\end{multlined} \]

Soient $m,n \in \{0,...,p-1\}$ et $j \in \{0,...,2p-1\}$. Il faut calculer :
\begin{align*}
\qch^\delta_{[\X^\alpha(s)]} \left( F^m K^j E^n \right) & \;\overset{\ref{Lemme: qch}}{=} \qch^\delta_{\X^\alpha(s)} \left( F^m K^j E^n \right) \\
&\overset{\eqref{Eqn: qch}}{=} \tr_{\X^\alpha(s)} \left( K^{\delta p-1} F^m K^j E^n \right) \\
&\; \overset{\eqref{Eqn: comm1}}{=} q^{2m} \tr_{\X^\alpha(s)} \left( F^m K^{j-1+\delta p} E^n \right).
\end{align*}
On utilise la $\C$-base canonique $\{ x^\alpha_k(s) \; ; \; 0 \leq k \leq s-1\}$ de $\X^\alpha(s)$ (cf. la proposition \ref{Prop: simples}), et on fixe $k \in \{0,...,s-1\}$. Alors, pour tous $m,n \in \{0,...,p-1\}$ et $j \in \{0,...,2p-1\}$, on a :
\begin{align*}
F^m K^j E^n x^\alpha_k(s)
&= \delta_{k \geq n} \alpha^n \prod_{i=0}^{n-1} [k-i][s-k+i] F^m K^j x^\alpha_{k-n}(s) \\
&= \delta_{k \geq n} \alpha^n \frac{[k]! [s-k+n-1]!}{[k-n]! [s-k-1]!} F^m K^j x^\alpha_{k-n}(s) \\
&= \delta_{k \geq n} \alpha^{n+j} q^{j(s-1-2k+2n)} ([n]!)^2 {k \brack n} {s-k+n-1 \brack n} F^m x^\alpha_{k-n}(s) \\
&= \delta_{n \leq k \leq n-m+p-1} \alpha^{n+j} q^{j(s-1-2k+2n)} ([n]!)^2 {k \brack n} {s-k+n-1 \brack n} x^\alpha_{k-n+m}(s).
\end{align*}
En particulier, $x^\alpha_k(s)$ est vecteur propre sous l'action de $F^m K^j E^n$ si et seulement si $k \geq n=m$. On en déduit que :
\begin{align*}
\tr_{\X^\alpha(s)} \left( F^m K^{j-1+\delta p} E^n \right)
&= \begin{multlined}[t]
	\delta_{n,m} (-1)^{\delta(s-1)} \alpha^{m+j-1+\delta p} ([m]!)^2 \\
	\sum_{k=m}^{s-1} q^{(j-1)(s-1-2k+2m)} {k \brack m} {s-k+m-1 \brack m} .
	\end{multlined}
\end{align*}

Il s'ensuit que :
\begin{align*}
\chi_\delta^\alpha(s) &= \begin{multlined}[t]
	\frac{1}{2p} \sum_{m,n=0}^{p-1} \sum_{i,j=0}^{2p-1} \frac{(q-q^{-1})^{m+n}}{[m]![n]!} q^{\frac{m(m-1)}{2}+\frac{n(n-1)}{2}-m^2-mj-ij+mi+2m} \\
	\times \tr_{\X^\alpha(s)} \left( F^m K^{j-1+\delta p} E^n \right) E^m K^i F^n
	\end{multlined} \\
&= \begin{multlined}[t]
	\frac{(-1)^{\delta(s-1)}}{2p} \sum_{m=0}^{p-1} \sum_{i,j=0}^{2p-1} (q-q^{-1})^{2m} \alpha^{m+j-1+\delta p} q^{m-mj-ij+mi} \\
	\times \sum_{k=m}^{s-1} q^{(j-1)(s-1-2k+2m)} {k \brack m} {s-k+m-1 \brack m} E^m K^i F^m
	\end{multlined} \\
&= \begin{multlined}[t]
	\frac{(-1)^{\delta(s-1)}}{2p} \alpha^{\delta p-1} \sum_{k=0}^{s-1} \sum_{m=0}^{k} (q-q^{-1})^{2m} \alpha^m q^{-s+1+2k-m} {k \brack m} {s-k+m-1 \brack m} \\
	\times \sum_{i=0}^{2p-1} q^{mi} \sum_{j=0}^{2p-1} \alpha^j q^{j(s-1-2k+m-i)} E^m K^i F^m.
	\end{multlined}
\end{align*}
Pour évaluer le dernier terme, on introduit $A \in \{0,1\}$ tel que $q^{Ap}=\alpha 1$. Alors :
\begin{align*}
\chi_\delta^\alpha(s) &= \begin{multlined}[t]
	\frac{(-1)^{\delta(s-1)}}{2p} \alpha^{\delta p-1} \sum_{k=0}^{s-1} \sum_{m=0}^{k} (q-q^{-1})^{2m} \alpha^m q^{-s+1+2k-m} {k \brack m} {s-k+m-1 \brack m} \\
	\times \sum_{i=0}^{2p-1} q^{mi} \sum_{j=0}^{2p-1} q^{j(Ap+s-1-2k+m-i)} E^m K^i F^m
	\end{multlined} \\
&= \begin{multlined}[t]
	(-1)^{\delta(s-1)} \alpha^{\delta p-1} \sum_{k=0}^{s-1} \sum_{m=0}^{k} (q-q^{-1})^{2m} \alpha^m q^{-s+1+2k-m} {k \brack m} {s-k+m-1 \brack m} \\	\times q^{m(Ap+s-1-2k+m)} E^m K^{Ap+s-1-2k+m} F^m
	\end{multlined} \\
&= \begin{multlined}[t]
	(-1)^{\delta(s-1)} \alpha^{\delta p-1} \sum_{k=0}^{s-1} \sum_{m=0}^{k} (q-q^{-1})^{2m} q^{(m-1)(s-1-2k+m)} {k \brack m} {s-k+m-1 \brack m} \\
	\times E^m K^{Ap+s-1-2k+m} F^m.
	\end{multlined}
\end{align*}
D'où le résultat.
\end{proof}

On en déduit les expressions de $\chi^\pm_\delta(s)$, $1 \leq s \leq p$, dans la sous-algèbre $\langle C \rangle$ engendrée par l'élément de Casimir \eqref{Eqn: Casimir}, puis dans la base canonique du centre $\{ e_s \; ; \; 0 \leq s \leq p \} \cup \{ w^\pm_s \; ; \; 1 \leq s \leq p-1 \}$ (cf. la proposition \ref{Prop: centre}).

\begin{Prop}
\label{Prop: Drinfeld}
Soit $\delta \in \{0,1\}$. Pour tout $s \in \{1,...,p\}$, on a :
\begin{align*}
\chi_\delta^+(s) &= U_s((-1)^\delta\widehat{C}) \\
	&= \sum_{j=0}^p U_s((-1)^\delta\widehat{\beta}_j) e_j
		+ (-1)^\delta (q-q^{-1})^2 \sum_{j=1}^{p-1} U_s'((-1)^\delta\widehat{\beta}_j) \left( w^+_j + w^-_j \right), \\
\chi_\delta^-(s) &= \frac{1}{2} \left( U_{p+s}-U_{p-s} \right) ((-1)^\delta\widehat{C}) \\
&= \begin{multlined}[t]
	\frac{1}{2} \sum_{j=0}^p (U_{p+s}-U_{p-s})((-1)^\delta \widehat{\beta}_j) e_j \\
	+ (-1)^\delta \frac{(q-q^{-1})^2}{2} \sum_{j=1}^{p-1}(U_{p+s}'-U_{p-s}')((-1)^\delta \widehat{\beta}_j) \left( w^+_j + w^-_j \right), 
	\end{multlined}
\end{align*}
où $\widehat{C} := (q-q^{-1})^2 C$ et pour tout $s \in \N$, $U_s(x)$ est le $s$-ième polynôme de Chebychev et $\widehat{\beta}_s := q^s+q^{-s}$.
\end{Prop}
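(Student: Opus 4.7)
The plan is to leverage three ingredients already established in the excerpt: (a) that $\bchi^\delta$ is a morphism of $\C$-algebras, (b) the presentation of $\C \otimes_\Z \G$ as a polynomial quotient given in Proposition~\ref{Prop: Grothendieck}, and (c) the decomposition of polynomials in $\pm\widehat C$ on the canonical basis of $\Zf$ recorded in Remark~\ref{Rem: poly en C}. The whole proposition will follow once we identify a single value, namely $\bchi^\delta\bigl([\X^+(2)]\bigr)$.

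First I would compute $\chi^+_\delta(2)$ explicitly from Lemma~\ref{Lemme: Drinfeld} with $s=2$ and $\alpha=+$. Only three terms $(k,m)\in\{(0,0),(1,0),(1,1)\}$ contribute, and summing them yields
\[
\chi^+_\delta(2) \;=\; (-1)^{\delta}\bigl(q^{-1}K + qK^{-1} + (q-q^{-1})^2 EF\bigr).
\]
Comparing with the defining expression \eqref{Eqn: Casimir} of the Casimir element, namely $(q-q^{-1})^2 C = (q-q^{-1})^2 EF + q^{-1}K + qK^{-1}$, this gives the key identity
\[
\bchi^\delta\bigl([\X^+(2)]\bigr) \;=\; (-1)^\delta\,\widehat C.
\]

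Next I would invoke the fact that $\bchi^\delta$ is a $\C$-algebra morphism (as a composition of the algebra morphism $\qch^\delta$ of \eqref{Eqn: qch} with the Drinfeld algebra isomorphism \eqref{Eqn: Drinfeld1}). Combined with the isomorphism of Proposition~\ref{Prop: Grothendieck}, which sends $U_s(x)\mapsto [\X^+(s)]$ and $\tfrac12(U_{p+s}-U_{p-s})(x)\mapsto [\X^-(s)]$, this yields for every polynomial $P\in\C[x]$
\[
\bchi^\delta\bigl(\overline P([\X^+(2)])\bigr) \;=\; P\bigl(\bchi^\delta([\X^+(2)])\bigr) \;=\; P\bigl((-1)^\delta\widehat C\bigr).
\]
Applying this to $P=U_s$ and to $P=\tfrac12(U_{p+s}-U_{p-s})$ gives the two polynomial identities of the statement, namely $\chi^+_\delta(s)=U_s((-1)^\delta\widehat C)$ and $\chi^-_\delta(s)=\tfrac12(U_{p+s}-U_{p-s})((-1)^\delta\widehat C)$. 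A minor consistency check I would make is that these expressions are well defined modulo $\widehat{\psi}_{2p}(\widehat C)$: since $\widehat C$ satisfies the minimal equation described in Remark~\ref{Rem: Casimir} (up to the rescaling $\widehat C = (q-q^{-1})^2 C$) and $\widehat\psi_{2p}(x)$ of Corollary~\ref{Cor: Grothendieck} indeed vanishes on $\widehat C$, both sides of the identity are unambiguous.

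Finally, the expansion on the canonical basis $\{e_s\}\cup\{w^\pm_s\}$ is a direct application of Remark~\ref{Rem: poly en C} to the polynomials $U_s$ and $\tfrac12(U_{p+s}-U_{p-s})$, substituting $\widehat C$ by $(-1)^\delta\widehat C$. This introduces the evaluations at $\pm\widehat\beta_j$ and their derivatives with the coefficient $\pm(q-q^{-1})^2$ appearing in front of the nilpotent part, reproducing exactly the two displayed formulas. The only potentially delicate point is Step~1, the matching with the Casimir; but since it reduces to a finite sum of three monomials it poses no real obstacle. The real conceptual content of the proposition is thus the compatibility of the three morphisms $\bchi^\delta$, Grothendieck presentation, and polynomial calculus on $\widehat C$, all of which have been prepared in the preceding sections.
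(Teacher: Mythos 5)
Your proof is correct and follows essentially the same route as the paper: compute $\chi^+_\delta(2)=(-1)^\delta\widehat C$ from Lemma~\ref{Lemme: Drinfeld}, use that $\bchi^\delta$ is an algebra morphism together with the presentation of $\C\otimes_\Z\G$ from Proposition~\ref{Prop: Grothendieck}, and conclude with Remark~\ref{Rem: poly en C}. The consistency check modulo $\widehat\psi_{2p}$ is a harmless addition not present in the paper's argument.
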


\begin{proof}
D'après la proposition \ref{Prop: Grothendieck}, la $\C$-algèbre $\C \otimes_\Z \G$ est engendrée par la classe $[\X^+(2)]$.  Plus précisément, on a :
\begin{align*}
& [\X^+(s)] = U_s \left( [\X^+(2)] \right) , && 1 \leq s \leq p, \\
& [\X^-(s)] = \frac{1}{2} \left( U_{p+s}-U_{p-s} \right) \left( [\X^+(2)] \right) , && 1 \leq s \leq p.
\end{align*}
Or, d'après la proposition \ref{Lemme: Drinfeld}, l'image de $[\X^+(2)]$ par le morphisme $\bchi^\delta$ est :
\[ \chi^+_\delta(2) = (-1)^\delta \left( q^{-1} K + q K^{-1} + (q-q^{-1})^2 EF \right) = (-1)^\delta \widehat{C}. \]
 Comme $\bchi^\delta$ est un morphisme d'algèbres, il s'ensuit que :
\begin{align*}
&\chi_\delta^+(s) = U_s((-1)^\delta\widehat{C}), && 1 \leq s \leq p, \\
&\chi_\delta^-(s) = \frac{1}{2} \left( U_{p+s}-U_{p-s} \right) ((-1)^\delta\widehat{C}), && 1 \leq s \leq p,
\end{align*}
Pour décomposer ces polynômes en $(-1)^\delta\widehat{C}$ dans la base canonique du centre, il suffit alors d'utiliser la remarque \ref{Rem: poly en C}.
\end{proof}

\begin{Cor} 
\label{Cor: Drinfeld}
Soit $\delta \in \{0,1\}$. L'image $\Df_{2p}$ du morphisme $\bchi^\delta$ \eqref{Eqn: Drinfeld2} est la sous-algèbre $\langle C \rangle$ du centre $\Zf$ de $\Uq$ engendrée par l'élément de Casimir $C$ \eqref{Eqn: Casimir}.
\end{Cor}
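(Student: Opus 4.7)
The plan is to establish the two-sided inclusion $\Df_{2p} = \langle C \rangle$ by combining the explicit polynomial description of the generators $\chi^\pm_\delta(s)$ with a dimension count.

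First, I would establish the inclusion $\Df_{2p} \subseteq \langle C \rangle$. The family $\{ \chi^\pm_\delta(s) \; ; \; 1 \leq s \leq p \}$ is a $\C$-linear generating family of $\Df_{2p}$, since the classes $\{ [\X^\pm(s)] \; ; \; 1 \leq s \leq p \}$ freely generate the $\Z$-module $\G$ (cf.\ for instance \cite[Prop. 16.6]{CR81}), and by extension of scalars generate $\C \otimes_\Z \G$ as a $\C$-vector space. Now Proposition \ref{Prop: Drinfeld} expresses each $\chi^+_\delta(s)$ as $U_s((-1)^\delta \widehat{C})$ and each $\chi^-_\delta(s)$ as $\tfrac{1}{2}(U_{p+s}-U_{p-s})((-1)^\delta \widehat{C})$, where $\widehat{C} = (q-q^{-1})^2 C$. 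Each generator is therefore a polynomial in $C$, which gives $\Df_{2p} \subseteq \langle C \rangle$.

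Next, I would show the reverse inclusion via a dimension argument. On one hand, Remark \ref{Rem: Casimir}(i) tells us that $\langle C \rangle$ is a $\C$-vector space of dimension $2p$, with basis $\{ e_s \; ; \; 0 \leq s \leq p \} \cup \{ w^+_s + w^-_s \; ; \; 1 \leq s \leq p-1 \}$. On the other hand, $\bchi^\delta$ is the composition of the injective $\C$-algebra morphism $\qch^\delta : \C \otimes_\Z \G \hookrightarrow \Ch^l$ \eqref{Eqn: qch} (valid by Proposition \ref{Prop: qch} since $K^{\delta p - 1}$ is invertible) with the isomorphism $\bchi : \Ch^l \overset{\sim}{\to} \Zf$ \eqref{Eqn: Drinfeld1}, hence is itself injective. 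Therefore $\dim_\C \Df_{2p} = \dim_\C (\C \otimes_\Z \G) = 2p$, which matches $\dim_\C \langle C \rangle$.

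Combining the inclusion $\Df_{2p} \subseteq \langle C \rangle$ with the equality of dimensions yields $\Df_{2p} = \langle C \rangle$. The only point that requires some care is the verification that the $2p$ elements $\chi^\pm_\delta(s)$ are genuinely linearly independent in $\Zf$ — but this is automatic once one recognises $\bchi^\delta$ as a composition of two maps each shown earlier to be injective, so no separate calculation is needed. There is no real obstacle in this argument: all the heavy lifting has been done in Propositions \ref{Prop: poly en C}, \ref{Prop: Grothendieck} and \ref{Prop: Drinfeld}, and the corollary is essentially a bookkeeping statement reinterpreting their content.
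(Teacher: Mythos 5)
Votre preuve est correcte et suit exactement le raisonnement que le papier laisse implicite : l'inclusion $\Df_{2p} \subseteq \langle C \rangle$ d�coule de la proposition \ref{Prop: Drinfeld} qui exprime les $\chi^\pm_\delta(s)$ comme polyn�mes en $\widehat{C}$, et l'�galit� s'obtient en comparant les dimensions ($\dim_\C \Df_{2p} = 2p$ par injectivit� de $\bchi^\delta = \bchi \circ \qch^\delta$, et $\dim_\C \langle C \rangle = 2p$ par la remarque \ref{Rem: Casimir}). C'est pr�cis�ment le contenu de la remarque \ref{Rem: D2p} qui suit le corollaire dans le texte.
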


\begin{Rem}
\label{Rem: D2p}
D'après les précédents résultats et la remarque \ref{Rem: poly en C} sur les polynômes en $C$, $\Df_{2p}$ est un $\C$-espace vectoriel de dimension $2p$ dont des $\C$-bases sont $\{ \chi_\delta^\pm(s) \; ; \; 1 \leq s \leq p \}$ ou $\{ e_s \; ; \; 0 \leq s \leq p \} \cup \{ w^+_s+w^-_s \; ; \; 1 \leq s \leq p-1 \}$.
\end{Rem}

\subsection{Le morphisme de Radford}
\label{subsection: Radford}

La donnée d'une co-intégrale bilatère $\mathbf{c}$ \eqref{Prop: integrales} et d'un élément de balancement $\mathbf{k}$ \eqref{Cor: balancement} de $\Uq$ permet de définir un morphisme de $\C$-espaces vectoriels injectif :
\[ \hbphi^\mathbf{k} : \C \otimes_\Z \G \longrightarrow \Zf, \]
où $\G$ désigne l'anneau de Grothendieck de $\Uq$ et $\Zf$ son centre. On cherche à décrire son image $\Rf_{2p}$ dans le centre.

\begin{Def}[{\cite[Prop. 3]{Rad90}}]
Soient $A$ une $\C$-algèbre de Hopf de dimension finie et $\mathbf{c}$ une co-intégrale à gauche. On définit le \emph{morphisme de Radford} (associé à $\mathbf{c}$) par :
\[ \hbphi : \begin{cases}
	A^* \longrightarrow A \\
	\beta \longmapsto (\beta \otimes id) \Delta(\mathbf{c})
	\end{cases} . \]
\end{Def}

\begin{Rem}
\label{Rem: Radford}
Soient $A$ une $\C$-algèbre de Hopf de dimension finie et $\mathbf{c}$ une co-intégrale à gauche.
\begin{enumerate}[(i)]
	\item D'après le résultat \cite[Prop. 3]{Rad90} de Radford, le morphisme de Radford $\hbphi$ est bijectif et sa bijection réciproque est :
	\[ \hbphi^{-1} : \begin{cases}
	A \longrightarrow A^* \\
	x \longmapsto \bmu^r(S(a) ?)
	\end{cases} . \]
	où le symbole $?$ désigne la place de la variable et $\bmu^r$ est une intégrale à droite de $A$ telle que $\bmu^r(\mathbf{c})=1$. 
	\item Si $\mathbf{c}$ est une co-intégrale bilatère, alors $\hbphi$ est $A$-linéaire pour les actions adjointe et co-adjointe à gauche. En effet, pour tous $a \in A$ et $\beta \in A^*$, on a :
	\[ \hbphi ( a \cdot \beta ) 
	= \left( \beta \left( \sum_{(a)} S(a') ? a'' \right) \otimes id \right) \Delta(\mathbf{c}) 
	= (\beta \otimes id) \left( \sum_{(a)} S(a') \mathbf{c}' a'' \otimes \mathbf{c}'' \right) , \]
	où :
	\begin{align*}
	\sum_{(a)} S(a') \mathbf{c}' a'' \otimes \mathbf{c}''
	&= \sum_{(a)} S(a') \mathbf{c}' a'' \varepsilon(a''') \otimes \mathbf{c}''
	= \sum_{(a)} S(a') \mathbf{c}' a'' \otimes \mathbf{c}'' \varepsilon(a''') \\
	&= \sum_{(a)} S(a') \mathbf{c}' a'' \otimes \mathbf{c}'' a''' S(a^{(4)}).
	\end{align*}
	Or, pour tout $x \in A$, on sait que $\mathbf{c} x = x \mathbf{c}$ car $\mathbf{c}$ est une co-intégrale bilatère. Donc $\Delta(\mathbf{c} x) = \Delta(x \mathbf{c})$. Il s'ensuit que :
	\begin{align*}
	\sum_{(a)} S(a') \mathbf{c}' a'' \otimes \mathbf{c}''
	&= \sum_{(a)} S(a') a'' \mathbf{c}' \otimes a''' \mathbf{c}'' S(a^{(4)})
	= \sum_{(a)} \varepsilon(a') \mathbf{c}' \otimes a'' \mathbf{c}'' S(a''') \\
	&= \sum_{(a)} \mathbf{c}' \otimes \varepsilon(a') a'' \mathbf{c}'' S(a''')
	= \sum_{(a)} \mathbf{c}' \otimes a' \mathbf{c}'' S(a'').
	\end{align*}
	D'où $\hbphi ( a \cdot \beta ) = (\beta \otimes id) \left( \sum_{(a)} \mathbf{c}' \otimes a \cdot \mathbf{c}'' \right) = a \cdot \hbphi(\beta)$.
	\item Le morphisme de Radford permet de montrer que $\Uq$ est une algèbre de Frobenius comme mentionné dans la remarque \ref{Rem: droite}.
\end{enumerate}
\end{Rem}

Soit $\mathbf{c}$ une co-intégrale bilatère de $\Uq$ (cf. proposition \ref{Prop: integrales}). On considère le morphisme de Radford $\hbphi$ associé à $\Uq$ (et à la co-intégrale $\mathbf{c}$). D'après la remarque \ref{Rem: Radford} et les caractérisations données dans la proposition \ref{Prop: stabilisateurs}, il induit un isomorphisme de $\C$-espaces vectoriels : \index{phi1@ $\hbphi$}
\begin{equation}
\label{Eqn: Radford1}
\hbphi : \Ch^l \overset{\sim}{\longrightarrow} \Zf, 
\end{equation}
où $\Ch^l$ désigne l'espace des $q$-caractères à gauche de $\Uq$. On s'intéresse à la composée des morphismes $\mathrm{qch}^\delta$ \eqref{Prop: qch} et $\hbphi$ : \index{phi2@ $\hbphi^\delta$}
\begin{equation} 
\label{Eqn: Radford2}
\hbphi^\delta : \begin{cases}
	\C \otimes _\Z \G \hookrightarrow \Zf \\
	[\X] \longmapsto (\mathrm{qch}^\delta_{[\X]} \otimes id) \Delta(\mathbf{c})
	\end{cases} ; \quad \delta \in \{0,1\}.
\end{equation}

Soit $\delta \in \{0,1\}$. On note $\Rf_{2p}$ \index{Radford@ $\Rf_{2p}$} l'image de $\hbphi^\delta$ et :
\begin{equation}
\forall \alpha \in \{+,-\} \quad \forall s \in \{1,...,p\} \qquad \widehat{\phi}^\alpha_\delta(s) := \hbphi^\delta \left( [\X^\alpha(s) ] \right) .
\end{equation}
Comme le groupe de Grothendieck $\G$ de $\Uq$ est librement engendré par les classes des modules simples $\X^\pm(s)$, $1 \leq s \leq p$, la famille $\{ \widehat{\phi}^\pm_\delta(s) \; ; \; 1 \leq s \leq p \}$ est une $\C$-base du sous-espace vectoriel $\Rf_{2p}$. On étudie cette famille de $\Zf$.

\begin{Lemme} 
\label{Lemme: Radford}
Soient $\delta \in \{0,1\}$, $\alpha \in \{+,-\}$ et $s \in \{1,...,p\}$. L'image de la classe $[\X^\alpha(s)]$ par le morphisme $\hbphi^\delta$ \eqref{Eqn: Radford2} est :
\[ \widehat{\phi}_\delta^\alpha(s) = \begin{multlined}[t]
	\zeta (-1)^{(\delta-1)(s-1)} \alpha^{(\delta-1)p} \sum_{k=0}^{s-1} \sum_{r=0}^k ([r]!)^2 {k \brack r} {s-k+r-1 \brack r} \\
	 \times \sum_{j=0}^{2p-1} \alpha^{j+r} q^{j(s+1-2k+2r)} F^{p-1-r} K^j E^{p-1-r},
	\end{multlined} \]
où $\zeta \in \C$ dépend du choix de la co-intégrale bilatère $\mathbf{c}_\zeta$ \eqref{Prop: integrales}.
\end{Lemme}

\begin{proof}
D'après la proposition \ref{Prop: integrales}, les co-intégrales bilatères de $\Uq$ sont données par :
\[ \mathbf{c}_\zeta = \zeta \sum_{j=0}^{2p-1} q^{2j} F^{p-1} K^j E^{p-1}; \quad \zeta \in \C . \]
On fixe $\zeta \in \C$. Le coproduit de la co-intégrale bilatère $\mathbf{c}_\zeta$ associé est :
\begin{align*}
\Delta(\mathbf{c}_\zeta) & \; \; = \zeta \sum_{j=0}^{2p-1} q^{2j} \Delta(F)^{p-1} \Delta(K)^j \Delta(E)^{p-1} \\
& \overset{\eqref{Eqn: coprod2}}{=} \begin{multlined}[t]
	\zeta \sum_{j=0}^{2p-1} \sum_{r,t=0}^{p-1} q^{2j+r(p-1-r)-t(p-1-t)} {p-1 \brack r} {p-1 \brack t} \\
	\times F^r K^{r-p+1+j} E^{p-1-t} \otimes F^{p-1-r} K^{j+p-1-t} E^t 
	\end{multlined} \\
& \overset{\eqref{Eqn: qcoeff2}}{=} \begin{multlined}[t]
	\zeta \sum_{j=0}^{2p-1} \sum_{r,t=0}^{p-1} (-1)^{r-t} q^{2j-r(r+1)+t(t+1)} \\
	\times F^r K^{r-p+1+j} E^{p-1-t} \otimes F^{p-1-r} K^{j+p-1-t} E^t.
	\end{multlined}
\end{align*}
Donc :
\[ \widehat{\phi}_\delta^\alpha(s) \overset{\eqref{Eqn: Radford2}}{=} \begin{multlined}[t]
	\zeta \sum_{j=0}^{2p-1} \sum_{r,t=0}^{p-1} (-1)^{r-t} q^{2j-r(r+1)+t(t+1)} \qch^\delta_{[\X^\alpha(s)]} \left( F^r K^{r-p+1+j} E^{p-1-t} \right) \\
	\times F^{p-1-r} K^{j+p-1-t} E^t .
	\end{multlined} \]

Soient $r,t \in \{0,...,p-1\}$ et $j \in \{0,...,2p-1\}$. Il faut calculer :
\begin{align*}
\qch^\delta_{[\X^\alpha(s)]} \left( F^r K^{r-p+1+j} E^{p-1-t} \right) & \;\overset{\ref{Lemme: qch}}{=} \qch^\delta_{\X^\alpha(s)} \left( F^r K^{r-p+1+j} E^{p-1-t} \right) \\
&\overset{\eqref{Eqn: qch}}{=} \tr_{\X^\alpha(s)} \left( K^{\delta p-1} F^r K^{r-p+1+j} E^{p-1-t} \right) \\
&\; \overset{\eqref{Eqn: comm1}}{=} q^{2r} \tr_{\X^\alpha(s)} \left( F^r K^{r+j+(\delta-1)p} E^{p-1-t} \right).
\end{align*}
On utilise la $\C$-base canonique $\{ x^\alpha_k(s) \; ; \; 0 \leq k \leq s-1\}$ de $\X^\alpha(s)$ (cf. la proposition \ref{Prop: simples}), et on fixe $k \in \{0,...,s-1\}$. Alors, pour tous $m,n \in \{0,...,p-1\}$ et $j \in \{0,...,2p-1\}$, on a :
\begin{align*}
F^m K^j E^n x^\alpha_k(s)
&= \delta_{k \geq n} \alpha^n \prod_{i=0}^{n-1} [k-i][s-k+i] F^m K^j x^\alpha_{k-n}(s) \\
&= \delta_{k \geq n} \alpha^n \frac{[k]! [s-k+n-1]!}{[k-n]! [s-k-1]!} F^m K^j x^\alpha_{k-n}(s) \\
&= \delta_{k \geq n} \alpha^{n+j} q^{j(s-1-2k+2n)} ([n]!)^2 {k \brack n} {s-k+n-1 \brack n} F^m x^\alpha_{k-n}(s) \\
&= \delta_{n \leq k \leq n-m+p-1} \alpha^{n+j} q^{j(s-1-2k+2n)} ([n]!)^2 {k \brack n} {s-k+n-1 \brack n} x^\alpha_{k-n+m}(s).
\end{align*}
En particulier, $x^\alpha_k(s)$ est vecteur propre sous l'action de $F^m K^j E^n$ si et seulement si $k \geq n=m$. On en déduit que :
\begin{align*}
\tr_{\X^\alpha(s)} \left( F^r K^{r+j+(\delta-1)p} E^{p-1-t} \right)
&= \begin{multlined}[t]
	\delta_{r,p-1-t} (-1)^{(\delta-1)(s-1)} \alpha^{j+(\delta-1)p} ([r]!)^2 \\
	\sum_{k=r}^{s-1} q^{(r+j)(s-1-2k+2r)} {k \brack r} {s-k+r-1 \brack r} .
	\end{multlined}
\end{align*}

Il s'ensuit que :
\begin{align*}
\widehat{\phi}_\delta^\alpha(s) &= \begin{multlined}[t]
	\zeta \sum_{j=0}^{2p-1} \sum_{r,t=0}^{p-1} (-1)^{r-t} q^{2j-r(r-1)+t(t+1)} \tr_{\X^\alpha(s)} \left( F^r K^{r+j+(\delta-1)p} E^{p-1-t} \right) \\
	\times F^{p-1-r} K^{j+p-1-t} E^t
	\end{multlined} \\
&= \begin{multlined}[t]
	\zeta (-1)^{(\delta-1)(s-1)} \sum_{j=0}^{2p-1} \sum_{r=0}^{p-1} \alpha^{j+(\delta-1)p} ([r]!)^2 \\
	\times \sum_{k=r}^{s-1}  q^{2j+2r} q^{(r+j)(s-1-2k+2r)} {k \brack r} {s-k+r-1 \brack r} F^{p-1-r} K^{j+r} E^{p-1-r}
	\end{multlined} \\
&= \begin{multlined}[t]
	\zeta (-1)^{(\delta-1)(s-1)} \alpha^{(\delta-1)p} \sum_{k=0}^{s-1} \sum_{r=0}^k ([r]!)^2 {k \brack r} {s-k+r-1 \brack r} \\
	 \times \sum_{j=0}^{2p-1} \alpha^j q^{(r+j)(s+1-2k+2r)} F^{p-1-r} K^{j+r} E^{p-1-r} .
	\end{multlined}
\end{align*}
Le dernier terme est une somme dont l'indice $j$ ne dépend pas du représentant choisi dans $\Z / 2p \Z$. En effectuant le changement de variables $j \mapsto j+r$, on obtient :
\[ \widehat{\phi}_\delta^\alpha(s) = \begin{multlined}[t]
	\zeta (-1)^{(\delta-1)(s-1)} \alpha^{(\delta-1)p} \sum_{k=0}^{s-1} \sum_{r=0}^k ([r]!)^2 {k \brack r} {s-k+r-1 \brack r} \\
	\times \sum_{j=0}^{2p-1} \alpha^{j+r} q^{j(s+1-2k+2r)} F^{p-1-r} K^j E^{p-1-r} .
	\end{multlined} \]
\end{proof}

On en déduit les expressions de $\widehat{\phi}^\pm_\delta(s)$, $1 \leq s \leq p$, dans la base canonique du centre $\{ e_s \; ; \; 0 \leq s \leq p \} \cup \{ w^\pm_s \; ; \; 1 \leq s \leq p-1 \}$ (cf. la proposition \ref{Prop: centre}).

\begin{Prop} 
\label{Prop: Radford}
Soit $\delta \in \{0,1\}$. On a :
\begin{align*}
& \widehat{\phi}_\delta^+(s) = \zeta (-1)^{p+\delta(s-1)} 2p \frac{([p-1]!)^2}{[s]^2} w^+_s, && 1 \leq s \leq p-1, \\
& \widehat{\phi}_\delta^-(s) = \zeta (-1)^{\delta(p-s-1)} 2p \frac{([p-1]!)^2}{[s]^2} w^-_{p-s}, && 1 \leq s \leq p-1, \\
& \widehat{\phi}_\delta^+(p) = \zeta (-1)^{(\delta-1)(p-1)} 2p ([p-1]!)^2 e_p, \\
& \widehat{\phi}_\delta^-(p) = \zeta (-1)^{p-\delta} 2p ([p-1]!)^2 e_0,
\end{align*}
où $\zeta \in \C$ dépend du choix de la co-intégrale bilatère $\mathbf{c}_\zeta$ \eqref{Prop: integrales}.
\end{Prop}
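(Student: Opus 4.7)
L'id�e est d'exploiter la caract�risation du centre donn�e par le corollaire \ref{Cor: centre} : puisque $\widehat{\phi}_\delta^\alpha(s) \in \Zf$, pour expliciter sa d�composition dans la base canonique $\{ e_j \} \cup \{ w_j^\pm \}$, il suffit de calculer son action sur les vecteurs g�n�rateurs $x_0^\pm(s')$ des modules simples et $y_0^\pm(s')$ des PIMs, pour $1 \leq s' \leq p$. On part de l'expression explicite donn�e par le lemme \ref{Lemme: Radford}, qui exprime $\widehat{\phi}_\delta^\alpha(s)$ comme une somme de mon�mes de PBW du type $F^{p-1-r} K^j E^{p-1-r}$ avec $0 \leq r \leq k \leq s-1$, assortis des coefficients $q$-binomiaux ${k \brack r} {s-k+r-1 \brack r}$ et d'une somme de caract�res $\sum_{j=0}^{2p-1} \alpha^{j+r} q^{j(s+1-2k+2r)}$.

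Je commencerais par simplifier la somme interne sur $j$. En �crivant $\alpha = q^{\epsilon p}$ avec $\epsilon \in \{0,1\}$, les identit�s $\sum_{j=0}^{2p-1} q^{j\nu} = 2p\, \delta_{\nu \equiv 0\,[2p]}$ imposent la contrainte $s+1-2k+2r + \epsilon p \equiv 0 \mod 2p$. Combin�e aux in�galit�s $0 \leq r \leq k \leq s-1 \leq p-1$, cette contrainte restreint dr�stiquement les couples $(k,r)$ admissibles. Ensuite, je calculerais l'action des mon�mes survivants sur les vecteurs $x_0^\pm(s')$ et $y_0^\pm(s')$ en utilisant les descriptions des modules donn�es par les propositions \ref{Prop: simples} et \ref{Prop: PIMs}. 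L'observation cl� est que $E^{p-1-r}$ appliqu� au vecteur de plus haut poids $x_0^+(s')$ s'annule sauf si $r=p-1$, ce qui impose $k=p-1$ et donc $s=p$. Par cons�quent, pour $s \leq p-1$, $\widehat{\phi}_\delta^+(s)$ annule tous les $x_0^\pm(s')$, ce qui �tablit que ses coordonn�es $a_{s'}$ sur les $e_{s'}$ dans la base canonique sont nulles, confirmant que $\widehat{\phi}_\delta^+(s) \in \Vect(w_{s'}^\pm)$.

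Pour extraire les coordonn�es restantes, j'�tudierais l'action de $\widehat{\phi}_\delta^+(s)$ sur les vecteurs $y_0^+(s')$, $1 \leq s' \leq p-1$. Via la structure de PIM, $E y_0^+(s') = a_{p-s'-1}^-(p-s')$, puis l'action it�r�e de $E$ se propage dans la ``branche gauche'' de $\PIM^+(s')$ suivant la proposition \ref{Prop: PIMs} et les identit�s de commutation \eqref{Eqn: comm2}. Apr�s application de $F^{p-1-r}$, on retombe dans le sous-module simple socle engendr� par $x_0^+(s')$. La contrainte modulaire issue de la somme de caract�res isole alors un unique couple $(k,r)$ qui d�pend de $s$ et force $s'=s$, produisant un facteur $\delta_{s,s'}$. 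Le calcul du scalaire restant, combinant les coefficients $q$-binomiaux, les $q$-factorielles et les valeurs propres de $K$, se simplifie via les identit�s de la section \ref{section: Uq} (notamment \eqref{Eqn: qcoeff2}) et fournit le coefficient $(-1)^{p+\delta(s-1)} 2p ([p-1]!)^2 / [s]^2$ de l'�nonc�. Les actions sur $y_0^-(s')$ se traitent de mani�re analogue pour obtenir la formule sur $\widehat{\phi}_\delta^-(s)$, la dualit� entre $\X^+$ et $\X^-$ expliquant le d�calage $s \mapsto p-s$ dans l'indice de $w^-$.

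Enfin, les cas semi-simples $s=p$ se traitent � part : l'unique couple $(k,r)=(p-1,p-1)$ survit et l'action sur $x_0^+(p)$ (resp. $x_0^-(p)$) est non nulle, identifiant $\widehat{\phi}_\delta^+(p)$ � un multiple de $e_p$ (resp. $\widehat{\phi}_\delta^-(p)$ � un multiple de $e_0$), le facteur num�rique �tant obtenu par un calcul direct analogue. Le principal obstacle est la gestion rigoureuse de la propagation de l'action de $E^{p-1-r}$ dans la structure interne des PIMs (l'interaction entre les quatre copies de simples de la proposition \ref{Prop: PIMs}) et la simplification des facteurs $q$-combinatoires qui en r�sultent, en particulier pour v�rifier que le signe et la puissance de $[p-1]!$ annonc�s co�ncident avec le r�sultat du calcul brut.
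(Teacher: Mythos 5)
Your overall strategy is exactly that of the paper: start from the explicit PBW expression of Lemme \ref{Lemme: Radford}, then use Corollaire \ref{Cor: centre} to read off the coordinates in the canonical basis by letting $\widehat{\phi}^\alpha_\delta(s)$ act on the generators $x_0^{\pm}(s')$ and $y_0^{\pm}(s')$. The treatment of the simple modules is correct: $E^{p-1-r}x_0^{\alpha'}(s')=0$ unless $r=p-1$, which forces $k=p-1$ and hence $s=p$, so all coefficients $a_{s'}$ vanish for $s\leq p-1$; the $s=p$ cases then come out of the character sum $\sum_j(\alpha\alpha')^jq^{j(p+s')}=2p\,\delta_{s',p}\delta_{\alpha,\alpha'}$ essentially as you describe.

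There is however a concrete gap in your treatment of the action on $y_0^{\alpha'}(s')$. You claim that the modular constraint coming from the sum over $j$ \og isole un unique couple $(k,r)$ \fg{} and thereby forces $s'=s$. This is not what happens: the character sum only imposes $\alpha=\alpha'$, the parity condition $s-s'\in 2\N$, and fixes $k=\tfrac{s+s'}{2}-1$; the index $r$ still ranges over the whole interval $\{s'-1,\dots,\tfrac{s+s'}{2}-1\}$, so one is left with an alternating sum
\[ B_{\frac{s-s'}{2}} = \sum_{r=s'-1}^{\frac{s+s'}{2}-1} (-1)^r \frac{[\tfrac{s-s'}{2}+r]!\,[p-2-r]!\,[s'+p-2-r]!}{[\tfrac{s+s'}{2}-1-r]!}. \]
The factor $\delta_{s,s'}$ does \emph{not} come from the modular constraint: it comes from the vanishing of this sum when $s\neq s'$, which in the paper is established via the $q$-binomial partition identity $\sum_{r=0}^{n}(-1)^{r}{n\brack r}{n+r+s'-1\brack n-1}=0$ for $n\geq 1$ (cf. \cite{AE04}), the case $n=0$ yielding the value $(-1)^{s'-1}([p-1]!)^2/[s']$ that produces the announced coefficient $2p([p-1]!)^2/[s]^2$. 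This identity is the technical heart of the proposition and cannot be replaced by the support argument you invoke; as written, your sketch would not let you conclude that $\widehat{\phi}^+_\delta(s)$ is proportional to $w^+_s$ rather than a combination of several $w^+_{s'}$ of the same parity.
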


\begin{proof}
Soient $\alpha \in \{+,-\}$ et $s \in \{1,...,p\}$. Conformément au corollaire \ref{Cor: centre}, on étudie l'action de $\widehat{\phi}^\alpha_\delta(s)$ (explicité dans le lemme \ref{Lemme: Radford}) sur $\X^\pm(s')$, $1 \leq s' \leq p$, puis sur $\PIM^\pm(s')$, $1 \leq s' \leq p-1$.

Soient $\alpha' \in \{+,-\}$, $s' \in \{1,...,p\}$, et $x_0^{\alpha'}(s')$ le vecteur de plus haut poids de $\X^{\alpha'}(s')$ (cf. la proposition \ref{Prop: simples}). Pour tous $r \in \{0,...,p-1\}$ et $j \in \{0,...,2p-1\}$, on a :
\[ F^{p-1-r} K^j E^{p-1-r} x_0^{\alpha'}(s) = \delta_{r,p-1} K^j x_0^{\alpha'}(s') = \delta_{r,p-1} {\alpha'}^j q^{(s'-1)j} x_0^{\alpha'}(s'). \]
Donc :
\begin{align*}
\widehat{\phi}^\alpha_\delta(s) x_0^{\alpha'}(s') &= \delta_{s,p} \zeta (-1)^{(\delta-1)(p-1)} \alpha^{(\delta-1)p} ([p-1]!)^2 \sum_{j=0}^{2p-1} \alpha^{j+p-1} {\alpha'}^j q^{j(p+1)+j(s'-1)} \\
&= \delta_{s,p} \zeta (-1)^{(\delta-1)(p-1)} \alpha^{\delta p-1} ([p-1]!)^2 \sum_{j=0}^{2p-1} (\alpha \alpha') ^j q^{j(p+s')} \\
&= \delta_{s,s',p} \delta_{\alpha,\alpha'} \zeta (-1)^{(\delta -1)(p-1)} \alpha^{\delta p-1} 2p ([p-1]!)^2.
\end{align*}
La coordonnée de $\widehat{\phi}^\alpha_\delta(s)$ suivant $e_{s'}$ est donnée par :
\begin{align*}
a_0 &= \delta_{s,p} \delta_{\alpha,-} \zeta (-1)^{p-\delta} 2p ([p-1]!)^2, && \text{ si } s'=0, \\
a_{s'} &= 0, && \text{ si } 1 \leq s' \leq p-1, \\
a_p &= \delta_{s,p} \delta_{\alpha,+} \zeta (-1)^{(\delta-1)(p-1)} 2p ([p-1]!)^2, && \text{ si } s' = p.
\end{align*}

Soient $\alpha' \in \{+,-\}$, $s' \in \{1,...,p-1\}$, et $y_0^{\alpha'}(s')$ le vecteur de poids $\alpha' q^{s'-1}$ qui engendre $\PIM^{\alpha'}(s')$ sous $\Uq$ (cf. la proposition \ref{Prop: PIMs}). Pour tous $r \in \{0,...,p-1\}$ et $j \in \{0,...,2p-1\}$, on a :
\begin{align*}
F^{p-1-r} K^j E^{p-1-r} y_0^{\alpha'}(s')
&= \delta_{r,p-1} K^j y_0^{\alpha'}(s') + \delta_{r \leq p-2} F^{p-1-r} K^j E^{p-2-r} a_{p-s'-1}^{-\alpha'}(p-s') \\
&= \begin{multlined}[t]
	\delta_{r,p-1} {\alpha'}^j q^{(s'-1)j} y_0^{\alpha'}(s') + \delta_{s'-1 \leq r \leq p-2} (-\alpha')^{p-r} \\
	\prod_{i=1}^{p-2-r} [p-s'-i] [i] F^{p-1-r} K^j a_{r-s'+1}^{-\alpha'}(p-s') 
	\end{multlined} \\
&= \begin{multlined}[t]
	\delta_{r,p-1} {\alpha'}^j q^{(s'-1)j} y_0^{\alpha'}(s') + \delta_{s'-1 \leq r \leq p-2} (-\alpha')^{p-r+j} q^{(p+s'-2r-3)j} \\
	\prod_{i=1}^{p-2-r} [s'+i] [i] F^{p-1-r} a_{r-s'+1}^{-\alpha'}(p-s') 
	\end{multlined} \\
&= \begin{multlined}[t] 
	\delta_{r,p-1} {\alpha'}^j q^{(s'-1)j} y_0^{\alpha'}(s') + \delta_{s'-1 \leq r \leq p-2} (-1)^j (-\alpha')^{p-r+j} q^{j(s'-2r-3)} \\
	\frac{[p-2-r]![s'+p-2-r]!}{[s']!} x_0^{\alpha'}(s').
	\end{multlined}
\end{align*}
Donc :
\begin{align*}
\widehat{\phi}^\alpha_\delta(s) y_0^{\alpha'}(s')
&= \zeta (-1)^{(\delta-1)(s-1)} \alpha^{(\delta-1)p} \sum_{k=0}^{s-1} \sum_{r=s'-1}^k ([r]!)^2 {k \brack r} {s-k+r-1 \brack r} \\
&\qquad \times \frac{[p-2-r]![s'+p-2-r]!}{[s']!} \sum_{j=0}^{2p-1} (-1)^j \alpha^{j+r} (-\alpha')^{p-r+j} q^{j(s+s'-2-2k)} x_0^{\alpha'}(s')  \\
&= \begin{multlined}[t]
	\zeta (-1)^{(\delta-1)(s-1)} \alpha^{\delta p} \sum_{k=0}^{s-1} \sum_{r=s'-1}^k \frac{[k]! [s-k+r-1]! [p-2-r]![s'+p-2-r]!}{[k-r]! [s-k-1]! [s']!} \\
	\times (-\alpha \alpha')^{p-r} \sum_{j=0}^{2p-1} (\alpha \alpha')^{j} q^{j(s+s'-2-2k)} x_0^{\alpha'}(s') 
	\end{multlined} \\
&= \begin{multlined}[t]
	\zeta \delta_{\alpha,\alpha'} \delta_{s-s' \in 2 \N} (-1)^{p+(\delta-1)(s-1)} \alpha^{\delta p} 2p \frac{[\frac{s+s'}{2}-1]!}{[\frac{s-s'}{2}]! [s']!} \\
	\times \underbrace{\sum_{r=s'-1}^{\frac{s+s'}{2}-1} (-1)^r \frac{[\frac{s-s'}{2}+r]! [p-2-r]![s'+p-2-r]!}{[\frac{s+s'}{2}-1-r]!}}_{B_{\frac{s-s'}{2}}}.
	\end{multlined}
\end{align*}
Les coordonnées de $\widehat{\phi}^\alpha_\delta(s)$ suivant $w^+_{s'}$ et $w^-_{s'}$ sont respectivement données par :
\begin{equation} \tag{1}
\begin{aligned}
&b_{s'}^+ = \zeta \delta_{\alpha,+} \delta_{s-s' \in 2 \N} (-1)^{p+(\delta-1)(s-1)} 2p \frac{[\frac{s+s'}{2}-1]!}{[\frac{s-s'}{2}]! [s']!} B_{\frac{s-s'}{2}}, \\
&b_{s'}^- = \zeta \delta_{\alpha,-} \delta_{s+s'-p \in 2 \N} (-1)^{(\delta-1)(p-s-1)} 2p \frac{[\frac{s+p-s'}{2}-1]!}{[\frac{s+s'-p}{2}]! [p-s']!} B_{\frac{s+s'-p}{2}}.
\end{aligned}
\end{equation}
Soit $n \in \N$. Le coefficient $B_{n}$ vérifie :
\begin{align*}
B_n & \; := \sum_{r=s'-1}^{s'-1+n} (-1)^r \frac{[n+r]! [p-2-r]![s'+p-2-r]!}{[s'-1+n-r]!} \\
& \; \; = \sum_{r=0}^{n} (-1)^{r+s'-1} \frac{[n+r+s'-1]! [p-1-r-s']![p-1-r]!}{[n-r]!} \\
&\overset{\eqref{Eqn: qcoeff2}}{=} (-1)^{s'-1} ([p-1]!)^2 \sum_{r=0}^{n} (-1)^{r} \frac{[n+r+s'-1]!}{[n-r]! [r+s']! [r]!} \\
& \; \; = \begin{multlined}[t]
	\delta_{n,0} (-1)^{s'-1} \frac{([p-1]!)^2}{[s']} + \delta_{n\geq1} (-1)^{s'-1} \frac{([p-1]!)^2}{[n]} 
	\times \sum_{r=0}^{n} (-1)^{r} {n \brack r} {n+r+s'-1 \brack n-1} .
	\end{multlined}
\end{align*}
Le dernier terme est une identité de partition (cf. par exemple \cite{AE04}) :
\[ \sum_{r=0}^{n} (-1)^{r} {n \brack r} {n+r+s'-1 \brack n-1} = 0 . \]
Il s'ensuit que :
\begin{gather*}
B_{\frac{s-s'}{2}} = \delta_{s,s'} (-1)^{s-1} \frac{([p-1]!)^2}{[s]}, \\
B_{\frac{s+s'-p}{2}} = \delta_{p-s,s'} (-1)^{p-s-1} \frac{([p-1]!)^2}{[p-s]} = \delta_{p-s,s'} (-1)^{p-s-1} \frac{([p-1]!)^2}{[s]}.
\end{gather*}
En injectant ce résultat dans les équations $(1)$, on obtient :
\begin{align*}
&b_{s'}^+ = \zeta \delta_{\alpha,+} \delta_{s,s'} (-1)^{p+\delta(s-1)} 2p \frac{([p-1])^2}{[s]^2}, \\
&b_{s'}^- = \zeta \delta_{\alpha,-} \delta_{p-s,s'} (-1)^{\delta(p-s-1)} 2p \frac{([p-1]!)^2}{[s]^2}.
\end{align*}
D'où le résultat.
\end{proof}

\begin{Cor} 
\label{Cor: Radford}
Soit $\delta \in \{0,1\}$. L'image $\Rf_{2p}$ du morphisme $\hbphi^\delta$ \eqref{Eqn: Radford2} est le socle du centre $\Zf$ de $\Uq$.
\end{Cor}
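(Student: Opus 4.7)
The plan is to combine the explicit computation of Proposition~\ref{Prop: Radford} with the multiplicative structure of $\Zf$ given by Proposition~\ref{Prop: centre}. Since $\hbphi^\delta$ is injective and the classes $[\X^\pm(s)]$, $1 \leq s \leq p$, form a $\Z$-basis of $\G$, the image $\Rf_{2p}$ is the $\C$-subspace spanned by the family $\{\widehat{\phi}_\delta^\pm(s) \; ; \; 1 \leq s \leq p\}$. Proposition~\ref{Prop: Radford} identifies this family (up to nonzero scalars) with
\[ \{ e_0, e_p \} \cup \{ w^+_s, w^-_s \; ; \; 1 \leq s \leq p-1 \} , \]
so it only remains to check that this family is exactly a $\C$-basis of the socle of $\Zf$.

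To that end, I would use the relations of Proposition~\ref{Prop: centre} to exhibit the block decomposition of $\Zf$ as a commutative $\C$-algebra. Writing $A_0 := \C e_0$, $A_p := \C e_p$, and, for $1 \leq s \leq p-1$, $A_s := \C e_s \oplus \C w^+_s \oplus \C w^-_s$, the orthogonality relations $e_s e_{s'} = \delta_{s,s'} e_s$, $e_s w^\pm_{s'} = \delta_{s,s'} w^\pm_{s'}$ and $w^\pm_s w^{\pm'}_{s'} = 0$ show that $\Zf = \bigoplus_{s=0}^p A_s$ as a direct sum of ideals, that $A_0$ and $A_p$ are simple (and isomorphic to $\C$), and that for $1 \leq s \leq p-1$ the factor $A_s$ is a local $\C$-algebra of unit $e_s$ whose maximal ideal $\mathfrak{m}_s := \C w^+_s \oplus \C w^-_s$ satisfies $\mathfrak{m}_s^2 = 0$.

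From this block decomposition, the socle of $\Zf$ is the direct sum of the socles of each $A_s$. For $s \in \{0,p\}$, $A_s$ is itself simple so $\mathrm{soc}(A_s) = A_s$. For $1 \leq s \leq p-1$, the annihilator of $\mathfrak{m}_s$ in $A_s$ is exactly $\mathfrak{m}_s$ (since $\mathfrak{m}_s^2=0$ but $e_s \notin \mathfrak{m}_s$), and the two lines $\C w^+_s$ and $\C w^-_s$ are minimal ideals of $A_s$, giving $\mathrm{soc}(A_s) = \mathfrak{m}_s$. Therefore
\[ \mathrm{soc}(\Zf) = \C e_0 \oplus \C e_p \oplus \bigoplus_{s=1}^{p-1} \left( \C w^+_s \oplus \C w^-_s \right), \]
which, by the computation above, coincides with $\Rf_{2p}$. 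No significant obstacle is expected: everything is reduced to the explicit formulae of Proposition~\ref{Prop: Radford} and to elementary bookkeeping in the commutative algebra $\Zf$ via Proposition~\ref{Prop: centre}; the only point requiring a bit of care is to verify that the nonzero scalar factors appearing in Proposition~\ref{Prop: Radford} never vanish, which is clear since $[s] \neq 0$ and $[p-1]! \neq 0$ for $1 \leq s \leq p-1$.
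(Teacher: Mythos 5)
Votre preuve est correcte et suit essentiellement la m�me route que celle du texte : on identifie d'abord $\Rf_{2p} = \Vect_\C \left( e_0, e_p, w^\pm_s \; ; \; 1 \leq s \leq p-1 \right)$ gr�ce � la proposition \ref{Prop: Radford} et � la libert� de $\G$ sur les classes des modules simples, puis on reconna�t ce sous-espace comme le socle de $\Zf$ � l'aide des relations de la proposition \ref{Prop: centre}. La seule diff�rence, purement cosm�tique, est que le texte identifie le socle d'un seul coup comme $\mathrm{Ann}_\Zf(\Rf(\Zf))$, tandis que vous le calculez bloc par bloc via la d�composition $\Zf = \bigoplus_s A_s$ ; les deux reviennent au m�me.
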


\begin{proof}
Par construction, le groupe de Grothendieck est librement engendré par les classes des modules simples $\X^\pm(s)$, $1 \leq s \leq p$ (cf. par exemple \cite[Prop. 16.6]{CR81}). D'après la proposition \ref{Prop: Radford}, l'image $\Rf_{2p}$ du morphisme $\hbphi^\delta$ est :
\[ \Vect_\C \left( \widehat{\phi}_\delta^\pm(s) \; ; \; 1 \leq s \leq p \right) = \Vect_\C \left( e_0 , e_p , w^\pm_s \; ; \; 1 \leq s \leq p-1 \right) . \]
Or, d'après la proposition \ref{Prop: centre}, la famille $\{ w^\pm_s \; ; \; 1 \leq s \leq p-1 \}$ est une $\C$-base du radical $\Rf(\Zf)$ de $\Zf$, et la famille $\{ e_0 , e_p \} \cup \{ w^\pm_s \; ; \; 1 \leq s \leq p-1 \}$ est une $\C$-base de l'ensemble des éléments de $\Zf$ qui annulent $\Rf(\Zf)$. Donc $\Rf_{2p} = \mathrm{Ann}_\Zf(\Rf(\Zf))$ est le socle de $\Zf$ (cf. par exemple \cite[Exercice 4.1.2]{Pie82}).
\end{proof}

\begin{Rem}
\label{Rem: R2p}
D'après les précédents résultats, $\Rf_{2p}$ est un $\C$-espace vectoriel de dimension $2p$, dont des $\C$-bases sont $\{ \widehat{\phi}_\delta^\pm(s) \; ; \; 1 \leq s \leq p \}$ ou $\{ e_0 , e_p \} \cup \{ w^\pm_s \; ; \; 1 \leq s \leq p-1 \}$.
\end{Rem}
\chapter{Idempotents de Jones-Wenzl aux racines de l'unité}

Les constructions explicites des invariants de 3-variété de type Reshetikhin-Turaev reposent principalement sur la théorie des groupes quantiques \emph{quotients} $\overline{U}_q \mathfrak{g}$, où $\mathfrak{g}$ est une algèbre de Lie et $q$ une racine de l'unité, et de leurs représentations de dimension finie. Pour les groupes quantiques quotients $\Uq$, associés à l'algèbre de lie $sl_2$ et aux racines $q$ de l'unité, Lickorish offre une construction topologique alternative de ces invariants dans \cite{Lic91}, \cite{Lic92} et \cite{Lic93}, tout en gardant le procédé de chirurgie et l'étude des mouvements de Kirby (cf. par exemple \cite[§ 16, § 19]{PS97}). Dans ce cadre, l'étude des algèbres de Hopf (tressées et enrubannées) est remplacée par l'étude des \emph{modules d'écheveaux} (cf. par exemple \cite[§ 26]{PS97}), et les représentations de dimension finie par les \emph{idempotents de Jones-Wenzl} (cf. par exemple \cite[§ 27]{PS97}). En effet, la catégorie des classes d'écheveaux coloriés par les idempotents de Jones-Wenzl possède une structure analogue à celle des représentations de dimension finie d'une algèbre de Hopf modulaire (cf. par exemple \cite[§ XI-XII]{Tur94}). Ainsi, pour chaque racine $q$ paire de l'unité, on associe à toute surface fermée orientée des espaces vectoriels de classes d'écheveaux coloriés par des idempotents de Jones-Wenzl.

Introduits par Jones en 1983, ces idempotents de Jones-Wenzl se définissent pour un paramètre $A$ \emph{formel} dans les modules d'écheveaux du disque fermé, qui s'identifient canoniquement à des sous $\Z[A,A^{-1}]$-algèbres des $\C(A)$-algèbres de Temperley-Lieb (cf. par exemple \cite[§ 26]{PS97}). Ils se calculent grâce à un système de récurrence établi dans \cite{Wen87}, et correspondent à des projecteurs sur les modules indécomposables de dimension finie du groupe quantique générique $\UA$ semi-simple (cf. par exemple \cite[§ 3.5]{CFS95}). Par contre, lorsque $A^2$ s'évalue en une racine $q$ de l'unité, une large partie de ces idempotents n'est plus définie correctement, et les idempotents restants correspondent uniquement aux projecteurs sur les $\Uq$-modules \emph{simples}.

Comme dans les chapitres précédents, on concentre notre travail
sur le groupe quantique restreint $\Uq$ associé à une racine $q$ primitive \emph{paire} de l'unité. Dans ce troisième chapitre, on propose une nouvelle construction des idempotents de Jones-Wenzl basée sur l'étude des idempotents orthogonaux primitifs (POIs, cf. par exemple \cite[§ 25-26]{CR62}) des algèbres de Temperley-Lieb. Pour cela, on commence par des rappels succincts sur les espaces d'écheveaux. On détaille ensuite la structure des algèbres de Temperley-Lieb \emph{génériques} $\TL_n(A^2)$, $n \in \N$, définies pour un paramètre formel $A^2$, et celle des algèbres de Temperley-Lieb \emph{évaluées} $\TL_n(q)$, $n \in \N$, obtenues après évaluation de $A^2$ en $q$. \`A cette occasion, on s'appropriera les outils de \cite{GW93}. Les POIs résultant de ces analyses permettent enfin de définir des idempotents de Jones-Wenzl \emph{évaluables} en $q$, qui étendent les propriétés des idempotents de Jones-Wenzl usuels, et correspondent à des projecteurs sur les $\Uq$-modules simples et les $\Uq$-PIMs. 

\minitoc

\section{Rappels sur les espaces d'écheveaux}
\label{section: echeveaux}

On note $I$ \index{I@ $I$} l'intervalle $[0,1]$ et $\mathbb{S}^1$ \index{S@ $\mathbb{S}^1$} le cercle. On fixe un paramètre formel $A$\index{A@ $A$}, un entier $n \in \N$ \index{n4@ $n$} et une 3-variété $M$ compacte orientée de bord $\partial M$. On distingue deux cas :
\begin{itemize}
	\item si $\partial M$ est non vide, on le marque par $2n$ segment(s) $r_1,...,r_{2n}$ ;
	\item sinon, on prend $n=0$. 
\end{itemize}	
On commence par définir l'espace d'écheveaux $K_A(M,2n)$ associé à $(M, \partial M)$, puis on en donne une description lorsque $M$ est homéomorphe à une surface épaissie $\Sigma \times I$, où $\Sigma$ est une 2-variété compacte orientée de bord $\partial \Sigma$.

\begin{Def}[{\cite[§ 1]{BHMV92}}]
\begin{enumerate}[(i)]
	\item On appelle \emph{enchevêtrement enrubanné} de $M$ l'image $L$ de toute union finie disjointe d'anneaux $\mathbb{S}^1 \times I$ et de bandes $I \times I$ par un plongement propre dans $M$, telle que $L \cap \partial M$ est l'union disjointe des segments $r_1,...,r_{2n}$.
	\item On dit que $L$ est un \emph{entrelacs enrubanné} si toutes ses composantes sont des anneaux. En particulier, on appelle \emph{ruban} tout entrelacs enrubanné connexe.
\end{enumerate}
\end{Def}

\begin{Rem}
Autrement dit, un enchevêtrement enrubanné de $M$ est une sous-variété à bord de $M$ orientée homéomorphe à une union finie disjointe d'anneaux $\mathbb{S}^1 \times I$ et de bandes $I \times I$.
\end{Rem}

Pour tout enchevêtrement enrubanné $L$ de $M$, on désigne par \emph{classe d'isotopie} de $L$ sa classe d'équivalence modulo les isotopies dans $M$ qui fixent $\partial M$.

\begin{Def}[{\cite[§ 1]{BHMV92}}]
\begin{enumerate}[(i)]
	\item On appelle \emph{espace d'écheveaux} de $M$ le $\Z[A,A^{-1}]$- module à gauche quotient $K_A(M,2n) = V_A(M,2n) / V_A^0(M,2n)$ \index{K1@ $K_A(M,2n)$} où :
	\begin{itemize}
		\item $V_A(M,2n)$ est le $\Z[A,A^{-1}]$-module à gauche librement engendré par les classes d'isotopie d'enchevêtrements enrubannés de $M$, y compris celle de l'enchevêtrement vide ;
		\item $V_A^0(M,2n)$ est le sous-module à gauche de $V_A(M,2n)$ engendré par les éléments de la forme :
		\begin{equation} 
		\label{Eqn: echeveaux1}
		L-AL_0-A^{-1}L_\infty, \quad L \cup \bigcirc +(A^2+A^{-2})L,
		\end{equation}
		où $L$, $L_0$ et $L_\infty$ coïncident hors d'une boule $\bar{B} \subseteq M$ et ont dans $\bar{B}$ les projections sur un plan équatorial représentées dans la figure \ref{Fig: echeveaux1}, et $L \cup \bigcirc$ désigne l'union disjointe de $L$ et d'un ruban trivial $\bigcirc$ contenu dans une boule $\bar{B}$ disjointe de $L$.
	\end{itemize} 
	\item On appelle \emph{classe d'écheveau} de $M$ l'image de toute classe d'isotopie d'enchevêtrement enrubanné de $M$ dans $K_A(M,2n)$. 
\end{enumerate}
\end{Def}

\begin{figure}[!ht]
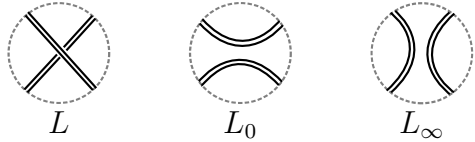

\caption{Les enchevêtrements enrubannés $L$, $L_0$ et $L_\infty$ dans $\bar{B}$.}
\label{Fig: echeveaux1}
\[ \begin{array}{ccccc}
	\KRrm && \KRhi && \KRid \\
	L && L_0 && L_\infty
	\end{array}  \]
\end{figure}

Dans la suite, pour tout enchevêtrement enrubanné $L$, on notera encore $L$ ses classes d'isotopie et d'écheveau.

\begin{Rem} 
\label{Rem: KA homeo}
Les relations d'écheveaux étant locales, l'espace d'écheveaux \\ $K_A(M,2n)$ est invariant sous les homéomorphismes de $M$ qui fixent $\partial M$. 
\end{Rem}

Il existe un produit tensoriel naturel sur les espaces d'écheveaux de deux 3-variétés dont les bords contiennent une composante connexe commune à orientation opposée. En effet, soit $\Sigma$ une composante connexe de $\partial M$ ($\Sigma = \emptyset$ si $\partial M = \emptyset$) marquée par $2k \leq 2n$ segments ($k \in \N$). On note $\overline{\Sigma}$ la variété correspondante munie de l'orientation opposée. Soient $n' \in \N$ et $M'$ une autre 3-variété compacte orientée dont le bord $\partial M'$ est marqué par $2n'$ segment(s) (avec $n'=0$ si $\partial M' = \emptyset$) et contient une composante connexe $\overline{\Sigma}$. En recollant $M$ et $M'$ le long de $\Sigma$ via un homéomorphisme $\varphi : \Sigma \rightarrow \Sigma$ préservant l'orientation, on obtient une 3-variété $M \cup_{\Sigma} M'$ compacte orientée de bord $\partial M \cup_{\Sigma} \partial M'$. Les relations d'écheveaux étant locales, cette opération de recollement induit un morphisme de $\Z[A,A^{-1}]$-modules :
\begin{equation}
\label{Eqn: KA produit1}
\begin{cases}
	K_A(M,2n) \otimes K_A(M',2n') \longrightarrow K_A(M \cup_{\Sigma} M',2n+2n'-4k) \\
	L \otimes L' \longmapsto L \cup_\Sigma L'
	\end{cases}.
\end{equation}

On suppose désormais que $M$ est homéomorphe à une \emph{surface épaissie} $\Sigma \times I$, où $\Sigma$ une 2-variété compacte orientée de bord $\partial \Sigma$. Comme précédemment, on distingue à deux cas :
\begin{itemize}
	\item si $\partial \Sigma$ est non vide, on le marque de $2n$ point(s) $R_1,...,R_{2n}$ ;
	\item sinon, on prend $n=0$.
\end{itemize}
D'après la remarque \ref{Rem: KA homeo}, on sait que $K_A(M,2n) \simeq K_A(\Sigma \times I, 2n)$. On redéfinit alors l'espace d'écheveaux $K_A(\Sigma \times I,2n)$ à l'aide de \emph{diagrammes sur $\Sigma$} comme suit.

Pour tout enchevêtrement enrubanné $L$ de $\Sigma \times I$, il existe un enchevêtrement enrubanné $L'$ de $\Sigma \times I$ isotope à $L$ dont la projection sur $\Sigma$ parallèlement à $I$ est régulière. Ainsi, on obtient un \emph{diagramme} $D_L$ sur $\Sigma$ (cf. par exemple \cite[§ 1]{PS97}) qui intersecte $\partial \Sigma$ en $\{R_1,...,R_{2n}\}$. Comme pour les enchevêtrements enrubannés, pour tout diagramme $D$ sur $\Sigma$, on désigne par \emph{classe d'isotopie} de $D$ sa classe d'équivalence modulo les isotopies dans $\Sigma$ qui fixent $\partial \Sigma$. Alors les classes d'isotopie d'enchevêtrements enrubannés de $\Sigma \times I$ sont en bijection avec les classes d'isotopie de diagrammes sur $\Sigma$ modulo les \emph{mouvements de Reidemeister} $\Omega_1', \Omega_2$, $\Omega_3$ représentés dans la figure \ref{Fig: Reidemeister} (cf. par exemple \cite[§ 19.6-7]{PS97}). 
\begin{figure}[!ht]
\caption{Mouvements de Reidemeister.}
\label{Fig: Reidemeister}
\[ \begin{array}{ccccc}
	\vcenter{\hbox{ 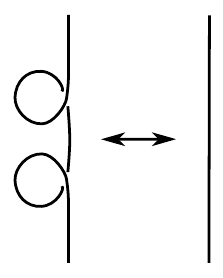 }} & \hspace*{1cm} & \vcenter{\hbox{ 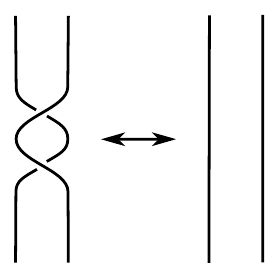 }} & \hspace*{1cm} & \vcenter{\hbox{ 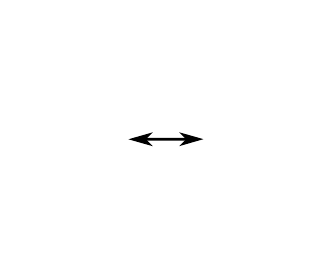 }} 
	\end{array} \]
\end{figure}
Autrement dit, on a un isomorphisme de $\Z[A,A^{-1}]$-modules à gauche :
\[ \begin{cases}
	V_A(\Sigma \times I, 2n) \overset{\sim}{\longrightarrow} V_A(\Sigma, 2n) / V_A^\Rf(\Sigma, 2n) \\
	L \longmapsto D_L \mod (\Omega_1', \Omega_2, \Omega_3) 
	\end{cases} \]
où :
\begin{itemize}
	\item $V_A(\Sigma \times I,2n)$ est le $\Z[A,A^{-1}]$-module à gauche librement engendré par les classes d'isotopie d'enchevêtrements enrubannés de $\Sigma \times I$, y compris celle de l'enchevêtrement vide ;
	\item $V_A(\Sigma,2n)$ est le $\Z[A,A^{-1}]$-module à gauche librement engendré par les classes d'isotopie de diagrammes sur $\Sigma$, y compris celle du diagramme vide ;
	\item $V_A^\Rf(\Sigma,2n)$ est le sous-module à gauche de $V_A(\Sigma,2n)$ engendré par les relations (locales) de Reidemeister.
\end{itemize}
Comme les mouvements de Reidemeister passent au quotient sous les relations d'écheveaux \eqref{Eqn: echeveaux1} (cf. par exemple \cite[Thm 26.4]{PS97}), il induit un isomorphisme d'espaces d'écheveaux :
\[ K_A(\Sigma \times I, 2n) \overset{\sim}{\longrightarrow} K_A(\Sigma, 2n), \]
où $K_A(\Sigma,2n)$ est défini ci-après. On représentera implicitement les classes d'écheveaux de $\Sigma \times I$ par leurs classes d'écheveaux de diagrammes sur $\Sigma$ via cette identification.

\begin{Def}[{\cite[§ 26.3]{PS97}}]
\begin{enumerate}[(i)]
	\item On appelle \emph{espace d'écheveaux} de $\Sigma$ le $\Z[A,A^{-1}]$-module à gauche quotient $K_A(\Sigma,2n) = V_A(\Sigma,2n) / V_A^0(\Sigma,2n)$ \index{K2@ $K_A(\Sigma,2n)$} où :
	\begin{itemize}
		\item $V_A(\Sigma,2n)$ est le $\Z[A,A^{-1}]$-module à gauche librement engendré par les classes d'isotopie de diagrammes sur $\Sigma$, y compris celle du diagramme vide ;
		\item $V_A^0(\Sigma,2n)$ est le sous-module à gauche de $V_A(\Sigma,2n)$ engendré par les éléments de la forme :
		\begin{equation} 
		\label{Eqn: echeveaux2}
		D-AD_0-A^{-1}D_\infty, \quad D \cup \bigcirc +(A^2+A^{-2})D,
		\end{equation}
		où $D$, $D_0$ et $D_\infty$ coïncident hors d'une boule $\bar{B} \subseteq \Sigma$ et sont dans $\bar{B}$ représentées dans la figure \ref{Fig: echeveaux2}, et $D \cup \bigcirc$ désigne l'union disjointe de $D$ et du diagramme trivial $\bigcirc$ contenu dans une boule $\bar{B}$ disjointe de $D$.
	\end{itemize} 
	\item On appelle \emph{classe d'écheveau} de $\Sigma$ l'image de toute classe d'isotopie de diagramme sur $\Sigma$ dans $K_A(\Sigma,2n)$. 
\end{enumerate}
\end{Def}

\begin{figure}[!ht]
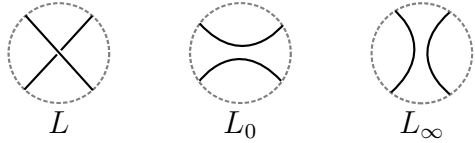

\caption{Les diagrammes $D$, $D_0$ et $D_\infty$ dans $\bar{B}$.}
\label{Fig: echeveaux2}
\[ \begin{array}{ccccc}
	\Krm && \Khi && \Kid \\
	L && L_0 && L_\infty
	\end{array}  \]
\end{figure}

Dans la suite, pour tout diagramme $D$, on notera encore $D$ ses classes d'isotopie et d'écheveau.

\begin{Prop}[{\cite[Thm 26.5]{PS97}}]
\label{Prop: multi-courbes}
L'espace d'écheveaux $K_A(\Sigma,2n)$ est un module à gauche de $\Z[A,A^{-1}]$ libre, dont une $\Z[A,A^{-1}]$-base est donnée par les classes d'écheveaux de diagrammes sur $\Sigma$ ne contenant ni croisement, ni courbe fermée contractile. On appelle ces diagrammes les \emph{multi-courbes} de $\Sigma$.
\end{Prop}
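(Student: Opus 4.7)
Le plan se d�compose en deux �tapes : montrer d'abord que les multi-courbes engendrent $K_A(\Sigma,2n)$, puis �tablir leur ind�pendance lin�aire sur $\Z[A,A^{-1}]$.

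Pour l'engendrement, je proc�derais par une double induction lexicographique sur le couple (nombre de croisements, nombre de courbes ferm�es contractiles) du repr�sentant choisi. Si un diagramme $D$ poss�de un croisement, la relation locale de Kauffman \eqref{Eqn: echeveaux2} donne $D = A D_0 + A^{-1} D_\infty$ dans $K_A(\Sigma,2n)$, o� $D_0$ et $D_\infty$ poss�dent tous deux un croisement de moins que $D$. En appliquant ce proc�d� � chaque croisement, on aboutit apr�s un nombre fini d'�tapes � une combinaison $\Z[A,A^{-1}]$-lin�aire de diagrammes sans croisement. On �limine alors les courbes ferm�es contractiles une � une via la seconde relation $D \cup \bigcirc = -(A^2+A^{-2}) D$, en observant qu'une courbe contractile sur $\Sigma$ peut toujours, apr�s isotopie, �tre rapproch�e jusqu'� border un disque disjoint du reste du diagramme. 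Le r�sultat est bien une combinaison $\Z[A,A^{-1}]$-lin�aire finie de multi-courbes.

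Pour l'ind�pendance lin�aire — qui constitue l'obstacle principal — l'approche que je privil�gierais est g�om�trique via un d�coupage de $\Sigma$. Je choisirais un syst�me fini $\mathcal{A} = \{\alpha_1,\dots,\alpha_k\}$ d'arcs propres plong�s et disjoints dans $\Sigma$, rencontrant $\partial \Sigma$ hors des points marqu�s, tel que $\Sigma \setminus \mathcal{A}$ soit une r�union disjointe de disques marqu�s $D_1,\dots,D_r$. Chaque multi-courbe $\gamma$, mise en position minimale par rapport � $\mathcal{A}$, se d�compose en une famille d'arcs dans les disques $D_j$, compl�tement cod�e par (i) le nombre d'intersections $(n_1,\dots,n_k) \in \N^k$ de $\gamma$ avec chaque $\alpha_i$, et (ii) pour chaque disque $D_j$ marqu� par $n_j^D$ points, un appariement planaire sans croisement des points marqu�s, �l�ment d'une base de l'alg�bre de Temperley-Lieb du disque. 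On sait par ailleurs (cas du disque, classique via l'alg�bre de Temperley-Lieb) que $K_A(D^2, 2m)$ est un $\Z[A,A^{-1}]$-module libre de base les appariements planaires sur $2m$ points. Les op�rations de recollement \eqref{Eqn: KA produit1} le long des arcs $\alpha_i$ fournissent alors un morphisme de $\Z[A,A^{-1}]$-modules permettant de reconstruire tout �l�ment de $K_A(\Sigma, 2n)$ � partir des modules d'�cheveaux des disques $D_j$, et cette reconstruction s�pare deux multi-courbes distinctes.

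La difficult� principale r�side dans la rigueur de cet argument de d�coupage : il faut v�rifier soigneusement que deux multi-courbes non isotopes donnent, en position minimale, des donn�es combinatoires $(n_1,\dots,n_k)$ et d'appariement distinctes sur au moins un disque, et que les relations d'�cheveaux sur $\Sigma$ n'induisent que celles d�j� pr�sentes dans les modules des $D_j$ ; c'est essentiellement le contenu de la construction du crochet de Kauffman pour diagrammes sur $\Sigma$. Une fois ce point acquis, une relation non triviale $\sum_i c_i \gamma_i = 0$ entre multi-courbes se projetterait en une relation non triviale dans un produit tensoriel de modules d'�cheveaux de disques, contredisant la libert� d�j� connue de ces derniers. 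Comme la construction est celle d�velopp�e dans \cite[\S 26]{PS97}, je renverrais � la r�f�rence pour les d�tails techniques de la compatibilit� entre d�coupage et relations d'�cheveaux, en me concentrant dans la preuve sur la partie d'engendrement et la mise en forme du crit�re de s�paration.
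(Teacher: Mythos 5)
The paper itself gives no proof of this proposition: it is quoted directly from \cite[Thm 26.5]{PS97}, so there is no internal argument to compare yours against; I can only assess your proposal on its own terms.

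Your generation step is correct and standard: resolving a crossing via $D = AD_0 + A^{-1}D_\infty$ strictly decreases the crossing number, and once no crossings remain, an \emph{innermost} contractible circle bounds a disk disjoint from the rest of the diagram and can be removed by the second relation; a lexicographic induction concludes. (Note that it is innermost-ness, rather than an isotopy pushing the circle away from the rest of the diagram, that guarantees the hypothesis of the relation is met when circles are nested.)

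The linear-independence step, however, contains a genuine gap. The gluing morphism \eqref{Eqn: KA produit1} maps the disk skein modules \emph{into} $K_A(\Sigma,2n)$; to separate classes you need a well-defined morphism \emph{out of} $K_A(\Sigma,2n)$, i.e. a cutting map, and constructing it is exactly the hard point. A skein class admits many diagram representatives meeting your arc system $\mathcal{A}$ in different ways, so the combinatorial data (intersection numbers with the $\alpha_i$ and planar pairings in the disks $D_j$) is, a priori, not an invariant of the class but only of a chosen representative in minimal position; and your assertion that the skein relations on $\Sigma$ induce only those already present in the disk modules is essentially the conclusion to be proved. The standard route --- the one followed in \cite[\S 26]{PS97} --- is to build the inverse map directly: send a diagram $D$ with $c$ crossings to its full Kauffman state sum, the $\Z[A,A^{-1}]$-combination of multi-curves obtained by resolving all crossings in all $2^{c}$ ways and removing the resulting contractible circles; then verify that this assignment is invariant under the moves $\Omega_1'$, $\Omega_2$, $\Omega_3$ and annihilates the submodule $V_A^0(\Sigma,2n)$, hence descends to $K_A(\Sigma,2n)$ and restricts to the identity on multi-curves. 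This retraction yields freeness at once. I would keep your generation step and replace the cutting argument by this verification (or by an explicit citation of it), since as written the separation criterion is not established.
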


De même que précédemment, il existe un produit naturel sur les espaces d'écheveaux de deux surfaces épaissies dont les bords contiennent un composante connexe commune à orientation opposée. Dans ce cadre, le produit tensoriel \eqref{Eqn: KA produit1} devient :
\begin{equation}
\label{Eqn: KA produit2}
\begin{cases}
	K_A(\Sigma,2n) \otimes K_A(\Sigma',2n') \longrightarrow K_A(\Sigma \cup_{C} \Sigma',2n+2n'-4k) \\
	D \otimes D' \longmapsto D \cup_C D'
	\end{cases}
\end{equation}
où $C$ est une composante connexe de $\partial \Sigma$ marquée par $2k \leq 2n$ point(s) ($k \in \N$), $n' \in \N$, et $\Sigma'$ est une autre 2-variété  compacte orientée dont le bord $\partial \Sigma'$ est marqué par $2n'$ point(s) (avec $n=0$ si $\partial \Sigma' = \emptyset$) et contient une composante connexe $\overline{C}$.

\section{Algèbres de Temperley-Lieb et idempotents}
\label{section: TL et POIs}

Dans cette section, on s'intéresse à l'espace d'écheveaux $K_A(\bar{D},2n)$ du disque fermé $\bar{D}$\index{D@ $\bar{D}$}. On commence par établir le lien entre le $\Z[A,A^{-1}]$-module $K_A(\bar{D},2n)$ et la $\C(A)$-algèbre de Temperley-Lieb \emph{générique} $\TL_n(A^2)$, définie pour un paramètre $A$ formel. On donne ensuite des rappels sur la structure de cette algèbre, que l'on pourra trouver dans \cite[§ 5]{KT08} avec d'autres notations. Après évaluation de $A$ en un nombre complexe $\zeta$ qui est racine $4p$-ième de l'unité ($p \in \N^*$), on obtient une nouvelle algèbre de Temperley-Lieb \emph{évaluée} $\TL_n(\zeta^2)$ dont la structure n'est plus la même. On illustre ce cas en évaluant $A^2$ en une racine $2p$-ième de l'unité $q := e^{\frac{i \pi}{p}}$, et on explicite la structure de l'algèbre de Temperley-Lieb évaluée $\TL_n(q)$ correspondante. Pour cela, on reformulera une partie du travail de \cite{GW93} avec la normalisation des algèbres de Temperley-Lieb de \cite{CFS95}.

\subsection{Algèbres de Temperley-Lieb et évaluation}
\label{subsection: TL}

L'espace d'écheveaux $K_A(\bar{D},2n)$ est muni d'une structure de $\Z[A,A^{-1}]$-algèbre unitaire en identifiant $\bar{D}$ avec $I^2$, de sorte que $I \times \{0\}$ et $I \times \{1\}$ contiennent tout deux $n$ point(s) marqué(s). Une telle identification n'est évidemment pas unique : elle dépend du choix de la répartition des points sur les composantes de bord $I \times \{0\}$ et $I \times \{1\}$. Toutefois, d'après la remarque \ref{Rem: KA homeo}, les espaces d'écheveaux $K_A(\bar{D},2n)$ et $K_A(I^2, 2n)$ sont isomorphes. 

Le produit de deux classes d'écheveaux $L_1$ et $L_2$ consiste à identifier les $n$ point(s) $L_1 \cap (I \times \{1\})$ du bord de $L_1$ avec les $n$ point(s) $L_2 \cap (I \times \{0\})$ du bord de $L_2$ :
\begin{equation}
\label{Eqn: produit en pile}
\vcenter{\hbox{\begin{tikzpicture}
	\draw (0,0) node{$\begin{array}{c} |\cdots| \\  L_1 \\ \underbrace{| \cdots |}_{n} \end{array}$};
	\draw (-0.5,-0.48) rectangle (0.5,0.96);
	\draw (0,1.5) node{};
	\end{tikzpicture}}}
\cdot \vcenter{\hbox{\begin{tikzpicture}
	\draw (0,0) node{$\begin{array}{c} |\cdots| \\  L_2 \\ \underbrace{| \cdots |}_{n} \end{array}$};
	\draw (-0.5,-0.48) rectangle (0.5,0.96);
	\draw (0,1.5) node{};
	\end{tikzpicture}}}
= \vcenter{\hbox{\begin{tikzpicture}
	\draw (0,0) node{$\begin{array}{c} |\cdots| \\  L_2 \\ L_1 \\ \underbrace{|\cdots|}_{n} \end{array}$};
	\draw (-0.5,-0.73) rectangle (0.5,1.21);
	\draw (0,1.5) node{};
	\end{tikzpicture}}} .
\end{equation}
En tant que $\Z[A,A^{-1}]$-algèbre, $K_A(\bar{D},2n)$ est librement engendrée par les classes d'écheveaux de la forme :
\[ h_0:= \vcenter{\hbox{\begin{tikzpicture}
	\draw (0,0) node{$\underbrace{\Big| \cdots \Big|}_{n}$};
	\draw (-0.5,-0.17) rectangle (0.5,0.69);
	\draw (0,1) node{};
	\end{tikzpicture}}}, 
\quad h_i:= \vcenter{\hbox{\begin{tikzpicture}
	\draw (0,0) node{$\underbrace{\Big| \cdots \Big|}_{i-1}  \bigcupcap \underbrace{\Big| \cdots \Big|}_{n-i-1}$};
	\draw (-1.34,-0.18) rectangle (1.34,0.67);
	\draw (0,1) node{};
	\end{tikzpicture}}},
\quad 1 \leq i \leq n-1, \]
(cf. par exemple \cite[Thm 26.10]{PS97}). Ces générateurs fournissent un isomorphisme explicite entre $K_A(\bar{D},2n)$ et une sous $\Z[A,A^{-1}]$-algèbre de la $\C(A)$-algèbre générique $\TL_n(A^2)$ définie ci-dessous. Pour celle-ci, on choisit la normalisation quadratique (avec $A^2$) qui découle naturellement des relations d'écheveaux \eqref{Eqn: echeveaux2}. On pourra trouver une normalisation non quadratique dans \cite[§ 5.7]{KT08} ou \cite[§ 2.4]{CFS95} par exemple.

\begin{Def}
Pour $n \in \N^*$, l'\emph{algèbre de Temperley-Lieb} $\TL_n(A^2)$ \index{TLm@ $\TL_n(A^2)$} est la $\C(A)$-algèbre unitaire engendrée par $h_0:=1$, $h_1$,...,$h_{n-1}$ sous les relations :
\begin{equation}
\label{Eqn: TLA}
\begin{aligned}
&\forall i,j \in \{1,...,n-1\} \mbox{ tels que } |i-j|=1
	&\quad& h_i h_j h_i = h_i, \\
&\forall i,j \in \{1,...,n-1\} \text{ tels que } |i-j| \geq 2
	&& h_i h_j = h_j h_i, \\
&\forall i \in \{1,...,n-1\}
	&& h_i^2 = -(A^2+A^{-2}) h_i.
\end{aligned}
\end{equation}
Pour $n=0$, on pose $\TL_0(A^2)=\C(A)$ par convention.
\end{Def}

On utilise les notations standards pour les coefficients $A^2$-entiers : \index{n5@ $[n]_{A^2}$}
\begin{equation}
\label{Eqn: Acoeff1}
\forall n \in \N \qquad [n]_{A^2} := \frac{A^{2n}-A^{-2n}}{A^2-A^{-2}} .
\end{equation}
Ainsi, dans l'algèbre de Temperley-Lieb $\TL_n(A^2)$, on a :
\[ \forall i \in \{1,...,n-1\} \qquad h_i^2 = -[2]_{A^2} h_i . \]
On rappelle que les coefficients $A^2$-entiers vérifient :
\begin{equation}
\label{Eqn: Acoeff2}
\begin{aligned}
&\forall n \in \N && [n+1]_{A^2} + [n-1]_{A^2} = [2]_{A^2} [n]_{A^2}, \\
&\forall n,m \in \N && [n]_{A^2} [m+1]_{A^2} - [n+1]_{A^2} [m]_{A^2} = [n-m]_{A^2}.
\end{aligned}
\end{equation} 

D'autre part, pour tout $k \in \N$, on a une injection canonique de $\TL_n(A^2)$ dans $\TL_{n+k}(A^2)$ par identification des générateurs $h_0,...,h_n$.  Au niveau des classes d'écheveaux, cette injection se traduit par :
\begin{equation}
\label{Eqn: injections de TL}
\rect{L_1}{} \longmapsto
		\vcenter{\hbox{\begin{tikzpicture}
		\draw (0,0) node{$L_1 \underbrace{\Big| \cdots \Big|}_{k}$};
		\draw (-0.8,-0.15) rectangle (0.8,0.7);
		\draw (0,1.4) node{};
		\end{tikzpicture}}}.
\end{equation}
On identifiera tacitement les éléments de $\TL_n(A^2)$ avec ceux de $\TL_{n+k}(A^2)$ via cette injection.

L'algèbre de Temperley-Lieb évaluée $\TL_n(q)$ \index{TLn@ $\TL_n(q)$} est la $\C$-algèbre définie de manière analogue en remplaçant $A^2$ par $q = e^\frac{i \pi}{p}$. Elle est liée à l'algèbre de Temperley-Lieb générique $\TL_n(A^2)$ par un morphisme d'évaluation explicité ci-après.

\begin{Def}
On note $\C[A]_{(A^2-q)}$ le localisé de l'anneau polynomial $\C[A]$ en l'idéal engendré par $(A^2-q)$. Pour $n \in \N^*$, l'\emph{algèbre de Temperley-Lieb} $\TL_n(A^2)_q$ est la $\C[A]_{(A^2-q)}$-algèbre unitaire engendrée par $h_0:=1$, $h_1$,...,$h_{n-1}$ sous les relations :
\begin{align*}
&\forall i,j \in \{1,...,n-1\} \mbox{ tels que } |i-j|=1
	&& h_i h_j h_i = h_i, \\
&\forall i,j \in \{1,...,n-1\} \text{ tels que } |i-j| \geq 2
	&& h_i h_j = h_j h_i, \\
&\forall i \in \{1,...,n-1\}
	&& h_i^2 = -(A^2+A^{-2}) h_i.
\end{align*}
Pour $n=0$, on pose $\TL_0(A^2)_q=\C[A]_{(A^2-q)}$ par convention.
\end{Def}

\begin{Prop}[{\cite[Prop. 0.1]{GW93}}]
\label{Prop: evaluations}
Soit le morphisme d'évaluation défini par :
	\[ ev_n : \begin{cases} 
			\TL_n(A^2)_q \longrightarrow \TL_n(q) \\
			\sum_{i}^k P_i(A) w_i \longmapsto \sum_{i}^k P_i(q^{\frac{1}{2}}) w_i
			\end{cases} \]
où, pour tout $i \in \{1,...,k\}$, $w_i$ est un mot en $\{h_0,...,h_{n-1}\}$. Alors $ev_n$ est surjectif.
\end{Prop}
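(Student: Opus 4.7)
The plan is to reduce the surjectivity of $ev_n$ to the (almost tautological) observation that every element of $\TL_n(q)$ admits a natural ``constant lift'' in $\TL_n(A^2)_q$. The only real verification needed is that such a constant lift is well-defined, i.e.\ that evaluating at $A = q^{1/2}$ is compatible with the two sets of defining relations.

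First, I would spell out the compatibility of the defining relations. The relations $h_i h_j h_i = h_i$ for $|i-j|=1$ and $h_i h_j = h_j h_i$ for $|i-j|\geq 2$ are identical in $\TL_n(A^2)_q$ and $\TL_n(q)$, so they cause no difficulty. The only relation involving the parameter is $h_i^2 = -(A^2+A^{-2})h_i$; evaluating $A$ at $q^{1/2}$ turns this into $h_i^2 = -(q+q^{-1})h_i$, which is exactly the corresponding relation in $\TL_n(q)$. This shows that $ev_n$ is a well-defined $\C$-algebra homomorphism (it sends the $\C[A]_{(A^2-q)}$-module structure to the $\C$-module structure through the evaluation $P \mapsto P(q^{1/2})$ of the localized polynomial ring, which is itself surjective onto $\C$ since any $\lambda \in \C$ lifts to the constant $\lambda$).

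Second, I would use that $\TL_n(q)$, as a $\C$-vector space, is spanned by the (finite) set of words in the generators $h_0, h_1, \ldots, h_{n-1}$. Thus any $z \in \TL_n(q)$ can be written as a finite sum
\[ z = \sum_{i=1}^k \lambda_i w_i, \qquad \lambda_i \in \C, \]
where each $w_i$ is a word in the $h_j$'s. Viewing each scalar $\lambda_i$ as a constant element of $\C[A]_{(A^2-q)}$ and each word $w_i$ as the corresponding word in $\TL_n(A^2)_q$, one obtains a lift $\tilde{z} := \sum_{i=1}^k \lambda_i w_i \in \TL_n(A^2)_q$. By the very definition of $ev_n$, we have $ev_n(\tilde{z}) = \sum_{i=1}^k \lambda_i w_i = z$, which proves surjectivity.

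There is no genuine obstacle here: the result is essentially a tautology, whose only content is the bookkeeping check that the parameter-dependent relation $h_i^2 = -(A^2+A^{-2})h_i$ specializes correctly and that constant scalars lie in the localized coefficient ring. The statement will be used in the sequel precisely to transport information (notably the primitive orthogonal idempotents) from $\TL_n(A^2)$ down to $\TL_n(q)$, following the strategy of \cite{GW93}; the surjectivity of $ev_n$ is the elementary first step that makes this transport meaningful.
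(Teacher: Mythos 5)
Your argument is correct. Note that the paper itself gives no proof of this proposition: it is stated with a citation to \cite[Prop.\ 0.1]{GW93} and nothing more, so there is no in-text argument to compare against. Your two observations — that the only parameter-dependent relation $h_i^2 = -(A^2+A^{-2})h_i$ specializes to $h_i^2 = -(q+q^{-1})h_i$ under $A \mapsto q^{1/2}$ (so $ev_n$ is a well-defined algebra morphism), and that any $\C$-linear combination of words in the generators lifts verbatim with constant coefficients in $\C \subset \C[A]_{(A^2-q)}$ — are exactly the content of the statement, and together they establish surjectivity.
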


\begin{Def}
On dit que $u \in \TL_n(A^2)$ est \emph{évaluable} si $u \in \TL_n(A^2)_q$. Dans ce cas, on note $\bar{u}:=ev_n(u)$ l'évaluation de $u$ dans $\TL_n(q)$.
\end{Def}

\subsection{Tableaux standards et représentations}
\label{subsection: TLA}

On suppose désormais que $n \geq 2$ ; on écarte les cas $n=0$ et $n=1$ pour lesquels $\TL_n(A^2) \cong \C(A)$. La $\C(A)$-algèbre $\TL_n(A^2)$ est de dimension finie (cf. par exemple \cite[Prop. 5.28]{KT08}). D'après le théorème de Krull-Schmidt (cf. par exemple \cite[Thm 14.5]{CR62}), elle se décompose de manière unique, à isomorphisme et ordre des facteurs près, en somme directe d'idéaux indécomposables de $\TL_n(A^2)$, appelés \emph{facteurs directs} de $\TL_n(A^2)$. Afin de les expliciter, on utilise les diagrammes de Young à $n$ cases et, pour chacun de ces diagrammes $\lambda$, on associe une représentation $V_\lambda$ de $\TL_n(A^2)$. Enfin, ces facteurs directs correspondent à des idempotents (orthogonaux) centraux primitifs (PCIs\index{PCIs@ PCIs}) de $\TL_n(A^2)$ (cf. par exemple \cite[§ 25, § 54]{CR62}). On en donne une description à partir d'idempotents orthogonaux primitifs (POIs\index{POIs@ POIs}, non centraux) de $\TL_n(A^2)$, associés aux tableaux standards à $n$ cases.

\begin{Def}[{\cite[§ 5.1.2, § 5.2.2]{KT08}}]
Soient $\lambda_1 \geq \lambda_2 \geq ... \geq \lambda_k$ tels que $\lambda = (\lambda_1,..., \lambda_k)$ est une partition de $n$.
\begin{enumerate}[(i)]
	\item On appelle \emph{diagramme de Young} de $\lambda$ la collection $[\lambda_1,..., \lambda_k]$ de $n$ cases justifiées à gauche telle que, pour tout ligne $i \in \{1,...,k\}$, on a $\lambda_i \in \N$ cases :
	\[ [\lambda_1,..., \lambda_k] = {\scriptstyle \begin{array}{|c|c|c|c|c|c|}
		\hline 
		\textcolor{gray} 1 & \textcolor{gray} 2 & \multicolumn{3}{c|}{\cdots} & \textcolor{gray}{\lambda_1} \\
		\hline 
		\textcolor{gray} 1 & \textcolor{gray} 2 & \multicolumn{2}{c|}{\cdots} & \textcolor{gray}{\lambda_2} \\
		\cline{1-5} 
		\vdots & \vdots & \multicolumn{2}{c|}{\vdots} \\	
		\cline{1-4}
		\textcolor{gray} 1 & \textcolor{gray} 2 & \cdots & \textcolor{gray}{\lambda_k} \\
		\cline{1-4} 	
		\end{array}} \]
	On note $\Delta_n$ \index{Delta2@ $\Delta_n$} l'ensemble des diagrammes de Young à $n$ cases et $\Delta^{\leq 2}_n$ \index{Delta3@ $\Delta_n^{\leq2}$} le sous-ensemble de $\Delta_n$ des diagrammes de Young à $n$ cases avec au plus deux lignes.
	\item On appelle \emph{treillis de Young} le graphe orienté dont les sommets sont étiquetés par les diagrammes de Young, y compris les diagrammes $\emptyset$ et $\lyng{1}$, tel que tout sommet étiqueté par $\nu$ est relié au sommet étiqueté par $\mu$ si $\nu \subset \mu$ et que $\mu$ possède exactement une case de plus que $\nu$ (cf. la figure \ref{Fig: treillis de Young}).
\end{enumerate}
\end{Def}

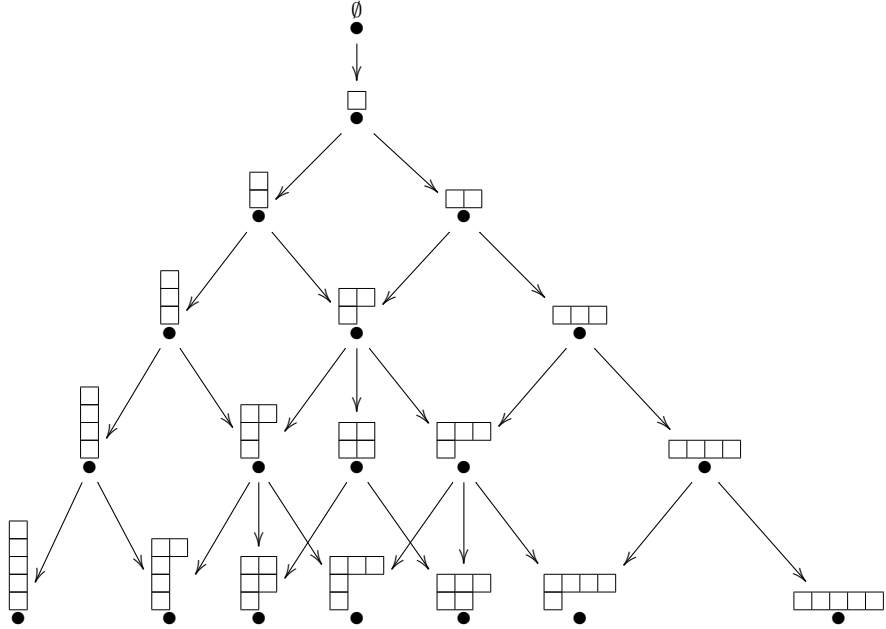
\begin{figure}[!ht]
\caption{Treillis de Young.}
\label{Fig: treillis de Young}
\[ \Yboxdim{7pt} 
\xymatrix @-2ex {
	&&&& \overset{\emptyset}{\bullet} \ar[d] \\
	&&&& \overset{\yng(1)}{\bullet} \ar[ld] \ar[rd] \\
	&&& \overset{\yng(1,1)}{\bullet} \ar[ld] \ar[rd] && \overset{\yng(2)}{\bullet} \ar[ld] \ar[rd] \\
	&& \overset{\yng(1,1,1)}{\bullet} \ar[ld] \ar[rd] && \overset{\yng(2,1)}{\bullet} \ar[ld] \ar[d] \ar[rd] && \overset{\yng(3)}{\bullet} \ar[ld] \ar[rd] \\
	& \overset{\yng(1,1,1,1)}{\bullet} \ar[ld] \ar[rd] && \overset{\yng(2,1,1)}{\bullet} \ar[ld] \ar[d] \ar[rd] & \overset{\yng(2,2)}{\bullet} \ar[ld] \ar[rd] & \overset{\yng(3,1)}{\bullet} \ar[ld] \ar[d] \ar[rd] && \overset{\yng(4)}{\bullet} \ar[ld] \ar[rd] \\
 	\overset{\yng(1,1,1,1,1)}{\bullet} && \overset{\yng(2,1,1,1)}{\bullet} & \overset{\yng(2,2,1)}{\bullet} & \overset{\yng(3,1,1)}{\bullet} & \overset{\yng(3,2)}{\bullet} & \overset{\yng(4,1)}{\bullet} && \overset{\yng(5)}{\bullet} } \]
\end{figure}

Il y a une bijection canonique entre l'ensemble des partitions de $n$ et l'ensemble $\Delta_n$ des diagrammes de Young à $n$ cases. Dans la suite, pour toute partition $\lambda$, on notera encore $\lambda$ son diagramme de Young associé.

\begin{Def}[{\cite[§ 5.1.4, § 5.1.5]{KT08}}]
Soit $\lambda \in \Delta_n$ et $\sigma$ une permutation de $\{1,...,n\}$.
\begin{enumerate}[(i)]
	\item On appelle \emph{tableau} de forme $\lambda$ tout diagramme de Young $\lambda$ muni d'une bijection $T : \{ \text{cases de } \lambda \} \rightarrow \{1,...,n\}$. La fonction $T$ est appelée l'\emph{étiquetage} de $\lambda$ et ses valeurs sont appelées les \emph{étiquettes} des cases correspondantes ; on les inscrits à l'intérieur de celles-ci.
	\item Pour tout tableau $t$ de forme $\lambda$ et d'étiquetage $T$, on note $\sigma(t)$ le tableau de forme $\lambda$ dont l'étiquetage est $\sigma \circ T$.
	\item On dit qu'un tableau est \emph{standard} si ses étiquettes sont croissantes de gauche à droite sur chaque ligne, et de haut en bas sur chaque colonne. On note $T_n$ \index{T1@ $T_n$} (resp. $T_\lambda$\index{T2@ $T_\lambda$}) l'ensemble des tableaux standards à $n$ cases (resp. de forme $\lambda$) et $T^{\leq 2}_n$ \index{T3@ $T_n^{\leq2}$} le sous-ensemble de $T_n$ des tableaux standards dont la forme est un diagramme de Young dans $\Delta_n^{\leq 2}$.
	\item Pour tout tableau standard $t$, on appelle \emph{graphe} de $t$ le sous-graphe orienté du treillis de Young : \index{gamma@ $\gamma(t)$}
	\[ \gamma(t):= \xymatrix @-2ex {
	\overset{\emptyset}{\bullet} \ar[r] &
	\overset{\lambda^{(1)}}{\bullet} \ar[r] &
		\overset{\lambda^{(2)}}{\bullet} \ar[r] &
		\cdots \ar[r] &
		\overset{\lambda^{(n)}}{\bullet} } \]
	où, pour tout $i \in \{1,...,n\}$, $\lambda^{(i)}$ est le diagramme de Young contenant les étiquettes $1,...,i$ de $t$ (cf. la figure colorée \ref{Fig: graphes de tableaux}).
\end{enumerate}
\end{Def}

\begin{figure}[!ht]
\caption{Exemples de graphes de tableaux.}
\label{Fig: graphes de tableaux}
\[ \Yboxdim{7pt}
\xymatrix @-2ex {
	&&&& \overset{\emptyset}{\bullet} \ar@[red]@<-0.3ex>[d] \ar@[blue]@<0.3ex>[d] \\
	&&&& \overset{\yng(1)}{\bullet} \ar@[red][ld] \ar@[blue][rd] \\
	&&& \overset{\yng(1,1)}{\bullet} \ar[ld] \ar@[red][rd] && \overset{\yng(2)}{\bullet} \ar[ld] \ar@[blue][rd] \\
	&& \overset{\yng(1,1,1)}{\bullet} \ar[ld] \ar[rd] && \overset{\yng(2,1)}{\bullet} \ar[ld] \ar@[red][d] \ar[rd] && \overset{\yng(3)}{\bullet} \ar@[blue][ld] \ar[rd] \\
	& \overset{\yng(1,1,1,1)}{\bullet} \ar[ld] \ar[rd] && \overset{\yng(2,1,1)}{\bullet} \ar[ld] \ar[d] \ar[rd] & \overset{\yng(2,2)}{\bullet} \ar@[red][ld] \ar[rd] & \overset{\yng(3,1)}{\bullet} \ar[ld] \ar[d] \ar@[blue][rd] && \overset{\yng(4)}{\bullet} \ar[ld] \ar[rd] \\
 	\overset{\yng(1,1,1,1,1)}{\bullet} && \overset{\yng(2,1,1,1)}{\bullet} & \underset{\color{red} \gamma \left( \vcenter{ \hbox{ \tiny \young(13,24,5) }} \right)}{\overset{\yng(2,2,1)}{\bullet}} & \overset{\yng(3,1,1)}{\bullet} & \overset{\yng(3,2)}{\bullet} & \underset{\color{blue} \gamma \left( \vcenter{ \hbox{ \tiny \young(1235,4) }} \right)}{\overset{\yng(4,1)}{\bullet}} && \overset{\yng(5)}{\bullet} } \]
\end{figure}
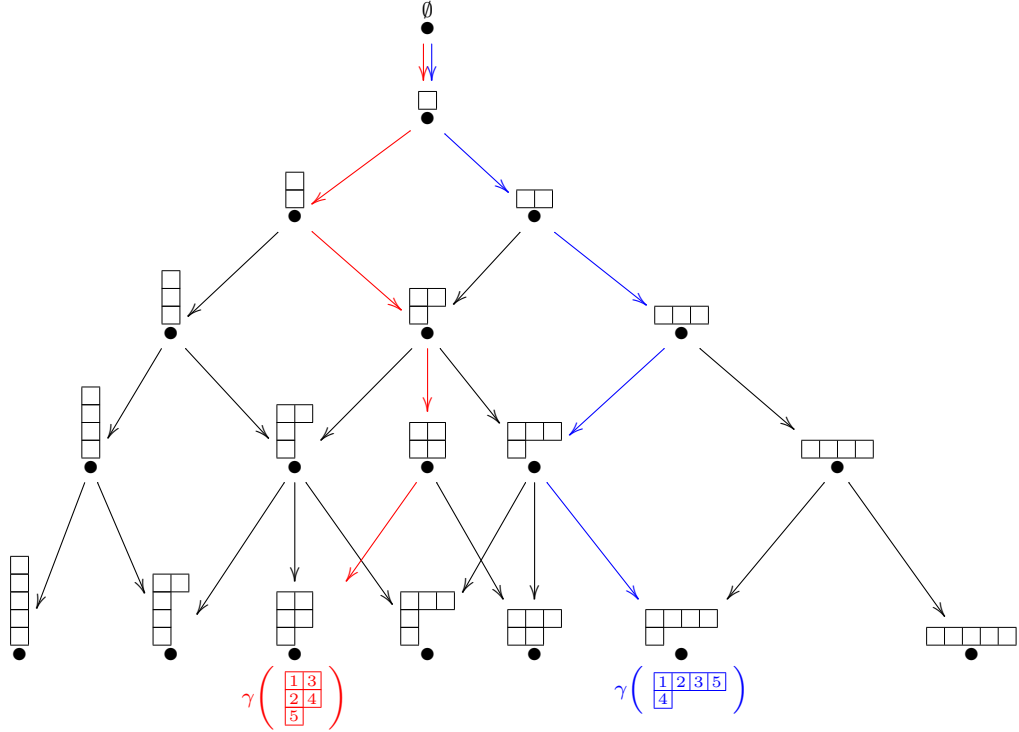

Il y a une bijection canonique entre l'ensemble $T_n$ des tableaux standards de taille $n$ et l'ensemble des sous-graphes orientés à $n-1$ arêtes du Treillis de Young. Cette représentation graphique sera utile dans la suite.

\begin{Def}[{\cite[§ 5.1.6]{KT08}}]
Soit $t \in T_n$. Pour tout $i \in \{1,...,n-1\}$, on définit la \emph{distance axiale} entre $i$ et $i+1$ dans $t$ par : \index{d @$d_t(i)$}
\[ d_t(i) := c(i) - c(i+1) + r(i+1) - r(i) \] 
où, pour tout $j \in \{1,...,n\}$, le couple $(r(j),c(j))$ désigne les coordonnées de la case de $t$ étiquetée par $j$.
\end{Def}

\begin{Prop} 
\label{Prop: modules de TLA}
Soit $\lambda \in \Delta_n^{\leq 2}$. On considère le $\C(A)$-espace vectoriel $V_\lambda$ \index{V @$V_\lambda$} librement engendré par $\left\{ v_t \; ; \; t \in T_\lambda \right\}$.
\begin{enumerate}[(i)]
	\item Le $\C(A)$-espace vectoriel $V_\lambda$ est muni d'une structure de $\TL_n(A^2)$-module à gauche par :
\[ h_i \cdot v_t = -\frac{[d_t(i)+1]_{A^2}}{[d_t(i)]_{A^2}} v_t - (1-\delta_{d_t(i),-1} ) \frac{[d_t(i)-1]_{A^2}}{[d_t(i)]_{A^2}} v_{\sigma_i(t)}, \]
où $i \in \{1,...,n-1\}$, $t \in T_\lambda$ et $\sigma_i$ désigne la transposition $(i,i+1)$. 
	\item Il existe un isomorphisme canonique de $\TL_{n-1}(A^2)$-modules à gauche :
\[ {V_\lambda}_{\vert \TL_{n-1}(A^2)} \cong \bigoplus_{\mu \in \Delta_{n-1} \; ; \; \mu \subset \lambda} V_\mu . \]
\end{enumerate}
Les modules $V_\lambda$, $\lambda \in \Delta_n^{\leq 2}$, sont appelés \emph{représentations semi-normales} de $\TL_n(A^2)$.
\end{Prop}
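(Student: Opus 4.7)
The plan is to establish (i) by a direct verification of the three families of Temperley-Lieb relations on the basis $\{v_t \; ; \; t \in T_\lambda\}$, and then deduce (ii) from the fact that $\TL_{n-1}(A^2)$ involves only the generators $h_1, \ldots, h_{n-2}$.

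First I would record two elementary facts about axial distances. If $t \in T_\lambda$ and $\sigma_i(t)$ is still a standard tableau (equivalently $|d_t(i)| \geq 2$), then $d_{\sigma_i(t)}(i) = -d_t(i)$, because swapping the labels $i$ and $i+1$ exchanges the roles of $(r(i),c(i))$ and $(r(i+1),c(i+1))$. If $\sigma_i(t)$ is not standard then $|d_t(i)| = 1$ and the second term of the formula $h_i \cdot v_t$ is killed by the factor $(1 - \delta_{d_t(i),-1})[d_t(i)-1]_{A^2}/[d_t(i)]_{A^2}$, which vanishes whenever $d_t(i) \in \{\pm 1\}$; this is the reason the action is well defined on $V_\lambda$. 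I would also note that for $|i-j|\geq 2$ the positions of $i,i+1$ and of $j,j+1$ are disjoint, so $d_t(i)$ is unaffected by $\sigma_j$ and vice-versa, while $\sigma_i$ and $\sigma_j$ commute as permutations.

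Next, I would check the three defining relations \eqref{Eqn: TLA} one at a time. The quadratic relation $h_i^2 = -[2]_{A^2} h_i$ on $v_t$ follows by computing $h_i(h_i v_t)$, collecting the coefficient of $v_t$ and of $v_{\sigma_i(t)}$, using $d_{\sigma_i(t)}(i) = -d_t(i)$ together with $[-m]_{A^2} = -[m]_{A^2}$ and the identity $[m+1]_{A^2} + [m-1]_{A^2} = [2]_{A^2}[m]_{A^2}$ from \eqref{Eqn: Acoeff2}. The far commutation $h_i h_j = h_j h_i$ for $|i-j|\geq 2$ is then immediate from the two disjointness observations above: both compositions expand into at most four terms indexed by $\{t, \sigma_i(t), \sigma_j(t), \sigma_i \sigma_j(t)\}$ with identical coefficients. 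The heart of the calculation is the braid-like relation $h_i h_{i\pm 1} h_i = h_i$. Here one uses the elementary identity
\begin{equation*}
d_{\sigma_i(t)}(i+1) = d_t(i) + d_t(i+1), \qquad d_{\sigma_{i+1}(t)}(i) = d_t(i) + d_t(i+1),
\end{equation*}
valid for any standard tableau, together with the addition formula
\begin{equation*}
[m]_{A^2}[k+1]_{A^2} - [m+1]_{A^2}[k]_{A^2} = [m-k]_{A^2}
\end{equation*}
from \eqref{Eqn: Acoeff2}, to collapse the six terms produced by $h_i h_{i+1} h_i v_t$ onto the two terms of $h_i v_t$.

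Finally, for (ii), the subalgebra $\TL_{n-1}(A^2) \subset \TL_n(A^2)$ is generated by $h_1, \ldots, h_{n-2}$, and the formula for $h_i \cdot v_t$ with $i \leq n-2$ uses only $d_t(i)$, which depends exclusively on the positions of the labels $1, \ldots, n-1$ in $t$. Thus, writing any $t \in T_\lambda$ as the pair $(t', \text{box of } n)$ where $t' \in T_\mu$ is the restriction of $t$ to $\{1, \ldots, n-1\}$ and $\mu = \lambda \setminus \{\text{box of } n\}$ lies in $\Delta_{n-1}$, the subspaces $V_\mu^{(\lambda)} := \mathrm{Vect}_{\C(A)}\{v_t \; ; \; t \text{ restricts to a tableau of shape } \mu\}$ are stable under $h_1, \ldots, h_{n-2}$, and the action on $V_\mu^{(\lambda)}$ is canonically isomorphic to that on $V_\mu$. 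The decomposition ${V_\lambda}_{\vert \TL_{n-1}(A^2)} \cong \bigoplus_{\mu \subset \lambda} V_\mu$ follows.

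I expect the braid-like relation to be the only genuinely delicate step; the quadratic relation, the far commutation, and the branching rule are all either direct or bookkeeping. The main obstacle is keeping the six coefficients in $h_i h_{i+1} h_i v_t$ in manageable form and applying the telescoping identity on $q$-integers at the right moment.
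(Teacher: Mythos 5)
Your overall strategy --- direct verification of the three Temperley--Lieb relations on the basis $\{v_t\}$, followed by the branching rule obtained by restricting tableaux to $\{1,\dots,n-1\}$ --- is exactly the paper's, and your treatment of well-definedness, of the quadratic relation, of the far commutation, and of point (ii) matches the paper's proof step for step.

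There is, however, one genuine gap, located precisely at the step you yourself identify as the heart of the matter. The identity $d_{\sigma_i(t)}(i+1) = d_t(i)+d_t(i+1)$ together with the telescoping formula $[m]_{A^2}[k+1]_{A^2}-[m+1]_{A^2}[k]_{A^2}=[m-k]_{A^2}$ does \emph{not} suffice to collapse the six terms of $h_i h_{i+1} h_i \cdot v_t$ onto $h_i\cdot v_t$ for arbitrary values of $d:=d_t(i)$ and $e:=d_t(i+1)$. Writing $[m]$ for $[m]_{A^2}$, the coefficient of $v_t$ alone would require
\[
[d+1][e+1][d+e]+[d-1][d+e+1][e]=[d][e][d+e],
\]
which fails for generic $(d,e)$ (try $d=e=1$). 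This is as it must be: the seminormal representations attached to shapes with three or more rows satisfy the Hecke braid relation but \emph{not} the Temperley--Lieb relation $h_ih_jh_i=h_i$. What saves the computation is the hypothesis $\lambda\in\Delta_n^{\leq2}$: for a standard tableau with at most two rows, the possible placements of the labels $i,i+1,i+2$ force $d=-1$, or $e=-1$, or $e=-d-1$, and it is only in these degenerate configurations that the six terms reduce to $h_i\cdot v_t$ (in the example above, $e=-d-1$ gives $[d+1][-d][-1]+0=[d][d+1]$, as required). The paper's proof makes exactly this case analysis. Your argument closes once you insert, before the ``collapse'', the observation that the two-row condition always places $(d,e)$ in one of these three cases, and then check each case separately.
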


\begin{Rem}
\label{Rem: modules de TLA}
\begin{enumerate}[(i)]
	\item Soient $t \in T_n$ et $i \in \{1,...,n-1\}$. Le tableau $\sigma_i(t)$, obtenu en échangeant les étiquettes $i$ et $i+1$ dans le tableau $t$, est standard si et seulement si les étiquettes $i$ et $i+1$ ne sont pas sur la même ligne ou sur la même colonne. Or, comme $t$ est standard, les étiquettes $i$ et $i+1$ sont sur la même ligne (resp. sur la même colonne) si et seulement si $d_t(i)=-1$ (resp. $d_t(i)=1$). L'action de $\TL_n(A^2)$ sur le $\C(A)$-espace vectoriel $V_\lambda$ est donc bien définie.
	\item Soit $\lambda = [\lambda_1, \lambda_2] \in \Delta_n^{\leq 2}$. Il existe au plus deux diagrammes de Young $\mu$ à $n-1$ case(s) tels que $\mu \subset \lambda$ : un diagramme $\mu^l = [\lambda_1-1, \lambda_2]$ si $\lambda_1 > \lambda_2$, un diagramme $\mu^r = [\lambda_1, \lambda_2-1]$ si $\lambda_2 > 0$ (cf. la figure \ref{Fig: treillis de Young} ou \ref{Fig: treillis de TL}). De plus, ces sous-diagrammes de $\lambda$ ont au plus deux lignes et ils caractérisent $\lambda$ dès que $n \geq 3$.
\end{enumerate}
\end{Rem}

\begin{proof}

\begin{enumerate}[(i)]
	\item Il s'agit de vérifier que les endomorphismes de $V_\lambda$ définis par $v_t \mapsto h_i \cdot v_t$, $1 \leq i \leq n-1$, vérifient les relations \eqref{Eqn: TLA} de l'algèbre de Temperley-Lieb. Soient $t \in T_\lambda$ et $i,j \in \{1,...,n-1\}$. On pose $d:= d_t(i)$ et $e:= d_t(j)$. On observe que $d_{\sigma_i(t)}(i)=-d$.
	
	Pour la dernière relation, on obtient :
	\begin{align*}
	h_i^2 \cdot v_t &\; \; = h_i \cdot \left( h_i \cdot v_t \right) \\
	&\; \; = \begin{multlined}[t]
		\left( \frac{[d+1]_{A^2}^2}{[d]_{A^2}^2} + \frac{[d-1]_{A^2} [-d-1]_{A^2}}{[d]_{A^2} [-d]_{A^2}} \right) v_t \\
		+ \left( \frac{[d+1]_{A^2} [d-1]_{A^2}}{[d]_{A^2}^2} + (1-\delta_{d,-1}) \frac{[d-1]_{A^2} [-d+1]_{A^2}}{[d]_{A^2} [-d]_{A^2}} \right) v_{\sigma_i(t)} 
		\end{multlined} \\
	&\; \; = \begin{multlined}[t]
		\frac{[d+1]_{A^2}^2 + [d-1]_{A^2} [d+1]_{A^2}}{[d]_{A^2}^2} v_t \\
		+ (1-\delta_{d,-1}) \frac{[d+1]_{A^2} [d-1]_{A^2} + [d-1]_{A^2}^2}{[d]_{A^2}^2} v_{\sigma_i(t)} 
		\end{multlined} \\
	&\overset{\eqref{Eqn: Acoeff2}}{=} [2]_{A^2} \frac{[d+1]_{A^2}}{[d]_{A^2}} v_t + [2]_{A^2} (1-\delta_{d,-1}) \frac{[d-1]_{A^2}}{[d]_{A^2}} v_{\sigma_i(t)} .
	\end{align*}
	Donc $h_i^2 \cdot v_t = -[2]_{A^2} h_i \cdot v_t$.
	
	Pour la deuxième relation, on suppose que $|i-j|\geq 2$. Dans ce cas, on observe que $d_{\sigma_i(t)}(j)=d$. On obtient :
	\begin{align*}
	(h_i h_j) \cdot v_t &= h_i \cdot \left( h_j \cdot v_t \right) \\
	&= \begin{multlined}[t]
		\frac{[e+1]_{A^2} [d+1]_{A^2}}{[e]_{A^2}[d]_{A^2}} v_t + (1-\delta_{d,-1}) \frac{[e+1]_{A^2} [d-1]_{A^2}}{[e]_{A^2} [d]_{A^2}} v_{\sigma_i(t)} \\
		+  (1-\delta_{e,-1}) \frac{[e-1]_{A^2} [d+1]_{A^2}}{[e]_{A^2}[d]_{A^2}} v_{\sigma_j(t)} \\
		+  (1-\delta_{e,-1})  (1-\delta_{d,-1}) \frac{[e-1]_{A^2} [d-1]_{A^2}}{[e]_{A^2} [d]_{A^2}} v_{\sigma_i \sigma_j (t)} .
		\end{multlined}
	\end{align*}
	Donc $(h_i h_j) \cdot v_t = (h_j h_i) \cdot v_t$ par symétrie du résultat en $(i,j)$.
	
	Pour le première relation, on suppose que $|i-j|=1$. Dans ce cas, on observe que $d_{\sigma_j(t)}(i)=d+e=d_{\sigma_i(t)}(j)$ et $d_{\sigma_j \sigma_i(t)}(i)=e$. On obtient :
	\begin{align*}
	(h_i h_j h_i) \cdot v_t &= h_i \cdot \left( h_j \cdot \left( h_i \cdot v_t \right) \right) \notag \notag \\
	&= \begin{multlined}[t]
		- \left( \frac{[d+1]_{A^2}^2 [e+1]_{A^2}}{[d]_{A^2}^2 [e]_{A^2}} + \frac{[d-1]_{A^2} [d+e+1]_{A^2} [-d-1]_{A^2}}{[d]_{A^2} [d+e]_{A^2} [-d]_{A^2}} \right) v_t \tag{1} \\
		- \Bigg( \frac{[d+1]_{A^2} [e+1]_{A^2} [d-1]_{A^2}}{[d]_{A^2}^2 [e]_{A^2}} \notag \\
		+ (1-\delta_{d,-1}) \frac{[d-1]_{A^2} [d+e+1]_{A^2} [-d+1]_{A^2}}{[d]_{A^2} [d+e]_{A^2} [-d]_{A^2}} \Bigg) v_{\sigma_i(t)} \notag \\
		- (1-\delta_{e,-1}) \frac{[d+1]_{A^2} [e-1]_{A^2} [d+e+1]_{A^2}}{[d]_{A^2} [e]_{A^2} [d+e]_{A^2}} v_{\sigma_j(t)} \notag \\
		- (1-\delta_{e,-1})(1-\delta_{d+e,-1}) \frac{[d+1]_{A^2} [e-1]_{A^2} [d+e-1]_{A^2}}{[d]_{A^2} [e]_{A^2} [d+e]_{A^2}} v_{\sigma_i \sigma_j(t)} \notag \\
		- (1-\delta_{d,-1}) (1-\delta_{d+e,-1}) \frac{[d-1]_{A^2} [d+e-1]_{A^2} [e+1]_{A^2}}{[d]_{A^2} [d+e]_{A^2} [e]_{A^2}} v_{\sigma_j \sigma_i(t)} \notag \\
		- (1-\delta_{d,-1}) (1-\delta_{d+e,-1}) (1-\delta_{e,-1}) \notag \\
		\times \frac{[d-1]_{A^2} [d+e-1]_{A^2} [e-1]_{A^2}}{[d]_{A^2} [d+e]_{A^2} [e]_{A^2}} v_{\sigma_i \sigma_j \sigma_i(t)} .
		\end{multlined}
	\end{align*}
	On distingue deux cas.
	\begin{Cas}
	On suppose que les étiquettes $i$ et $j$ sont sur une même ligne dans la diagramme de Young $\lambda$. Alors $d=-1$ ou $e=-1$. Pour $d=-1$, l'égalité $(1)$ donne $(h_i h_j h_i) \cdot v_t = 0 = h_i \cdot v_t$. Pour $e=-1$, l'égalité $(1)$ donne :
	\[ (h_i h_j h_i) \cdot v_t = - \frac{[d+1]_{A^2}}{[d]_{A^2}} v_t + (1-\delta_{d,-1}) \frac{[d-1]_{A^2}}{[d]_{A^2}} v_{\sigma_i(t)} = h_i \cdot v_t . \]
	\end{Cas}
	\begin{Cas}
	On suppose que les étiquettes $i$ et $j$ ne sont pas sur une même ligne dans la diagramme de Young $\lambda$. Alors $e \in \{-1,-d-1\}$ ou $d \in \{-1, e-1\}$. Les cas $e=-1$ et $d=-1$ ont déjà été traités dans le cas 1. Pour $e=-d-1$ (équivalent à $d=-e-1$), l'égalité $(1)$ donne encore :
	\[ (h_i h_j h_i) \cdot v_t = - \frac{[d+1]_{A^2}}{[d]_{A^2}} v_t + (1-\delta_{d,-1}) \frac{[d-1]_{A^2}}{[d]_{A^2}} v_{\sigma_i(t)} = h_i \cdot v_t . \]
	\end{Cas}
	
	\item On a une partition des tableaux standards de forme $\lambda$ :
	\[ T_\lambda = \coprod_{\mu \in \Delta_{n-1} \; ; \; \mu \subset \lambda} T_\mu . \]
	Elle induit un isomorphisme de $\C(A)$-espaces vectoriels :
	\[ V_\lambda \cong \bigoplus_{\mu \in \Delta_{n-1} \; ; \; \mu \subset \lambda} V_\mu . \]
	Or, pour tout diagramme de Young $\mu \in \Delta_{n-1}$, le $\C(A)$-espace vectoriel $V_{\mu}$ est stable sous l'action des générateurs $h_0,..., h_{n-2}$ de $\TL_{n-1}(A^2)$ (mais pas sous l'action de $h_{n-1}$). En identifiant $TL_{n-1}(A^2)$ avec la sous-algèbre de $\TL_n(A^2)$ engendrée par les éléments $h_0,..., h_{n-2}$ (via l'injection canonique \ref{Eqn: injections de TL}), on a donc un isomorphisme de $\TL_{n-1}(A^2)$-modules à gauche.
\end{enumerate}
\end{proof}

L'ensemble $\{ V_\lambda \; ; \; \lambda \in \Delta^{\leq 2}_n \}$ des représentations semi-normales de $\TL_n(A^2)$ fournit une décomposition de $\TL_n(A^2)$.

\begin{Thm} 
\label{Thm: structure TLA}
\begin{enumerate}[(i)]
	\item Pour tout $\lambda \in \Delta_n^{\leq 2}$, le $\TL_n(A^2)$-module à gauche $V_\lambda$ est simple.
	\item Pour tous $\lambda, \mu \in \Delta_n^{\leq 2}$ tels que $\lambda \not = \mu$, les $\TL_n(A^2)$-modules à gauche $V_\lambda$ et $V_\mu$ ne sont pas isomorphes.
	\item On a un isomorphisme de $\TL_n(A^2)$-modules à gauche :
	\[ \TL_n(A^2) \cong \bigoplus_{\lambda \in \Delta_n^{\leq 2}} f_\lambda V_\lambda \]
	où, pour tout $\lambda \in \Delta_n^{\leq 2}$, $f_\lambda$ est le nombre de tableaux standards de forme $\lambda$.
\end{enumerate}
\end{Thm}

\begin{Rem}
\label{Rem: structure TLA}
Le théorème \ref{Thm: structure TLA} s'étend pour la valeur $n=1$. Dans ce cas, on a $\Delta_1^{\leq2} = \{ \lyng{1} \}$ et on considère le $\C(A)$-espace vectoriel $V_{\lyng{1}}$ librement engendré par $v_{\lyoung{1}}$. Alors $V_{\lyng{1}} \cong \C(A)$ et $\TL_1(A^2) \cong \C(A) \cong V_{\lyng{1}}$.
\end{Rem}

\begin{proof}
\begin{enumerate}[(i)]
	\item On procède par récurrence sur $n \geq 2$. 
	
	Pour $n=2$, on a $\Delta_2^{\leq2} = \{ \lyng{2}, \lyng{1,1} \}$. Les $\TL_2(A^2)$-modules à gauches $V_{\lyng{2}}$ et $V_{\lyng{1,1}}$, respectivement engendrés sous $\C(A)$ par $ v_{\lyoung{12}}$ et $v_{\lyoung{1,2}}$, sont de $\C(A)$-dimension $1$ donc simples.
	
	Soit $n \geq 2$. On suppose que, pour tout $\mu \in \Delta_n^{\leq 2}$, le $\TL_n(A^2)$-module à gauche $V_\mu$ est simple. Soit $\lambda \in \Delta_{n+1}^{\leq 2}$. Montrons que le $\TL_{n+1}(A^2)$-module à gauche $V_\lambda$ est simple. Soient $V$ un sous-module à gauche non nul de $V_\lambda$, et $V'$ un sous-module à gauche simple de $V_{\vert \TL_n(A^2)}$. D'après la proposition \ref{Prop: modules de TLA}, le $\TL_n(A^2)$-module à gauche induit ${V_\lambda}_{\vert \TL_n(A^2)}$ admet la décomposition en somme directe :
	\begin{equation} \tag{1}
	{V_\lambda}_{\vert \TL_n(A^2)} \cong \bigoplus_{\mu \in \Delta_n \; ; \; \mu \subset \lambda} V_\mu = \bigoplus_{\mu \in \Delta_n^{\leq 2} \; ; \; \mu \subset \lambda} V_\mu .
	\end{equation}
	Or, d'après l'hypothèse de récurrence, les $\TL_n(A^2)$-modules à gauche de cette décomposition sont simples. Il existe donc un diagramme de Young $\mu \in \Delta_n^{\leq 2}$ tel que $\mu \subset \lambda$ et $V' \cong V_\mu$. On distingue deux cas.
	
	\begin{Cas}
	On suppose que $\mu$ est le seul diagramme de Young à $n$ cases tel que $\mu \subset \lambda$. Alors on a :
	\[ {V_\lambda}_{\vert \TL_n(A^2)} \overset{(1)}{\cong} V_\mu \cong V' \subseteq V_{\vert \TL_n(A^2)}. \]
	Comme $V$ est un sous-module à gauche de $V_\lambda$, il s'ensuit que $V_{\vert \TL_n(A^2)} = {V_\lambda}_{\vert \TL_n(A^2)}$ donc $V = V_\lambda$.
	\end{Cas}
	
	\begin{Cas}
	On suppose qu'il existe un diagramme de Young $\mu' \not = \mu$ à $n$ cases tel que $\mu' \subset \lambda$. On note $(r,c)$ la coordonnée de la case $\lambda \setminus \mu$ et $(r',c')$ celle de $\lambda \setminus \mu'$. Alors $|r-r'| = 1$ et $|c-c'| \geq 1$ (cf. la remarque \ref{Rem: modules de TLA}). On considère un tableau standard $t \in T_\lambda$ dont les cases $(r,s)$ et $(r',c')$ sont respectivement étiquetées par $n+1$ et $n$. Par construction, les étiquettes $n$ et $n+1$ ne sont ni sur la même ligne, ni sur la même colonne. Donc $\sigma_n(t) \in T_\lambda$ et $d_t(n) \not = \pm 1$ (cf. la remarque \ref{Rem: modules de TLA}). De plus :
	\begin{equation} \tag{2}
	\begin{aligned}
	&h_n \cdot v_t = - \frac{[d_t(n)+1]_{A^2}}{[d_t(n)]_{A^2}} v_t  - (1-\delta_{d_t(n),-1} ) \frac{[d_t(n)-1]_{A^2}}{[d_t(n)]_{A^2}} v_{\sigma_n(t)}, \\
	\Longleftrightarrow \qquad& h_n \cdot v_t + \frac{[d_t(n)+1]_{A^2}}{[d_t(n)]_{A^2}} v_t = - \underbrace{(1-\delta_{d_t(n),-1} ) \frac{[d_t(n)-1]_{A^2}}{[d_t(n)]_{A^2}}}_{\neq 0} v_{\sigma_n(t)},
	\end{aligned}
	\end{equation}	
	 où $v_t \in V_\mu \cong V' \subseteq V$ et $v_{\sigma_n(t)} \in V_{\mu'}$ selon la décomposition $(1)$. Comme $V$ est un $\TL_{n+1}(A^2)$-module à gauche, on a aussi $h_n \cdot v_t \in V$, donc $v_{\sigma_n(t)} \in V$ d'après $(2)$. Il s'ensuit que $V_{\vert \TL_n(A^2)} \cap V_{\mu'}$ est un sous-module non nul de $V_{\mu'}$. Or, d'après l'hypothèse de récurrence, le $\TL_n(A^2)$-module à gauche $V_{\mu'}$ est simple. Donc $V_{\vert \TL_n(A^2)} \cap V_{\mu'} = V_{\mu'}$ i.e. $V_{\mu'} \subseteq V_{\vert \TL_n(A^2)}$. Ceci étant valable pour tout diagramme de Young $\mu' \not = \mu$ à $n$ cases tel que $\mu' \subset \lambda$ (en fait il n'y a que $\mu'$ : cf. la remarque \ref{Rem: modules de TLA}), on a :
	\[ {V_\lambda}_{\vert \TL_n(A^2)} \overset{(1)}{\cong} V_\mu \oplus \bigoplus_{\substack{\mu' \in \Delta_n^{\leq2} \; ; \; \mu' \subset \lambda \\ \mu' \neq \mu}} V_{\mu'} \subseteq V_{\vert \TL_n(A^2)} . \]
	Comme $V$ est un sous-module à gauche de $V_\lambda$, il s'ensuit que $V_{\vert \TL_n(A^2)} = {V_\lambda}_{\vert \TL_n(A^2)}$ donc $V = V_\lambda$.
	\end{Cas}
	
	Dans chacun des cas, on a $V = V_\lambda$. Par conséquent, le $\TL_{n+1}(A^2)$-module à gauche $V_\lambda$ est simple. Ceci étant valable pour tout diagramme de Young $\lambda \in \Delta_{n+1}^{\leq2}$, cela prouve l'hérédité et achève la récurrence.
	
	\item Pour $n=2$, on a $\Delta_2^{\leq2} = \{ \lyng{2}, \lyng{1,1} \}$. Les $\TL_2(A^2)$-modules à gauches $V_{\lyng{2}}$ et $V_{\lyng{1,1}}$, respectivement engendrés par $ v_{\lyoung{12}}$ et $v_{\lyoung{1,2}}$, ne sont pas isomorphes puisque : 
	\[ h_1 \cdot v_{\lyoung{12}} = 0, \quad h_1 \cdot v_{\lyoung{1,2}} = - [2]_{A^2} v_{\lyoung{1,2}} . \]
	On suppose que $n \geq 3$. Soient $\lambda, \mu \in \Delta_n^{\leq 2}$ tels que $V_\lambda \cong V\mu$. Montrons que $\lambda = \mu$. D'après la proposition \ref{Prop: modules de TLA}, l'isomorphisme $V_\lambda \cong V_\mu$ induit un isomorphisme de $\TL_{n-1}(A^2)$-modules à gauche :
	\[ \bigoplus_{\lambda' \in \Delta_{n-1} \; ; \; \lambda' \subset \lambda} V_{\lambda'} \cong \bigoplus_{\mu' \in \Delta_{n-1} \; ; \; \mu' \subset \mu} V_{\mu'} , \]
	où tous les $\TL_{n-1}(A^2)$-modules à gauche de ces décompositions sont simples d'après $(i)$. D'après le théorème de Krull-Schmidt (cf. par exemple \cite[Thm 14.5]{CR62}), on en déduit que :
	\[ \left\{ \lambda' \in \Delta_n^{\leq 2} \; ; \; \lambda' \subset \lambda \right\} = \left\{ \mu' \in \Delta_n^{\leq 2} \; ; \; \mu' \subset \mu \right\} . \]
	 Par conséquent, $\lambda = \mu$ (cf. l'assertion $(ii)$ de la remarque \ref{Rem: modules de TLA}).
	 
	 \item D'après ce qui précède, on a un ensemble $\{ V_\lambda \; ; \; \lambda \in \Delta_n^{\leq 2} \}$ de $\TL_n(A^2)$-modules à gauche simples, deux-à-deux non isomorphes. Montrons que, à isomorphisme près, on obtient ainsi tous les facteurs directs de la représentation régulière à gauche de $\TL_n(A^2)$.
	 
	 On sait que tout $\TL_n(A^2)$-module simple est isomorphe à un facteur direct du module quotient $\TL_n(A^2) / \Rad(\TL_n(A^2))$ (cf. par exemple \cite[Thm 54.5, Cor. 54.13]{CR62}), et que sa multiplicité est donnée par sa dimension (cf. par exemple \cite[Thm 54.11]{CR62}). On a donc un morphisme injectif de $\TL_n(A^2)$-modules à gauche :
	 \begin{equation} \tag{2}
	 \bigoplus_{\lambda \in \Delta_n^{\leq2}} \dim_{\C(A)}(V_\lambda) V_\lambda \hookrightarrow \TL_n(A^2) / \Rad(\TL_n(A^2))
	 \end{equation}
	 où $\dim_{\C(A)} (V_\lambda) = f_\lambda$. La $\C(A)$-algèbre de gauche a pour dimension :
	 \[ \sum_{\lambda \in \Delta_n^{\leq 2}} \dim_{\C(A)} (V_\lambda)^2 = \sum_{\lambda \in \Delta_n^{\leq 2}} (f_\lambda)^2 = \frac{1}{n+1} {2n \choose n} \] 
	 (cf. par exemple \cite[Exercice 5.2.3]{KT08}) ; il s'agit du $n$-ième \emph{nombre Catalan}. Il s'ensuit que $\dim_{\C(A)} \TL_n(A^2) \geq  \frac{1}{n+1} {2n \choose n}$. Par ailleurs, on sait que $\TL_n(A^2)$ est librement engendrée sous $\C(A)$ par l'ensemble des mots réduits, dont la cardinalité est aussi $\frac{1}{n+1} {2n \choose n}$ (cf. par exemple \cite[§ 5.7.1]{KT08}). Donc $\dim_{\C(A)} \TL_n(A^2) =  \frac{1}{n+1} {2n \choose n}$. Par conséquent, on a $\Rad(\TL_n(A^2)) = \{0\}$ et le morphisme $(2)$ fournit un isomorphisme de $\TL_n(A^2)$-modules à gauche :
	 \[ \bigoplus_{\lambda \in \Delta_n^{\leq2}} f_\lambda V_\lambda \cong \TL_n(A^2). \]
\end{enumerate}
\end{proof}

Combiné avec le théorème de Wedderburn (cf. par exemple \cite[§ 3.5]{Pie82}), le théorème \ref{Thm: structure TLA} donne la semi-simplicité de l'algèbre de Temperley-Lieb $\TL_n(A^2)$ et sa décomposition en somme directe d'idéaux indécomposables :
\begin{equation}
\label{Eqn: structure TLA1}
\TL_n(A^2) \cong \bigoplus_{\lambda \in \Delta^{\leq2}_n} \End_{\C(A)}( V_\lambda ) .
\end{equation}

Pour tout $\lambda \in \Delta_n^{\leq2}$, on note $z_\lambda$ \index{z@ $z_\lambda$} le PCI associé au facteur direct $\End_{\C(A)}(V_\lambda)$ (cf. par exemple \cite[§ 25]{CR62}). Il est défini comme l'unique élément de $\TL_n(A^2)$ tel que :
\[ z_\lambda \TL_n(A^2) = \TL_n(A^2) z_\lambda \cong \End_{\C(A)} (V_\lambda ) . \]
Les PCIs $z_\lambda$, $\lambda \in \Delta_n^{\leq 2}$, sont deux-à-deux orthogonaux et vérifient :
\[ 1 = \sum_{ \lambda \in \Delta_n^{\leq 2} } z_\lambda, \qquad \forall \lambda \in \Delta_n^{\leq2} \qquad z_\lambda = \sum_{t \in T_\lambda} p_t , \]
où, pour tous $\lambda \in \Delta_n^{\leq 2}$ et $t \in T_\lambda$, l'élément $p_t \in z_\lambda \TL_n(A^2)$ \index{p2@ $p_t$} agit sur $V_\lambda$ comme la projection orthogonale sur $\C(A) v_t$ (i.e sa matrice de représentation dans la $\C(A)$-base $\{v_t \; ; \; t \in T_\lambda \}$ est une matrice élémentaire de la forme $E_{ii}$ avec $i \in \{1,...,f_\lambda\}$). Par construction des représentations semi-normales, les ensembles $\{ p_t \; ; \; t \in T_\lambda \}$ et $\{ p_t \; ; \; t \in T_n^{\leq2} \}$ sont respectivement les POIs de $z_\lambda \TL_n(A^2)$ et de $\TL_n(A^2)$ (cf. par exemple \cite[§ 25-26]{CR62}). Ils déterminent la décomposition de $\TL_n(A^2)$ en somme directe de $\TL_n(A^2)$-modules à gauche (resp. à droite) indécomposables :
\begin{equation}
\label{Eqn: structure TLA2}
\begin{gathered}
\TL_n(A^2) = \bigoplus_{\lambda \in \Delta^{\leq2}_n} \TL_n(A^2) z_\lambda = \bigoplus_{\lambda \in \Delta^{\leq2}_n} \bigoplus_{t \in T_\lambda} \TL_n(A^2) p_t \\
\left( \text{resp. } \TL_n(A^2) = \bigoplus_{\lambda \in \Delta^{\leq2}_n} z_\lambda \TL_n(A^2) = \bigoplus_{\lambda \in \Delta^{\leq2}_n} \bigoplus_{t \in T_\lambda} p_t \TL_n(A^2) \right) .
\end{gathered}
\end{equation}
On retrouve la décomposition donnée dans le théorème \ref{Thm: structure TLA} puisque :
\[ \forall \lambda \in \Delta_n^{\leq 2} \quad \forall t \in T_\lambda \qquad \TL_n(A^2) p_t \cong V_\lambda
\qquad \left( \text{resp. } p_t \TL_n(A^2) \cong V_\lambda \right). \]

On cherche une expression de ces POIs de $\TL_n(A^2)$. Pour cela, on établit des relations de type récurrence à l'aide de nouvelles notations.

\begin{Def}
Soit $t \in T_n^{\leq 2}$.
\begin{enumerate}[(i)]
	\item On appelle \emph{treillis de Temperley-Lieb} le sous-graphe orienté du treillis de Young formé des sommets étiquetés par des diagrammes de Young avec au plus deux lignes (cf. la figure \ref{Fig: treillis de TL}).
	\item On note $t'$ \index{t4 @$t'$} le sous-tableau standard de $t$ formé des cases étiquetées par $\{1,...,n-1\}$. Pour $n \geq 3$, on définit $t^{(i)}$ par la relation de récurrence :
	\[ t^{(i)} = {t^{(i-1)}}', \quad 2 \leq i \leq n-1. \]
	\item S'il existe, on note $\hat{t}$ \index{t5 @$\hat{t}$} le tableau standards de $T_n^{\leq 2}$ tel que $\hat{t}'=t'$ et $\hat{t}\not =t$.
\end{enumerate}
\end{Def}

Ces définitions s'interprètent géométriquement sur le treillis de Temperley-Lieb et sont illustrées dans la figure colorée \ref{Fig: treillis de TL}.

\begin{figure}[!ht]
\caption{Treillis de Temperley-Lieb.}
\label{Fig: treillis de TL}
\[ \Yboxdim{7pt}
\xymatrix @-1ex {
	\overset{\emptyset}{\bullet} \ar@[red][rd]^ {\color{red} t^{(5)}} \\
	& \overset{\yng(1)}{\bullet} \ar[ld] \ar@[red][rd]^ {\color{red} t^{(4)}} \\
	\overset{\yng(1,1)}{\bullet} \ar[rd] && \overset{\yng(2)}{\bullet} \ar@[red][ld]_{\color{red} t'''} \ar[rd] \\
	& \overset{\yng(2,1)}{\bullet} \ar[ld] \ar@[red][rd]^{\color{red} t''} && \overset{\yng(3)}{\bullet} \ar[ld] \ar[rd] \\
	\overset{\yng(2,2)}{\bullet} \ar[rd] && \overset{\yng(3,1)}{\bullet} \ar[ld] \ar@[red][rd]^{\color{red} t'} && \overset{\yng(4)}{\bullet} \ar[ld] \ar[rd] \\
	& \overset{\yng(3,2)}{\bullet} \ar[ld] \ar[rd] && \overset{\yng(4,1)}{\bullet} \ar@[red][ld]^{\color{red} \hat{t}} \ar@[red][rd]_{\color{red} t} && \overset{\yng(5)}{\bullet} \ar[ld] \ar[rd] \\
	\overset{\yng(3,3)}{\bullet} && \overset{\yng(4,2)}{\bullet} && \overset{\yng(5,1)}{\bullet} && \overset{\yng(6)}{\bullet} } \]
\end{figure}
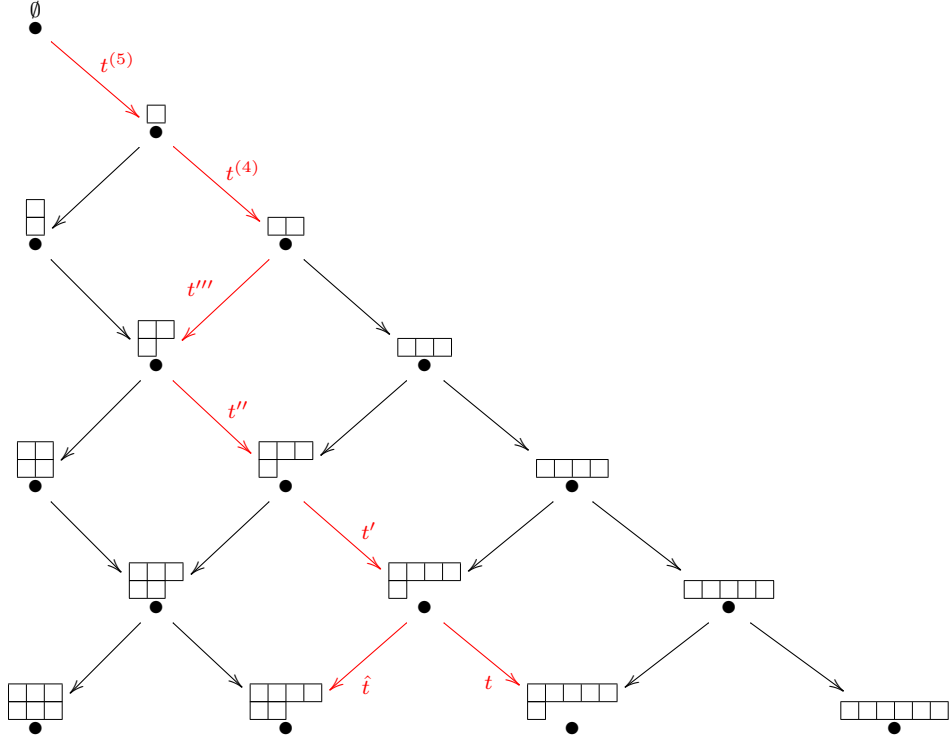

\begin{Rem}
\label{Rem: treillis de TL1}
Soit $t \in T_n^{\leq 2}$ de forme $\lambda = [\lambda_1, \lambda_2]$. 
\begin{enumerate}[(i)]
	\item Il existe au plus un tableau standard $s \in T_n^{\leq 2}$ tel que $s'=t'$ et $s \not =t$. En effet, pour tout $r \in T_{n-1}^{\leq 2}$, il existe au plus deux tableaux standards $s,t \in T_n^{\leq 2}$ tels que $t'=r=s'$. Plus précisément, si $r$ est de forme $\mu=[\mu_1,\mu_2]$ telle que $\mu_1=\mu_2$, alors il existe exactement un tableau standard $t \in T_n^{\leq 2}$ tel que $t'=r$. Sinon, il en existe exactement deux. Donc $\hat{t}$ est défini sans équivoque. 
	\item Si $\hat{t}$ n'existe pas, alors $d_{t}(n-1) = -2$. Sinon, les étiquettes $n-1$ et $n$ sont sur la même ligne dans $t$ ou exclusivement dans $\hat{t}$. Ces cas sont respectivement caractérisés par les égalités $d_t(n-1)=-1$ et $d_{\hat{t}}(n-1)=-1$. Plus précisément, si $t'$ est de forme $[\mu_1,\mu_2]$, alors on a quatre configurations possibles illustrées dans la figure \ref{Fig: conf sur TL1}.
\end{enumerate}
\end{Rem}

\begin{figure}[!ht]
\caption{Configurations possibles sur le treillis de Temperley-Lieb.}
\label{Fig: conf sur TL1}
\[ \begin{array}{|c|cc|cc|}
\hline 
& \multicolumn{2}{c|}{d_t(n-1)=-1} & \multicolumn{2}{c|}{d_t(n-1) \neq -1} \\  
\hline
\multirow{9}{*}{$\exists \hat{t}$} &
\vcenter{ \xymatrix @-2ex {
	\bullet \ar[rd]^{t'} \\
	& \bullet \ar[ld]_{\hat{t}} \ar[rd]^{t} \\
	\bullet && \bullet }} &
\begin{aligned}
	d_t(n-1) &= -1, \\ 
	d_{\hat{t}}(n-1) &= \mu_1-\mu_2 \\ &= \lambda_1-\lambda_2-1.
	\end{aligned} &
\vcenter{ \xymatrix @-2ex {
	\bullet \ar[rd]^{t'} \\
	& \bullet \ar[ld]_{t} \ar[rd]^{\hat{t}} \\
	\bullet && \bullet }} &
\begin{aligned} 
	d_t(n-1) &= \mu_1-\mu_2 \\ &= \lambda_1-\lambda_2+1, \\ 
	d_{\hat{t}}(n-1) &= -1.
	\end{aligned} \\
\cline{2-5}
& \vcenter{ \xymatrix @-2ex {
	&& \bullet \ar[ld]_{t'} \\
	& \bullet \ar[ld]_{t} \ar[rd]^{\hat{t}} \\
	\bullet && \bullet }} &
\begin{aligned} 
	d_t(n-1) &= -1, \\ 
	d_{\hat{t}}(n-1) &= -\mu_1+\mu_2-2 \\ &= -\lambda_1+\lambda_2-3.
	\end{aligned} &
\vcenter{ \xymatrix @-2ex {
	&& \bullet \ar[ld]_{t'} \\
	& \bullet \ar[ld]_{\hat{t}} \ar[rd]^{t} \\
	\bullet && \bullet }} &
\begin{aligned} 
	d_t(n-1) &= -\mu_1+\mu_2-2 \\ &= -\lambda_1+\lambda_2-1, \\ 
	d_{\hat{t}}(n-1) &= -1.
	\end{aligned} \\
\hline
\nexists \hat{t} && &
\vcenter{ \xymatrix @-2ex {
	& \ar@[gray]@{--}[dd] & \bullet \ar[ld]_{t'} \\
	& \bullet \ar[rd]^{t} \\
	&& \bullet }} &
\begin{aligned} 
	d_t(n-1) &= -2.
	\end{aligned} \\
\hline
\end{array} \]
\end{figure}

\begin{Prop} 
\label{Prop: POIs de TLA}
Les POIs $p_t$, $t \in T_n^{\leq 2}$, de $\TL_n(A^2)$ vérifient :
\begin{align*}
& p_{\lyoung{1}} = 1, \\
& p_t = \begin{multlined}[t]
	\begin{cases} 
	p_{t'} & \text{ si $\hat{t}$ n'existe pas }, \\
	- \frac{[d_t(n-1)]_{A^2}}{[d_t(n-1)+1]_{A^2}} p_{t'} h_{n-1} p_{t'} & \text{ si $\hat{t}$ existe et } d_t(n-1) \not = -1, \\
	p_{t'} + \frac{[d_{\hat{t}}(n-1)]_{A^2}}{[d_{\hat{t}}(n-1)+1]_{A^2}} p_{t'} h_{n-1} p_{t'} & \text{ si $\hat{t}$ existe et } d_t(n-1) = -1,
	\end{cases}
	 \quad t \in T_n^{\leq 2}, \; n \geq 2.
	\end{multlined}
\end{align*}
\end{Prop}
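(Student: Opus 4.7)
Le plan est de proc\'eder par r\'ecurrence sur $n \geq 2$, en exploitant la caract\'erisation des POIs d\'ecoulant de la d\'ecomposition \eqref{Eqn: structure TLA2} : pour tout $t \in T_n^{\leq 2}$ de forme $\lambda$, l'idempotent $p_t$ est l'unique \'el\'ement de $\TL_n(A^2)$ agissant sur $V_\lambda$ comme la projection orthogonale sur $\C(A) v_t$ (annulant les autres $v_s$, $s \in T_\lambda$), et s'annulant sur tous les $V_{\lambda'}$ avec $\lambda' \neq \lambda$. L'initialisation $n = 2$ r\'esultera d'un calcul direct sur les deux modules unidimensionnels $V_{\lyng{2}}$ et $V_{\lyng{1,1}}$.

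Pour l'h\'er\'edit\'e, soient $t \in T_n^{\leq 2}$ de forme $\lambda$ et $\mu$ la forme de $t'$. Je commencerai par exploiter la proposition \ref{Prop: modules de TLA} : la restriction ${V_\lambda}_{\vert \TL_{n-1}(A^2)}$ se d\'ecompose canoniquement en $\bigoplus_{\mu' \subset \lambda} V_{\mu'}$, chaque facteur \'etant engendr\'e par les $v_s$ avec $s \in T_\lambda$ et $s'$ de forme $\mu'$. Par hypoth\`ese de r\'ecurrence, $p_{t'} \in \TL_{n-1}(A^2) \hookrightarrow \TL_n(A^2)$ agira donc sur $V_\lambda$ comme la projection orthogonale sur la droite engendr\'ee par l'unique extension $s \in T_\lambda$ avec $s' = t'$, et s'annulera sur tout $V_{\lambda''}$ ne contenant pas $\mu$. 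Cette analyse \'etablira imm\'ediatement le cas o\`u $\hat{t}$ n'existe pas, puisqu'alors $p_{t'} = p_t$. Dans le cas contraire, on aura $p_{t'} = p_t + p_{\hat{t}}$, et il faudra s\'eparer les deux contributions \`a l'aide de $h_{n-1}$.

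Pour cela, je calculerai l'action de $p_{t'} h_{n-1} p_{t'}$ sur $v_t \in V_\lambda$ et sur $v_{\hat{t}} \in V_{\hat{\lambda}}$ via la formule d'action de $h_{n-1}$ fournie par la proposition \ref{Prop: modules de TLA}. L'observation cl\'e sera que, pour tout $s \in T_n^{\leq 2}$ tel que $\sigma_{n-1}(s)$ soit standard, le sous-tableau $\sigma_{n-1}(s)'$ a une forme distincte de celle de $s'$ (car les \'etiquettes $n-1$ et $n$ y sont \'echang\'ees) ; par cons\'equent, $p_{t'}$ annihile syst\'ematiquement $v_{\sigma_{n-1}(s)}$. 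Seules survivront donc les contributions diagonales, de la forme $p_{t'} h_{n-1} p_{t'} v_t = -\frac{[d_t(n-1)+1]_{A^2}}{[d_t(n-1)]_{A^2}} v_t$ et analoguement pour $v_{\hat{t}}$. Il restera \`a constater, via la remarque \ref{Rem: treillis de TL1} et la figure \ref{Fig: conf sur TL1}, que si $\hat{t}$ existe alors exactement l'un des entiers $d_t(n-1), d_{\hat{t}}(n-1)$ vaut $-1$ ; cette asym\'etrie justifie la dichotomie des formules et garantit que les d\'enominateurs ne s'annulent jamais. La principale difficult\'e sera essentiellement comptable : v\'erifier que le pr\'efacteur choisi annule pr\'ecis\'ement la contribution de $v_{\hat{t}}$ dans $V_{\hat{\lambda}}$ (via $[0]_{A^2} = 0$) tout en restituant $v_t$ dans $V_\lambda$.
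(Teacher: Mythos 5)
Votre d\'emonstration est correcte et reprend pour l'essentiel l'argument du texte : caract\'erisation de $p_{t'}$ par $p_{t'} \cdot v_s = \delta_{s',t'} v_s$ sur les repr\'esentations semi-normales, calcul de $p_{t'} h_{n-1} p_{t'}$ o\`u les termes en $v_{\sigma_{n-1}(s)}$ disparaissent parce que $\sigma_{n-1}(s)'$ n'a pas la m\^eme forme que $t'$, puis dichotomie selon lequel de $d_t(n-1)$ ou $d_{\hat{t}}(n-1)$ vaut $-1$ (remarque \ref{Rem: treillis de TL1}). L'habillage en r\'ecurrence sur $n$ est superflu mais inoffensif : le texte obtient directement l'identit\'e cl\'e \`a partir de la proposition \ref{Prop: modules de TLA} et de la d\'ecomposition \eqref{Eqn: structure TLA2}.
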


\begin{proof}
On rajoute la valeur $n=1$. Dans ce cas, on a $T_1^{\leq2} = \{ \lyoung{1} \}$. On considère l'élément $p_{\lyoung{1}} \in \TL_1(A^2)$ qui agit sur $V_{\lyng{1}} = \C(A) v_{\lyoung{1}}$ comme la projection orthogonale sur $\C(A) v_{\lyoung{1}}$. Or $\TL_1(A^2) \cong \C(A) \cong \End_{\C(A)}(V_{\lyng{1}})$. Donc le PCI de $\TL_1(A^2)$ est $z_ {\lyng{1}} = 1$ et $p_{\lyoung{1}} = z_ {\lyng{1}} = 1$. 

Soient $n \geq 2$ et $t, s \in T_n^{\leq2}$. On étudie les éléments de $\TL_n(A^2)$ via leurs actions sur les représentations semi-normales $\{ V_\lambda \; ; \; \lambda \in \Delta^{\leq2}_n \}$ grâce à l'isomorphisme \eqref{Eqn: structure TLA1}. D'après la description des $\TL_n(A^2)$-modules à gauche donnée dans la proposition \ref{Prop: modules de TLA}, on a :
\begin{equation} \tag{1}
p_{t'} \cdot v_s = \delta_{s',t'} v_s .
\end{equation}
On distingue deux cas.

\begin{Cas}
On suppose que $\hat{t}$ n'existe pas. Alors $t$ est le seul tableau standard $s \in T_n^{\leq 2}$ tel que $s'=t'$. L'équation $(1)$ donne donc $p_{t'} = p_t$.
\end{Cas}

\begin{Cas}
On suppose que $\hat{t}$ existe. Alors $t$ et $\hat{t}$ sont les tableaux standards $s \in T_n^{\leq 2}$ tels que $s'=t'$. L'équation $(1)$ donne donc $p_{t'} = p_t + p_{\hat{t}}$. On note $d:= d_s(n-1)$ et on réutilise la proposition \ref{Prop: modules de TLA} :
\begin{align*}
h_{n-1} p_{t'} \cdot v_s &= \delta_{s',t'} h_{n-1} \cdot v_s \\
&= - \delta_{s',t'} \frac{[d+1]_{A^2}}{[d]_{A^2}} v_s - \delta_{s',t'} (1-\delta_{d,-1}) \frac{[d-1]_{A^2}}{[d]_{A2}} v_{\sigma_{n-1}(s)}, \\
p_{t'} h_{n-1} p_{t'} \cdot v_s &= - \delta_{s',t'} \frac{[d+1]_{A^2}}{[d]_{A^2}} p_{t'} \cdot v_s - \delta_{s',t'} (1-\delta_{d,-1}) \frac{[d-1]_{A^2}}{[d]_{A2}} p_{t'} \cdot v_{\sigma_{n-1}(s)} \\
&= - \delta_{s',t'} \frac{[d+1]_{A^2}}{[d]_{A^2}} v_s.
\end{align*}
On en déduit que :
\[ p_{t'} h_{n-1} p_{t'} = - \frac{[d_t(n-1)+1]_{A^2}}{[d_t(n-1)]_{A^2}} p_t - \frac{[d_{\hat{t}}(n-1)+1]_{A^2}}{[d_{\hat{t}}(n-1)]_{A^2}} p_{\hat{t}}, \]
où $d_t(n-1)=-1$ ou exclusivement $d_{\hat{t}}(n-1)=-1$ (cf. la remarque \ref{Rem: treillis de TL1}). Autrement dit :
\[ p_{t'} h_{n-1} p_{t'} = \begin{cases} 
	- \frac{[d_t(n-1)+1]_{A^2}}{[d_t(n-1)]_{A^2}} p_t & \text{ si } d_t(n-1) \neq -1, \\
	- \frac{[d_{\hat{t}}(n-1)+1]_{A^2}}{[d_{\hat{t}}(n-1)]_{A^2}} p_{\hat{t}} & \text{ si } d_t(n-1) = -1.
	\end{cases} \]
Par conséquent, on a :
\[ p_t = \begin{cases} 
	- \frac{[d_t(n-1)]_{A^2}}{[d_t(n-1)+1]_{A^2}} p_{t'} h_{n-1} p_{t'} & \text{ si } d_t(n-1) \neq -1, \\
	p_{t'} - p_{\hat{t}} = p_{t'} + \frac{[d_{\hat{t}}(n-1)]_{A^2}}{[d_{\hat{t}}(n-1)+1]_{A^2}} p_{t'} h_{n-1} p_{t'} & \text{ si } d_t(n-1) = -1.
	\end{cases} \]
\end{Cas}
\end{proof}

\subsection{POIs évaluables aux racines de l'unité}
\label{subsection: TLq}

On explicite maintenant la structure de l'algèbre évaluée $\TL_n(q)$ à l'aide de celle de l'algèbre générique $\TL_n(A^2)$, grâce au morphisme d'évaluation \eqref{Prop: evaluations}. De même que pour $\TL_n(A^2)$, la $\C$-algèbre $\TL_n(q)$ se décompose de manière unique, à isomorphisme et ordre des facteurs près, en somme directe d'idéaux indécomposables de $\TL_n(q)$, appelés \emph{facteurs directs} de $\TL_n(q)$. Ces facteurs directs correspondent à des PCIs de $\TL_n(q)$. Afin de les expliciter, on construit des idempotents orthogonaux de $\TL_n(A^2)$ évaluables, à partir desquels s'expriment les POIs et les PCIs de $\TL_n(q)$.

On utilise les mêmes notions (diagrammes de Young, tableaux standards, distances axiales) que dans la sous-section \ref{subsection: TLA}. Les formules qui définissent la structure des $\TL_n(A^2)$-modules à gauche $V_\lambda$, $\lambda \in \Delta_n^{\leq2}$, ne sont pas toutes évaluables (cf. la proposition \ref{Prop: modules de TLA}). Plus précisément, on ne peut pas les évaluer dès que :
\[ \exists t \in T_n^{\leq2} \quad \exists i \in \{1,...,n-1\} \quad \text{ tels que } \quad d_t(i) = 0 \mod p. \]
Soit $t \in T_n^{\leq2}$ un tableau standard de graphe :
\[ \gamma(t) = \xymatrix @-2ex {
	\overset{\lambda^{(1)}}{\bullet} \ar[r] &
		\overset{\lambda^{(2)}}{\bullet} \ar[r] &
		\cdots \ar[r] &
		\overset{\lambda^{(n)}}{\bullet} }. \]
Ce cas de figure apparaît seulement s'il existe $i \in \{1,...,n-1\}$ tel que $\lambda_1^{(i)} - \lambda_2^{(i)} +1 = p$, où $\lambda^{(i)} = [\lambda_1^{(i)}, \lambda_2^{(i)}]$. Ce qui amène aux définitions suivantes.

\begin{Def}[{\cite[§ 1]{GW93}}]
Soit $\lambda=[\lambda_1,\lambda_2] \in \Delta_n^{\leq2}$.
\begin{enumerate}[(i)]
	\item On dit que $\lambda$ est \emph{critique} si $\omega(\lambda):=\lambda_1-\lambda_2+1$ \index{omega @$\omega(\lambda)$} est divisible par $p$.
	\item Pour tout $k \in \N^*$, on appelle $k$-ième \emph{ligne critique} du treillis de Temperley-Lieb l'ensemble des sommets étiquetés par des diagrammes de Young $\mu \in \Delta_n^{\leq2}$ tels que $\omega(\mu)=kp$.
\end{enumerate}
\end{Def}

\begin{Rem}
\label{Rem: treillis de TL2}
Soit $\lambda = [\lambda_1, \lambda_2] \in \Delta_n^{\leq2}$. Sur le diagramme de Temperley-Lieb, l'entier $\omega(\lambda) \geq 1$ s'interprète comme le numéro de la colonne du sommet étiqueté par $\lambda$. Il permet d'exprimer facilement les distances axiales. En effet, pour tout $t \in T_\lambda$, on a :
\[ d_t(n-1)= \begin{cases} 
	-1 & \text{ si $n-1$ et $n$ sont sur la même ligne, } \\
	\omega(\lambda) & \text{ si $n-1$ est sur la première ligne et $n$ sur la seconde, } \\
	-\omega(\lambda) & \text{ sinon. }
	\end{cases} \]
Pour plus de détails, on pourra se référer à la figure \ref{Fig: conf sur TL2}, qui reprend les configurations possibles explicitées dans la remarque \ref{Rem: treillis de TL1}.
\begin{figure}[!ht]
\caption{Configurations possibles sur le treillis de Temperley-Lieb.} \label{Fig: conf sur TL2}
\footnotesize
\[ \begin{array}{|c|cc|cc|}
\hline 
& \multicolumn{2}{c|}{d_t(n-1)=-1} & \multicolumn{2}{c|}{d_t(n-1) \neq -1} \\  
\hline
\multirow{9}{*}{$\exists \hat{t}$} &
\vcenter{ \xymatrix @-2.5ex {
	\bullet \ar[rd]^{t'} & \overset{\color{gray} \omega(\mu)}{} \ar@[gray]@{--}[dd] & \overset{\color{gray} \omega(\lambda)}{} \ar@[gray]@{--}[dd] \\
	& \bullet \ar[ld]_{\hat{t}} \ar[rd]^{t} \\
	\bullet && \bullet }} &
\begin{aligned}
	d_t(n-1) &= -1, \\ 
	d_{\hat{t}}(n-1) &= \omega(\mu) -1 \\ &= \omega(\lambda)-2.
	\end{aligned} &
\vcenter{ \xymatrix @-2.5ex {
	\overset{{\color{gray} \omega(\lambda)}}{\bullet} \ar[rd]^{t'} \ar@[gray]@{--}[dd] & \overset{\color{gray} \omega(\mu)}{} \ar@[gray]@{--}[dd] \\
	& \bullet \ar[ld]_{t} \ar[rd]^{\hat{t}} \\
	\bullet && \bullet }} &
\begin{aligned} 
	d_t(n-1) &= \omega(\mu) -1 \\ &= \omega(\lambda), \\ 
	d_{\hat{t}}(n-1) &= -1.
	\end{aligned} \\
\cline{2-5}
& \vcenter{ \xymatrix @-2.5ex {
	\overset{\color{gray} \omega(\lambda)}{} \ar@[gray]@{--}[dd] & \overset{\color{gray} \omega(\mu)}{} \ar@[gray]@{--}[dd] & \bullet \ar[ld]_{t'} \\
	& \bullet \ar[ld]_{t} \ar[rd]^{\hat{t}} \\
	\bullet && \bullet }} &
\begin{aligned} 
	d_t(n-1) &= -1, \\ 
	d_{\hat{t}}(n-1) &= -\omega(\mu)-1 \\ &= -\omega(\lambda)-2.
	\end{aligned} &
\vcenter{ \xymatrix @-2.5ex {
	& \overset{\color{gray} \omega(\mu)}{} \ar@[gray]@{--}[dd] & \overset{{\color{gray} \omega(\lambda)}}{\bullet} \ar[ld]_{t'} \ar@[gray]@{--}[dd] \\
	& \bullet \ar[ld]_{\hat{t}} \ar[rd]^{t} \\
	\bullet && \bullet }} &
\begin{aligned} 
	d_t(n-1) &= -\omega(\mu)-1 \\ &= -\omega(\lambda), \\ 
	d_{\hat{t}}(n-1) &= -1.
	\end{aligned} \\
\hline
\nexists \hat{t} && &
\vcenter{ \xymatrix @-2.5ex {
	& \overset{\color{gray} \omega(\mu)=1}{} \ar@{--}[dd] & \overset{{\color{gray} \omega(\lambda)}}{\bullet} \ar[ld]_{t'} \ar@[gray]@{--}[dd] \\
	& \bullet \ar[rd]^{t} \\
	&& \bullet }} &
\begin{aligned} 
	d_t(n-1) &= -2.
	\end{aligned} \\
\hline
\end{array} \]
\end{figure}
\end{Rem}

\begin{Def}[{\cite[§ 1]{GW93}}]
Soit $t \in T_n^{\leq2}$.
\begin{enumerate}[(i)]
	\item On dit que $t$ est \emph{critique} si sa forme $\lambda$ est critique.
	\item Si $t$ possède un sous-tableau critique propre, alors on appelle \emph{sous-tableau critique maximal} de $t$ le plus grand sous-tableau critique propre de $t$.
	\item Si $t$ possède un sous-tableau critique maximal $r \in T_\mu$ et que le sous-graphe $\gamma(t) \setminus \gamma(r)$ admet un graphe symétrique $\bar{\gamma}$ par rapport la $\frac{\omega(\mu)}{p}$-ième ligne critique, alors on appelle \emph{tableau conjugué} de $t$ le tableau standard $\bar{t}$ \index{t6@ $\bar{t}$} défini par le graphe $\gamma(r) \cup \bar{\gamma}$.
	\item On dit que $t$ est \emph{régulier} si son graphe $\gamma(t)$ ne passe pas deux fois consécutivement sur la même ligne critique.
\end{enumerate}
\end{Def}

Ces définitions s'interprètent géométriquement sur le treillis de Temperley-Lieb et sont illustrées dans la figure \ref{Fig: lignes critiques}.

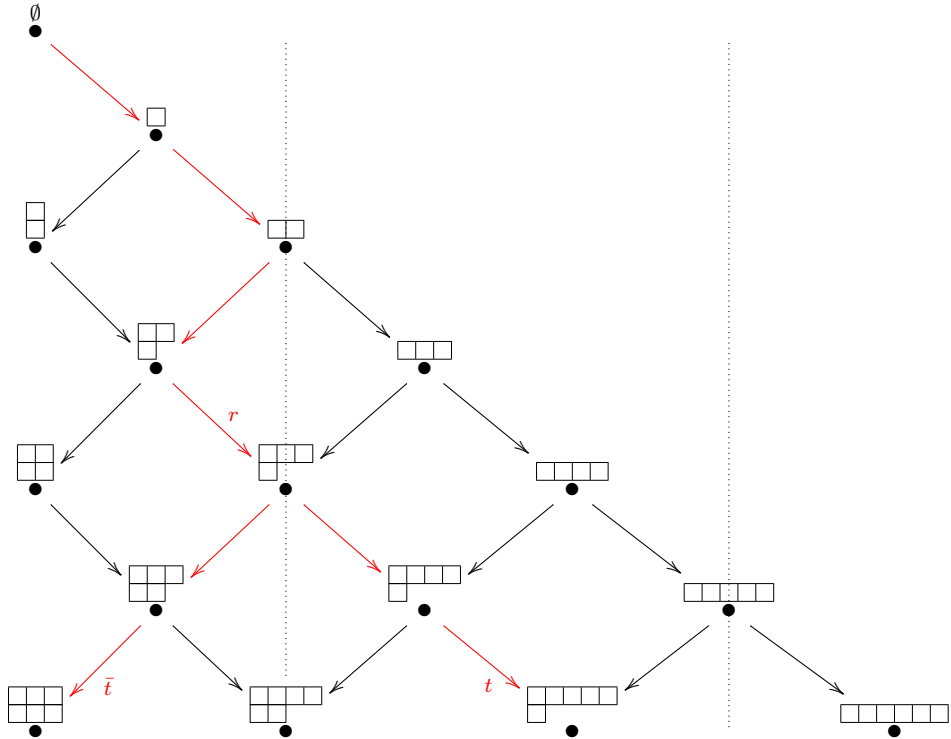
\begin{figure}[!ht]
\caption{Lignes critiques pour $p=3$.}
\label{Fig: lignes critiques}
\[ \Yboxdim{7pt}
\xymatrix @-1ex {
	\overset{\emptyset}{\bullet} \ar@[red][rd] && \ar@{.}[dddddd] &&& \ar@{.}[dddddd] \\
	& \overset{\yng(1)}{\bullet} \ar[ld] \ar@[red][rd] &&&& \\
	\overset{\yng(1,1)}{\bullet} \ar[rd] && \overset{\yng(2)}{\bullet} \ar@[red][ld] \ar[rd] \\
	& \overset{\yng(2,1)}{\bullet} \ar[ld] \ar@[red][rd]^{\color{red} r} && \overset{\yng(3)}{\bullet} \ar[ld] \ar[rd] \\
	\overset{\yng(2,2)}{\bullet} \ar[rd] && \overset{\yng(3,1)}{\bullet} \ar@[red][ld] \ar@[red][rd] && \overset{\yng(4)}{\bullet} \ar[ld] \ar[rd] \\
	& \overset{\yng(3,2)}{\bullet} \ar@[red][ld]^{\color{red} \bar{t}} \ar[rd] && \overset{\yng(4,1)}{\bullet} \ar[ld] \ar@[red][rd]_{\color{red} t} && \overset{\yng(5)}{\bullet} \ar[ld] \ar[rd] \\
	\overset{\yng(3,3)}{\bullet} && \overset{\yng(4,2)}{\bullet} && \overset{\yng(5,1)}{\bullet} && \overset{\yng(6)}{\bullet} } \]
\end{figure}

\begin{Rem}
\label{Rem: lignes critiques}
Soit $t \in T_n^{\leq2}$.
\begin{enumerate}[(i)]
	\item Le tableau standard $t$ est critique si et seulement si son graphe $\gamma(t)$ se termine sur une ligne critique.
	\item Le tableau standard $t$ possède un sous-tableau critique propre (et donc un sous-tableau critique maximal) si et seulement si $\gamma(t)$ traverse la première ligne critique.
	\item On suppose que $t$ admet un sous-tableau critique maximal $r$. Alors $t$ ne possède pas de tableau conjugué si et seulement si $\gamma(t)$ se termine sur la deuxième ligne critique et $\gamma(r)$ sur la première.
\end{enumerate}
\end{Rem}

On rappelle qu'il y a une correspondance bijective entre les POIs $p_t$, $t \in T_n^{\leq2}$, de $\TL_n(A^2)$ et les facteurs directs de sa représentation régulière à gauche :
\[ \TL_n(A^2) \overset{\eqref{Eqn: structure TLA2}}{=} \bigoplus_{\lambda \in \Delta_n^{\leq2}} \bigoplus_{t \in T_\lambda} \TL_n(A^2) p_t \overset{\eqref{Thm: structure TLA}}{\cong} \bigoplus_{\lambda \in \Delta_n^{\leq2}} f_\lambda V_\lambda \]
où, pour tout $\lambda \in \Delta_n^{\leq2}$, $f_\lambda$ est le nombre de tableaux standards de forme $\lambda$. On s'intéressent aux tableaux standards $t \in T_n^{\leq 2}$ pour lesquels $p_t$ est évaluable. En effet, dans ce cas, ils fournissent des $\TL_n(q)$-modules à gauche $\TL_n(q) \bar{p}_t$. Il s'agit de déterminer les modules ainsi obtenus et de recenser les autres.

Compte-tenu des relations de récurrence données dans la proposition \ref{Prop: POIs de TLA}, les POIs non évaluables apparaissent après passage des lignes critiques. En effet, soit $t \in T_n^{\leq 2}$. D'après la proposition \ref{Prop: POIs de TLA}, on sait que :
\[ p_t = \begin{cases} 
	p_{t'} & \text{ si $\hat{t}$ n'existe pas }, \\
	- \frac{[d_t(n-1)]_{A^2}}{[d_t(n-1)+1]_{A^2}} p_{t'} h_{n-1} p_{t'} & \text{ si $\hat{t}$ existe et } d_t(n-1) \not = -1, \\
	p_{t'} + \frac{[d_{\hat{t}}(n-1)]_{A^2}}{[d_{\hat{t}}(n-1)+1]_{A^2}} p_{t'} h_{n-1} p_{t'} & \text{ si $\hat{t}$ existe et } d_t(n-1) = -1.
	\end{cases} \]
On note $\mu$ la forme de $t'$. D'après la remarque \ref{Rem: treillis de TL2}, si $\hat{t}$ existe, alors on a :
\begin{align*} 
&\begin{cases} d_t(n-1)=-1, \\ d_{\hat{t}}(n-1) \in \{ \omega(\mu)-1, -\omega(\mu)-1 \}, \end{cases}
\text{ ou excl. } \; 
\begin{cases} d_t(n-1) \in \{ \omega(\mu)-1, -\omega(\mu)-1 \}, \\ d_{\hat{t}}(n-1)=-1. \end{cases}
\end{align*}
L'idempotent $p_t$ correspondant n'est pas évaluable si :
\begin{align*}
d_t(n-1) = d_{\hat{t}}(n-1) \mod p &\quad \Longleftrightarrow \quad \omega(\mu) = 0 \mod p
&\quad \Longleftrightarrow \quad t' \text{ est critique}.
\end{align*}
Dans ce cas, on considèrera : \index{p3 @$p_{[t]}$}
\begin{equation} 
\label{Eqn: POI sym}
p_{[t]}:= \begin{cases}
		p_t & \text{ si } \bar{t} \text{ n'existe pas, } \\
		p_t + p_{\bar{t}} & \text{ sinon.}
		\end{cases} 
\end{equation}

\begin{Lemme} 
\label{Lemme: idempotents eva1}
Soit $t \in T_n^{\leq 2}$. On suppose que le tableau standard $t$ possède un sous-tableau critique maximal $r \in T_k^{\leq2}$. Si $p_r$ est évaluable, alors $p_{[t]}$ est évaluable.
\end{Lemme}

\begin{proof}
On suppose que $p_r$ est évaluable. On procède par récurrence sur $n \geq k+1$. On se servira régulièrement de la proposition \ref{Prop: POIs de TLA}, et des remarques \ref{Rem: treillis de TL1}, \ref{Rem: treillis de TL2} et \ref{Rem: lignes critiques}.

Pour $n=k+1$, on a $t'=r$ et $\bar{t}= \hat{t}$ lorsqu'ils existent (ils peuvent ne pas exister pour $p=1$). Donc $p_{[t]} = p_r$ est évaluable. Ce cas de figure suffit à prouver le lemme pour $p=1$, valeur pour laquelle toutes les lignes sont critiques. On suppose désormais que $p\geq2$.

Soit $n \geq k+1$. On suppose que, pour tout $u \in T_n^{\leq2}$, si $u$ possède $r$ comme sous-tableau critique maximal, alors $p_{[u]}$ est évaluable. Soit $t \in T_{n+1}$ admettant $r$ comme sous-tableau critique maximal. On note $\lambda$ la forme de $t$, $\mu$ celle de $r$, et $l:= \frac{\omega(\mu)}{p}$. On a $d:= d_t(n) \in \{-1, -\omega(\lambda), \omega(\lambda)\}$. On distingue trois cas.

\begin{Cas}
On suppose que $d \neq -1$ et $\omega(\lambda)=\omega(\mu)=lp$. Alors $\bar{t}$ existe et on a quatre configurations possibles illustrées ci-après.
\begin{figure}[!ht]
\caption{Cas 1 : configurations possibles.}
\[ \begin{array}{|cc|cc|}
	\hline 
	\multicolumn{2}{|c|}{d_t(n)=-\omega(\lambda)} & \multicolumn{2}{c|}{d_t(n)=\omega(\lambda)} \\  
\hline
	\vcenter{ \xymatrix @-2ex {
		\bullet \ar[rd]^{r} & \vdots \ar@{.}[ddd] \\
		& \bullet \ar[ld] \ar[rd] \\
		\bullet \ar[rd]_{t} && \bullet \ar[ld]^{\bar{t}} \\
		& \bullet }} &
	\begin{aligned} 
		&d_t(n) = -\omega(\lambda), \\
		&d_t(n-1) = \omega(\lambda)-1, \\  
		&d_{\bar{t}}(n) = \omega(\lambda), \\
		&d_{\bar{t}}(n-1) = -1.
		\end{aligned} &
	\vcenter{ \xymatrix @-2ex {
		\bullet \ar[rd]^{r} & \vdots \ar@{.}[ddd] \\
		& \bullet \ar[ld] \ar[rd] \\
		\bullet \ar[rd]_{\bar{t}} && \bullet \ar[ld]^{t} \\
		& \bullet }} &
	\begin{aligned} 
		&d_t(n) = \omega(\lambda), \\
		&d_t(n-1) = -1, \\ 
		&d_{\bar{t}}(n) = -\omega(\lambda), \\
		&d_{\bar{t}}(n-1) = \omega(\lambda)-1. \\
		\end{aligned} \\
	\hline
	\vcenter{ \xymatrix @-2ex {
		& \vdots \ar@{.}[ddd] & \bullet \ar[ld]_{r} \\
		& \bullet \ar[ld] \ar[rd] \\
		\bullet \ar[rd]_{t} && \bullet \ar[ld]^{\bar{t}} \\
		& \bullet }} &
	\begin{aligned} 
		&d_t(n) = -\omega(\lambda), \\
		&d_t(n-1) = -1, \\  
		&d_{\bar{t}}(n) = \omega(\lambda), \\
		&d_{\bar{t}}(n-1) = -\omega(\lambda)-1.
		\end{aligned} &
	\vcenter{ \xymatrix @-2ex {
		& \vdots \ar@{.}[ddd] & \bullet \ar[ld]_{r} \\
		& \bullet \ar[ld] \ar[rd] \\
		\bullet \ar[rd]_{\bar{t}} && \bullet \ar[ld]^{t} \\
		& \bullet }} &
	\begin{aligned} 
		&d_t(n) = \omega(\lambda), \\
		&d_t(n-1) = -\omega(\lambda)-1, \\ 
		&d_{\bar{t}}(n) = -\omega(\lambda), \\
		&d_{\bar{t}}(n-1) = -1.
		\end{aligned} \\
	\hline
	\end{array} \]
\end{figure}
Dans chacun des cas, on a $d_{\bar{t}}(n) = -d \neq -1$. On en déduit que :
\begin{align*}
p_{[t]} &\quad= p_t + p_{\bar{t}} = - \frac{[d]_{A^2}}{[d+1]_{A^2}} p_{t'} h_n p_{t'} - \frac{[-d]_{A^2}}{[-d+1]_{A^2}} p_{\bar{t}'} h_n p_{\bar{t}'} \\
&\quad= - \frac{[d]_{A^2}}{[d+1]_{A^2}} p_{t'} h_n p_{t'} - \frac{[d]_{A^2}}{[d-1]_{A^2}} p_{\bar{t}'} h_n p_{\bar{t}'} \\
&\quad= - \frac{[d]_{A^2}}{[d+1]_{A^2}} p_{t'} h_n p_{t'} - \frac{[d]_{A^2}}{[d-1]_{A^2}} (p_{[t']}-p_{t'}) h_n (p_{[t']}-p_{t'}) \\
&\quad= - \frac{[d]_{A^2}}{[d+1]_{A^2}} p_{t'} h_n p_{t'} - \frac{[d]_{A^2}}{[d-1]_{A^2}} (p_{[t']} h_n p_{[t']} - p_{[t']} h_n p_{t'} - p_{t'} h_n p_{[t']} + p_{t'} h_n p_{t'} ) \\
&\quad= \begin{multlined}[t]
	- \frac{[d]_{A^2}}{[d+1]_{A^2} [d-1]_{A^2}} ([d+1]_{A^2} + [d-1]_{A^2}) p_{t'} h_n p_{t'} - \frac{[d]_{A^2}}{[d-1]_{A^2}} p_{[t']} h_n p_{[t']} \\
	+ \frac{[d]_{A^2}}{[d-1]_{A^2}} p_{[t']} h_n p_{t'} + \frac{[d]_{A^2}}{[d-1]_{A^2}} p_{t'} h_n p_{[t']} 
	\end{multlined} \\
&\overset{\eqref{Eqn: Acoeff2}}{=} \begin{multlined}[t]
	- \frac{[d]_{A^2}^2 [2]_{A^2}}{[d+1]_{A^2} [d-1]_{A^2}} p_{t'} h_n p_{t'} - \frac{[d]_{A^2}}{[d-1]_{A^2}} p_{[t']} h_n p_{[t']} \tag{1} \\
	+ \frac{[d]_{A^2}}{[d-1]_{A^2}} p_{[t']} h_n p_{t'} + \frac{[d]_{A^2}}{[d-1]_{A^2}} p_{t'} h_n p_{[t']} .
	\end{multlined}
\end{align*}
Quitte à échanger $t$ et $\bar{t}$, on peut supposer que $d_t(k) = d_t(n-1) = -d-1 \neq -1$. Comme $d_{t'}(n-1)=d_t(n-1)$, on obtient alors :
\[ p_{t'} = - \frac{[d_t(k)]_{A^2}}{[d_t(k)+1]_{A^2}} p_{r} h_{n-1} p_{r} = - \frac{[-d-1]_{A^2}}{[-d]_{A^2}} p_{r} h_{n-1} p_{r} = - \frac{[d+1]_{A^2}}{[d]_{A^2}} p_{r} h_{n-1} p_{r}. \]
En insérant ce résultat dans l'équation $(1)$, on obtient :
\begin{gather*}
p_{[t]} = \begin{multlined}[t]
	- [2]_{A^2} \frac{[d+1]_{A^2}}{[d-1]_{A^2}} p_r h_{n-1} p_r h_n p_r h_{n-1} p_r - \frac{[d]_{A^2}}{[d-1]_{A^2}} p_{[t']} h_n p_{[t']}  \\
	- \frac{[d+1]_{A^2}}{[d-1]_{A^2}} p_{[t']} h_n p_r h_{n-1} p_r - \frac{[d+1]_{A^2}}{[d-1]_{A^2}} p_r h_{n-1} p_r h_n p_{[t']},
	\end{multlined}
\end{gather*}
où $d = \pm \omega(\lambda) \neq 1 \mod p$, $p_r$ est évaluable, et $p_{[t']}$ l'est aussi d'après l'hypothèse de récurrence (en l'occurrence $p_{[t']} = p_r$).
\end{Cas}

\begin{Cas}
On suppose que $d \neq -1$ et $\omega(\lambda) \neq \omega(\mu) = lp$ (ce qui exclut $p=2$). Alors $|\omega(\lambda)-lp| \in \{1,...,p-1\}$, $\bar{t}$ existe, et on a quatre configurations possibles illustrées ci-après.
\begin{figure}[!ht] 
\caption{Cas 2 : configurations possibles.}
\[ \begin{array}{|cc|cc|}
	\hline 
	\multicolumn{2}{|c|}{d_t(n)=-\omega(\lambda)} & \multicolumn{2}{c|}{d_t(n)=\omega(\lambda)} \\  
\hline
	\vcenter{ \xymatrix @-3ex {
		& \bullet \ar[ld] & \vdots \ar@{.}[dd] & \bullet \ar[rd] \\
		\bullet \ar[rd]_{t} &&&& \bullet \ar[ld]^{\bar{t}} \\
		& \bullet & \vdots & \bullet }} &
	\begin{aligned} 
		&d_t(n) = -\omega(\lambda), \\
		&d_{\bar{t}}(n) = 2\omega(\mu)-\omega(\lambda), \\
		&\omega(\lambda) \geq \omega(\mu)-p+2, \\
		&\omega(\lambda)\leq \omega(\mu)-1.
		\end{aligned} &
	\vcenter{ \xymatrix @-3ex {
		& \bullet \ar[ld] & \vdots \ar@{.}[dd] & \bullet \ar[rd] \\
		\bullet \ar[rd]_{\bar{t}} &&&& \bullet \ar[ld]^{t} \\
		& \bullet & \vdots & \bullet }} &
	\begin{aligned} 
		&d_t(n) = \omega(\lambda), \\
		&d_{\bar{t}}(n) = \omega(\lambda)-2\omega(\mu), \\
		&\omega(\lambda) \geq \omega(\mu)+1, \\
		&\omega(\lambda)\leq \omega(\mu)+p-2.
		\end{aligned} \\
	\hline
	\vcenter{ \xymatrix @-3ex {
		\bullet \ar[rd] && \vdots \ar@{.}[dd] && \bullet \ar[ld] \\
		& \bullet \ar[ld]^{\bar{t}} && \bullet \ar[rd]_{t} & \\
		\bullet && \vdots && \bullet }} &
	\begin{aligned} 
		&d_t(n) = -\omega(\lambda), \\
		&d_{\bar{t}}(n) = 2\omega(\mu)-\omega(\lambda), \\
		&\omega(\lambda) \geq \omega(\mu)+2, \\
		&\omega(\lambda) \leq \omega(\mu)+p-1.
		\end{aligned} &
	\vcenter{ \xymatrix @-3ex {
		\bullet \ar[rd] && \vdots \ar@{.}[dd] && \bullet \ar[ld] \\
		& \bullet \ar[ld]^{t} && \bullet \ar[rd]_{\bar{t}} & \\
		\bullet && \vdots && \bullet }} &
	\begin{aligned} 
		&d_t(n) = \omega(\lambda), \\
		&d_{\bar{t}}(n) = \omega(\lambda)-2\omega(\mu), \\
		&\omega(\lambda) \geq \omega(\mu)-p+1, \\
		&\omega(\lambda)\leq \omega(\mu)-2.
		\end{aligned} \\
	\hline
	\end{array} \]
\end{figure}
Dans chacun des cas, on a $d_{\bar{t}}(n)=d \pm 2lp \neq -1$. En procédant comme précédemment, on en déduit que :
\begin{gather*}
p_{[t]} = \begin{multlined}[t]
	- \left( \frac{[d]_{A^2}}{[d+1]_{A^2}} + \frac{[d\pm2lp]_{A^2}}{[d\pm2lp+1]_{A^2}} \right) p_{t'} h_n p_{t'} - \frac{[d\pm2lp]_{A^2}}{[d\pm2lp+1]_{A^2}} p_{[t']} h_n p_{[t']} \\
	+ \frac{[d\pm2lp]_{A^2}}{[d\pm2lp+1]_{A^2}} p_{[t']} h_n p_{t'} + \frac{[d+2lp]_{A^2}}{[d\pm2lp+1]_{A^2}} p_{t'} h_n p_{[t']}.
	\end{multlined}
\end{gather*}
Or, les tableaux standards $t'$ et $\bar{t}'$ n'ont pas la même forme. Donc $p_{\bar{t}'} h_n p_{t'} = 0 = p_{t'} h_n p_{\bar{t}'}$. Il s'ensuit que $p_{[t']} h_n p_{t'} = p_{t'} h_n p_{t'} = p_{t'} h_n p_{[t']}$ et :
\begin{align*}
p_{[t]} &\quad= - \frac{[d]_{A^2} [d\pm2lp+1]_{A^2} - [d+1]_{A^2} [d\pm2lp]_{A^2}}{[d+1]_{A^2} [d\pm2lp+1]_{A^2}} p_{t'} h_n p_{[t']} 
	- \frac{[d\pm2lp]_{A^2}}{[d\pm2lp+1]_{A^2}} p_{[t']} h_n p_{[t']} \\
&\overset{\eqref{Eqn: Acoeff2}}{=} - \frac{[\mp2lp]_{A^2}}{[d+1]_{A^2} [d\pm2lp+1]_{A^2}} p_{t'} h_n p_{[t']} - \frac{[d\pm2lp]_{A^2}}{[d\pm2lp+1]_{A^2}} p_{[t']} h_n p_{[t']} \\
&\quad= \pm (A^{2lp}+A^{-2lp}) \frac{[lp]_{A^2}}{[d+1]_{A^2} [d\pm2lp+1]_{A^2}} p_{t'} h_n p_{[t']} \tag{2}
	- \frac{[d\pm2lp]_{A^2}}{[d\pm2lp+1]_{A^2}} p_{[t']} h_n p_{[t']}.
\end{align*}
Pour les mêmes raisons que dans le cas 1, quitte à échanger $t$ et $\bar{t}$, on peut supposer que $d_{t'}(k) = \alpha lp-1 \neq -1$ et $d_{\bar{t}'}(k) = -1$, avec $\alpha \in \{+,-\}$. En procédant comme dans la preuve de la proposition \ref{Prop: POIs de TLA}, on obtient alors :
\begin{align*}
&p_{[t']} h_k p_{[t']} = p_{t'} h_k p_{t'} = - \frac{[d_{t'}(k)+1]_{A^2}}{[d_{t'}(k)]_{A^2}} p_{t'} = - \frac{[\alpha lp]_{A^2}}{[\alpha lp-1]_{A^2}} p_{t'} \\
\Longleftrightarrow \qquad& p_{t'} = - \frac{[\alpha lp-1]_{A^2}}{[\alpha lp]_{A^2}}  p_{[t']} h_k p_{[t']} = -\alpha \frac{[\alpha lp-1]_{A^2}}{[lp]_{A^2}}  p_{[t']} h_k p_{[t']}. 
\end{align*}
En insérant ce résultat dans l'équation $(2)$, on obtient :
\begin{gather*}
p_{[t]} = \begin{multlined}[t]
	\mp \alpha (A^{2lp}+A^{-2lp}) \frac{[\alpha lp-1]_{A^2}}{[d+1]_{A^2} [d\pm2lp+1]_{A^2}} p_{[t']} h_k p_{[t']} h_n p_{[t']} \\
	- \frac{[d\pm2lp]_{A^2}}{[d\pm2lp+1]_{A^2}} p_{[t']} h_n p_{[t']}, 
	\end{multlined}
\end{gather*}
où $d = \mp \omega(\lambda) \neq -1 \mod p$ (cf. les configurations possibles) et $p_{[t']}$ est évaluable d'après l'hypothèse de récurrence.
\end{Cas}

\begin{Cas}
On suppose que $d=-1$. Alors $\hat{t}$ existe et $d_{\hat{t}}(n) \neq -1$. Donc $p_{[t]} = p_{[t']} - p_{[\hat{t}]}$ où $p_{[\hat{t}]}$ est évaluable d'après les cas précédents, et $p_{[t']}$ l'est aussi d'après l'hypothèse de récurrence.
\end{Cas}

Par conséquent, dans chacun des cas, $p_{[t]}$ est évaluable. Ce qui prouve l'hérédité et achève la récurrence.
\end{proof}

\begin{Lemme} 
\label{Lemme: idempotents eva2}
Soit $t \in T_n^{\leq2}$ un tableau standard régulier. On suppose que le tableau standard régulier $t$ est critique et possède un sous-tableau critique maximal $r \in T_k^{\leq2}$. Si $p_r$ est évaluable, alors $p_t$ est évaluable.
\end{Lemme}

\begin{proof}
On suppose que $p_r$ est évaluable. On note $\lambda$ la forme de $t$, $\mu$ celle de $r$, $l:= \frac{\omega(\mu)}{p}$ et $l':= \frac{\omega(\lambda)}{p}$. On a $d:= d_t(k) \in \{-1, -\omega(\mu)-1, \omega(\mu)-1\}$ (cf. la remarque \ref{Rem: treillis de TL2}). On distingue trois cas. 

\begin{Cas}
On suppose que $d \neq -1$. Alors $d = \mp \omega(\mu)-1$, $l'=l \pm 1$, $r$ admet un sous-tableau maximal $s \in T_j^{\leq2}$, et on a deux configurations possibles illustrées ci-après.
\begin{figure}[!ht]
\caption{Cas 1 : configurations possibles.}
\[ \begin{array}{|c|c|}
	\hline
	d_t(k) = - \omega(\mu) -1 & d_t(k) = \omega(\mu) -1 \\
	\hline
	\xymatrix @-2ex {
		\vdots \ar@{.}[dddddddddddd] &&&&&& \overset{s \quad}{\bullet} \ar@[red][ld] \ar@{.}[dddddddddddd] \\
		&&&&& \bullet \ar[ld] \ar@[red][rd] \\
		&&&& \bullet \ar@{-->}@[red][rd] && \bullet \ar@[red][ld] \\
		&&& \cdots && \color{red} \cdots \ar@[red][rd] \ar@{-->}@[red][ld] \\
		&& \bullet \ar[ld] && \bullet \ar@{-->}@[red][rd] && \bullet \ar@[red][ld] \\
		& \bullet \ar[ld]_{r} \ar@{-->}@[red][rd] &&&& \bullet \ar@[red][rd] \ar@{-->}@[red][ld] \\
		\bullet \ar[rd] && \bullet \ar@{-->}@[red][ld] & \color{red} \cdots & \bullet \ar@{-->}@[red][rd] && \bullet  \ar@[red][ld] \\
		& \bullet \ar[rd] &&&& \bullet \ar@[red][rd] \ar@{-->}@[red][ld] \\
		&& \bullet && \bullet \ar@{-->}@[red][rd] && \bullet \ar@[red][ld] \\ 
		&&& \cdots && \color{red} \cdots \ar@[red][rd] \ar@{-->}@[red][ld] \\
		&&&& \bullet \ar[rd] && \bullet \ar@[red][ld] \\
		&&&&& \bullet \ar@[red][rd]_t^{\color{red} z} \\
		\vdots &&&&&& \bullet } &
		\xymatrix @-2ex {
		\overset{s \quad}{\bullet} \ar@[red][rd] \ar@{.}[dddddddddddd] &&&&&& \vdots \ar@{.}[dddddddddddd] \\
		& \bullet \ar[rd] \ar@[red][ld] \\
		\bullet \ar@[red][rd]&& \bullet \ar@{-->}@[red][ld] \\
		& \color{red} \cdots \ar@[red][ld] \ar@{-->}@[red][rd] && \cdots \\
		\bullet \ar@[red][rd] && \bullet \ar@{-->}@[red][ld] && \bullet \ar[rd] && \\
		& \bullet \ar@[red][ld] \ar@{-->}@[red][rd] &&&& \bullet \ar[rd]^{r} \ar@{-->}@[red][ld] \\
		\bullet \ar@[red][rd] && \bullet \ar@{-->}@[red][ld] & \color{red} \cdots & \bullet \ar@{-->}@[red][rd] && \bullet \ar[ld] \\
		& \bullet \ar@[red][ld] \ar@{-->}@[red][rd] &&&& \bullet \ar[ld] \\
		\bullet \ar@[red][rd] && \bullet \ar@{-->}@[red][ld] && \bullet \\ 
		& \color{red} \cdots \ar@[red][ld] \ar@{-->}@[red][rd] && \cdots \\
		\bullet \ar@[red][rd] && \bullet \ar[ld] \\
		& \bullet \ar@[red][ld]_{\color{red} z}^t \\
		\bullet &&&&&& \vdots } \\
		\hline
		\end{array} \]
\end{figure}
On considère le tableau standard $z \in T_\lambda$ dont le graphe $\gamma(s)$ étend $\gamma(t)$ entre la $l$-ième et la $(l \pm 1)$-ième ligne critique en intersectant $\frac{n-j}{2}+1$ fois la $(l \pm 1)$-ième ligne critique (illustré en rouge dans la figure des configurations possibles). Alors il existe une permutation $\sigma$ de $\{j+1,...,n\}$ telle que $z = \sigma(t)$. \\
Pour $p=1$, on a $\sigma=id=\sigma^{-1}$ et on lui associe l'élément $w^\sigma := 1$. Pour $p \geq 2$ (donc $n \geq 4$), la permutation $\sigma$ se décompose en une composée $\sigma = \sigma(p-1) ... \sigma(2) \sigma(1)$ avec :
\[ \forall \kappa \in \{1,...,p-2\} \quad \sigma(\kappa) := \prod_{\substack{j+1 \leq i \leq n-1 \\ d_{\sigma(\kappa)(t)}(i) = \mp lp-\kappa}} \sigma_i. \]
En l'occurrence, on a $\sigma(1) = \sigma_k$ et $\sigma(p-1) = \sigma_{j+2} \sigma_{j+4} ... \sigma_{n-2}$. Pour tout $i \in \{j+1,...,n-1\}$, en identifiant $\sigma_i$ avec $h_i$, on obtient un élément de $\TL_n(A^2)$, noté $w^\sigma := h(p-1) ... h(2) h(1)$, avec :
\[ \forall \kappa \in \{1,...,p-2\} \quad h(\kappa+1) := \prod_{\substack{j+1 \leq i \leq n-1 \\ d_{\sigma(\kappa)(t)}(i) = \mp \omega - \kappa}} h_i. \]
De même, pour la permutation $\sigma^{-1}$ et sa décomposition $\sigma^{-1} = \sigma(1)^{-1} \sigma(2)^{-1} ... \sigma(p-1)^{-1}$, on obtient un mot $w^{\sigma^{-1}}$ en $\{h_{j+1},...,h_{n-1}\}$. \\
Avec ces éléments, pour tout $p \in \N^*$, on montre qu'il existe $P(A) \in \C[A]_{(A^2-q)}$ tel que :
\[ p_t = P(A) p_{[t]} w^{\sigma^{-1}} (h_{j+1} h_{j+3} ... h_{n-1}) w^{\sigma} p_{[t]} . \]
Pour cela, on calcule $p_{[t]} w^{\sigma^{-1}} (h_{j+1} h_{j+3} ... h_{n-1}) w^{\sigma} p_{[t]}$ à l'aide des résultats généraux suivants, qui découlent de la remarque \ref{Rem: treillis de TL2}.
\[ \begin{array}{|c|c|} 
\hline
d_u(i) = \omega(\lambda^{(i)})-1 & d_u(i) = -\omega(\lambda^{(i)})-1 \\
\hline
\vcenter{ \xymatrix{
	\overset{\qquad}{} & \bullet \ar@{-->}[ld] \ar[rd] & \overset{\color{gray} \omega(\lambda^{(i)})}{} \ar@[gray]@{--}[dd] \\
	\bullet \ar@{-->}[rd]_{\sigma_i(u)} && \overset{\lambda^{(i)}}{\bullet} \ar[ld]^u \\
	& \bullet &}} &
\vcenter{ \xymatrix{
	\overset{\color{gray} \omega(\lambda^{(i)})}{} \ar@[gray]@{--}[dd] & \bullet \ar[ld] \ar@{-->}[rd] & \overset{\qquad}{} \\
	\overset{\lambda^{(i)}}{\bullet} \ar[rd]_{u} && \bullet \ar@{-->}[ld]^{\sigma_i(u)} \\
	& \bullet & }} \\
p_{\sigma_i(u)} h_i p_u = - \frac{[\omega(\lambda^{(i)})-2]_{A^2}}{[\omega(\lambda^{(i)})-1]_{A^2}} p_{\sigma_i(u)} & 
p_{\sigma_i(u)} h_i p_u = - \frac{[\omega(\lambda^{(i)})+2]_{A^2}}{[\omega(\lambda^{(i)})+1]_{A^2}} p_{\sigma_i(u)} \\
p_{u} h_i p_u = - \frac{[\omega(\lambda^{(i)})]_{A^2}}{[\omega(\lambda^{(i)})-1]_{A^2}} p_{u} & 
p_{u} h_i p_u = - \frac{[\omega(\lambda^{(i)})]_{A^2}}{[\omega(\lambda^{(i)})+1]_{A^2}} p_{u} \\
\hline
\end{array} \]
On en déduit qu'il existe $P_{t,z}(A), P_{z,t}(A) \in \C[A]_{(A^2-q)}$, non divisibles par $(A^2-q)$, tels que :
\begin{gather*}
p_z w^\sigma p_{[t]} = p_z w^\sigma p_t = P_{t,z}(A) [(l \pm 1)p]_{A^2}^{\frac{n-j}{2}-1} p_z , \\
p_z (h_{j+1} h_{j+3} ... h_{n-1}) p_z = \left( -\frac{[(l \pm 1)p \mp 1]_{A^2}}{[(l \pm 1)p]_{A^2}} \right)^{\frac{n-j}{2}} p_z, \\
p_{[t]} w^{\sigma^{-1}} p_z = p_t w^{\sigma^{-1}} p_z = P_{z,t}(A) [lp]_{A^2} p_t.
\end{gather*}
Par conséquent, on a :
\begin{align*}
p_{[t]} w^{\sigma^{-1}} (h_{j+1} h_{j+3} ... h_{n-1}) w^{\sigma} p_{[t]} &= p_{[t]} w^{\sigma^{-1}} p_z (h_{j+1} h_{j+3} ... h_{n-1}) p_z w^{\sigma} p_{[t]} \\
&= (-1)^{\frac{n-j}{2}} P_{t,z}(A) P_{z,t}(A) \frac{[(l \pm 1)p \mp 1]_{A^2}^{\frac{n-j}{2}} [lp]_{A^2}}{[(l \pm 1)p]_{A^2}} p_t .
\end{align*}
Le facteur $(A^2-q)$ de $[lp]_{A^2}$ se compensent avec celui de $[(l \pm 1)p]_{A^2}$. D'où l'existence de $P \in \C(A)_{(A^2-q)}$ tel que :
\[ p_t = P(A) p_{[t]} w^{\sigma^{-1}} (h_{j+1} h_{j+3} ... h_{n-1}) w^{\sigma} p_{[t]}. \]
\end{Cas}

\begin{Cas}
On suppose que $d=-1$. Alors on a :
\[ p_{t} = \begin{cases} 
	p_{[t]} - p_{\bar{t}} & \text{ si $\bar{t}$ existe, } \\
	p_{[t]} & \text{ si $\hat{t}$ n'existe pas, }
	\end{cases}  \]
où $p_{\bar{t}}$ est évaluable d'après le cas précédent car $d_{\bar{t}}(k) \neq -1$.
\end{Cas}

Enfin, d'après le lemme \ref{Lemme: idempotents eva1}, on sait que $p_{[t]}$ est évaluable. Par conséquent, dans chacun des cas, $p_{t}$ est aussi évaluable.
\end{proof}

\begin{Thm} 
\label{Thm: idempotents eva}
Soit $t \in T_n^{\leq 2}$ un tableau standard régulier.
\begin{enumerate}[(a)]
	\item Si $t$ ne possède pas de sous-tableau critique propre, alors $p_{[t]}=p_t$ est évaluable.
	\item Si $t$ est critique, alors $p_t$ est évaluable.
	\item Si $t$ n'est pas critique et possède un sous-tableau critique maximal $r \in T_k^{\leq2}$, alors $p_{[t]}$ est évaluable. De plus, $p_t$ et $p_{\bar{t}}$ ne sont pas évaluables, et l'élément $n_{[t]} := p_{[t]} h_k p_{[t]}$ \index{n6 @$n_{[t]}$} s'évalue sur un élément nilpotent d'ordre 2.
\end{enumerate}
\end{Thm}

\begin{proof}
\begin{enumerate}[(a)]
	\item On suppose que $t$ ne possède pas de sous-tableau critique propre. On rajoute la valeur $n=1$ (valeur clé pour le cas $p=1$) et on procède par récurrence sur $n \geq 1$.
	
	Pour $n=1$, on a $T_1^{\leq2} = \{ \lyoung{1} \}$ et $p_{\lyoung{1}} = 1$ (cf. la proposition \ref{Prop: POIs de TLA}). Donc $p_{\lyoung{1}}$ est évaluable. Ce cas de figure suffit à prouver l'assertion pour $p=1$, valeur pour laquelle toutes les lignes sont critiques. On suppose désormais que $p \geq 2$.
	
	Soit $n \geq 1$. Suppose que, pour tout $u \in T_n^{\leq2}$, si $u$ ne possède pas de sous-tableau critique propre, alors $p_u$ est évaluable. Soit $t \in T_{n+1}$ n'admettant pas de sous-tableau critique propre. On note $\lambda$ sa forme. D'après la proposition \ref{Prop: POIs de TLA}, on a :
	\[ p_t = \begin{cases} 
	p_{t'} & \text{ si $\hat{t}$ n'existe pas }, \\
	- \frac{[d_t(n)]_{A^2}}{[d_t(n)+1]_{A^2}} p_{t'} h_n p_{t'} & \text{ si $\hat{t}$ existe et } d_t(n) \not = -1, \\
	p_{t'} + \frac{[d_{\hat{t}}(n)]_{A^2}}{[d_{\hat{t}}(n)+1]_{A^2}} p_{t'} h_n p_{t'} & \text{ si $\hat{t}$ existe et } d_t(n) = -1,
	\end{cases} \]
	où $p_{t'}$ est évaluable d'après l'hypothèse de récurrence. Par ailleurs, comme $t$ ne possède pas de sous-tableau critique propre, son graphe $\gamma(t)$ ne traverse pas la première ligne critique (cf. la remarque \ref{Rem: lignes critiques}). Avec la remarque \ref{Rem: treillis de TL2}, on en déduit que :
	\begin{align*}
	&d_t(n) \in \{-p,...,-3\} \cup \{1,...,p-2\} && \text{ si $\hat{t}$ existe et } d_t(n) \not = -1, \\
	&d_{\hat{t}}(n) \in \{-p,...,-3\} \cup \{1,...,p-2\} && \text{ si $\hat{t}$ existe et } d_t(n) = -1.
	\end{align*}
	Par conséquent, $p_t$ est évaluable. Ce qui prouve l'hérédité et achève la récurrence.
	\item On suppose que $t$ est critique. On procède par récurrence sur le nombre $N \geq 0$ de sous-tableau(x) critique(s) propre(s).
	
	Pour $N=0$, $p_t$ est évaluable d'après le cas 1.
	
	Soit $N \geq 0$. On suppose que, pour tout tableau standard $u$ régulier et critique, si $u$ possède $N$ sous-tableau(x) critique(s) propre(s), alors $p_u$ est évaluable. Soit $t$ un tableau standard régulier et critique, admettant $N+1$ sous-tableau(x) critique(s) propre(s). Il possède un sous-tableau critique maximal $r$ régulier, admettant $N$ sous-tableau(x) critique(s) propre(s). Donc $p_r$ est évaluable d'après l'hypothèse de récurrence. Il s'ensuit que $p_t$ l'est aussi d'après le lemme \ref{Lemme: idempotents eva2}. Ce qui prouve l'hérédité et achève la récurrence.
	\item On suppose que $t$ n'est pas critique et possède un sous-tableau critique maximal $r \in T_k^{\leq2}$. Comme $t$ est régulier, le sous-tableau $r$ l'est aussi. Donc $p_r$ est évaluable d'après $(b)$. Il s'ensuit que $p_{[t]}$ l'est aussi d'après le lemme \ref{Lemme: idempotents eva1}. 
	
	Montrons que $n_{[t]}:= p_{[t]} h_k p_{[t]}$ s'évalue sur un élément nilpotent d'ordre 2. On note $\mu$ la forme de $r$ et $l:= \frac{\omega(\mu)}{p}$. On a $d:= d_t(k) \in \{-1, -lp-1, lp-1\}$ (cf. la remarque \ref{Rem: treillis de TL2}). Quitte à échanger $t$ et $\bar{t}$, on peut supposer que $d_t(k) = \mp lp-1 \neq -1$ et $d_{\bar{t}}(k) = -1$. En procédant comme dans la preuve de la proposition \ref{Prop: POIs de TLA}, on obtient alors :
	\begin{equation} \tag{1}
	n_{[t]} = p_{[t]} h_k p_{[t]} = p_{t} h_k p_{t} = - \frac{[d_t(k)+1]_{A^2}}{[d_t(k)]_{A^2}} p_t = - \frac{[\mp lp]_{A^2}}{[\mp lp-1]_{A^2}} p_t.
	\end{equation}
	Donc :
	\[ n_{[t]}^2 = \left( - \frac{[\mp lp]_{A^2}}{[\mp lp-1]_{A^2}} \right)^2 p_t = - \frac{[\mp lp]_{A^2}}{[\mp lp-1]_{A^2}} n_{[t]} \]
	s'évalue sur $0$. Pour montrer que $n_{[t]}$ ne s'évalue pas sur $0$, on considère le tableau standard $T \in T_{2n-k}^{\leq2}$ (non régulier) dont le graphe $\gamma(T)$ étend $\gamma(t)$ de $n-k$ arrêtes par symétrie par rapport à la ligne $\Delta_n^{\leq2}$. De même que pour $p_{[t]}$, l'idempotent $p_{[T]}$ est évaluable. En procédant comme dans la preuve du lemme \ref{Lemme: idempotents eva2}, on montre qu'il existe un élément $h \in \TL_{2n-k}$ tel que $p_{[T]} h p_{[T]}$ est évaluable et dont la matrice de représentation $M(p_{[T]} h p_{[T]})$ dans la base $\{v_T, v_{\bar{T}}\}$ est de la forme :
	\[ M(p_{[T]} h p_{[T]}) = \frac{1}{[\mp lp]_{A^2}} \begin{pmatrix}
		P_{T,T}(A) & P_{\bar{T},T}(A) \\
		P_{T,\bar{T}}(A) & P_{\bar{T},\bar{T}}(A)
	\end{pmatrix}, \]
	où $P_{T,T}(A), P_{\bar{T},T}(A), P_{T,\bar{T}}(A), P_{\bar{T},\bar{T}}(A) \in \C[A]_{(A^2-q)}$ ne sont pas divisibles par $(A^2-q)$. On en déduit la matrice de représentation de $n_{[t]} p_{[T]} h p_{[T]}$ dans cette même base :
	\[ M(n_{[t]} p_{[T]} h p_{[T]}) = -\frac{1}{[\mp lp-1]_{A^2}} \begin{pmatrix}
		P_{T,T}(A) & P_{\bar{T},T}(A) \\
		0 & 0
	\end{pmatrix}. \]
	Donc $n_{[t]} p_{[T]} h p_{[T]}$, et par suite $n_{[t]}$, ne s'évaluent pas sur $0$.

	D'après l'équation $(1)$, cela implique que $p_t$ n'est pas évaluable, et donc $p_{\bar{t}} = p_{[t]}-p_t$ non plus.
\end{enumerate}
\end{proof}

\begin{figure}[!ht]
\caption{Idempotents évaluables pour $p=3$ (i).}
\label{Fig: idempotents eva}
\[ \Yboxdim{7pt} 
\xymatrix @-1ex { 
	\overset{\emptyset}{\bullet} \ar@[green]@<0.3ex>[rd] \ar@[blue]@<-0.3ex>[rd] \ar@[red]@<-1ex>[rd] \ar@<1ex>[rd] && \ar@{.}[dddddd] &&& \ar@{.}[dddddd] \\
	& \overset{\yng(1)}{\bullet} \ar@[green]@<-0.3ex>[ld] \ar@[blue]@<0.3ex>[ld] \ar@[green]@{..>}@<0.3ex>[rd] \ar@[blue]@{..>}@<-0.3ex>[rd] \ar@[red]@<-1ex>[rd] \ar@<1ex>[rd] &&&& \\
	\overset{\yng(1,1)}{\bullet} \ar@[green]@<-0.3ex>[rd] \ar@[blue]@<0.3ex>[rd] && \overset{\yng(2)}{\bullet} \ar@[green]@{..>}@<-0.6ex>[ld] \ar@[blue]@{..>}@<0ex>[ld] \ar@[red]@<0.6ex>[ld] \ar@[green]@{..>}@<0.3ex>[rd] \ar@[blue]@{..>}@<-0.3ex>[rd] \ar@[red]@{-->}@<-1ex>[rd] \ar@<1ex>[rd] \\
	& \overset{\yng(2,1)}{\bullet} \ar@[green]@<-0.3ex>[ld] \ar@[blue]@<0.3ex>[ld] \ar@[green]@{-->}@<-0.6ex>[rd] \ar@[blue]@{-->}@<0ex>[rd] \ar@[red]@<0.6ex>[rd] && \overset{\yng(3)}{\bullet} \ar@[red]@{-->}[ld] \ar@[green]@{..>}@<0ex>[rd] \ar@[blue]@{..>}@<-0.6ex>[rd] \ar@<0.6ex>[rd] \\
	\overset{\yng(2,2)}{\bullet} \ar@[green]@<-0.3ex>[rd] \ar@[blue]@<0.3ex>[rd] && \overset{\yng(3,1)}{\bullet} \ar@[green]@{-->}@<-0.6ex>[ld] \ar@[blue]@{-->}@<0ex>[ld] \ar@[red]@<0.6ex>[ld] \ar@[green]@{-->}@<0.6ex>[rd] \ar@[blue]@{-->}@<0ex>[rd] \ar@[red]@<-0.6ex>[rd] && \overset{\yng(4)}{\bullet} \ar@[green]@{..>}@<0.3ex>[ld] \ar@[blue]@{..>}@<-0.3ex>[ld] \ar[rd] \\
	& \overset{\yng(3,2)}{\bullet} \ar@[green]@<-0.3ex>[ld] \ar@[red]@<0.3ex>[ld] \ar@[blue]@<-0.3ex>[rd] \ar@[red]@<0.3ex>[rd] && \overset{\yng(4,1)}{\bullet} \ar@[blue]@{-->}@<0.3ex>[ld] \ar@[red]@<-0.3ex>[ld] \ar@[green]@{-->}@<0.3ex>[rd] \ar@[red]@<-0.3ex>[rd] && \overset{\yng(5)}{\bullet} \ar[ld] \ar[rd] \\
	\overset{\yng(3,3)}{\bullet} && \overset{\yng(4,2)}{\bullet} && \overset{\yng(5,1)}{\bullet} && \overset{\yng(6)}{\bullet} } \]
\end{figure}
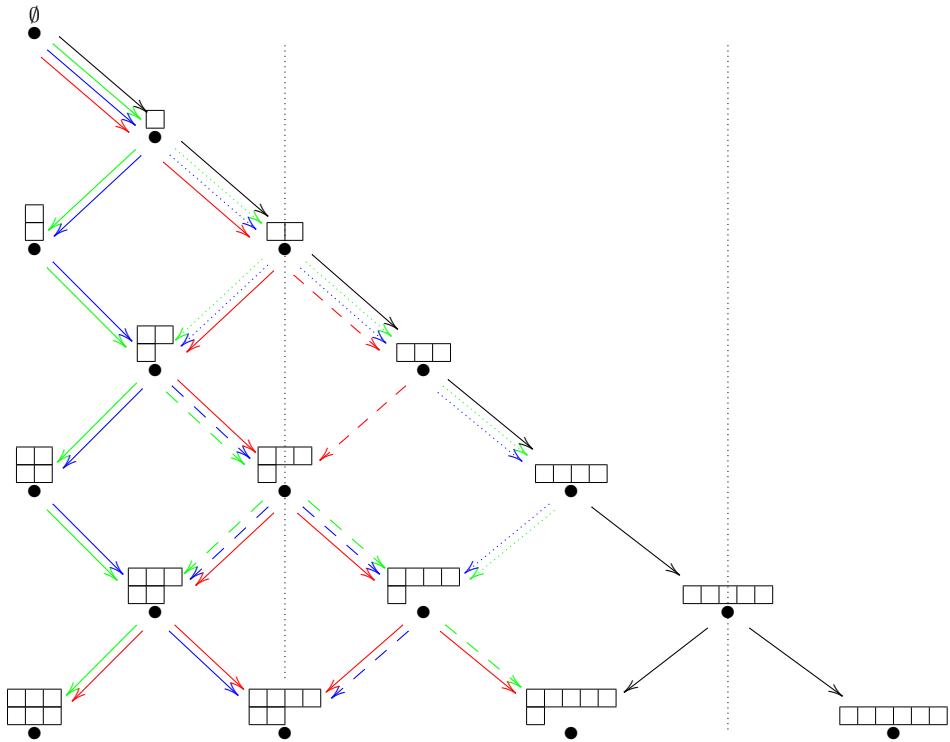

\begin{figure}[!ht]
\ContinuedFloat
\caption{Idempotents évaluables pour $p=3$ (ii).}
\footnotesize
\[ \begin{array}{|c|c|c|}
	\hline
	\text{ Graphes } &
		\text{POIs de } \TL_6(A^2) & 
		\text{Idempotents de } \TL_6(q) \\
	\hline
	\xymatrix@-2ex{\ar@[green][r]&} & 
		p_{\lyoung{135,246}} & 
		\bar{p}_{\lyoung{135,246}} \\
	\xymatrix@-2ex{\ar@{-->}@[green][r]&} & 
		p_{\lyoung{134,256}}, \;  p_{\lyoung{13456,2}} & 
		\bar{p}_{ \Big[ \lyoung{134,256} \Big] } \\
	\xymatrix@-2ex{\ar@{..>}@[green][r]&} & 
		p_{\lyoung{125,346}}, \;  p_{\lyoung{12346,5}} & 
		\bar{p}_{ \Big[ \lyoung{125,346} \Big] } \\
	\xymatrix@-2ex{\ar@[red]@<2ex>[r] \ar@{-->}@[red]@<-2ex>[r]&} & 
		\begin{cases}
			p_{\lyoung{124,356}}, \; p_{\lyoung{12356,4}} \\
	 		p_{\lyoung{123,456}}, \; p_{\lyoung{12456,3}}
		\end{cases} & 
		\begin{cases}
			\bar{p}_{ \Big[ \Big[ \lyoung{124,356} \Big] \Big]^1 } := - \bar{p}_{ \Big[ \lyoung{124,3} \Big] } \frac{h_3}{[2]_{A^2}} \left( 1 + \frac{h_5}{[2]_{A^2}} \right) \bar{p}_{ \Big[ \lyoung{124,3} \Big] } \\
			\bar{p}_{ \Big[ \Big[ \lyoung{124,356} \Big] \Big]^2 } := \bar{p}_{ \Big[ \lyoung{124,3} \Big] } \left( 1 + \frac{h_3}{[2]_{A^2}} \right) \left( 1 + \frac{h_5}{[2]_{A^2}} \right) \bar{p}_{ \Big[ \lyoung{124,3} \Big] }
		\end{cases} \\
	\xymatrix@-2ex{\ar@[blue][r]&} & 
		p_{\lyoung{1356,24}} & 
		\bar{p}_{{\lyoung{1356,24}}} \\
	\xymatrix@-2ex{\ar@{-->}@[blue][r]&} &
		p_{\lyoung{1346,25}}, \; p_{\lyoung{1345,26}} & 
		\begin{cases}
			\bar{p}_{ \Big[ \lyoung{1346,25} \Big]^1 } :=  -\bar{p}_{ \Big[ \lyoung{1346,25} \Big] } \frac{h_5}{[2]_{A^2}} \bar{p}_{ \Big[ \lyoung{1346,25} \Big] } \\
			\bar{p}_{ \Big[ \lyoung{1346,25} \Big]^2 } :=\bar{p}_{ \Big[ \lyoung{1346,25} \Big] } \left( 1 + \frac{h_5}{[2]_{A^2}} \right) \bar{p}_{ \Big[ \lyoung{1346,25} \Big] }
		\end{cases} \\ 
	\xymatrix@-2ex{\ar@{..>}@[blue][r]&} &
		p_{\lyoung{1256,34}}, \; p_{\lyoung{1234,56}} & 
		\begin{cases}
			\bar{p}_{ \Big[ \lyoung{1256,34} \Big]^1 } := \bar{p}_{ \Big[ \lyoung{1256,34} \Big] } \frac{h_4 h_3 h_5 h_4}{[2]_{A^2}^4} \bar{p}_{ \Big[ \lyoung{1256,34} \Big] } \\
			\bar{p}_{ \Big[ \lyoung{1256,34} \Big]^2 } := \bar{p}_{ \Big[ \lyoung{1256,34} \Big] } \left( 1 - \frac{h_4 h_3 h_5 h_4}{[2]_{A^2}^4} \right) \bar{p}_{ \Big[ \lyoung{1256,34} \Big] } 
		\end{cases} \\ 
	\xymatrix@-2ex{\ar@[red]@<2ex>[r] \ar@{-->}@[red]@<-2ex>[r]&} &
		\begin{cases}
			p_{\lyoung{1246,35}}, \; p_{\lyoung{1245,36}} \\
			p_{\lyoung{1236,45}}, \; p_{\lyoung{1235,46}} \\
		\end{cases} &  
		\begin{cases}
			\bar{p}_{ \Big[ \Big[ \lyoung{1246,35} \Big] \Big]^1 } := \bar{p}_{ \Big[ \Big[ \lyoung{1246,35} \Big] \Big] } \frac{h_3 h_5}{[2]_{A^2}^2} \bar{p}_{ \Big[ \Big[ \lyoung{1246,35} \Big] \Big] } {}^{(*)} \\
			\bar{p}_{ \Big[ \Big[ \lyoung{1246,35} \Big] \Big]^2 } := - \bar{p}_{ \Big[ \Big[ \lyoung{1246,35} \Big] \Big] } \left( 1 + \frac{h_3}{[2]_{A^2}} \right) \frac{h_5}{[2]_{A^2}} \bar{p}_{ \Big[ \Big[ \lyoung{1246,35} \Big] \Big] } \\
			\bar{p}_{ \Big[ \Big[ \lyoung{1246,35} \Big] \Big]^3 } := - \bar{p}_{ \Big[ \Big[ \lyoung{1246,35} \Big] \Big] } \frac{h_3}{[2]_{A^2}} \left( 1 + \frac{h_5}{[2]_{A^2}} \right) \bar{p}_{ \Big[ \Big[ \lyoung{1246,35} \Big] \Big] } \\
			\bar{p}_{ \Big[ \Big[ \lyoung{1246,35} \Big] \Big]^4 } := \bar{p}_{ \Big[ \Big[ \lyoung{1246,35} \Big] \Big] } \left( 1 + \frac{h_3}{[2]_{A^2}} \right) \left( 1 + \frac{h_5}{[2]_{A^2}} \right) \bar{p}_{ \Big[ \Big[ \lyoung{1246,35} \Big] \Big] } 	
		\end{cases}\\
	\xymatrix@-2ex{\ar[r]&} & 
		p_{\lyoung{12345,6}}, \; p_{\lyoung{123456}} & 
		\bar{p}_{ \Big[ \lyoung{12345,6} \Big]} \\
	\hline
	\end{array} \]
	
$ ^{(*)} \bar{p}_{ \Big[ \Big[ \lyoung{1246,35} \Big] \Big] } := \bar{p}_{ \Big[ \lyoung{1246,35} \Big] } + \bar{p}_{ \Big[ \lyoung{1236,45} \Big] } $
\end{figure}

\begin{Rem}
\label{Rem: idempotents eva}
Le lemme \ref{Lemme: idempotents eva2} et l'assertion $(b)$ du théorème \ref{Thm: idempotents eva} ne se généralisent pas aux tableaux standards \emph{non réguliers}. Toutefois, pour tout tableau standard $t \in T_n^{\leq2}$ non régulier de forme $\lambda$, il est possible de montrer les assertions suivantes (cf. les preuves de \cite[Lemme 1.2, Prop. 2.1]{GW93}, où les algèbres de Temperley-Lieb sont étudiées avec un autre système de générateurs).
\begin{enumerate}[(a)]
	\item Si $t$ est critique, alors il existe une unique famille $\mathbb{S}(t)$ de tableaux \emph{non réguliers} de forme $\lambda$, comprenant $t$, telle que $\sum_{s \in \mathbb{S}(t)} p_s$ est évaluable et se décompose en $\card(\mathbb{S}(t))$ idempotents orthogonaux évaluables.
	\item Si $t$ n'est pas critique, alors il existe une unique famille $\mathbb{S}(t)$ de tableaux \emph{non réguliers} de forme $\lambda$, comprenant $t$, telle que $\sum_{s \in \mathbb{S}(t)} p_{[s]}$ est évaluable et se décompose en $\card(\mathbb{S}(t))$ idempotents orthogonaux évaluables.
\end{enumerate}
Ces $\card(\mathbb{S}(t))$ idempotents orthogonaux évaluables se construisent par induction et en utilisant le même procédé que dans la preuve du lemme \ref{Lemme: idempotents eva2}. En général, il est difficile de les exprimer en fonction des POIs de $\TL_n(A^2)$. Des exemples simples sont explicités dans la figure \ref{Fig: idempotents eva}.
\end{Rem}

\begin{Def}
Soient $\lambda \in \Delta_n^{\leq2}$ et $\mathbb{A}_p$ le groupe de réflexion engendré par les symétries orthogonales par rapport aux lignes critiques.
\begin{enumerate}[(i)]
	\item Pour toute réflexion $s \in \mathbb{S}_p$, on note $s(\lambda)$ le diagramme de Young tel que les sommets du treillis de Temperley-Lieb étiquetés par $\lambda$ et $s(\lambda)$ sont reliés par $s$.
	\item On définit : \index{lambda @$[\lambda]$}
	\[ [\lambda] := \begin{cases}
		\{ \lambda \} & \text{ si $\lambda$ est critique, } \\
		\left\{ \mu \in \Delta_n^{\leq2} \; ; \; \exists s \in \mathbb{A}_p \text{ telle que } \mu = s(\lambda) \right\} & \text{ sinon. }
		\end{cases} \]
\end{enumerate}
\end{Def}

\begin{figure}[!ht]
\caption{Partitions de diagrammes de Young pour $p=3$.}
\label{Fig: orbites de Young}
\Yboxdim{7pt}
\[ \xymatrix @-1ex {
	\overset{\emptyset}{\bullet} \ar[rd] && \ar@{.}[dddddd] &&& \ar@{.}[dddddd] \\
	& \overset{\yng(1)}{\bullet} \ar[ld] \ar[rd] &&&& \\
	\overset{\color{blue} \yng(1,1)}{\bullet} \ar[rd] && \overset{\yng(2)}{\bullet} \ar[ld] \ar[rd] \\
	& \overset{\color{red} \yng(2,1)}{\bullet} \ar[ld] \ar[rd] && \overset{\color{red} \yng(3)}{\bullet} \ar[ld] \ar[rd] \\
	\overset{\color{orange} \yng(2,2)}{\bullet} \ar[rd] && \overset{\yng(3,1)}{\bullet} \ar[ld] \ar[rd] && \overset{\color{orange} \yng(4)}{\bullet} \ar[ld] \ar[rd] \\
	& \overset{\color{yellow} \yng(3,2)}{\bullet} \ar[ld] \ar[rd] && \overset{\color{yellow} \yng(4,1)}{\bullet} \ar[ld] \ar[rd] && \overset{\yng(5)}{\bullet} \ar[ld] \ar[rd] \\
	\overset{\color{green} \yng(3,3)}{\bullet} && \overset{\yng(4,2)}{\bullet} && \overset{\color{green} \yng(5,1)}{\bullet} && \overset{\color{green} \yng(6)}{\bullet} } \]
\end{figure}
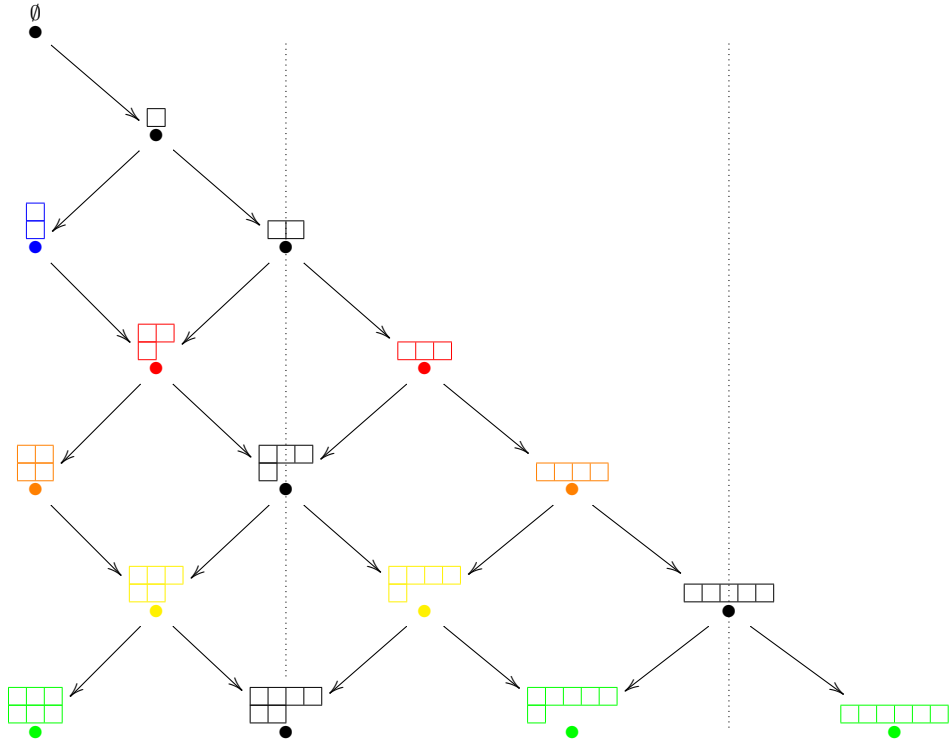

Les ensembles $\{ [\lambda] \; ; \; \lambda \in \Delta_n^{\leq2} \}$ donnent une partition de $\Delta_n^{\leq2}$, illustrée par un jeu de couleurs dans la figure \ref{Fig: orbites de Young}. De plus, l'inventaire des idempotents évaluables donné dans le théorème \ref{Thm: idempotents eva} et la remarque \ref{Rem: idempotents eva} incite à considérer les idempotents centraux : \index{z @$z_{[\lambda]}$}
\begin{equation}
\label{Eqn: PCI sym}
\forall \lambda \in \Delta_n^{\leq 2} \qquad z_{[\lambda]} := \sum_{\mu \in [\lambda]} z_{\mu}
= \sum_{\mu \in [\lambda]} \sum_{\substack{ t \in T_{\mu} \\ t \text{ régulier} }}  p_t + \sum_{\mu \in [\lambda]} \sum_{\substack{ t \in T_{\mu} \\ t \text{ non régulier} }} p_t . 
\end{equation}
Dans chaque somme, les termes non évaluables se regroupent (deux à deux pour les tableaux \emph{réguliers}) en des idempotents évaluables. Les éléments $z_{[\lambda]}$, $\lambda \in \Delta_n^{\leq2}$, sont donc évaluables. La structure des idéaux $\bar{z}_{[\lambda]} \TL_n(q)$ correspondants est étudiée dans \cite[§ 2]{GW93}. On synthétise et reformule leurs résultats dans le :

\begin{Thm}[{\cite[§ 2]{GW93}}]
\label{Thm: structure de TLq}
Soit $\lambda \in \Delta_n^{\leq2}$. 
\begin{enumerate}[(a)]
	\item Si $[\lambda] = \{\lambda\}$, alors $\bar{z}_\lambda \TL_n(q)$ est un idéal simple et vérifie :
	\[ \bar{z}_\lambda \TL_n(q) \cong \End(\C^{f_\lambda}), \]
	où $f_\lambda$ est le nombre de tableaux standards de forme $\lambda$.
	\item Si $[\lambda] = \{\mu_1,...,\mu_l\}$ avec $l \geq 2$ et $\omega(\mu_1) < ... < \omega(\mu_l)$, alors $\bar{z}_{[\lambda]} \TL_n(q)$ est un idéal indécomposable et admet une filtration d'idéaux $0 \subset R_{[\lambda]}^2 \subset R_{[\lambda]} \subset \bar{z}_{[\lambda]} \TL_n(q)$ tels que :
	\begin{align*}
	& \bar{z}_{[\lambda]} \TL_n(q) / R_{[\lambda]} \cong \bigoplus_{i=1}^l \End(\C^{f_{\mu_i}^L}), \\
	& R_{[\lambda]} / R_{[\lambda]}^2 \cong \bigoplus_{i=1}^{l-1} \Hom(\C^{f_{\mu_i}^L},\C^{f_{\mu_{i+1}}^L}) \oplus \Hom(\C^{f_{\mu_{i+1}}^L},\C^{f_{\mu_i}^L}), \\
	& R_{[\lambda]}^2 \cong \bigoplus_{i=2}^l \End(\C^{f_{\mu_i}^L}),
	\end{align*}
où, pour tout $\mu \in [\lambda]$, $f_\mu^L$ est le nombre de tableaux standards de forme $\mu$ qui ne possèdent pas de sous-tableau critique maximal ou en possèdent un dont la forme $\nu$ vérifie $\omega(\nu) \leq \omega(\mu)$.
\end{enumerate}
\end{Thm}

Pour davantage de lisibilité, la structure des idéaux $\bar{z}_{[\lambda]} \TL_n(q)$, $[\lambda] \neq \{\lambda\}$, est synthétisée dans la figure \ref{Fig: structure des ideaux de TLq}.

\begin{figure}[!ht] 
\caption{Structure des idéaux $\bar{z}_{[\lambda]} \TL_n(q)$, $[\lambda] = \{\mu_1,...,\mu_l\}$.}
\label{Fig: structure des ideaux de TLq}
Représentation diagrammatique de l'action à gauche de $\TL_n(q)$ :
\[ \scalebox{0.54}{\xymatrix{
	\bar{z}_{[\lambda]} \TL_n(q) / R_{[\lambda]} & \overset{\End( \C^{f_{\mu_1}^L} )}{\bullet} \ar[rrd] &&& \overset{\End( \C^{f_{\mu_2}^L} )}{\bullet} \ar[lld] \ar[rrd] \ar[dd] &&& \overset{\End( \C^{f_{\mu_3}^L} )}{\bullet} \ar[lld] \ar[rd] \ar[dd] &&&& \overset{\End( \C^{f_{\mu_l}^L} )}{\bullet} \ar[ld] \ar[dd] \\
	R_{[\lambda]} / R_{[\lambda]}^2 && \overset{\Hom( \C^{f_{\mu_1}^L}, \C^{f_{\mu_2}^L} )}{\bullet} \ar[rrd] & \overset{\Hom( \C^{f_{\mu_2}^L}, \C^{f_{\mu_1}^L} )}{\bullet} && \overset{\Hom( \C^{f_{\mu_2}^L}, \C^{f_{\mu_3}^L} )}{\bullet} \ar[rrd] & \overset{\Hom( \C^{f_{\mu_3}^L}, \C^{f_{\mu_2}^L} )}{\bullet} \ar[lld] && \ar[ld] & \cdots & \ar[rd] \\
	R_{[\lambda]}^2 &&&& \overset{\End( \C^{f_{\mu_2}^L} )}{\bullet} &&& \overset{\End( \C^{f_{\mu_3}^L} )}{\bullet} &&&& \overset{\End( \C^{f_{\mu_l}^L} )}{\bullet}
	}} \]
Représentation diagrammatique de l'action à droite de $\TL_n(q)$ :
\[ \scalebox{0.54}{\xymatrix{
	\bar{z}_{[\lambda]} \TL_n(q) / R_{[\lambda]} & \overset{\End( \C^{f_{\mu_1}^L} )}{\bullet} \ar[rd] &&& \overset{\End( \C^{f_{\mu_2}^L} )}{\bullet} \ar[ld] \ar[rd] \ar[dd] &&& \overset{\End( \C^{f_{\mu_3}^L} )}{\bullet} \ar[ld] \ar[rd] \ar[dd] &&&& \overset{\End( \C^{f_{\mu_l}^L} )}{\bullet} \ar[ld] \ar[dd] \\
	R_{[\lambda]} / R_{[\lambda]}^2 && \overset{\Hom( \C^{f_{\mu_1}^L}, \C^{f_{\mu_2}^L} )}{\bullet} & \overset{\Hom( \C^{f_{\mu_2}^L}, \C^{f_{\mu_1}^L} )}{\bullet} \ar[rd] && \overset{\Hom( \C^{f_{\mu_2}^L}, \C^{f_{\mu_3}^L} )}{\bullet} \ar[ld] & \overset{\Hom( \C^{f_{\mu_3}^L}, \C^{f_{\mu_2}^L} )}{\bullet} \ar[rd] && \ar[ld] & \cdots & \ar[rd] \\
	R_{[\lambda]}^2 &&&& \overset{\End( \C^{f_{\mu_2}^L} )}{\bullet} &&& \overset{\End( \C^{f_{\mu_3}^L} )}{\bullet} &&&& \overset{\End( \C^{f_{\mu_l}^L} )}{\bullet}
	}} \]
\end{figure}

\begin{Rem}
\label{Rem: structure TLq}
La théorème s'étend pour la valeur $n=1$. Dans ce cas, on a $\Delta_1^{\leq2} = \{ \lyng{1} \}$, $[ \lyng{1} \, ] = \{ \lyng{1} \}$, et on considère l'idempotent central $z_{\lyng{1}} := p_{\lyoung{1}} = 1$ (cf. la proposition \ref{Prop: POIs de TLA}). Alors $z_{\lyng{1}} \TL_1(q) \cong \C  \cong \End( \C )$.
\end{Rem}

\begin{Cor}[{\cite[§ 2]{GW93}}]
\label{Cor: PCIs de TLq}
Les PCIs de $\TL_n(q)$ sont les éléments $z_{[\lambda]}$, $\lambda \in \Delta_n^{\leq 2}$. 
\end{Cor}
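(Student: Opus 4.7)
L'id�e est d'exploiter directement la structure des id�aux $\bar{z}_{[\lambda]} \TL_n(q)$ obtenue au th�or�me \ref{Thm: structure de TLq}. Je proc�derais en quatre �tapes, dont seule la primitivit� (�tape 3) demande r�ellement de s'appuyer sur les r�sultats anticip�s de la section.

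Premi�rement, je v�rifierais que chaque $\bar{z}_{[\lambda]}$ est bien un idempotent central �valuable de $\TL_n(q)$. L'expression \eqref{Eqn: PCI sym} montre que $z_{[\lambda]} = \sum_{\mu \in [\lambda]} z_\mu$ est une somme d'idempotents centraux orthogonaux dans $\TL_n(A^2)$ (cf. la d�composition \eqref{Eqn: structure TLA1}) ; c'est donc un idempotent central de $\TL_n(A^2)$. L'�valuabilit� ayant �t� d�gag�e juste apr�s \eqref{Eqn: PCI sym} par regroupement des POIs $p_t$ non �valuables en sommes �valuables (via les lemmes \ref{Lemme: idempotents eva1}, \ref{Lemme: idempotents eva2}, le th�or�me \ref{Thm: idempotents eva} et la remarque \ref{Rem: idempotents eva}), l'image $\bar{z}_{[\lambda]} = ev_n(z_{[\lambda]})$ est bien un idempotent central de $\TL_n(q)$.

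Deuxi�mement, j'�tablirais les relations d'orthogonalit� et de compl�tude. Pour $[\lambda] \neq [\mu]$, on a $z_{[\lambda]} z_{[\mu]} = 0$ dans $\TL_n(A^2)$ par orthogonalit� des PCIs $z_\nu$, ce qui passe au quotient par $ev_n$ ; et la relation $\sum_{\lambda \in \Delta_n^{\leq2}} z_\lambda = 1$ dans $\TL_n(A^2)$ devient $\sum_{[\lambda]} \bar{z}_{[\lambda]} = 1$ dans $\TL_n(q)$, la somme �tant index�e par les classes de la partition $\{[\lambda]\}$ de $\Delta_n^{\leq2}$.

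Troisi�mement, et c'est l'�tape centrale, je montrerais que chaque $\bar{z}_{[\lambda]}$ est primitif dans le centre de $\TL_n(q)$. Par d�finition, il s'agit de v�rifier que l'id�al bilat�re $\bar{z}_{[\lambda]} \TL_n(q)$ n'est pas somme directe de deux id�aux bilat�res non nuls. Or le th�or�me \ref{Thm: structure de TLq} donne pr�cis�ment cette propri�t� : dans le cas $(a)$, l'id�al est \emph{simple} (isomorphe � $\End(\C^{f_\lambda})$), donc trivialement ind�composable ; dans le cas $(b)$, l'�nonc� affirme explicitement l'ind�composabilit� de l'id�al, la filtration $0 \subset R_{[\lambda]}^2 \subset R_{[\lambda]} \subset \bar{z}_{[\lambda]} \TL_n(q)$ empiler les sous-quotients de telle sorte qu'aucune scission bilat�re non triviale n'est possible. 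Dans chaque cas, $\bar{z}_{[\lambda]}$ est donc un idempotent central primitif.

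Quatri�mement, je conclurais par unicit�. Dans une $\C$-alg�bre de dimension finie, les idempotents centraux primitifs sont uniquement d�termin�s : toute famille d'idempotents centraux primitifs deux-�-deux orthogonaux de somme $1$ co�ncide avec la famille compl�te des PCIs. Les $\bar{z}_{[\lambda]}$, $[\lambda]$ parcourant les classes de la partition de $\Delta_n^{\leq 2}$, forment donc la liste exhaustive des PCIs de $\TL_n(q)$. Le seul obstacle conceptuel r�side dans l'�tape 3 : il faut interpr�ter correctement l'ind�composabilit� annonc�e dans le th�or�me \ref{Thm: structure de TLq} comme une ind�composabilit� \emph{bilat�re}, ce qui se lit sur la figure \ref{Fig: structure des ideaux de TLq} o� les actions � gauche et � droite entrelacent tous les blocs $\End(\C^{f_{\mu_i}^L})$.
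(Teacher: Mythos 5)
Votre preuve est correcte et suit essentiellement la d�marche que le texte sous-entend : le corollaire, cit� de \cite{GW93} sans d�monstration d�taill�e, se d�duit pr�cis�ment du th�or�me \ref{Thm: structure de TLq} par l'argument standard (idempotents centraux orthogonaux �valuables de somme $1$, primitivit� lue sur l'ind�composabilit� \emph{bilat�re} des id�aux $\bar{z}_{[\lambda]} \TL_n(q)$, puis unicit� de la d�composition de $1$ en PCIs dans une $\C$-alg�bre de dimension finie). Vous identifiez d'ailleurs correctement le seul point d�licat, � savoir que l'ind�composabilit� du th�or�me \ref{Thm: structure de TLq} doit s'entendre au sens des id�aux bilat�res, ce que confirme la figure \ref{Fig: structure des ideaux de TLq}.
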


Le corollaire \ref{Cor: PCIs de TLq} se traduit par la décomposition en somme directe d'idéaux indécomposables :
\begin{equation}
\label{Eqn: structure de TLq}
\TL_n(q) = \bar{z}_{[\lambda_1]} \TL_n(q) \oplus ... \oplus \bar{z}_{[\lambda_k]} \TL_n(q) \; ; \quad \Delta_n^{\leq2} = [\lambda_1] \sqcup ... \sqcup [\lambda_k].
\end{equation}

\begin{Rem}
\label{Rem: POIs de TLq}
Pour des raisons de cardinalité, les POIs de $\TL_n(q)$ sont donnés par les idempotents évaluables mentionnés dans le théorème \ref{Thm: idempotents eva} et la remarque \ref{Rem: idempotents eva}. Pour davantage de précision, on pourra consulter \cite[Prop. 0.1]{GW93}.
\end{Rem}

\`A titre d'exemple, pour $\TL_6(q)$, on a deux facteurs directs :
\[ \TL_6(q) = \bar{z}_{ \Big[ \lyng{3,3} \Big] } \TL_6(q) \oplus \bar{z}_{\lyng{4,2}} \TL_6(q) . \]
Conformément au théorème \ref{Thm: structure de TLq}, expliquons brièvement comment on obtient la structure de ces idéaux. Pour cela, on utilise les idempotents évaluables détaillés dans la figure \ref{Fig: idempotents eva}. On a :
\begin{align*}
\bar{z}_{ \Big[ \lyng{3,3} \Big] } &= \bar{p}_{\lyoung{135,246}} + \bar{p}_{\Big[ \lyoung{134,256} \Big]} + \bar{p}_{\Big[ \lyoung{125,346} \Big]} + \bar{p}_{\Big[ \Big[ \lyoung{124,356} \Big] \Big]^1} + \bar{p}_{\Big[ \Big[ \lyoung{124,356} \Big] \Big]^2} + \bar{p}_{\Big[ \lyoung{123456} \Big]}, \\
\bar{z}_{\lyng{4,2}} &= \begin{multlined}[t]
	\bar{p}_{\lyoung{1356,24}} + \bar{p}_{\Big[ \lyoung{1346,25} \Big]^1} + \bar{p}_{\Big[ \lyoung{1346,25} \Big]^2} + \bar{p}_{\Big[ \lyoung{1256,34} \Big]^1} + \bar{p}_{\Big[ \lyoung{1256,34} \Big]^2} \\
	+ \bar{p}_{\Big[ \Big[ \lyoung{1246,35} \Big] \Big]^1} + \bar{p}_{\Big[ \Big[ \lyoung{1246,35} \Big] \Big]^2} + \bar{p}_{\Big[ \Big[ \lyoung{1246,35} \Big] \Big]^3} + \bar{p}_{\Big[ \Big[ \lyoung{1246,35} \Big] \Big]^4}.
	\end{multlined}
\end{align*}
Un premier travail consiste à montrer que les 9 idempotents qui décomposent $\bar{z}_{\lyng{4,2}}$ sont deux-à-deux orthogonaux, primitifs, et qu'ils sont reliés entre eux sous l'action de $\TL_n(q)$ -- c'est la situation des POIs associés aux tableaux standards d'une même forme dans les algèbres de Temperley-Lieb génériques. Ainsi, on obtient $\bar{z}_{\lyng{4,2}} \TL_6(q) \cong \End(\C^9)$. \\
Un second travail consiste à identifier les idéaux indécomposables du quotient semi-simple maximal :
\[ \bar{z}_{ \Big[ \lyng{3,3} \Big] } \TL_6(q) / \Rad \left( \bar{z}_{ \Big[ \lyng{3,3} \Big] } \TL_6(q) \right) . \]
Pour cela, on considère :
\begin{align*}
&\bar{p}_{ \Big[ \lyng{3,3} \Big] } := \bar{p}_{\Big[ \lyoung{134,256} \Big]} + \bar{p}_{\Big[ \lyoung{125,346} \Big]} + \bar{p}_{\Big[ \Big[ \lyoung{124,356} \Big] \Big]^1} + \bar{p}_{\Big[ \Big[ \lyoung{124,356} \Big] \Big]^2}, \\ 
&\mathcal{P} := \left\{ \bar{p}_{\lyoung{135,246}}, \bar{p}_{\Big[ \lyng{3,3} \Big]}, \bar{p}_{\Big[ \lyoung{123456} \Big]} \right\}.\end{align*}
Pour tout $\bar{p} \in \mathcal{P}$, on obtient une sous-algèbre $\bar{p} \TL_6(q) \bar{p}$ dont les idempotents associés sont deux-à-deux orthogonaux, primitifs, et reliés sous l'action de $\TL_6(q)$. De plus, si $\bar{p} \neq \bar{p}_{\lyoung{135,246}}$, alors ils donnent lieu à autant de nilpotents, deux-à-deux orthogonaux, primitifs, et reliés sous l'action de $\TL_6(q)$. Leur somme $\bar{n}$ détermine le radical simple $\bar{n} \TL_6(q) \bar{n}$ de $\bar{p} \TL_6(q) \bar{p}$. Ainsi, les structures des sous-algèbres correspondantes sont données par :
\[ \scalebox{0.45}{\xymatrix{
	\bar{p}_{\lyoung{135,246}} \TL_6(q) \bar{p}_{\lyoung{135,246}} &&& \bar{p}_{\Big[ \lyng{3,3} \Big]} \TL_6(q) \bar{p}_{\Big[ \lyng{3,3} \Big]} &&& \bar{p}_{\Big[ \lyoung{123456} \Big]} \TL_6(q) \bar{p}_{\Big[ \lyoung{123456} \Big]} \\
	\overset{\End(\C)}{\bullet} &&& \overset{\End(\C^4)}{\bullet} \ar[d] &&& \overset{\End(\C)}{\bullet} \ar[d] \\
	&&& \overset{\End(\C^4)}{\bullet} &&& \overset{\End(\C)}{\bullet}
	}} \]
Il reste alors décrire le radical $\Rad\left( \bar{z}_{ \Big[ \lyng{3,3} \Big] } \TL_6(q) \right)$, dans lequel interviennent les $\C$-espaces vectoriels de la forme $\bar{p} \TL_6(q) \bar{p}'$ avec $\bar{p}, \bar{p}' \in \mathcal{P}$. Un dernier travail consiste à montrer que les idempotents qui décomposent $\bar{p}$ sont reliés sous l'action de $\TL_6(q)$ à ceux qui décomposent $\bar{p}'$. On obtient alors :
\[ \scalebox{0.45}{\xymatrix{
	\bar{p}_{\lyoung{135,246}} \TL_6(q) \bar{p}_{\lyoung{135,246}} & \bar{p}_{\lyoung{135,246}} \TL_6(q) \bar{p}_{ \Big[ \lyng{3,3} \Big]} & \bar{p}_{ \Big[ \lyng{3,3} \Big]} \TL_6(q)  \bar{p}_{\lyoung{135,246}} & \bar{p}_{\Big[ \lyng{3,3} \Big]} \TL_6(q) \bar{p}_{\Big[ \lyng{3,3} \Big]} & \bar{p}_{\Big[ \lyng{3,3} \Big]} \TL_6(q) \bar{p}_{ \Big[ \lyoung{123456} \Big] } & \bar{p}_{ \Big[ \lyoung{123456} \Big]} \TL_6(q) \bar{p}_{ \Big[ \lyng{3,3} \Big]} & \bar{p}_{\Big[ \lyoung{123456} \Big]} \TL_6(q) \bar{p}_{\Big[ \lyoung{123456} \Big]} \\
	\overset{\End(\C)}{\bullet} \ar[rd] \ar[rrd] &&& \overset{\End(\C^4)}{\bullet} \ar[lld] \ar[ld] \ar[rrd] \ar[rd] \ar[dd] &&& \overset{\End(\C)}{\bullet} \ar[lld] \ar[ld] \ar[dd] \\
	& \overset{\Hom(\C, \C^4)}{\bullet} \ar[rrd] & \overset{\Hom(\C^4, \C)}{\bullet} \ar[rd] && \overset{\Hom(\C^4, \C)}{\bullet} \ar[rrd] \ar[ld] & \overset{\Hom(\C, \C^4)}{\bullet} \ar[rd] \ar[lld] \\
	&&& \overset{\End(\C^4)}{\bullet} &&& \overset{\End(\C)}{\bullet}
	}} \]

\section{Idempotents de Jones-Wenzl évaluables aux racines de l'unité}
\label{section: JWG}

Lorsque $A$ est un paramètre formel, ou après évaluation de $A$ en un nombre complexe qui n'est pas racine $4p$-ième de l'unité ($p \in \N^*$), on peut définir une famille d'idempotents $\left\{ f_n \in \TL_n(A^2) \; ; \; n \in \N^* \right\}$ par le système de récurrence :
\begin{equation} \label{Eqn: recurrence JW}
\begin{cases}
f_1=1, \\
f_n = f_{n-1} + \frac{[n-1]_{A^2}}{[n]_{A^2}} f_{n-1} h_{n-1} f_{n-1}, 
	& n \geq 2,
\end{cases}
\end{equation}
(cf. par exemple \cite[Def. 3.5.1]{CFS95} et \cite{Wen87}). On les appelle \emph{idempotents de Jones-Wenzl}. En revanche, après évaluation de $A$ en une racine $4p$-ième de l'unité, ce système de récurrence n'est plus bien défini pour les entiers $n \geq p$ (un zéro apparaît dans le coefficient $[p]_{A^2}$). On propose une construction générale des idempotents de Jones-Wenzl, basée sur les POIs des algèbres de Temperley-Lieb, qui étend le système de récurrence \ref{Eqn: recurrence JW} aux racines $4p$-ième de l'unité. On fixe à nouveau $n \in \N$ et on illustre cette construction sur $\TL_n(q)$ en évaluant $A^2$ en une racine $2p$-ième de l'unité $q=e^{\frac{i \pi}{p}}$. Pour cela, on commence par expliciter la correspondances entre les POIs des algèbres de Temperley-Lieb évaluées et certaines représentations de $\Uq$. On met alors en évidence des idempotents remarquables, les \emph{idempotents de Jones-Wenzl évaluables}, qui correspondent à des projecteurs sur des multiples des $\Uq$-représentations $\X^+(s), \PIM^+(s), \PIM^-(s)$, $1 \leq s \leq p$ (cf. la section \ref{section: reps}). Enfin, on étudie l'espace d'écheveaux $K_A(\bar{D} \times \mathbb{S}^1, 0)$ du tore solide colorié par ces idempotents. On obtiendra une généralisation de résultats de Lickorish donnés dans \cite[§ 4]{Lic92}.

\subsection{Projections sur les représentations des groupes quantiques}
\label{subsection: POIs rep fonda}

Les algèbres de Temperley-Lieb évaluées jouent un rôle singulier pour les représentations de $\Uq$. Plus précisément, il existe une correspondance (pas tout à fait bijective) entre les POIs de $\TL_n(q)$ (cf. la remarque \ref{Rem: POIs de TLq}) et les facteurs directs de $\X^+(2)^{\otimes n}$. De manière analogue, dans le cas générique, il existe une correspondance bijective entre les POIs de $\TL_n(A^2)$ et les facteurs directs du produit $\X_{A^2}^+(2)^{\otimes n}$ de la représentation fondamentale $\X_{A^2}^+(2)$ du groupe quantique générique $\UA$. On commence par expliciter ces correspondances dans le cas générique, puis on établit celles après évaluation de $A^2$ en $q=e^{\frac{i \pi}{p}}$.

\begin{Def}[{\cite[Def. VI.1.1, Prop. VII.1.1]{Kas95}}]
Le \emph{groupe quantique} $\UA$ \index{UA@ $\UA$} est la $\C(A)$-algèbre engendrée par $E, F, K, K^{-1}$ sous les relations :
\begin{equation} 
\begin{gathered}
K E K^{-1} =A^4 E,
	\qquad K F K^{-1} =A^{-4} F ,
	\qquad [E,F] = \frac{K-K^{-1}}{A^2-A^{-2}}.
\end{gathered}
\end{equation}
Elle est munie d'une structure d'algèbre de Hopf, dont le coproduit $\Delta$, la co-unité $\varepsilon$, et l'antipode $S$ sont donnés par :
\begin{equation} 
\begin{gathered} 
\Delta(E) = 1 \otimes E + E \otimes K,
	\qquad \Delta(F) = K^{-1} \otimes F + F \otimes 1,
	\qquad \Delta(K) = K \otimes K, \\	
\varepsilon(E) = 0, 
	\qquad \varepsilon(F) = 0, 
	\qquad \varepsilon(K) = 1,  \\
S(E) = -EK^{-1},
	\qquad S(F) = -KF,
	\qquad S(K)=K^{-1}.
\end{gathered}
\end{equation}
\end{Def}

A isomorphisme près, il y a une infinité de $\UA$-modules simples à gauche de dimension finie $\X_{A^2}^\alpha(s)$, indexés par $\alpha \in \{+,-\}$ et $s \in \N^*$ (cf. par exemple \cite[§ VI.3]{Kas95}).

\begin{Prop}[{\cite[§ VI.3]{Kas95}}]
\label{Prop: simplesA}
Soient $\alpha \in \{-,+\}$ et $s \in \N^*$. Le module simple $\X_{A^2}^\alpha(s)$ \index{X@ $\X_{A^2}^\alpha(s)$} est engendré sous $\UA$ par un vecteur de plus haut poids $x_0^\alpha(s)$ (resp. de plus bas poids $x_{s-1}^\alpha(s)$), et il admet une $\C(A)$-base $\{ x_n^\alpha(s) \; ; \; 0 \leq n \leq s-1 \}$ telle que :
\begin{align*}
& K x_n^\alpha(s) = \alpha A^{2(s-1-2n)} x_n^\alpha(s), \quad 0 \leq n \leq s-1, \\
& E x_0^\alpha(s) = 0, 
	\qquad E x_n^\alpha(s) = \alpha [n]_{A^2} [s-n]_{A^2} x_{n-1}^\alpha(s), \quad 1 \leq n \leq s-1, \\
& F x_n^\alpha(s) = x_{n+1}^\alpha(s), \quad 0 \leq n \leq s-2,
	\qquad F x_{s-1}^\alpha(s) = 0.
\end{align*}
On l'appelle la \emph{base canonique de $\X_{A^2}^\alpha(s)$}.
\end{Prop}

A isomorphismes près, ces $\UA$-modules simple à gauche (il y en a d'autres de dimension \emph{non finie}) décomposent les $\UA$-modules à gauche de dimension finie (cf. par exemple \cite[Thm VII.2.2]{Kas95}). Aussi, contrairement aux $\Uq$-modules, on ne s'intéresse pas aux autres structures de $\UA$-modules (tels que les modules de Verma, cf. par exemple \cite[§ VI.3]{Kas95}). En particulier, les produits tensoriels $\X_{A^2}^\alpha(s) \otimes \X_{A^2}^{\alpha'}(s')$, $\alpha, \alpha' \in \{+,-\}$ et $s,s' \in \N^*$, dont la structure de $\UA$-module à gauche est donné par le coproduit $\Delta$ de $\UA$, sont des $\UA$-modules à gauche de dimension finie. Ils admettent les décompositions suivantes.

\begin{Lemme} 
\label{Lemme: produits simplesA}
Soient $\alpha \in \{+,-\}$ et $s \in \N^*$. On a :
\[ \X_{A^2}^\alpha(s) \otimes \X_{A^2}^+(2) \cong 
	\begin{cases}
		\X_{A^2}^\alpha(2) & \text{ si } s=1, \\
		\X_{A^2}^\alpha(s-1) \oplus \X_{A^2}^\alpha(s+1) & \text{ sinon}.
	\end{cases} \]
\end{Lemme}

\begin{proof}
On procède comme dans la preuve du lemme \ref{Lemme: produits simples1}.
\end{proof}

\begin{Thm} 
\label{Thm: produits simplesA}
Soient $\alpha, \alpha' \in \{+,-\}$ et $s,s' \in \N^*$. On a :
\[ \X_{A^2}^\alpha(s) \otimes \X_{A^2}^{\alpha'}(s') \cong 
	\bigoplus_{\substack{s''=s-s'+1 \\ \mathrm{pas}=2}}^{s+s'-1} \X_{A^2}^{\alpha \alpha'}(s''). \]
\end{Thm}

\begin{proof}
On procède comme dans la preuve du théorème \ref{Thm: produits simples}.
\end{proof}

On s'intéresse aux facteurs directs de la suite $\{ \X_{A^2}^+(2)^{\otimes n} \; ; \;  n \in \N \}$ de $\UA$-modules à gauche. D'après le lemme \ref{Lemme: produits simplesA}, les facteurs directs de cette suite font intervenir les classes d'isomorphisme des $\UA$-modules à gauche $\X_{A^2}^+(s)$, $s \in \N^*$. Précisons à quels rangs ils interviennent pour la première fois. Pour tout $\UA$-module à gauche $M$, on note $n(M)$ le plus petit entier $n$ pour lequel $M$ est isomorphe à un facteur de $\X_{A^2}^+(2)^{\otimes n}$, et $n(M)=\infty$ si un tel entier n'existe pas. Alors, pour tout $s \in \N^*$, on a $n(\X_{A^2}^+(s))= s-1$ et $n(\X_{A^2}^-(s)) = \infty$. Dans la suite, on établit une correspondance bijective entre les POIs de $\TL_n(A^2)$ et les facteurs directs de $\X_{A^2}^+(2)^{\otimes n}$.

\begin{Def}[{\cite{CFS95}[Def. 3.2.2]}]
On note $\{ x_0:= x^+_0(2), x_1:= x^+_1(2)\}$ la base canonique de $\X_{A^2}^+(2)$. On définit les morphismes linéaires $\cap_A$ et $\cup_A$ par :
\begin{align*}
& \cap_A : \left\{ \begin{array}{l}
	\X^+(2) \otimes \X^+(2) \longrightarrow \C(A) \\
	x_m \otimes x_n \longmapsto (n-m) i A^{n-m}
	\end{array} \right. ,
	\quad \forall m,n \in \{0,1\}, \\ 
& \cup_A : \left\{ \begin{array}{l}
	\C(A) \longrightarrow \X^+(2) \otimes \X^+(2) \\
	1 \longmapsto i A x_0 \otimes x_1 - i A^{-1} x_1 \otimes x_0
	\end{array} \right. .
\end{align*}
\end{Def}

Connaissant la structure de $\UA$-module de $\X_{A^2}^+(2)$ et $\C(A) \cong \X_{A^2}^+(1)$ (cf. la proposition \ref{Prop: simplesA}), on montre que les morphismes $\cap_A$ et $\cup_A$ sont $\UA$-équivariants et vérifient :
\[ \cap \circ \cup(1) = -[2]_{A^2},
\qquad \begin{cases}
\cup \circ \cap (x_0 \otimes x_0) = 0 = \cap \circ \cup (x_1 \otimes x_1), \\
\cup \circ \cap (x_0 \otimes x_1) = -A^2 x_0 \otimes x_1 + x_1 \otimes x_0, \\
\cup \circ \cap (x_1 \otimes x_0) = x_0 \otimes x_1 -A^{-2} x_1 \otimes x_0.
\end{cases} \]
Pour tout $n \in \N$, ils permettent de définir une action connue de $\TL_n(A^2)$ sur $\X_{A^2}^+(2)^{\otimes n}$ qui commute avec celle de $\UA$.

\begin{Thm}[{\cite[Thm 3.3.4]{CFS95}}]
\label{Thm: action de TLA}
On a un morphisme d'algèbres injectif défini par : \index{theta1 @$\theta_{A^2,n}$} 
\[ \theta_{A^2,n} : \begin{cases}
	\TL_n(A^2) \longrightarrow \End_{\UA} \left( \X_{A^2}^+(2)^{\otimes n} \right) \\
	h_0=1 \longmapsto id_n, \\
	h_i \longmapsto id_{i-1} \otimes \cup_A \circ \cap_A \otimes id_{n-i-1}, & i \in \{1,...,n-1\} . 
	\end{cases} \]
\end{Thm}

Ainsi, pour tout POI $p$ de $\TL_n(A^2)$, $\theta_{A^2,n}(p)$ est un projecteur non nul $\UA$-équivariant de $\X_{A^2}^+(2)^{\otimes n}$ et son image $p \cdot  \X_{A^2}^+(2)^{\otimes n}$ est un $\UA$-module dont la classe d'isomorphisme apparaît dans la décomposition de $\X_{A^2}^+(2)^{\otimes n}$. Pour $n=0$, l'algèbre de Temperley-Lieb $\TL_0(A^2)=\C(A)$ possède un unique POI $p=1$ et $\theta_{A^2,0}(1)$ est un projecteur sur $\C(A) \cong \X_{A^2}^+(1)$. Pour les autres valeurs de $n$, les projecteurs $\theta_{A^2n,}(p_t)$, $t \in T_n^{\leq2}$, sont détaillés dans le :

\begin{Thm}
\label{Thm: POIs rep fondaA}
On suppose que $n \geq 1$. Pour tout tableau standard $t \in T_n^{\leq 2}$ de forme $\lambda$, il existe un unique facteur direct $\X$ de $\X^+_{A^2}(2)^{\otimes n}$, isomorphe à $\X_{A^2}^+(\omega(\lambda))$, tel que $\theta_{A^2,n}(p_t)$ est un projecteur sur $\X$.
\end{Thm}

\begin{proof}
Il est clair que les morphismes $\theta_{A^2,n}(p_t)$, $t \in T_n^{\leq2}$, sont des projecteurs non nuls dont les images $p_t \cdot \X_{A^2}^+(2)^{\otimes n}$, $t \in T_n^{\leq2}$, sont des $\UA$-modules en somme directe. Comme $\theta_{A^2,n}$ est injectif, cela assure l'unicité de l'assertion. Il suffit donc de montrer que, pour tout $t \in T_n^{\leq2}$ de forme $\lambda$, $p_t \cdot \X_{A^2}^+(2)^{\otimes n} \cong \X_{A^2}^+(\omega(\lambda))$. On procède par récurrence double sur $n \geq 1$ et sur :
\begin{align*}
\omega(\lambda) &\in \{2,4,...,n+1\} && \text{ si $n$ est impair}, \\
&\in \{1,3,...,n+1\} && \text{ si $n$ est pair}.
\end{align*}

Pour $n=1$, on a $\Delta_1^{\leq2} = \{ \lyng{1} \}$ et $T_1^{\leq 2} = \{ \lyoung{1} \}$. Il y a un unique POI $p_{\lyoung{1}} = 1$ et $p_{\lyoung{1}} \cdot \X_{A^2}^+(2) = \X^+(2)$.

Soit $n \geq 1$. On suppose que, pour tout $r \in T_n^{\leq2}$ de forme $\nu$, $p_r \cdot \X_{A^2}^+(2)^{\otimes n} \cong \X_{A^2}^+(\omega(\nu))$. Soit $t \in T_{n+1}^{\leq2}$ de forme $\lambda=[\lambda_1, \lambda_2]$. On considère le tableau standard $t' \in T_n^{\leq 2}$ et, s'il existe, le tableau standard $\hat{t} \in T_{n+1}^{\leq 2}$ (cf. la remarque \ref{Rem: treillis de TL1}). On distingue deux cas.

\begin{Cas}
On suppose que $n$ est impair. On procède par récurrence sur :
\[ d:=\omega(\lambda) \in \{1,3,...,n+2\} . \]

Pour $d=1$, il existe un unique diagramme de Young $\nu = [\lambda_1, \lambda_2-1]$ tel que $\nu \subset \lambda$. Donc $t'$ est de forme $\nu$ et $\omega(\nu)=d+1=2$. D'après l'hypothèse de récurrence, on a $p_{t'} \cdot \X_{A^2}^+(2)^{\otimes n} \cong \X_{A^2}^+(2)$. On en déduit que :
\begin{align*}
p_{t'} \cdot \X_{A^2}^+(2)^{\otimes n+1} &\; \; = \left( p_{t'} \cdot \X_{A^2}^+(2)^{\otimes n} \right) \otimes \X_{A^2}^+(2)
\cong \X_{A^2}^+(2) \otimes \X_{A^2}^+(2) \\
&\overset{\eqref{Thm: produits simplesA}}{\cong} \X_{A^2}^+(1) \oplus \X_{A^2}^+(3).
\end{align*}
De plus, le tableau standard $\hat{t}$ existe et sa forme $\mu = [\lambda_1+1, \lambda_2-1]$ vérifie $\omega(\mu)=d+2=3$. En utilisant l'égalité $p_{t'} = p_t + p_{\hat{t}}$ dans $\TL_{n+1}(A^2)$ (cf. la preuve de la proposition \ref{Prop: POIs de TLA}) et leur orthogonalité, on obtient également :
\[ p_{t'} \cdot \X_{A^2}^+(2)^{\otimes n+1} = (p_t + p_{\hat{t}})  \cdot \X_{A^2}^+(2)^{\otimes n+1} = \left( p_t \cdot \X_{A^2}^+(2)^{\otimes n+1} \right) \oplus \left( p_{\hat{t}} \cdot \X_{A^2}^+(2)^{\otimes n+1} \right). \]
Supposons par l'absurde que $p_t \cdot \X_{A^2}^+(2)^{\otimes n+1} \cong \X_{A^2}^+(3)$. Alors, par identification des facteurs directs, on a $p_{\hat{t}} \cdot \X_{A^2}^+(2)^{\otimes n+1} \cong \X_{A^2}^+(1)$. Or, comme $\omega(\mu)>1$, il existe deux tableaux standards $s, \hat{s} \in T_{n+2}^{\leq2}$ tels que $s'=\hat{t}=\hat{s}'$. En reproduisant les raisonnements précédents sur $\hat{t}$, on obtient d'une part :
\begin{align*}
p_{\hat{t}} \cdot \X_{A^2}^+(2)^{\otimes n+2} = \left( p_{\hat{t}} \cdot \X_{A^2}^+(2)^{\otimes n+1} \right) \otimes \X_{A^2}^+(2) 
\cong \X_{A^2}^+(1) \otimes \X_{A^2}^+(2)
\overset{\eqref{Thm: produits simplesA}}{\cong} \X_{A^2}^+(2),
\end{align*}
et d'autre part :
\begin{align*}
p_{\hat{t}} \cdot \X_{A^2}^+(2)^{\otimes n+2} &= (p_s + p_{\hat{s}}) \cdot \X_{A^2}^+(2)^{\otimes n+2}
= \left( p_s \cdot \X_{A^2}^+(2)^{\otimes n+2} \right) \oplus \left( p_{\hat{s}} \cdot \X_{A^2}^+(2)^{\otimes n+2} \right).
\end{align*}
Ce qui est absurde car $\X_{A^2}^+(2)$ est simple. Donc $p_t \cdot \X_{A^2}^+(2)^{\otimes n+1} \cong \X_{A^2}^+(1)$.

Soit $d \in \{3,...,n+2\}$. On suppose que, pour $s \in T_{n+1}^{\leq2}$ de forme $\mu$ telle que $\omega(\mu) < d$, $p_s \cdot \X_{A^2}^+(2)^{\otimes n+1} \cong \X_{A^2}^+(\omega(\mu))$. Comme $d>1$, il existe deux diagrammes de Young $\nu \in \{ [\lambda_1, \lambda_2-1], [\lambda_1-1, \lambda_2] \}$ tels que $\nu \subset \lambda$. On a donc deux formes possibles pour $t'$.
\begin{enumerate}[(a)]
	\item On suppose que $t'$ est de forme $\nu = [\lambda_1-1, \lambda_2]$. Alors $\omega(\nu)=d-1$, le tableau standard $\hat{t}$ existe et sa forme $\mu = [\lambda_1-1, \lambda_2+1]$ vérifie $\omega(\mu)=d-2$. D'après la première hypothèse de récurrence, on a $p_{t'} \cdot \X_{A^2}^+(2)^{\otimes n} \cong \X_{A^2}^+(d-1)$. Comme précédemment, on obtient d'une part :
	\begin{align*}
	p_{t'} \cdot \X_{A^2}^+(2)^{\otimes n+1} &\; \; =	\left( p_{t'} \cdot \X_{A^2}^+(2)^{\otimes n} \right) \otimes \X_{A^2}^+(2)
	\cong \X_{A^2}^+(d-1) \otimes \X_{A^2}^+(d) \\
	&\overset{\eqref{Thm: produits simplesA}}{\cong} \X_{A^2}^+(d-2) \oplus \X_{A^2}^+(d),
	\end{align*}
	et d'autre part :
	\begin{align*}
	p_{t'} \cdot \X_{A^2}^+(2)^{\otimes n+1} &= (p_t + p_{\hat{t}}) \cdot \X_{A^2}^+(2)^{\otimes n+1}
	= \left( p_t \cdot \X_{A^2}^+(2)^{\otimes n+1} \right) \oplus \left( p_{\hat{t}} \cdot \X_{A^2}^+(2)^{\otimes n+1} \right).
	\end{align*}
	Or, d'après la seconde hypothèse de récurrence, on a aussi $p_{\hat{t}} \cdot \X_{A^2}^+(2)^{\otimes n+1} \cong \X_{A^2}^+(d-2)$. Donc, par identification des facteurs directs, $p_t \cdot \X_{A^2}^+(2)^{\otimes n+1} \cong \X_{A^2}^+(d)$.
	\item On suppose que $t'$ est de forme $\nu = [\lambda_1, \lambda_2-1]$. Alors $\omega(\nu)=d+1$, le tableau standard $\hat{t}$ existe et sa forme $\mu = [\lambda_1+1, \lambda_2-1]$ vérifie $\omega(\mu)=d+2$. Soit $s \in T_{n+1}^{\leq2}$ de forme $\lambda$ tel que $s'$ est de forme $[\lambda_1-1, \lambda_2]$ (il existe car $d>1$). D'après le théorème \ref{Thm: structure TLA}, on a $\TL_{n+1}(A^2) p_t \cong V_\lambda \cong \TL_{n+1}(A^2) p_s$. Il existe donc $u_{s,t}, u_{t,s} \in \TL_{n+1}(A^2)$ tels que $p_t = u_{t,s} p_s$ et $p_s = u_{s,t} p_t$. On en déduit les isomorphismes de $\UA$-modules réciproques :
	\begin{align*}
	&\begin{cases} 
			p_t \cdot \X_{A^2}^+(2)^{\otimes n+1} \longrightarrow p_s \cdot \X_{A^2}^+(2)^{\otimes n+1} \\
			p_t \cdot X \longmapsto u_{s,t} p_t \cdot X
			\end{cases}, \;
	\begin{cases} 
			p_s \cdot \X_{A^2}^+(2)^{\otimes n+1} \longrightarrow p_t \cdot \X_{A^2}^+(2)^{\otimes n+1} \\
			p_s \cdot X \longmapsto u_{t,s} p_s \cdot X
		\end{cases}.
	\end{align*}
		Or, $p_s \cdot \X_{A^2}^+(2)^{\otimes n+1} \cong \X_{A^2}^+(d)$ d'après $(a)$. Donc $p_t \cdot \X_{A^2}^+(2)^{\otimes n+1} \cong \X_{A^2}^+(d)$.
\end{enumerate}
Dans chacun des cas, pour $n$ impair, on a $p_t \cdot \X_{A^2}^+(2)^{\otimes n+1} \cong \X_{A^2}^+(d)$. Ce qui prouve l'hérédité et achève la seconde récurrence.
\end{Cas}

\begin{Cas}
On suppose que $n$ est pair. On procède par récurrence sur :
\[ d:= \omega(\lambda) \in \{2,4,...,n+2\} . \] 

Pour $d=2$, il existe deux diagrammes de Young $\nu \in [\lambda_1, \lambda_2-1], [\lambda_1-1, \lambda_2]$ tels que $\nu \subset \lambda$. Pour les mêmes raisons que dans le cas $1(b)$, on peut suppose que $t'$ est de forme $[\lambda_1-1, \lambda_2]$. Alors $\omega(\nu)=d-1=1$ et le tableau standard $\hat{t}$ n'existe pas. D'après l'hypothèse de récurrence, on a $p_{t'} \cdot \X_{A^2}^+(2)^{\otimes n} \cong \X_{A^2}^1(1)$. On en déduit que :
\[ p_{t'} \cdot \X_{A^2}^+(2)^{\otimes n+1} = \left( p_{t'} \cdot \X_{A^2}^+(2)^{\otimes n} \right) \otimes \X_{A^2}^+(2) \cong \X_{A^2}^+(1) \otimes \X_{A^2}^+(2) \overset{\eqref{Thm: produits simplesA}}{\cong} \X_{A^2}^+(2). \]
De plus, en utilisant l'égalité $p_{t'} = p_t$ dans $\TL_{n+1}(A^2)$ (cf. la preuve de la proposition \ref{Prop: POIs de TLA}), on obtient également :
\[ p_{t'} \cdot \X_{A^2}^+(2)^{\otimes n+1} = p_t \cdot \X_{A^2}^+(2)^{\otimes n+1}. \]
Donc $p_t \cdot \X_{A^2}^+(2)^{\otimes n+1} \cong \X_{A^2}^+(2)$.

Pour $d \in \{4,...,n+2\}$, on procède exactement comme dans le cas 1 avec une seconde récurrence et une distinction de cas.
\end{Cas}

Ainsi, quelque soit la parité de $n$, on a $p_t \cdot \X_{A^2}^+(2)^{\otimes n+1} \cong \X_{A^2}^+(d)$. Ce qui prouve l'hérédité et achève la première récurrence.
\end{proof}

\begin{Cor}
\label{Cor: POIs rep fondaA}
On a une bijection ensembliste :
\[ \begin{cases}
	\left\{ p_t \; ; \; t \in T_n^{\leq 2} \right\} \longrightarrow DF \left( \X_{A^2}^+(2)^{\otimes n} \right) \\
	p_t \longrightarrow p_t \cdot \X_{A^2}^+(2)^{\otimes n}
	\end{cases} \]
où $DF(\X^+(2)^{\otimes n})$ désigne l'ensemble des classes d'isomorphismes des facteurs directs de $\X^+(2)^{\otimes n}$ comptées avec multiplicité.
\end{Cor}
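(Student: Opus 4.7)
Le plan de d�monstration est le suivant. D'apr�s le th�or�me \ref{Thm: POIs rep fondaA}, on sait d�j� que l'application est bien d�finie, et que chaque image $p_t \cdot \X_{A^2}^+(2)^{\otimes n}$ est isomorphe au module simple $\X_{A^2}^+(\omega(\lambda))$, o� $\lambda$ est la forme de $t$. L'essentiel du travail restant consiste � �tablir que la correspondance est bijective lorsqu'on compte les facteurs directs avec multiplicit�.

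D'abord, j'utiliserais la relation $\sum_{t \in T_n^{\leq 2}} p_t = 1$, qui d�coule imm�diatement de la d�composition $1 = \sum_{\lambda \in \Delta_n^{\leq 2}} z_\lambda = \sum_\lambda \sum_{t \in T_\lambda} p_t$ rappel�e dans la sous-section \ref{subsection: TLA}. En appliquant le morphisme d'alg�bres $\theta_{A^2,n}$, on obtient $\sum_t \theta_{A^2,n}(p_t) = id_{\X_{A^2}^+(2)^{\otimes n}}$. L'orthogonalit� des POIs et l'injectivit� de $\theta_{A^2,n}$ donnent alors une d�composition effective en somme directe :
\[ \X_{A^2}^+(2)^{\otimes n} = \bigoplus_{t \in T_n^{\leq 2}} p_t \cdot \X_{A^2}^+(2)^{\otimes n}. \]

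Ensuite, je r�appliquerais le th�or�me \ref{Thm: POIs rep fondaA} pour conclure que chaque sous-module $p_t \cdot \X_{A^2}^+(2)^{\otimes n}$ est isomorphe � un module simple de $\UA$, donc ind�composable. La d�composition pr�c�dente fournit ainsi une d�composition de $\X_{A^2}^+(2)^{\otimes n}$ en somme directe de $|T_n^{\leq 2}|$ modules ind�composables index�s par les POIs de $\TL_n(A^2)$. Par le th�or�me de Krull-Schmidt, cette d�composition co�ncide, � isomorphisme et ordre pr�s, avec toute autre d�composition en ind�composables. On obtient ainsi une correspondance bijective entre les POIs $p_t$ et les facteurs directs $p_t \cdot \X_{A^2}^+(2)^{\otimes n}$ compt�s avec multiplicit�.

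Le principal point d�licat sera de justifier rigoureusement l'injectivit� ensembliste : deux POIs distincts doivent correspondre � deux \emph{occurrences} distinctes d'un m�me facteur direct (� isomorphisme pr�s) dans la d�composition. Ceci d�coule imm�diatement de l'orthogonalit� : pour $t \neq s$, on a $p_t \cdot (p_s \cdot \X_{A^2}^+(2)^{\otimes n}) = 0$, donc $p_t \cdot \X_{A^2}^+(2)^{\otimes n}$ et $p_s \cdot \X_{A^2}^+(2)^{\otimes n}$ sont des sous-espaces distincts de $\X_{A^2}^+(2)^{\otimes n}$, m�me s'ils sont isomorphes en tant que $\UA$-modules. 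La bijection demand�e est donc bien �tablie, en comprenant $DF(\X_{A^2}^+(2)^{\otimes n})$ comme le multi-ensemble (ou, de mani�re �quivalente, l'ensemble des occurrences) des facteurs directs dans la d�composition canonique fournie par les POIs.
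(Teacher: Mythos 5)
Votre preuve est correcte, mais elle suit une route r\'eellement diff\'erente de celle du texte. Le texte d\'eduit l'injectivit\'e directement du th\'eor\`eme \ref{Thm: POIs rep fondaA} (unicit\'e du facteur associ\'e \`a chaque $p_t$), puis conclut par un argument de cardinalit\'e : il montre par une double r\'ecurrence, fond\'ee sur la r\`egle de Clebsch--Gordan du lemme \ref{Lemme: produits simplesA}, que le nombre $f_\lambda$ de tableaux standards de forme $\lambda$ co\"incide avec la multiplicit\'e $f^n_{\omega(\lambda)}$ de $\X_{A^2}^+(\omega(\lambda))$ dans $\X_{A^2}^+(2)^{\otimes n}$, de sorte que les deux ensembles ont le m\^eme cardinal. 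Vous, au contraire, exploitez la compl\'etude et l'orthogonalit\'e du syst\`eme de POIs ($1 = \sum_{t} p_t$) transport\'ees par le morphisme d'alg\`ebres $\theta_{A^2,n}$ pour obtenir une d\'ecomposition effective $\X_{A^2}^+(2)^{\otimes n} = \bigoplus_{t} p_t \cdot \X_{A^2}^+(2)^{\otimes n}$ en modules simples, puis invoquez Krull--Schmidt pour l'identifier \`a la d\'ecomposition canonique. C'est plus court et plus conceptuel ; cela \'evite enti\`erement la r\'ecurrence combinatoire. En revanche, l'approche du texte \'etablit au passage l'identit\'e $f_\lambda = f^n_{\omega(\lambda)}$ (multiplicit\'e du $\UA$-simple \'egale \`a la dimension du $\TL_n(A^2)$-simple, dualit\'e de type Schur--Weyl), information quantitative que votre argument ne livre qu'implicitement. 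Les deux d\'emonstrations sont valides ; pensez seulement \`a traiter s\'epar\'ement le cas trivial $n=0$, exclu des hypoth\`eses du th\'eor\`eme \ref{Thm: POIs rep fondaA}.
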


\begin{proof}
Le cas $n=0$ est évident : l'algèbre $\TL_0(A^2) = \C(A)$ possède un unique POI $p=1$ et $\X_{A^2}^+(1)^{\otimes 0} = \C(A) \cong \X_{A^2}^+(1)$. 

Soit $n \in \N^*$. Le théorème \ref{Thm: POIs rep fondaA} montre que cette application est bien définie et injective. Il suffit de montrer que les ensembles $\left\{ p_t \; ; \; t \in T_n^{\leq 2} \right\}$ et $DF(\X_{A^2}^+(2)^{\otimes n})$ ont le même cardinal. D'une part, on sait que :
\begin{equation} \tag{1}
\card \left\{ p_t \; ; \; t \in T_n^{\leq 2} \right\} = \sum_{\lambda \in \Delta_n^{\leq2}} f_\lambda
\end{equation}
où, pour tout $\lambda \in \Delta_n^{\leq2}$, $f_\lambda$ désigne le nombre de tableau standard de forme $\lambda$. D'autre part, pour tout $d \in \N^*$, on note $f_d^n \in \N$ la multiplicité de la classe d'isomorphisme de $\X_{A^2}^+(d)$ dans $DF(\X_{A^2}^+(2)^{\otimes n})$. Alors, d'après la décomposition des produits de $\UA$-modules simples donnée le lemme \ref{Lemme: produits simplesA}, on a :
\begin{equation} \tag{2}
\card DF(\X_{A^2}^+(2)^{\otimes n}) = \sum_{\substack{d=0 \\ \mathrm{pas}=2}}^n f_{n+1-d}^n .
\end{equation}
Les indices des sommes $(1)$ et $(2)$ s'identifient via la bijection ensembliste :
\[ \begin{cases}
	\Delta_n^{\leq2} \longrightarrow \left\{ n+1-d \; ; \; d \in \{0,...,n\} \text{ pair.} \right\}  \\
	\lambda \longmapsto \omega(\lambda).
	\end{cases} \]
Il reste à montrer que, pour tout $\lambda \in \Delta_n^{\leq2}$, on a $f_\lambda = f_{\omega(\lambda)}^n$. On procède par récurrence sur $n \geq 1$.

Pour $n=1$, on a $\Delta_1^{\leq2} = \{ \lyng{1} \}$ et $T_{\lyng{1}} = \{ \lyoung{1} \}$. Donc $f_{\lyng{1}} = 1 = f_2^1$.

Soit $n \geq 1$. On suppose que, pour tout $\mu \in \Delta_n^{\leq2}$, $f_\mu = f_{\omega(\mu)}^n$. Soit $\lambda = [\lambda_1, \lambda_2] \in \Delta_{n+1}^{\leq2}$. On distingue deux cas.

\begin{Cas}
On suppose que $\omega(\lambda)=1$. Alors il existe un unique diagramme de Young $\mu = [\lambda_1, \lambda_2-1] \in \Delta_n^{\leq2}$ tel que $\mu \subset \lambda$. Il s'ensuit que $f_\lambda = f_\mu$. Or, d'après l'hypothèse de récurrence, $f_\mu = f_{\omega(\lambda)+1}^n = f_2^n$. D'autre part, d'après le lemme \ref{Lemme: produits simplesA}, on a $f_1^{n+1} = f_2^n$. D'où $f_\lambda = f_1^{n+1}$. 
\end{Cas}

\begin{Cas}
On suppose que $\omega(\lambda) > 1$. Alors il existe exactement deux diagrammes de Young $\mu = [\lambda_1, \lambda_2-1] \in \Delta_n^{\leq2}$ et $\nu = [\lambda_1-1, \lambda_2] \in \Delta_n^{\leq2}$ tels que $\mu, \nu \subset \lambda$. Il s'ensuit que $f_\lambda = f_\mu + f_\nu$. Or, d'après l'hypothèse de récurrence, $f_\mu = f_{\omega(\lambda)+1}^n$ et $f_\mu = f_{\omega(\lambda)-1}^n$. D'autre part, d'après le lemme \ref{Lemme: produits simplesA}, on a $f_{\omega(\lambda)}^{n+1} = f_{\omega(\lambda)-1}^n + f_{\omega(\lambda)+1}^n$. D'où $f_\lambda = f_{\omega(\lambda),n+1}$. 
\end{Cas}

Dans chacun des cas, on a $f_\lambda = f_{\omega(\lambda)}^{n+1}$. Ce qui prouve l'hérédité et achève la récurrence. D'où le résultat.
\end{proof}

On reproduit la même raisonnement après évaluation de $A^2$ en $q = e^{\frac{i \pi}{p}}$. On s'intéresse alors aux facteurs directs de la suite de $\Uq$-modules $\{ \X^+(2)^{\otimes n} \; ; \;  n \in \N \}$. D'après le lemme \ref{Lemme: produits simples1} et le théorème \ref{Thm: produits simples-PIMs}, les facteurs directs de cette suite font intervenir les classes d'isomorphisme de $\X^+(s)$, $\PIM^\pm(s)$, $1 \leq s \leq p$. Précisons à quels rangs ils interviennent pour la première fois. Pour tout $\Uq$-module à gauche $M$, on note $n(M)$ le plus petit entier $n$ pour lequel $M$ est isomorphe à un facteur de $\X^+(2)^{\otimes n}$, et $n(M)=\infty$ si un tel entier n'existe pas. Alors, pour tout $s \in \{1,...,p\}$, on a $n(\X^+(s))= s-1$, $n(\X^-(s)) = \infty$, $n(\PIM^+(s))= 2p-s-1$ et $n(\PIM^-(s)) = n(2 \PIM^-(s)) = 3p-s-1 $. On remarque que, pour tout $s \in \{1,...,p\}$, la classe d'isomorphisme du module $\PIM^-(s)$ apparaît avec une multiplicité \emph{double}. Il reste à établir la correspondance entre les POIs de $\TL_n(q)$ et les facteurs directs de $\X^+(2)^{\otimes n}$ à ce phénomène de dédoublement près.

\begin{Def}[{\cite{CFS95}[Def. 3.2.2]}]
On note $\{ x_0:= x^+_0(2), x_1:= x^+_1(2)\}$ la base canonique de $\X^+(2)$. On définit les morphismes linéaires $\cap$ et $\cup$ par :
\begin{align*}
& \cap : \left\{ \begin{array}{l}
	\X^+(2) \otimes \X^+(2) \longrightarrow \C(A) \\
	x_m \otimes x_n \longmapsto (n-m) i q^{\frac{n-m}{2}}
	\end{array} \right. ,
	\quad \forall m,n \in \{0,1\}, \\ 
& \cup : \left\{ \begin{array}{l}
	\C(A) \longrightarrow \X^+(2) \otimes \X^+(2) \\
	1 \longmapsto i q^{\frac{1}{2}} x_0 \otimes x_1 - i q^{-\frac{1}{2}} x_1 \otimes x_0
	\end{array} \right. .
\end{align*}
\end{Def}

De même que précédemment, les morphismes $\cap$ et $\cup$ sont $\Uq$-équivariants et vérifient :
\[ \cap \circ \cup(1) = -[2],
\qquad \begin{cases}
\cup \circ \cap (x_0 \otimes x_0) = 0 = \cap \circ \cup (x_1 \otimes x_1), \\
\cup \circ \cap (x_0 \otimes x_1) = -q x_0 \otimes x_1 + x_1 \otimes x_0, \\
\cup \circ \cap (x_1 \otimes x_0) = x_0 \otimes x_1 -q^{-1} x_1 \otimes x_0.
\end{cases} \]
Le travail de \cite{GW93} (l'identification des POIs de $\TL_n(q)$ et un résultat de conservation des dimensions après évaluation) permet alors de généraliser les théorèmes \ref{Thm: action de TLA} et \ref{Thm: POIs rep fondaA} lorsque $A^2$ s'évalue en $q = e^{\frac{i \pi}{p}}$.

\begin{Thm}[{\cite[Thm 2.4]{GW93}}]
\label{Thm: action de TLq}
On a un morphisme d'algèbres injectif défini par : \index{theta2 @$\theta_n$}
\[ \theta_n : \begin{cases}
	\TL_n(q) \longrightarrow \End_{\Uq} \left( \X^+(2)^{\otimes n} \right) \\
	h_0=1 \longmapsto id_n, \\
	h_i \longmapsto id_{i-1} \otimes \cup \circ \cap \otimes id_{n-i-1}, & i \in \{1,...,n-1\} . 
	\end{cases} \]
\end{Thm}

\begin{Thm}
\label{Thm: POIs rep fonda}
On suppose que $n \geq 1$. Soit $t \in T_n^{\leq2}$ un tableau standard régulier de forme $\lambda$. On note $N \in \N$ le nombre d'intersection(s) entre le graphe $\gamma(t)$ et l'ensemble des lignes critiques privé de la première.
\begin{enumerate}[(a)]
	\item Si $t$ ne possède pas de sous-tableau critique propre, alors il existe un unique facteur direct $\X$ de $\X^+(2)^{\otimes n}$, isomorphe à $\X^+(\omega(\lambda))$, tel que $\theta_n(\bar{p}_t)$ est un projecteur sur $\X$.
	\item Si $t$ est critique, alors il existe un unique facteur $\X$ de $\X^+(2)^{\otimes n}$, isomorphe à \\ $2^N \PIM^{(-)^{\frac{\omega(\lambda)}{p}-1}}(p) = 2^N \X^{(-)^{\frac{\omega(\lambda)}{p}-1}}(p)$, tel que $\theta_n(\bar{p}_t)$ est un projecteur sur $\X$.
	\item Si $t$ n'est pas critique et possède un sous-tableau critique maximal $r$ de forme $\mu$, alors il existe un unique facteur $\X$ de $\X^+(2)^{\otimes n}$, isomorphe à \\ $2^N \PIM^{(-)^{\frac{\omega(\mu)}{p}-1}} \left( p-|\omega(\lambda)-\omega(\mu)| \right)$, tel que $\theta_n(\bar{p}_{[t]})$ est un projecteur sur $\X$. \\ 
	De plus, le morphisme $\theta_n(\bar{n}_{[t]})$ envoie le sous-module de $\X$ isomorphe à \\ $2^N \X^{(-)^{\frac{\omega(\mu)}{p}-1}} \left( p-|\omega(\lambda)-\omega(\mu)| \right)$ sur le module quotient isomorphe à \\ $2^N \X^{(-)^{\frac{\omega(\mu)}{p}-1}} \left( p-|\omega(\lambda)-\omega(\mu)| \right)$.
\end{enumerate}
\end{Thm}

\begin{proof}
On procède comme dans la preuve du théorème \ref{Thm: POIs rep fondaA} (tout sous-tableau d'un tableau standard régulier est régulier) avec les décompositions en somme directe données dans le lemme \ref{Lemme: produits simples1} et le théorème \ref{Thm: produits simples-PIMs}. Soit $\lambda = [\lambda_1, \lambda_2]$ tel que $\omega(\lambda)>1$. On peut encore supposer que $t'$ est de forme $[\lambda_1-1, \lambda_2]$ pour les raisons suivantes.
\begin{enumerate}[(a)]
	\item Si $t$ ne possède pas de sous-tableau critique propre, alors il existe $s \in T_\lambda$ sans sous-tableau critique propre tel que $s'$ est de forme $[\lambda_1-1, \lambda_2]$. D'après le théorème \ref{Thm: structure de TLq}, on a $\TL_{n+1}(q) \bar{p}_t \cong \C^{f_\lambda} \cong \TL_{n+1}(q) \bar{p}_s$. Il existe donc $\bar{u}_{s,t}, \bar{u}_{t,s} \in \TL_{n+1}(q)$ tels que $\bar{p}_t = \bar{u}_{t,s} \bar{p}_s$ et $\bar{p}_s = \bar{u}_{s,t} \bar{p}_t$. On en déduit les isomorphismes de $\Uq$-modules réciproques :
	\begin{align*}
	&\begin{cases} 
			\bar{p}_t \cdot \X^+(2)^{\otimes n+1} \longrightarrow \bar{p}_s \cdot \X^+(2)^{\otimes n+1} \\
			\bar{p}_t \cdot X \longmapsto \bar{u}_{s,t} \bar{p}_t \cdot X
			\end{cases}, \;
	\begin{cases} 
			\bar{p}_s \cdot \X^+(2)^{\otimes n+1} \longrightarrow \bar{p}_t \cdot \X^+(2)^{\otimes n+1} \\
			\bar{p}_s \cdot X \longmapsto \bar{u}_{t,s} \bar{p}_s \cdot X
		\end{cases}.
	\end{align*}
	\item Si $t$ est critique, alors il existe $s \in T_\lambda$ critique et régulier tel que $s'$ est de forme $[\lambda_1-1, \lambda_2]$. D'après le théorème \ref{Thm: structure de TLq}, on a encore $\TL_{n+1}(q) \bar{p}_t \cong \C^{f_\lambda} \cong \TL_{n+1}(q) \bar{p}_s$. Pour les mêmes raisons que précédemment, on en déduit que $\bar{p}_t \cdot \X^+(2)^{\otimes n+1} \cong \bar{p}_s \cdot \X^+(2)^{\otimes n+1}$.
	\item Si $t$ n'est pas critique et possède un sous-tableau maximal, alors $t$ ou $\bar{t}$ vérifie la configuration adéquate.
\end{enumerate}

On se place dans ce dernier cas. Il reste à discuter du morphisme $\theta_n(\bar{n}_{[t]})$. Pour cela, on considère le morphisme induit par $\theta_n$ sur la sous-algèbre $p_{[t]} \TL_n(q) p_{[t]}$ :
\[ \bar{p}_{[t]} \TL_n(q) \bar{p}_{[t]} \longrightarrow \End_{\Uq} \left( 2^N \PIM^{(-)^{\frac{\omega(\mu)}{p}-1}} \left( p-|\omega(\lambda)-\omega(\mu)| \right) \right). \]
D'après le théorème \ref{Thm: structure de TLq} et la proposition \ref{Prop: PIMs}, on connaît les structures respectives de $p_{[t]} \TL_n(q) p_{[t]}$ et de $2^N \PIM^{(-)^{\frac{\omega(\mu)}{p}-1}} \left( p-|\omega(\lambda)-\omega(\mu)| \right)$. Par passages successifs sur les radicaux, on obtient un morphisme d'algèbres :
\[ \bar{n}_{[t]} \TL_n(q) \bar{n}_{[t]} \longrightarrow \End_{\Uq} \left( 2^N \X^{(-)^{\frac{\omega(\mu)}{p}-1}} \left( p-|\omega(\lambda)-\omega(\mu)| \right) \right). \]
D'où le résultat.
\end{proof}

\begin{Rem}
\label{Rem: POIs rep fonda}
Le théorème \ref{Thm: POIs rep fonda} se généralise pour les POIs associés aux tableaux standards \emph{non réguliers}, dont les expressions sont bien plus complexes (cf. les remarques \ref{Rem: idempotents eva}). Avec ceux-ci, on pourrait donner un énoncé analogue au corollaire \ref{Cor: POIs rep fondaA} en considérant l'ensemble des classes d'isomorphisme de facteurs \emph{spécifiques} de $\X^+(2)^{\otimes n}$ comptées avec multiplicité (qui tiennent compte du phénomène de dédoublement).
\end{Rem}

\subsection{Construction générale des idempotents de Jones-Wenzl}
\label{subsection: JWG}

Lorsque $A$ est un paramètre formel, on peut définir les idempotents de Jones-Wenzl usuels $\{ f_n \in \TL_n(A^2) \; ; \; n \in \N^* \}$ en fonction des POIs des algèbres $\{ \TL_n(A^2) \; ; \; n \in \N^* \}$. Pour cela, pour tout $n \in \N^*$, on considère le POI $p_{t(n)}$ de $\TL_n(A^2)$ associé au tableau standard régulier :
\begin{equation}
\label{Eqn: t(n)}
t(n) := {\scriptstyle \begin{array}{|c|c|c|c|} 
		\hline 1 & 2 & \cdots & n \\
		\hline \end{array}} .
\end{equation} \index{t7 @$t(n)$}
Alors, d'après la proposition \ref{Prop: POIs de TLA} et la remarque \ref{Rem: treillis de TL2}, on a :
\[ \begin{cases}
	p_{t(1)} = 1, \\
	p_{t(n)} = p_{t(n-1)} + \frac{[n-1]_{A^2}}{[n]_{A^2}} p_{t(n-1)} h_{n-1} p_{t(n-1)},
	& n \geq 2.
	\end{cases} \]
On retrouve le système de récurrence \eqref{Eqn: recurrence JW} découvert par Wenzl (cf. \cite{Wen87}). Ainsi, pour tout $n \in \N^*$, on a $f_n = p_{t(n)}$. Conformément à la sous-section \ref{subsection: POIs rep fonda}, ce point de vue permet d'interpréter les idempotents de Jones-Wenzl comme les projecteurs sur les facteurs directs de la suite $\{ \X_{A^2}^+(2)^{\otimes n} ; \; ; n \in \N^* \}$ qui apparaissent pour la première fois. Plus précisément, pour tout $n \in \N^*$, il existe un unique facteur direct $\X$ isomorphe à $\X_{A^2}^+(n+1)$ tel que $\theta_{A^2,n}(f_n)$ est un projecteur sur $\X$ (cf. le théorème \ref{Thm: POIs rep fondaA}). On construit les idempotents de Jones-Wenzl \emph{évaluables} de manière analogue avec des POIs des algèbres $\{ \TL_n(q) \; ; \; n \in \N^* \}$. On donne ensuite quelques propriétés remarquables qui généralisent celles des idempotents de Jones-Wenzl usuels (cf. par exemple \cite[§ 3.5]{CFS95}).

\begin{Def}
Pour tout $n \in \N^*$, on définit le $n$-ième \emph{idempotent de Jones-Wenzl évaluable} $f_n \in \TL_n(A^2)$ et le $n$-ième \emph{nilpotent de Jones-Wenzl évaluable} $f_n' \in \TL_n(A^2)$ par : \index{f1 @$f_n$} \index{f1' @$f_n'$}
\begin{align*}
&f_n := \begin{cases}
	p_{t(n)} & \text{ si } n \leq p-1 \text{ ou } n = -1 \mod p, \\
	p_{[t(n)]} & \text{ sinon,} \\
	\end{cases} \\
&f_n' := \begin{cases}
	0 & \text{ si } n \in \{1,...,p-1\} \text{ ou } n = -1 \mod p, \\
	n_{[t(n)]} & \text{ sinon.} \\
	\end{cases}
\end{align*}
\end{Def}

Soient $n \in \N^*$ et $l \in \N$ le quotient de $n$ dans sa division euclidienne par $p$. Alors, d'après le théorème \ref{Thm: POIs rep fonda}, on a le lien suivant avec les facteurs directs de la suite $\{ \X^+(2)^{\otimes n} ; \; ; n \in \N^* \}$.
\begin{enumerate}[(a)]
	\item Si $n \leq p-1$, alors il existe un unique facteur direct $\X$ de $\X^+(2)^{\otimes n}$, isomorphe à $\X^+(n+1)$, tel que $\theta_n(\bar{f}_n)$ est un projecteur sur $\X$.
	\item Si $n = -1 \mod p$, alors il existe un unique facteur direct $\X$ de $\X^+(2)^{\otimes n}$, isomorphe à $2^{l-1} \PIM^{(-)^{l-1}}(p) = 2^{l-1} \X^{(-)^{l-1}}(p)$, tel que $\theta_n(\bar{f}_n)$ est un projecteur sur $\X$.
	\item Sinon, il existe un unique facteur direct $\X$ de $\X^+(2)^{\otimes n}$, isomorphe à $2^{l-1} \PIM^{(-)^{l-1}}((l+1)p-n-1)$, tel que $\theta_n(\bar{f}_n)$ est un projecteur sur $\X$. \\ De plus, le morphisme $\theta_n(\bar{f}_n')$ envoie le sous-module de $\X$ isomorphe à $2^{l-1} \X^{(-)^{l-1}}((l+1)p-n-1)$ sur le module quotient isomorphe à $2^{l-1} \X^{(-)^{l-1}}((l+1)p-n-1)$.
\end{enumerate}
Ces idempotents et nilpotents de Jones-Wenzl évaluables vérifient les systèmes de récurrences suivants.

\begin{Prop} 
\label{Prop: recurrence JWG}
\begin{enumerate}[(i)]
	\item Les idempotents de Jones-Wenzl évaluables vérifient le système de récurrence évaluable :
\[ \begin{cases}
f_1 = 1, \\
f_n = f_{n-1} + \frac{[n-1]_{A^2}}{[n]_{A^2}} f_{n-1} h_{n-1} f_{n-1}, \qquad 2 \leq n \leq p-1, \\
f_p = f_{p-1}, \\
f_{p+1} = f_p + \frac{[p]_{A^2}}{[p+1]_{A^2}} f_p h_p + \frac{[p-1]_{A^2}}{[p+1]_{A^2}} \left( h_p f_p' + f_p' h_p \right)	+ [2]_{A^2} \frac{[p-1]_{A^2}}{[p+1]_{A^2}} f_p' h_p f_p', \\
f_n = \begin{multlined}[t]
	f_{n-1} + \frac{[n-1]_{A^2}}{[n]_{A^2}} f_{n-1} h_{n-1} f_{n-1} \\[-3ex]
	+ \left( A^{2p}+A^{-2p} \right) \frac{[p-1]_{A^2}}{[n]_{A^2} [n-2p]_{A^2}} f_{n-1}' h_{n-1} f_{n-1}, \\
	p+2 \leq n \leq 2p-1, 
	\end{multlined} \\
f_{2p} = f_{2p-1}, \\
f_{2p+1} = \begin{multlined}[t]
	f_{2p} + \frac{[2p]_{A^2}}{[2p+1]_{A^2}} f_{2p} h_{2p} + \frac{[2p-1]_{A^2}}{[2p+1]_{A^2}} \left( h_{2p} f_{2p}' + f_{2p}' h_{2p} \right) \\[-3ex]
	+ [2]_{A^2} \frac{[2p-1]_{A^2}}{[2p+1]_{A^2}} f_{2p}' h_{2p} f_{2p}', 
	\end{multlined} \\
f_n = \begin{multlined}[t]
	f_{n-1} + \frac{[n-1]_{A^2}}{[n]_{A^2}} f_{n-1} h_{n-1} f_{n-1} \\[-3ex]
	+ \left( A^{4p}+A^{-4p} \right) \frac{[2p-1]_{A^2}}{[n]_{A^2} [n-4p]_{A^2}} f_{n-1}' h_{n-1} f_{n-1}, \\
	2p+2 \leq n \leq 3p-2.
	\end{multlined}
\end{cases} \]
	\item Les nilpotents de Jones-Wenzl évaluables vérifient le système de récurrence évaluable :
\[ \begin{cases}
f_p' = f_p h_{p-1} f_p, \\
f_{p+1}' = f_p' - f_p' h_p f_p', \\
f_n' = f_{n-1}' + \frac{[n-1-2p]_{A^2}}{[n-2p]_{A^2}} f_{n-1} h_{n-1} f_{n-1}', &\quad p+2 \leq p \leq 2p-2, \\
f_{2p}' = f_{2p} h_{2p-1} f_{2p}, \\
f_{2p+1}' = f_{2p}' - f_{2p}' h_{2p} f_{2p}', \\
f_n' = f_{n-1}' + \frac{[n-1-4p]_{A^2}}{[n- 4p]_{A^2}} f_{n-1} h_{n-1} f_{n-1}', &\quad 2p+2 \leq p \leq 3p-2. \\
\end{cases} \]
\end{enumerate}
\end{Prop}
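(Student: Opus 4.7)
The plan is to establish both recurrences by induction on $n$, treating each range in the piecewise definition separately. The engine of the argument is the general POI recurrence of Proposition \ref{Prop: POIs de TLA}, together with the explicit expansions of $p_{[t]}$ derived in the proofs of Lemmas \ref{Lemme: idempotents eva1} and \ref{Lemme: idempotents eva2}, specialized to the tableau $t(n)$ with shape $[n]$ and to its companion $\hat{t}(n)$ of shape $[n-1,1]$. Throughout, the distances axiales needed follow from Remark \ref{Rem: treillis de TL2}: $d_{t(n)}(n-1)=-1$ (since $n-1$ and $n$ lie on the same row), and $d_{\hat{t}(n)}(n-1)=n-1$.

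For $2\leq n\leq p-1$, the tableau $t(n)$ has $\omega=n+1\leq p$, hence no proper critical subtableau; thus $f_n=p_{t(n)}$ and the third case of Proposition \ref{Prop: POIs de TLA} directly yields the Wenzl formula $f_n=f_{n-1}+\frac{[n-1]}{[n]}f_{n-1}h_{n-1}f_{n-1}$. The boundary identities $f_p=f_{p-1}$ and $f_{2p}=f_{2p-1}$ are obtained by observing that $t(p-1)$ (resp.\ $t(2p-1)$) is critical and $\bar{t}(p)=\hat{t}(p)$ (resp.\ $\bar{t}(2p)=\hat{t}(2p)$) exists; then the definition \eqref{Eqn: POI sym} combined with $p_{t(n-1)}=p_{t(n)}+p_{\hat{t}(n)}$ gives $p_{[t(n)]}=p_{t(n-1)}$.

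For $n=p+1$ (and analogously $n=2p+1$), $t(n)$ is in the situation of Case 1 of Lemma \ref{Lemme: idempotents eva1} with $\omega(\lambda)=\omega(\mu)$. Substituting $p_{[t(p)]}=f_p$ into the expression for $p_{[t(p+1)]}$ obtained at the end of that Case 1, and eliminating $p_{t(p)}h_p p_{t(p)}$ via the critical-level relation $p_{t(p)}=f_p-p_{\hat{t}(p)}$ and $f_p'=f_p h_{p-1}f_p$, produces after simplification the claimed formula with the $f_p h_p$, $h_p f_p'+f_p' h_p$, and $f_p'h_p f_p'$ terms. For $n\in\{p+2,\dots,2p-1\}$, one first applies Case 3 of Lemma \ref{Lemme: idempotents eva1} to get $p_{[t(n)]}=p_{[t(n-1)]}-p_{[\hat{t}(n)]}$, then invokes Case 2 of the same lemma on $\hat{t}(n)$ with $l=1$, $d=n-1$, to rewrite $p_{[\hat{t}(n)]}$ as a combination of $p_{[t(n-1)]}h_{n-1}p_{[t(n-1)]}$ and a triple product containing $h_k=h_{p-1}$; recognising this triple product as $f_{n-1}'h_{n-1}f_{n-1}$ delivers the recurrence (the analogous computation at $l=2$ treats $2p+2\leq n\leq 3p-2$). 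The nilpotent recurrences follow by the same case analysis, starting from $f_p'=f_p h_{p-1}f_p$ (by definition of $n_{[t(p)]}$) and using that the action of $h_{n-1}$ on the right of a nilpotent $f_{n-1}'$ absorbs into a modified Wenzl-type formula.

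The main obstacle is the bookkeeping of signs and $q$-integer prefactors: Case 2 of Lemma \ref{Lemme: idempotents eva1} involves a sign $\alpha\in\{+,-\}$ fixed by whether $d_{t'}(k)$ equals $lp-1$ or $-lp-1$, and the choice between $d\pm 2lp$ depending on whether $t$ or $\bar{t}$ is the configuration carrying $d_t(n)=-1$. Determining these signs for $t(n)$ and $\hat{t}(n)$ in each residue class mod $p$ demands careful inspection of the four configurations in the figure accompanying that case. Once the raw expressions are assembled, repeated use of the identities $[p]=0$, $[p+j]=-[j]$, $[2lp\pm 1]=\pm(-1)^l(A^{2lp}+A^{-2lp})/(A^2-A^{-2})\cdot\ldots$, and $\frac{[n-1]}{[n]}-\frac{[n-1-2p]}{[n-2p]}=-\frac{(A^{2p}+A^{-2p})[p-1]}{[n][n-2p]}$ (deduced from \eqref{Eqn: Acoeff2}) collapses the messy intermediate formula into the clean stated recurrence. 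The corresponding identity at $l=2$ handles the range $2p+2\leq n\leq 3p-2$.
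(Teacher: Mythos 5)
Your proposal follows essentially the same route as the paper's proof: everything is reduced to the generic recurrence of Proposition \ref{Prop: POIs de TLA} together with the case analysis carried out in the proofs of Lemmas \ref{Lemme: idempotents eva1} and \ref{Lemme: idempotents eva2}, the boundary identities $f_{lp}=f_{lp-1}$ and $f'_{lp}=f_{lp}h_{lp-1}f_{lp}$ coming from the critical-line initialisation and the triple products being recognised as $f'_{n-1}h_{n-1}f_{n-1}$ via $f'_{n-1}=p_{[t(n-1)]}h_{lp-1}p_{[t(n-1)]}$. Two small slips should be corrected but do not affect the strategy: at $n=lp+1$ it is $\widehat{t(lp+1)}$ (of shape $[lp,1]$, with $\omega=lp=\omega(\mu)$) that falls into Case~1 of Lemma \ref{Lemme: idempotents eva1}, while $t(lp+1)$ itself has $d_{t(lp+1)}(lp)=-1$ and belongs to Case~3; and your auxiliary identity should read $\frac{[n-1]_{A^2}}{[n]_{A^2}}-\frac{[n-1-2p]_{A^2}}{[n-2p]_{A^2}}=\frac{[2p]_{A^2}}{[n]_{A^2}[n-2p]_{A^2}}=\frac{(A^{2p}+A^{-2p})[p]_{A^2}}{[n]_{A^2}[n-2p]_{A^2}}$ (your version has the wrong sign and $[p-1]_{A^2}$ in place of $[p]_{A^2}$; the factor $[lp-1]_{A^2}$ in the stated recurrence comes directly from the Case~2 coefficient $\frac{[\alpha lp-1]_{A^2}}{[d+1]_{A^2}[d\pm 2lp+1]_{A^2}}$, not from this difference of Wenzl coefficients).
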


\begin{proof}
\begin{enumerate}[(i)]
	\item Pour tout $n \in \{2,...,p-1\}$, on a $f_n = p_{t(n)}$ et $d_{t(n)} = n-1$ (cf. la remarque \ref{Rem: treillis de TL2}). Les premières relations de récurrence résultent donc de la proposition \ref{Prop: POIs de TLA} :
	\[ \begin{cases}
		f_1 = 1, \\
		f_n = f_{n-1} + \frac{[n-1]_{A^2}}{[n]_{A^2}} f_{n-1} h_{n-1} f_{n-1}, & 2 \leq n \leq p-1. 
		\end{cases} \]
	
	Les autres relations de récurrence sont des cas particuliers de relations de récurrence obtenues dans la preuve du lemme \ref{Lemme: idempotents eva1}. Soit $l \in \{1,2\}$. Pour $n=lp$, on a $f_{lp} = p_{[t(lp)]}$ et $f_{lp-1} = p_{t(lp-1)}$ car $t(lp)' = t(lp-1)$ est critique. D'après l'initialisation dans la preuve du lemme \ref{Lemme: idempotents eva1}, on a :
	\begin{equation} \tag{1}
	\begin{aligned}
	f_{lp} &= f_{lp-1}, \\
	f_{lp}' &= f_{lp} h_{lp-1} f_{lp} = f_{lp-1} h_{lp-1} f_{lp-1}.
	\end{aligned}
	\end{equation}
	
	Pour $n=lp+1$, on a $f_{lp+1} = p_{[t(lp+1)]}$ où $t(lp+1)$ possède un sous-tableau critique maximal $t(lp-1)$. On utilise les cas 1 et 3 :
	\begin{align*}
	f_{lp+1} &= \begin{multlined}[t]
		f_{lp} + [2]_{A^2} \frac{[-lp+1]_{A^2}}{[-lp-1]_{A^2}} f_{lp-1} h_{lp-1} f_{lp-1} h_{lp} f_{lp-1} h_{lp-1} f_{lp-1} \\
		+ \frac{[-lp]_{A^2}}{[-lp-1]_{A^2}} f_{lp} h_{lp} f_{lp} + \frac{[-lp+1]_{A^2}}{[-lp-1]_{A^2}} f_{lp} h_{lp} f_{lp-1} h_{lp-1} f_{lp-1} \\
		+ \frac{[-lp+1]_{A^2}}{[-lp-1]_{A^2}} f_{lp-1} h_{lp-1} f_{lp-1} h_{lp} f_{lp} 
		\end{multlined} \\
	&\overset{(1)}{=} \begin{multlined}[t]
		f_{lp} + [2]_{A^2} \frac{[lp-1]_{A^2}}{[lp+1]_{A^2}} f_{lp}' h_{lp} f_{lp}' + \frac{[lp]_{A^2}}{[lp+1]_{A^2}} f_{lp} h_{lp} f_{lp} \\
		+ \frac{[lp-1]_{A^2}}{[lp+1]_{A^2}} \left( f_{lp} h_{lp} f_{lp}' + f_{lp}' h_{lp} f_{lp} \right) .
		\end{multlined}
	\end{align*}
	Or, $f_{lp} = f_{lp-1}$ est une combinaison de mots en $\{h_0,h_1,...,h_{lp-2}\}$. D'où les simplifications :
	\begin{align*}
	&f_{lp} h_{lp} f_{lp} = f_{lp}^2 h_{lp} = f_{lp} h_{lp}, \\
	&f_{lp} h_{lp} f_{lp}' = h_{lp} f_{lp} f_{lp}' = h_{lp} f_{lp}', \\
	&f_{lp}' h_{lp} f_{lp} = f_{lp}' f_{lp} h_{lp} = f_{lp}' h_{lp}.
	\end{align*}
		
	Pour $n \in \{lp+2,...,(l+1)p-2\}$, on a $f_n = p_{[t(n)]}$ où $t(n)$ possède un sous-tableau critique maximal $t(lp-1)$. On utilise les cas 2 et 3 :
	\begin{gather*}
	f_n = \begin{multlined}[t]
		f_{n-1} + (A^{2lp}+A^{-2lp}) \frac{[lp-1]_{A^2}}{[n-2lp]_{A^2} [n]_{A^2}} f_{n-1} h_{lp-1} f_{n-1} h_{n-1} f_{n-1} \\
		+ \frac{[n-1]_{A^2}}{[n]_{A^2}} f_{n-1} h_{n-1} f_{n-1} .
		\end{multlined}
	\end{gather*}
	Or, par définition, on a $f_{n-1}' = f_{n-1} h_{lp-1} f_{n-1}$. D'où le résultat.
	
	Enfin, pour $n=2p-1$, le tableau standard $t(2p-1)$ est critique et ne possède pas de tableau conjugué (cf. la remarque \ref{Rem: lignes critiques}). Donc $f_{2p-1} = p_{t(2p-1)} = p_{[t(2p-1)]}$ ; les calculs ci-dessus s'appliquent encore.
	
	\item Soient $l \in \{1,2\}$ et $n \in \{lp+1,...,(l+1)p-2\}$. On a $f_n' = p_{[t(n)]} h_{lp-1} p_{[t(n)]}$ car $t(n)$ possède un sous-tableau critique maximal $t(lp-1)$. De plus, d'après la remarque \ref{Rem: treillis de TL2}, on a :
	\[ \begin{cases}
		d_{t(n)}(lp-1) = d_{t(lp)}(lp-1) = -1, \\
		d_{\overline{t(n)}}(lp-1) =  d_{\overline{t(lp)}}(lp-1) = d_{\widehat{t(lp)}}(lp-1) = lp-1.
		 \end{cases} \]
	En procédant comme dans la preuve de la proposition \ref{Prop: POIs de TLA}, on en déduit que :
	\begin{equation} \tag{2}
	\begin{aligned}
	&f_{lp}' = p_{[t(lp)]} h_{lp-1} p_{[t(lp)]} = p_{\overline{t(lp)}} h_{lp-1} p_{\overline{t(lp)}} = - \frac{[lp]_{A^2}}{[lp-1]_{A^2}} p_{\overline{t(lp)}} \\
	&f_{n}' = p_{[t(n)]} h_{lp-1} p_{[t(n)]} = p_{\overline{t(n)}} h_{lp-1} p_{\overline{t(n)}} = - \frac{[lp]_{A^2}}{[lp-1]_{A^2}} p_{\overline{t(n)}}. 
	\end{aligned}
	\end{equation}
	Or, d'après la proposition \ref{Prop: POIs de TLA}, on a aussi :
	\begin{equation} \tag{3}
	p_{\overline{t(n)}} = p_{\overline{t(n-1)}} + \frac{[n-1-2lp]_{A^2}}{[n-2lp]_{A^2}} p_{\overline{t(n-1)}} h_{n-1} p_{\overline{t(n-1)}} 
	\end{equation}	
	car $d_{\overline{t(n)}}(n-1) = -1$ et $d_{\widehat{\overline{t(n)}}}(n-1) = n-1-2lp$. \\
	Pour $n=lp+1$, en multipliant l'équation $(3)$ par $- \frac{[lp]_{A^2}}{[lp-1]_{A^2}}$, on obtient :
	\[ 	f_{lp+1}' = f_{lp}' + \frac{[-lp]_{A^2}}{[-lp+1]_{A^2}} f_{lp}' h_{lp} p_{\overline{t(lp)}} \overset{(2)}{=} f_{lp}' - f_{lp}' h_{lp} f_{lp}' . \]	
	Pour $n \geq lp+2$, les tableaux standards $t(n-1)$ et $\overline{t(n-1)}$ n'ont pas la même forme. Il s'ensuit que :
	\[ p_{\overline{t(n-1)}} h_{n-1} p_{\overline{t(n-1)}} = p_{ \left[ \overline{t(n-1)} \right] } h_{n-1} p_{\overline{t(n-1)}} = f_{n-1} h_{n-1} p_{\overline{t(n-1)}} . \]
	En multipliant l'équation $(3)$ par $-\frac{[lp]_{A^2}}{[lp-1]_{A^2}}$, on obtient :
	\[ 	f_n' = f_{n-1}' + \frac{[n-1-lp]_{A^2}}{[n-lp]_{A^2}} f_{n-1} h_{n-1} f_{n-1}'. \]
	D'où le résultat.
\end{enumerate}
\end{proof}

\begin{Rem}
\label{Rem: recurrence JWG}
\begin{enumerate}[(i)]
	\item Pour tout $n \in \{1,...,3p-2\}$, le système de récurrence donnée dans l'assertion (i) de la proposition \ref{Prop: recurrence JWG} est la seule manière de compléter le système de récurrence \eqref{Eqn: recurrence JW} pour obtenir des idempotents \emph{évaluables} (cf. la sous-section \ref{subsection: TLq}). Pour $n \geq 3p-1$, le passage des lignes critiques complexifie la dynamique de récurrence et requiert l'intervention d'idempotents associés aux lignes critiques antérieures (cf. la preuve du lemme \ref{Lemme: idempotents eva2}).
	\item A partir de toutes ces relations de récurrence, on peut montrer par récurrence que, pour tout $n \in \N^*$, $f_n-1$ est dans l'idéal de $\TL_n(A^2)$ engendré par $h_1,...,h_{n-1}$.
\end{enumerate}
\end{Rem}

\begin{Cor} 
\label{Cor: recurrence JWG}
\begin{enumerate}[(i)]
	\item Les idempotents de Jones-Wenzl évalués vérifient le système de récurrence :
\[ \begin{cases}
\bar{f}_1 = 1, \\
\bar{f}_n = \bar{f}_{n-1} + \frac{[n-1]}{[n]} \bar{f}_{n-1} h_{n-1} \bar{f}_{n-1}, & 2 \leq n \leq p-1, \\
\bar{f}_p = \bar{f}_{p-1}, \\
\bar{f}_{p+1} = \bar{f}_p - \left( h_p \bar{f}_p' + \bar{f}_p' h_p \right)	- [2] \bar{f}_p' h_p \bar{f}_p', \\
\bar{f}_n = \bar{f}_{n-1} + \frac{[n-1]}{[n]} \bar{f}_{n-1} h_{n-1} \bar{f}_{n-1} - \frac{2}{[n]^2} \bar{f}_{n-1}' h_{n-1} \bar{f}_{n-1}, &	p+2 \leq n \leq 2p-1,  \\
\bar{f}_{2p} = \bar{f}_{2p-1}, \\
\bar{f}_{2p+1} = \bar{f}_{2p} - \left( h_{2p} \bar{f}_{2p}' + \bar{f}_{2p}' h_{2p} \right) - [2] \bar{f}_{2p}' h_{2p} \bar{f}_{2p}', \\
\bar{f}_n = \bar{f}_{n-1} + \frac{[n-1]}{[n]} \bar{f}_{n-1} h_{n-1} \bar{f}_{n-1} - \frac{2}{[n]^2} \bar{f}_{n-1}' h_{n-1} \bar{f}_{n-1}, & 2p+2 \leq n \leq 3p-2.
\end{cases} \]
	\item Les nilpotents de Jones-Wenzl évalués vérifient le système de récurrence :
\[ \begin{cases}
\bar{f}_p' = \bar{f}_p h_{p-1} \bar{f}_p, \\
\bar{f}_{p+1}' = \bar{f}_p' - \bar{f}_p' h_p \bar{f}_p', \\
\bar{f}_n' = \bar{f}_{n-1}' + \frac{[n-1]}{[n]} \bar{f}_{n-1} h_{n-1} \bar{f}_{n-1}', &\quad p+2 \leq p \leq 2p-2, \\
\bar{f}_{2p}' = \bar{f}_{2p} h_{2p-1} \bar{f}_{2p}, \\
\bar{f}_{2p+1}' = \bar{f}_{2p}' - \bar{f}_{2p}' h_{2p} \bar{f}_{2p}', \\
\bar{f}_n' = \bar{f}_{n-1}' + \frac{[n-1]}{[n]} \bar{f}_{n-1} h_{n-1} \bar{f}_{n-1}', &\quad 2p+2 \leq p \leq 3p-2. \\
\end{cases} \]
\end{enumerate}
\end{Cor}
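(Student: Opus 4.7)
The plan is to deduce each line of the corollary directly from the corresponding line of Proposition \ref{Prop: recurrence JWG} by applying the evaluation morphism $ev_n : \TL_n(A^2)_q \to \TL_n(q)$ of Proposition \ref{Prop: evaluations}. First, I would observe that every element appearing on the right-hand sides in Proposition \ref{Prop: recurrence JWG} — the $f_m$, the $f_m'$, and the products $f_{m-1} h_{m-1} f_{m-1}$, $f_{m-1}' h_{m-1} f_{m-1}$, etc. — lies in the evaluable subalgebra $\TL_n(A^2)_q$, since the $f_m$ and $f_m'$ themselves are evaluable by Theorem \ref{Thm: idempotents eva} (they are exactly the $p_{t(m)}$, $p_{[t(m)]}$, $n_{[t(m)]}$ of that theorem). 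Therefore $ev_n$ is well-defined on both sides, it is a ring morphism, and commutes with the product and sum. It thus suffices to evaluate each scalar coefficient.

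Second, I would compute the relevant values of the $A^2$-integers at $A^2 = q = e^{i\pi/p}$. Using $q^p = -1$ one obtains the basic identities $[p]_q = 0$, $[p-1]_q = 1$, $[p+1]_q = -1$, and more generally $[m+p]_q = -[m]_q$ and $[m+2p]_q = [m]_q$ for all $m \in \Z$ (cf.\ \eqref{Eqn: qcoeff2}). From these I would derive:
\begin{itemize}
\item $\frac{[n-1]_{A^2}}{[n]_{A^2}} \longmapsto \frac{[n-1]}{[n]}$ whenever $n \not\equiv 0 \bmod p$;
\item at $n=p+1$: $\frac{[p]_{A^2}}{[p+1]_{A^2}} \to 0$, so the term $\frac{[p]_{A^2}}{[p+1]_{A^2}} f_p h_p$ disappears, and $\frac{[p-1]_{A^2}}{[p+1]_{A^2}} \to -1$, giving the sign $-(h_p \bar{f}_p' + \bar{f}_p' h_p)$ and the coefficient $[2]\cdot(-1) = -[2]$ in front of $\bar{f}_p' h_p \bar{f}_p'$;
\item $(A^{2p}+A^{-2p}) \to q^p + q^{-p} = -2$ and $[n-2p]_q = [n]_q$, whence
\[ \frac{(A^{2p}+A^{-2p})[p-1]_{A^2}}{[n]_{A^2}[n-2p]_{A^2}} \longmapsto \frac{-2}{[n]^2} , \]
producing the coefficient $-\frac{2}{[n]^2}$ for $p+2 \leq n \leq 2p-1$.
\end{itemize}
The same computations with $2p$ in place of $p$ treat the lines $n=2p$, $n=2p+1$, and $2p+2 \leq n \leq 3p-2$: one uses $[2p]_q=0$, $[2p\pm 1]_q = \mp 1$, $(A^{4p}+A^{-4p}) \to 2$, $[2p-1]_q=-1$ and $[n-4p]_q=[n]_q$, which again yields $-\frac{2}{[n]^2}$.

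For part (ii), the same strategy applies: each coefficient is of the form $\frac{[n-1-2lp]_{A^2}}{[n-2lp]_{A^2}}$ with $l\in\{1,2\}$, and the identity $[m+2lp]_q=[m]_q$ gives $\frac{[n-1-2lp]_q}{[n-2lp]_q}=\frac{[n-1]}{[n]}$. The initialising relations $f_{lp}' = f_{lp} h_{lp-1} f_{lp}$ and $f_{lp+1}' = f_{lp}' - f_{lp}' h_{lp} f_{lp}'$ have integer coefficients and transfer unchanged under $ev_n$.

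Finally, I would check that at every step the denominator $q$-integers in the stated ranges ($[n]_q$ for $n \not\equiv 0 \bmod p$, and $[n-2p]_q=[n]_q$, etc.) are nonzero, so the right-hand sides of Proposition \ref{Prop: recurrence JWG} genuinely lie in $\TL_n(A^2)_q$ and pass through $ev_n$ without ambiguity. There is no conceptual difficulty in the proof: the corollary is a pure evaluation of the preceding proposition, and the only real obstacle is bookkeeping — tracking the sign changes $[m+p]_q = -[m]_q$ and the periodicity $[m+2p]_q=[m]_q$ to match the coefficient of each term on the nose.
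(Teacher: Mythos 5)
Your proof is correct and is exactly the argument the paper intends (the corollary is stated without a separate proof precisely because it is the termwise evaluation of Proposition \ref{Prop: recurrence JWG} under $ev_n$, using $[p]=0$, $[p-1]=1$, $[p+1]=-1$, $[m+2p]=[m]$, $q^p+q^{-p}=-2$, $q^{2p}+q^{-2p}=2$ and $[2p-1]=-1$). Note only the small slip ``$[2p\pm 1]_q=\mp 1$'', which should read $[2p\pm 1]_q=\pm 1$; since the coefficient actually used is the ratio $[2p-1]/[2p+1]=-1$, this does not affect any of the evaluated formulas.
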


On retrouve les propriétés usuelles des idempotents de Jones-Wenzl (cf. par exemple \cite[Thm 5.3.2]{CFS95}) sur les premiers termes.

\begin{Prop}
\label{Prop: JWG}
Soient $n \in \N^*$ et $l \in \N$ le quotient de $n$ dans sa division euclidienne par $p$. On a :
\begin{enumerate}[(i)]
	\item $f_n^2 = f_n$ ;
	\item si $n \leq p-1$ ou $n = -1 \mod p$, alors :
	\[ \forall i \in \{1,...,n-1\} \qquad f_n h_i = 0 = h_i f_n \; ; \]
	\item sinon :
	\[ \forall i \in \{1,...,n-1\} \setminus \{lp-1\} \qquad f_n h_i = 0 = h_i f_n, \qquad f_n'^2 = - \frac{[lp]_{A^2}}{[lp-1]_{A^2}} f_n' . \]
\end{enumerate}
\end{Prop}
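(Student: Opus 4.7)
La stratégie consiste à ramener les identités à des calculs dans l'algèbre générique $\TL_n(A^2)$ via l'action semi-normale des générateurs $h_i$ sur les représentations $V_\lambda$, puis à passer à l'évaluation dans $\TL_n(q)$ grâce au Théorème \ref{Thm: idempotents eva}. L'assertion (i) sera immédiate : par définition, $f_n$ est soit un POI $p_{t(n)}$, soit une somme $p_{t(n)} + p_{\overline{t(n)}}$ de deux POIs associés à des tableaux standards de formes distinctes, donc deux à deux orthogonaux ; dans les deux cas $f_n^2 = f_n$, et cette égalité est préservée par évaluation.

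L'observation clé pour (ii) et (iii) est que le tableau $t(n)$ n'a qu'une seule ligne, donc $d_{t(n)}(i) = -1$ pour tout $i \in \{1,\ldots,n-1\}$. D'après la Proposition \ref{Prop: modules de TLA}, ceci entraîne $h_i \cdot v_{t(n)} = 0$ dans la représentation semi-normale $V_{[n]}$. Via l'isomorphisme canonique de $\TL_n(A^2)$-modules à gauche $\TL_n(A^2) p_{t(n)} \cong V_{[n]}$, $a p_{t(n)} \mapsto a v_{t(n)}$, j'en déduirai $h_i p_{t(n)} = 0$ dans $\TL_n(A^2)$, et symétriquement $p_{t(n)} h_i = 0$. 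Lorsque $p_{t(n)}$ est évaluable -- cas $n \leq p-1$ (Théorème \ref{Thm: idempotents eva}(a)) ou $n \equiv -1 \mod p$ (Théorème \ref{Thm: idempotents eva}(b)) -- ces identités passent à $\TL_n(q)$, ce qui donnera (ii).

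Pour (iii), j'écrirai $n = lp + r$ avec $l \geq 1$ et $r \in \{0,\ldots,p-2\}$, puis je décrirai explicitement $\overline{t(n)}$ : par symétrie de son graphe par rapport à la $l$-ième ligne critique, il a pour forme $[lp-1, r+1]$, avec les étiquettes $1,\ldots,lp-1$ sur la première ligne et $lp,\ldots,n$ sur la seconde. Un calcul direct des distances axiales donne $d_{\overline{t(n)}}(i) = -1$ pour tout $i \neq lp-1$ et $d_{\overline{t(n)}}(lp-1) = lp-1$. Le même raisonnement semi-normal fournira alors $h_i p_{\overline{t(n)}} = 0 = p_{\overline{t(n)}} h_i$ dans $\TL_n(A^2)$ pour $i \neq lp-1$. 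En sommant avec les identités analogues pour $p_{t(n)}$, j'obtiendrai $h_i p_{[t(n)]} = 0 = p_{[t(n)]} h_i$ dans $\TL_n(A^2)$, puis dans $\TL_n(q)$. Pour l'identité $f_n'^2 = -\frac{[lp]_{A^2}}{[lp-1]_{A^2}} f_n'$, je m'appuierai sur l'équation (1) de la preuve du Théorème \ref{Thm: idempotents eva}(c), qui donne déjà $n_{[t(n)]} = -\frac{[lp]_{A^2}}{[lp-1]_{A^2}} p_{\overline{t(n)}}$ puisque $d_{t(n)}(lp-1) = -1$ et $d_{\overline{t(n)}}(lp-1) = lp-1$ ; l'idempotence de $p_{\overline{t(n)}}$ conclut par élévation au carré.

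Le point le plus délicat sera le passage de $\TL_n(A^2)$ à $\TL_n(q)$ en (iii) : bien que $p_{t(n)}$ et $p_{\overline{t(n)}}$ ne soient pas individuellement évaluables (Théorème \ref{Thm: idempotents eva}(c)), leur somme $p_{[t(n)]}$ l'est, et les produits $h_i p_{[t(n)]}$ et $p_{[t(n)]} h_i$ vivent bien dans $\TL_n(A^2)_q$. Il faudra donc soigneusement argumenter que l'identité $h_i p_{[t(n)]} = 0$, établie dans l'algèbre générique par combinaison d'identités non évaluables, se restreint effectivement à une identité de $\TL_n(A^2)_q$ avant d'être transportée par le morphisme d'évaluation.
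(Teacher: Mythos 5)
Votre démonstration est correcte et suit essentiellement la même voie que celle du texte : action sur les représentations semi-normales, calcul des distances axiales $d_{t(n)}(i)=-1$ et $d_{\overline{t(n)}}(i)$, puis la formule $f_n' = -\frac{[lp]_{A^2}}{[lp-1]_{A^2}}\, p_{\overline{t(n)}}$ tirée de la preuve du théorème \ref{Thm: idempotents eva}. Notez seulement que votre dernier paragraphe sur le passage à $\TL_n(q)$ est superflu : l'énoncé porte sur les éléments \emph{génériques} $f_n, f_n' \in \TL_n(A^2)$ (les coefficients sont des $[\cdot]_{A^2}$), et une identité nulle dans $\TL_n(A^2)$ entre éléments évaluables s'évalue trivialement.
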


\begin{proof}
Il est clair que $f_n^2 = f_n$. Passons aux autres assertions. On étudie les éléments de $\TL_n(A^2)$ via leurs actions sur les représentations semi-normales $\{ V_\lambda \; ; \; \lambda \in \Delta^{\leq2}_n \}$ grâce à l'isomorphisme \eqref{Eqn: structure TLA1}.
\begin{enumerate}[(i)]
	\setcounter{enumi}{1}
	\item On suppose que $n \leq p-1$ ou $n = -1 \mod p$. On a $f_n = p_{t(n)}$. D'après la remarque \ref{Rem: treillis de TL2}, le tableau standard $t(n)$ vérifie :
	\[ \forall i \in \{1,...,n-1\} \qquad d_{t(n)}(i) = -1. \]
	Soit $s \in T_n^{\leq2}$. Pour tout $i \in \{1,...,n-1\}$, on a donc :
	\begin{align*}
	h_i f_n \cdot v_s &= h_i p_{t(n)} \cdot v_s = \delta_{s,t(n)} h_i \cdot v_{t(n)} = 0, \\
	f_n h_i \cdot v_{t(n)} &= p_{t(n)} h_i \cdot v_{t(n)} \\
	&= - \delta_{s,t(n)} \frac{[d_s(i)+1]_{A^2}}{[d_s(i)]_{A^2}} v_{t(n)} - \delta_{\sigma_i(s),t(n)} (1-\delta_{d_s(i),-1} ) \frac{[d_s(i)-1]_{A^2}}{[d_t(i)]_{A^2}} v_{t(n)} \\
	&= 0.
	\end{align*}
	Ceci étant valable pour tout $s \in T_n^{\leq2}$, on en déduit que $h_i f_n = 0 = f_n h_i$.
	\item On suppose que $n \geq p$ et $n \neq -1 \mod p$. On a $f_n = p_{[t(n)]}$. Le tableau standard $t(n)$ possède un sous-tableau critique maximal $t(lp-1)$ et un tableau conjugué $\overline{t(n)}$ (cf. la remarque \ref{Rem: lignes critiques}). D'après la remarque \ref{Rem: treillis de TL2}, les tableaux standards $t(n)$ et $\overline{t(n)}$ vérifient :
	\[	\forall i \in \{1,...,n-1\} \qquad d_{t(n)}(i) = -1, \qquad 
	d_{\overline{t(n)}}(i) = \begin{cases}
		-1 & \text{ si } i \neq lp-1, \\
		lp-1 & \text{ sinon.}
		\end{cases} \]
	En procédant comme dans $(a)$ avec $f_n = p_{[t(n)]} = p_{t(n)} + p_{\overline{t(n)}}$ et $i \in \{1,...,n-1\} \setminus \{lp-1\}$, on obtient $h_i f_n = 0 = f_n h_i$. Tandis que pour $i=lp-1$, on a :
\[ f_n' = p_{[t(n)]} h_{lp-1} p_{[t(n)]} = p_{\overline{t(n)}} h_{lp-1} p_{\overline{t(n)}} = - \frac{[lp]_{A^2}}{[lp-1]_{A^2}} p_{\overline{t(n)}} . \]
Il s'ensuit que :
\[ f_n'^2 = \left( - \frac{[lp]_{A^2}}{[lp-1]_{A^2}} p_{\overline{t(n)}} \right)^2 = \left( - \frac{[lp]_{A^2}}{[lp-1]_{A^2}} \right)^2 p_{\overline{t(n)}} = - \frac{[lp]_{A^2}}{[lp-1]_{A^2}} f_n' . \]
\end{enumerate}
\end{proof}

\subsection{Une base de l'espace d'écheveaux du tore solide}
\label{subsection: echeveaux JWG}

On considère dorénavant le $\Z[A,A^{-1}]$-module d'écheveaux $K_A(\bar{D} \times \mathbb{S}^1, 0)$ du tore solide et, pour tout $n \in \N$, l'application linéaire : \index{Theta3 @$\Theta_{A^2,n}$}
\begin{equation}
\label{Eqn: ThetaA}
\Theta_{A^2,n} : \left\{ \begin{array}{l}
	\TL_n(A^2) \longrightarrow \C(A) \otimes_{\Z[A,A^{-1}]} K_A(\bar{D} \times \mathbb{S}^1, 0) \\
	a \longmapsto \insertion{a}{n}{Tore}
	\end{array} \right. \hspace{-0.5cm} .
\end{equation}

On note $1$ la classe d'écheveau de l'enchevêtrement vide, et $\alpha$ la classe d'écheveau associée à l'âme $\{0\} \times \mathbb{S}^1$ de $\bar{D} \times \mathbb{S}^1$. On désigne par $\alpha^n$ la classe d'écheveau de $n \in \mathbb{N}^*$ copie(s) parallèle(s) de $\alpha$. Compte-tenu des relations d'écheveaux \eqref{Eqn: echeveaux2}, pour tout $n \in \N$, l'image de $\Theta_{A^2,n}$ est incluse dans l'ensemble des polynômes en $\alpha$ à coefficient dans $\C(A)$ de degré au plus $n$. On étudie l'ensemble des images $\{ \Theta_{A^2,n}(f_n), \Theta_{A^2,n}(f_n') \; ; \; n \in \N^* \}$ des idempotents et nilpotents de Jones-Wenzl évaluables. 

Pour cela, on commence par étudier les images des POIs des algèbres de Teperley-Lieb génériques. Pour $n=0$, l'algèbre $\TL_0(A^2)=\C(A)$ possède un unique POI $p=1$ et $\Theta{A^2,0}(1) = 1$. Pour les autres valeurs de $n$, les images $\Theta_{A^2,n}(p_t)$, $t \in T_n^{\leq2}$, font intervenir les polynômes de Chebychev $(U_s(x))_{s \in \N}$ \eqref{Eqn: Chebychev}. Afin d'épurer les représentations diagrammatiques, on note :
\begin{equation}
\label{Eqn: insertion des POIs}
\forall n \in \N^* \quad \forall t \in T_n^{\leq2} \qquad \rect{I}{t} := \rect{p_t}{} .
\end{equation} \index{p4 @$\rect{I}{t}$}

\begin{Prop}
\label{Prop: insertion des POIs}
On suppose que $n \geq 1$. Pour tout tableau standard $t \in T_n^{\leq2}$ de forme $\lambda$, on a $\Theta_{A^2,n}(p_t) = U_{\omega(\lambda)}(\alpha)$.
\end{Prop}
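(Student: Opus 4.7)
The plan is to proceed by induction on $n$, combining the recurrence for primitive orthogonal idempotents from Proposition \ref{Prop: POIs de TLA} with the Chebyshev recurrence $x U_s(x) = U_{s-1}(x) + U_{s+1}(x)$ \eqref{Eqn: Chebychev}. The base case $n = 1$ is immediate: with $t = \lyoung{1}$, $\lambda = \lyng{1}$, $\omega(\lambda) = 2$, and $p_t = 1$, the closure of a single strand in the solid torus produces one parallel copy of $\alpha$, matching $U_2(\alpha) = \alpha$.

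For the inductive step, given $t \in T_n^{\leq 2}$ of shape $\lambda$ with sub-tableau $t' \in T_{n-1}^{\leq 2}$ of shape $\mu$, the first key observation is the skein identity
\[
\Theta_{A^2, n}(p_{t'}) = \alpha \cdot \Theta_{A^2, n-1}(p_{t'}),
\]
which holds because $p_{t'}$ is the identity on strand $n$ once embedded in $\TL_n(A^2)$, so closing that strand in the solid torus contributes a parallel copy of $\alpha$ disjoint from the rest. When $\hat{t}$ does not exist, Proposition \ref{Prop: POIs de TLA} gives $p_t = p_{t'}$ and Remark \ref{Rem: treillis de TL2} forces $\omega(\mu) = 1$, $\omega(\lambda) = 2$, so the skein identity and inductive hypothesis give $\Theta_{A^2, n}(p_t) = \alpha \cdot U_1(\alpha) = \alpha = U_{\omega(\lambda)}(\alpha)$, closing the easy case.

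The substantive case is when $\hat{t}$ exists, where $p_{t'} = p_t + p_{\hat{t}}$ in $\TL_n(A^2)$ and the Temperley-Lieb lattice (Remark \ref{Rem: treillis de TL2}) yields $\{\omega(\lambda), \omega(\hat\lambda)\} = \{\omega(\mu) - 1, \omega(\mu) + 1\}$, where $\hat\lambda$ denotes the shape of $\hat{t}$. The skein identity, inductive hypothesis, and Chebyshev recurrence combine to give
\[
\Theta_{A^2, n}(p_t) + \Theta_{A^2, n}(p_{\hat{t}}) = \alpha \cdot U_{\omega(\mu)}(\alpha) = U_{\omega(\lambda)}(\alpha) + U_{\omega(\hat\lambda)}(\alpha),
\]
which pins down the sum but not the two terms individually. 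To separate them, I would invoke Theorem \ref{Thm: POIs rep fondaA}: the morphism $\theta_{A^2, n}(p_t)$ is a projector onto a summand of $\X_{A^2}^+(2)^{\otimes n}$ isomorphic to $\X_{A^2}^+(\omega(\lambda))$ as a $\UA$-module. Together with the generic analogue of Proposition \ref{Prop: Grothendieck}, the $\C(A)$-algebra $K_A(\bar{D} \times \mathbb{S}^1, 0) \otimes \C(A)$ is identified with the generic Grothendieck ring of $\UA$ via $\alpha \leftrightarrow [\X_{A^2}^+(2)]$, and Lemme \ref{Lemme: produits simplesA} combined with the Chebyshev recurrence yields $U_s(\alpha) \leftrightarrow [\X_{A^2}^+(s)]$ for every $s$.

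The main obstacle is justifying that insertion-and-closure of an idempotent $e \in \TL_n(A^2)$ in the solid torus matches, under the above identification, the class of the image of $\theta_{A^2, n}(e)$ as a $\UA$-module. This compatibility between skein closure and representation-theoretic class is a standard consequence of the Kauffman-Lickorish construction of quantum $sl_2$ invariants in the generic setting (cf.\ \cite{Lic91, Lic92}), but to keep the argument self-contained one may instead finish the induction by a direct skein computation of $\Theta_{A^2, n}(p_{t'} h_{n-1} p_{t'})$: tracing the closed curves produced by the $h_{n-1}$-cup-cap together with the longitudinal closure of strand $n$, one reduces to $(n-1)$-strand configurations by exploiting idempotency $p_{t'}^2 = p_{t'}$ and the cyclicity $\Theta_{A^2, n}(xy) = \Theta_{A^2, n}(yx)$ coming from isotopy in the torus.
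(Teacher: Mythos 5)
There is a genuine gap at the decisive step. Your induction correctly establishes the base case, the easy case where $\hat{t}$ does not exist, and the sum identity
\[
\Theta_{A^2,n}(p_t) + \Theta_{A^2,n}(p_{\hat{t}}) \;=\; \alpha\, U_{\omega(\mu)}(\alpha) \;=\; U_{\omega(\mu)-1}(\alpha) + U_{\omega(\mu)+1}(\alpha),
\]
but this determines only the sum of the two closures, not each term, and everything you offer to separate them is either unproven or unexecuted. Your route (a) rests on the claim that the annular closure $\Theta_{A^2,n}(e)$ of an idempotent $e$ coincides, under $\alpha \leftrightarrow [\X_{A^2}^+(2)]$ and $U_s(\alpha) \leftrightarrow [\X_{A^2}^+(s)]$, with the Grothendieck class of the image of $\theta_{A^2,n}(e)$. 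That compatibility is essentially equivalent to the proposition you are trying to prove; nothing in this manuscript establishes it, and citing it as ``standard'' from \cite{Lic91, Lic92} imports precisely the content at stake. Your route (b) is the correct strategy --- and it is the one the paper follows --- but you only gesture at it: the whole difficulty is the explicit evaluation of the partial closure $\Theta_{A^2,n}(p_{t'}h_{n-1}p_{t'})$, and idempotency plus cyclicity alone do not reduce it to level $n-1$.

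Concretely, what is missing is a two-step descent. Writing $p_t$ via the recurrence of Proposition \ref{Prop: POIs de TLA} in terms of $p_{t'}$ and $p_{t'}h_{n-1}p_{t'}$, the closure of the $h_{n-1}$ term turns the last strand of $p_{t'}$ back on itself; to evaluate this partial trace one must expand $p_{t'}$ in turn through $p_{t''}$ and $p_{t''}h_{n-2}p_{t''}$, pick up the factor $-[2]_{A^2}$ from the resulting contractible circle, and combine the coefficients using the $A^2$-integer identities \eqref{Eqn: Acoeff2} before the Chebyshev recurrence $U_{\omega(\lambda)}(\alpha) = \alpha\,U_{\omega(\lambda)\mp1}(\alpha) - U_{\omega(\lambda)\mp2}(\alpha)$ can be applied. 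This forces a strengthened induction hypothesis covering both ranks $n-1$ and $n-2$, and a case analysis on the last two edges of $\gamma(t)$ (the four configurations of $d_t(n-1)$ and $d_t(n-2)$ relative to $-1$), which is exactly where the substance of the paper's proof lies. Until that computation is carried out, your argument pins down $\Theta_{A^2,n}(p_t)$ only up to the ambiguity of swapping it with $\Theta_{A^2,n}(p_{\hat{t}})$.
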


\begin{proof}
On procède par récurrence sur $n \in \N^*$.

Pour $n=1$, on a $\Delta_1^{\leq2} = \{ \lyng{1} \}$ et $T_1^{\leq 2} = \{ \lyoung{1} \}$. Il y a un unique POI $p_{\lyoung{1}} = 1$ et $\Theta_{A^2,1}(p_{\lyoung{1}}) = \alpha = U_2(\alpha)$.

Pour $n=2$, $\Delta_2^{\leq2} = \{ \lyng{1,1}, \lyng{2} \}$ et $T_1^{\leq 2} = \{ \lyoung{1,2}, \lyoung{12} \}$. On a exactement deux POIs qui, d'après la proposition \ref{Prop: POIs de TLA} et la remarque \ref{Rem: treillis de TL2}, vérifient :
\[ p_{\lyoung{1,2}} = - \frac{1}{[2]_{A^2}} p_{\lyoung{1}} h_1 p_{\lyoung{1}}, \qquad p_{\lyoung{12}} = p_{\lyoung{1}} + \frac{1}{[2]_{A^2}} p_{\lyoung{1}} h_1 p_{\lyoung{1}}. \]
On en déduit que :
\begin{align*}
\Theta_{A^2,2}(p_{\lyoung{1,2}}) &= - \frac{1}{[2]_{A^2}} \insertion{IH0}{\lyoung{1}}{Tore} = - \frac{-[2]_{A^2}}{[2]_{A^2}} \Tore
	= 1 = U_1(\alpha), \\
\Theta_{A^2,2}(p_{\lyoung{12}}) &= \insertion{I1}{\lyoung{1}}{Tore} + \frac{1}{[2]_{A^2}} \insertion{IH0}{\lyoung{1}}{Tore}
	= \alpha^2 - 1 = U_3(\alpha).
\end{align*}

Soit $n \geq 2$. On suppose que, pour tout $k \in \{1,...n\}$ et pour tout $r \in T_k^{\leq2}$ de forme $\mu$, $\Theta_{A^2,k}(p_r) = U_{\omega(\mu)}(\alpha)$. Soit $t \in T_{n+1}^{\leq2}$ de forme $\lambda$. On considère les tableaux standards $t' \in T_n^{\leq 2}$ et $t'' \in T_{n-1}^{\leq 2}$. On note $\mu$ la forme de $t'$ et $\nu$ celle de $t''$. On utilisera régulièrement la proposition \ref{Prop: POIs de TLA} et la remarque \ref{Rem: treillis de TL2}. On distingue 4 cas.

\begin{Cas}
On suppose que $d_t(n) = 1 = d_t(n-1)$. Alors $\omega(\nu) = \omega(\lambda) \mp 2$, $\omega(\mu) = \omega(\lambda) \mp 1$, et on a deux configurations possibles :
\[ \begin{array}{|c|c|}
\hline
\omega(\lambda) \geq 4 & \omega(\lambda) \geq 1 \\
\hline
\vcenter{ \xymatrix @-2ex {
	\bullet \ar[rd]^{t''} \\
	& \bullet \ar[rd]^{t'} \ar[ld]_{\widehat{t'}} \\
	\bullet && \bullet \ar[rd]^{t} \ar[ld]_{\hat{t}} \\
	& \bullet && \bullet }} &
\vcenter{ \xymatrix @-2ex {
	&&& \bullet \ar[ld]_{t''} \\
	&& \bullet \ar[ld]_{t'} \ar[rd]^{\widehat{t'}} \\
	& \bullet \ar[ld]_{t} \ar[rd]^{\hat{t}} && \bullet \\
	\bullet && \bullet }} \\
p_t = p_{t'} + \frac{[\omega(\lambda)-2]_{A^2}}{[\omega(\lambda)-1]_{A^2}} p_{t'} h_n p_{t'} & 
	p_t = p_{t'} + \frac{[\omega(\lambda)+2]_{A^2}}{[\omega(\lambda)+1]_{A^2}} p_{t'} h_n p_{t'} \\ 
p_{t'} = p_{t''} + \frac{[\omega(\lambda)-3]_{A^2}}{[\omega(\lambda)-2]_{A^2}} p_{t''} h_{n-1} p_{t''} & 
	p_{t'} = p_{t''} + \frac{[\omega(\lambda)+3]_{A^2}}{[\omega(\lambda)+2]_{A^2}} p_{t''} h_{n-1} p_{t''} \\
\hline
\end{array} \]
On en déduit que :
\begin{align*}
\Theta_{A^2,n+1}(p_t)
	&\; \; = \insertion{I1}{t'}{Tore} + \frac{[\omega(\lambda) \mp 2]_{A^2}}{[\omega(\lambda) \mp 1]_{A^2}} \insertion{IH1}{t'}{Tore} \\
	&\; \; = \alpha \Theta_{A^2,n}(p_{t'}) + \frac{[\omega(\lambda) \mp 2]_{A^2}}{[\omega(\lambda) \mp 1]_{A^2}} \insertion{IB1}{t'}{Tore} \\
	&\; \; = \alpha \Theta_{A^2,n}(p_{t'}) \\
	&\; \; \quad + \frac{[\omega(\lambda) \mp 2]_{A^2}}{[\omega(\lambda) \mp 1]_{A^2}} \insertion{IB2}{t''}{Tore}
		+ \frac{[\omega(\lambda) \mp 3]_{A^2}}{[\omega(\lambda) \mp 1]_{A^2}} \insertion{IH2}{t''}{Tore} \\
	&\; \; = \alpha \Theta_{A^2,n}(p_{t'}) + \left( -[2]_{A^2} \frac{[\omega(\lambda) \mp 2]_{A^2}}{[\omega(\lambda) \mp 1]_{A^2}} + \frac{[\omega(\lambda) \mp 3]_{A^2}}{[\omega(\lambda) \mp 1]_{A^2}} \right) \Theta_{A^2,n}(p_{t''}) \\
	&\overset{\eqref{Eqn: Acoeff2}}{=} \alpha \Theta_{A^2,n}(p_{t'}) - \Theta_{A^2,n-1}(p_{t''}).
\end{align*}
Or, d'après l'hypothèse de récurrence, on a :
\[ \Theta_{A^2,n}(p_{t'}) = U_{\omega(\lambda)\mp1}(\alpha), \qquad \Theta_{A^2,n-1}(p_{t''}) = U_{\omega(\lambda)\mp2}(\alpha) . \]
Par définition des polynômes de Chebychev \eqref{Eqn: Chebychev}, on a donc :
\[ \Theta_{A^2,n+1}(p_t) = \alpha U_{\omega(\lambda)\mp1}(\alpha) - U_{\omega(\lambda)\mp2}(\alpha) = U_{\omega(\lambda)}(\alpha). \]
\end{Cas}

\begin{Cas}
On suppose que $d_t(n) = 1$ et $d_t(n-1) \neq -1$. Alors $\omega(\nu) = \omega(\lambda) \mp 2$, $\omega(\mu) = \omega(\lambda) \mp 1$, et on a deux configurations possibles :
\[ \begin{array}{|c|c|}
\hline
\omega(\lambda) \geq 3 & \omega(\lambda) \geq 1 \\
\hline
\vcenter{ \xymatrix @-2ex {
	& \bullet \ar[ld]_{t''} \\
	\bullet \ar[rd]^{t'} \\
	& \bullet \ar[rd]^{t} \ar[ld]_{\hat{t}} \\
	\bullet && \bullet }} &
\vcenter{ \xymatrix @-2ex {
	& \bullet \ar[rd]^{t''}  \\
	&& \bullet \ar[ld]_{t'} \\
	& \bullet \ar[ld]_{t} \ar[rd]^{\hat{t}} \\
	\bullet && \bullet }} \\
p_t = p_{t'} + \frac{[\omega(\lambda)-2]_{A^2}}{[\omega(\lambda)-1]_{A^2}} p_{t'} h_n p_{t'} & 
	p_t = p_{t'} + \frac{[\omega(\lambda)+2]_{A^2}}{[\omega(\lambda)+1]_{A^2}} p_{t'} h_n p_{t'} \\ 
p_{t'} = - \frac{[\omega(\lambda)-1]_{A^2}}{[\omega(\lambda)-2]_{A^2}} p_{t''} h_{n-1} p_{t''} & 
	p_{t'} = - \frac{[\omega(\lambda)+1]_{A^2}}{[\omega(\lambda)+2]_{A^2}} p_{t''} h_{n-1} p_{t''} \\
\hline
\end{array} \]
En procédant comme dans le cas 1, on obtient :
\begin{align*}
\Theta_{A^2,n+1}(p_t) &= \alpha \Theta_{A^2,n}(p_{t'}) + \frac{[\omega(\lambda) \mp 2]_{A^2}}{[\omega(\lambda) \mp 1]_{A^2}} \insertion{IB1}{t'}{Tore} \\
	&= \alpha \Theta_{A^2,n}(p_{t'}) - \insertion{IH2}{t''}{Tore} 
	= \alpha \Theta_{A^2,n}(p_{t'}) - \Theta_{A^2,n-1}(p_{t''}).
\end{align*}
On conclut comme dans le cas 1.
\end{Cas}

\begin{Cas}
On suppose que $d_t(n) \neq 1$ et $d_t(n-1) = -1$. Alors $\omega(\nu) = \omega(\lambda)$, $\omega(\mu) = \omega(\lambda) \pm 1$, et on a trois configurations possibles :
\[ \begin{array}{|c|c|c|}
\hline
\omega(\lambda) \geq 2 & \omega(\lambda) \geq 3 & \omega(\lambda) = 2 \\
\hline
\vcenter{ \xymatrix @-2ex {
	\bullet \ar[rd]^{t''} \\
	& \bullet \ar[rd]^{t'} \ar[ld]_{\widehat{t'}} \\
	\bullet && \bullet \ar[ld]_{t} \\
	& \bullet }} &
\vcenter{ \xymatrix @-2ex {
	&& \bullet \ar[ld]_{t''} \\
	& \bullet \ar[ld]_{t'} \ar[rd]^{\widehat{t'}} \\
	\bullet \ar[rd]^{t} && \bullet \\
	& \bullet }} &
\vcenter{ \xymatrix @-2ex {
	\ar@{--}[ddd] && \bullet \ar[ld]_{t''} \\
	& \bullet \ar[ld]_{t'} \ar[rd]^{\widehat{t'}} \\
	\bullet \ar[rd]^{t} && \bullet \\
	& \bullet }} 	\\
p_t = - \frac{[\omega(\lambda)]_{A^2}}{[\omega(\lambda)+1]_{A^2}} p_{t'} h_n p_{t'} & 
	p_t = - \frac{[\omega(\lambda)]_{A^2}}{[\omega(\lambda)-1]_{A^2}} p_{t'} h_n p_{t'} &
	p_t = p_{t'} \\ 
p_{t'} = p_{t''} + \frac{[\omega(\lambda)-1]_{A^2}}{[\omega(\lambda)]_{A^2}} p_{t''} h_{n-1} p_{t''} & 
	p_{t'} = p_{t''} + \frac{[\omega(\lambda)+1]_{A^2}}{[\omega(\lambda)]_{A^2}} p_{t''} h_{n-1} p_{t''} & \\
\hline
\end{array} \]
Pour la dernière configuration, il est clair que $\Theta_{A^2,n+1}(p_t) = \alpha \Theta_{A^2,n} (p_{t'})$. Or, d'après l'hypothèse de récurrence, on a $\Theta_{A^2,n} (p_{t'}) = U_1(\alpha) = 1$. Donc $\Theta_{A^2, n+1}(p_t) = \alpha = U_2(\alpha)$. Pour les deux autres configurations, on a :
\begin{align*}
\Theta_{A^2,n+1}(p_t) 
	& = - \frac{[\omega(\lambda)]_{A^2}}{[\omega(\lambda) \pm 1]_{A^2}} \insertion{IH1}{t'}{Tore}
	= - \frac{[\omega(\lambda)]_{A^2}}{[\omega(\lambda) \pm 1]_{A^2}} \insertion{IB1}{t'}{Tore} \\
	& = - \frac{[\omega(\lambda)]_{A^2}}{[\omega(\lambda) \pm 1]_{A^2}} \insertion{IB2}{t''}{Tore}
		- \frac{[\omega(\lambda) \mp 1]_{A^2}}{[\omega(\lambda) \pm 1]_{A^2}} \insertion{IH2}{t''}{Tore} \\
	& = \left( [2]_{A^2} \frac{[\omega(\lambda)]_{A^2}}{[\omega(\lambda) \pm 1]_{A^2}} + \frac{[\omega(\lambda) \mp 1]_{A^2}}{[\omega(\lambda) \pm 1]_{A^2}} \right) \Theta_{A^2,n-1}(p_{t''})
	\overset{\eqref{Eqn: Acoeff2}}{=} \Theta_{A^2,n-1}(p_{t''}).
\end{align*}
Or, d'après l'hypothèse de récurrence, on a $\Theta_{A^2,n-1}(p_{t''}) = U_{\omega(\lambda)}(\alpha)$. Donc :
\[ \Theta_{A^2,n+1}(p_t) = U_{\omega(\lambda)}(\alpha) . \]
\end{Cas}

\begin{Cas}
On suppose que $d_t(n) \neq 1$ et $d_t(n-1) \neq -1$. Alors $\omega(\nu) = \omega(\lambda)$, $\omega(\mu) = \omega(\lambda) \pm 1$, et on a trois configurations possibles :
\[ \begin{array}{|c|c|c|}
\hline
\omega(\lambda) \geq 1 & \omega(\lambda) \geq 3 & \omega(\lambda) = 2 \\
\hline
\vcenter{ \xymatrix @-2ex {
	& \bullet \ar[ld]_{t''} \\
	\bullet \ar[rd]^{t'} \\
	& \bullet \ar[ld]_{t} \\
	\bullet }} &
\vcenter{ \xymatrix @-2ex {
	\bullet \ar[rd]^{t''} \\
	& \bullet \ar[ld]_{t'} \\
	\bullet \ar[rd]^{t} \\
	& \bullet }} &
\vcenter{ \xymatrix @-2ex {
	\ar@{--}[ddd] \bullet \ar[rd]^{t''} \\
	& \bullet \ar[ld]_{t'} \\
	\bullet \ar[rd]^{t} \\
	& \bullet }} 	\\
p_t = - \frac{[\omega(\lambda)]_{A^2}}{[\omega(\lambda)+1]_{A^2}} p_{t'} h_n p_{t'} & 
	p_t = - \frac{[\omega(\lambda)]_{A^2}}{[\omega(\lambda)-1]_{A^2}} p_{t'} h_n p_{t'} &
	p_t = p_{t'} \\ 
p_{t'} = - \frac{[\omega(\lambda)+1]_{A^2}}{[\omega(\lambda)]_{A^2}} p_{t''} h_{n-1} p_{t''} & 
	p_{t'} = - \frac{[\omega(\lambda)-1]_{A^2}}{[\omega(\lambda)]_{A^2}} p_{t''} h_{n-1} p_{t''} & \\
\hline
\end{array} \]
Pour la dernière configuration, on procède comme dans le cas 3. Pour les deux autres configurations, on a :
\begin{align*}
\Theta_{A^2,n+1}(p_t) 
	&= - \frac{[\omega(\lambda)]_{A^2}}{[\omega(\lambda) \pm 1]_{A^2}} \insertion{IH1}{t'}{Tore}
	= - \frac{[\omega(\lambda)]_{A^2}}{[\omega(\lambda) \pm 1]_{A^2}} \insertion{IB1}{t'}{Tore} \\
	&= \insertion{IH2}{t''}{Tore} = \Theta_{A^2,n-1}(p_{t''}).
\end{align*}
On conclut comme dans le cas 3.
\end{Cas}

Par conséquent, dans chacun des cas, on a $\Theta_{A^2,n+1}(p_t) = U_{\omega(\lambda)}(\alpha)$. Ce qui prouve l'hérédité et achève la récurrence.
\end{proof}

\begin{Rem}
Pour $n \geq p$, on sait qu'il existe des POIs $p_t$, avec $t \in T_n^{\leq2}$, \emph{non évaluables} (cf. le théorème \ref{Thm: idempotents eva}). Cependant, ils le deviennent sous l'image de $\Theta_{A^2,n}$ puisque les polynômes de Chebychev sont à coefficients relatifs (récurrence facile).
\end{Rem}

On en déduit la généralisation suivante du théorème \cite[Thm 2]{Lic92}.

\begin{Prop} 
\label{Prop: insertion des JWG}
Soient $n \in \N^*$ et $l \in \N$ le quotient de $n$ dans sa division euclidienne par $p$. On a :
\begin{align*}
\Theta_{A^2,n}(f_n) &= \begin{cases} 
	U_{n+1}(\alpha) & \text{ si } n \leq p-1 \text{ ou } n=-1 \mod p, \\
	U_{n+1}(\alpha)+U_{2lp-n-1}(\alpha) & \text{ sinon, } \end{cases} \\
\Theta_{A^2,n}(f_n') &= - \frac{[lp]_{A^2}}{[lp-1]_{A^2}} U_{2lp-n-1}(\alpha) \quad \text{ si } n \geq p \text{ et } n \neq -1 \mod p.
\end{align*}
\end{Prop}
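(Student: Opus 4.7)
The plan is to reduce everything to Proposition \ref{Prop: insertion des POIs}, which asserts that $\Theta_{A^2,n}(p_t) = U_{\omega(\lambda)}(\alpha)$ for any standard tableau $t$ of shape $\lambda$. Since $f_n$ and $f_n'$ are defined from POIs associated with the one-row tableau $t(n)$ (of shape $[n]$, hence $\omega([n]) = n+1$), I will split the argument according to the cases appearing in the definitions of $f_n$ and $f_n'$ given in Subsection \ref{subsection: JWG}.

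First, the case $n \leq p-1$ or $n = -1 \mod p$ is immediate: by definition $f_n = p_{t(n)}$ and $f_n' = 0$, so Proposition \ref{Prop: insertion des POIs} applied to $t(n)$ directly yields $\Theta_{A^2,n}(f_n) = U_{n+1}(\alpha)$.

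For the remaining case, I write $n = lp + r$ with $l \geq 1$ and $r \in \{0,1,\ldots,p-2\}$, so that $f_n = p_{[t(n)]} = p_{t(n)} + p_{\overline{t(n)}}$, and my first task is to identify the conjugate tableau $\overline{t(n)}$. Its graph is obtained by reflecting $\gamma(t(n)) \setminus \gamma(t(lp-1))$ across the $l$-th critical line (column $lp$). After the critical subtableau $t(lp-1)$, the walk $\gamma(t(n))$ performs $r+1$ consecutive moves to the right (adding cells in the first row), so the reflected walk performs $r+1$ consecutive moves to the left, placing the labels $lp, lp+1, \ldots, lp+r$ in the second row. Hence $\overline{t(n)}$ has shape $[lp-1, r+1]$ with $\omega = lp-r-1 = 2lp-n-1$. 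By linearity of $\Theta_{A^2,n}$ and a second application of Proposition \ref{Prop: insertion des POIs}, I obtain $\Theta_{A^2,n}(f_n) = U_{n+1}(\alpha) + U_{2lp-n-1}(\alpha)$.

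For the nilpotent part, the identity $f_n' = -\frac{[lp]_{A^2}}{[lp-1]_{A^2}}\, p_{\overline{t(n)}}$ has already been derived in the proof of Proposition \ref{Prop: recurrence JWG}(ii), as a direct consequence of the general computation of $p_{[t]} h_k p_{[t]}$ when $d_t(k) = -1$ and $d_{\overline{t}}(k) = lp-1$. Applying $\Theta_{A^2,n}$ and invoking Proposition \ref{Prop: insertion des POIs} once more then immediately produces the desired formula. The only real obstacle is therefore the purely combinatorial verification of the shape of $\overline{t(n)}$, for which one must check that placing $lp, lp+1, \ldots, lp+r$ in the second row respects the standard tableau condition; once this is settled, the statement follows immediately from the previous proposition.
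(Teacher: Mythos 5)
Votre démonstration est correcte et suit essentiellement la même démarche que celle du texte : tout se ramène à la proposition \ref{Prop: insertion des POIs}, en identifiant $\omega(\lambda(n)) = n+1$, $\omega(\overline{\lambda(n)}) = 2lp-n-1$, et en réutilisant l'identité $f_n' = -\frac{[lp]_{A^2}}{[lp-1]_{A^2}}\, p_{\overline{t(n)}}$ déjà établie. La seule différence est que vous explicitez la vérification combinatoire de la forme $[lp-1, r+1]$ du tableau conjugué, que le texte tient pour acquise.
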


\begin{proof}
Il suffit d'utiliser la proposition \ref{Prop: insertion des POIs}. On note $\lambda(n)$ la forme de $t(n)$, et $\overline{\lambda(n)}$ celle de $\overline{t(n)}$ lorsqu'il existe. 
\begin{enumerate}[(a)]
	\item Si $n \leq p-1$ ou $n = -1 \mod p$, alors $f_n = p_{t(n)}$ et $\omega(\lambda(n)) = n+1$. D'où $\Theta_{A^2,n}(f_n) = U_{n+1}(\alpha)$.
	\item Sinon, $f_n = p_{[t(n)]} = p_{t(n)} + p_{\overline{t(n)}}$ et $\omega(\lambda(n)) = n+1$, $\omega(\overline{\lambda(n)}) = 2lp-n-1$. D'où $\Theta_{A^2,n}(f_n) = U_{n+1}(\alpha)+U_{2lp-n-1}(\alpha)$. Enfin, on a vu que $f_n' = - \frac{[lp]_{A^2}}{[lp-1]_{A^2}} p_{\overline{t(n)}}$ dans la preuve de la proposition \ref{Prop: JWG}. Donc $\Theta_{A^2,n}(f_n') = - \frac{[lp]_{A^2}}{[lp-1]_{A^2}} U_{2lp-n-1}(\alpha)$.
\end{enumerate}
\end{proof}

\begin{Cor}
\label{Cor: insertion des JWG}
La famille $\{ \Theta_{A^2,n}(f_n) \; ; \; n \in \N^* \}$ est une $\Z[A,A^{-1}]$-base de l'espace d'écheveaux $K_A( \bar{D} \times \mathbb{S}^1, 0)$.
\end{Cor}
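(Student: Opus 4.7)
The plan is to exhibit the family $\{\Theta_{A^2,n}(f_n)\}_{n \in \N^*}$ (together with the class of the empty link) as an upper-triangular, unit-diagonal change of basis from the standard multi-curve basis of $K_A(\bar{D} \times \mathbb{S}^1, 0)$. The main input is Proposition \ref{Prop: insertion des JWG}, which gives explicit formulas in the polynomial $\C(A)[\alpha]$; everything else will be a triangularity check.

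First, I will invoke Proposition \ref{Prop: multi-courbes} for the solid torus $\bar{D} \times \mathbb{S}^1$: its skein module is $\Z[A,A^{-1}]$-free, with basis $\{\alpha^k\}_{k \geq 0}$ where $\alpha^0 = 1$ is the empty-link class and $\alpha^k$ denotes $k$ parallel copies of the core $\{0\}\times \mathbb{S}^1$. Next, I will record from the recurrence \eqref{Eqn: Chebychev} that the Chebyshev polynomial $U_s(x)$ is monic of degree $s-1$ in $x$ (immediate induction), with integer coefficients, so that every $\Theta_{A^2,n}(f_n)$ lies in $\Z[A,A^{-1}][\alpha]$.

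Then I will compute the degree of $\Theta_{A^2,n}(f_n)$ case by case, using Proposition \ref{Prop: insertion des JWG}. When $n \leq p-1$ or $n \equiv -1 \pmod p$, we have $\Theta_{A^2,n}(f_n) = U_{n+1}(\alpha)$, which is monic of degree $n$. Otherwise, writing $n = lp + r$ with $l \geq 1$ and $0 \leq r \leq p-2$, we get
\[
\Theta_{A^2,n}(f_n) = U_{n+1}(\alpha) + U_{2lp-n-1}(\alpha);
\]
here $2lp - n - 1 = lp - r - 1 \leq lp - 1 \leq n - 1$, so the correction term has degree at most $n-2 < n$, and the leading term of $\Theta_{A^2,n}(f_n)$ remains $\alpha^n$ with coefficient $1$. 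Thus for every $n \geq 1$, $\Theta_{A^2,n}(f_n)$ is a monic polynomial of degree $n$ in $\alpha$ with coefficients in $\Z[A,A^{-1}]$.

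To conclude, I will arrange the coordinates of $\Theta_{A^2,0}(1) = 1$ and $\Theta_{A^2,n}(f_n)$ for $n \geq 1$ in the basis $(\alpha^k)_{k \geq 0}$: the resulting infinite matrix is upper triangular with $1$'s on the diagonal, hence invertible over $\Z[A,A^{-1}]$. Consequently $\{\Theta_{A^2,n}(f_n)\}_{n \in \N^*}$ (completed by $1$ in degree $0$) is a $\Z[A,A^{-1}]$-basis of $K_A(\bar{D} \times \mathbb{S}^1, 0)$. The only subtle point is the strict inequality $2lp - n - 1 < n+1$ for the correction term, which is exactly the condition $l \leq n/p$ and follows directly from $l = \lfloor n/p \rfloor$; no serious obstacle beyond bookkeeping of cases.
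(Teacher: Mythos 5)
Your proof is correct and follows essentially the same route as the paper: invoke the multi-curve basis $\{\alpha^k\}$ of $K_A(\bar{D}\times\mathbb{S}^1,0)$ from Proposition \ref{Prop: multi-courbes}, then use Proposition \ref{Prop: insertion des JWG} together with the fact that $U_s$ is monic of degree $s-1$ with integer coefficients to get an upper-triangular, unit-diagonal change of basis. You merely make explicit the degree bookkeeping (in particular that $U_{2lp-n-1}$ has degree $<n$) and the degree-$0$ normalisation that the paper leaves implicit.
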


\begin{proof}
D'après la proposition \ref{Prop: multi-courbes}, la famille $\{ \alpha^n \: ; \; n \in \N^* \}$ est une $\Z[A,A^{-1}]$-base de $K_A( \bar{D} \times \mathbb{S}^1, 0)$. Or, la famille $\{ \Theta_{A^2,n}(f_n) \; ; \; n \in \N^* \}$ est formée de polynômes de Chebychev en $\alpha$ avec des indices croissants (cf. la proposition \ref{Prop: insertion des JWG}). Par construction, les polynômes de Chebychev sont échelonnés en degré et à coefficients relatifs (récurrences faciles). D'où le résultat.
\end{proof}

Enfin, on a une trace l'espace d'écheveaux $K_A(\bar{D} \times \mathbb{S}^1, 0)$ du tore solide définie par : \index{tr @$\tr$}
\begin{equation}
\label{Eqn: tr}
\tr : \left\{ \begin{array}{l}
	K_A(\bar{D} \times \mathbb{S}^1, 0) \longrightarrow \Z[A,A^{-1}] \\
	\insertion{a}{}{Tore} \longmapsto \insertion{a}{}{Empty}
	\end{array} \right. \hspace{-0.5cm} .
\end{equation}

\begin{Prop}
\label{Prop: traces des JWG}
Soient $n \in \N^*$ et $l \in \N$ le quotient de $n$ dans sa division euclidienne par $p$. On a :
\begin{align*}
\tr \left( \Theta_{A^2,n}(f_n) \right) &= \begin{cases}
	(-1)^n [n+1]_{A^2} \quad \text{ si } n \leq p-1 \text{ ou } n = -1 \mod p, \\
	(-1)^n [n+1]_{A^2} + (-1)^n [2lp-n-1]_{A^2} \quad \text{ sinon,}
	\end{cases} \\
\tr \left( \Theta_{A^2,n}(f_n') \right) &= (-1)^{n+1} \frac{[lp]_{A^2} [2lp-n-1]_{A^2}}{[lp-1]_{A^2}} \quad \text{ si } n \geq p \text{ et } n \neq -1 \mod p.
\end{align*}
\end{Prop}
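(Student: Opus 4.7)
The plan is to derive Proposition~\ref{Prop: traces des JWG} as a direct consequence of Proposition~\ref{Prop: insertion des JWG}, which already expresses $\Theta_{A^2,n}(f_n)$ and $\Theta_{A^2,n}(f_n')$ as explicit linear combinations of Chebyshev polynomials $U_m(\alpha)$. Applying the trace $\tr$ \eqref{Eqn: tr} on both sides, everything reduces to computing $\tr(U_m(\alpha))$ for $m \in \N^*$.

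First I would compute $\tr(\alpha^k)$ for $k \in \N$: closing up $k$ parallel copies of the core circle in the plane produces $k$ disjoint unknots, each of which evaluates to $-[2]_{A^2}$ by the skein relation \eqref{Eqn: echeveaux2}. Hence $\tr(\alpha^k) = (-[2]_{A^2})^k$, and by $\Z[A,A^{-1}]$-linearity of $\tr$ it follows that
$$\tr(U_m(\alpha)) = U_m(-[2]_{A^2}) \qquad \forall m \in \N.$$
Next I would evaluate $U_m(-[2]_{A^2})$ via the closed form of Remark~\ref{Rem: Chebychev}. Setting $t = -A^2$, one has $t + t^{-1} = -A^2 - A^{-2} = -[2]_{A^2}$, whence
$$U_m(-[2]_{A^2}) = \frac{(-A^2)^m - (-A^{-2})^m}{-A^2 + A^{-2}} = (-1)^{m+1}\,[m]_{A^2}.$$

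Finally I would substitute into the three formulas of Proposition~\ref{Prop: insertion des JWG}. For $n \leq p-1$ or $n \equiv -1 \bmod p$, this gives $\tr(\Theta_{A^2,n}(f_n)) = (-1)^{n+2}[n+1]_{A^2} = (-1)^n [n+1]_{A^2}$. Otherwise, using the parity identity $(-1)^{2lp-n} = (-1)^n$, we get
$$\tr(\Theta_{A^2,n}(f_n)) = (-1)^n [n+1]_{A^2} + (-1)^n [2lp-n-1]_{A^2},$$
and for the nilpotent part
$$\tr(\Theta_{A^2,n}(f_n')) = -\frac{[lp]_{A^2}}{[lp-1]_{A^2}}\,(-1)^{2lp-n}[2lp-n-1]_{A^2} = (-1)^{n+1}\frac{[lp]_{A^2}\,[2lp-n-1]_{A^2}}{[lp-1]_{A^2}}.$$

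There is no real obstacle here: the statement is a straightforward corollary of Proposition~\ref{Prop: insertion des JWG} once we observe that, in the solid torus, powers $\alpha^k$ are disjoint parallel circles whose traces in the plane factor multiplicatively. The only point requiring care is keeping track of signs in the Chebyshev evaluation, but this is entirely transparent after the substitution $t = -A^2$.
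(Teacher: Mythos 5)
Your proof is correct and follows essentially the same route as the paper: the paper's own argument is precisely that $\tr(R(\alpha)) = R(-A^2-A^{-2})$ by the skein relations, combined with Proposition \ref{Prop: insertion des JWG} and the closed form of Remark \ref{Rem: Chebychev}. Your explicit evaluation $U_m(-[2]_{A^2}) = (-1)^{m+1}[m]_{A^2}$ via $t=-A^2$ and the subsequent sign bookkeeping are exactly the details the paper leaves implicit.
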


\begin{proof}
D'après les relations d'écheveaux \eqref{Eqn: echeveaux2}, pour tout polynôme $R(x) \in \Z[\alpha]$, on a :
\[ \tr( R(\alpha)) = R(-A^2-A^{-2}) . \]
Il suffit alors d'utiliser la proposition \ref{Prop: insertion des JWG}) et la remarque \ref{Rem: Chebychev}.
\end{proof}

\begin{Rem}
Lorsque $A^2$ s'évalue en $q= e^{\frac{i \pi}{p}}$, les classes d'écheveaux :
\[\Theta_{A^2,n}(f_n), \; \Theta_{A^2,n}(f_n') \; ; \; \quad n \geq p, \]
coloriées par les idempotents et nilpotents de Jones-Wenzl évaluables d'indice $n \geq p$ sont toutes de trace nulle. Ce constat n'est pas surprenant puisque, dans notre construction, ces idempotents et nilpotents correspondent à des $\Uq$-modules de \emph{trace quantique} nulle (cf. par exemple \cite[§ 3.2, § A.1]{FGST06b}). Il s'agit d'une obstruction majeure pour étendre la construction des invariants de 3-variété de \cite{RT91} à tous les objets de $\Rep^{fd}_s$ (cf. l'introduction du chapitre I). Elle a été contournée dans \cite{CGPM14} grâce à l'introduction d'une \emph{trace modifiée} (voir aussi \cite{GPMT09}).
\end{Rem}
\chapter{Représentations projectives du groupe spécial linéaire}

La construction des invariants de Reshetikhin et Turaev s'accompagne de représentations linéaires projectives des groupes de difféotopie des surfaces (cf. \cite[§ 4.6]{RT91}). Dans le cas du \emph{tore}, le groupe de difféotopie s'identifie canoniquement au groupe spécial linéaire $\SL_2(\Z)$ (cf. par exemple [FM12, § 2]). On a alors une représentation projective de $\SL_2(\Z)$.

En parallèle, dans l'article \cite{LM94}, Lyubashenko et Majid mettent en évidence une représentation linéaire du groupe de difféotopie du tore \emph{épointé} sur toute algèbre de Hopf factorisable, tressée et enrubannée (cf. par exemple \cite[§ A.4]{FGST06b}). Pour les groupes quantiques quotients $\Uq$, associés à l'algèbre de lie $sl_2$ et aux racines $q$ de l'unité, Kerler détaille cette représentation dans \cite{Ker95} et s'intéresse plus particulièrement à la représentation induite sur le \emph{centre}. Il obtient ainsi une représentation de $\SL_2(\Z)$ sur le centre des groupes quantiques quotients de $sl_2$, qu'il conjecture être une extension non triviale de la représentation de $\SL_2(\Z)$ obtenue par la théorie des champs quantique topologique de \cite{RT91}. En 2005, Feigin, Gainutdinov, Semikhanov et Tipunin démontrent cette conjecture dans \cite{FGST06b} grâce à des outils introduits dans \cite{Lac03a}. En réalité, ces différents auteurs n'étudient pas exactement le même groupe quantique quotient de $sl_2$ (distinction nécessaire suivant la parité des racines de l'unité). Toutefois, les outils qu'ils mettent en œuvre se transposent sans difficultés majeures d'un groupe quantique quotient à l'autre.

Dans ce dernier chapitre, on propose un analogue topologique partiel de cette action de $\SL_2(\Z)$ due à \cite{LM94}. Comme dans les chapitres précédents, on concentre notre travail sur le groupe quantique restreint $\Uq$ associé à une racine $q$ primitive \emph{paire} de l'unité. On commence par des rappels sur la représentation de \cite{LM94} et sa représentation induite de $\SL_2(\Z)$ sur le centre de $\Uq$ (cf. \cite{FGST06b}). On détaille ensuite des actions remarquables sur l'espace d'écheveaux du tore solide colorié par les \emph{idempotents de Jones-Wenzl évaluables}. Ces calculs d'écheveaux permettent enfin de construire une analogie entre les éléments du centre de $\Uq$ et les classes d'écheveaux coloriés, pour laquelle une partie de l'action de $\SL_2(\Z)$ de \cite{LM94} s'interprète avec les actions de la vrille négative et du bouclage. \vspace{-0.1cm}

\minitoc

\section{Représentation modulaire sur le centre du groupe quantique restreint}
\label{section: SL2-rep}

Soit $(A,\mu,\nu,\Delta,\varepsilon,S,\mathbf{R}, \mathbf{v})$ une algèbre de Hopf  factorisable, tressée et enrubannée. La représentation linéaire projective à gauche de \cite{LM94} sur $A$ est définie par deux endomorphismes $\cS, \cT : A \rightarrow A$ \index{T8 @$\cT$}\index{S @$\cS$} tels que :
\begin{equation}
\label{Eqn: SL2-rep}
\forall x \in A \qquad \cS(x) =( id \otimes \bmu) \left( \mathbf{R}^{-1} (1 \otimes x) \mathbf{R}_{21}^{-1} \right), \quad
	\cT(x) = \mathbf{v} x ,
\end{equation}
où $\bmu$ est une intégrale à gauche de $A$ dont l'existence et l'unicité sont discutées dans \cite[§ 1]{Lyu95b}. Le lien avec le groupe de difféotopie du tore épointé se fait grâce aux \emph{relations modulaires} :
\[ (\cS \cT)^3 = \nu \cS^2, \qquad \cS^2 = S^{-1}, \]
où $\nu$ est une constante complexe (cf. \cite[Thm 1.1]{LM94} et \cite[§ 6.3]{Lyu95a}). 

On s'intéresse à cette représentation dans le cas où $A$ est le groupe quantique restreint $\Uq$ où $q = e^{\frac{i \pi}{p}}$ ($p \in \N^*$) ; pour lequel on a explicité les intégrales à gauche dans la proposition \eqref{Prop: integrales}, la $M$-matrice factorisable \eqref{Eqn: M-matrice}, et les éléments d'enrubannement possibles \eqref{Eqn: enrubannement}. D'après une étude générale des doubles quantiques dans \cite[§ 2.5]{Ker95}, il est connu que $\cT$ et $\cS$ induisent une représentation à gauche de $\SL_2(\Z)$ sur le centre $\Zf$ de $\Uq$.

\begin{Rem}
\label{Rem: SL2-rep1}
On peut montrer que $\cT$ et $\cS$ \eqref{Eqn: SL2-rep} induisent une représentation de $\SL_2(\Z)$ sur $\Zf$ avec des arguments simples, sans évoquer la construction catégorique sur le $\mathrm{coend}$ de \cite{LM94} comme dans \cite{Ker95}. Il est clair que $\cT$ est stable sur $\Zf$ puisque l'élément d'enrubannement est central. Pour l'endomorphisme $\cS$, on considère une des deux algèbres doubles tressées et enrubannées $(\bar{D}, \bar{\mathbf{R}}, \bar{\mathbf{v}}_\delta)$, $\delta \in \{0,1\}$, contenant une sous-algèbre de Hopf isomorphe à $\Uq$ (cf. la sous-section \ref{subsection: enrubannement}). On fixe une écriture de sa $R$-matrice :
\[ \bar{\mathbf{R}} = \sum_{i=1}^k a_i \otimes b_i \quad ; \quad a_i, b_i \in \bar{D}. \]
Alors, son inverse s'écrit :
\[ \bar{\mathbf{R}}^{-1} = \sum_{i=1}^k S(a_i) \otimes b_i = \sum_{i=1}^k a_i \otimes S^{-1}(b_i) \]
(cf. par exemple \cite[Thm VIII.2.4]{Kas95}). Soient $\zeta \in \C$ et $\bmu_\zeta^l$ l'intégrale à gauche correspondante de $\Uq$ (cf. la proposition \ref{Prop: integrales}). Pour tout $z \in \Zf$, on a donc :
\begin{align*}
\cS(z) &= ( id \otimes \bmu^l_\zeta) \left( \bar{\mathbf{R}}^{-1} (1 \otimes z) \bar{\mathbf{R}}_{21}^{-1} \right)
	= ( id \otimes \bmu^l_\zeta) \left( a_i b_j \otimes S^{-1}(b_i) z S(a_j) \right) \\
	&= ( id \otimes \bmu^l_\zeta) \left( a_i b_j \otimes S^{-1}(b_i) S(a_j) z \right) 
	= \bmu^l_\zeta \circ S^{-1} \left( S(z) S^2(a_j) b_i \right) a_i b_j.
\end{align*}
D'après la remarque \ref{Rem: integrales} et la proposition \ref{Prop: integrales} sur les intégrales, il existe un unique complexe $\zeta'$ tel que $\bmu^l_\zeta \circ S^{-1} = \bmu^r_{\zeta'}$. On considère le morphisme de Radford $\hbphi : \Ch^l \rightarrow \Zf$ \eqref{Eqn: Radford1} dont la bijection réciproque est :
\[ \hbphi^{-1} : \begin{cases}
	\Zf \longrightarrow \Ch^l \\
	z \longmapsto \bmu^r_{\zeta'}(S(z) ?)
	\end{cases} , \]
où le symbole $?$ désigne la place de la variable (cf. la remarque \ref{Rem: Radford}). On obtient alors :
\begin{align*}
\cS(z) &= \bmu^r_{\zeta'} \left( S(z) S^2(a_j) b_i \right) a_i b_j
	= \hbphi^{-1}(z)(S^2(a_j) b_i) a_i b_j 
	\overset{\ref{Prop: stabilisateurs}}{=} \hbphi^{-1}(z) \left( b_i a_j \right) a_i b_j \\
	&= \left( \hbphi^{-1}(z) \otimes id \right) \left( b_i a_j \otimes a_i b_j \right) 
	= \left( \hbphi^{-1}(z) \otimes id \right) \bar{\mathbf{R}}_{21} \bar{\mathbf{R}}
	=  \bchi \circ \hbphi^{-1} (z),
\end{align*}
où $\bchi : \Ch^l \rightarrow \Zf$ \eqref{Eqn: Drinfeld1} est le morphisme de Drinfeld. On en déduit que $\cS$ est stable sur $\Zf$. Enfin, on sait que $\Uq$ possède un élément de balancement généralisé (cf. le corollaire \ref{Cor: balancement}), donc $S_{\vert \Zf} = id$. Par conséquent, les deux endomorphismes $\cS_{\vert \Zf}, \cT_{\vert \Zf}$ vérifient les relations modulaires :
\[ (\cS_{\vert \Zf} \cT_{\vert \Zf})^3 = \nu \cS_{\vert \Zf}^2, \qquad \cS_{\vert \Zf}^2 = id. \]
D'où le représentation de $\SL_2(\Z)$ sur $\Zf$.
\end{Rem}

On commence par décrire une partie de l'action de $\cT$ et $\cS$ sur la base canonique du centre $\{ e_s \; ; \; 0 \leq s \leq p \} \cup \{ w^\pm_s \; ; \; 1  \leq s \leq p-1 \}$ (cf. la proposition \ref{Prop: centre}).

\begin{Prop} 
\label{Prop: SL2-rep}
Soit $\delta \in \{0,1\}$. On munit $\Uq$ de l'élément d'enrubannement $\mathbf{v}_\delta$ \eqref{Eqn: enrubannement} et de la représentation de \cite{LM94} \eqref{Eqn: SL2-rep}.
\begin{enumerate}[(i)]
	\item L'action de $\cT$ sur le centre $\Zf$ de $\Uq$ est donnée par :
	\begin{align*}
		\cT(e_s) &= \begin{multlined}[t]
			(-1)^{\delta(s-1)} q^{-\frac{s^2-1}{2}} e_s \\
			+ \delta_{1 \leq s \leq p-1} (-1)^{\delta(s-1)}  (q-q^{-1}) q^{-\frac{s^2-1}{2}} \left( \frac{p-s}{[s]} w_s^+ - \frac{s}{[s]} w_s^- \right), \quad
			1 \leq s \leq p,  
			\end{multlined} \\
		\cT(w_s^\pm) &= (-1)^{\delta(s-1)} q^{-\frac{s^2-1}{2}} w_s^\pm, \quad 1 \leq s \leq p-1.
	\end{align*}
	\item L'action de $\cS$ sur le centre $\Zf$ de $\Uq$ vérifie :
	\begin{align*}
		\cS(e_0) &= \frac{(-1)^{p-\delta}}{\zeta 4p ([p-1]!)^2} \sum_{j=0}^p U_{2p}((-1)^\delta \widehat{\beta}_j) e_j \\
	& \qquad + \frac{(-1)^{p}}{\zeta 4p ([p-1]!)^2} (q-q^{-1})^2 \sum_{s=1}^{p-1} U_{2p}'((-1)^\delta \widehat{\beta}_j) \left( w^+_j + w^-_j \right), \\
		\cS(e_p) &= \frac{(-1)^{(\delta-1)(p-1)}}{\zeta 2p ([p-1]!)^2} \sum_{j=0}^p U_p((-1)^\delta\widehat{\beta}_j) e_j \\
		& \qquad + \frac{(-1)^{(\delta-1)p+1}}{\zeta 2p ([p-1]!)^2} (q-q^{-1})^2 \sum_{j=1}^{p-1} U_p'((-1)^\delta\widehat{\beta}_s) \left( w^+_s + w^-_s \right) \\
		\cS(w_s^+) &= \begin{multlined}[t]
			\frac{(-1)^{p+\delta(s-1)}}{\zeta 2p ([p-1]!)^2} [s]^2 \sum_{j=0}^p U_s((-1)^\delta\widehat{\beta}_j) e_j \\
			+ \frac{(-1)^{p+\delta s}}{\zeta 2p ([p-1]!)^2} [s]^2 (q-q^{-1})^2 \sum_{j=1}^{p-1} U_s'((-1)^\delta\widehat{\beta}_j) \left( w^+_j + w^-_j \right), \\
			1 \leq s \leq p-1, 
			\end{multlined} \\
		\cS(w_s^-) &= \begin{multlined}[t]
			\frac{(-1)^{\delta(p-s-1)}}{\zeta 4p ([p-1]!)^2}[s]^2 \sum_{j=0}^p (U_{2p-s}-U_{s})((-1)^\delta \widehat{\beta}_j) e_j \\
			+ \frac{(-1)^{\delta(p-s)}}{\zeta 4p ([p-1]!)^2}[s]^2 (q-q^{-1})^2 \sum_{s=1}^{p-1}(U_{2p-s}'-U_{s}')((-1)^\delta \widehat{\beta}_j) \left( w^+_j + w^-_j \right), \\
			1 \leq s \leq p-1,
			\end{multlined}
	\end{align*}
	où $\zeta \in \C$, $(U_s(x))_{s \in \mathbb{N}}$ désigne les polynômes de Chebychev et, pour tout $s \in \mathbb{N}$, $\widehat{\beta}_s := q^s+q^{-s}$.
\end{enumerate}
\end{Prop}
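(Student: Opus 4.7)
The plan is to treat the two assertions separately, since $\cT$ and $\cS$ have very different descriptions on $\Zf$.

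For assertion (i), I start from the explicit expansion of the ribbon element in the canonical basis given in Proposition \ref{Prop: enrubannement}:
\[ \mathbf{v}_{\delta} = \sum_{s=0}^p (-1)^{\delta(s-1)} q^{-\frac{s^2-1}{2}} e_s + (q-q^{-1}) \sum_{s=1}^{p-1} (-1)^{\delta(s-1)} q^{-\frac{s^2-1}{2}} \left( \frac{p-s}{[s]} w^+_s - \frac{s}{[s]} w^-_s \right). \]
Since $\cT(x) = \mathbf{v}_\delta x$ by definition \eqref{Eqn: SL2-rep}, the computation reduces to multiplying this expansion by each basis element $e_s$ or $w^\pm_s$ and collecting terms using the multiplication table of Proposition \ref{Prop: centre}: $e_s e_{s'} = \delta_{s,s'} e_s$, $e_s w^\pm_{s'} = \delta_{s,s'} w^\pm_{s'}$, and $w^\pm_s w^{\pm'}_{s'} = 0$. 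The orthogonality relations kill every cross-term, so only the coefficient indexed by $s$ survives in each product; this immediately yields the stated formulas. This step is essentially mechanical.

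For assertion (ii), the key input is the identity $\cS = \bchi \circ \hbphi^{-1}$ established in Remark \ref{Rem: SL2-rep1}, combined with the factorisations $\bchi^\delta = \bchi \circ \mathrm{qch}^\delta$ and $\hbphi^\delta = \hbphi \circ \mathrm{qch}^\delta$. For any class $[\X^\alpha(s)] \in \C \otimes_\Z \G$, this gives
\[ \cS\bigl(\widehat{\phi}^\alpha_\delta(s)\bigr) = \bchi \circ \hbphi^{-1} \bigl(\hbphi^\delta([\X^\alpha(s)])\bigr) = \bchi^\delta\bigl([\X^\alpha(s)]\bigr) = \chi^\alpha_\delta(s). \]
The strategy is then to invert the Radford formulas of Proposition \ref{Prop: Radford} to express each basis element in the socle $\Rf_{2p} = \Vect_\C(e_0, e_p, w^\pm_s)$ as a scalar multiple of some $\widehat{\phi}^\alpha_\delta(s)$, then read off $\cS$ of that element as the corresponding scalar multiple of $\chi^\alpha_\delta(s)$, and finally substitute the Drinfeld formulas of Proposition \ref{Prop: Drinfeld} to re-expand in the canonical basis. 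Concretely, Proposition \ref{Prop: Radford} gives
\[ w^+_s = \tfrac{(-1)^{p+\delta(s-1)} [s]^2}{\zeta \, 2p ([p-1]!)^2}\, \hbphi^\delta([\X^+(s)]), \qquad w^-_{s} = \tfrac{(-1)^{\delta(p-s-1)} [p-s]^2}{\zeta \, 2p ([p-1]!)^2}\, \hbphi^\delta([\X^-(p-s)]), \]
and analogous identities for $e_0$, $e_p$; applying $\cS$ and substituting the expansions of $\chi^+_\delta(s)$ and $\chi^-_\delta(p-s)$ from Proposition \ref{Prop: Drinfeld} (together with the symmetry $U_{p+s} - U_{p-s}$ rewritten as $U_{2p-s}-U_s$ via the identity $U_{2p-k} = -U_k$ modulo the minimal polynomial, which follows from Corollary \ref{Cor: Grothendieck}) produces the claimed formulas.

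The main technical obstacle I expect is bookkeeping of the signs $(-1)^{\delta(\cdot)}$ and of the $\zeta$-dependence: the integral normalisation chosen in Proposition \ref{Prop: integrales} propagates through both $\hbphi^\delta$ and the inversion step, and one must check that the signs produced by the Radford inversion combine correctly with those produced by the Drinfeld expansion to give the stated exponents. A secondary but more delicate point is to verify that the image of the $\widehat{\phi}^-_\delta(s)$ component is really expressed via $U_{2p-s} - U_s$ rather than $U_{p+s} - U_{p-s}$: the two are equal in $\C \otimes_\Z \G$ by Proposition \ref{Prop: Grothendieck}, but one must pick the representative whose derivative also agrees on the canonical basis, which requires using the minimal polynomial relation from Corollary \ref{Cor: Grothendieck} at the level of derivatives (equivalently, Remark \ref{Rem: poly en C}). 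Once these normalisations are pinned down, the rest is a direct substitution.
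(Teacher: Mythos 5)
Votre démonstration suit exactement la même démarche que celle du texte : pour (i), la multiplication par $\mathbf{v}_\delta$ développé dans la base canonique (proposition \ref{Prop: enrubannement}) combinée aux relations d'orthogonalité de la proposition \ref{Prop: centre}, et pour (ii), l'identité $\cS = \bchi \circ \hbphi^{-1}$ de la remarque \ref{Rem: SL2-rep1}, l'inversion des formules de Radford (proposition \ref{Prop: Radford}) puis la substitution des formules de Drinfeld (proposition \ref{Prop: Drinfeld}). Seule remarque mineure : le passage de $U_{p+s}-U_{p-s}$ à $U_{2p-s}-U_s$ est une simple réindexation $s \mapsto p-s$ dans $\chi^-_\delta(p-s)$ et ne nécessite pas la relation $U_{2p-k}=-U_k$ modulo le polynôme minimal que vous invoquez.
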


\begin{proof}
\begin{enumerate}[(i)]
	\item Par construction, l'action de $\cT$ est donnée par la multiplication par l'élément d'enrubannement $\mathbf{v}_\delta$ \eqref{Eqn: SL2-rep}. Or, d'après la proposition \ref{Prop: enrubannement}, on a :
	\begin{gather*}
	\mathbf{v}_{\delta} =
		\sum_{s=0}^p (-1)^{\delta(s-1)} q^{-\frac{s^2-1}{2}} e_s 
		+ (q-q^{-1}) \sum_{s=1}^{p-1} (-1)^{\delta(s-1)} q^{-\frac{s^2-1}{2}} \left( \frac{p-s}{[s]} w^+_s - \frac{s}{[s]} w^-_s \right) .
	\end{gather*}
	D'où les expressions de $\cT(e_s)$, $0 \leq s \leq p$, et $\cT(w^\pm_s)$, $1  \leq s \leq p-1$.
	\item D'après la remarque \ref{Rem: SL2-rep1}, l'action de $\cS$ sur $\Zf$ est donnée par :
	\begin{equation} \tag{1}
	\forall z \in \Zf \qquad \cS(z) = \bchi \circ \hbphi^{-1}(z),
	\end{equation}
	où $\hbphi : \Ch^l \rightarrow \Zf$ \eqref{Eqn: Radford1} est le morphisme de Radford et $\bchi : \Ch^l \rightarrow \Zf$ \eqref{Eqn: Drinfeld1} celui de Drinfeld. On considère les morphismes $\hbphi^\delta$ \eqref{Eqn: Radford2} et $\bchi^\delta$ \eqref{Eqn: Drinfeld2} construits à partir de $\hbphi$ et $\bchi$ respectivement. Comme dans les sous-sections \ref{subsection: Radford} et \ref{subsection: Drinfeld}, on note :
	\begin{gather*}
	\forall \alpha \in \{+,-\} \quad \forall s \in \{1,...,p\} \qquad 
	\widehat{\phi}^\alpha_\delta(s) := \hbphi^\delta \left( [\X^\alpha(s)] \right), \quad
	\chi^\alpha_\delta(s) := \bchi^\delta \left( [\X^\alpha(s)] \right),
	\end{gather*}
	où, pour tout $\Uq$-module à gauche $\X$, $[\X]$ désigne la classe d'équivalence de $\X$ dans le groupe de Grothendieck $\G$ de $\Uq$. Alors, d'après la proposition \ref{Prop: Radford}, on sait que :
	\begin{align*}
	& w^+_s = \frac{(-1)^{p+\delta(s-1)}}{\zeta 2p ([p-1]!)^2} [s]^2 \widehat{\phi}_\delta^+(s), && 1 \leq s \leq p-1, \\
	& w^-_{p-s} = \frac{(-1)^{\delta(p-s-1)}}{\zeta 2p ([p-1]!)^2}[s]^2 \widehat{\phi}_\delta^-(s), && 1 \leq s \leq p-1, \\
	& e_p = \frac{(-1)^{(\delta-1)(p-1)}}{\zeta 2p ([p-1]!)^2} \widehat{\phi}_\delta^+(p), \\
	& e_0 = \frac{(-1)^{p-\delta}}{\zeta 2p ([p-1]!)^2} \widehat{\phi}_\delta^-(p),
	\end{align*}
	où $\zeta \in \C$ dépend du choix de la co-intégrale bilatère $\mathbf{c}_\zeta$ \eqref{Prop: integrales} dans la construction du morphisme de Radford $\hbphi$. On en déduit les expressions :
	\begin{align*}
	\cS(w^+_s) &= \frac{(-1)^{p+\delta(s-1)}}{\zeta 2p ([p-1]!)^2} [s]^2 \cS(\widehat{\phi}_\delta^+(s))
		\overset{(1)}{=} \frac{(-1)^{p+\delta(s-1)}}{\zeta 2p ([p-1]!)^2} [s]^2 \chi^+_\delta(s), 
		\quad 1 \leq s \leq p-1, \\
	\cS(w^-_{p-s}) &= \frac{(-1)^{\delta(p-s-1)}}{\zeta 2p ([p-1]!)^2}[s]^2 \cS(\widehat{\phi}_\delta^-(s))
		\overset{(1)}{=} \frac{(-1)^{\delta(p-s-1)}}{\zeta 2p ([p-1]!)^2}[s]^2 \chi_\delta^-(s)), 
		\quad 1 \leq s \leq p-1, \\
	\cS(e_p) &= \frac{(-1)^{(\delta-1)(p-1)}}{\zeta 2p ([p-1]!)^2} \cS(\widehat{\phi}_\delta^+(p)) \overset{(1)}{=} \frac{(-1)^{(\delta-1)(p-1)}}{\zeta 2p ([p-1]!)^2} \chi_\delta^+(p), \\
	\cS(e_0) &= \frac{(-1)^{p-\delta}}{\zeta 2p ([p-1]!)^2} \cS(\widehat{\phi}_\delta^-(p)) \overset{(1)}{=} \frac{(-1)^{p-\delta}}{\zeta 2p ([p-1]!)^2} \chi_\delta^-(p).
	\end{align*}
	Or, d'après la proposition \ref{Prop: Drinfeld}, on sait aussi que pour tout $s \in \{1,...,p\}$ :
	\begin{align*}
	\chi_\delta^+(s) &=
		\sum_{j=0}^p U_s((-1)^\delta\widehat{\beta}_j) e_j
		+ (-1)^\delta (q-q^{-1})^2 \sum_{j=1}^{p-1} U_s'((-1)^\delta\widehat{\beta}_j) \left( w^+_j + w^-_j \right), \\
	\chi_\delta^-(s) &= \begin{multlined}[t]
		\frac{1}{2} \sum_{j=0}^p (U_{p+s}-U_{p-s})((-1)^\delta \widehat{\beta}_j) e_j \\
		+ (-1)^\delta \frac{(q-q^{-1})^2}{2} \sum_{s=1}^{p-1}(U_{p+s}'-U_{p-s}')((-1)^\delta \widehat{\beta}_j) \left( w^+_j + w^-_j \right),
		\end{multlined}
	\end{align*}
	D'où les expressions de $\cS(e_0)$, $\cS(e_p)$, et $\cS(w^\pm_s)$, $1  \leq s \leq p-1$.
\end{enumerate}
\end{proof}

\begin{Rem}
\label{Rem: SL2-rep2}
Dans la proposition \ref{Prop: SL2-rep}, les expressions de $\cS(e_s)$, $1 \leq s \leq p-1$, ne sont pas données car elles sont très compliquées et peu exploitables. Pour les obtenir, on exprime les vecteurs $e_s$, $1 \leq s \leq p-1$, comme des polynômes en $(-1)^\delta (q-q^{-1})^2 C$ conformément à la remarque \ref{Rem: poly en C}. On décompose ensuite ces polynômes dans la famille libre $\left\{ \chi^\pm_\delta(s') \; ; \; 1 \leq s' \leq p \right\}$ (cf. la proposition \ref{Prop: Drinfeld}). D'après la remarque \ref{Rem: SL2-rep1}, on a :
\[ \forall s \in \{1,...,p\} \qquad \cS( \widehat{\phi}_\delta^\pm(s) ) = \chi_\delta^\pm(s) \quad \Longrightarrow \quad \cS( \chi_\delta^\pm(s) ) = \widehat{\phi}_\delta^\pm(s) . \]
On obtient ainsi les expressions de $\cS(e_s)$, $1 \leq s \leq p-1$, dans $\left\{ \widehat{\phi}^\pm_\delta(s') \; ; \; 1 \leq s' \leq p \right\}$ dont on connaît le lien avec la base canonique du centre (cf. la proposition \ref{Prop: Radford}).
\end{Rem}

En outre, en choisissant une autre base du centre construite à partir des sous-familles libres $\left\{ \widehat{\phi}^\pm_\delta(s) \; ; \; 1 \leq s \leq p \right\}$ et $\left\{ \chi^\pm_\delta(s) \; ; \; 1 \leq s \leq p \right\}$ (cf. les sous-sections \ref{subsection: Radford} et \ref{subsection: Drinfeld}), on dégage des réductions de Jordan remarquables pour les actions de $\cT$ et $\cS$. Plus précisément, ces réductions fournissent une décomposition de la $\SL_2(\Z)$-représentation à gauche sur $\Zf$ comme une extension non triviale de la $\SL_2(\Z)$-représentation obtenue par la théorie des champs quantique topologique (TQFT) \index{TQFT@ TQFT} de \cite{RT91} (cf. par exemple \cite[§ II.3.9, § IV.5.4]{Tur94}).

\begin{Thm}[{\cite[Thm 5.2]{FGST06b}}]
\label{Thm: SL2-rep}
La $SL_2(\mathbb{Z})$-représentation à gauche sur le centre $\Zf$ de $\Uq$ de \cite{LM94} se décompose en la somme directe $\mathfrak{Z} \cong \PIM_{p+1} \oplus \left( \C^2 \otimes \V_{p-1} \right)$ où :
\begin{enumerate}[(i)]
	\item le $\SL_2(\Z)$-module $\PIM_{p+1}$ est simple de $\C$-dimension $p+1$,
	\item le $\SL_2(\Z)$-module $\V_{p-1}$ est semi-simple de $\C$-dimension $p-1$ et isomorphe au $\SL_2(\Z)$-module obtenu par la TQFT de \cite{RT91},
	\item et $\C^2$ désigne la $\SL_2(\Z)$-représentation standard définie par les matrices :
	\[ T = \begin{pmatrix} 1 & 1 \\ 0 & 1 \end{pmatrix}, \qquad S = \begin{pmatrix} 0 & -1 \\ 0 & 1 \end{pmatrix} . \]
\end{enumerate}
\end{Thm}

\section{Actions remarquables sur l'espace d'écheveaux du tore solide}
\label{section: actions topo}

Compte-tenu du théorème \ref{Thm: SL2-rep} de décomposition de la $\SL_2(\Z)$-représentation de \cite{LM94} sur le centre $\Zf$ de $\Uq$, on cherche un analogue topologique.
Pour cela, on considère l'espace d'écheveaux $K_A(\bar{D} \times \mathbb{S}^1, 0)$ du tore solide et on étudie les actions de la vrille et du bouclage. Des résultats préliminaires sur les actions de l'enlacement seront nécessaires. On obtiendra des généralisations de résultats de Lickorish donnés dans \cite[§ 5]{Lic92}.

\subsection{Actions de l'enlacement}
\label{subsection: enlacement}

On commence par étudier les actions de l'\emph{enlacement positif} sur l'espace d'écheveaux $K_A(\bar{D} \times \mathbb{S}^1, 0)$ du tore solide. Elles sont données par les applications linéaires :
\[ \forall i \in \{1,...,n-1\} \qquad \mathbf{E}_i :
	\begin{cases}
	K_A(\bar{D} \times \mathbb{S}^1) \longrightarrow K_A(\bar{D} \times \mathbb{S}^1) \\
	\insertion{a}{n}{Tore} \longmapsto \enlac{a}{n-i \;}{Tore} .
	\end{cases} \] 
L'enlacement négatif étant l'image miroir du positive, ses actions se déduisent de celle de l'enlacement positif en remplaçant $A$ par $A^{-1}$ (cf. les relations d'écheveaux \eqref{Eqn: echeveaux2}). 

Comme $\left\{ \Theta_{A^2,n}(f_n) \; ; \; n \in \N^* \right\}$ est une $\Z[A,A^{-1}]$-base de $K_A(\bar{D} \times \mathbb{S}^1, 0)$ (cf. le corollaire \ref{Cor: insertion des JWG}), pour tout $i \in \{1,...,n-1\}$, on calcule l'image de $\mathbf{E}_i$ sur ces classes d'écheveaux coloriés par les idempotents de Jones-Wenzl évaluables. Pour cela, il suffit de calculer les classes d'écheveaux des coupons : \index{E@ $\mathbf{E}_{i,n-i}$}
\begin{equation}
\label{Eqn: enlacement}
\mathbf{E}_{i,n-i} := \begin{tikzbox}
	\draw (0.5,0) -- (1,0) node[above] {$\scriptstyle i$} -- (2,0) node[above] {$\scriptstyle n-i$} -- (2.5,0);
	\braid s_1^{-1} s_1^{-1} ;
	\draw (0.5,-2.5) -- (1,-2.5) node[below] {$\scriptstyle i$} -- (2,-2.5) node[below] {$\scriptstyle n-i$} -- (2.5,-2.5);
\end{tikzbox} ; \quad n \in \N^*, \quad i \in \{1,...,n-1\},
\end{equation}
où la barre horizontale supérieure (resp. inférieure) désigne l'insertion de l'idempotent de Jones-Wenzl évaluable dont l'indice correspond au nombre total de brin(s) sortant(s) (resp. entrant(s)). On commence par donner trois lemmes techniques.

\begin{Lemme}
\label{Lemme: enlacement1}
Soient $n \in \N^*$, $l \in \N$ le quotient de $n$ dans sa division euclidienne par $p$, et $i \in \{1,...,n-1\}$.
\begin{enumerate}[(i)]
	\item Si $n \leq p-1$ ou $n = -1 \mod p$, alors :
	\[ \mathbf{E}_{i,n-i} = \begin{tikzbox}
		\draw (0.5,0) -- (1,0) node[above] {$\scriptstyle i$} -- (2,0) node[above] {$\scriptstyle n-i$} -- (2.5,0);
		\braid s_1^{-1} s_1^{-1} ;
		\draw (0.5,-2.5) -- (1,-2.5) node[below] {$\scriptstyle i$} -- (2,-2.5) node[below] {$\scriptstyle n-i$} -- (2.5,-2.5);
	\end{tikzbox}
		= A^{2(n-i)i} \begin{tikzbox}
	\draw (0.5,0) -- (1,0) node[above] {$\scriptstyle n$} -- (1.5,0) ;
	\draw (1,0) -- (1,-2.5) ;
	\draw (0.5,-2.5) -- (1,-2.5) node[below] {$\scriptstyle n$} -- (1.5,-2.5) ;
\end{tikzbox} . \]
	\item Si $n \geq p$ et $i \leq lp-1 < n$, alors :
	\[ \mathbf{E}_{i,n-i} = \begin{tikzbox}
		\draw (0.5,0) -- (1,0) node[above] {$\scriptstyle i$} -- (2,0) node[above] {$\scriptstyle n-i$} -- (2.5,0);
		\braid s_1^{-1} s_1^{-1} ;
		\draw (0.5,-2.5) -- (1,-2.5) node[below] {$\scriptstyle i$} -- (2,-2.5) node[below] {$\scriptstyle n-i$} -- (2.5,-2.5);
	\end{tikzbox}
		= A^{2(lp-1-i)i} \begin{tikzbox}
		\draw (0.5,0) -- (1,0) node[above] {$\scriptstyle lp-1-i \qquad$} -- (1.5,0) node[above] {$\scriptstyle i$} -- (2.5,0) node[above] {$\scriptstyle n-lp+1$} -- (3,0);
		\draw (1,0) -- (1,-2.5);
		\braid at (1.5,0) s_1^{-1} s_1^{-1} ;
		\draw (0.5,-2.5) -- (1,-2.5) node[below] {$\scriptstyle lp-1-i \qquad$} -- (1.5,-2.5) node[below] {$\scriptstyle i$} -- (2.5,-2.5) node[below] {$\scriptstyle n-lp+1$} -- (3,-2.5);
	\end{tikzbox} . \]
	\item Si $n \geq p$ et $lp-1 \leq i < n$, alors :
	\[ \mathbf{E}_{i,n-i} = \begin{tikzbox}
		\draw (0.5,0) -- (1,0) node[above] {$\scriptstyle i$} -- (2,0) node[above] {$\scriptstyle n-i$} -- (2.5,0);
		\braid s_1^{-1} s_1^{-1} ;
		\draw (0.5,-2.5) -- (1,-2.5) node[below] {$\scriptstyle i$} -- (2,-2.5) node[below] {$\scriptstyle n-i$} -- (2.5,-2.5);
	\end{tikzbox}
		= A^{2(n-i)(i-lp+1)} \begin{tikzbox}
		\draw (0.5,0) -- (1,0) node[above] {$\scriptstyle lp-1$} -- (2,0) node[above] {$\scriptstyle n-i$} -- (2.5,0) node[above] {$\scriptstyle \qquad i-lp+1$} -- (3,0);
		\draw (2.5,0) -- (2.5,-2.5);
		\braid s_1^{-1} s_1^{-1} ;
		\draw (0.5,-2.5) -- (1,-2.5) node[below] {$\scriptstyle lp-1$} -- (2,-2.5) node[below] {$\scriptstyle n-i$} -- (2.5,-2.5) node[below] {$\scriptstyle \qquad i-lp+1$} -- (3,-2.5);
	\end{tikzbox} . \]
\end{enumerate}
\end{Lemme}

\begin{proof}
\begin{enumerate}[(i)]
	\item D'après la proposition \ref{Prop: JWG}, pour tout $i \in \{1,...,n-1\}$, on a $h_i f_n = 0 = f_n h_i$. Aussi, on dénoue chacun des $i(n-i)$ croisements supérieurs et inférieurs avec la composante $D_0$ des relations d'écheveaux \eqref{Eqn: echeveaux2} :
	\[ \mathbf{E}_{i,n-i} = \begin{tikzbox}
		\draw (0.5,0) -- (1,0) node[above] {$\scriptstyle i$} -- (2,0) node[above] {$\scriptstyle n-i$} -- (2.5,0) ;
		\braid s_1^{-1} s_1^{-1} ;
		\draw[green,->] (1.9,-0.6) arc (20:-20:0.5) ;
		\draw[green,->] (1.1,-0.9) arc (200:160:0.5) ;
		\draw[green,->] (1.9,-1.6) arc (20:-20:0.5) ;
		\draw[green,->] (1.1,-1.9) arc (200:160:0.5) ;
		\draw (0.5,-2.5) -- (1,-2.5) node[below] {$\scriptstyle i$} -- (2,-2.5) node[below] {$\scriptstyle n-i$} -- (2.5,-2.5);
	\end{tikzbox}
		= A^{2(n-i)i} \begin{tikzbox}
		\draw (0.5,0) -- (1,0) node[above] {$\scriptstyle n-i$} -- (2,0) node[above] {$\scriptstyle i$} -- (2.5,0) ;
		\draw (1,0) -- (1,-2.5) ;
		\draw (2,0) -- (2,-2.5) ;
		\draw (0.5,-2.5) -- (1,-2.5) node[below] {$\scriptstyle n-i$} -- (2,-2.5) node[below] {$\scriptstyle i$} -- (2.5,-2.5) ;
	\end{tikzbox}
			= A^{2(n-i)i} \begin{tikzbox}
		\draw (0.5,0) -- (1,0) node[above] {$\scriptstyle n$} -- (1.5,0) ;
		\draw (1,0) -- (1,-2.5) ;
		\draw (0.5,-2.5) -- (1,-2.5) node[below] {$\scriptstyle n$} -- (1.5,-2.5) ;
	\end{tikzbox} . \]
	\item D'après la proposition \ref{Prop: JWG}, pour tout $i \in \{1,...,lp-2\}$, on a $h_i f_n = 0 = f_n h_i$. On procède comme en $(i)$ sur les $i(lp-1-i)$ croisements de gauches dans les $i(n-i)$ croisements supérieurs et inférieurs :
	\begin{align*}
	\mathbf{E}_{i,n-i} &= \begin{tikzbox}
		\draw (0.5,0) -- (1,0) node[above] {$\scriptstyle i$} -- (2,0) node[above] {$\scriptstyle n-i$} -- (2.5,0);
		\braid s_1^{-1} s_1^{-1} ;
		\draw (0.5,-2.5) -- (1,-2.5) node[below] {$\scriptstyle i$} -- (2,-2.5) node[below] {$\scriptstyle n-i$} -- (2.5,-2.5);
	\end{tikzbox}
		= \begin{tikzbox}
		\draw (0.5,0) -- (1,0) node[above] {$\scriptstyle i$} -- (2,0) node[above] {$\scriptstyle lp-1-i$} -- (3,0) node[above] {$\scriptstyle \qquad n-lp+1$} -- (3.5,0);
		\braid[height=0.5cm] s_1^{-1} s_2^{-1} s_2^{-1} s_1^{-1} ;
		\draw[green,->] (1.9,-0.4) arc (10:-10:0.5) ;
		\draw[green,->] (1.1,-0.6) arc (190:170:0.5) ;
		\draw[green,->] (1.9,-1.9) arc (10:-10:0.5) ;
		\draw[green,->] (1.1,-2.1) arc (190:170:0.5) ;
		\draw (0.5,-2.5) -- (1,-2.5) node[below] {$\scriptstyle i$} -- (2,-2.5) node[below] {$\scriptstyle lp-1-i$} -- (3,-2.5) node[below] {$\scriptstyle \qquad n-lp+1$} -- (3.5,-2.5);
	\end{tikzbox}
		= A^{2(lp-1-i)i} \begin{tikzbox}
		\draw (0.5,0) -- (1,0) node[above] {$\scriptstyle lp-1-i$} -- (2,0) node[above] {$\scriptstyle i$} -- (3,0) node[above] {$\scriptstyle n-lp+1$} -- (3.5,0);
		\draw (1,0) -- (1,-2.5);
		\braid s_2^{-1} s_2^{-1} ;
		\draw (0.5,-2.5) -- (1,-2.5) node[below] {$\scriptstyle lp-1-i$} -- (2,-2.5) node[below] {$\scriptstyle i$} -- (3,-2.5) node[below] {$\scriptstyle n-lp+1$} -- (3.5,-2.5);
	\end{tikzbox} .
	\end{align*}
	\item D'après la proposition \ref{Prop: JWG}, pour tout $i \in \{lp,...,n-1\}$, on a $h_i f_n = 0 = f_n h_i$. On procède comme en $(i)$ sur les $(i-lp+1)(n-i)$ croisements de droite dans les $i(n-i)$ croisements supérieurs et inférieurs :
	\begin{align*}
	\mathbf{E}_{i,n-i} &= \begin{tikzbox}
		\draw (0.5,0) -- (1,0) node[above] {$\scriptstyle i$} -- (2,0) node[above] {$\scriptstyle n-i$} -- (2.5,0);
		\braid s_1^{-1} s_1^{-1} ;
		\draw (0.5,-2.5) -- (1,-2.5) node[below] {$\scriptstyle i$} -- (2,-2.5) node[below] {$\scriptstyle n-i$} -- (2.5,-2.5);
	\end{tikzbox}
		= \begin{tikzbox}
		\draw (0.5,0) -- (1,0) node[above] {$\scriptstyle lp-1$} -- (2,0) node[above] {$\scriptstyle i-lp+1$} -- (3,0) node[above] {$\scriptstyle n-i$} -- (3.5,0);
		\braid[height=0.5cm] s_2^{-1} s_1^{-1} s_1^{-1} s_2^{-1} ;
		\draw[green,->] (2.9,-0.4) arc (10:-10:0.5) ;
		\draw[green,->] (2.1,-0.6) arc (190:170:0.5) ;
		\draw[green,->] (2.9,-1.9) arc (10:-10:0.5) ;
		\draw[green,->] (2.1,-2.1) arc (190:170:0.5) ;
		\draw (0.5,-2.5) -- (1,-2.5) node[below] {$\scriptstyle lp-1$} -- (2,-2.5) node[below] {$\scriptstyle i-lp+1$} -- (3,-2.5) node[below] {$\scriptstyle n-i$} -- (3.5,-2.5);
	\end{tikzbox}
		= A^{2(n-i)(i-lp+1)} \begin{tikzbox}
		\draw (0.5,0) -- (1,0) node[above] {$\scriptstyle lp-1$} -- (2,0) node[above] {$\scriptstyle n-i$} -- (3,0) node[above] {$\scriptstyle i-lp+1$} -- (3.5,0);
		\draw (3,0) -- (3,-2.5);
		\braid s_1^{-1} s_1^{-1} ;
		\draw (0.5,-2.5) -- (1,-2.5) node[below] {$\scriptstyle lp-1$} -- (2,-2.5) node[below] {$\scriptstyle n-i$} -- (3,-2.5) node[below] {$\scriptstyle i-lp+1$} -- (3.5,-2.5);
	\end{tikzbox} .
	\end{align*}
\end{enumerate}
\end{proof}

\begin{Lemme}
\label{Lemme: enlacement2}
Soient $n \geq p$ tel que $n \neq -1 \mod p$, et $l \in \N^*$ le quotient de $n$ dans sa division euclidienne par $p$. On se donne $g,c,r \in \N$ et $d,e \in \N^*$ tels que $g+e+2c+d+r = n$ et $g+e = lp-1$. Alors :
\begin{gather*}
\mathbf{S}_{g,e,c,d,r} := \begin{tikzbox}
	\draw (0.5,0) -- (1,0) node[above] {$\scriptstyle g$} -- (1.5,0) node[above] {$\scriptstyle e$} -- (2,0) node[above] {$\scriptstyle c$} -- (3.5,0) node[above] {$\scriptstyle d$} -- (4,0) node[above] {$\scriptstyle r$} -- (4.5,0);
	\draw (1,0) -- (1,-2.5);
	\draw (4,0) -- (4,-2.5);
	\draw (2,0) to[bend right=90] (3,0);
	\draw (2,-2.5) to[bend left=90] (3,-2.5);
	\braid[width=2cm] at (1.5,0) s_1^{-1} s_1^{-1} ;
	\draw (0.5,-2.5) -- (1,-2.5) node[below] {$\scriptstyle g$} -- (1.5,-2.5) node[below] {$\scriptstyle e$} -- (2,-2.5) node[below] {$\scriptstyle c$} -- (3.5,-2.5) node[below] {$\scriptstyle d$} -- (4,-2.5) node[below] {$\scriptstyle r$} -- (4.5,-2.5);
\end{tikzbox}
	= A^{2d} \mathbf{S}_{g+1,e-1,c,d,r} + A^{-2e} \left( A^{2d} - A^{-2d} \right) \mathbf{S}_{g,e-1,c+1,d-1,r} .
\end{gather*}
\end{Lemme}

\begin{proof}
On commence par traiter le cas où $g=c=r=0$. On isole deux brins centraux sur lesquels on utilise les relations d'isotopie et d'écheveaux \eqref{Eqn: echeveaux2}:
\begin{align*}
\begin{tikzbox} 
	\draw (0.5,0) -- (1,0) node[above] {$\scriptstyle e$} -- (2,0) node[above] {$\scriptstyle d$} -- (2.5,0);
	\draw[red] (1,0) -- (1,-1.5);
	\draw[blue] (2,0) -- (2,-1.5);
	\braid[style strands={1}{red}, style strands={2}{blue}] at (1,-1.5) s_1^{-1} s_1^{-1} ;
	\draw[red] (1,-4) -- (1,-5.5);
	\draw[blue] (2,-4) -- (2,-5.5);
	\draw (0.5,-5.5) -- (1,-5.5) node[below] {$\scriptstyle e$} -- (2,-5.5) node[below] {$\scriptstyle d$} -- (2.5,-5.5);
\end{tikzbox}
	&= \begin{tikzbox} 
	\draw (0.5,0) -- (1,0) node[above] {$\scriptstyle e-1$} -- (2,0) node[above] {$\scriptstyle 1$} -- (3,0) node[above] {$\scriptstyle 1$}  -- (4,0) node[above] {$\scriptstyle d-1$} -- (4.5,0);
	\braid[height=0.833cm, style strands={1}{red}, style strands={4}{blue}] s_2^{-1} s_1^{-1}-s_3^{-1} s_2^{-1} s_2^{-1} s_1^{-1}-s_3^{-1} s_2^{-1} ;
	\draw (0.5,-5.5) -- (1,-5.5) node[below] {$\scriptstyle e-1$} -- (2,-5.5) node[below] {$\scriptstyle 1$} -- (3,-5.5) node[below] {$\scriptstyle 1$} -- (4,-5.5) node[below] {$\scriptstyle d-1$} -- (4.5,-5.5);
\end{tikzbox}
	= \begin{tikzbox} 
	\draw (0.5,0) -- (1,0) node[above] {$\scriptstyle e-1$} -- (2,0) node[above] {$\scriptstyle 1$} -- (3,0) node[above] {$\scriptstyle 1$}  -- (4,0) node[above] {$\scriptstyle d-1$} -- (4.5,0);
	\braid[height=0.5cm, style strands={1}{red}, style strands={4}{blue}] s_2^{-1} s_3^{-1} s_3^{-1} s_2^{-1} s_1^{-1} s_2^{-1} s_3^{-1} s_3^{-1} s_2^{-1} s_1 ;
	\draw[green] (2.5,-0.5) circle (0.3) ;
	\draw (0.5,-5.5) -- (1,-5.5) node[below] {$\scriptstyle e-1$} -- (2,-5.5) node[below] {$\scriptstyle 1$} -- (3,-5.5) node[below] {$\scriptstyle 1$} -- (4,-5.5) node[below] {$\scriptstyle d-1$} -- (4.5,-5.5);
\end{tikzbox} \\
	&= A \begin{tikzbox} 
	\draw (0.5,0) -- (1,0) node[above] {$\scriptstyle e-1$} -- (2,0) node[above] {$\scriptstyle 1$} -- (3,0) node[above] {$\scriptstyle 1$}  -- (4,0) node[above] {$\scriptstyle d-1$} -- (4.5,0);
	\draw[red] (1,0) -- (1,-0.5) ;
	\draw (2,0) -- (2,-0.5) ;
	\draw (3,0) -- (3,-0.5) ;
	\draw[blue] (4,0) -- (4,-0.5) ;
	\braid[height=0.5cm, style strands={1}{red}, style strands={4}{blue}] at (1,-0.5) s_3^{-1} s_3^{-1} s_2^{-1} s_1^{-1} s_2^{-1} s_3^{-1} s_3^{-1} s_2^{-1} s_1 ;
	\draw[green] (2.5,-2) circle (0.3) ;
	\draw (0.5,-5.5) -- (1,-5.5) node[below] {$\scriptstyle e-1$} -- (2,-5.5) node[below] {$\scriptstyle 1$} -- (3,-5.5) node[below] {$\scriptstyle 1$} -- (4,-5.5) node[below] {$\scriptstyle d-1$} -- (4.5,-5.5);
\end{tikzbox}
	+ A^{-1} \begin{tikzbox}
	\draw (0.5,0) -- (1,0) node[above] {$\scriptstyle e-1$} -- (2,0) node[above] {$\scriptstyle 1$} -- (3,0) node[above] {$\scriptstyle 1$}  -- (4,0) node[above] {$\scriptstyle d-1$} -- (4.5,0);
	\draw[red] (1,0) -- (1,-0.7);
	\draw[blue] (4,0) -- (4,-0.7);
	\draw (2,0) to[bend right=90] (3,0);
	\draw (2,-0.7) to[bend left=90] (3,-0.7);
	\braid[border height=0.15cm, height=0.5cm, style strands={1}{red}, style strands={4}{blue}] at (1,-0.7) s_3^{-1} s_3^{-1} s_2^{-1} s_1^{-1} s_2^{-1} s_3^{-1} s_3^{-1} s_2^{-1} s_1 ;
	\draw (0.5,-5.5) -- (1,-5.5) node[below] {$\scriptstyle e-1$} -- (2,-5.5) node[below] {$\scriptstyle 1$} -- (3,-5.5) node[below] {$\scriptstyle 1$} -- (4,-5.5) node[below] {$\scriptstyle d-1$} -- (4.5,-5.5);
\end{tikzbox} \\
	&= A^2 \begin{tikzbox} 
	\draw (0.5,0) -- (1,0) node[above] {$\scriptstyle e-1$} -- (2,0) node[above] {$\scriptstyle 1$} -- (3,0) node[above] {$\scriptstyle 1$}  -- (4,0) node[above] {$\scriptstyle d-1$} -- (4.5,0);
	\draw[red] (1,-1.3) -- (1,-2) ;
	\draw (2,-1.3) -- (2,-2);
	\draw (3,-1.3) -- (3,-2);
	\draw[blue] (4,-1.3) -- (4,-2) ;
	\braid[border height=0.15cm, height=0.5cm, style strands={1}{red}, style strands={4}{blue}] at (1,0) s_3^{-1} s_3^{-1} ;
	\braid[height=0.5cm, style strands={1}{red}, style strands={4}{blue}] at (1,-2) s_1^{-1} s_2^{-1} s_3^{-1} s_3^{-1} s_2^{-1} s_1 ;
	\draw (0.5,-5.5) -- (1,-5.5) node[below] {$\scriptstyle e-1$} -- (2,-5.5) node[below] {$\scriptstyle 1$} -- (3,-5.5) node[below] {$\scriptstyle 1$} -- (4,-5.5) node[below] {$\scriptstyle d-1$} -- (4.5,-5.5);
\end{tikzbox}
	+ \begin{tikzbox}
	\draw (0.5,0) -- (1,0) node[above] {$\scriptstyle e-1$} -- (2,0) node[above] {$\scriptstyle 1$} -- (3,0) node[above] {$\scriptstyle 1$}  -- (4,0) node[above] {$\scriptstyle d-1$} -- (4.5,0);
	\draw[red] (1,-1.3) -- (1,-2) ;
	\draw[blue] (4,-1.3) -- (4,-2) ;
	\draw (2,-1.3) to[bend right=90] (3,-1.3);
	\draw (2,-2) to[bend left=90] (3,-2);
	\braid[border height=0.15cm, height=0.5cm, style strands={1}{red}, style strands={4}{blue}] at (1,0) s_3^{-1} s_3^{-1} ;
	\braid[height=0.5cm, style strands={1}{red}, style strands={4}{blue}] at (1,-2) s_1^{-1} s_2^{-1} s_3^{-1} s_3^{-1} s_2^{-1} s_1 ;
	\draw (0.5,-5.5) -- (1,-5.5) node[below] {$\scriptstyle e-1$} -- (2,-5.5) node[below] {$\scriptstyle 1$} -- (3,-5.5) node[below] {$\scriptstyle 1$} -- (4,-5.5) node[below] {$\scriptstyle d-1$} -- (4.5,-5.5);
\end{tikzbox} \\
	&\qquad + A^{-1} \begin{tikzbox}
	\draw (0.5,0) -- (1,0) node[above] {$\scriptstyle e-1$} -- (2,0) node[above] {$\scriptstyle 1$} -- (3,0) node[above] {$\scriptstyle 1$}  -- (4,0) node[above] {$\scriptstyle d-1$} -- (5.5,0);
	\draw[red] (1,0) -- (1,-0.7);
	\draw[blue] (4,0) -- (4,-0.7);
	\draw (2,0) to[bend right=90] (3,0);
	\draw (2,-0.7) to[bend left=90] (3,-0.7);
	\braid[border height=0.1cm, height=0.5cm, style strands={1}{red}, style strands={4}{blue}] at (1,-0.7) s_3 s_4 s_3 ;
	\draw[green] (4.5,-1.5) circle (0.3) ;
	\draw (5,-0.7) to[bend left=150] (5,-2.4) ;
	\braid[border height=0.1cm, height=0.5cm, style strands={1}{red}, style strands={4}{blue}] at (1,-2.3) s_1^{-1} s_2^{-1} s_3^{-1} s_3^{-1} s_2^{-1} s_1 ;
	\draw (0.5,-5.5) -- (1,-5.5) node[below] {$\scriptstyle e-1$} -- (2,-5.5) node[below] {$\scriptstyle 1$} -- (3,-5.5) node[below] {$\scriptstyle 1$} -- (4,-5.5) node[below] {$\scriptstyle d-1$} -- (5.5,-5.5);
\end{tikzbox} \\
	&= A^2 \begin{tikzbox} 
	\draw (0.5,0) -- (1,0) node[above] {$\scriptstyle e-1$} -- (2,0) node[above] {$\scriptstyle 1$} -- (3,0) node[above] {$\scriptstyle 1$}  -- (4,0) node[above] {$\scriptstyle d-1$} -- (4.5,0);
	\draw[red] (1,-1.3) -- (1,-2) ;
	\draw (2,-1.3) -- (2,-2);
	\draw (3,-1.3) -- (3,-2);
	\draw[blue] (4,-1.3) -- (4,-2) ;
	\braid[border height=0.15cm, height=0.5cm, style strands={1}{red}, style strands={4}{blue}] at (1,0) s_3^{-1} s_3^{-1} ;
	\braid[height=0.5cm, style strands={1}{red}, style strands={4}{blue}] at (1,-2) s_1^{-1} s_2^{-1} s_3^{-1} s_3^{-1} s_2^{-1} s_1 ;
	\draw[green,->] (1.9,-2.4) arc (10:-10:0.5) ;
	\draw[green,->] (1.1,-2.6) arc (190:170:0.5) ;
	\draw[green,->] (1.9,-5.1) arc (-10:10:0.5) ;
	\draw[green,->] (1.1,-4.9) arc (170:190:0.5) ;
	\draw[green,->] (3.9,-0.8) arc (10:-10:0.5) ;
	\draw[green,->] (3.1,-1) arc (190:170:0.5) ;
	\draw[green,->] (3.9,-0.3) arc (10:-10:0.5) ;
	\draw[green,->] (3.1,-0.5) arc (190:170:0.5) ;
	\draw (0.5,-5.5) -- (1,-5.5) node[below] {$\scriptstyle e-1$} -- (2,-5.5) node[below] {$\scriptstyle 1$} -- (3,-5.5) node[below] {$\scriptstyle 1$} -- (4,-5.5) node[below] {$\scriptstyle d-1$} -- (4.5,-5.5);
\end{tikzbox}
	+ \begin{tikzbox}
	\draw (0.5,0) -- (1,0) node[above] {$\scriptstyle e-1$} -- (2,0) node[above] {$\scriptstyle 1$} -- (3,0) node[above] {$\scriptstyle 1$}  -- (4,0) node[above] {$\scriptstyle d-1$} -- (4.5,0);
	\draw[red] (1,-1.3) -- (1,-2) ;
	\draw[blue] (4,-1.3) -- (4,-2) ;
	\draw (2,-1.3) to[bend right=90] (3,-1.3);
	\draw (2,-2) to[bend left=90] (3,-2);
	\braid[border height=0.15cm, height=0.5cm, style strands={1}{red}, style strands={4}{blue}] at (1,0) s_3^{-1} s_3^{-1} ;
	\braid[height=0.5cm, style strands={1}{red}, style strands={4}{blue}] at (1,-2) s_1^{-1} s_2^{-1} s_3^{-1} s_3^{-1} s_2^{-1} s_1 ;
	\draw[green,->] (1.9,-5.1) arc (-10:10:0.5) ;
	\draw[green,->] (1.1,-4.9) arc (170:190:0.5) ;
	\draw[green,->] (3.9,-0.8) arc (10:-10:0.5) ;
	\draw[green,->] (3.1,-1) arc (190:170:0.5) ;
	\draw[green,->] (3.9,-0.3) arc (10:-10:0.5) ;
	\draw[green,->] (3.1,-0.5) arc (190:170:0.5) ;
	\draw (0.5,-5.5) -- (1,-5.5) node[below] {$\scriptstyle e-1$} -- (2,-5.5) node[below] {$\scriptstyle 1$} -- (3,-5.5) node[below] {$\scriptstyle 1$} -- (4,-5.5) node[below] {$\scriptstyle d-1$} -- (4.5,-5.5);
\end{tikzbox} \\
	&\qquad - A^{-4} \begin{tikzbox}
	\draw (0.5,0) -- (1,0) node[above] {$\scriptstyle e-1$} -- (2,0) node[above] {$\scriptstyle 1$} -- (3,0) node[above] {$\scriptstyle 1$}  -- (4,0) node[above] {$\scriptstyle d-1$} -- (4.5,0);
	\draw[red] (1,0) -- (1,-0.7);
	\draw[blue] (4,0) -- (4,-0.7);
	\draw (2,0) to[bend right=90] (3,0);
	\draw (2,-0.7) to[bend left=90] (3,-0.7);
	\braid[border height=0.15cm, height=0.5cm, style strands={1}{red}, style strands={4}{blue}] at (1,-0.7) s_3 s_3 ;
	\braid[height=0.5cm, style strands={1}{red}, style strands={4}{blue}] at (1,-2) s_1^{-1} s_2^{-1} s_3^{-1} s_3^{-1} s_2^{-1} s_1 ;
	\draw[green,->] (1.9,-5.1) arc (-10:10:0.5) ;
	\draw[green,->] (1.1,-4.9) arc (170:190:0.5) ;
	\draw[green,->] (3.9,-1.7) arc (-10:10:0.5) ;
	\draw[green,->] (3.1,-1.5) arc (170:190:0.5) ;
	\draw[green,->] (3.9,-1.2) arc (-10:10:0.5) ;
	\draw[green,->] (3.1,-1) arc (170:190:0.5) ;
	\draw (0.5,-5.5) -- (1,-5.5) node[below] {$\scriptstyle e-1$} -- (2,-5.5) node[below] {$\scriptstyle 1$} -- (3,-5.5) node[below] {$\scriptstyle 1$} -- (4,-5.5) node[below] {$\scriptstyle d-1$} -- (4.5,-5.5);
\end{tikzbox} .
\end{align*}

Or, d'après la proposition \ref{Prop: JWG}, pour tout $i \in \{1,...,n-1\} \setminus \{lp-1\}$, on a $h_i f_n = 0 = f_n h_i$. Aussi, on dénoue chacun des $e-1$ (resp. $d-1$) croisements extrêmes avec la composante qui ne s'annule pas. On obtient donc :
\begin{align*}
\mathbf{S}_{0,e,0,d,0} &= A^{2+2(d-1)+(e-1)-(e-1)} \begin{tikzbox} 
	\draw (0.5,0) -- (1,0) node[above] {$\scriptstyle 1$} -- (2,0) node[above] {$\scriptstyle e-1$} -- (3,0) node[above] {$\scriptstyle 1$}  -- (4,0) node[above] {$\scriptstyle d-1$} -- (4.5,0);
	\draw (1,0) -- (1,-1) ;
	\draw[red] (2,0) -- (2,-1);
	\draw (3,0) -- (3,-1);
	\draw[blue] (4,0) -- (4,-1) ;
	\braid[height=0.5cm, style strands={2}{red}, style strands={4}{blue}] at (1,-1) s_2^{-1} s_3^{-1} s_3^{-1} s_2^{-1} ;
	\draw (0.5,-3.5) -- (1,-3.5) node[below] {$\scriptstyle 1$} -- (2,-3.5) node[below] {$\scriptstyle e-1$} -- (3,-3.5) node[below] {$\scriptstyle 1$} -- (4,-3.5) node[below] {$\scriptstyle d-1$} -- (4.5,-3.5);
\end{tikzbox} \\
	&\qquad + A^{-(e-1)} \left( A^{2(d-1)} - A^{-2(d-1)-4} \right) \begin{tikzbox}
	\draw (0.5,0) -- (1,0) node[above] {$\scriptstyle e-1$} -- (2,0) node[above] {$\scriptstyle 1$} -- (3,0) node[above] {$\scriptstyle 1$}  -- (4,0) node[above] {$\scriptstyle d-1$} -- (4.5,0);
	\draw[red] (1,0) -- (1,-0.7) ;
	\draw[blue] (4,-0) -- (4,-0.7) ;
	\draw (2,0) to[bend right=90] (3,0);
	\draw (2,-0.7) to[bend left=90] (3,-0.7);
	\braid[border height=0.15cm, height=0.5cm, style strands={1}{red}, style strands={4}{blue}] at (1,-0.7) s_1^{-1} s_2^{-1} s_3^{-1} s_3^{-1} s_2^{-1} ;
	\draw[green,->] (2.3,-3.15) arc (-150:-30:0.2) ;
	\draw[green,->] (2.7,-3.05) arc (30:150:0.2) ;
	\draw (0.5,-3.5) -- (1,-3.5) node[below] {$\scriptstyle 1$} -- (2,-3.5) node[below] {$\scriptstyle e-1$} -- (3,-3.5) node[below] {$\scriptstyle 1$} -- (4,-3.5) node[below] {$\scriptstyle d-1$} -- (4.5,-3.5);
\end{tikzbox} \\
	&= A^{2d} \begin{tikzbox}
	\draw (0.5,0) -- (1,0) node[above] {$\scriptstyle 1$} -- (2,0) node[above] {$\scriptstyle e-1$} -- (3,0) node[above] {$\scriptstyle d-1$} -- (3.5,0);
	\draw (1,0) -- (1,-2.5);
	\braid[style strands={2}{red}, style strands={3}{blue}] s_2^{-1} s_2^{-1} ;
	\draw (0.5,-2.5) -- (1,-2.5) node[below] {$\scriptstyle 1$} -- (2,-2.5) node[below] {$\scriptstyle e-1$} -- (3,-2.5) node[below] {$\scriptstyle d$} -- (3.5,-2.5);
\end{tikzbox}
	+ A^{-2(e-1)-2} \left( A^{2d} - A^{-2d} \right) \begin{tikzbox}
	\draw (0.5,0) -- (1,0) node[above] {$\scriptstyle e-1$} -- (2,0) node[above] {$\scriptstyle 1$} -- (3,0) node[above] {$\scriptstyle 1$}  -- (4,0) node[above] {$\scriptstyle d-1$} -- (4.5,0);
	\draw (2,0) to[bend right=90] (3,0);
	\braid[width=3cm, style strands={1}{red}, style strands={2}{blue}] at (1,0) s_1^{-1} s_1^{-1} ;
	\draw (2,-2.5) to[bend left=90] (3,-2.5);
	\draw (0.5,-2.5) -- (1,-2.5) node[below] {$\scriptstyle e-1$} -- (2,-2.5) node[below] {$\scriptstyle 1$} -- (3,-2.5) node[below] {$\scriptstyle 1$} -- (4,-2.5) node[below] {$\scriptstyle d-1$} -- (4.5,-2.5);
\end{tikzbox} \\
	&= A^{2d} \begin{tikzbox}
	\draw (0.5,0) -- (1,0) node[above] {$\scriptstyle 1$} -- (2,0) node[above] {$\scriptstyle e-1$} -- (3,0) node[above] {$\scriptstyle d-1$} -- (3.5,0);
	\draw (1,0) -- (1,-2.5);
	\braid[style strands={2}{red}, style strands={3}{blue}] s_2^{-1} s_2^{-1} ;
	\draw (0.5,-2.5) -- (1,-2.5) node[below] {$\scriptstyle 1$} -- (2,-2.5) node[below] {$\scriptstyle e-1$} -- (3,-2.5) node[below] {$\scriptstyle d$} -- (3.5,-2.5);
\end{tikzbox}
	+ A^{-2e} \left( A^{2d} - A^{-2d} \right) \begin{tikzbox}
	\draw (0.5,0) -- (1,0) node[above] {$\scriptstyle e-1$} -- (2,0) node[above] {$\scriptstyle 1$} -- (3,0) node[above] {$\scriptstyle 1$}  -- (4,0) node[above] {$\scriptstyle d-1$} -- (4.5,0);
	\draw (2,0) to[bend right=90] (3,0);
	\braid[width=3cm, style strands={1}{red}, style strands={2}{blue}] at (1,0) s_1^{-1} s_1^{-1} ;
	\draw (2,-2.5) to[bend left=90] (3,-2.5);
	\draw (0.5,-2.5) -- (1,-2.5) node[below] {$\scriptstyle e-1$} -- (2,-2.5) node[below] {$\scriptstyle 1$} -- (3,-2.5) node[below] {$\scriptstyle 1$} -- (4,-2.5) node[below] {$\scriptstyle d-1$} -- (4.5,-2.5);
\end{tikzbox}.
\end{align*}

Enfin, l'ajout de brins à gauche, à droite, ou en demi-cercle ne change pas les calculs ci-dessus. D'où le résultat pour le coupon $\mathbf{S}_{g,e,c,d,r}$ avec $g, c, r \in \N$.
\end{proof}

\begin{Lemme}
\label{Lemme: enlacement3}
Soient $n \geq p$ tel que $n \neq -1 \mod p$, et $l \in \N^*$ le quotient de $n$ dans sa division euclidienne par $p$. On se donne un entier $c$ tel que $1 \leq c \leq n-lp+1$. Alors :
\[ \begin{tikzbox}
	\draw (0.5,0) -- (1,0) node[above] {$\scriptstyle lp-1-c \qquad$} -- (1.5,0) node[above] {$\scriptstyle c$} -- (3,0) node[above] {$\scriptstyle \qquad n-lp-c$} -- (3.5,0) ;
	\draw (1,0) -- (1,-2.5) ;
	\draw (3,0) -- (3,-2.5) ;
	\draw (2.5,0) arc (0:-180:0.5) ;
	\draw (2.5,-2.5) arc (0:180:0.5) ;
	\draw (0.5,-2.5) -- (1,-2.5) node[below] {$\scriptstyle lp-1-c \qquad$} -- (1.5,-2.5) node[below] {$\scriptstyle c$} -- (3,-2.5) node[below] {$\scriptstyle \qquad n-lp-c$} -- (3.5,-2.5) ;
\end{tikzbox} 
	= (-1)^{c-1} \frac{[lp-1]_{A^2}}{[lp-c]_{A^2}} \begin{tikzbox}
	\draw (0.5,0) -- (1,0) node[above] {$\scriptstyle lp-2 \quad$} -- (1.5,0) node[above] {$\scriptstyle 1$} -- (3,0) node[above] {$\scriptstyle \quad n-lp$} -- (3.5,0) ;
	\draw (1,0) -- (1,-2.5) ;
	\draw (3,0) -- (3,-2.5) ;
	\draw (2.5,0) arc (0:-180:0.5) ;
	\draw (2.5,-2.5) arc (0:180:0.5) ;
	\draw (0.5,-2.5) -- (1,-2.5) node[below] {$\scriptstyle lp-2 \quad$} -- (1.5,-2.5) node[below] {$\scriptstyle 1$} -- (3,-2.5) node[below] {$\scriptstyle \quad n-lp$} -- (3.5,-2.5) ;
\end{tikzbox} . \]
\end{Lemme}

\begin{proof}
Les barres horizontales désignent l'insertion de l'idempotent de Jones-Wenzl évaluable $f_n = p_{[t(n)]}$, où $t(n)$ \eqref{Eqn: t(n)} possède un sous-tableau critique maximal $t(lp-1)$ et un tableau conjugué $\overline{t(n)}$ (cf. la remarque \ref{Rem: lignes critiques}). Le coupon de droite correspond donc à $f_n h_{lp-1} f_n = p_{[t(n)]} h_{lp-1} p_{[t(n)]} = f_n'$. Or, dans la preuve de la proposition \ref{Prop: JWG}, on a vu que :
\begin{equation} \tag{1}
f_n' = p_{[t(n)]} h_{lp-1} p_{[t(n)]} = p_{\overline{t(n)}} h_{lp-1}  p_{\overline{t(n)}} = - \frac{[lp]_{A^2}}{[lp-1]_{A^2}} p_{\overline{t(n)}} .
\end{equation}
On construit l'élément $f_n^{(c)} \in \TL_n(A^2)$ correspondant au coupon de gauche. On a :
\[ \begin{tikzbox}
	\draw (0.5,0) -- (1,0) node[above] {$\scriptstyle lp-1-c \qquad$} -- (1.5,0) node[above] {$\scriptstyle c$} -- (3,0) node[above] {$\scriptstyle \qquad n-lp-c$} -- (3.5,0) ;
	\draw (1,0) -- (1,-3.5) ;
	\draw (3,0) -- (3,-3.5) ;
	\draw[red] (2.5,0) arc (0:-180:0.5) ;
	\draw[red] (2.5,-3.5) arc (0:180:0.5) ;
	\draw (0.5,-3.5) -- (1,-3.5) node[below] {$\scriptstyle lp-1-c \qquad$} -- (1.5,-3.5) node[below] {$\scriptstyle c$} -- (3,-3.5) node[below] {$\scriptstyle \qquad n-lp-c$} -- (3.5,-3.5) ;
\end{tikzbox} 
	= \begin{tikzbox}
	\draw (-0.5,0) -- (0,0) node[above] {$\scriptstyle lp-1-c \qquad$} -- (4,0) node[above] {$\scriptstyle \qquad n-lp-c$} -- (4.5,0) ;
	\draw (0,0) -- (0,-3.5) ;
	\draw (4,0) -- (4,-3.5) ;
	\draw[red] (2.25,0) arc (0:-180:0.25) ;
	\draw[red] (1.25,0) -- (1.25,-0.7);
	\draw[red] (2.75,0) -- (2.75,-0.7);
	\draw[red] (1.75,-0.7) arc (0:-180:0.25) ;
	\draw[red] (2.25,-0.7) arc (0:180:0.25) ;
	\draw[red] (2.75,-0.7) arc (0:-180:0.25) ;
	\node at (0.95,-0.5) {$\cdots$} ;
	\node at (3.15,-0.5) {$\cdots$} ;
	\draw[red] (0.5,0) -- (0.5,-1.4);
	\draw[red] (3.5,0) -- (3.5,-1.4);
	\draw[red] (1,-1.4) arc (0:-180:0.25) ;
	\draw[red] (1.5,-1.4) arc (0:180:0.25) ;
	\node at (2.05,-1.5) {$\cdots$};
	\draw[red] (3,-1.4) arc (0:180:0.25) ;
	\draw[red] (3.5,-1.4) arc (0:-180:0.25) ;
	\draw[red] (1,-2.1) arc (0:180:0.25) ;
	\draw[red] (1.5,-2.1) arc (0:-180:0.25) ;
	\node at (2.05,-2) {$\cdots$};
	\draw[red] (3,-2.1) arc (0:-180:0.25) ;
	\draw[red] (3.5,-2.1) arc (0:180:0.25) ;
	\draw[red] (0.5,-2.1) -- (0.5,-3.5);
	\draw[red] (3.5,-2.1) -- (3.5,-3.5);
	\node at (0.95,-3) {$\cdots$};
	\node at (3.15,-3) {$\cdots$};
	\draw[red] (1.75,-2.8) arc (0:180:0.25) ;
	\draw[red] (2.25,-2.8) arc (0:-180:0.25) ;
	\draw[red] (2.75,-2.8) arc (0:180:0.25) ;
	\draw[red] (1.25,-2.8) -- (1.25,-3.5) ;
	\draw[red] (2.75,-2.8) -- (2.75,-3.5) ;
	\draw[red] (2.25,-3.5) arc (0:180:0.25) ;
	\draw (-0.5,-3.5) -- (0,-3.5) node[below] {$\scriptstyle lp-1-c \qquad$} -- (4,-3.5) node[below] {$\scriptstyle \qquad n-lp-c$} -- (4.5,-3.5) ;
\end{tikzbox} . \]
Donc :
\begin{align*}
f_n^{(c)} &= f_n h_{lp-1} \left( h_{lp-2} h_{lp} \right) ... \left( h_{lp-c} h_{lp-c+2} ... h_{lp+c-2} \right) ... \left( h_{lp-2} h_{lp} \right) h_{lp-1} f_n \\
	&= p_{[t(n)]} h_{lp-1} \left( h_{lp-2} h_{lp} \right) ... \left( h_{lp-c} h_{lp-c+2} ... h_{lp+c-2} \right) ... \left( h_{lp-2} h_{lp} \right) h_{lp-1} p_{[t(n)]} .
\end{align*}
On se retrouve dans la configuration de la preuve du lemme \ref{Lemme: idempotents eva2}. En utilisant les mêmes arguments de calculs, on obtient :
\begin{align*}
f_n^{(c)} &= \begin{multlined}[t]
	\left( - \frac{[lp-2]_{A^2}}{[lp-1]_{A^2}} \right) \left( - \frac{[lp-3]_{A^2}}{[lp-2]_{A^2}} \right)^2 ... \left( - \frac{[lp-c]_{A^2}}{[lp-c+1]_{A^2}} \right)^{c-1}
		\left( - \frac{[lp-c+1]_{A^2}}{[lp-c]_{A^2}} \right)^{c} \\
		\times \left( - \frac{[lp-c+2]_{A^2}}{[lp-c+1]_{A^2}} \right)^{c-1} ... \left( - \frac{[lp-1]_{A^2}}{[lp-2]_{A^2}} \right)^2 \left( - \frac{[lp]_{A^2}}{[lp-1]_{A^2}} \right) p_{\overline{t(n)}} 
		\end{multlined} \\
	&= (-1)^c \prod_{j=1}^{c-1} \frac{[lp-j-1]_{A^2}^j [lp-j+1]_{A^2}^j}{[lp-j]_{A^2}^{2j}} \times \frac{[lp-c+1]_{A^2}^c}{[lp-c]_{A^2}^c} p_{\overline{t(n)}} \\
	&= (-1)^c \prod_{j=1}^{c-1} \frac{1}{[lp-j]_{A^2}^{2j}} \prod_{j=2}^{c} [lp-j]_{A^2}^{j-1} \prod_{j=0}^{c-2} [lp-j]_{A^2}^{j+1} \times \frac{[lp-c+1]_{A^2}^c}{[lp-c]_{A^2}^c} p_{\overline{t(n)}} \\
	&= (-1)^c \frac{[lp-c+1]_{A^2}^{c-2} [lp-c]_{A^2}^{c-1} [lp]_{A^2} [lp-1]_{A^2}^2 [lp-c+1]_{A^2}^c}{[lp-1]_{A^2}^2 [lp-c+1]_{A^2}^{2c-2} [lp-c]_{A^2}^c} p_{\overline{t(n)}} \\
	&= (-1)^c \frac{[lp]_{A^2}}{[lp-c]_{A^2}} p_{\overline{t(n)}} . \tag{2}
\end{align*}
En comparant les équations $(1)$ et $(2)$, on obtient :
\[ f_n^{(c)} = (-1)^{c-1} \frac{[lp-1]_{A^2}}{[lp-c]_{A^2}} f_n'. \]
D'où l'égalité des coupons correspondants.
\end{proof}

On peut maintenant exprimer certains coupons $\mathbf{E}_{i,n-i}$ \eqref{Eqn: enlacement}, $n \in \N^*$, $i \in \{1,...,n-1\}$, en fonction des coupons associés aux idempotents et nilpotents de Jones-Wenzl évaluables : \index{f2 @$\rect{I}{n} $} \index{f2' @$\rect{N}{n}$}
\begin{equation}
\label{Eqn: coupons JWGA}
\begin{aligned}
&\rect{I}{n} := \rect{f_n}{}
	= \begin{tikzbox}
		\draw (0.5,0) -- (1,0) node[above] {$\scriptstyle n$} -- (1.5,0) ;
		\draw (1,0) -- (1,-2.5) ;
		\draw (0.5,-2.5) -- (1,-2.5) node[below] {$\scriptstyle n$} -- (1.5,-2.5) ;
	\end{tikzbox} , && n \in \N^*, \\
&\rect{N}{n} := \rect{f_n'}{}
	= \begin{tikzbox}
		\draw (0.5,0) -- (1,0) node[above] {$\scriptstyle lp-2 \quad$} -- (1.5,0) node[above] {$\scriptstyle 1$} -- (3,0) node[above] {$\scriptstyle n-lp$} -- (3.5,0) ;
		\draw (1,0) -- (1,-2.5) ;
		\draw (3,0) -- (3,-2.5) ;
		\draw (2.5,0) arc (0:-180:0.5) ;
		\draw (2.5,-2.5) arc (0:180:0.5) ;
		\draw (0.5,-2.5) -- (1,-2.5) node[below] {$\scriptstyle lp-2 \quad$} -- (1.5,-2.5) node[below] {$\scriptstyle 1$} -- (3,-2.5) node[below] {$\scriptstyle n-lp$} -- (3.5,-2.5) ;
	\end{tikzbox} , && n \geq p,
\end{aligned}
\end{equation}
où, pour tout $n \in \N^*$, l'entier $l$ désigne le quotient de  $n$ dans sa division euclidienne par $p$.

\begin{Thm}[---, Andrews]
\label{Thm: enlacement}
Soient $n \in \N^*$, $l \in \N$ le quotient de $n$ dans sa division euclidienne par $p$, et $i \in \{1,...,n-1\}$.
\begin{enumerate}[(i)]
	\item Si $n \leq p-1$ ou $n = -1 \mod p$, alors $\mathbf{E}_{i,n-i}^{\pm1} = A^{\pm 2(n-i)i} \; \rect{I}{n}$.
	\item[(ii')] Si $n \geq p$ et $i = 1 < n$, alors :
	\begin{gather*}
	\mathbf{E}_{1,n-1}^{\pm1} = A^{\pm 2(n-1)} \; \rect{I}{n} \pm A^{\pm 2(lp-3)} \left( A^2 - A^{-2} \right) [n-lp+1]_{A^2} \; \rect{N}{n} , \\
	\mathbf{E}_{1,n-1}^{\pm1} \cdot \rect{N}{n} = \begin{multlined}[t]
		A^{\pm 2(n-1)} \; \rect{N}{n}
		\mp A^{\pm 2(lp-3)} \left( A^2 - A^{-2} \right) \frac{ [lp]_{A^2} [n-lp+1]_{A^2} }{ [lp-1]_{A^2} } \; \rect{N}{n}.
		\end{multlined}
	\end{gather*}
	\item[(iii)] Si $n \geq p$ et $lp-1 \leq i < n$, alors :
	\begin{gather*}
	\mathbf{E}_{i,n-i}^{\pm1} = \begin{multlined}[t]
		A^{\pm 2(n-i)i} \; \rect{I}{n}
		\pm A^{\pm 2(n-i)(i-lp)} \left( A^2 - A^{-2} \right) \frac{ [lp-1]_{A^2} [(n-i)lp]_{A^2} }{ [lp]_{A^2} } \; \rect{N}{n} ,
		\end{multlined} \\
	\mathbf{E}_{i,n-i}^{\pm1} \cdot \rect{N}{n} = \begin{multlined}[t]
		A^{\pm 2(n-i)i} \; \rect{N}{n}
		\mp A^{\pm 2(n-i)(i-lp)} \left( A^2 - A^{-2} \right) [(n-i)lp]_{A^2} \; \rect{N}{n} .
		\end{multlined}
	\end{gather*}
\end{enumerate}
\end{Thm}

\begin{proof}
L'enlacement négatif étant l'image miroir du positif, les formules avec les coupons $\mathbf{E}_{i,n-i}^{-1}$ se déduisent de celles avec les coupons $\mathbf{E}_{i,n-i}$ en remplaçant $A$ par $A^{-1}$ (cf. les relations d'écheveaux \eqref{Eqn: echeveaux2}). 

De même, pour les assertions $(ii')$ et $(iii)$, les formules des coupons $\mathbf{E}_{i,n-i} \cdot \rect{N}{n}$ (i.e. le produit en pile \eqref{Eqn: produit en pile} de $\mathbf{E}_{i,n-i}$ et $\rect{N}{n}$) se déduisent de celles des coupons $\mathbf{E}_{i,n-i}$ grâce aux relations :
\[ \rect{I}{n} \cdot \rect{N}{n} = \rect{N}{n} = \rect{N}{n} \cdot \rect{I}{n}, \qquad \rect{N}{n} \cdot \rect{N}{n} \overset{\ref{Prop: JWG}}{=} - \frac{ [lp]_{A^2} }{ [lp-1]_{A^2} } \rect{N}{n} . \]

Il suffit donc d'étudier le coupon $\mathbf{E}_{i,n-i}$. L'assertion $(i)$ découle directement du lemme \ref{Lemme: enlacement1}. On suppose désormais que $n \geq p$. D'après les lemmes \ref{Lemme: enlacement1} et \ref{Lemme: enlacement2}, on sait que :
\begin{equation} \tag{1}
\mathbf{E}_{i,n-i} = \begin{cases}
		A^{2(lp-1-i)i} \mathbf{S}_{lp-1-i,i,0,n-lp+1,0} & \text{ si } i \leq lp-1 <n.\\
		A^{2(n-i)(i-lp+1)} \mathbf{S}_{0,lp-1,0,n-i,i-lp+1} & \text{ si } lp-1 \leq i < n. 
	\end{cases}
\end{equation}
On se donne $g,r \in \N$ et $d,e \in \N^*$ tels que $g+e+d+r=n$ et $g+e=lp-1$. Alors, d'après le lemme \ref{Lemme: enlacement2}, on a :
\[ S_{g,e,0,d,r} = A^{2d} S_{g+1,e-1,0,d,r} + A^{-2e} \left( A^{2d} - A^{-2d} \right) S_{g,e-1,1,d-1,r} . \]
On en déduit que :
\[ \mathbf{S}_{g,e,0,d,r} = A^{2de} \mathbf{S}_{g+e,0,0,0,d+r} + \sum_{k=1}^{\min(e,d)} \sum_{0=i_0 < i_1 < ... < i_k < i_{k+1}=e+1} C_{i_1,...,i_k} \mathbf{S}_{g+e-k,0,k,0,d-k+r} , \]
où :
\begin{align*}
C_{i_1,...,i_k} &= \prod_{j=0}^{k} A^{2(d-j)(i_{j+1}-1-i_j)}  \prod_{j=0}^{k-1} A^{-2(e-i_{j+1}+1)} \left( A^{2(d-j)} - A^{-2(d-j)} \right) \\
	&= A^{2\sum_{j=0}^k (d-j)(i_{j+1}-i_j-1) - 2\sum_{j=0}^{k-1} (e-i_{j+1}+1) } \prod_{j=0}^{k-1} \left( A^{2(d-j)} - A^{-2(d-j)} \right) \\
		&= \begin{multlined}[t]
			A^{2d(e+1) - 2d(k+1) - 2k(e+1) + 2\sum_{j=1}^{k} i_j + k(k+1) -2ek + 2\sum_{j=1}^k i_j -2k }
			\prod_{j=0}^{k-1} \left( A^{2(d-j)} - A^{-2(d-j)} \right) 
			\end{multlined} \\
		&= A^{2de + k(k-3-2d-4e) + 4\sum_{j=1}^k i_j} \prod_{j=0}^{k-1} \left( A^{2(d-j)} - A^{-2(d-j)} \right) .
\end{align*}
De plus, d'après le lemme \ref{Lemme: enlacement3}, on sait que :
\[ \mathbf{S}_{g+e-k,0,k,0,d-k+r} = (-1)^{k-1} \frac{A^{2(lp-1)}-A^{-2(lp-1)}}{A^{2(lp-k)}-A^{-2(lp-k)}} \mathbf{S}_{lp-2,0,1,0,n-lp} . \]
Par conséquent :
\begin{equation} \tag{2}
\mathbf{S}_{g,e,0,d,r} = A^{2de} \; \rect{I}{n} + A^{2de} \left( A^{2(lp-1)}-A^{-2(lp-1)} \right) \Sigma_{e,d} \; \rect{N}{n} ,
\end{equation}
où :
\begin{align*}
\Sigma_{e,d} &:= \begin{multlined}[t]
	\sum_{k=1}^{\min(e,d)} (-1)^{k-1} \frac{A^{k(k-3-2d-4e)}}{A^{2(lp-k)}-A^{-2(lp-k)}}
	\sum_{1\leq i_1 <...<i_k \leq e} A^{4 \sum_{j=1}^k i_j} \; \prod_{j=0}^{k-1} \left( A^{2(d-j)} - A^{-2(d-j)} \right) .
	\end{multlined} 
\end{align*}
Il reste à simplifier ce terme $\Sigma_{e,d}$. Pour cela, on utilise une preuve de G. E. Andrews. On pose :
\[ \forall X,Y \in \C(A) \quad \forall k \in \N^* \qquad (Y;X)_k := \prod_{j=0}^{k-1} (1-YX^j) . \]
Alors, pour tout $k \in \N^*$, on a :
\[ \prod_{j=0}^{k-1} \left( A^{2(d-j)} - A^{-2(d-j)} \right) = A^{2dk - k(k-1)} (A^{-4d}; A^4)_k = A^{k(2d-k+1)} (A^{-4d}; A^4)_k . \]
Et d'après le théorème $q$-binomial (cf. par exemple \cite[Thm8]{AE04}), pour tout $k \in \N^*$ :
\begin{gather*}
\sum_{1\leq i_1 <...<i_k \leq e} A^{4 \sum_{j=1}^k i_j} 
	= A^{2k(k+1)} \frac{ (A^{4e}; A^{-4})_k }{ (A^4; A^4)_k }
	= (-1)^k A^{2k(k+1)} \underbrace{A^{4ke-2k(k-1)}}_{ A^{k(4e+4)} } \frac{ (A^{-4e}; A^{4})_k }{ (A^4; A^4)_k } .
\end{gather*}
Donc :
\begin{align*}
\Sigma_{e,d} &= - \sum_{k=1}^{\min(e,d)}  \frac{ A^{2k} (A^{-4d}; A^4)_k (A^{-4e}; A^{4})_k }{ A^{2(lp-k)} \left( 1-A^{-4(lp-k)} \right) (A^4; A^4)_k } \\
	&= - \sum_{k=1}^{\min(e,d)}  \frac{ A^{4k} (A^{-4d}; A^4)_k (A^{-4e}; A^{4})_k }{ A^{2lp} \left( 1-A^{-4(lp-k)} \right) (A^4; A^4)_k } \\
	&= - \frac{1}{A^{2lp}} \sum_{k=1}^{\min(e,d)}  \frac{ (A^{-4(lp-1)}; A^4)_{k-1} (A^{-4d}; A^4)_k (A^{-4e}; A^{4})_k }{ (A^{-4(lp-1)}; A^4)_k (A^4; A^4)_k } A^{4k} \\
	&= - \frac{1}{A^{2lp} \left( 1 - A^{-4lp} \right) } \sum_{k=1}^{\min(e,d)}  \frac{ (A^{-4lp}; A^4)_k (A^{-4d}; A^4)_k (A^{-4e}; A^{4})_k }{ (A^{-4(lp-1)}; A^4)_k (A^4; A^4)_k } A^{4k} \\
	&= - \frac{1}{A^{2lp} - A^{-2lp} } \sum_{k \geq 1}  \frac{ (A^{-4lp}; A^4)_k (A^{-4d}; A^4)_k (A^{-4e}; A^{4})_k }{ (A^{-4(lp-1)}; A^4)_k (A^4; A^4)_k } A^{4k} . \tag{3}
\end{align*}

On revient sur la distinction des cas $(ii')$ et $(iii)$.
\begin{enumerate}
	\item[(ii')] On suppose que $i = 1 < n$. Alors $e=1$ et $d=n-lp+1$. D'après les égalités $(1)$ et $(2)$, on a :
	\begin{align*}
	\mathbf{E}_{1,n-1} &= \begin{multlined}[t]
		A^{2(lp-2)} A^{2(n-lp+1)} \; \rect{I}{n} \\
		+ A^{2(lp-2)} A^{2(n-lp+1)} \left( A^{2(lp-1)} - A^{-2(lp-1)} \right) \Sigma_{1,n-lp+1} \; \rect{N}{n} 
		\end{multlined} \\
	&= A^{2(n-1)} \; \rect{I}{n} + \underbrace{A^{2(n-1)} (A^2-A^{-2}) [lp-1]_{A^2} \Sigma_{i,n-lp+1}}_{ B_{1, n-lp+1} } \: \rect{N}{n} .
	\end{align*}
	Or, d'après l'équation $(3)$, la somme $\Sigma_{1,n-lp+1}$ se réduit à :
	\begin{align*}
	\Sigma_{1,n-lp+1} &= - \frac{1}{A^{2lp} - A^{-2lp} } \frac{ (A^{-4lp}; A^4)_1 (A^{-4(n-lp+1)}; A^4)_1 (A^{-4}; A^{4})_1 }{ (A^{-4(lp-1)}; A^4)_1 (A^4; A^4)_1 } A^{4} \\
	&=  \frac{1}{A^{2lp} - A^{-2lp} } \frac{ (1-A^{-4lp}) (1-A^{-4(n-lp+1)}) }{ (1-A^{-4(lp-1)}) } \\
	&= A^{-2lp-2(n-lp+1)+2(lp-1)} \frac{ [n-lp+1]_{A^2} }{ [lp-1]_{A^2} }
	= A^{-2(n-lp+2)} \frac{ [n-lp+1]_{A^2} }{ [lp-1]_{A^2} } .
	\end{align*}
	Il s'ensuit que :
	\[ B_{1, n-lp+1} = A^{2(lp-3)} (A^2-A^{-2}) [n-lp+1]_{A^2}. \]
	\item[(iii)] On suppose que $lp-1 \leq i < n$. Alors $e=lp-1$ et $d=n-i$. D'après les égalités $(1)$ et $(2)$, on a :
	\begin{align*}
	\mathbf{E}_{i,n-i} &= \begin{multlined}[t]
		A^{2(n-i)(i-lp+1)} A^{2(n-i)(lp-1)} \; \rect{I}{n} \\
		+ A^{2(n-i)(i-lp+1)} A^{2(n-i)(lp-1)} \left( A^{2(lp-1)} - A^{-2(lp-1)} \right) \Sigma_{lp-1,n-i} \; \rect{N}{n} 
		\end{multlined} \\
	&= A^{2(n-i)i} \; \rect{I}{n} + \underbrace{A^{2(n-i)i} (A^2-A^{-2}) [lp-1]_{A^2} \Sigma_{lp-1,n-i}}_{ B_{lp-1, n-i} } \: \rect{N}{n} .
	\end{align*}
	Or, d'après l'équation $(3)$, la somme $\Sigma_{lp-1,n-i}$ est une série hypergéométrique basique (cf. par exemple \cite[Eqn 1.2.14]{GR04}) :
	\begin{align*}
	\Sigma_{lp-1,n-i} &= - \frac{1}{A^{2lp} - A^{-2lp} } \sum_{k \geq 1}  \frac{ (A^{-4lp},A^4)_k (A^{-4(n-i)},A^4)_k }{ (A^4, A^4)_k } A^{4k} \\
		&= - \frac{1}{A^{2lp} - A^{-2lp} } \left( {}_2\phi_1 \left( A^{-4lp}, A^{-4(n-i)}; 0; A^4, A^4 \right) -1 \right).
	\end{align*}
	D'après la formule de transformation de Heine (cf. par exemple \cite[§ 1.5]{GR04}), on a :
	\begin{align*}
	\Sigma_{lp-1,n-i} &= - \frac{1}{A^{2lp} - A^{-2lp} } \left( \frac{(0; A^4)}{(0; A^4)} A^{-4lp(n-i)} -1 \right) \\
		&= \frac{1 - A^{-4lp(n-i)}}{A^{2lp} - A^{-2lp} } 
		= A^{-2lp(n-i)} \frac{ [lp(n-i)]_{A^2} }{ [lp]_{A^2} }. 
	\end{align*}
	Il s'ensuit que :
	\[ B_{lp-1, n-i} = A^{2(n-i)(i-lp)} (A^2-A^{-2}) \frac{ [lp-1]_{A^2}  [lp(n-i)]_{A^2} }{ [lp]_{A^2} } . \]
\end{enumerate}
D'où le résultat.
\end{proof}

\subsection{Actions de la vrille}
\label{subsection: vrille}

A l'aide des résultats de la sous-section \ref{subsection: enlacement}, on étudie l'action de la \emph{vrille positive} sur l'espace d'écheveaux $K_A(\bar{D} \times \mathbb{S}^1, 0)$ du tore solide. Elle est donnée par l'application linéaire :
\[ \mathbf{T} : \begin{cases}
		K_A(\bar{D} \times \mathbb{S}^1) \longrightarrow K_A(\bar{D} \times \mathbb{S}^1) \\
		\insertion{a}{}{Tore} \longmapsto \vrille{a}{}{Tore}
	\end{cases} \hspace{-0.3cm} . \]
La vrille négative étant l'image miroir de celle positive, son action se déduit de celle de la vrille positive en remplaçant $A$ par $A^{-1}$ (cf. les relations d'écheveaux \eqref{Eqn: echeveaux2}).

Comme $\left\{ \Theta_{A^2,n}(f_n) \; ; \; n \in \N^* \right\}$ est une $\Z[A,A^{-1}]$-base de $K_A(\bar{D} \times \mathbb{S}^1, 0)$ (cf. le corollaire \ref{Cor: insertion des JWG}), on calcule l'image de $\mathbf{T}$ sur ces classes d'écheveaux coloriés par les idempotents de Jones-Wenzl évaluables. Pour cela, il suffit de calculer les classes d'écheveaux des coupons : \index{T9 @$\mathbf{T}_n$}
\begin{equation}
\label{Eqn: vrille}
\mathbf{T}_n := \begin{tikzbox}
	\draw (0.5,0) -- (2,0) node[above] {$\scriptstyle n$} -- (2.5,0);
	\draw (2,0) -- (2,-0.5) ;
	\braid at (1,-0.5) s_1^{-1} ;
	\draw (1,-0.5) to[bend right=150] (1,-2) ;
	\draw (2,-2) -- (2,-2.5) ;
	\draw (0.5,-2.5) -- (2,-2.5) node[below] {$\scriptstyle n$} -- (2.5,-2.5);
\end{tikzbox} 
	= \begin{tikzbox}
	\draw (0.5,0) -- (1,0) node[above] {$\scriptstyle n$} -- (2.5,0);
	\draw (1,0) -- (1,-0.5) ;
	\braid at (1,-0.5) s_1^{-1} ;
	\draw (2,-0.5) to[bend left=150] (2,-2) ;
	\draw (1,-2) -- (1,-2.5) ;
	\draw (0.5,-2.5) -- (1,-2.5) node[below] {$\scriptstyle n$} -- (2.5,-2.5);
\end{tikzbox} ; \quad n \in \N^*,
\end{equation}
où la barre horizontale supérieure (resp. inférieure) désigne l'insertion de l'idempotent de Jones-Wenzl évaluable dont l'indice correspond au nombre total de brin(s) sortant(s) (resp. entrant(s)). On utilisera encore les notations \eqref{Eqn: coupons JWGA} pour les coupons associés aux idempotents et nilpotents de Jones-Wenzl évaluables.

\begin{Lemme}
\label{Lemme: vrille}
Soient $n \in \N^*$ et $l \in \N$ le quotient de $n$ dans sa division euclidienne par $p$.
\begin{enumerate}[(i)]
	\item Si $n \leq p-1$ ou $n = -1 \mod p$, alors :
	\[ \mathbf{T}_n = \begin{tikzbox}
		\draw (0.5,0) -- (1,0) node[above] {$\scriptstyle n$} -- (2.5,0);
		\draw (1,0) -- (1,-0.5) ;
		\braid at (1,-0.5) s_1^{-1} ;
		\draw (2,-0.5) to[bend left=150] (2,-2) ;
		\draw (1,-2) -- (1,-2.5) ;
		\draw (0.5,-2.5) -- (1,-2.5) node[below] {$\scriptstyle n$} -- (2.5,-2.5);
	\end{tikzbox}
		= (-1)^n A^{n(n+2)} \begin{tikzbox}
	\draw (0.5,0) -- (1,0) node[above] {$\scriptstyle n$} -- (1.5,0) ;
	\draw (1,0) -- (1,-2.5) ;
	\draw (0.5,-2.5) -- (1,-2.5) node[below] {$\scriptstyle n$} -- (1.5,-2.5) ;
\end{tikzbox} . \]
	\item Si $n \geq p$, alors :
	\[ \mathbf{T}_n = \begin{tikzbox}
		\draw (0.5,0) -- (1,0) node[above] {$\scriptstyle n$} -- (2.5,0);
		\draw (1,0) -- (1,-0.5) ;
		\braid at (1,-0.5) s_1^{-1} ;
		\draw (2,-0.5) to[bend left=150] (2,-2) ;
		\draw (1,-2) -- (1,-2.5) ;
		\draw (0.5,-2.5) -- (1,-2.5) node[below] {$\scriptstyle n$} -- (2.5,-2.5);
	\end{tikzbox}
		= (-1)^n A^{(lp-1)(lp+1) + (n-lp+1)(n-lp+3)} \begin{tikzbox}
	\draw (0.5,0) -- (1,0) node[above] {$\scriptstyle lp-1$} -- (2,0) node[above] {$\scriptstyle n-lp+1$} -- (2.5,0);
	\braid s_1^{-1} s_1^{-1} ;
	\draw (0.5,-2.5) -- (1,-2.5) node[below] {$\scriptstyle lp-1$} -- (2,-2.5) node[below] {$\scriptstyle n-lp+1$} -- (2.5,-2.5);
\end{tikzbox} . \]
\end{enumerate}
\end{Lemme}

\begin{proof}
\begin{enumerate}[(i)]
	\item On isole un brin à gauche sur lequel on utilise les relations d'isotopie et d'écheveaux \eqref{Eqn: echeveaux2} :
	\begin{align*}
	\mathbf{T}_n &= \begin{tikzbox}
		\draw (0.5,0) -- (1,0) node[above] {$\scriptstyle n$} -- (2.5,0);
		\draw[red] (1,0) -- (1,-1.5) ;
		\braid[red] at (1,-1.5) s_1^{-1} ;
		\draw[red] (2,-1.5) to[bend left=150] (2,-3) ;
		\draw[red] (1,-3) -- (1,-4.5) ;
		\draw (0.5,-4.5) -- (1,-4.5) node[below] {$\scriptstyle n$} -- (2.5,-4.5);
	\end{tikzbox} 
	= \begin{tikzbox}
		\draw (0.5,0) -- (1,0) node[above] {$\scriptstyle 1$} -- (2,0) node[above] {$\scriptstyle n-1$} -- (5.5,0);
		\draw (1,0) -- (1,-1.25) ;
		\draw[red] (2,0) -- (2,-1.25) ;
		\braid[height=0.5cm, number of strands=5, style strands={2,4}{red}] at (1,-1.25) s_2^{-1} s_1^{-1}-s_3^{-1} s_2^{-1} ;
		\draw[red] (4,-1.25) to[bend left=150] (4,-3.25) ;
		\draw (3,-1.25) to[bend left=90] (5,-1.25) ;
		\draw (3,-3.25) to[bend right=90] (5,-3.25) ;
		\draw[red] (2,-3.25) -- (2,-4.5) ;
		\draw (1,-3.25) -- (1,-4.5) ;
		\draw (0.5,-4.5) -- (1,-4.5) node[below] {$\scriptstyle 1$} -- (2,-4.5) node[below] {$\scriptstyle n-1$} -- (5.5,-4.5);
	\end{tikzbox}
	= \begin{tikzbox}
		\draw (0.5,0) -- (1,0) node[above] {$\scriptstyle 1$} -- (2,0) node[above] {$\scriptstyle n-1$} -- (3.5,0);
		\draw (1,0) -- (1,-0.5) ;
		\draw[red] (2,0) -- (2,-0.5) ;
		\braid[height=0.5cm, style strands={2}{red}] at (1,-0.5) s_2^{-1} s_1^{-1} s_2^{-1} ;
		\draw (3,-0.5) to[bend left=150] (3,-2.5) ;
		\draw (1,-2.5) -- (1,-3) ;
		\draw[red] (2,-2.5) -- (2,-3) ;
		\braid[height=0.5cm, style strands={2,3}{red}] at (1,-3) s_2^{-1} ;
		\draw[red] (3,-3) to[bend left=150] (3,-4) ;
		\draw (1,-4) -- (1,-4.5) ;
		\draw[red] (2,-4) -- (2,-4.5) ;
		\draw (0.5,-4.5) -- (1,-4.5) node[below] {$\scriptstyle 1$} -- (2,-4.5) node[below] {$\scriptstyle n-1$} -- (3.5,-4.5);
	\end{tikzbox} \\
	&= \begin{tikzbox} 
		\draw (0.5,0) -- (1,0) node[above] {$\scriptstyle 1$} -- (2,0) node[above] {$\scriptstyle n-1$} -- (3.5,0);
		\draw (1,0) -- (1,-0.5) ;
		\draw[red] (2,0) -- (2,-0.5) ;
		\braid[height=0.5cm, style strands={2}{red}] at (1,-0.5) s_1^{-1} s_2^{-1} s_1^{-1} ;
		\draw[green] (2.5,-1.5) circle (0.3) ;
		\draw (3,-0.5) to[bend left=150] (3,-2.5) ;
		\draw (1,-2.5) -- (1,-3) ;
		\draw[red] (2,-2.5) -- (2,-3) ;
		\braid[height=0.5cm, style strands={2,3}{red}] at (1,-3) s_2^{-1} ;
		\draw[red] (3,-3) to[bend left=150] (3,-4) ;
		\draw (1,-4) -- (1,-4.5) ;
		\draw[red] (2,-4) -- (2,-4.5) ;
		\draw (0.5,-4.5) -- (1,-4.5) node[below] {$\scriptstyle 1$} -- (2,-4.5) node[below] {$\scriptstyle n-1$} -- (3.5,-4.5);
	\end{tikzbox} 
	= - A^3 \begin{tikzbox}
		\draw (0.5,0) -- (1,0) node[above] {$\scriptstyle 1$} -- (2,0) node[above] {$\scriptstyle n-1$} -- (3.5,0);
		\draw (1,0) -- (1,-0.5) ;
		\draw[red] (2,0) -- (2,-0.5) ;
		\braid[style strands={2}{red}] at (1,-0.5) s_1^{-1} s_1^{-1} ;
		\braid[height=0.5cm, style strands={2,3}{red}] at (1,-3) s_2^{-1} ;
		\draw[green,->] (1.9,-1.1) arc (20:-20:0.5) ;
		\draw[green,->] (1.1,-1.4) arc (200:160:0.5) ;
		\draw[green,->] (1.9,-2.1) arc (20:-20:0.5) ;
		\draw[green,->] (1.1,-2.4) arc (200:160:0.5) ;
		\draw[red] (3,-3) to[bend left=150] (3,-4) ;
		\draw (1,-4) -- (1,-4.5) ;
		\draw[red] (2,-4) -- (2,-4.5) ;
		\draw (0.5,-4.5) -- (1,-4.5) node[below] {$\scriptstyle 1$} -- (2,-4.5) node[below] {$\scriptstyle n-1$} -- (3.5,-4.5);
	\end{tikzbox} .
	\end{align*}
	
	Or, d'après la proposition \ref{Prop: JWG}, pour tout $i \in \{1,...,n-1\}$, on a $h_i f_n = 0 = f_n h_i$. Aussi, on dénoue chacun des $2(n-1)$ croisements supérieurs avec la composante $D_0$ des relations d'écheveaux \eqref{Eqn: echeveaux2} :
	\[ \begin{tikzbox}
		\draw (0.5,0) -- (1,0) node[above] {$\scriptstyle n$} -- (2.5,0);
		\draw[red] (1,0) -- (1,-0.5) ;
		\braid[red] at (1,-0.5) s_1^{-1} ;
		\draw[red] (2,-0.5) to[bend left=150] (2,-2) ;
		\draw[red] (1,-2) -- (1,-2.5) ;
		\draw (0.5,-2.5) -- (1,-2.5) node[below] {$\scriptstyle n$} -- (2.5,-2.5);
	\end{tikzbox} 
		= - A^{3+2(n-1)} \begin{tikzbox}
		\draw (0.5,0) -- (1,0) node[above] {$\scriptstyle 1$} -- (2,0) node[above] {$\scriptstyle n-1$} -- (3.5,0);
		\draw (1,0) -- (1,-2.5) ;
		\draw[red] (2,0) -- (2,-0.5) ;
		\braid[red] at (2,-0.5) s_1^{-1} ;
		\draw[red] (3,-0.5) to[bend left=150] (3,-2) ;
		\draw[red] (2,-2) -- (2,-2.5) ;
		\draw (0.5,-2.5) -- (2,-2.5) node[below] {$\scriptstyle n-1$} -- (3.5,-2.5);
	\end{tikzbox} 
	= - A^{2n+1} \begin{tikzbox}
		\draw (0.5,0) -- (1,0) node[above] {$\scriptstyle 1$} -- (2,0) node[above] {$\scriptstyle n-1$} -- (3.5,0);
		\draw (1,0) -- (1,-2.5) ;
		\draw[red] (2,0) -- (2,-0.5) ;
		\braid[red] at (2,-0.5) s_1^{-1} ;
		\draw[red] (3,-0.5) to[bend left=150] (3,-2) ;
		\draw[red] (2,-2) -- (2,-2.5) ;
		\draw (0.5,-2.5) -- (2,-2.5) node[below] {$\scriptstyle n-1$} -- (3.5,-2.5);
	\end{tikzbox} . \]
	Par une récurrence immédiate, on en déduit que :
	\begin{align*}
	\begin{tikzbox}
		\draw (0.5,0) -- (1,0) node[above] {$\scriptstyle n$} -- (2.5,0);
		\draw (1,0) -- (1,-0.5) ;
		\braid at (1,-0.5) s_1^{-1} ;
		\draw (2,-0.5) to[bend left=150] (2,-2) ;
		\draw (1,-2) -- (1,-2.5) ;
		\draw (0.5,-2.5) -- (1,-2.5) node[below] {$\scriptstyle n$} -- (2.5,-2.5);
	\end{tikzbox} 
	&= \prod_{i=1}^n -A^{2i+1} \begin{tikzbox}
		\draw (0.5,0) -- (1,0) node[above] {$\scriptstyle n$} -- (1.5,0) ;
		\draw (1,0) -- (1,-2.5) ;
		\draw (0.5,-2.5) -- (1,-2.5) node[below] {$\scriptstyle n$} -- (1.5,-2.5) ;
	\end{tikzbox}
	= (-1)^n A^{n(n+1)+n} \begin{tikzbox}
		\draw (0.5,0) -- (1,0) node[above] {$\scriptstyle n$} -- (1.5,0) ;
		\draw (1,0) -- (1,-2.5) ;
		\draw (0.5,-2.5) -- (1,-2.5) node[below] {$\scriptstyle n$} -- (1.5,-2.5) ;
	\end{tikzbox}
	= (-1)^n A^{n(n+2)} \begin{tikzbox}
		\draw (0.5,0) -- (1,0) node[above] {$\scriptstyle n$} -- (1.5,0) ;
		\draw (1,0) -- (1,-2.5) ;
		\draw (0.5,-2.5) -- (1,-2.5) node[below] {$\scriptstyle n$} -- (1.5,-2.5) ;
	\end{tikzbox} .
	\end{align*}
	\item On isole $lp-1$ brins à gauche sur lesquels on utilise les relations d'isotopie :
	\begin{align*}
	\mathbf{T}_n &= \begin{tikzbox}
		\draw (0.5,0) -- (1,0) node[above] {$\scriptstyle n$} -- (2.5,0);
		\draw[red] (1,0) -- (1,-1.5) ;
		\braid[red] at (1,-1.5) s_1^{-1} ;
		\draw[red] (2,-1.5) to[bend left=150] (2,-3) ;
		\draw[red] (1,-3) -- (1,-4.5) ;
		\draw (0.5,-4.5) -- (1,-4.5) node[below] {$\scriptstyle n$} -- (2.5,-4.5);
	\end{tikzbox} 
	= \begin{tikzbox} 
		\draw (0.5,0) -- (1,0) node[above] {$\scriptstyle lp-1$} -- (2,0) node[above] {$\scriptstyle n-lp+1$} -- (5.5,0);
		\draw[blue] (1,0) -- (1,-1.25) ;
		\draw[red] (2,0) -- (2,-1.25) ;
		\braid[height=0.5cm, number of strands=5, style strands={2,4}{red}, style strands={1,3,5}{blue}] at (1,-1.25) s_2^{-1} s_1^{-1}-s_3^{-1} s_2^{-1} ;
		\draw[red] (4,-1.25) to[bend left=150] (4,-3.25) ;
		\draw[blue] (3,-1.25) to[bend left=90] (5,-1.25) ;
		\draw[blue] (3,-3.25) to[bend right=90] (5,-3.25) ;
		\draw[red] (2,-3.25) -- (2,-4.5) ;
		\draw[blue] (1,-3.25) -- (1,-4.5) ;
		\draw (0.5,-4.5) -- (1,-4.5) node[below] {$\scriptstyle lp-1$} -- (2,-4.5) node[below] {$\scriptstyle n-lp+1$} -- (5.5,-4.5);
	\end{tikzbox}
	= \begin{tikzbox}
		\draw (0.5,0) -- (1,0) node[above] {$\scriptstyle lp-1$} -- (2,0) node[above] {$\scriptstyle \quad n-lp+1$} -- (3.5,0);
		\draw[blue] (1,0) -- (1,-0.5) ;
		\draw[red] (2,0) -- (2,-0.5) ;
		\braid[height=0.5cm, style strands={2}{red}, style strands={1,3}{blue}] at (1,-0.5) s_2^{-1} s_1^{-1} s_2^{-1} ;
		\draw[blue] (3,-0.5) to[bend left=150] (3,-2.5) ;
		\draw[blue] (1,-2.5) -- (1,-3) ;
		\draw[red] (2,-2.5) -- (2,-3) ;
		\braid[height=0.5cm, style strands={2,3}{red}, style strands={1}{blue}] at (1,-3) s_2^{-1} ;
		\draw[red] (3,-3) to[bend left=150] (3,-4) ;
		\draw[blue] (1,-4) -- (1,-4.5) ;
		\draw[red] (2,-4) -- (2,-4.5) ;
		\draw (0.5,-4.5) -- (1,-4.5) node[below] {$\scriptstyle lp-1$} -- (2,-4.5) node[below] {$\scriptstyle \quad n-lp+1$} -- (3.5,-4.5);
	\end{tikzbox} \\
	&= \begin{tikzbox} 
		\draw (0.5,0) -- (1,0) node[above] {$\scriptstyle lp-1$} -- (3,0) node[above] {$\scriptstyle n-lp+1$} -- (4.5,0);
		\draw[blue] (1,0) -- (1,-0.5) ;
		\draw[red] (3,0) -- (3,-0.5) ;
		\braid[width=2cm, style strands={2}{red}, style strands={1}{blue}] at (1,-0.5) s_1^{-1} s_1^{-1} ;
		\braid[height=0.5cm, style strands={3,4}{red}, style strands={1,2}{blue}, style floors={1}{green} ] at (1,-3) | s_1^{-1} - s_3^{-1} ;
		\draw[blue] (2,-3) to[bend left=150] (2,-4) ;
		\draw[red] (4,-3) to[bend left=150] (4,-4) ;
		\draw[blue] (1,-4) -- (1,-4.5) ;
		\draw[red] (3,-4) -- (3,-4.5) ;
		\draw (0.5,-4.5) -- (1,-4.5) node[below] {$\scriptstyle lp-1$} -- (2,-4.5) node[below] {$\scriptstyle n-lp+1$} -- (4.5,-4.5);
	\end{tikzbox} .
	\end{align*}
	
	Or, d'après la proposition \ref{Prop: JWG}, pour tout $i \in \{1,...,n-1\} \setminus \{lp-1\}$, on a $h_i f_n = 0 = f_n h_i$. On procède comme en $(i)$ sur les $lp-1$ premiers brins et les $n-lp+1$ derniers brins. On obtient ainsi :
	\begin{align*}
	\mathbf{T}_n &= \begin{tikzbox}
		\draw (0.5,0) -- (1,0) node[above] {$\scriptstyle n$} -- (2.5,0);
		\draw (1,0) -- (1,-0.5) ;
		\braid at (1,-0.5) s_1^{-1} ;
		\draw (2,-0.5) to[bend left=150] (2,-2) ;
		\draw (1,-2) -- (1,-2.5) ;
		\draw (0.5,-2.5) -- (1,-2.5) node[below] {$\scriptstyle n$} -- (2.5,-2.5);
	\end{tikzbox}
		= \underbrace{ (-1)^{(lp-1)+(n-lp+1)} }_{ (-1)^n } A^{(lp-1)(lp+1) + (n-lp+1)(n-lp+3)} \begin{tikzbox}
		\draw (0.5,0) -- (1,0) node[above] {$\scriptstyle lp-1$} -- (2,0) node[above] {$\scriptstyle n-lp+1$} -- (2.5,0);
		\braid[style strands={1}{blue}, style strands={2}{red}] s_1^{-1} s_1^{-1} ;
		\draw (0.5,-2.5) -- (1,-2.5) node[below] {$\scriptstyle lp-1$} -- (2,-2.5) node[below] {$\scriptstyle n-lp+1$} -- (2.5,-2.5);
	\end{tikzbox} \\
			&= (-1)^n A^{(lp-1)(lp+1) + (n-lp+1)(n-lp+3)} \begin{tikzbox}
		\draw (0.5,0) -- (1,0) node[above] {$\scriptstyle lp-1$} -- (2,0) node[above] {$\scriptstyle n-lp+1$} -- (2.5,0);
		\braid[style strands={1}{blue}, style strands={2}{red}] s_1^{-1} s_1^{-1} ;
		\draw (0.5,-2.5) -- (1,-2.5) node[below] {$\scriptstyle lp-1$} -- (2,-2.5) node[below] {$\scriptstyle n-lp+1$} -- (2.5,-2.5);
	\end{tikzbox} .
	\end{align*}
\end{enumerate}
\end{proof}

\begin{Thm}
\label{Thm: vrille}
Soient $n \in \N^*$ et $l \in \N$ le quotient de $n$ dans sa division euclidienne par $p$.
\begin{enumerate}[(i)]
	\item Si $n \leq p-1$ ou $n = -1 \mod p$, alors $\mathbf{T}_n^{\pm1} = (-1)^n A^{\pm n(n+2)} \; \rect{I}{n}$.
	\item Si $n \geq p$, alors :
	\begin{gather*}
	\mathbf{T}_n^{\pm1} = \begin{multlined}[t]
		(-1)^n A^{\pm n(n+2)} \; \rect{I}{n}
		\pm (-1)^n A^{\pm n(n+2) \mp 2lp(n-lp+1)} \\
		\times \left( A^2 - A^{-2} \right) \frac{ [lp-1]_{A^2} [(n-lp+1)lp]_{A^2} }{ [lp]_{A^2} } \; \rect{N}{n} , 
		\end{multlined} \\
	\mathbf{T}_n^{\pm1} \cdot \rect{N}{n} = \begin{multlined}[t]
		(-1)^n A^{\pm n(n+2)} \; \rect{N}{n}
		\mp (-1)^n A^{\pm n(n+2) \mp 2lp(n-lp+1)} \\
		\times \left( A - A^{-2} \right) [(n-lp+1)lp]_{A^2} \; \rect{N}{n} .
		\end{multlined}
	\end{gather*}
\end{enumerate}
\end{Thm}

\begin{proof}
La vrille négative étant l'image miroir de la positive, les formules avec les coupons $\mathbf{T}_n^{-1}$ se déduisent de celles avec les coupons $\mathbf{T}_n$ en remplaçant $A$ par $A^{-1}$ (cf. les relations d'écheveaux \eqref{Eqn: echeveaux2}). 

De même, pour les assertions $(ii)$ et $(iii)$, les formules des coupons $\mathbf{T}_n \cdot \rect{N}{n}$ (i.e. le produit en pile \eqref{Eqn: produit en pile} de $\mathbf{T}_n$ et $\rect{N}{n}$) se déduisent de celles des coupons $\mathbf{T}_n$ grâce aux relations :
\[ \rect{I}{n} \cdot \rect{N}{n} = \rect{N}{n} = \rect{N}{n} \cdot \rect{I}{n}, \qquad \rect{N}{n} \cdot \rect{N}{n} \overset{\ref{Prop: JWG}}{=} - \frac{ [lp]_{A^2} }{ [lp-1]_{A^2} } \rect{N}{n} . \]

Il suffit donc d'étudier le coupon $\mathbf{T}_n$. L'assertion $(i)$ découle directement du lemme \ref{Lemme: vrille}. On suppose désormais que $n \geq p$. D'après le lemme \ref{Lemme: vrille} et l'assertion $(iii)$ du théorème \ref{Thm: enlacement}, on a :
\begin{align*}
\mathbf{T}_n &= \begin{multlined}[t]
		(-1)^n A^{(lp-1)(lp+1)+(n-lp+1)(n-lp+3)+2(n-lp+1)(lp-1)} \rect{I}{n} \\
		+ (-1)^n A^{(lp-1)(lp+1)+(n-lp+1)(n-lp+3)-2(n-lp+1)} \\
		\times \left( A^2 - A^{-2} \right) \frac{ [lp-1]_{A^2} [(n-lp+1)lp]_{A^2} }{ [lp]_{A^2} } \rect{N}{n} 
		\end{multlined} \\
	&= \begin{multlined}[t]
		(-1)^n A^{(lp-1)(n+2)+(n-lp+1)(n+2)} \rect{I}{n}
		+ (-1)^n A^{(lp-1)(lp+1)+(n-lp+1)(n-lp+1)} \\
		\times \left( A^2 - A^{-2} \right) \frac{ [lp-1]_{A^2} [(n-lp+1)lp]_{A^2} }{ [lp]_{A^2} } \rect{N}{n} 
		\end{multlined} \\
	&= \begin{multlined}[t]
		(-1)^n A^{n(n+2)} \rect{I}{n}
		+ (-1)^n A^{n(n+2)-2lp(n-lp+1)} \\
		\left( A^2 - A^{-2} \right) \frac{ [lp-1]_{A^2} [(n-lp+1)lp]_{A^2} }{ [lp]_{A^2} } \rect{N}{n} .
		\end{multlined}
\end{align*}
D'où le résultat.
\end{proof}

\subsection{Actions du bouclage}
\label{subsection: bouclage}

A l'aide des résultats de la sous-section \ref{subsection: enlacement}, on étudie enfin l'action du \emph{bouclage} sur l'espace d'écheveaux $K_A(\bar{D} \times \mathbb{S}^1, 0)$ du tore solide. Elle est donnée par combinaisons linéaires des applications linéaires :
\[ \forall i \in \N^* \qquad \mathbf{B}_i : \begin{cases}
		K_A(\bar{D} \times \mathbb{S}^1) \longrightarrow K_A(\bar{D} \times \mathbb{S}^1) \\
		\insertion{a}{}{Tore} \longmapsto \boucle{a}{}{Tore}
	\end{cases} \hspace{-0.3cm} . \]
	
Comme $\left\{ \Theta_{A^2,n}(f_n) \; ; \; n \in \N^* \right\}$ est une $\Z[A,A^{-1}]$-base de $K_A(\bar{D} \times \mathbb{S}^1, 0)$ (cf. le corollaire \ref{Cor: insertion des JWG}), on calcule l'image de $\mathbf{B}_i$, $i \in \N^*$, sur ces classes d'écheveaux coloriés par les idempotents de Jones-Wenzl évaluables. Pour cela, il suffit de calculer les classes d'écheveaux des coupons : \index{B @$\mathbf{B}_{i,n}$}
\begin{equation}
\label{Eqn: bouclage}
\mathbf{B}_{i,n} := \begin{tikzbox}
	\draw (0.5,0) -- (2,0) node[above] {$\scriptstyle n$} -- (2.5,0);
	\draw (2,0) -- (2,-0.5) ;
	\braid[height=0.5cm] at (1,-0.5) s_1^{-1} s_1^{-1} ;
	\draw (1,-0.5) to[bend right=150] (1,-2) node[below left] {$\scriptstyle i$};
	\draw (2,-2) -- (2,-2.5) ;
	\draw (0.5,-2.5) -- (2,-2.5) node[below] {$\scriptstyle n$} -- (2.5,-2.5); 
\end{tikzbox}
	= \begin{tikzbox}
	\draw (0.5,0) -- (1,0) node[above] {$\scriptstyle n$} -- (2.5,0);
	\draw (1,0) -- (1,-0.5) ;
	\braid[height=0.5cm] at (1,-0.5) s_1 s_1 ;
	\draw (2,-0.5) to[bend left=150] (2,-2) node[below right] {$\scriptstyle i$};
	\draw (1,-2) -- (1,-2.5) ;
	\draw (0.5,-2.5) -- (1,-2.5) node[below] {$\scriptstyle n$} -- (2.5,-2.5); 
\end{tikzbox} ; \quad n \in \N^*, \quad i \in \N^*,
\end{equation}
où la barre horizontale supérieure (resp. inférieure) désigne l'insertion de l'idempotent de Jones-Wenzl évaluable dont l'indice correspond au nombre total de brin(s) sortant(s) (resp. entrant(s)). On utilisera encore les notations \eqref{Eqn: coupons JWGA} pour les coupons associés aux idempotents et nilpotents de Jones-Wenzl évaluables.

\begin{Lemme}
\label{Lemme: bouclage}
Soient $n \in \N^*$, $l \in \N$ le quotient de $n$ dans sa division euclidienne par $p$, et $i \in \N^*$.
\begin{enumerate}[(i)]
	\item Si $n \leq p-1$ ou $n = -1 \mod p$, alors :
	\[ \mathbf{B}_{i,n} := \begin{tikzbox}
		\draw (0.5,0) -- (1,0) node[above] {$\scriptstyle n$} -- (2.5,0);
		\draw (1,0) -- (1,-0.5) ;
		\braid[height=0.5cm] at (1,-0.5) s_1 s_1 ;
		\draw (2,-0.5) to[bend left=150] (2,-2) node[below right] {$\scriptstyle i$};
		\draw (1,-2) -- (1,-2.5) ;
		\draw (0.5,-2.5) -- (1,-2.5) node[below] {$\scriptstyle n$} -- (2.5,-2.5); 
	\end{tikzbox}
	= - \left( A^{2(n+1)} + A^{-2(n+1)} \right) \begin{tikzbox}
		\draw (0.5,0) -- (1,0) node[above] {$\scriptstyle n$} -- (2.5,0);
		\draw (1,0) -- (1,-0.5) ;
		\braid[height=0.5cm] at (1,-0.5) s_1 s_1 ;
		\draw (2,-0.5) to[bend left=150] (2,-2) node[below right] {$\scriptstyle i-1$};
		\draw (1,-2) -- (1,-2.5) ;
		\draw (0.5,-2.5) -- (1,-2.5) node[below] {$\scriptstyle n$} -- (2.5,-2.5); 
	\end{tikzbox} . \]
	\item Si $n \geq p$, alors :
	\[	\mathbf{B}_{i,n} := \begin{multlined}[t]
	\begin{tikzbox}
		\draw (0.5,0) -- (1,0) node[above] {$\scriptstyle n$} -- (2.5,0);
		\draw (1,0) -- (1,-0.5) ;
		\braid[height=0.5cm] at (1,-0.5) s_1 s_1 ;
		\draw (2,-0.5) to[bend left=150] (2,-2) node[below right] {$\scriptstyle i$};
		\draw (1,-2) -- (1,-2.5) ;
		\draw (0.5,-2.5) -- (1,-2.5) node[below] {$\scriptstyle n$} -- (2.5,-2.5); 
	\end{tikzbox}
	= - \left( A^{2(n+1)} + A^{-2(n+1)} \right) \begin{tikzbox}
		\draw (0.5,0) -- (1,0) node[above] {$\scriptstyle n$} -- (2.5,0);
		\draw (1,0) -- (1,-0.5) ;
		\braid[height=0.5cm] at (1,-0.5) s_1 s_1 ;
		\draw (2,-0.5) to[bend left=150] (2,-2) node[below right] {$\scriptstyle i-1$};
		\draw (1,-2) -- (1,-2.5) ;
		\draw (0.5,-2.5) -- (1,-2.5) node[below] {$\scriptstyle n$} -- (2.5,-2.5); 
	\end{tikzbox} \\
	- \left( A^2 - A^{-2} \right)^2 [lp-1]_{A^2} [n-lp+1]_{A^2} \begin{tikzbox}
		\draw (0.5,0) -- (1,0) node[above] {$\scriptstyle lp-2$}  -- (2,0) node[above] {$\scriptstyle 1$} -- (4,0) node[above] {$\scriptstyle n-lp$} -- (4.5,0);
		\draw (4,0) -- (4,-0.65) ;
		\draw (1,0) -- (1,-0.65) ;
		\draw (2,0) to[bend right=90] (3,0) ;
		\draw (2,-0.65) to[bend left=90] (3,-0.65) ;
		\braid[border height=0.1cm, height=0.5cm] at (1,-0.65) s_1-s_3^{-1} s_1-s_3^{-1} ;
		\draw (2,-1.85) node[below left] {$\scriptstyle i-1$} to[bend right=90] (3,-1.85) ;
		\draw (4,-1.85) -- (4,-2.5) ;
		\draw (1,-1.85) -- (1,-2.5) ;
		\draw (2,-2.5) to[bend left=90] (3,-2.5) ;
		\draw (0.5,-2.5) -- (1,-2.5) node[below] {$\scriptstyle lp-2$} -- (2,-2.5) node[below] {$\scriptstyle 1$} -- (4,-2.5) node[below] {$\scriptstyle n$-lp} -- (4.5,-2.5); 
	\end{tikzbox}.
	\end{multlined} \]
\end{enumerate}
\end{Lemme}

\begin{proof}
On commence par traiter le cas où $i=1$. On isole un brin à gauche sur lequel on utilise les relations d'isotopie et d'écheveaux \eqref{Eqn: echeveaux2} :
\begin{align*}
\mathbf{B}_{1,n} &= \begin{tikzbox}
	\draw (0.5,0) -- (1,0) node[above] {$\scriptstyle n$} -- (2.5,0);
	\draw[red] (1,0) -- (1,-1) ;
	\braid[height=0.5cm, style strands={1}{red}] at (1,-1) s_1 s_1 ;
	\draw (2,-1) to[bend left=150] (2,-2.5) node[below right] {$\scriptstyle 1$} ;
	\draw[red] (1,-2.5) -- (1,-3.5) ;
	\draw (0.5,-3.5) -- (1,-3.5) node[below] {$\scriptstyle n$} -- (2.5,-3.5);
\end{tikzbox}
= \begin{tikzbox} 
	\draw (0.5,0) -- (1,0) node[above] {$\scriptstyle 1$} -- (2,0) node[above] {$\scriptstyle n-1$} -- (3.5,0);
	\draw (1,0) -- (1,-0.5) ;
	\draw[red] (2,0) -- (2,-0.5) ;
	\braid[height=0.5cm, style strands={2}{red}] at (1,-0.5) s_2 s_1 s_1 s_2 ;
	\draw[green] (1.5,-1.5) circle (0.3) ;
	\draw (3,-0.5) to[bend left=150] (3,-3) node[below left] {$\scriptstyle 1$} ;
	\draw (1,-3) -- (1,-3.5) ;
	\draw[red] (2,-3) -- (2,-3.5) ;
	\draw (0.5,-3.5) -- (1,-3.5) node[below] {$\scriptstyle 1$} -- (2,-3.5) node[below] {$\scriptstyle n-1$} -- (3.5,-3.5);
\end{tikzbox} \\
&= A \begin{tikzbox} 
	\draw (0.5,0) -- (1,0) node[above] {$\scriptstyle 1$} -- (2,0) node[above] {$\scriptstyle n-1$} -- (3.5,0);
	\draw (1,0) -- (1,-0.5) ;
	\draw[red] (2,0) -- (2,-0.5) ;
	\braid[height=0.5cm, style strands={2}{red}] at (1,-0.5) s_2 ;
	\draw (1,-1.5) to[bend right=90] (2,-1.5) ;
	\draw (1,-2.5) to[bend left=90] (2,-2.5) ;
	\draw[red] (3,-1.5) -- (3,-2.5) ;
	\braid[height=0.5cm, style strands={3}{red}] at (1,-2.5) s_1 s_2 ;
	\draw[green] (1.5,-3) circle (0.3) ;
	\draw (3,-0.5) to[bend left=150] (3,-4) node[below left] {$\scriptstyle 1$} ;
	\draw (1,-4) -- (1,-4.5) ;
	\draw[red] (2,-4) -- (2,-4.5) ;
	\draw (0.5,-4.5) -- (1,-4.5) node[below] {$\scriptstyle 1$} -- (2,-4.5) node[below] {$\scriptstyle n-1$} -- (3.5,-4.5);
\end{tikzbox}  
+ A^{-1} \begin{tikzbox} 
	\draw (0.5,0) -- (1,0) node[above] {$\scriptstyle 1$} -- (2,0) node[above] {$\scriptstyle n-1$} -- (3.5,0);
	\draw (1,0) -- (1,-0.5) ;
	\draw[red] (2,0) -- (2,-0.5) ;
	\braid[height=0.5cm, style strands={2}{red}] at (1,-0.5) s_2 ;
	\draw (1,-1.5) -- (1,-2.5) ;
	\draw (2,-1.5)-- (2,-2.5) ;
	\draw[red] (3,-1.5) -- (3,-2.5) ;
	\braid[height=0.5cm, style strands={3}{red}] at (1,-2.5) s_1 s_2 ;
	\draw (3,-0.5) to[bend left=150] (3,-4) node[below left] {$\scriptstyle 1$} ;
	\draw (1,-4) -- (1,-4.5) ;
	\draw[red] (2,-4) -- (2,-4.5) ;
	\draw (0.5,-4.5) -- (1,-4.5) node[below] {$\scriptstyle 1$} -- (2,-4.5) node[below] {$\scriptstyle n-1$} -- (3.5,-4.5);
\end{tikzbox} \\
&= -A^4 \begin{tikzbox} 
	\draw (0.5,0) -- (1,0) node[above] {$\scriptstyle 1$} -- (2,0) node[above] {$\scriptstyle n-1$} -- (2.5,0);
	\draw (1,0) -- (1,-1) ;
	\draw[red] (2,0) -- (2,-1) ;
	\braid[style strands={2}{red}] at (1,-1) s_1^{-1} s_1^{-1} ;
	\draw (1,-3.5) -- (1,-4.5) ;
	\draw[red] (2,-3.5) -- (2,-4.5) ;
	\draw (0.5,-4.5) -- (1,-4.5) node[below] {$\scriptstyle 1$} -- (2,-4.5) node[below] {$\scriptstyle n-1$} -- (2.5,-4.5);
\end{tikzbox}  
+ A^{-1} \begin{tikzbox} 
	\draw (0.5,0) -- (1,0) node[above] {$\scriptstyle 1$} -- (2,0) node[above] {$\scriptstyle n-1$} -- (3.5,0);
	\draw (1,0) -- (1,-0.5) ;
	\draw[red] (2,0) -- (2,-0.5) ;
	\braid[style strands={2}{red}] at (1,-0.5) s_1 s_2 s_1 ;
	\draw (3,-0.5) to[bend left=150] (3,-4) node[below left] {$\scriptstyle 1$} ;
	\draw[green] (2.5,-2.2) circle (0.3) ;
	\draw (1,-4) -- (1,-4.5) ;
	\draw[red] (2,-4) -- (2,-4.5) ;
	\draw (0.5,-4.5) -- (1,-4.5) node[below] {$\scriptstyle 1$} -- (2,-4.5) node[below] {$\scriptstyle n-1$} -- (3.5,-4.5);
\end{tikzbox} \\
&= -A^4 \begin{tikzbox} 
	\draw (0.5,0) -- (1,0) node[above] {$\scriptstyle 1$} -- (2,0) node[above] {$\scriptstyle n-1$} -- (2.5,0);
	\braid[style strands={2}{red}] s_1^{-1} s_1^{-1} ;
	\draw (0.5,-2.5) -- (1,-2.5) node[below] {$\scriptstyle 1$} -- (2,-2.5) node[below] {$\scriptstyle n-1$} -- (2.5,-2.5);
\end{tikzbox}  
- A^{-4} \begin{tikzbox} 
	\draw (0.5,0) -- (1,0) node[above] {$\scriptstyle 1$} -- (2,0) node[above] {$\scriptstyle n-1$} -- (2.5,0);
	\braid[style strands={2}{red}] s_1 s_1 ;
	\draw (0.5,-2.5) -- (1,-2.5) node[below] {$\scriptstyle 1$} -- (2,-2.5) node[below] {$\scriptstyle n-1$} -- (2.5,-2.5);
\end{tikzbox} 
= -A^4 \mathbf{E}_{1,n} - A^{-4} \mathbf{E}_{1,n}^{-1}.
\end{align*}

Or, d'après le théorème \ref{Thm: enlacement}, on sait que :
\[ \mathbf{E}_{1,n}^{\pm1} = A^{\pm 2(n-1)} \; \rect{I}{n} \pm \delta_{n \geq p} A^{\pm 2(lp-3)} \left( A^2 - A^{-2} \right) [n-lp+1]_{A^2} \; \rect{N}{n} . \]
Donc :
\[ \mathbf{B}_{1,n} = -\left( A^{2(n+1)} + A^{-2(n+1)} \right) \rect{I}{n} - \delta_{n \geq p} \left( A^2-A^{-2} \right)^2 [lp-1]_{A^2} [n-lp+1]_{A^2} \rect{N}{n} . \]

Enfin, l'ajout de brins de bouclage ne change pas les calculs ci-dessus. D'où le résultat pour le coupon $\mathbf{B}_{i,n}$ avec $i \in \N$.
\end{proof}

\begin{Thm}
\label{Thm: bouclage}
Soient $n \in \N^*$, $l \in \N$ le quotient de $n$ dans sa division euclidienne par $p$, et $i \in \N^*$.
\begin{enumerate}[(i)]
	\item Si $n \leq p-1$ ou $n = -1 \mod p$, alors $\mathbf{B}_{i,n} = (-1)^i \left( A^{2(n+1)} + A^{-2(n+1)} \right)^i \rect{I}{n}$.
	\item Si $n \geq p$, alors :
	\begin{gather*}
	\mathbf{B}_{i,n} = \begin{multlined}[t]
		(-1)^i \left( A^{2(n+1)} + A^{-2(n+1)} \right)^i \; \rect{I}{n}
		- \left( A^2 - A^{-2} \right)^2 [lp-1]_{A^2} [n-lp+1]_{A^2} \\
		\times (-1)^{i-1} \left( A^{2(n+1)} + A^{-2(n+1)} \right)^{i-1} \sum_{k=0}^{i-1} \left( \frac{ A^{2(2lp-n-1)} + A^{-2(2lp-1-n)} }{ A^{2(n+1)} + A^{-2(n+1)} } \right)^k
		  \rect{N}{n} ,
		\end{multlined} \\
	\mathbf{B}_{i,n} \cdot \rect{N}{n} =(-1)^i \left( A^{2(2lp-n-1)} + A^{-2(2lp-1-n)} \right)^i \; \rect{N}{n} .
	\end{gather*}
\end{enumerate}
\end{Thm}

\begin{proof}
L'assertion $(i)$ s'obtient par une récurrence immédiate sur $i \in \N^*$ grâce à l'assertion $(i)$ du lemme \ref{Lemme: bouclage}.

On suppose désormais que $n \geq p$. D'après le lemme \ref{Lemme: bouclage}, on a :
\begin{gather*}
\mathbf{B}_{i,n} = \begin{multlined}[t]
	- \left( A^{2(n+1)} + A^{-2(n+1)} \right) \mathbf{B}_{i-1,n} \tag{1} \\
	- \left( A^2 - A^{-2} \right)^2 [lp-1]_{A^2} [n-lp+1]_{A^2} \mathbf{B}_{i-1,n} \cdot \rect{N}{n} .
	\end{multlined}
\end{gather*}
L'expression du coupon $\mathbf{B}_{i,n} \cdot \rect{N}{n}$ (i.e. le produit en pile \eqref{Eqn: produit en pile} de $\mathbf{B}_{i,n}$ et $\rect{N}{n}$) se déduit $(1)$ grâce aux relations :
\[ \rect{I}{n} \cdot \rect{N}{n} = \rect{N}{n} = \rect{N}{n} \cdot \rect{I}{n}, \qquad \rect{N}{n} \cdot \rect{N}{n} \overset{\ref{Prop: JWG}}{=} - \frac{ [lp]_{A^2} }{ [lp-1]_{A^2} } \rect{N}{n} . \]
On obtient ainsi :
\begin{align*}
\mathbf{B}_{i,n} \cdot \rect{N}{n} &= \begin{multlined}[t]
	- \left( A^{2(n+1)} + A^{-2(n+1)} \right) \mathbf{B}_{i-1,n} \cdot \rect{N}{n} \\
		+ \left( A^2 - A^{-2} \right)^2 [lp]_{A^2} [n-lp+1]_{A^2} \mathbf{B}_{i-1,n} \cdot \rect{N}{n} 
		\end{multlined} \\
	&= \begin{multlined}[t]
		- \left( A^{2(n+1)} + A^{-2(n+1)} \right) \mathbf{B}_{i-1,n} \cdot \rect{N}{n} \\
		+ \left( A^{2lp} - A^{-2(lp)} \right) \left( A^{2(n-lp+1)} - A^{-2(n-lp+1)} \right) \mathbf{B}_{i-1,n} \cdot \rect{N}{n} 
		\end{multlined} \\ 
	&= - \left( A^{2(2lp-n-1)} + A^{-2(2lp-1-n)} \right) \mathbf{B}_{i-1,n} \cdot \rect{N}{n} .
\end{align*}
Par une récurrence immédiate, on en déduit que :
\[ \mathbf{B}_{i,n} = (-1)^i \left( A^{2(2lp-n-1)} + A^{-2(2lp-1-n)} \right)^i \rect{N}{n} . \]
En injectant ce résultat dans l'équation $(1)$, on obtient :
\begin{gather*}
\mathbf{B}_{i,n} = \begin{multlined}[t]
	- \left( A^{2(n+1)} + A^{-2(n+1)} \right) \mathbf{B}_{i-1,n}
	- \left( A^2 - A^{-2} \right)^2 [lp-1]_{A^2} [n-lp+1]_{A^2} \\
	\times (-1)^{i-1} \left( A^{2(2lp-n-1)} + A^{-2(2lp-1-n)} \right)^{i-1} \rect{N}{n} .
	\end{multlined}
\end{gather*}
On en déduit que :
\[ \mathbf{B}_{i,n} = \begin{multlined}[t]
	(-1)^i  \left( A^{2(n+1)} + A^{-2(n+1)} \right)^i \; \rect{I}{n} \\
	- \left( A^2 - A^{-2} \right)^2 [lp-1]_{A^2} [n-lp+1]_{A^2} \Sigma_{i,n} \; \rect{N}{n} , 
	\end{multlined} \]
où :
\begin{align*}
\Sigma_{i,n} &= \sum_{k=1}^i (-1)^{k-1} \left( A^{2(n+1)} + A^{-2(n+1)} \right)^{k-1} (-1)^{i-k} \left( A^{2(2lp-n-1)} + A^{-2(2lp-1-n)} \right)^{i-k} \\
	&= \sum_{k=0}^{i-1} (-1)^{i-k-1} \left( A^{2(n+1)} + A^{-2(n+1)} \right)^{i-k-1} (-1)^k \left( A^{2(2lp-n-1)} + A^{-2(2lp-1-n)} \right)^k \\
	&= (-1)^{i-1} \left( A^{2(n+1)} + A^{-2(n+1)} \right)^{i-1} \sum_{k=0}^{i-1} \left( \frac{ A^{2(2lp-n-1)} + A^{-2(2lp-1-n)} }{ A^{2(n+1)} + A^{-2(n+1)} } \right)^k
\end{align*}
D'où le résultat.
\end{proof}

\section{Représentation modulaire sur l'espace d'écheveaux du tore solide}
\label{section: SL2-rep topo}

Avec les calculs d'écheveaux de la section \ref{section: SL2-rep topo}, on construit les prémisses d'un analogue topologique de la représentation de $\SL_2(\Z)$ sur le centre $\Zf$ de $\Uq$ (cf. la proposition \ref{Prop: SL2-rep} et le théorème \ref{Thm: SL2-rep}). Pour faciliter les écritures, on note :
\begin{equation} 
\label{Eqn: coupons JWGq}
\index{S @$\mathbf{S}^+_s$} \index{I @$\mathbf{I}^\pm_s$} \index{N7 @$\mathbf{N}^\pm_s$}
\begin{aligned}
\forall s \in \{1,...,p\} \qquad 
	&\mathbf{S}^+_s := \rect{\bar{f}_{s-1}}{} = \overline{\rect{I}{s-1}}, \\
	&\mathbf{I}_s^+ := \rect{\bar{f}_{2p-s-1}}{} = \overline{\rect{I}{2p-s-1}}, \\
	&\mathbf{I}_s^- := \rect{\bar{f}_{3p-s-1}}{} = \overline{\rect{I}{3p-s-1}}, \\
\forall s \in \{1,...,p-1\} \qquad 
	&\mathbf{N}_s^+ := \rect{\bar{f}_{2p-s-1}'}{} = \overline{\rect{N}{2p-s-1}}, \\
	&\mathbf{N}_s^- := \rect{\bar{f}_{3p-s-1}'}{} = \overline{\rect{N}{3p-s-1}}, \\
\end{aligned}
\end{equation}
où $(f_n)_{n\in \N^*}$ et $(f_n')_{n\in \N^*}$ désignent respectivement les idempotents et nilpotents de Jones-Wenzl évaluables. Ainsi, d'après le théorème \ref{Thm: POIs rep fonda}, on a les correspondances suivantes entre les classes d'écheveaux coloriés et les $\Uq$-modules à gauche.
\[ \begin{array}{ccll} 
	\text{\'Echeveaux} && \text{Représentations} & \\
	\mathbf{S}^+_s & \longleftrightarrow & \X^+(s), & 1 \leq s \leq p, \\
	\mathbf{I}^+_s & \longleftrightarrow & \PIM^+(s), & 1 \leq s \leq p, \\
	\mathbf{N}^+_s & \longleftrightarrow & \PIM^+(s) / \Rad \left( \PIM^+(s) \right) \cong \X^+(s), & 1 \leq s \leq p-1, \\
	\mathbf{I}^-_s & \longleftrightarrow & 2 \PIM^-(s), & 1 \leq s \leq p, \\
	\mathbf{N}^-_s & \longleftrightarrow & 2 \left( \PIM^-(s) / \Rad \left( \PIM^-(s) \right) \right) \cong 2 \X^-(s), & 1 \leq s \leq p-1.
\end{array} \] 
D'après la sous-section \ref{subsection: centre1}, une partie de cette correspondance se prolonge avec la base canonique $\{ e_s \; ; \; 0 \leq s \leq p \} \cup \{ w^\pm_s \; ; \; 1  \leq s \leq p-1 \}$ de $\Zf$.
\begin{equation}
\label{Eqn: analogies}
\begin{array}{cccccl} 
	\text{\'Echeveaux} && \text{Représentations} && \text{\'Eléments} \\
	\mathbf{I}^+_p & \longleftrightarrow & \X^+(p) & \longleftrightarrow & e_p, & \\
	\mathbf{I}^+_s + \mathbf{I}^-_{p-s} & \longleftrightarrow & \PIM^+(s) \oplus (2) \PIM^-(p-s) & \longleftrightarrow & e_s, & 1 \leq s \leq p-1, \\
	\mathbf{N}^+_s & \longleftrightarrow & \X^+(s) & \longleftrightarrow & \frac{w^+_s}{[s]}, & 1 \leq s \leq p-1, \\
	\mathbf{N}^-_{p-s} & \longleftrightarrow & (2) \X^-(p-s) & \longleftrightarrow & - \frac{w^-_s}{[s]}, & 1 \leq s \leq p-1, \\
	\mathbf{I}^-_p & \longleftrightarrow & (2) \X^-(p) & \longleftrightarrow & e_0. &
\end{array}
\end{equation} 
Le choix de la normalisation des termes $\mathbf{N}^\pm_s$, $1 \leq s \leq p-1$, permettra d'identifier l'action de la vrille négative avec celle de $\cT$ \eqref{Eqn: SL2-rep}. Enfin, on réalisera le morphisme de Drinfeld $\bchi$ \eqref{Eqn: Drinfeld1} par l'action de bouclage sur une couleur de type "identité". Cela offre une réalisation partielle de l'action de $\cS \cong \bchi \circ \hbphi^{-1}$ (cf. la remarque \ref{Rem: SL2-rep1}) pour laquelle il manque l'interprétation géométrique du morphisme de Radford $\hbphi$ \eqref{Eqn: Radford1} lié aux intégrales.

\subsection{Evaluation dans l'espace d'écheveaux du tore solide}

On considère désormais le $\Z[q^{\frac{1}{2}}, q^{-\frac{1}{2}}]$-module d'écheveaux $K(\bar{D} \times \mathbb{S}^1, 0)$ \index{K3@ $K(M, 2n)$} défini de manière analogue à $K_A(\bar{D} \times \mathbb{S}^1, 0)$ en remplaçant $A^2$ par $q = e^\frac{i \pi}{p}$ (cf. la section \ref{section: echeveaux}). Les espaces d'écheveaux générique $K_A(\bar{D} \times \mathbb{S}^1, 0)$ et évalué $K(\bar{D} \times \mathbb{S}^1, 0)$ sont liés par un morphisme d'évaluation surjectif :
\[ \insertion{a}{}{Tore} \longmapsto \insertion{\bar{a}}{}{Tore} \]
tel que celui des algèbres de Temperley-Lieb génériques et évaluées (cf. la sous-section \ref{subsection: TL}). Avec ces considérations, les formules des théorèmes \ref{Thm: enlacement}, \ref{Thm: vrille} et \ref{Thm: bouclage} sont toutes évaluables. Après évaluation de $A^2$ en $q$, on obtient leurs analogues dans l'espace d'écheveaux $K(\bar{D} \times \mathbb{S}^1, 0)$.

\begin{Cor}
\label{Cor: enlacement}
Soient $n \in \N^*$, $l \in \N$ le quotient de $n$ dans sa division euclidienne par $p$, et $i \in \{1,...,n-1\}$.
\begin{enumerate}[(i)]
	\item Si $n \leq p-1$ ou $n = -1 \mod p$, alors $\overline{\mathbf{E}_{i,n-i}^{\pm 1}} = q^{\pm(n-i)i} \; \overline{\rect{I}{n}}$.
	\item[(ii')] Si $n \geq p$ et $i = 1 < n$, alors :
	\begin{gather*}
	\overline{\mathbf{E}_{1,n-1}^{\pm 1}} = q^{\pm(n-1)} \; \overline{\rect{I}{n}} \pm q^{\mp 3} \left( q - q^{-1} \right) [n+1] \; \overline{\rect{N}{n}} , \\
	\overline{\mathbf{E}_{1,n-1}^{\pm 1}} \cdot \overline{\rect{N}{n}} = q^{\pm(n-1)} \; \overline{\rect{N}{n}}.
	\end{gather*}
	\item[(iii)] Si $n \geq p$ et $lp-1 \leq i < n$, alors :
	\begin{gather*}
	\overline{\mathbf{E}_{i,n-i}^{\pm 1}} = q^{\pm(n-i)i} \; \overline{\rect{I}{n}} \mp q^{\pm(n-i)i} \left( q - q^{-1} \right) (n-i) \; \overline{\rect{N}{n}} , \\
	\overline{\mathbf{E}_{i,n-i}^{\pm 1}} \cdot \overline{\rect{N}{n}} = q^{\pm(n-i)i} \; \overline{\rect{N}{n}} .
	\end{gather*}
\end{enumerate}
\end{Cor}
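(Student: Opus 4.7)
L'id�e est d'appliquer le morphisme d'�valuation $A^2 \mapsto q$ aux formules du th�or�me \ref{Thm: enlacement}, en prenant soin de lever au pr�alable les ind�terminations de la forme $0/0$ qui apparaissent aux racines de l'unit�. Le cas $(i)$ est imm�diat : l'expression $A^{\pm2(n-i)i}\,\rect{I}{n}$ est directement �valuable et donne $q^{\pm(n-i)i}\,\overline{\rect{I}{n}}$ puisque toutes les quantit�s en jeu sont dans $\C[A]_{(A^2-q)} \otimes_{\Z[A,A^{-1}]} K_A(\bar D\times \mathbb{S}^1,0)$.

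Pour la partie d�licate (cas $(ii')$ et $(iii)$), je commencerais par regrouper les identit�s num�riques cl�s aux racines de l'unit�, qui d�coulent de $q=e^{i\pi/p}$ et donc $q^{lp}=(-1)^l$~:
\begin{equation*}
[lp] = 0, \qquad [lp-1] = (-1)^{l+1}, \qquad [n+1] = (-1)^l\,[n-lp+1],
\end{equation*}
la derni�re �galit� provenant de $n+1=lp+(n-lp+1)$. Ensuite, pour justifier l'existence d'une �valuation bien d�finie dans le cas $(iii)$, o� le facteur $[lp-1]_{A^2}[(n-i)lp]_{A^2}/[lp]_{A^2}$ pr�sente une ind�termination, j'utiliserais la factorisation
\begin{equation*}
\frac{[(n-i)lp]_{A^2}}{[lp]_{A^2}} = \sum_{k=0}^{n-i-1} A^{2lp(n-i-1-2k)},
\end{equation*}
qui montre que ce quotient est un polyn�me de Laurent en $A$, donc bien �valuable. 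En $A^2=q$, chaque terme vaut $(-1)^{l(n-i-1-2k)}=(-1)^{l(n-i-1)}$, d'o� la somme est $(-1)^{l(n-i-1)}(n-i)$.

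Pour le cas $(ii')$, j'injecterais $q^{\pm(lp-3)} = (-1)^{\pm l}q^{\mp3}$ et $[n-lp+1]=(-1)^l[n+1]$ dans l'expression g�n�rique, puis je simplifierais les signes $(-1)^l \cdot (-1)^l = 1$ pour obtenir le coefficient $\pm q^{\mp3}(q-q^{-1})[n+1]$ devant $\overline{\rect{N}{n}}$. Pour le cas $(iii)$, je multiplierais les �valuations : $q^{\pm(n-i)(i-lp)}\cdot(q-q^{-1})\cdot(-1)^{l+1}\cdot(-1)^{l(n-i-1)}(n-i)$ et j'appliquerais $q^{\mp(n-i)lp}=(-1)^{\mp l(n-i)}$ pour regrouper les exposants de $-1$. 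Un calcul de parit� direct montre que l'exposant total est impair dans les deux signes, ce qui produit le signe $\mp$ attendu devant $(q-q^{-1})(n-i)$. Pour les deuxi�mes �galit�s dans $(ii')$ et $(iii)$ (multiplication par $\overline{\rect{N}{n}}$), l'observation cruciale est que le terme correcteur contient explicitement $[lp]_{A^2}$ ou $[(n-i)lp]_{A^2}$ au num�rateur, lesquels s'annulent en $q$ sans compensation du d�nominateur~; ces termes disparaissent donc int�gralement, ne laissant que $q^{\pm(n-i)i}\overline{\rect{N}{n}}$.

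Le principal obstacle technique est la gestion simultan�e des signes $(-1)^{l\cdot(\cdot)}$ et de l'ind�termination $0/0$ dans le cas $(iii)$~: il faut v�rifier soigneusement que la r��criture polynomiale de $[(n-i)lp]_{A^2}/[lp]_{A^2}$ est compatible avec la forme ferm�e donn�e par Andrews dans la preuve du th�or�me \ref{Thm: enlacement}, et s'assurer qu'aucune ambigu�t� de branche n'intervient lors du passage $A^2\to q$. Les autres passages sont des substitutions directes.
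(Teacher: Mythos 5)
Votre plan est correct et suit exactement la d\'emarche du texte, qui d\'eduit ce corollaire du th\'eor\`eme \ref{Thm: enlacement} par simple \'evaluation de $A^2$ en $q$ (le m\'emoire ne d\'etaille d'ailleurs pas ces calculs). Vos identit\'es cl\'es ($[lp]=0$, $[lp-1]=(-1)^{l+1}$, $[n-lp+1]=(-1)^l[n+1]$, et la r\'e\'ecriture de $[(n-i)lp]_{A^2}/[lp]_{A^2}$ en somme g\'eom\'etrique s'\'evaluant sur $(-1)^{l(n-i-1)}(n-i)$) ainsi que le bilan de signes donnant le $\mp$ sont tous exacts.
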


\begin{Cor}
\label{Cor: vrille}
Soient $n \in \N^*$ et $l \in \N$ le quotient de $n$ dans sa division euclidienne par $p$.
\begin{enumerate}[(i)]
	\item Si $n \leq p-1$ ou $n = -1 \mod p$, alors $\overline{\mathbf{T}_n^{\pm 1}} = (-1)^n q^{\pm\frac{n(n+2)}{2}} \; \overline{\rect{I}{n}}$.
	\item Si $n \geq p$, alors :
	\begin{gather*}
	\overline{\mathbf{T}_n^{\pm 1}} = (-1)^n q^{\pm\frac{n(n+2)}{2}} \; \overline{\rect{I}{n}} \mp (-1)^n q^{\pm\frac{n(n+2)}{2}} \left( q - q^{-1} \right) (n-lp+1) \; \overline{\rect{N}{n}} , \\
	\overline{\mathbf{T}_n^{\pm 1}} \cdot \overline{\rect{N}{n}} = (-1)^n q^{\pm \frac{n(n+2)}{2}} \; \overline{\rect{N}{n}} .
	\end{gather*}
\end{enumerate}
\end{Cor}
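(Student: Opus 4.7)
The plan is to evaluate both assertions of Theorem \ref{Thm: vrille} at $A^2 = q$, using the surjective evaluation morphism from the generic space of skeins $K_A(\bar{D} \times \mathbb{S}^1, 0)$ onto $K(\bar{D} \times \mathbb{S}^1, 0)$ introduced at the beginning of this section; this morphism commutes with the insertion operation $\Theta_{A^2,n}$ and with the twist coupon $\mathbf{T}_n$, so it suffices to specialize the formulas already proved in the generic setting. Assertion $(i)$ is immediate: the generic right hand side $(-1)^n A^{\pm n(n+2)} \, \rect{I}{n}$ contains no factor that could produce an indeterminate form, and the monomial $A^{\pm n(n+2)}$ specializes to $q^{\pm n(n+2)/2}$, which yields the stated identity.

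Assertion $(ii)$ splits into two formulas, and I would treat the product $\overline{\mathbf{T}_n^{\pm 1}} \cdot \overline{\rect{N}{n}}$ first because it is by far the easier of the two. In the generic counterpart, the corrective summand carries a factor $[(n-lp+1)lp]_{A^2}$, and since $q^{lp} = e^{i \pi l} = (-1)^l$ one has $[klp]_q = 0$ for every integer $k$. Hence this corrective term evaporates under evaluation, leaving exactly $(-1)^n q^{\pm n(n+2)/2} \, \overline{\rect{N}{n}}$.

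The substantive work lies in the first formula of assertion $(ii)$. The generic expression for $\mathbf{T}_n^{\pm 1}$ contains the ratio
\[ \frac{[(n-lp+1)lp]_{A^2}}{[lp]_{A^2}}, \]
which is a $0/0$ indeterminate form at $A^2 = q$. I would compute this limit by writing $[m]_{A^2} = (A^{2m} - A^{-2m})/(A^2 - A^{-2})$, setting $u = A^2$, and applying L'Hopital's rule in the variable $u$; a short computation gives
\[ \lim_{A^2 \to q} \frac{[(n-lp+1)lp]_{A^2}}{[lp]_{A^2}} = (n-lp+1)(-1)^{l(n-lp)}. \]
Combining this limit with the evaluations $[lp-1]_q = (-1)^{l+1}$ and $q^{\mp lp(n-lp+1)} = (-1)^{l(n-lp+1)}$ coming from the exponent of $A$ in the generic formula, the monomial prefactor $A^{\pm n(n+2)}$ specializes as before, and the factor $(A^2 - A^{-2})$ becomes $(q - q^{-1})$.

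The only real obstacle is the bookkeeping of the various $(-1)^l$ contributions produced by $q^{lp} = (-1)^l$; the parities $n + l(n-lp+1) + (l+1) + l(n-lp)$ must telescope. Expanding $l[(n-lp+1) + (n-lp)] = 2l(n-lp) + l$ and discarding even exponents leaves $n + 2l + 1 \equiv n+1 \pmod 2$, so the overall sign of the corrective term is $\pm (-1)^{n+1} = \mp (-1)^n$, which matches the sign announced in the statement. Once this parity check is carried out, every remaining manipulation is a routine substitution.
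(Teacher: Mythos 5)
Votre démonstration est correcte et suit exactement la voie que le texte sous-entend (le mémoire ne donne pas de preuve explicite du corollaire : il se contente d'invoquer l'évaluation $A^2 \mapsto q$ des formules du théorème \ref{Thm: vrille}). Vous explicitez en plus le seul point non trivial — la levée de l'indétermination $\frac{[(n-lp+1)lp]_{A^2}}{[lp]_{A^2}}$, dont la limite $(n-lp+1)(-1)^{l(n-lp)}$ est exacte, ainsi que le bilan de parité $n+2l(n-lp)+2l+1 \equiv n+1 \pmod 2$ donnant le signe $\mp(-1)^n$ — ce qui est conforme au résultat annoncé.
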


\begin{Cor}
\label{Cor: bouclage}
Soient $n \in \N^*$, $l \in \N$ le quotient de $n$ dans sa division euclidienne par $p$, et $i \in \N^*$.
\begin{enumerate}[(i)]
	\item Si $n \leq p-1$ ou $n = -1 \mod p$, alors $\overline{\mathbf{B}_{i,n}} = (-1)^i \left( q^{(n+1)} + q^{-(n+1)} \right)^i \; \overline{\rect{I}{n}}$.
	\item Si $n \geq p$, alors :
	\begin{gather*}
	\overline{\mathbf{B}_{i,n}} = \begin{multlined}[t]
		(-1)^i \left( q^{(n+1)} + q^{-(n+1)} \right)^i \; \overline{\rect{I}{n}}  \\
		+ (-1)^{i-1} \left( q^{(n+1)} q^{-(n+1)} \right)^{i-1} i \left( q - q^{-1} \right)^2 [n+1] \; \overline{\rect{N}{n}} ,
		\end{multlined} \\
	\overline{\mathbf{B}_{i,n}} \cdot \overline{\rect{N}{n}} = (-1)^i \left( q^{(n+1)} + q^{-(n+1)} \right)^i \; \overline{\rect{N}{n}} .
	\end{gather*}
\end{enumerate}
\end{Cor}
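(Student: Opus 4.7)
The plan is to derive Corollary \ref{Cor: bouclage} as a direct specialization of Theorem \ref{Thm: bouclage} under the evaluation morphism $A^2 \mapsto q$, using the identities $q^p = -1$ and $q^{2p} = 1$ to collapse the generic expressions. First I would verify that the right-hand sides of Theorem \ref{Thm: bouclage} lie in the localized ring $\C[A]_{(A^2-q)}$ whenever the associated idempotents and nilpotents of Jones-Wenzl are evaluable; this is essentially already checked in the proof of Theorem \ref{Thm: idempotents eva} and the construction of $\rect{I}{n}$ and $\rect{N}{n}$ in \eqref{Eqn: coupons JWGA}. Then it suffices to evaluate the scalar coefficients coming out of Theorem \ref{Thm: bouclage}.

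Case $(i)$ is immediate: since no nilpotent term appears when $n \leq p-1$ or $n=-1\mod p$, one simply substitutes $A^2 = q$ in $(-1)^i\bigl(A^{2(n+1)}+A^{-2(n+1)}\bigr)^i$.

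For case $(ii)$, writing $n=lp+r$ with $r\in\{0,\dots,p-2\}$ (forced by $n\geq p$ and $n\neq -1\mod p$), I would first use $q^{2lp}=1$ to compute
\begin{equation*}
q^{2lp-n-1}+q^{-(2lp-1-n)} = q^{-(n+1)}+q^{n+1},
\end{equation*}
so that the geometric sum $\sum_{k=0}^{i-1}\bigl(\tfrac{A^{2(2lp-n-1)}+A^{-2(2lp-1-n)}}{A^{2(n+1)}+A^{-2(n+1)}}\bigr)^k$ evaluates to $i$. Next I would exploit $q^{lp}=(-1)^l$ to simplify the product of quantum integers: $[lp-1]=(-1)^{l+1}$, and since $n+1=lp+(r+1)$ with $r+1\in\{1,\dots,p-1\}$, also $[n+1]=(-1)^l[n-lp+1]$. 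Multiplying gives $[lp-1]\,[n-lp+1] = -[n+1]$. Combining these two simplifications in the formula for $\mathbf{B}_{i,n}$ produces exactly the stated coefficient $(-1)^{i-1}\bigl(q^{n+1}+q^{-(n+1)}\bigr)^{i-1} i\,(q-q^{-1})^2[n+1]$ in front of $\overline{\rect{N}{n}}$. The formula for $\overline{\mathbf{B}_{i,n}} \cdot \overline{\rect{N}{n}}$ is obtained directly by evaluating $(-1)^i\bigl(A^{2(2lp-n-1)}+A^{-2(2lp-1-n)}\bigr)^i$ with the same reduction $q^{2lp-n-1} = q^{-(n+1)}$.

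The main obstacle I anticipate is purely notational rather than mathematical: keeping careful track of the signs arising from $q^{lp}=(-1)^l$ versus $q^p=-1$, and ensuring that the closed-form collapse of the geometric sum is applied only in case $(ii)$, where the ratio appearing in Theorem \ref{Thm: bouclage} is literally $1$ after evaluation (so all terms $k=0,\dots,i-1$ contribute equally). No deeper analysis is required once Theorem \ref{Thm: bouclage} is in hand, since the evaluation morphism is a ring homomorphism well defined on all coefficients involved.
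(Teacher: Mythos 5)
Votre démonstration est correcte et suit exactement la voie du texte : le corollaire y est présenté comme la simple évaluation en $A^2=q$ du théorème \ref{Thm: bouclage}, et vos réductions ($q^{2lp}=1$ qui rend le rapport de la somme géométrique égal à $1$, puis $[lp-1]\,[n-lp+1]=-[n+1]$ via $q^{lp}=(-1)^l$) sont précisément les simplifications implicites dans l'énoncé du corollaire. Notez au passage que votre calcul confirme que le facteur $\left(q^{(n+1)}q^{-(n+1)}\right)^{i-1}$ de l'énoncé est une coquille pour $\left(q^{(n+1)}+q^{-(n+1)}\right)^{i-1}$.
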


\subsection{Analogies topologiques et perspectives}

Pour tout $n \in \N$, on considère l'application linéaire : \index{Theta4 @$\Theta_n$}
\begin{equation}
\label{Eqn: Theta}
\Theta_{n} : \left\{ \begin{array}{l}
	\TL_n(q) \longrightarrow \C(A) \otimes_{\Z[q^{\frac{1}{2}},q^{-\frac{1}{2}}]} K(\bar{D} \times \mathbb{S}^1, 0) \\
	\bar{a} \longmapsto \insertion{\bar{a}}{n}{Tore}
	\end{array} \right. \hspace{-0.5cm} .
\end{equation}

Avec les notations \eqref{Eqn: coupons JWGq}, les corollaires \ref{Cor: vrille} et \ref{Cor: bouclage} se reformulent en la :

\begin{Prop} 
\label{Prop: SL2-rep topo}
\begin{enumerate}[(i)]
	\item L'action de la vrille négative $\mathbf{T}^{-1}$ vérifie :
	\begin{gather*}
		\overline{\mathbf{T}_{2p-s-1}^{-1} + \mathbf{T}_{2p-s-1}^{-1}} = \begin{multlined}[t]
			(-1)^{s-1} q^{-\frac{s^2-1}{2}} \left( \mathbf{I}^+_s + \mathbf{I}^+_{p-s} \right) \\
			+ \delta_{1 \leq s \leq p-1} (-1)^{s-1}  (q-q^{-1}) q^{-\frac{s^2-1}{2}}
			\left( (p-s) \mathbf{N}^+_s + s \mathbf{N}^-_{p-s} \right), \\
			1 \leq s \leq p, 
			\end{multlined} \\
		\overline{\mathbf{T}_{2p-s-1} \cdot \mathbf{N}_s^+ } = (-1)^{s-1} q^{-\frac{s^2-1}{2}} \mathbf{N}^+_s, \qquad 1 \leq s \leq p-1, \\
		\overline{\mathbf{T}_{2p+s-1} \cdot \mathbf{N}_{p-s}^+ } = (-1)^{s-1} q^{-\frac{s^2-1}{2}} \mathbf{N}^-_{p-s}, \qquad 1 \leq s \leq p-1,
	\end{gather*}
	\item L'action du bouclage $\mathbf{B}_R$ par un polynôme $R(\alpha) \in \C[\alpha]$ vérifie :
	\begin{gather*}
	\overline{ \sum_{s=1}^p \mathbf{B}_{R,2p-s-1} + \mathbf{B}_{R,2p+s-1} } = \begin{multlined}[t]
		R(-\widehat{\beta}_s)) \left(\mathbf{I}_s^+ + \mathbf{I}_{p-s}^+ \right) \\
		- (q-q^{-1})^2 R'(-\widehat{\beta}_s) [s] \left( \mathbf{N}_s^+ - \mathbf{N}_{p-s}^- \right)
		\end{multlined}
		\end{gather*}
	où, pour tout $s \in \N$, $\widehat{\beta}_s := q^s+q^{-s}$.
\end{enumerate}
\end{Prop}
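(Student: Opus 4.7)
The plan is to derive both identities directly from Corollaries \ref{Cor: vrille} and \ref{Cor: bouclage} by plugging in the specific indices $n = 2p-s-1$ and $n = 2p+s-1$, then simplifying the resulting scalars using the fact that $q^{2p} = 1$. Concretely, I first translate the notation: $\mathbf{I}_s^+ = \overline{\rect{I}{2p-s-1}}$ and $\mathbf{I}_{p-s}^- = \overline{\rect{I}{2p+s-1}}$, with the analogous identifications for the nilpotent boxes $\mathbf{N}_s^+, \mathbf{N}_{p-s}^-$. This is what tells me which two values of $n$ to feed into the corollaries in order to obtain the claimed combinations.

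For part (i), I would compute the scalars appearing in Corollary \ref{Cor: vrille} at $n = 2p-s-1$ and $n = 2p+s-1$. The quotient is $l = 1$ in the first case and $l = 2$ in the second, so that $n - lp + 1$ equals $p-s$ and $s$ respectively (for $1 \le s \le p-1$); these are exactly the integer multipliers that appear in front of $\mathbf{N}_s^+$ and $\mathbf{N}_{p-s}^-$ in the statement. The sign $(-1)^n$ reduces to $(-1)^{s-1}$ in both cases since $2p$ is even. The key reduction is the scalar $q^{-n(n+2)/2}$: expanding $n(n+2) = (2p \mp s)^2 - 1 = 4p^2 \mp 4ps + s^2 - 1$ and using $q^{2p} = 1$, the $2p^2$ and $2ps$ pieces disappear and only $q^{-(s^2-1)/2}$ survives. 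Summing the two cases then yields exactly the displayed formula for $\overline{\mathbf{T}_{2p-s-1}^{-1}} + \overline{\mathbf{T}_{2p+s-1}^{-1}}$ (up to the apparent typos $\mathbf{I}_{p-s}^+$ vs.\ $\mathbf{I}_{p-s}^-$ and $\mathbf{T}$ vs.\ $\mathbf{T}^{-1}$ which should be corrected for consistency); the action on the nilpotent boxes follows from the second equality in Corollary \ref{Cor: vrille} by the same scalar reduction.

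For part (ii), I would write $R(\alpha) = \sum_i a_i \alpha^i$ so that $\mathbf{B}_{R,n} = \sum_i a_i \mathbf{B}_{i,n}$, then apply Corollary \ref{Cor: bouclage} termwise. Setting $y_n := q^{n+1} + q^{-(n+1)} = \widehat{\beta}_{n+1}$ and $x_n := -y_n$, the identity $(-1)^i y_n^i = x_n^i$ collapses the $\rect{I}{n}$ coefficient to $R(x_n)$, while $(-1)^{i-1} y_n^{i-1} = x_n^{i-1}$ together with the factor $i$ from the corollary collapses the $\rect{N}{n}$ coefficient to $R'(x_n)$ multiplied by $(q-q^{-1})^2[n+1]$. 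For $n = 2p-s-1$ and $n = 2p+s-1$, periodicity gives $\widehat{\beta}_{n+1} = \widehat{\beta}_s$ in both cases, while $[2p-s] = -[s]$ and $[2p+s] = [s]$ produce the crucial sign difference that turns the combined nilpotent contribution into $\mathbf{N}_s^+ - \mathbf{N}_{p-s}^-$; summing and relabelling yields the stated identity.

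The computations are essentially routine once the substitutions are set up, and the only subtle points will be checking the boundary cases $s \in \{1, p\}$ (where some of the nilpotent terms are absent by the definitions \eqref{Eqn: coupons JWGq}) and ensuring that the typographical issues in the displayed formulas are consistent with the intended statement, matching the $\SL_2(\Z)$-action of Proposition \ref{Prop: SL2-rep} under the correspondences \eqref{Eqn: analogies}. I expect no genuine obstacle, since the hard work has already been done in Theorems \ref{Thm: vrille} and \ref{Thm: bouclage} and their evaluated forms; the proposition is essentially a reorganisation of those results in the coordinate system suggested by the central elements $e_s$ and $w_s^\pm$.
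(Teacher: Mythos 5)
Votre démonstration est correcte et suit exactement la voie du texte : la proposition y est présentée sans preuve séparée, comme une simple reformulation des corollaires \ref{Cor: vrille} et \ref{Cor: bouclage} obtenue en spécialisant $n=2p-s-1$ et $n=2p+s-1$, en réduisant les scalaires via $q^{2p}=1$ (d'où $(-1)^n=(-1)^{s-1}$, $q^{\mp n(n+2)/2}=q^{\mp(s^2-1)/2}$, $\widehat{\beta}_{n+1}=\widehat{\beta}_s$, $[2p\mp s]=\mp[s]$) et en sommant sur les deux indices. Vos remarques sur les coquilles de l'énoncé ($\mathbf{I}^-_{p-s}$ au lieu de $\mathbf{I}^+_{p-s}$, l'exposant $-1$ manquant, le facteur $(q^{(n+1)}+q^{-(n+1)})^{i-1}$ dans le corollaire) sont également justes et n'affectent pas l'argument.
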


L'assertion $(i)$ de la proposition \ref{Prop: SL2-rep topo} est à comparer avec l'assertion $(i)$ de la proposition \ref{Prop: SL2-rep}. Sous les identifications \ref{Eqn: analogies}, on obtient les mêmes formules lorsque $\delta=1$ (cf. les éléments d'enrubannement possibles $\mathbf{v}_\delta$ \eqref{Eqn: enrubannement}). L'action de $\cT$ \eqref{Eqn: SL2-rep} s'interprète donc par l'action de la vrille négative, qui est le résultat d'un \emph{homéomorphisme de Dehn} négatif le long d'une courbe qui borde un disque (cf. par exemple \cite[§ 12]{PS97}).

L'assertion $(ii)$ de la proposition \ref{Prop: SL2-rep topo} est à comparer avec l'assertion $(ii)$ de la proposition \ref{Prop: SL2-rep}. En choisissant judicieusement les polynômes $R(x)$ :
\[ \begin{array}{cccl} 
	\text{Polynômes} && \text{\'Eléments} & \\
	(-1)^p U_{p}(x) & \longleftrightarrow & e_p, & \\
	(-1)^{s-1} [s] \; U_{s}(x) & \longleftrightarrow & \frac{w^+_s}{[s]} , & 1 \leq s \leq p-1, \\
	- (-1)^{s-1} [s] \; \frac{1}{2} \left(U_{2p-s} - U_{s} \right)(x) & \longleftrightarrow & - \frac{w^-_s}{[s]} , & 1 \leq s \leq p-1, \\
	(-1)^{2p-1} \; \frac{1}{2} U_{2p}(x) & \longleftrightarrow & e_0, &
\end{array} \]
sous les identifications \ref{Eqn: analogies}, on obtient encore les mêmes formules lorsque $\delta=1$ à un scalaire $\widehat{\zeta}:= \frac{(-1)^p}{\zeta 2p ([p-1]!)^2}$ près. Quitte à choisir convenablement une intégrale (cf. la proposition \ref{Prop: integrales} et la remarque \ref{Rem: SL2-rep1}), on peut supposer que $\zeta = \frac{(-1)^p}{2p ([p-1]!)^2}$ et $\widehat{\zeta}=1$.

Compte-tenu des propositions \ref{Prop: insertion des JWG} et \ref{Prop: traces des JWG}, le choix de ces polynômes semble lié à une \emph{trace modifiée} sur les classes d'écheveaux $\mathbf{I}^\pm_s$ et $\mathbf{N}^\pm_s$, $1 \leq s \leq p$ :
\[ \begin{array}{cccl} 
	\text{Polynômes} && \text{Traces} & \\
	(-1)^p \; U_{p}(x) & \longleftrightarrow & - \frac{\tr \left( \Theta_{p-1} (\mathbf{I}^+_p) \right)}{[p]} \Theta_{p-1} (\mathbf{I}^+_p), & \\
	(-1)^{s-1} [s] \; U_{s}(x) & \longleftrightarrow & \frac{\tr \left( \Theta_{2p-s-1} (\mathbf{N}^+_s) \right)}{[p]} \frac{\Theta_{2p-s-1} (\mathbf{N}^+_s)}{[p]} , & 1 \leq s \leq p-1, \\
	- (-1)^{s-1} [s] \; U_{2p-s}(x) & \longleftrightarrow & \frac{\tr \left( \Theta_{2p+s-1} (\mathbf{N}^-_{p-s}) \right)}{[2p]} \frac{\Theta_{2p+s-1} (\mathbf{N}^-_{p-s})}{[2p]} , & 1 \leq s \leq p-1, \\
	(-1)^{2p-1} \; U_{2p}(x) & \longleftrightarrow & \frac{\tr \left( \Theta_{2p-1} (\mathbf{I}^-_p) \right)}{[2p]} \Theta_{2p-1} (\mathbf{I}^-_p). &
\end{array} \]
D'après le corollaire \ref{Cor: Drinfeld}, avec une telle trace modifiée, le morphisme de Drinfeld $\bchi^1$ \eqref{Eqn: Drinfeld2} s'interprèterait par l'action du bouclage, qui est le résultat du produit de Kauffman de deux entrelacs enrubannés plongés dans des tores solides complémentaires dans la sphère de dimension 3 (cf. \cite[§ 1]{BHMV92}). On obtiendrait ainsi une partie de l'interprétation topologique de $\cS = \bchi \circ \hbphi^{-1}$ (cf. la remarque \ref{Rem: SL2-rep1}) ; il manquerait l'interprétation du morphisme de Radford $\hbphi^1$ \eqref{Eqn: Radford2} lié aux intégrales.

Pour la représentation de $\SL_2(\Z)$ obtenue par la TQFT de \cite{RT91}, les morphismes $\cT'$ et $\cS'$ correspondants s'interprètent respectivement par les actions de la vrille négative et du bouclage via les idempotents de Jones-Wenzl usuels (cf. par exemple \cite[§ II.3, § XII]{Tur94}). C'est pourquoi, après interprétation totale de $\cS$, on espère ainsi définir une nouvelle représentation de $\SL_2(\Z)$ sur l'espace d'écheveaux du tore solide qui étende celle de \cite{RT91} conformément au théorème \ref{Thm: SL2-rep}.

Enfin, on espère également que les idempotents et nilpotents de Jones-Wenzl évaluables permettent de construire de nouveaux invariants de 3-variété à la manière de \cite{Lic92}. Cette perspective soulève les problèmes ouverts suivants.

\begin{Pb}
Définir une trace modifiée, analogue à celle de \cite{GPMT09}, sur l'espace d'écheveaux du tore solide qui ne s'annule pas sur les idempotents (et nilpotents) de Jones-Wenzl évaluables.
\end{Pb}

\begin{Pb}
Définir une couleur de Kirby à partir des idempotents (et nilpotents) de Jones-Wenzl évaluables à la manière de \cite{Lic91}.
\end{Pb}

\begin{Pb}
Etudier les coupons :
\[ \begin{tikzbox}
	\draw (-0.75,0.5) -- (-0.75,0) ;
	\draw (0.75,0.5) -- (0.75,0) ;
	\draw[thick] (-1.25,0) node[above] {$\scriptstyle a$} -- (-0.25,0) ;
	\draw[thick] (0.25,0) -- (1.25,0) node[above] {$\scriptstyle b$} ;
	\draw (-0.5,0) to[bend right=90] (0.5,0) ; 
	\draw (-1,0) to[out=-90, in=90] (-0.25,-1.5) ;
	\draw (1,0) to[out=-90, in=90] (0.25,-1.5) ;
	\draw[thick] (-0.5,-1.5) -- (0.5,-1.5) node[below] {$\scriptstyle j$} ;
	\draw (0,-1.5)  -- (0,-2) ;
\end{tikzbox} \; ; \quad a,b,j \in \N^*, \]
correspondant aux opérateurs de Clebsch-Gordan dans $\Rep^{fd}_s$, et définir des systèmes de couleurs admissibles pour étendre le coloriage par les idempotents (et nilpotents) de Jones-Wenzl évaluables du tore aux corps à anses à la manière de \cite{Lic93}.
\end{Pb}

\begin{Pb}
Avec les idempotents (et nilpotents) de Jones-Wenzl évaluables, est-il possible de retrouver les invariants construits dans \cite{CGPM14} pour la classe cohomologique nulle, à la manière de \cite{BHMV92} ? 
\end{Pb}

\renewcommand{\indexname}{Index des notations}
\printindex

\bibliographystyle{alpha-fr}
\bibliography{Biblio}

\end{document}